\documentclass[10pt,oldfontcommands,oneside]{memoir}

%\let\plainqed\qedsymbol
%\newcommand{\claimqed}{$\lrcorner$}

%\numberwithin{equation}{section}
%\numberwithin{theorem}{section}
 
%%%%%%%%%%%%%%%%%%%%%%%%% Pagesize, trimsize, etc

% Size of the stock and page. For now, we make the stock size and page size equal
\setstocksize{240mm}{170mm}
\settrimmedsize{240mm}{170mm}{*}

% Set the size of the trims. In this case, the stock and page size are the same.
\settrims{0mm}{0mm}

% This command sets the \textheight and \textwidth
\settypeblocksize{168.96mm}{112mm}{*}

% This command sets the \spinemargin
\setlrmargins{22mm}{*}{*}

% This command sets the \uppermargin
\setulmargins{27mm}{*}{*}

% This command sets the \headsep
\setheaderspaces{*}{7mm}{*}

% Implement the layout after checking and fixing any potential errors
\checkandfixthelayout
\fixpdflayout

%%%%%%%%%%%%%%%%%%%%%%%%% Related to numbering

% Number up to the following depth
\setsecnumdepth{subsection}

% Numbering the equations (from the amsmath package)
\numberwithin{equation}{chapter}
\counterwithin{figure}{chapter}

% Number up to the following depth in the table of contents
\maxtocdepth{section}

%%%%%%%%%%%%%%%%%%%%%%%%% Related to the table of contents

% Set the distance between the number of the chapter/part and the text

% Change the size of the box the page numbers are printed in. This should remove the overfull hboxes

\setpnumwidth{3em}
\setrmarg{4em}

%%%%%%%%%%%%%%%%%%%%%%%%% Chapter styles

\usepackage{graphicx}
\chapterstyle{bianchi}
% \chapterstyle{ell}
% \chapterstyle{ger}

% \usepackage[T1]{fontenc}
% \usepackage{titlesec, blindtext, color}
% \definecolor{gray75}{gray}{0.75}
% \titleformat{\chapter}[hang]{\Huge\bfseries}{\thechapter\hspace{20pt}\textcolor{gray75}{|}\hspace{20pt}}{0pt}{\Huge\bfseries}
% 
% \newcommand{\basedon}[1]{%
% \chapterprecishere{Based on \textup{\cite{#1}}}
% 
% \vspace{15pt}
% }%

%%%%%%%%%%%%%%%%%%%%%%%%% Header styles

\nouppercaseheads

%%%%%%%%%%%%%%%%%%%%%%%%% Page numbering

% Do not number the title page
\aliaspagestyle{title}{empty}

% Do not number the part pages
\aliaspagestyle{part}{empty}

%%%%%%%%%%%%%%%%%%%%%%%%% Font choices

\usepackage{microtype}

\usepackage{dhara}
\usepackage{setspace}

\author{\vspace{.7cm}\\ter verkrijging van de graad van doctor aan de\\
  Technische Universiteit Eindhoven, op gezag van de\\
  rector magnificus, prof.dr.ir. F.P.T. Baaijens, voor een\\
  commissie aangewezen door het College voor\\
   Promoties in het openbaar te verdedigen\\
  op dinsdag 28 augustus 2018 om 13.30 uur\\
  \vspace{1cm}\\
door\\
\vspace{1cm}\\ Souvik Dhara\\
\vspace{.7cm}\\
geboren te Kolkata, India}
\title{\textbf{Critical Percolation on Random Networks with Prescribed Degrees}\\
\vspace{1.5cm}
{\LARGE\scshape proefschrift}}
\date{}
%
%\author{Souvik Dhara}
%\title{Critical Percolation on Random Networks with Prescribed Degrees}
%\date{}

\linespread{1.2}

\begin{document} 
\frontmatter

\thispagestyle{empty}
\begin{vplace}[0.5]
\begin{center}
\Huge Critical Percolation on Random Networks with Prescribed Degrees
\end{center}
\end{vplace}
\newpage

\ 
\vfill
% Copyright page
\thispagestyle{empty}%
\noindent This work was financially supported by The Netherlands Organization for Scientific Research (NWO) through the Gravitation Networks grant 024.002.003.\\\\
\includegraphics[width = 0.55\linewidth]{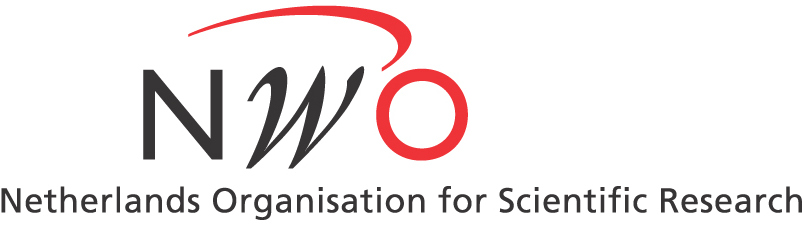}\hfill
\includegraphics[width = 0.22\linewidth]{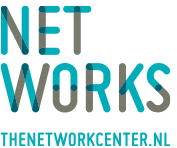}

\vspace{1cm}
\noindent \copyright~Souvik Dhara, 2018
\vspace{1em}

\noindent Critical Percolation on Random Networks with Prescribed Degrees
\vspace{1em}

\noindent A catalogue record is available from the Eindhoven University of Technology Library\\
\noindent ISBN: 978-90-386-4559-9
\vspace{1em}

%\noindent Cover design by   
%\noindent Cover design by Gildeprint
%\vspace{1em}

\noindent Printed by Gildeprint Drukkerijen, Enschede

\newpage
\maketitle
\aliaspagestyle{title}{empty} 
\newpage
% Counter-title page
% \aliaspagestyle{title}{empty}%
\noindent Dit proefschrift is goedgekeurd door de promotoren en de samenstelling\\van de promotiecommissie is als volgt:\\ \thispagestyle{empty}
\begin{flushleft}
\begin{tabular}{ l l }
voorzitter:   & prof.dr. M.A. Peletier\\
$1^\text{e}$ promotor: &prof.dr. R.W. van der Hofstad\\
$2^\text{e}$ promotor: &prof.dr. J.S.H. van Leeuwaarden\\
leden: 
&prof.dr.ir. S.C. Borst\\
&prof.dr. N. Broutin (Sorbonne Universit\'e)\\
&prof.dr. W.Th.F. den Hollander (UL)\\
&prof.dr. N.V. Litvak\\
%& dr. S. Bhamidi (UNC Chapel Hill)\\
&dr. L. Warnke (Georgia Institute of Technology)
\end{tabular}
\end{flushleft}
%\vspace{.4cm}
\vfill
Het onderzoek dat in dit proefschrift wordt beschreven is uitgevoerd in\\ overeenstemming met de TU/e Gedragscode Wetenschapsbeoefening.

\chapter*{Acknowledgment}
The journey from a student to a researcher requires development of several aspects: sharpening knowledge, becoming aware of contemporary research, developing communication skills, and identifying sources of inspiration. In the four years of my PhD, I have received enormous support from many people in nourishing each of the above aspects. Before going into the contents of this thesis, I would like to express my gratitude to them for their contribution to my academic career.

I am deeply indebted to my supervisors Remco van der Hofstad and Johan van Leeuwaarden for guiding me through this journey.  It is an honor to be your successor in the math genealogy-tree. Remco, your unending energy and enthusiasm for research are and will always be my inspiration. You have taught me everything from simple to deep mathematical facts. Our 17:30 meetings will always be in my cherished memories. You have always led with a brilliant example and showed me the highest standard for research and teaching.  Johan, your thoughts always provided me with a unique perspective. Your questions would always bring out the best from our research projects. You have always given me the best career advices, and shared your own invaluable experiences. 

I sincerely thank the committee members Sem Borst, Nicolas Broutin, Frank den Hollander, Nelly Litvak, and Lutz Warnke for a careful reading of my thesis and giving me feedback for improvement.

I have had wonderful experiences during research visits outside TU/e. I would like to thank Shankar Bhamidi for welcoming me heartily at the University of North Carolina Chapel Hill, and for sharing his enthusiasm and passion for research. Shankar, our trip to Nantahala is memorable to me. Special thanks to Michel Mandjes for sharing ideas from a vibrant research field which was orthogonal to my research project, and for adding new perspectives to my research. 

Working with Sem Borst was a fantastic experience. I learned a lot about how real-world systems work in our joint project. Sem, I was fortunate to witness your exquisite style of writing a research paper, and I hope to reflect some of the things that I have learned from you in my future writings. Thanks to Sanchayan Sen for sharing his deep technical insights, which has enriched my understanding throughout my PhD.

The summer internship at Microsoft Research Lab New England was a very special experience for me. Many thanks to Christian Borgs and Jennifer Chayes for mentoring me during my internship, for introducing me to a wonderful line of research, and teaching me many fundamental qualities of a researcher. 
Working with Subhabrata Sen in the internship project was an absolute pleasure. 
Subhabrata, your unique ability to ask fundamental questions and simplifying problems taught me a lot.

The NETWORKS group has been an integral part of my academic career during my PhD. I highly appreciate the exposure that I received about contemporary research in diverse fields of probability, combinatorics and computer science.

I would like to thank all the members in the department for a fantastic work-culture and environment. I thank Onno Boxma for giving me a personalized introductory course on queueing theory. Thanks to Julia Komjathy for showing me how to organize an advanced course such as Random graphs. I would like to thank Alberto Brini and Clara Stegehuis for creating a convivial and fantastic work-environment in our office. Thanks to Gianmarco Bet, Kay Bogerd, Lorenzo Federico, Jaron Sanders, Clara Stegehuis, Alessandro Zocca for many interesting research discussions. It was  very much enjoyable to share teaching duties with Angelos Aveklouris, Youri Raaijmakers, Rik Timmerman, and Viktoria Vadon. Special thanks to Enrico Baroni for a memorable trip to Niagara Falls, and for being our guide during our wonderful trip to Italy. Many thanks to Nikhil Bansal, Mark de Berg, Sem Borst, Onno Boxma, Robert Fitzner, Bart Jansen, Tim Hulshof, Julia Komjathy, Nelly Litvak, Maria Vlasiou, and Bert Zwart for giving me extensive feedback during my interview. I would also like to thank Chantal Reemers and Petra Rozema-Hoekerd for helping me out relentlessly through the administrative procedures.

Coming to the Netherlands from India was a major change of culture in my life. 
In the past four years, I have heavily relied on Soma Ray for her advices about the lifestyle in Eindhoven as an Indian. 

Moreover, I am thankful to my teachers at Indian Statistical Institute during my Masters degree. I extend my special thanks to Antar Bandyopadhyay, Sreela Gangopadhyay, and Arup Kumar Pal for their inspiring courses which formed the basis of my background in probability. Thanks to Krishanu Maulik for informing me about this PhD position in  Netherlands, and for sharing his experiences.

This work will not have been possible without the blessings of my parents. 
Thanks to my father for being my first mathematics teacher and inducing in me the passion for mathematics. 
My mother and sister have always been my source of love and affection. My wife Sukanya, thanks for loving me unconditionally through the hard times of life. Finally, thanks to my friend Debankur for being a caring friend, a fantastic teacher, and a great collaborator. Thanks for being there through all the unique experiences of my life for the last nine years.

\newpage 
\tableofcontents
%\todo[inline]{Change section names so that there is consistency in the table of contents.}

\mainmatter

\chapter[Introduction]{Introduction}
\label{chap:introduction}
%\vfill
%\documentclass[10pt,oldfontcommands]{memoir}
%\usepackage{dhara}
%%\usepackage{showlabels}
%\begin{document}
%\title{Critical percolation on random networks with prescribed degrees}
%\author{Souvik Dhara}
%\maketitle
%\tableofcontents
%\chapter{Introduction}
%\begin{abstract}
%\end{abstract}
%\tableofcontents

The study of large networks, arising from applications in social, physical and life sciences, has witnessed meteoric growth over the past two decades. 
Recent technological innovations allow practitioners to access and study network data of ever-increasing size. 
A thorough understanding of large networks can often provide deep insights into the workings of complex systems.
%many social, economic and biological systems of practical interest.
These networks are usually composed of two key components: 
\begin{enumerate}[(1)]
\item \emph{Structure:} 
The units/individuals are represented by nodes/vertices in the network, whereas their connectivity represents some sort of interaction.
The connectivity structure or topology of these large networks are often  either unknown or highly complex.  
Therefore, probabilistic models of graphs or random graphs have been used extensively to model real-world networks, along with their diverse structural characteristics.
\item \emph{Functionality:} 
Each of these networks come with certain functionality in the real world, such as information exchange or the spread of rumors/diseases. 
The functionality of networks is often modeled as a stochastic process acting upon the network. 
\end{enumerate}
From a mathematical perspective, the understanding of processes on random networks is interesting  due to the inherent double randomness: the random structure produced by the random graph model, and the stochastic process living on this random structure.
The interplay between the random topology and the stochastic process gives rise to novel behavior in the large network regime in terms of scaling limits
%. Analyzing this convoluted randomness 
and their analysis demands new tools from probability theory.

\subsection*{Network properties and universality.}
Discovering fundamental principles that can describe large complex networks has been a celebrated theme of research in network science.
Below we discuss the main conceptual strands related to this research area: 
\paragraph{Sparsity and power-law degree distributions.}
Empirical analysis of a number of real-world systems such as the internet, citation networks, and  protein-protein interaction networks, seem to suggest that these networks are inherently sparse in the sense that the number of edges scales linearly with the number of vertices.
Moreover, the empirical degree distributions follow approximately power-law distributions. 
Write $N_k$ for the number of vertices with degree $k$ and $p_k = N_k/n$ for the degree distribution with $n$ being the number of vertices in the network.
Then for large $k$, 
\begin{equation}
p_k \approx Ck^{-\tau}, \text{ for } C>0,\text{ and }\tau >2.
\end{equation} 
The constant $\tau$ is called the \emph{degree exponent} of the corresponding network.
This feature makes it impossible to model real-world networks by classical homogeneous random graph models such as the Erd\H{o}s-R\'enyi random graph or the random regular graph. 
Extensive discussions of the veracity as well as limitations of these findings, and the multitude of network models proposed to understand real-world data can be found in \cite{RGCN1,RTD-book,Newman-book,pastor2007evolution,AB02,BC18} and the references therein.

\paragraph{Universality in the large network limit.}
%Starting from the multitude of network models proposed to understand real systems, 
The second major thread that has emerged in network science, especially in the probability community, is the notion of universality: asymptotics in the large network limit,  for a wide range of functionals, often depend mainly on the degree exponent $\tau$.
In turn, this suggests that the qualitative behavior, across a plethora of models, can be largely insensitive to the details of the network model. 

\subsection*{Phase transition.}
Many random graph models for real-world networks are observed to exhibit a \emph{phase transition}. 
For a communication network, where the link between any two servers can fail with a certain probability, one may expect to observe a transition in the connectivity structure of networks depending on the link failure probability. 
One may also consider a computer virus spreading over the internet, which becomes an epidemic over a short window of time. 
The study of phase transitions is often interesting in numerous applications in statistical physics as well.
From the perspective of network science, the objective is to provide a framework for describing  universality laws governing the phase transition in terms of basic characteristics like the degree distribution.
However, even defining rigorously the meaning of a phase transition is a non-trivial task for the simplest models. 
Identifying the point and nature of phase transitions has been a  fundamental question in the development of the current existing theory of random graphs and complex network models.

\subsection*{Aim of this chapter.}
We discuss how the phase transition occurs in random graphs when percolation acts on them.
This is a fundamental model for analyzing the effect of link failure or spread of epidemics on a given network. 
A detailed analysis is provided for the critical behavior in random graphs that generate networks with arbitrary degree distributions.
The idea is to establish key relations between the network statistics such as the power-law exponent, and the nature of the critical behavior of this phase transition. 
% 
%
%
%
%The main objective of this thesis is to understand the behavior of phase transition a  dynamic processes called percolation 
%%on random graphs with general degree sequence. 
%%This enables us to capture the behavior 
%%\section*{Overview of this chapter.}
%%In this thesis, we focus on the theoretical aspects of the behavior of percolation 
%on canonical random graph models that generate networks with arbitrary degree distribution. 
In particular, we investigate different universality classes for the critical behavior of percolation based on the degree exponent $\tau$. 
In particular, for $\tau>4$, the behavior lies in the same universality class as classical homogeneous random graph models. 
This shows that the inhomogeneity in the degree distribution does not influence the percolation critical behavior as long as the degree distribution has a finite third-moment.
The behavior is more intricate in the other regimes with $\tau\in (3,4)$ and $\tau \in (2,3)$.
%, and we provide an extensive asymptotic analysis of many interesting functionals, that has attracted attention from probabilists, as well as physicists, over the past decades. 
%We give an overview of the results of this thesis in Section~\ref{sec:intro-contribution}. 
In Section~\ref{sec:intro-models}, we give an introduction to some of the most studied random graph models that will be pivotal to the discussions in this thesis. 
In Section~\ref{sec:intro-perc}, we define the percolation phase transition on finite graphs.
In Section~\ref{sec:intro-perc-crit}, we provide an overview of the rich history of critical behavior of percolation on finite graphs,
and formulate the key questions associated to this literature. 
Different universality classes are also described in this section.
In the following sections, we then provide answers to the key questions, and present our contributions to this literature.
%Often, describing the results and proofs requires several prerequisites.
The overall goal of this introductory chapter  is to describe the results at a high level, and discuss the central ideas behind our methods. 
The main results, along with all the associated technical details, will be discussed in full detail in subsequent chapters.

\section{Random graph preliminaries}
\label{sec:intro-models}
A graph $G = (V,E)$ consists of a vertex set $V$, and a set of edges $E\subset \{\{i,j\}: i,j\in V\}$ specifying the connections between different vertices.
For a multigraph, $E$ is a multi-set possibly consisting of multiple-edges between vertices, as well as self-loops. 
Throughout, we will assume that $|V|,|E|<\infty$. 
A random graph model specifies a probability distribution over the space of graphs.
We will consider $n$ vertices labeled by $[n]:=\{1,2,...,n\}$, which will serve as the vertex set of the random graph.
We now discuss some classical random graph models, and some properties related to their connectivity structure.
 
%In Section~\ref{sec:models-2} below, we first define random graph models that have been of pivotal interest.
%In Section~\ref{sec:intro-existence-giant}, we describe a structural property, called the existence of the giant component, which has been central in the development the theory of random graphs over the past decades. 

\subsection{Random graph models}\label{sec:models-2}

\paragraph*{Erd\H{o}s-R\'enyi random graph.} 
The Erd\H{o}s-R\'enyi random graph is the simplest and most widely studied random graph model, where any two vertices share an edge with some fixed probability~$p$, independently across edges. 
We denote the graph generated by the above procedure on $n$ vertices  by $\mathrm{ERRG}_n(p)$. 
This model was studied in the earliest work on random graphs by  \cite{ER59,ER60,G59}. 
The model in \cite{ER59,ER60} chooses $M$ edges uniformly at random from all possible ${n \choose 2}$ edges, and thus is slightly different. 
However, for $M\approx np$, the two models are asymptotically equivalent \cite{JLR00}. 
Note that the degree of each vertex is distributed as a $\mathrm{Bin}(n-1,p)$ random variable. 
Thus for $p = c/n$, the asymptotic degree of each vertex is $\mathrm{Poisson}(c)$, with fixed average degree~$c$. 
%\todo{Define Bin, Poisson.}

\paragraph*{The configuration model.} 
 Consider a non-increasing sequence of degrees $\boldsymbol{d} = ( d_i )_{i \in [n]}$ such that $\ell_n = \sum_{i \in [n]}d_i$ is even. 
 %For notational convenience, we suppress the dependence of the degree sequence on $n$. 
 The configuration model on $n$ vertices having degree sequence $\boldsymbol{d}$ is constructed as follows \cite{B80,BC78}:
 \begin{itemize}
 \item[] Equip vertex $j$ with $d_{j}$ stubs, or \emph{half-edges}. Two half-edges create an edge once they are paired. Therefore, initially we have $\ell_n=\sum_{i \in [n]}d_i$ half-edges. Pick any one half-edge and pair it with a uniformly chosen half-edge from the remaining unpaired half-edges and keep repeating the above procedure until all the unpaired half-edges are exhausted. 
 \end{itemize}
  Let $\CM$ denote the graph constructed by the above procedure.
  Note that $\CM$ may contain self-loops or multiple edges. 
  Given any degree sequence, let $\mathrm{UM}_n(\bld{d})$ denote the graph chosen uniformly at random from the collection of all simple graphs with degree sequence $\boldsymbol{d}$.
  It can be shown that the law of $\mathrm{CM}_{n}(\boldsymbol{d})$, conditioned on the graph being simple, is the same as that of $\mathrm{UM}_n(\bld{d})$ (see \cite[Proposition 7.13]{RGCN1}). 
  Thus, in order to sample a graph uniformly from the space of all simple graphs with a given degree sequence $\bld{d}$, we can keep on generating the configuration model until we obtain a simple graph.
 It was shown in \cite{J09c,AHH16} that, under very general assumptions, the asymptotic probability of the graph being simple is positive, so that with high probability we need to repeat the above algorithm only a finite number of times to generate $\mathrm{UM}_n(\bld{d})$.
The graph $\mathrm{UM}_n(\bld{d})$, in the special case $d_i = d$ for some fixed $d$, is in the literature also known as the random $d$-regular graph.
 
%The main results in this paper will be derived for $\CM$ and the closely related random graphs $\mathrm{UM}_n(\bld{d})$.
%\todo[inline]{Mention that the d regular graph is a special case}
\paragraph*{Inhomogeneous random graph.} 
An inhomogeneous random graph is generated by equipping each vertex $i$  with weight a $w_i$, and creating an edge between vertices $i$ and $j$ with probability $p_{ij} = \kappa(w_i,w_j)$ independently, for some function $\kappa$.
Thus the special case where $\kappa$ is a constant, i.e., $p_{ij}=p$ gives rise to $\mathrm{ERRG}_n(p)$ defined above.
A detailed analysis of the properties of this random graph model has been provided in \cite{BJR07} under a very general setup. 
Some choices of $p_{ij}$ have been popular for their special properties: 
%Consider a sequence of non-negative vertex-weights $\bld{w}:=(w_i)_{i\in [n]}$, with $w_i$ being the weight associated to vertex $i$. 
%\cmt{start. Origin of these model from Remco's book?}
%We describe three specific choices of $p_{ij}$'s depending on the weight sequence $\bld{w}$.
\begin{enumerate}
\item[$\rhd$] \emph{Norros-Reittu model} \cite{NR06}. $p_{ij} = 1-\exp(-w_iw_j/L_n)$, where we define $L_n := \sum_{i\in [n]}w_i$. 
This model is also referred to as  Poissonian graph model, or the Norros-Reittu random graph.  
We denote this model by $\mathrm{NR}_n(\bld{w})$. 
\item[$\rhd$] \emph{Chung-Lu model} \cite{CL02,CL02b}. $p_{ij} = \min \{w_iw_j/L_n,1\}$. This model will be denoted by $\mathrm{CL}_n(\bld{w})$.

\item[$\rhd$] \emph{Generalized random graph} \cite{BDM06}. $p_{ij} = w_iw_j/(L_n+w_iw_j)$. 
We denote this model by $\GRG$.
This model has the property that the distribution of this random graph, conditionally on the degree sequence $\bld{d}$, is the same as for $\mathrm{UM}_n(\bld{d})$.
\end{enumerate}
The weight $w_i$ plays a similar role as degree $d_i$ for $\CM$.
In fact, the expected degree of vertex $i$ is asymptotically $w_i$ in all the random graphs $\mathrm{NR}_n(\bld{w},1)$, $\mathrm{CL}_n(\bld{w})$, and $\GRG$ under some regularity conditions. 
These models are often \emph{asymptotically equivalent}. 
We refer the reader to \cite[Chapter 6]{RGCN1} for a detailed account of these properties.

In the subsequent sections, we will consider a sequence of degrees sequences $(\bld{d}_n)_{n\geq 1}$ and weight sequences $(\bld{w}_n)_{n\geq 1}$ while generating sequences of graphs. 
For notational convenience, we suppress the dependence of the degree and the weight sequences on $n$.

\subsection{Existence of a giant component}\label{sec:intro-existence-giant}

A giant component exists if, in the large network limit, the proportion of vertices in the largest connected component stays bounded away from zero.
In the sparse regime, where the number of edges scales linearly as the number of vertices, a unique giant component exists with high probability for most random graph models. 
Interestingly, global properties such as the existence of the giant component can be approximated by local properties, owing to the well-behaved topology of random graphs.
To understand this more precisely, let $\sC(v)$ denote the component containing vertex $v$ and let $V_n$ denote a vertex chosen uniformly at random. 
Then the existence of the giant component can be characterized by the following two fundamental properties:

\paragraph*{Branching process approximation.} 
 For all random graph models described above, the proportion of vertices involved in a cycle of length at most $2k$ is negligible, so that for any fixed $k\geq 1$, the $k$-neighborhood of $V_n$ is a tree with high probability. 
Therefore the finite neighborhoods of a randomly chosen vertex can be \emph{approximated} by the neighborhoods of the root of an infinite random rooted tree.
In all the random graph models mentioned above, the random tree is a branching process with a suitable progeny distribution. 
However, there are examples of random graph models where the approximating tree is not a simple branching process \cite{BJR07,BBCS14}.
The above notion of approximation can be formalized in terms of \emph{local weak convergence}, introduced by Benjamini and Schramm \cite{BS01} (see also the survey \cite{AS04}). 
We refer the reader to \cite{RGCN2} for local weak convergence results for random graphs. 
Thus, the so-called \emph{local events} (events depending on finite neighborhoods of $V_n$) can be described by functionals of a branching process that are often tractable.

\paragraph*{Approximating global events by local events.}
%Interestingly, many complicated global events, like the existence of  giant components can also be approximated by local events.
The components having size $[\omega(n), n/\omega(n)]$, for any $\omega(n) \to\infty$, span an asymptotically negligible proportion of vertices, so that with high probability, either $|\sC(V_n)|$ must have finite size, or $\sC(V_n)$ is the giant component.
 Therefore, when a long path exists from $V_n$, then $V_n$ must be in the giant component, and $\sC(V_n)$ is the giant. 
This is the reason why for random graph models in this section a unique giant component exists with high probability if and only if the mean of the progeny distribution of the approximating branching process is larger than one.

\vspace{.4cm}
For $\ERRG$, it is not difficult to see that the progeny distribution of the approximating branching process is Poisson$(c)$, and indeed a giant component exists precisely when $c>1$ \cite{G59,ER60}.
For $\CM$, note that while pairing the $k$-th half-edge, the probability of pairing to a vertex of degree $r$ is approximately $rn_r/\ell_n$. 
Thus the degree of a neighbor of a vertex is approximately given by the \emph{size-biased distribution} 
\begin{eq}\label{eq:SB-dist-degree}
\PR(D_n^* = r) = \frac{rn_r}{\ell_n}.
\end{eq} 
%where $n_r$ denotes the number of vertices of degree $r$.
%The distribution of $D_n^*$ is known as the size biased distribution, since the choice of a vertex is biased proportional its degree. 
Therefore, the approximating branching process has progeny $D_n^*-1$, since one edge is connected to the parent of a vertex. 
The expectation of $D_n^*-1$ is
\begin{eq}
\nu_n: = \frac{\sum_{r=1 }^\infty r(r-1)n_r}{\ell_n} = \frac{\sum_{i\in [n]}d_i(d_i-1)}{\ell_n}.
\end{eq} 
It was established in \cite{MR95,JL09} (see also the recent results \cite{JPRR18}) that the giant component exists precisely when 
\begin{equation}\label{eq:conv-nu-local-weak}
\lim_{n\to\infty}\nu_n = \nu >1,
\end{equation}
again confirming the local weak-limit heuristics.
It can also be shown that for the inhomogeneous random graph models, the mean of the approximating branching process turns out to be $\sum_{i\in [n]}w_i^2/\sum_{i\in [n]}w_i$, and the giant component exists \cite{BJR07,RGCN1} when  
\begin{eq}
\lim_{n\to\infty}\frac{\sum_{i\in [n]}w_i^2}{\sum_{i\in [n]}w_i}>1.
\end{eq}

%with the progeny distribution being distributed as a mixed Poisson random variable with mean $W_n^*$, where $W_n^*$ denotes the weight of a vertex chosen in  a size-biased manner with the sizes being $(w_k/\sum_{i\in [n]})_{k\in [n]}$. 
%The mean of this distribution is $\sum_{i\in [n]}w_i^2/\sum_{i\in [n]}w_i$ again confirming the branching process h

\section{Percolation on finite graphs}
\label{sec:intro-perc} 
%\blue{Given a graph $G$, bond percolation or simply \emph{percolation} refers to keeping each edge of the graph independently with some fixed probability $p$.}
%On the other hand, this is the simplest model showing phase transition, and over the past decades, mathematicians have devoted an enormous literature to rigorously understand the phase transition in different contexts.
%Here we focus on the theoretical aspects of percolation and its key properties. 
%This is the simplest model 
%In this section, we discuss phase transition of the percolation process and the critical value for RGs.
%In the later parts of this section, we describe the phase transition of this percolation process

\paragraph*{Percolation process and Harris coupling.}
Given a graph $G$, bond (site) percolation refers to deleting each edge (vertex)  independently with probability $1-p$. 
Throughout, we will be interested in bond percolation; thus we simply write percolation which refers to bond percolation, and the obtained graph is denoted by $G(p)$.
In case of percolation on random graphs, the deletion of edges is also independent from the underlying random graph. 
With the \emph{percolation process}, we refer to the graph-valued stochastic process $(G(p))_{p\in [0,1]}$ coupled through the so-called Harris coupling. More precisely: 
\begin{itemize}
\item[] Associate an independent uniform $[0,1]$ random variable $U_e$ to each edge $e$ of the graph $G$. $G(p)$ can be generated by keeping edge $e$ iff $U_e\leq p$. Keeping the uniform random variables fixed while varying $p$, gives a coupling between the graphs $(G(p))_{p\in [0,1]}$.
\end{itemize}

Classically, percolation has been extensively studied on infinite connected graphs such as the hypercubic lattice. 
This is the simplest known model that exhibits a \emph{phase transition}. 
If $p$ is small, then $G(p)$ consists of connected components of finite size only. 
On the other hand, if $p$ is close to~1,  then $G(p)$ contains an infinite cluster. 
Since for $p_1<p_2$, $G(p_1)$ is a subgraph of  $G(p_2)$ under the Harris coupling, there exists a \emph{unique} value $p_c$ such that $G(p)$ contains an infinite cluster if and only if $p>p_c$. 
Thus $p_c$ can also be defined as the unique point of discontinuity of the function $\PR(\exists \text{ an infinite component in }G(p))$.
The quantity $p_c$ is called the \emph{critical value} for the phase transition of the percolation process. 
Several questions like finding the value of $p_c$, establishing the uniqueness of the infinite components and behavior of different functionals close to $p_c$ for infinite transitive graphs have been discussed extensively in \cite{Gri99,BR06,Kes82,HvdH17}.

\paragraph*{Phase transition on finite graphs.}
It is not evident how to define the phase transition for a fixed finite graph $G_n$ with $n$ vertices. 
For any $f:G_n\mapsto \R$, the expectation $\E[f(G_n(p))]$ is a smooth function in $p$, and therefore none of the functionals of the graph experience a  transition. 
The phase transition can only arise when the graphs become large, i.e., it should be  related to the large-network limit.
%
%is a large-network-limit property.
For this reason, the phase transition is defined for a sequence of graphs $(G_n)_{n\geq 1}$ rather than a given fixed graph, and the transition is captured in terms of the limit  $\lim_{n\to\infty}\E[f(G_n(p))]$.   
Let $\sC_{\sss(k)}(p)$ denote the $k$-th largest connected component of $G_n(p)$.  
The \emph{critical value} $p_c = p_c(n)$ is defined such that the following holds given any $\varepsilon>0$: 
%\todo{Define whp}
\begin{eq} \label{eq:phase-transition-defn} 
\lim_{n\to\infty}\PR(|\sC_{\sss (1)}(p)| > \delta n )=
\begin{cases}
 0 \quad &\text{for }p< p_c(1-\varepsilon), \quad \text{({subcritical})}\\
 1  \quad &\text{for }p> p_c(1+\varepsilon), \quad \text{({supercritical})}
\end{cases}
\end{eq}
where the first limit should hold for all $\delta>0$, and the second one for some $\delta = \delta(p)>0$ sufficiently small. 
In most cases, $|\sC_{\sss (1)}(p)|/n$ converges in probability to some positive constant $\eta$ that depends on the graph sequence $(G_n)_{n\geq 1}$ and $p$, and more importantly $|\sC_{\sss (1)}(p)|\gg|\sC_{\sss (2)}(p)|$, so that the giant component is \emph{unique}. 
We will stick to the above definition of the phase transition and critical value throughout this thesis.
It is worthwhile mentioning that there is a substantial literature on how to define the critical value, and the phase transition. 
See \cite{NP08,JW18,BCHSS05,HvdH17,Hof17} for different definitions of critical probability and related discussions. 
One could also note that $p_c$ is not unique for finite graphs. This is due to the fact that $p_c$ is allowed to depend on $n$ and the phase transition is only an asymptotic notion.  

The study of random graphs practically started with the question of identifying the critical value of $p_c$. 
Note that percolation on a complete graph yields an Erd\H{o}s-R\'enyi random graph, and in the early works \cite{G59,ER60}, it was shown that $p_c=1/(n-1)$.
Over the past 60 years of development of the random graph literature, identifying the critical value and the asymptotics of the giant component has been one of the guiding questions, not only for the percolation process, but for any sequence of dynamically growing graph processes.
The threshold has been identified under fairly general conditions, for example when the underlying graph is an expander \cite{BBLR12},
converges in a cut metric to an irreducible graphon \cite{BBCR10}, or even general sequences of graphs \cite{CHL09,Ald16}.
For more details we refer to \cite{J09,F07,JLR00,ABS04,RGCN1,Bol01,BJR07,BBCR10,RW12,BBLR12,CHL09,Ald16} and the references therein.

\paragraph*{Percolation on random graphs.}
For a sequence of random graphs, the phase-transition of the percolation process occurs when \eqref{eq:phase-transition-defn} holds with high probability with respect to the joint distribution of the random graph and the percolation process. 
The percolation process is viewed as a dynamic process living on a disordered medium, i.e., the random graph. 
In a sense, this may appear paradoxical, since percolation itself often serves as a model for generating a disordered medium on which stochastic processes like random walks act. 
However, when viewing random graphs as models for real-world networks, 
percolation serves as a model for robustness of internet or communication networks when the nodes/edges of the underlying network experience random damage. 
%The goal is to understand whether two randomly selected individuals in the network can communicate after the damage.
Percolation has also been used to model the vaccination on a network to prevent the growth of an epidemic. 
A detailed account of these applications can be found in \cite{Newman-book,Bar16}.

%
%
%it makes sense to study percolation as a means to understand effects of random failures, or preventing epidemics on a network.

Of particular interest to this thesis is the phase transition result for uniformly chosen graphs with given degree $\mathrm{UM}_n(\bld{d})$, and the configuration model $\CM$.
For random $d$-regular graphs \cite{ABS04}, $p_c = 1/(d-1)$.
The percolation phase transition on $\mathrm{UM}_n(\bld{d})$ and on $\CM$ was studied in \cite{F07,J09} when the empirical degree distribution has a finite second moment in the large network limit, as $n\to\infty$. 
It was shown that $p_c = 1/\nu$, where the parameter $\nu$ is defined by \eqref{eq:conv-nu-local-weak}.
See also \cite{FJP16,JP18} for some recent results on more general degree sequences.

\paragraph*{Relation to branching process approximation.}
The critical probability $p_c$ is intimately related to the branching process approximation. 
%discussed in more detail in Section~\ref{sec:intro-existence-giant}.
%The above mentioned results about the critical value are tied through a common thread. 
Indeed, when the number of edges in $G_n(p)$ scales linearly with $n$, the typical local neighborhoods of $G_n(p)$ can be approximated by a branching process in the sense of Section~\ref{sec:intro-existence-giant} for most sequences of (random) graphs $(G_n)_{n\geq 1}$.
The value $p_c$ is then such that the mean of this approximating branching process is 1.
Indeed, that turns out to be the case for $\ERRG$, random regular graphs, and $\CM$. 
Heuristically, these results complement Schramm's conjecture about infinite transitive graphs stating that the local weak limit determines the percolation threshold. 
Without going into further details, we refer the interested reader to \cite{BNP11,BPT17,DcT17} and the references therein for a beautiful line of work initiated with Schramm's conjecture.

\paragraph*{Formation of a complex structure.}
Around the critical value, the phase transition happens not only with respect to the size of $\sC_{\sss (1)}$, but also with respect to the \emph{complexity} of its connectivity structure. 
To measure complexity, let us define for a connected graph $G$ the number of surplus edges as the number of edges to be deleted to turn $G$ into a tree. 
Thus $\#$ surplus edges of $G$ = $\#$ edges $-$ $\#$ vertices + 1, denoted by $\SP(G)$. 
Note that $\SP(G) = 0$ means that $G$ is a tree, and a large value of $\SP(G)$ means that $G$ has many (possibly overlapping) cycles with a more complex structure. 
In the subcritical regime, any component has at most one surplus edge \cite{JLR00,HM12,DMS17,Bol01}, and there are finitely many surplus edges in the whole graph, so that the subcritical components are mostly trees. 
On the other hand, the giant component in the supercritical regime satisfies $\SP(\sC_{\sss (1)}(p)) \to\infty$ \cite{JL09,JLR00} with high probability, so that the structure of the giant is highly complex. 
See \cite{DKLP10} for a detailed result about the giant component of $\ERRG$. %\todo{Define O(1)}
Thus the percolation process starts adding cycles and the complex structure of the giant component begins to form precisely around the critical value $p_c$.
This explains the interest in the percolation critical behavior.

\subsection{Some definitions and notation.} \label{sec:notation-intro} In the next section, we discuss the critical window of phase transition. 
We now define some basic notation used throughout this thesis.
We will use the standard notation  $\xrightarrow{\sss \mathbb{P}}$, $\xrightarrow{\sss d}$ to denote convergence in probability and in distribution or law, respectively. 
We often use the Bachmann Landau notation $O(\cdot)$, $o(\cdot)$, $\Theta(\cdot)$ for large $n$ asymptotics of real numbers.
The topology needed for the distributional convergence will always be specified unless it is  clear from the context.  A sequence of events $(\mathcal{E}_n)_{n\geq 1}$ is said to occur with high probability with respect to probability measures $(\mathbb{P}_n)_{n\geq 1}$  if $\mathbb{P}_n\big( \mathcal{E}_n \big) \to 1$. Denote $f_n = O_{\sss \mathbb{P}}(g_n)$ if $ ( |f_n|/|g_n| )_{n \geq 1} $ is tight; $f_n =o_{\sss \mathbb{P}}(g_n)$ if $(|f_n|/|g_n|)_{n\geq 1}$ converges in probability to zero; $f_n =\Theta_{\sss \mathbb{P}}(g_n)$ if $f_n=O_{\sss \mathbb{P}}(g_n)$ and $g_n=O_{\sss \mathbb{P}}(f_n)$. 
%For a triangular array of random variables $(f_{k,n})_{k,n\geq 1}$, we write phrases like $f_{k,n} = \OP(n^{\alpha})$ (respectively  $\oP(n^{\alpha})$), uniformly over $k\leq n^{\beta}$ to mean that $\sup_{k\leq n^{\alpha}}|f_{k,n}| = \OP(n^{\alpha})$ (respectively $\oP(n^{\alpha})$).
%We also write $f_n=O_{\sss E}(a_n)$ (respectively $f_n=o_{\sss E}(a_n)$) to denote that $\sup_{n\geq 1}\expt{a_n^{-1}f_n}<\infty$ (respectively $\lim_{n\to\infty}\expt{a_n^{-1}f_n}=0$). 
Denote by 
\begin{equation}\ell^p_{\shortarrow}:= \big\{ \mathbf{x}= (x_1, x_2, x_3, ...): x_1 \geq x_2 \geq x_3 \geq ... \text{ and } \sum_{i=1}^{\infty} x_{i}^p < \infty \big\},
\end{equation}the subspace of non-negative, non-increasing  sequences of real numbers with square norm metric $d(\mathbf{x}, \mathbf{y})=( \sum_{i=1}^{\infty} (x_i-y_i)^p )^{1/p}$.
Let $(\ell^2_{\shortarrow})^k$ denote the $k$-fold product space of $\ell^2_{\shortarrow}$. 
With $\ell^2_{\shortarrow} \times \mathbb{N}^{\infty}$, we denote the product topology of $\ell^2_{\shortarrow}$ and $\mathbb{N}^{\infty}$, where $\mathbb{N}^{\infty}$ denotes the collection of sequences on~$\mathbb{N}$, endowed with the product topology. Define also
\begin{equation}
\mathbb{U}_{\shortarrow}:= \Big\{ ((x_i,y_i))_{i=1}^{\infty}\in  \ell^2_{\shortarrow} \times \mathbb{N}^{\infty}: \sum_{i=1}^{\infty} x_iy_i < \infty \text{ and } y_i=0 \text{ whenever } x_i=0   \Big\}
\end{equation} with the metric \begin{equation} \label{c0:defn_U_metric}d_{\mathbb{U}}((\mathbf{x}_1, \mathbf{y}_1), (\mathbf{x}_2, \mathbf{y}_2)):= \bigg( \sum_{i=1}^{\infty} (x_{1i}-x_{2i})^2 \bigg)^{1/2}+ \sum_{i=1}^{\infty} \big| x_{1i} y_{1i} - x_{2i}y_{2i}\big|.
\end{equation} Further, we introduce $\mathbb{U}^0_{\shortarrow} \subset \mathbb{U}_{\shortarrow}$ as \begin{equation}\mathbb{U}^0_{\shortarrow}:= \big\{((x_i,y_i))_{i=1}^{\infty}\in\mathbb{U}_{\shortarrow} : \text{ if } x_k = x_m, k \leq m,\text{ then }y_k \geq y_m\big\}.
\end{equation}
 We usually use the boldface notation $\mathbf{X}$ for a time-dependent stochastic process $( X(s))_{s \geq 0}$, unless stated otherwise, $\mathbb{C}[0,t]$  denotes the set of all continuous functions  from $[0,t]$ to $\mathbb{R}$ equipped with the topology induced by sup-norm $||\cdot||_{t}$. 
Similarly, $\mathbb{D}[0,t]$ (resp.~$\mathbb{D}[0,\infty)$) denotes the set of all c\`adl\`ag functions from $[0,t]$ (resp.~$[0,\infty)$) to $\mathbb{R}$ equipped with the Skorohod $J_1$ topology~\cite{Bil99}. 
%The Skorohod $J_1$ topology is introduced in detail~\cite{Bil99}.

\section{Critical window and emergence of the giant}
\label{sec:intro-perc-crit}
The critical regime lies on the boundary between the subcritical and supercritical regimes, where the system exhibits an intermediate behavior.
From a statistical physics perspective, this is the interesting regime to study because the properties in the critical regime help to answer the question ``How did the phase transition happen?". 
Here, one tries to identify  principles that govern the phase transition, which not only depend on the specifics of the system, but hold  universally for a large class of systems.
From a mathematical perspective, critical behavior often gives rise to novel scaling limit results.
%
%which has formed a cornerstone in probability theory over the last decades.  
In this section, we first discuss the importance and relevance of studying the critical behavior for percolation processes. 
Then we state some key questions about the critical behavior of percolation in Section~\ref{sec:key-question}.
In Section~\ref{sec:universality-classes}, we describe three fundamental types of critical behavior, i.e., universality classes, that will be crucial throughout this thesis.
We finish this section with a review of the related literature and the relevance of our work; see Section~\ref{sec:literature-review}.

\paragraph*{Critical window.}
To observe the critical behavior, one must take $p=p_c(1\mp \varepsilon_n)$ in \eqref{eq:phase-transition-defn}, for some $\varepsilon_n\to 0$ as $n\to\infty$.
Interestingly, the critical behavior is not observed for any $\varepsilon_n$; there is a range of $\varepsilon_n$ where the graph shows qualitatively similar features as the sub/supercritical regimes and the critical behavior is observed only when $\varepsilon_n$ is chosen appropriately. 
In most situations, this means that $\varepsilon_n = \Theta (n^{-\eta})$, where $\eta>0$ is a model-dependent constant. %\todo{$\eta$ largest value in what sense?}
To be more precise, recall that $\sC_{\sss (i)}(p)$ denotes the $i$-th largest component of $G_n(p)$. 
%For simplicity, let us recall t
The following are classical results \cite{Bol01,JLR00} for $G_n(p) = $ Erd\H{o}s-R\'enyi random graph (i.e., $G_n$ is the complete graph), where $\eta =1/3$: 
\begin{enumerate}[(a)]
\item \emph{Barely subcritical regime:} $p=p_c(1-\varepsilon_n)$ with $\varepsilon_n n^{\eta} \to \infty$.
Then for each fixed $i\geq 1$, as $n\to\infty$,
\begin{equation}\label{eq:barely-subcrit-feature}
\frac{|\sC_{\sss (i)}(p)|}{2\varepsilon_n^{-2} \log(n\varepsilon_n^3)} \pto 1, \quad\text{and}\quad\PR(\exists i: \SP(\sC_{\sss (i)}(p)) >1) \to 0.
\end{equation}
Thus, $G_n(p)$ shows the two characteristic features of the subcritical regime: $|\sC_{\sss (1)}(p)|$ is not distinctively larger than $|\sC_{\sss (2)}(p)|$, and $G_n(p)$ is essentially a collection of trees. 
Thus, even if $p \approx p_c$, $G_n(p)$ is subcritical in this regime.
This regime is often referred to as the barely subcritical regime in the literature.  
\item \emph{Barely supercritical regime:} $p=p_c(1+\varepsilon_n)$ with $\varepsilon_n n^{\eta} \to \infty$. 
Then, as $n\to\infty$, 
\begin{gather}\label{eq:barely-supercrit-feature}
\frac{|\sC_{\sss (1)}(p)|}{2n\varepsilon_n} \pto 1, \quad \frac{\SP(\sC_{\sss (1)}(p))}{n\varepsilon_n^3} \pto \frac{2}{3}, \quad \frac{|\sC_{\sss (2)}(p)|}{ |\sC_{\sss (1)}(p)|}\pto 0, \\
\text{and}\quad \PR(\exists i\geq 2: \SP(\sC_{\sss (i)}(p)) >1) \to 0. \nonumber
\end{gather}
See \cite[Section 23]{JKLP93}, \cite{JLR00,Bol01}. 
Thus, $G_n(p)$ exhibits two characteristic features in the supercritical regime: $|\sC_{\sss (1)}(p)|$ is considerable larger than all other components, and  $\sC_{\sss (1)}(p)$ is complex in the sense that there is a growing number of surplus edges, while all other components are trees. 
\end{enumerate}
\noindent 
Although the above formulations are stated for the Erd\H{o}s-R\'enyi
%\todo{More Refs?}
 random graph, the recent literature has provided many interesting results about the barely subcritical regimes \cite{J08,BDHS17,BHL12} and supercritical regimes \cite{JL09,HJL16} for graphs with general degree sequence (see also \cite{KS08,HM12,R12} for results in both regimes). 
Now, the phase transition takes place between the barely subcritical and supercritical regimes when $\varepsilon_n \sim n^{-\eta}$.  
This regime is known as the \emph{critical window} for the phase transition.  
More precisely, the critical window is defined to be the values of $p$ given by
\begin{equation}\label{ths:defn-crit-value}
p_c(\lambda) = p_c(1+\lambda n^{-\eta}), \quad -\infty<\lambda<\infty.
\end{equation}
In this regime, the largest components exhibit features that are completely different than  the subcritical, or the supercritical regime: 
 There exists a model-dependent exponent $\rho>0$ such that 
\begin{eq} \label{eq:non-concentrating-comp-size}
n^{-\rho} (|\sC_{\sss (i)}(p)|)_{i\geq 1} \text{ converges to a non-degenerate random vector.}
\end{eq}
 Further, for any $i\geq 1$, $\liminf_{n\to\infty}\PR(\SP(\sC_{\sss (i)}(p)) > 1) >0 $, but $\SP(\sC_{\sss (i)})$ is tight,  so that the surplus-edge count for large components starts to grow in the critical window.  
The above two properties hold for all values of $\lambda$ in \eqref{ths:defn-crit-value}; in this sense there is not a single critical value, but a whole ``window'' of critical values over which the phase transition happens.
This is due to finite-size effects and the joint scaling of $\varepsilon_n$ and $n$, a feature that is typically absent in the case of the phase transition on infinite graphs.
The exponent $\eta$ in \eqref{ths:defn-crit-value} is chosen as largest value such that the limit of \eqref{eq:non-concentrating-comp-size} depends on $\lambda$, so that $\eta$ is uniquely defined.

Paul Erd\H{o}s described the percolation process as the race between the components to become the giant \cite{ABS17}. 
\begin{figure} 
\begin{center}
\begin{tikzpicture}[scale=.98]
\filldraw[ right color=blue!30!,left color=red!30!, minimum height = 2.5cm] (0,0) rectangle node  {
\begin{tabular}{ccccccc}
$\varepsilon>0$ &$\varepsilon_n \gg n^{-\eta}$ & $\qquad $ & $\varepsilon_n \sim n^{-\eta}$ &$\qquad$ &$\varepsilon_n \gg n^{-\eta}$ & $\varepsilon>0$\\
 \multicolumn{2}{c}{Subcritical}& $\qquad$ & Critical window& $\qquad$ & \multicolumn{2}{c}{Supercritical}
\end{tabular}} +(12.2,1.2);
\node (A) at (1.4,-.5) { Mostly trees};
\node (B) at (6.1,-.5) {  Components merge};
\node (C) at (10.9,-.5) { Birth of giant};
\draw[->,thick ] (A) -- (B);
\draw[->,thick] (B) -- (C);
\end{tikzpicture}
\end{center}
\end{figure} 
The mental picture is that the collection of trees in the barely subcritical regime are the participants of this race and the component that outnumbers the other components in terms of the number of vertices wins the race. 
As the percolation parameter transitions through the critical window with $\lambda$ increasing with the Harris coupling in place, components grow in size and complexity, and the race is on.
$\mathscr{C}_{\sss (1)}(p_c(\lambda_1))$ and $\mathscr{C}_{\sss (1)}(p_c(\lambda_2))$ can be completely disjoint sets of vertices for $\lambda_1\neq \lambda_2$.
However, at the end of the critical window, when $\lambda$ becomes  sufficiently large, the leader $\mathscr{C}_{\sss (1)}(p_c(\lambda))$ stops changing and this leader becomes the young giant component at the end of the critical window. 
At the barely supercritical phase, the race ends and the largest component stays the largest throughout the future of the percolation process.  
See \cite{ABS17} for a formalization of this picture under a general setup.

It is worthwhile to highlight the fact in \eqref{eq:non-concentrating-comp-size} that the component sizes, after proper rescaling, converge to non-degenerate random variables.
This is a special feature of the critical window that is never observed in the sub/supercritical regime. 
In fact, to the best of our knowledge, all dynamic graph processes that show phase transition with respect to its component sizes, exhibits this feature.
Thus, this property could be considered as a potential definition of the critical window. 

%\cmt{start}

\subsection{Key questions} \label{sec:key-question}
We now describe the key questions about the percolation process in the critical window that we address in this thesis.
\paragraph*{(1) Component sizes and surplus edges.} 
The phase transition typically happens with respect to functionals such as the size of the largest components and their surplus edges.
Therefore, the most natural approach in this context is to find limit theorems for these functionals.
For each fixed $\lambda$, consider $p_c(\lambda)$ defined in \eqref{ths:defn-crit-value}, and define 
\begin{eq}\label{defn:com-surp-Zn}
\mathbf{Z}_n(\lambda):= (n^{-\rho}|\sC_{\sss (i)}(p_c(\lambda))|,\SP(\sC_{\sss (i)}(p_c(\lambda))))_{i\geq 1}
\end{eq} 
for some model-dependent constant $\rho>0$. 
As discussed in \eqref{eq:non-concentrating-comp-size}, $\mathbf{Z}_n(\lambda)$ is expected to  converge in distribution to some non-degenerate random vector. 
Since one deals with convergence of infinite-dimensional random vectors, the topology for the underlying distributional convergence turns out to be  important, because one gets convergence of more functionals under a stronger topology.
The results of this thesis will be discussed under the $\mathbb{U}^0_{\shortarrow}$ topology, defined in Section~\ref{sec:notation-intro}.

\paragraph*{(2) Evolution when passing through the critical window.}
As mentioned before, there is not a single critical value here, but a whole window of critical values. 
It is thus interesting to explore the relation between the relevant component functionals for different values of $\lambda$.
Now, under the Harris coupling, $\mathbf{Z}_n(\lambda)$ can be viewed as a stochastic process in $-\infty<\lambda<\infty$.
As $\lambda$ increases, more and more edges get added in $G_n(p_c(\lambda))$, and $\mathbf{Z}_n(\lambda)$ evolves. 
In the context of the ``race to become a giant", $(\mathbf{Z}_n(\lambda))_{\lambda\in\R}$ is the movie of this race. 
Therefore it is desirable to study the limit of the stochastic process $(\mathbf{Z}_n(\lambda))_{\lambda\in\R}$. 
This is a $\mathbb{U}^0_{\shortarrow}$-valued process, and we will consider the topology $(\mathbb{U}^0_{\shortarrow})^\N$ for convergence of this process.

\paragraph*{(3) Global metric structure.}
A recent direction in this literature aims to find the global structure, and characterize the distance-related functionals of the components. 
The motivation comes from understanding the minimal spanning tree on a random network, which is important in many contexts like the spread of epidemics. 
Of course, the term \emph{global structure} is a bit vague;
however, this can be formalized.  
Each component $\mathscr{C} \subset G_n((p_c(\lambda))$ can be viewed as a metric space, equipped with a measure on the associated Borel sigma algebra.
 The metric on $\sC$ is the graph-distance where (i) each edge has length one, (ii) the measure being proportional to the counting measure, i.e., for any $A\subset \mathscr{C}$, the measure of $A$ is given by $\mu_{\sss \mathrm{ct}}(A) = |A|/|\mathscr{C}|$. 
 Then, $\sC_{\sss (i)}(p_c(\lambda))$ can be viewed as a random element from $\mathscr{M}$, the space of metric spaces with an associated probability measure. 
For $M = (M,\mathrm{d},\mu) \in \mathscr{M}$ and $a>0$, define  $aM$ to be the measured metric space $(M,a\mathrm{d},\mu)$.  
Then the goal is to 
\begin{eq} \label{eq:metric-space-limit-goal}
\text{find the distributional limit of } \big( n^{-\delta}\mathscr{C}_{\sss (i)}(p_c(\lambda)) \big)_{i\geq 1}. 
\end{eq} 
Since the limit is obtained after rescaling of graph-distances by $n^{\delta}$, and the limit is usually a compact metric space, the distances in $\mathscr{C}_{\sss (i)}(p_c(\lambda)$ scale as $n^{\delta}$.
The above quantity is an $\mathscr{M}$-valued sequence.
Of course, the topology on $\mathscr{M}$ is important, and we will explore two different topologies, namely, the Gromov weak-topology and the Gromov-Hausdorff-Prokhov topology. 
%\blue{An inquisitive reader may note that the exponent $\eta$ for the critical window is the same as the exponent turning up in the distance. 
%This is discussed further in Section~\ref{sec:intro-discussion}.}

Another key question, which should have been stated as question (0) but will not be a topic of our discussion, is finding the value of $\eta$. 
For graphs, and in particular those with an underlying geometric structure, finding $\eta$ is a highly non-trivial task. 
An interested reader is referred to \cite{FHHH17,HS05} %\todo{More refs} 
and the references therein.
In the context of the models in this thesis, values of $\eta$ are  described below while discussing the different universality classes.

\subsection{Major universality classes} \label{sec:universality-classes}
In a seminal work, Aldous~\cite{A97} studied the first two questions above in the context of Erd\H{o}s-R\'enyi random graphs.
It turns out that $\eta =1/3$, and $\rho =2/3$.
Along with identifying the limiting object for the component sizes, Aldous observed that the evolution of the rescaled component sizes can be described by a process called the multiplicative coalescent; 
see Chapter~\ref{chap:thirdmoment} for a precise definition.
%Later this was extended to the augmented multiplicative coalescent \cite{BBW12} to capture the joint evolution of the components sizes and surplus edges described by~\eqref{defn:com-surp-Zn}.
The first result about the convergence of the global structure was provided recently in \cite{ABG09} for the critical Erd\H{o}s-R\'enyi random graphs with $\delta = 1/3$. 
%%\cmt{Adadrio berry work}
%
%%If $p_c(\lambda)$ is increased to $p_c(\lambda + \dif\lambda)$, then an edge is added between $\sC_{\sss (i)}(p_c(\lambda))$, and $\sC_{\sss (j)}(p_c(\lambda))$ at rate $(n^{-2/3}|\sC_{\sss (i)}(p_c(\lambda))|)\times (n^{-2/3}|\sC_{\sss (j)}(p_c(\lambda))|) $, for any $i\neq j$.
%%Thus, $\bZ_n(\lambda)$ evolves exactly as an augmented multiplicative coalescent. 
%%
%% 
%%\begin{itemize}
%%\item[$\rhd$] For all $i\neq j$, at rate $(n^{-2/3}|\sC_{\sss (i)}(p_c(\lambda))|)\times (n^{-2/3}|\sC_{\sss (j)}(p_c(\lambda))|)$, the components $\sC_{\sss (i)}(p_c(\lambda))$, and $\sC_{\sss (j)}(p_c(\lambda))$ merge and the new component has component size $\sC_{\sss (i)}(p_c(\lambda))+\sC_{\sss (j)}(p_c(\lambda))$ and surplus edges $\SP(\sC_{\sss (i)}(p_c(\lambda)))+\SP(\sC_{\sss (j)}(p_c(\lambda)))$. 
%%\item[$\rhd$] For any $i\geq 1$, at rate $K_2X_i^2(s)$, $Y_i(s)$ increases to $Y_i(s)+1$. 
%%\end{itemize}
%
Subsequently, there has been a surge in the literature to understand the most general cases under which one can establish qualitatively similar behavior as the Erd\H{o}s-R\'enyi random graph, and identify the cases when the behavior is different.
Following the above discussion, two universality classes have emerged  in the literature. 
It turns out that, when the asymptotic degree distribution follows a power-law with exponent~$\tau$, there is a transition in the critical behavior with respect to the exponent~$\tau$.
\paragraph*{Erd\H{o}s-R\'enyi universality class.}
For $\tau > 4$, the asymptotic empirical degree distribution has a finite third moment. 
In this case, the critical window turns out to be $p = p_c(1+\lambda n^{-1/3})$, the maximal component sizes $|\sC_{\sss(i)} (p_c(\lambda))|$, for any fixed $i$, are of the order $n^{2/3}$ in the critical regime, whilst typical distances in these maximal connected components scale like $n^{1/3}$. 
Thus $\rho = 2/3$ and $\eta =\delta= 1/3$.
Moreover, the scaling limits are the same as for the Erd\H{o}s-R\'enyi random graphs 
%\cite{A97,ABG09} 
up to constant factors of adjustment in the parameters.
%\todo[inline]{Why are the subsection numbers not appearing?}
\paragraph*{Heavy-tailed behavior. } 
For $\tau \in (3,4)$, the asymptotic degree distribution has an infinite third moment, but a finite second moment.
Here the critical window turns out to be $p = p_c(1+\lambda n^{-(\tau-3)/(\tau-1)})$, $|\sC_{\sss(i)}(p_c(\lambda))|$ is of the order $n^{(\tau-2)/(\tau-1)}$, whilst distances scale like $n^{(\tau-3)/(\tau - 1)}$. 
Thus $\rho = (\tau-2)/(\tau-1)$ and $\eta = \delta =(\tau-3)/(\tau-1)$.
The scaling limits turn out to be completely different in this regime.
For example, it turns out that the high-degree vertices play a crucial role in the connectivity structure of $\sC_{\sss(i)}(p_c(\lambda))$ in the sense that a deliberate deletion of the $k$-th highest degree vertex changes the scaling limit completely. 
This is in sharp contrast with the behavior for the $\tau >4$ regime. 
%In particular, if one removes the vertex of highest degree, then the scaling limits change completely, 

%\todo[inline]{Restructure the ``universality class" section.}
\paragraph{Universality in the evolution of the components.}
To intuitively understand the evolution of the component sizes and surplus edges, let us consider the Erd\H{o}s-R\'enyi case.
After increasing $p$ slightly, a new edge might appear in the graph, and due to the homogeneity in the connectivity structure of Erd\H{o}s-R\'enyi random graphs, this edge selects two end-points uniformly at random. 
For this reason, two components $\sC_{\sss (i)}(p_c(\lambda))$ and $\sC_{\sss (j)}(p_c(\lambda))$ merge at rate $|\sC_{\sss (i)}(p_c(\lambda))|\times |\sC_{\sss (j)}(p_c(\lambda))|$ and create a component of size $|\sC_{\sss (i)}(p_c(\lambda))| +|\sC_{\sss (j)}(p_c(\lambda))|$. 
Moreover, a surplus edge is created in  $\sC_{\sss (i)}(p_c(\lambda))$ at rate \linebreak $|\sC_{\sss (i)}(p_c(\lambda))|\times(|\sC_{\sss (i)}(p_c(\lambda))|-1)/2$.
This merging dynamics of a collection of particles according to the product of their weights is known as the \emph{multiplicative coalescent} \cite{A97,Ald99}.
The creation of surplus edges can also be augmented in the evolution of the component sizes~\cite{BBW12}.
In both the $\tau>4$ and $\tau\in (3,4)$ regimes, the above merging dynamics describes the evolution of the component sizes and surplus edges over the critical window for a wide array of models, in-spite of the dependence in the connectivity structure.
Thus, even if the scaling limits for $\tau>4$ and $\tau\in (3,4)$ are completely different for each fixed $\lambda$, the merging dynamics as $\lambda$ varies is guided by the same dynamics.

\subsection{Literature review and the relevance of our work}
\label{sec:literature-review} 
Each of the key questions (1)--(3) in Section~\ref{sec:key-question} have posed novel theoretical challenges in probability theory and combinatorics over the past decades. 
The study of critical random graphs began in the 1990's with the early works \cite{Bol84,L90,JKLP93,LPW94} on critical Erd\H{o}s-R\'enyi random graphs, where it was shown that the critical window is $p = n^{-1}(1+\lambda n^{-1/3})$, and the component sizes are of the order $n^{2/3}$, whereas the surplus edges are $O(1)$. 
In a seminal work \cite{A97}, Aldous derived the exact scaling limits of the rescaled component sizes and surplus edges, and showed that the evolution of the component sizes over the critical window can be described by the multiplicative coalescent process. 
This initiated the program for a large body of subsequent work \cite{NP10a,NP10b,R12,Jo10,BHL10,BBW12,AP00,H13,DLV14,BM14,HN17,HLC16,FHHH16}, showing that the behavior of a wide array of random graphs at criticality is universal in the sense that it does not depend on the precise description of the model. 
Of particular relevance to this thesis are the works on $\CM$ \cite{Jo10,NP10b,R12}. 
The question (1), for the $d$-regular case, was extensively analyzed in \cite{NP10b}, and the scaling limit for $\mathbf{Z}_n(\lambda)$ was derived for the critical $\CM$ with bounded maximum degree in \cite{R12} under the product topology. 
The results in \cite{Jo10} considered the special case that the degrees are an iid sample from a distribution having finite third-moment. 
Scaling limit results were derived for the component sizes; however there is no notion of ``critical window" in this set up. 
In Section~\ref{sec:finite-third-moment}, we discuss the joint convergence of the component sizes and the surplus edges  when the degree distribution satisfies a finite third-moment condition. 

%the critical window for $\CM$, and the scaling limits of $\mathbf{Z}_n(\lambda)$
%
%(Q2) was not considered in \cite{Jo10,NP10b,R12}. 
% we consider critical percolation on $\CM$ under 
%
%
%
%
%\cmt{Reference with geometric structures.}

The second major universality class emerged with the study of dynamically evolving random networks given by the Norros-Reittu random graph model, with a heavy-tailed empirical distribution of average degrees. 
In \cite{BHL12,H13}, the critical window was identified, along with scaling limit results for component sizes. 
In Section~\ref{sec:infinite-third-moment-intro}, we will show that the scaling limit of $\mathbf{Z}_n(\lambda)$ for $\CM$ under the heavy-tailed setup  lies in the universality class of \cite{BHL12}. 
In fact, the results are stronger than \cite{BHL12} in terms of the topology of convergence. Joseph~\cite{Jo10} studied the iid degree case, where the scaling limit turns out to be somewhat different than \cite{BHL12}.

In the context of the evolution of the component sizes, Aldous~\cite{A97} first studied the evolution of the component sizes. %which gave rise to a Feller process called the multiplicative coalescent.
The evolution of the component sizes was also studied in the context of random graphs with immigrating vertices \cite{AP00}, and the Norros-Reittu random graph \cite{BHL12}. 
See also \cite{BM14} for a construction of the multiplicative coalescent.
A complete description of this process, along with its entrance boundary conditions, was provided in \cite{AL98}.
This was generalized to augmented multiplicative coalescent processes in \cite{BBW12} to capture the evolution of the surplus edges as well. 
In Section~\ref{sec:intro-Q2}, we describe the evolution of $\mathbf{Z}_n(\lambda)$ in both universality classes. 
%
%In the random graph literature, (Q2) comes up only when the underlying graphs are dynamically evolving.
%
%
%
%
%
%
%this question has been studied  
%
%The question (Q2) has been less popular as compared to 
%The previous literature around (Q2) turns out to be quite sparse. 
%The candidate law for the evolution of the component sizes, multiplicative coalescent, was first identified i
%
%
% 
% Multiplicative coalescent, the fundamental process guiding the evolution of the component sizes with in the critical window, was first studied in . See \cite{BM14} for an alternative construction.   Subsequently, the distribution of the multiplicative coalescent was  characterized in terms of the so-called entrance boundary conditions in \cite{AL98}. 
%For random graphs with immigrating vertices, the evolution of the component sizes over the critical window was also \cite{AP00}.
%The candidate law for the evolution of $(\mathbf{Z}_n(\lambda))_{\lambda\in\R}$ was proposed in \cite{BBW12}. 
%
%
%
%a characterization of the scaling limit was provided 
%
%
%
%%, turn out to be quite different from that of the Erd\H{o}s-R\'enyi random graph. 
%The study of universality was a completely open question in this direction with no other studies. 

%
%\todo[inline]{Motivate the $\tau\in (3,4)$ regime. Say the result for $\tau>4$ case} 

The study of the global metric structure is a recently emerging direction in this field, which started with the pioneering work \cite{ABG09} on critical Erd\H{o}s-R\'enyi random graphs. 
The scaling limit identified in \cite{ABG09} was shown to be universal for the $\tau>4$ regime in a recent line of work \cite{BBSX14,BSW14,BS16}. 
In the context of critical random graphs with degree-exponent $\tau\in (3,4)$, candidate limit laws of maximal components with each edge rescaled to have length $1/n^{(\tau-3)/(\tau - 1)}$ were established in~\cite{BHS15}.
In Section~\ref{sec:intro-Q3}, we describe a ``universality principle'' for the $\tau\in (3,4)$ regime, which yields the scaling limits for $\mathrm{CM}_n(\bld{d},p_c(\lambda))$.

%
%Another cornerstone result in the study of the critical random graphs was established in \cite{ABG09} while studying (Q3) on critical Erd\H{o}s-R\'enyi random graphs. 
% 
%%They considered the critical components of Erd\H{o}s-R\'enyi random graphs as random metric spaces, with the metric being the graph-distance (scaled by $n^{1/3}$), and obtained the distributional limit on the space of metric spaces. 
%%These results provide detailed global structure of $(\mathscr{C}_{\sss (i)}(p_c(\lambda)))_{i\geq 1}$. 
%While the study of universality principles was initiated in \cite{BSW14,BBSX14} for the Erd\H{o}s-R\'enyi universality class, the study an open question for the $\tau\in (3,4)$ regime.
%\todo[inline]{Establish the relevance of this work by contrasting with other works}
%
%
%
%
%
%\todo[inline]{The aim of this thesis is to understand universality principles, and in particular provide a refined analysis for questions (Q1)--(Q3) in Section~\ref{sec:key-question} for graphs with general degree sequences.}

\subsection{A new universality class.}
%\todo[inline]{Write about $\tau\in (2,3)$.}
All the above literature assumes a finite second-moment condition on the degree distribution, and thus does not include the $\tau\in (2,3)$ case, where the asymptotic degree distribution has an infinite second moment but a finite first moment.
These networks are known in the literature as \emph{scale-free networks} \cite{Bar16}. 
One of the popular features of scale-free networks is that these networks are \emph{robust} under random edge-deletion, i.e., for any sequence $(p_n)_{n\geq 1}$ with $\liminf_{n\to\infty} p_n > 0$, the graph obtained by applying percolation with probability $p_n$ is always supercritical. 
This feature has been studied experimentally in~\cite{AJB00}, using heuristic arguments in~\cite{CEbAH00,CNSW00,DGM08,CbAH02} (see also \cite{braunstein2003optimal,braunstein2007optimal,Halvin05} in the context of optimal paths in the strong disorder regime), and mathematically in \cite{BR03}. %\todo{More refs?}
Thus, in order to observe the percolation critical behavior, one needs to take $p_c \to 0$ with the network size, even if the average degree of the network is finite.
It was predicted from the physics literature that the critical value should be $p_c \sim n^{-(3-\tau)/(\tau-1)}$: Detailed properties of the component sizes and structures remained as open question. 

In Section~\ref{sec:intro-infinite-second}, we discuss the first mathematically rigorous results in the $\tau \in (2,3)$ regime for  component sizes and their complexity.
The most striking thing about the results in the $\tau \in (2,3)$ regime is that the critical value changes depending on whether the underlying random graph has the so-called \emph{single-edge constraint}, i.e., the critical value when the underlying graph is a random multigraph generated by the configuration model is different than that under models like the erased configuration model $\ECM$ and the generalized random graph $\GRG$, where the underlying graph is simple.
This feature was never observed in the finite second-moment scenario.
%we take the multigraph generated by $\CM$, or we take the simple graph $\ECM$ by obtained by erasing the self-loops and multiple edges of $\CM$.
For $\CM$ the critical value indeed turns out to be $p_c \sim n^{-(3-\tau)/(\tau-1)}$, whereas for $\ECM$ or $\GRG$, we find that $p_c \sim n^{-(3-\tau)/2}$.
%The later critical value is the 
%Thus the so-called \emph{single-edge constraint} changes the critical behavior distinctively, a feature that is never observed in the finite second moment scenario.
The largest component sizes in both regimes are of the order $n^\alpha p_c$, and the scaling limits are in a completely different universality class than in the $\tau\in (3,4)$ and $\tau>4$ cases.

\subsection{Discussion}\label{sec:intro-discussion}
%\paragraph*{Width of the critical window versus distance.}
%In Section~\ref{sec:key-question}, we noticed that the exponent $\eta$ determining the width of the critical window is the same as distances between typical points within critical components, and this is not a coincidence.

\paragraph*{Relation to branching process approximations.} The distinction between the universality classes $\tau>4$ and $\tau\in (3,4)$ can also be seen in terms of the branching process approximation. 
Recall that for $\CM$ or $\mathrm{UM}_n(\bld{d})$, the local neighborhoods can be approximated by a branching process.
The progeny distribution is $D_n^*-1$, where $D_n^*$ is given by \eqref{eq:SB-dist-degree}. 
This distribution has asymptotically finite variance if and only if the third moment of the asymptotic degree distribution is finite (i.e., $\tau > 4$). 
It is known that for critical branching processes the growth rate of the neighborhoods crucially depends on the variance \cite{A17,Kor17}. 
In fact the height scales as $|T|^{1/2}$ and $|T|^{(\tau-3)/(\tau-2)}$ in the $\tau >4$ and $\tau\in (3,4)$ regimes, respectively, where $|T|$ is the total progeny of the branching process.
Now, heuristically speaking, if $\sC_{\sss (1)}(p_c(\lambda))$  was a tree, then from the theory of branching processes, one would expect the following relations to be true:
\begin{eq} \label{eq:scaling-relation-regimes}
\eta = \frac{\rho}{2}, \text{ for }\tau>4, \quad \text{and}\quad \eta = \frac{\tau-3}{\tau-2} \rho, \text{ for } \tau \in (3,4).
\end{eq}
Following \cite{Gri99}, we refer to the identities in \eqref{eq:scaling-relation-regimes} as scaling relations. 
This aligns with the exponents suggested above for the two regimes.
 Intuitively, the above relations should hold since $\sC_{\sss (1)}(p_c(\lambda))$ is a tree, i.e., \linebreak $\SP(\sC_{\sss (1)}(p_c(\lambda))) = 0$, with probability bounded away from zero. 
A disclaimer to the reader is that the bounds in \cite{A17,Kor17} are proved for a fixed branching process rather than a sequence of those. 
For a more rigorous explanation \eqref{eq:scaling-relation-regimes}, an interested reader is referred to \cite{H13}. 
For $\tau\in (2,3)$, the branching process approximation does not work for $\CM$ due to the presence of multiple edges.

\paragraph*{About the $\Unot$-topology.} 
The distributional convergence of $\mathbf{Z}_n(\lambda)$ under the $\Unot$-topology implies convergence of many interesting functionals.  
Let $\mathscr{C}(v)$ denote the connected component containing vertex $v$ in $G_n(p)$, and let $V_n$ denote a vertex chosen uniformly at random from $[n]$, independently of $G_n(p)$.
One example is a quenched version of the susceptibility function defined as $\E[|\sC(V_n)||G_n(p)]$. 
Note that 
\begin{eq}\label{eq:suscep-l2-relation}
\E\big[\ |\sC(V_n)|\ \big\vert\ G_n(p_c(\lambda))\big] = \frac{1}{n} \sum_{v\in [n]} |\sC(v)| = \frac{1}{n} \sum_{i\geq 1} |\sC_{\sss (i)}(p_c(\lambda))|^2,
\end{eq}and therefore the convergence in $\Unot$-topology implies the convergence of $n^{-1/3}\E[|\sC(V_n)|\vert G_n(p_c(\lambda))]$. 
We also get the convergence of the quantity $n^{-2/3}\sum_{i\geq 1} |\sC_{\sss (i)}(p_c(\lambda))|\SP(\sC_{\sss (i)}(p_c(\lambda)))$, which in particular implies that the components of small size cannot contain too many surplus edges.% (see also Proposition~\cmt{ref}). 
  The relevance of this topology is also discussed in \cite{BBW12} (see also \cite{A17}), because this turns up naturally  in defining the augmented multiplicative coalescent, and establishing a version of the Feller property.

\paragraph*{Scaling relation.}
The following scaling relation is true in both the regimes $\tau>4$ and $\tau\in (3,4)$:
\begin{equation}\label{eq:scaling-relation}
1+\eta = 2\rho.
\end{equation}
This can be understood intuitively. 
Since the component sizes converge in the $\ell^2_{\shortarrow}$-topology, one can expect that the expected value of $\sum_{i\geq 1}|\sC_{\sss (i)}(p_c(\lambda))|^2$ is of the order $n^{2\rho}$. 
One may also use \eqref{eq:suscep-l2-relation} to calculate this expectation. 
In fact, \eqref{eq:suscep-l2-relation} implies the scaling relation \eqref{eq:scaling-relation} if $\E[|\sC(V_n)|] = O(n^{\eta})$.
Now $|\sC(V_n)| \leq \sum_{l\geq 1}P_l$, where $P_l$ is the expected number of paths of length $l$ starting from vertex $v$.
Using the branching process approximation for $\CM$, $\E[P_l] \leq C(p_c \nu_n)^{l}$, for some constant $C>0$. 
Summing this estimate over $l$, one gets 
\begin{eq}
\E[\sC(V_n)] \leq C\sum_{l\geq 1} (p_c \nu_n)^l \leq C\sum_{l\geq 1}\Big(1+\frac{\lambda}{n^{\eta}}\Big)^l \leq Cn^{\eta},
\end{eq}for $\lambda<0$, which yields a heuristic derivation of \eqref{eq:scaling-relation}.
The above path counting technique has been formalized in \cite{J09b,janson2009susceptibility,BDHS17}.

%
%
%
%
%Using the relation , note that $\E[|\sC(V_n)|\vert G_n(p)]$ is of the order $n^{2\rho -1}$ in the critical window. 
%On the other hand, the quantity $\E[|\sC(V_n)|]$ can be shown to be $O(n^{\eta})$ from purely combinatorial arguments such as path counting techniques in \cite{J09b,janson2009susceptibility} for $\lambda<0$ (see for example \cmt{ref}).
%%This justifies the relation in \eqref{eq:scaling-relation} from an intuitive level.
%%For $\mathrm{CM}_n(\bld{d},p_c(\lambda))$ with $\lambda<0$, the same argument as \cite[Lemma 5.2]{J09b} will lead to the upper bound $\E[|\sC(V_n)|] \leq  Cn^{\eta}$, for some constant $C$. 
%%The lower bound of matching order can also be obtained by inclusion-exclusion (see for example \cmt{ref}) to conclude that $\E[|\sC(V_n)|] =O (n^{\eta})$. 

\paragraph*{Effect of slowly-varying corrections.}
Suppose that the asymptotic degree distribution satisfies $\PR(D\geq x) \sim L(x)x^{-(\tau-1)}$ with $L(\cdot)$ some slowly-varying function.
For $\tau>4$, the scaling limits, as well as the exponents, are insensitive to $L(\cdot)$. 
On the contrary, the component size, or even the critical window, depends crucially on the slowly-varying function for $\tau\in (3,4)$.  
The critical window becomes $p = p_c(1+\lambda L^*(n)^2 n^{-\eta})$, and the component sizes turn out to be of the same order as $n^{\rho}/L^*(n)$, for some slowly varying $L^*$. 
However, the scaling limits lie in the same universality class; see Chapter~\ref{chap:secondmoment}.

%\paragraph*{The surplus is not always tight.}
%Note that the surplus edges in \eqref{defn:com-surp-Zn} 
%%have not been rescaled by any function of $n$, because the surplus edges 
%are tight random variables which aligns also with the asymptotics in \eqref{eq:barely-subcrit-feature} and \eqref{eq:barely-supercrit-feature}. 
%However, in some graphs with an underlying geometric, it may be the case that there are many short cycles which inflate the count of the surplus edges (see e.g. \cite{FHHH16}).

%\paragraph*{Relation between width of the window and distances.} 
%The fact that 

\section{Component sizes and surplus edges}
\label{sec:intro-Q1} 
In this section, we provide an outline of the proofs for establishing scaling limits of $\mathbf{Z}_n(\lambda)$ for the random graph $\mathrm{CM}_n(\bld{d},p_c(\lambda))$. 
We provide the key ideas, and the strategy of the proof, 
leaving many details for the later chapters.
%along with statements of the intermediate results.
%The details of the proof will be provided in the later chapters.
%We start by describing a construction of $\mathrm{CM}_n(\bld{d},p_c(\lambda))$, which reduces the study of a configuration model 
In Section~\ref{sec:Janson-construction-intro}, we start by describing a construction of $\mathrm{CM}_n(\bld{d},p)$ due to Janson~\cite{J09}, which is a key tool throughout this thesis.
This construction allows us to treat $\mathrm{CM}_n(\bld{d},p_c(\lambda))$ as a configuration model with a suitable degree distribution, which can be easier to work with due to the sequential construction provided in Section~\ref{sec:intro-models}.
In Section~\ref{sec:finite-third-moment}, we consider the scaling limit for $\mathbf{Z}_n(\lambda)$ for the finite third-moment case, and outline a detailed proof strategy. 
The infinite third-moment case is considered in Section~\ref{sec:infinite-third-moment-intro}.
\subsection{Janson's construction} \label{sec:Janson-construction-intro}
 Suppose that $\boldsymbol{d}'$ is the random degree sequence obtained after percolation. 
Fountoulakis~\cite{F07} showed that, conditionally on $\boldsymbol{d}'$, the law of $\mathrm{CM}_{n}(\boldsymbol{d},p)$  is same as the law of $\mathrm{CM}_{n}(\boldsymbol{d}')$.  
Often asymptotics of different functionals of $\bld{d}'$ can be calculated, which gives a powerful tool to deal with percolation on random graphs with general degree sequence. 
The following explicit construction of $\mathrm{CM}_{n}(\boldsymbol{d},p)$ is due to Janson~\cite{J09}, provided in the context of identifying the percolation phase transition on $\CM$.
This construction will be crucial in what follows.
\begin{algo} \label{algo:janson-construction-intro}
\normalfont \begin{itemize}
 \item[(S1)] For each half-edge $e$, let $v_e$ be the vertex to which $e$ is attached. With probability $1-\sqrt{p}$, one detaches $e$ from $v_e$ and associates $e$ to a new vertex $v'$ that we color red. 
 %Color the new vertex $red$. 
 This is done independently for every half-edge. 
 Let $n_+$ be the number of red vertices created and $\tilde{n}=n+n_+$.  Suppose that $\Mtilde{\boldsymbol{d}} = ( \tilde{d}_i )_{i \in [\tilde{n}]}$ is the new degree sequence obtained by the above procedure, i.e., $\tilde{d}_i \sim \text{Bin} (d_i, \sqrt{p})$ for $i \in [n]$ and $\tilde{d}_i=1$ for $i \in [\tilde{n}] \setminus [n]$.
 \item[(S2)] Construct $\mathrm{CM}_{\tilde{n}}(\Mtilde{\boldsymbol{d}})$, independently of (S1).
 \item[(S3)] Delete all the red vertices. Alternatively, one can choose any $n_+$ degree-one vertices uniformly at random without replacement, independently of (S1) and (S2), and delete them.
 \end{itemize}
 \end{algo}
An edge is kept by Algorithm~\ref{algo:janson-construction-intro} if both its endpoints are not red, which happens with probability $p$. 
Also, conditionally on the choice of non-red half-edges, the pairing between these half-edges is a uniform perfect matching.
Algorithm~\ref{algo:janson-construction-intro} indeed produces $\mathrm{CM}_n(\bld{d},p)$ using Fountoulakis' result \cite{F07} mentioned above.
Due to the uniform matching, it does not matter whether we delete the red vertices, or $n_+$ degree-one vertices chosen uniformly at random. 
We end up with a sample from the same random graph distribution.

In what follows, directly setting up a technically tractable framework with  exploration processes on $\mathrm{CM}_n(\bld{d},p_c(\lambda))$ turns out to be difficult even for simple $d$-regular graphs \cite{NP10b}. 
On the other hand, due to the sequential construction, the configuration model is often easier to handle. 
The above construction allows us to study $\mathrm{CM}_n(\bld{d},p_c(\lambda))$ via a suitable configuration model.

%\begin{remark}\label{rem:alt-s3}
% \normalfont
% It was argued in \cite{J09} that the obtained multigraph also has the same distribution as $\mathrm{CM}_{n}(\boldsymbol{d},p)$ if we replace (S3) by
% \begin{itemize}
% \item[(S3$'$)] Instead of deleting red vertices, 
% \end{itemize}
%\end{remark} 

\subsection{Finite third-moment case} \label{sec:finite-third-moment}
This section is based on \cite{DHLS15}, where the asymptotics of $\mathbf{Z}_n(\lambda)$, under the finite third-moment assumption, has been treated.  
To ensure that $\CM$ has a giant component (otherwise there will be no phase transition for the percolation process), we must assume that \eqref{eq:conv-nu-local-weak} holds. 
In this case, $\eta =1/3$, and $p_c = 1/\nu_n$, so that
\begin{equation}
p_c(\lambda) = \frac{1}{\nu_n}(1+\lambda n^{-1/3}), \quad -\infty<\lambda<\infty.
\end{equation}
Firstly, let us state the assumptions on the degree distribution, which includes the empirical degree distribution to obey a power law with exponent  $\tau>4$ as a special case. 
\begin{assumption} \label{assumption-finite-third-intro}  \normalfont
For each $n\geq 1$, let $\bld{d}=\bld{d}_n=(d_i)_{i\in [n]}$ be a degree sequence such that $\ell_n = \sum_{i\in [n]}d_i$ is even. 
We assume the following about $(\boldsymbol{d}_n)_{n\geq 1}$ as $n\to\infty$:
 Let $D_n$ denote the degree of a vertex chosen uniformly at random independently of the graph. Then,
 \begin{enumerate}[(i)]
  \item (\emph{Weak convergence of $D_n$})  $ D_n \dto D$, for some random variable $D$ such that $\mathbb{E}[D^3] < \infty $.
 \item \label{assumption1-2}(\emph{Uniform integrability of $D_n^3$})
  $\E[D_n^3]= \frac{1}{n}\sum_{i\in [n]}d_{i}^{3} \to \E[D^{3}].$
  %\item $\prob{D=1}>0$.
 \end{enumerate}
 \end{assumption}
\noindent We will use Algorithm~\ref{algo:janson-construction-intro} to reduce the analysis of $\mathrm{CM}_n(\bld{d},p_c(\lambda))$ to $\mathrm{CM}_{\tilde{n}}(\Mtilde{\bld{d}})$.
In fact, the following holds for $\Mtilde{\bld{d}}$ for $p=p_c(\lambda)$: Let  $\mathbb{P}_p^n$ denote the  probability measure induced on $\mathbb{N}^{\infty}$ by Algorithm~\ref{algo:janson-construction-intro}~(S1). 
Denote the product measure of $(\PR_p^n)_{n\geq 1}$ by $\mathbb{P}_p$.
\begin{lemma}\label{prop_percolation_condition} The statements below are true  $\mathbb{P}_p$ almost surely:
 \textrm{Assumption~\ref{assumption-finite-third-intro}} is satisfied by $\Mtilde{\bld{d}}$ and
  $$ \frac{\sum_{i\in [\tilde{n}]}\tilde{d}_i(\tilde{d}_i-1)}{\sum_{i\in [\tilde{n}]}\tilde{d}_i} = 1+\lambda n^{-1/3}+o(n^{-1/3}).$$
\end{lemma}
\noindent The proof involves computing functionals of binomial distributions and their concentration, see Section~\ref{c1:sec_percolation} for a proof. 
Further, while performing (S3), the number of surplus edges within each component does not change, while the component size changes by the amount of deleted degree-one vertices. 
The latter can be estimated from the number of vertices of degree-one in each of the connected components.
Thus without loss of generality, our study reduces to finding the scaling limit of $\mathbf{Z}_n(\lambda)$ on $\CM$ satisfying
\begin{equation}\label{eq:thm-intro-nu-n}
   \nu_{n}:= \frac{\sum_{i\in [n]}d_i(d_i-1)}{\sum_{i\in [n]}d_i} =1+\lambda n^{-1/3}+o(n^{-1/3}), \quad  \text{for some }\lambda\in \mathbb{R}.
  \end{equation}
Another technical assumption that we make is that $\PR(D=1) > 0$, which is required for the phase transition result in \cite{MR95,JL09}, as well as in some technical parts of our proof. 
Formally, we aim to prove the following theorem: 
\begin{theorem} \label{thm_surplus-intro}
  Consider $\CM$ satisfying \textrm{Assumption~\ref{assumption-finite-third-intro}}, and \eqref{eq:thm-intro-nu-n} for some $\lambda\in \mathbb{R}$. As $n\to\infty$,
  \begin{equation} \label{eqn_thm_surplus}
  \mathbf{Z}_n(\lambda) \dto \mathbf{Z}(\lambda)
  \end{equation} with respect to the $\mathbb{U}^0_{\shortarrow}$ topology.
  Here $\mathbf{Z}(\lambda)$ is some non-degenerate random vector which will be defined in the proof ideas and more formally in Theorem~\ref{c1:thm_surplus}.
 \end{theorem}
\noindent 
In the subsequent sections, we describe the proof idea for Theorem~\ref{thm_surplus-intro}.
%We first explain a way of describing the component sizes in terms of a walk, namely the {\it exploration process}  in Section~\ref{sec:exploration-process-intro}, and in Section~\ref{sec:size-biased-intro} we describe the law of the increments of this walk. 
%In Section~\ref{sec:explor-analysis-intro}, we outline the proof for the scaling limit of the exploration process. 
%The relation between the exploration process and the size of the largest component 
%In Section~\ref{sec:large-com-early-intro}, 
%In the outline of the proof, we will also describe the scaling limit $\mathbf{Z}(\lambda)$. See \cmt{ref} for an explicit description. 
%\cmt{start}

\subsubsection{The exploration process}\label{sec:exploration-process-intro}
The central idea to prove scaling limits of critical component sizes was introduced by Aldous~\cite{A97} in the context of the Erd\H{o}s-R\'enyi random graph.
The idea is to explore the graph sequentially and encode the relevant information in terms of a walk called the \emph{exploration process}. 
Then the idea is to establish scaling limits of the exploration process and then try to read off, if possible, the relevant property from the limit of the exploration process. 
Let us explore $\CM$ sequentially using depth-first exploration. 
At each step $k$, we find a new vertex with degree $d_{\sss (k)}$.
This vertex may create $c_{\sss (k)}$ edges to the vertices which are already explored. 
Thus $d_{\sss (k)}-c_{\sss (k)}$ half-edges can give new vertices during the exploration. 
Once all the half-edges of a vertex are explored, the vertex is declared dead, meaning that the complete neighborhood of that vertex has been identified.
The precise description of the exploration algorithm is given in Section~\ref{c1:sec_proof}. 
Based on this exploration algorithm, define the exploration process by
 \begin{equation}\label{walk1-intro}
 S_{n}(0)=0, \quad S_{n}(i)=\sum_{j=1}^{i}(d_{\sss(j)} -2-2c_{\sss(j)}).
 \end{equation}
 The minus two is due to the fact that an edge (i.e. two half-edges) is explored at each step.
 The process $\mathbf{S}_{n}= ( S_{n}(i))_{i \in [n]}$ ``encodes the component sizes as lengths of path segments above past minima'' as discussed in \cite{A97}. 
 Suppose $\mathscr{C}_{i}$ is the $i$-th connected component explored by the above exploration process. Define
\begin{equation}
\tau_{k}=\inf \big\{ i:S_{n}(i)=-2k \big\}.
\end{equation}
Then  $\mathscr{C}_{k}$ is discovered between the times $\tau_{k-1}+1$ and $\tau_k$ and  $|\mathscr{C}_{k}|=\tau_{k}-\tau_{k-1}$.
\begin{figure}
\begin{center}
\begin{tikzpicture}[scale=.6]
\draw (1.5,3.)-- (1.5,-3.);
\draw (1.5,-3.)-- (19.5,-3.);
\draw (19.5,-3.)-- (19.5,3.);
\draw (19.5,3.)-- (1.5,3.);
\draw (1.5,0.)-- (19.5,0.);
\draw [->] (3.83275+5,-3.7) -- (6.96864+5,-3.7);
\draw (10.4,-3.35) node { Time};
\draw [->] (1.2,-1.2) -- (1.2,1.3);
\draw (0.8,1.78) node {$S_n(t)$};
\begin{scriptsize}
\draw (1.5,0)--(2,.7);
\draw [fill=black] (2,.7) circle (1pt);
\draw (2,.7)--(2.5,1.4);
\draw [fill=black] (2.5,1.4) circle (1pt);
\draw (2.5,1.4)--(3,2.8);
\draw [fill=black] (3,2.8) circle (1pt);
\draw (3,2.8)--(3.5,2.1);
\draw [fill=black] (3.5,2.1) circle (1pt);
\draw (3.5,2.1)--(4,1.4);
\draw [fill=black] (4,1.4) circle (1pt);
\draw (4,1.4)--(4.5,2.8);
\draw [fill=black] (4.5,2.8) circle (1pt);
\draw (4.5,2.8)--(5,2.1);
\draw [fill=black] (5,2.1) circle (1pt);
\draw (5,2.1)--(5.5,1.4);
\draw [fill=black] (5.5,1.4) circle (1pt);
\draw (5.5,1.4)--(6,1.4);
\draw [fill=black] (6,1.4) circle (1pt);
\draw (6,1.4)--(6.5,.7);
\draw [fill=black] (6.5,.7) circle (1pt);
\draw (6.5,.7)--(7,0);
\draw [fill=black] (7,0) circle (1pt);
\draw (7,0)--(7.5,-.7);
\draw [fill=black] (7.5,-.7) circle (1pt);
\draw (7.5,-.7)--(8,-1.4);
\draw [fill=black] (8,-1.4) circle (1pt);
\draw (8,-1.4)--(8.5,0);
\draw [fill=black] (8.5,0) circle (1pt);
\draw (8.5,0)--(9,-.7);
\draw [fill=black] (9,-.7) circle (1pt);
\draw (9,-.7)--(9.5,0);
\draw [fill=black] (9.5,0) circle (1pt);
\draw (9.5,0)--(10,.7);
\draw [fill=black] (10,.7) circle (1pt);
\draw (10,.7)--(10.5, .7);
\draw [fill=black] (10.5,.7) circle (1pt);
\draw (10.5,.7)--(11, 2.1);
\draw [fill=black] (11,2.1) circle (1pt);
\draw (11, 2.1)--(11.5, 1.4);
\draw [fill=black] (11.5,1.4) circle (1pt);
\draw (11.5,1.4)-- (12,.7);
\draw [fill=black] (12,.7) circle (1pt);
\draw (12,.7)-- (12.5,0);
\draw [fill=black] (12.5,0) circle (1pt);
\draw (12.5,0)-- (13,-.7);
\draw [fill=black] (13,-.7) circle (1pt);
\draw (13,-.7)-- (13.5,-1.4);
\draw [fill=black] (13.5,-1.4) circle (1pt);
\draw (13.5,-1.4) -- (14,-1.4);
\draw [fill=black] (14,-1.4) circle (1pt);
\draw (14.5,-2.1) -- (14,-1.4);
\draw [fill=black] (14.5,-2.1)  circle (1pt);
\draw (14.5,-2.1) -- (15,-2.8);
\draw [fill=black] (15,-2.8)  circle (1pt);
\draw (15.5,-2.1) -- (15,-2.8);
\draw [fill=black] (15.5,-2.1)  circle (1pt);
\draw (15.5,-2.1) -- (16,-.7);
\draw [fill=black] (16,-.7)  circle (1pt);
\draw (16,-.7)--(16.5,-.7);
\draw [fill=black] (16.5,-.7)  circle (1pt);
\draw (16.5,-.7)--(17,-1.4);
\draw [fill=black] (17,-1.4)  circle (1pt);
\draw (17,-1.4)--(17.5,-2.1);
\draw [fill=black] (17.5,-2.1)  circle (1pt);
\draw (17.5,-2.1)--(18,-.7);
\draw [fill=black] (18,-.7)  circle (1pt);
\draw (18.5,0)--(18,-.7);
\draw [fill=black] (18.5,0)  circle (1pt);

\draw (5,-1.4) node (a) {$|\mathscr{C}_1|$};
\draw (8,-2.8) node (b) {$|\mathscr{C}_2|$};
\draw[dotted,thick] [<-] (1.5,-1.4)--(4.2,-1.4);
\draw[dotted,thick] [->] (5.8,-1.4)--(8,-1.4);
\draw[dotted,thick] [<-] (1.5,-2.8)--(7.2,-2.8);
\draw[dotted,thick] [->] (8.8,-2.8)--(15,-2.8);
\draw (7.7,-.5) node {\scriptsize $-2$};
\draw (14.7,-1.5) node {\scriptsize $-4$};
\draw[dotted,thick] (8,0)--(8,-1.4);
\draw[dotted,thick] (15,0)--(15,-2.8);
\end{scriptsize}
\end{tikzpicture}
\end{center}
\caption{Component sizes as excursions of the exploration process}
\label{fig:exploration-comp}
\end{figure}
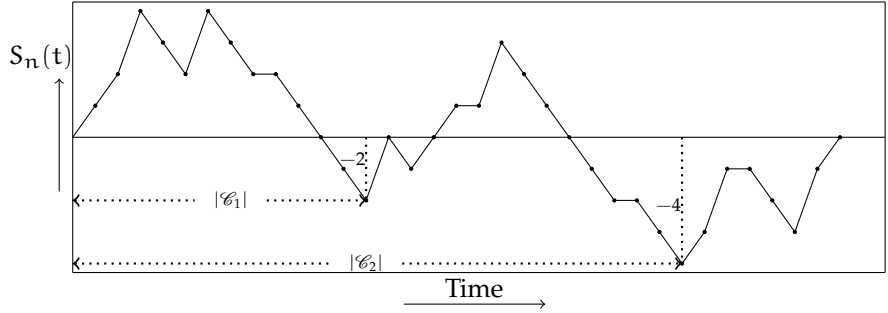
Therefore, the excursion lengths of the exploration process correspond to the sizes of the explored components, see Figure~\ref{fig:exploration-comp}. 
This property allows one to recover the scaling limits of the component sizes from the scaling limit of the exploration process.
\subsubsection{Size-biased exploration} \label{sec:size-biased-intro}
During the above mentioned exploration process, the vertices are explored in a size-biased manner with sizes proportional to their degrees, i.e., if we denote by $v_{\sss(i)}$ the $i$-th explored vertex, then 
 \begin{equation}
 \mathbb{P}\big(v_{\sss (i)}=j \vert v_{\sss (1)},v_{\sss (2)},...,v_{\sss (i-1)}\big)=\frac{d_{j}}{\sum_{k\notin \mathscr{V}_{i-1}} d_{k}},\quad \forall j\in \mathscr{V}_{i-1},
 \end{equation}
 where $\mathscr{V}_{i}$ denotes the first $i$ vertices to be discovered in the above exploration process.
 The following lemma is a consequence of the size-biased ordering, which keeps track of sum of the degrees and square of degrees of the explored vertices.
  This will allow us to track the drift and the quadratic variation of the exploration process \eqref{walk1-intro}:
\begin{lemma} \label{lem_con_1-intro} Suppose that \textrm{Assumption~\ref{assumption-finite-third-intro}} holds and denote $\sigma_r = \mathbb{E} [ D^r ] $. 
Then for all $t > 0 $, as $n \to \infty$,
\begin{equation} \label{lem_eq2-intro}
   \sup_{ u\leq t } \Big| n^{-2/3} \sum_{i=1}^{\lfloor n^{2/3} u \rfloor} d_{\sss (i)} - \frac{\sigma_{2}u}{\sigma_1} \Big| \xrightarrow{\mathbb{P}} 0, \quad \sup_{ u\leq t } \Big| n^{-2/3} \sum_{i=1}^{\lfloor n^{2/3} u \rfloor} d_{\sss (i)}^{2} - \frac{\sigma_{3}u}{\sigma_1} \Big| \xrightarrow{\mathbb{P}} 0.
  \end{equation} 
 \end{lemma}

\subsubsection{Analysis of the exploration process}\label{sec:explor-analysis-intro}
The next step is to obtain the scaling limit of the exploration process. 
%$\bar{\mathbf{S}}_n = (S_n(t))_{t\geq 0}$ given by $\bar{S}_n(t) = n^{-1/3} S_n(\lfloor tn^{2/3}\rfloor)$. 
%The time-scaling $n^{2/3}$ is due to our prior expectation that the component sizes are of this order. 
Firstly, let us consider the simplified process
\begin{equation}\label{walk2-intro}
 s_{n}(0)=0, \quad s_{n}(i)=\sum_{j=1}^{i}(d_{\sss(j)} -2),
 \end{equation} 
which ignores the effect of cycles in the exploration process. 
Due to the close relation to the size-biased exploration, we can more easily describe the scaling limit of $s_n(t)$: 
\begin{proposition} \label{thm_main2-intro} Let $\bar{\mathbf{s}}_n = (s_n(t))_{t\geq 0}$ be given by $\bar{s}_n(t) = n^{-1/3} s_n(\lfloor tn^{2/3}\rfloor)$.  
 Under \textrm{Assumption~\ref{assumption-finite-third-intro}}, as $n \to \infty$,
  \begin{equation}\label{eq:thm_main2-intro}
   \bar{\mathbf{s}}_{n} \xrightarrow{\mathcal{L}} \mathbf{B} 
  \end{equation}
  with respect to  the Skorohod $J_{1}$ topology, where $B(t) = c_0W(t)+\lambda t -c_1 t^2$ with $W$ a standard Brownian motion.
\end{proposition}  
The contribution $\sum_{j=1}^ic_{\sss (j)}$ counts the number of surplus edges created upto time $i$, and one can expect it to be negligible from  earlier heuristics about small number of surplus edges. 
Thus Proposition~\ref{thm_main2-intro} also provides the scaling limit of $\mathbf{S}_n$, after corresponding rescaling.
The time scaling $n^{2/3}$ is due to our prior prediction that the component sizes are of this order.
The space scaling $n^{1/3}$ is the usual square root fluctuation with respect to the time component that arises for Brownian scaling limits.

Let us now give some details as to how Proposition~\ref{thm_main2-intro} can be proved.
Let $\mathscr{F}_j$ denote the natural sigma algebra which contains all the information about the explored graph up to time $j$. The Doob-Meyer decomposition yields that 
\begin{equation}
s_n(i) = M_n(i) +A_n(i),
\end{equation}where 
  \begin{equation}\label{drift-QV-intro}
   A_{n}(i)= \sum_{j=1}^{i} \mathbb{E}\big[d_{\sss(j)}-2 \vert \mathscr{F}_{j-1} \big], \quad 
   B_{n}(i)= \sum_{j=1}^{i} \var{d_{\sss (j)}\vert \mathscr{F}_{j-1}},
  \end{equation}
  $(B_n(i))_{i\geq 1}$ being the quadratic variation process of $(M_n(i))_{i\geq 1}$.
We consider the convergence of the drift part $A_n$ and the martingale part $M_n$ separately. 

\paragraph*{Convergence of the drift part.}
The negative quadratic drift in the limit of $\bar{\mathbf{s}}_n$ is a consequence of the size-biased reordering stated in Lemma~\ref{lem_con_1-intro}. 
Note that
\begin{align*}%\label{the-split-up-intro}
 &\mathbb{E} \big[ d_{\sss(i)} -2 \vert \mathscr{F}_{i-1} \big]  = \frac{\sum_{j \notin \mathscr{V}_{i-1}} d_{j}(d_{j}-2)}{\sum_{j \notin \mathscr{V}_{i-1}} d_{j}}\\
 &= \frac{\sum_{j \in [n]} d_{j}(d_{j}-2)}{\sum_{j \in [n]} d_{j}}- \frac{\sum_{j \in \mathscr{V}_{i-1}} d_{j}(d_{j}-2)}{\sum_{j \in [n]} d_{j}} + \frac{\sum_{j \notin \mathscr{V}_{i-1}} d_{j}(d_{j}-2)\sum_{j \in \mathscr{V}_{i-1}} d_{j} }{\sum_{j \notin \mathscr{V}_{i-1}} d_{j}\sum_{j \in [n]} d_{j}} \\
 &= \frac{\lambda}{n^{1/3}} - \frac{\sum_{j \in \mathscr{V}_{i-1}} d_{j}^{2}}{\sum_{j \in [n]} d_{j}} + \frac{\sum_{j \notin \mathscr{V}_{i-1}} d_{j}^{2}\sum_{j \in \mathscr{V}_{i-1}} d_{j} }{\sum_{j \notin \mathscr{V}_{i-1}} d_{j}\sum_{j \in [n]} d_{j}} +o(n^{-1/3}).
\end{align*}
Therefore,
\begin{equation} \label{eq_cond_1-intro}
\begin{split}
&A_{n}(k) = \sum_{i=1}^{k}\mathbb{E} \big[ d_{\sss(i)} -2 \vert \mathscr{F}_{i-1} \big] \\
&= \frac{k\lambda}{n^{1/3}} - \sum_{i=1}^{k} \frac{\sum_{j \in \mathscr{V}_{i-1}} d_{j}^{2}}{\sum_{j \in [n]} d_{j}} + \sum_{i=1}^{k}\frac{\sum_{j \notin \mathscr{V}_{i-1}} d_{j}^{2}\sum_{j \in \mathscr{V}_{i-1}} d_{j} }{\sum_{j \notin \mathscr{V}_{i-1}} d_{j}\sum_{j \in [n]} d_{j}} + o(kn^{-1/3}).
\end{split}
\end{equation}
Now, $\sum_{j\in \mathscr{V}_{i-1}}d_j = o(\ell_n)$ uniformly over $i\leq tn^{2/3}$, since $\max_{i\in [n]}d_i = o(n^{1/3})$, by the uniform integrability of the third moment in Assumption~\ref{assumption-finite-third-intro}. 
Therefore $\sum_{j\notin \mathscr{V}_{i-1}}d_j \approx \ell_n$, and a similar argument yields that $\sum_{j\notin \mathscr{V}_{i-1}}d_j^2 \approx \sum_{j\in [n]}d_j^2$. 
Combining this with \eqref{lem_eq2-intro}, it follows that  
\begin{eq}\label{eq:drift-approximation-intro}
n^{-1/3} A_n(\lfloor u n^{2/3}\rfloor) \approx \lambda u - c_1  u^2,
\end{eq}where $c_1 = (\sigma_3\sigma_1-\sigma_2^2)/\sigma_1^3$. 
Notice that $c_1 = \var{D^*}/\E[D] > 0$, where $D^*$ is the size-biased version of the random variable $D$ appearing in Assumption~\ref{assumption-finite-third-intro}.
Thus, the drift term is negative and parabolic. 
In the above calculations, we see that the negative drift term arises from the depletion of degrees in the size-biased exploration.
As more vertices are explored, $\E[d_{\sss (i)}\vert \mathscr{F}_{i-1}]$ 
decreases by an amount proportional to $\sum_{j\in \mathscr{V}_{i-1}} d_j^2$.
Due to the finite third moment condition, Lemma~\ref{lem_con_1-intro} ensures that $\sum_{j\in \mathscr{V}_{i-1}} d_j^2$ increases linearly with time.
Thus the negative part in the drift term, which is just the sum of $\sum_{j\in \mathscr{V}_{i-1}} d_j^2$, is quadratic.
In this sense, the negative quadratic drift is related to the effect of depletion of degrees in sampling from the size-biased distribution without replacement.

\paragraph*{Convergence of the martingale part.}
The proof relies on the celebrated Martingale Functional Central Limit theorem (FCLT).  
The Martingale FCLT ensures convergence of martingales to Brownian motion provided that the limiting process has continuous sample paths, and the quadratic variation converges to a constant multiple of $t$. 
The latter condition arises due to L\'evy's characterization of Brownian motion as the unique process with quadratic variation $t$.
In this case, it is enough to show that 
 \begin{equation} \label{condition2}
  n^{-2/3}B_{n}(\lfloor un^{2/3}\rfloor )\xrightarrow{\mathbb{P}} c_0^{1/2} u.
 \end{equation} 
Again this can be deduced using Lemma~\ref{lem_con_1-intro}.
The increments of $B_{n}(\lfloor un^{2/3}\rfloor )$ in \eqref{drift-QV-intro} are given by the asymptotic finite variance of the size-biased distribution, which is equivalent to the finite third moment of the degree distribution. 
Thus, the finite third moment is essential from the point of view of the functional invariance principle.
%Thus, the limit of the martingale term fits into the framework of Donsker's invariance principle for convergence of stochastic processes, and we see why the finite third-moment condition becomes crucial.
The technical conditions for ensuring that the limiting process has continuous sample paths are explicitly stated in Section~\ref{c1:sec_proof}.

\subsubsection{Large components are explored early} \label{sec:large-com-early-intro}
To learn about the largest component sizes from Proposition~\ref{thm_main2-intro}, one first needs to check that the ordered vector of excursion lengths is a continuous function on a set $\mathcal{G} \subset \mathbb{D}(\R_+,\R)$, and the limiting process in \eqref{eq:thm_main2-intro} lies in $\mathcal{G}$ almost surely. 
This part of the argument follows using properties of %excursions of
 Brownian motion with a negative parabolic drift, see \cite{NP10a,A97,AL98}.
In order to ensure that the largest excursions of $\mathbf{B}$ in \eqref{eq:thm_main2-intro} correspond to the largest components in the critical random graph, it must be ensured that the largest components are explored in $O(n^{2/3})$ time. 
This is because, due to the time scaling by $n^{2/3}$, we loose information about the components explored in $\Omega (n^{2/3})$ time.
The following lemma ensures that no large component is explored after time $\Omega(n^{2/3})$:
 \begin{lemma}\label{lem:large-com-explored-early-intro}Let $\mathscr{C}_{\max}^{\sss \geq T}$ denote the largest component which is started to be explored after time $Tn^{2/3}$. Then, for any $\delta >0$,
 \begin{equation}\label{large-com-explored-early-intro}
  \lim_{T\to\infty}\limsup_{n\to\infty}\prob{|\mathscr{C}_{\max}^{\sss \geq T}|>\delta n^{2/3}}=0.
 \end{equation}
 \end{lemma}
The idea for the proof of Lemma~\ref{lem:large-com-explored-early-intro} is that due to the sequential matching of the half-edges after exploring the graph upto time $Tn^{2/3}$, the rest of the graph is again a configuration model. 
Moreover, the $\nu_n$ parameter for this new configuration model becomes 
 \begin{equation}\label{nu-n-i:nu-n:relation-intro}
 \nu_{n,Tn^{2/3}}= \nu_n-C_0 Tn^{-1/3}+o_{\sss\mathbb{P}}(n^{-1/3}).
\end{equation}
Thus, as we keep on exploring the graph, the rest of the graph becomes a configuration model that is more and more subcritical. 
Now the fact that the component sizes of a barely subcritical configuration model are $o(n^{2/3})$ can be leveraged.
However, the iterated limit in \eqref{large-com-explored-early-intro} requires explicit bounds on the required functionals of a ``slightly subcritical'' configuration model. For a formal deduction, see Lemma~\ref{c1:lem:large-com-explored-early}.

\subsubsection{Component sizes and surplus edges in the product topology} \label{sec:conv-prod-top-intro}
Let us now investigate how the exploration process can yield convergence of the surplus edges. 
At step $k+1$, we have discovered vertex $v_{\sss (k+1)}$ with degree $d_{\sss (k+1)}$, and since one half-edge has been used to discover $v_{\sss (k+1)}$, $d_{\sss (k+1)}-1$ half-edges can create surplus edges. 
There are $A_k = S_n(k) - \min_{j\leq k} S_n(j)$ many half-edges associated to the vertices that are discovered, but not yet explored completely.
Due to the uniform matching, $c_{\sss (k+1)}$, defined in \eqref{walk1-intro} satisfies 
\begin{eq}
\E[c_{\sss (k+1)}\vert \mathscr{F}_k,v_{\sss (k+1)}] \approx \frac{(d_{\sss (k+1)} -1)A_k}{\ell_n}.
\end{eq} 
Now, 
\begin{eq}
\mathbb{E}[ d_{\sss(k+1)} -1 \vert \mathscr{F}_k ] = \frac{\sum_{j \notin \mathscr{V}_k} d_j (d_j-1)}{\sum_{j \notin \mathscr{V}_k} d_j}  \approx \frac{\sum_{j \in [n]}d_j(d_j-1)}{\sum_{j \in [n]}d_j}+o_{\sss\mathbb{P}}(1) \approx 1,
\end{eq}so that 
\begin{eq}\label{eq:intro-rate-func-poisson}
\sum_{k\leq tn^{2/3}}\E[c_{\sss (k+1)} \vert \mathscr{F}_k] \approx \sum_{k\leq tn^{2/3}}\frac{A_k}{\sigma_1 n} \approx \frac{1}{\sigma_1}\int_0^t\bar{A}_n(u)\dif u,
\end{eq}where $\bar{A}_n(u) = n^{-1/3} A_n(\lfloor un^{2/3} \rfloor)$. 
Here $(\bar{A}_n(u))_{u\geq 0}$ converges to $(R(u))_{u\geq 0}$, where $R(u) = B(u) - \inf_{s\leq u} B(s)$ is the reflected version of the limit in~\eqref{eq:thm_main2-intro}.
This proves the following lemma:
\begin{lemma} \label{lem:surp:poisson-conv-intro} 
Let $N_n(k)$ be the number of surplus edges discovered up to time $k$ and $\bar{N}_n(u) = N_n(\lfloor un^{2/3} \rfloor)$. Then, as $n\to\infty$, $\bar{\mathbf{N}}_n\xrightarrow{\sss d} \mathbf{N},$ where $\mathbf{N}$ is the unique counting process such that the following is a martingale:
\begin{eq}
N(t) - \frac{1}{\sigma_1} \int_0^t R(u) \dif u.
\end{eq}
 \end{lemma}
Thus, the number of points $N_i$ in the $i$-th largest excursion $\gamma_i$ of $\mathbf{B}$ is distributed as a mixed Poisson random variable with parameter $(\sigma_1)^{-1} \int_{\gamma} R(u) \dif u$ (see Chapter~\ref{chap:thirdmoment} for a formal definition of excursions). 
At this moment, Proposition~\ref{thm_main2-intro} and Lemma~\ref{lem:surp:poisson-conv-intro} yield the convergence of the component sizes and surplus edges that are explored before time $O(n^{2/3})$. 
The scaling limits of the component sizes are the largest excursions of $\mathbf{B}$ in Proposition~\ref{thm_main2-intro}, and those of the surplus edges are given by mixed Poisson random variables with parameters being proportional to the areas under those excursions as given by Lemma~\ref{lem:surp:poisson-conv-intro}.
On the other hand, Lemma~\ref{lem:large-com-explored-early-intro} ensures that the largest components are explored in time $O(n^{2/3})$ during the exploration process.
This implies the finite-dimensional convergence of $\mathbf{Z}_n(\lambda)$:
\begin{theorem} \label{thm_surplus-intro-prod}
  Consider $\CM$ satisfying \textrm{Assumption~\ref{assumption-finite-third-intro}} and \eqref{eq:thm-intro-nu-n} for some $\lambda\in \mathbb{R}$. As $n\to\infty$,
  \begin{equation} \label{eqn_thm_surplus-intro-prod}
  \mathbf{Z}_n(\lambda) \dto \mathbf{Z}(\lambda)
  \end{equation} with respect to the product topology, where $\mathbf{Z}(\lambda)$ is the vector $(\gamma_i,N_i)_{i\geq 1}$ ordered as an element of $\Unot$.
 \end{theorem}
%\todo[inline]{Describe the limiting object}

\subsubsection{{Convergence in the $\Unot$ topology}} \label{sec:Unot-conv-intro}
In order to complete the proof of Theorem~\ref{thm_surplus-intro}, it is now sufficient to show that $(\mathbf{Z}_n(\lambda))_{n\geq 1}$ is tight in $\Unot$, owing to the convergence in product topology in Theorem~\ref{thm_surplus-intro-prod}. 
The tightness is more technical, and the details will be provided in Section~\ref{c1:sec-key-ingredients}. 
However, let us state here the conditions that we need to verify in order to complete the proof. 
Let $\sC_i$ denote the $i$-th explored component, and $Y_i^n=n^{-2/3} |\mathscr{C}_i|$, $N_i^n=\SP(\sC_i)$. 
It is sufficient for the tightness of probability measures on $\Unot$ to prove that for any $\delta>0$
\begin{eq} \label{eqn_sufficient_for_U_0_convergence-intro}
\lim_{\varepsilon\to 0}\limsup_{n\to\infty}\PR\bigg( \sum_{Y_i^n\leq \varepsilon} (Y_i^n)^2> \delta \bigg)=0,\\ \lim_{\varepsilon\to 0}\limsup_{n\to\infty}\PR\bigg( \sum_{Y_i^n\leq \varepsilon} Y_i^n N_i^n> \delta \bigg)=0.
 \end{eq}  
%\cmt{Heuristics?}

\subsubsection{Degree distribution within components} \label{sec:degree-dist-comp-intro}
Define $v_k(G)$ as the number of vertices of degree $k$ in the connected graph $G$.
Then, 
  \begin{equation}\label{eqn_vertices_of_degree_k-ord-intro}
   v_k \big( \mathscr{C}_{\sss(j)} \big) = \frac{kr_k}{\mathbb{E}[D]} \big| \mathscr{C}_{\sss (j)} \big| +O_{\sss\mathbb{P}}\big((k^{-1}n^{1/3})\big).
  \end{equation}
Again this can be deduced from the size-biased exploration process. 
If $N_k(t)$ denotes the number of vertices of degree $k$ discovered up to time $t$, then for any $t>0$, uniformly over $k$,
  \begin{equation}
   \sup\limits_{u \leq t} \big| n^{-2/3} N_k(un^{2/3})-\frac{kn_k}{\ell_n} u \big| =  O_{\sss\mathbb{P}}((kn^{1/3})^{-1}).
  \end{equation}
This is due to the fact that, at each step during the exploration, we discover a vertex of degree $k$ with probability roughly $k n_k/\ell_n$.
Obviously, there will be depletion in the total number of half-edges and the total number of half-edges attached to vertices of degree $k$, but that depletion does not matter in the $n^{2/3}$ scale.  
Now an application of Lemma~\ref{lem:large-com-explored-early-intro} yields \eqref{eqn_vertices_of_degree_k-ord-intro}.  

The above analysis provides a detailed picture of the size and complexity of the critical components for percolation on $\CM$. 
Whenever the degree distribution satisfies an asymptotic finite third-moment condition, the scaling limit lies in the same universality class as for the Erd\H{o}s-R\'enyi random graph identified in \cite{A97}.
For Erd\H{o}s-R\'enyi random graphs the negative drift term takes a simpler form as the size-biased version of a Poisson random variable  again has a Poisson distribution.
%However, we see the effect of size-biasing in the depletion of points.

\subsection{Infinite third-moment case} \label{sec:infinite-third-moment-intro}
We now continue with the case where $\E[D^3] = \infty$.
Since $\E[D^3]$ appears explicitly in the scaling limit of the exploration process in Section~\ref{sec:finite-third-moment} (see e.g.~\eqref{eq:drift-approximation-intro}), the scaling limit must be different in this case.  
This section is based on the results for $\mathbf{Z}_n(\lambda)$ from~\cite{DHLS16}. 
Throughout this section we will use the notation
\begin{gather*}
 \alpha= 1/(\tau-1),\qquad \rho=(\tau-2)/(\tau-1),\qquad \eta=(\tau-3)/(\tau-1),
\\
a_n= n^{\alpha}L(n),\qquad b_n=n^{\rho}(L(n))^{-1},\qquad c_n=n^{\eta} (L(n))^{-2},
\end{gather*}where $\tau\in (3,4)$ and $L(\cdot)$ is a slowly-varying function.
The results for $\mathbf{Z}_n(\lambda)$ are derived under the following assumptions on the degree sequence: 
\begin{assumption}\label{assumption-infinte-intro}
\normalfont Fix $\tau \in (3,4)$. Let $\boldsymbol{d}=(d_1,\dots,d_n)$ be a degree sequence (ordered in a non-increasing manner) such that the following conditions hold:
\begin{enumerate}[(i)] 
\item (\emph{High-degree vertices}) For any fixed $i\geq 1$, $d_i/a_n\to \theta_i,$
%\begin{equation}\label{defn::degree}
% \frac{d_i}{a_n}\to \theta_i,
%\end{equation}
where $\boldsymbol{\theta}=(\theta_1,\theta_2,\dots)\in \ell^3_{\shortarrow}\setminus \ell^2_{\shortarrow}$. 
\item (\emph{Moment assumptions}) Let $D_n$ denote the degree of a vertex chosen uniformly at random from $[n]$, independently of $\mathrm{CM}_n(\boldsymbol{d})$. Then, $D_n\xrightarrow{\sss d} D$, for some integer-valued random variable $D$ and 
\begin{gather*}
 \frac{1}{n}\sum_{i\in [n]}d_i\to \mu:=\expt{D}, \quad \frac{1}{n}\sum_{i\in [n]}d_i^2\to \E[D^2], \\
 \lim_{K\to\infty}\limsup_{n\to\infty}a_n^{-3} \sum_{i=K+1}^{n} d_i^3=0.
\end{gather*}
\item (\emph{Critical window}) For some $\lambda \in \mathbb{R}$,
\begin{equation}\label{critical-window}
 \nu_n(\lambda):=\frac{\sum_{i\in [n]}d_i(d_i-1)}{\sum_{i\in [n]}d_i}=1+\lambda c_n^{-1}+o(c_n^{-1}).
\end{equation}
\item Let $n_1$ be the number of vertices of degree-one. Then $n_1=\Theta(n)$, which is equivalent to assuming that $\prob{D=1}>0$.
\end{enumerate}
\end{assumption} 
\noindent 
%Assumption~\ref{assumption-infinte-intro}~(i) fixes the order of the . 
Assumption~\ref{assumption-infinte-intro} can be understood intuitively.
As in Section~\ref{sec:finite-third-moment}, we will set up an exploration process, which explores the components of $\CM$ in a size-biased manner. 
In this setting, we will see that the exploration process keeps on exploring vertices of high degree, resulting in jumps in the exploration process. 
Assumption~\ref{assumption-infinte-intro}~(i) is used to control the magnitude of these jumps. 
The scaling $a_n$ has the same order as $\max_{i\in [n]}D_i$, where $D_i$'s are i.i.d.~random variables satisfying $\PR(D_1\geq x) \propto L_0(x)x^{-(\tau-1)}$ for some slowly-varying function $L_0(\cdot)$.
The expectation and variance of the increments of the exploration process are  governed by the moment assumptions in Assumption~\ref{assumption-infinte-intro}. 
Of particular interest is the assumption on the third moment, which basically says that the variance of the increments is dictated by the contributions from the high-degree vertices only. 
The condition in Assumption~\ref{assumption-infinte-intro}~(iii) is the same criticality condition as in \eqref{eq:thm-intro-nu-n}.
The fact that the above set-up covers  $\mathrm{CM}_n(\bld{d},p_c(\lambda))$ can be established using an analogue of Lemma~\ref{prop_percolation_condition} in this setting. 
A key thing to note here is that if the degrees are an iid sample from a distribution $D$ with $\PR(D\geq x) \propto L_0(x)x^{-(\tau-1)}$, for some $\tau\in (3,4)$ and $L_0(\cdot)$ a slowly-varying function, then Assumption~\ref{assumption-infinte-intro} is satisfied; see Section~\ref{c2:important-examples}. 

Recall that $\mathbf{Z}_n(\lambda)$ denotes the vector of rescaled component sizes and surplus edges, ordered as an element of $\Unot$.
In this section, we rescale the component sizes by $b_n$. 
The following theorem describes the scaling limit of $\mathbf{Z}_n(\lambda)$ in the infinite third moment case:
\begin{theorem} \label{thm_surplus-intro-heavy}
  Consider $\CM$ satisfying \textrm{Assumption~\ref{assumption-infinte-intro}}. As $n\to\infty$, \linebreak
%  \begin{equation} \label{eqn_thm_surplus-intro-heavy}
$  \mathbf{Z}_n(\lambda) \dto \mathbf{Z}(\lambda)$
%  \end{equation} 
with respect to the $\mathbb{U}^0_{\shortarrow}$ topology, where $\mathbf{Z}(\lambda)$ is some non-degenerate random vector which will be defined in the proof ideas and more formally in Theorem~\ref{c2thm:spls}.
 \end{theorem} 
\noindent 
The proof of  Theorem~\ref{thm_surplus-intro-heavy} can be approached by the steps outlined in Section~\ref{sec:finite-third-moment}. 
However, the techniques involved are substantially different, because, for example, the exploration process does not have a finite variance of the increment distribution. 
Below, we outline the analysis of the exploration process, and the necessary modifications to conclude that the largest components are explored in time $O(b_n)$. 
The asymptotics for the surplus edges follow identically, since \eqref{eq:intro-rate-func-poisson} holds here as well, the only difference arises due to different scaling limit of the exploration process.
We also discuss the scaling limit when the underlying graph is $\mathrm{UM}_n(\bld{d})$, i.e., when the configuration model is conditioned to be simple. 
This problem was stated as a conjecture in~\cite{Jo10} when the degrees are an iid sample from a power-law distribution with~$\tau\in (3,4)$. 

\subsubsection{The size-biased exploration process}
For technical tractability, we modify the exploration process.
We sequentially take active half-edges, pair them uniformly with an unpaired half-edge.
If the new half-edge is incident to a new vertex, then we declare all of its half-edges to be active. The paired half-edges are killed. 
If there are no active half-edges in the system, then we choose one unexplored vertex with probability proportional to its degree and declare all its half-edges active.
See Section~\ref{c2sec:conv-expl} for an exact description. 
The only difference with the exploration process in Section~\ref{sec:finite-third-moment} is that only one edge is created per step, and it is not necessary that new vertices are found at each step. 
Let $\mathscr{V}_l$ denote the set of vertices discovered up to time $l$ and $\mathcal{I}_i^n(l):=\ind{i\in\mathscr{V}_l}$.
The exploration process is given by $S_n(0)=0,$ and
\begin{eq}\label{eq:exploration-intro-tau34}
S_n(l) &= \# \text{active half-edges at }l - 2 \times \# \text{components explored upto }l\\
&= \sum_{i\in [n]} d_i \mathcal{I}_i^n(l)-2l.
\end{eq}
Suppose that $\mathscr{C}_{k}$ is the $k^{th}$ connected component explored by the above exploration process and define
$\tau_{k}=\inf \big\{ i:S_{n}(i)=-2k \big\}.$
Then  $\mathscr{C}_{k}$ is discovered between the times $\tau_{k-1}+1$ and $\tau_k$, and  $\tau_{k}-\tau_{k-1}-1$ gives the total number of edges in $\mathscr{C}_k$. 
However, since the surplus edges will be shown to be tight, the number of edges and the component sizes are asymptotically the same, after rescaling by $b_n$.
  Note that we can write
   \begin{equation}
    S_n(l)= \sum_{i\in [n]} d_i \mathcal{I}_i^n(l)-2l=\sum_{i\in [n]} d_i \left( \mathcal{I}_i^n(l)-\frac{d_i}{\ell_n}l\right)+\left( \nu_n(\lambda)-1\right)l.
   \end{equation} 
 Define the re-scaled version $\bar{\mathbf{S}}_n$ of $\mathbf{S}_n$ by $\bar{S}_n(t)= a_n^{-1}S_n(\lfloor b_nt \rfloor)$. Then, by Assumption~\ref{assumption-infinte-intro},
   \begin{equation} \label{eqn::scaled_process-intro}
    \bar{S}_n(t)= a_n^{-1} \sum_{i\in [n]}d_i\left( \mathcal{I}_i^n(tb_n)-\frac{d_i}{\ell_n}tb_n \right)+\lambda t +o(1).
   \end{equation}

\subsubsection{Analysis of the exploration process}
The exploration process given by \eqref{eqn::scaled_process-intro} has the following scaling limit: 
\begin{theorem} \label{thm::convegence::exploration_process-intro}
As $n\to\infty$,
 $ \bar{\mathbf{S}}_n \dto \bar{\mathbf{S}}_\infty$ with respect to the Skorohod $J_1$ topology. 
 The limit $\bar{\mathbf{S}}_\infty:=  (\bar{S}_\infty(t))_{t\geq 0} $  is given by 
 \begin{eq}\label{scaling-limit-heavy-intro}
 \bar{S}_\infty(t) =  \sum_{i=1}^{\infty} \theta_i\left(\mathcal{I}_i(t)- (\theta_i/\mu)t\right)+\lambda t,
 \end{eq}
 where $\mathcal{I}_i(s):=\ind{\xi_i\leq s }$ for $\xi_i\sim \mathrm{Exp}(\theta_i/\mu)$ independently, and $\mathrm{Exp}(r)$ denotes the exponential distribution with rate $r$.
\end{theorem}
\noindent The limit \eqref{scaling-limit-heavy-intro} is a jump-process. The vertices of degree $\Theta(a_n)$ keep getting explored with time $O(b_n)$, and since the space has been rescaled by~$a_n$, these create  macroscopic jumps in the exploration process.
Notice that $\bar{\mathbf{S}}_\infty$ does not have independent increments and therefore it is not a L\'evy process. 
This was termed as \emph{thinned L\'evy process} in \cite{BHL12}, since $\mathcal{I}_i(s)$ can be seen as a thinning of a Poisson process, with all points discarded except the first one.
Due to the absence of independent increments, most standard techniques 
from the stochastic-process limits literature do not work here. 

However from \eqref{scaling-limit-heavy-intro}, it is evident that the scaling limit only depends on the asymptotics of the high-degree vertices given in Assumption~\ref{assumption-infinte-intro}~(i), and the contributions coming from the lower-degree vertices should be asymptotically negligible.
With this in mind, define the truncated sum 
\begin{eq}\label{intro-tail-K-martingale}
M_n^K(l)=a_n^{-1} \sum_{i>K}d_i\Big( \mathcal{I}_i^n(l)-\frac{d_i}{\ell_n}l \Big).
\end{eq}
Recall that we have ordered the degree sequence in Assumption~\ref{assumption-infinte-intro}, so that the sum in \eqref{intro-tail-K-martingale} takes into account all the contributions in the exploration process except for the $K$ largest degrees.
With a proper estimate of the variance and the expectation of $M_n^K(l)$, along with maximal inequalities for supermartingales, it can be shown that (see Section~\ref{c2sec:conv-expl}) for any $\varepsilon >0$ and $T>0$,
\begin{eq}
\lim_{K\to\infty}\limsup_{n\to\infty}\PR\Big(\sup_{l\leq Tb_n}|M_n^K(l)|> \varepsilon \Big)=0.
\end{eq}
This implies that if we truncate the sum in \eqref{eqn::scaled_process-intro} at any fixed $K$, then it suffices to establish the iterated limit as $n\to\infty$ and then $K\to\infty$. 
Finally, to complete the proof of Theorem~\ref{thm::convegence::exploration_process-intro}, it suffices to consider the joint distributional limit of the processes $( \mathcal{I}_i^n(tb_n) )_{i\in[K],t\geq 0}$, since for any fixed $K\geq 1$,
\begin{equation}
a_n^{-1} \sum_{i=1}^Kd_i\left( \mathcal{I}_i^n(tb_n)-\frac{d_i}{\ell_n}tb_n \right) = \sum_{i=1}^K\theta_i\left( \mathcal{I}_i^n(tb_n)-\frac{d_i}{\ell_n}tb_n \right)+o(1).
\end{equation}
The following lemma characterizes the limit of $( \mathcal{I}_i^n(tb_n) )_{i\in[K],t\geq 0}$:
\begin{lemma} \label{lem::convergence_indicators}
   Fix any $K\geq 1$. As $n\to\infty$,
   \begin{equation}
    \left( \mathcal{I}_i^n(tb_n) \right)_{i\in[K],t\geq 0} \dto \left( \mathcal{I}_i(t) \right)_{i\in[K],t\geq 0}.
   \end{equation}
 \end{lemma}
\begin{proof} By noting that $(\mathcal{I}_i^n(tb_n))_{t\geq 0}$ are indicator processes, it is enough to show that 
\begin{eq}
 \prob{\mathcal{I}_i^n(t_ib_n)=0,\ \forall i\in [K]} &\to \prob{\mathcal{I}_i(t_i)=0,\ \forall i\in [K]} = \exp \Big( -\mu^{-1}\sum_{i=1}^{K} \theta_it_i\Big)
\end{eq} for any $t_1,\dots,t_K\in \mathbb{R}$. Now, 
\begin{eq} \label{lem::eqn::expression1}
 &\prob{\mathcal{I}_i^n(m_i)=0,\ \forall i\in [K]}=\prod_{l=1}^{\infty}\Big(1-\sum_{i\leq K:l\leq m_i}\frac{d_i}{\ell_n-\Theta(l)} \Big),\\
 &= \exp\Big( - \sum_{l=1}^{\infty}\sum_{i\leq K:l\leq m_i} \frac{d_i}{\ell_n}+o(1) \Big)= \exp\Big( -\sum_{i\in [K]} \frac{d_im_i}{\ell_n} +o(1) \Big).
\end{eq} Putting $m_i=t_ib_n$, Assumption~\ref{assumption-infinte-intro} gives
\begin{equation} \label{lem::eqn::expression2}
 \frac{m_id_i}{\ell_n}= \frac{\theta_it_i}{\mu} (1+o(1)),
\end{equation}
which completes the proof of Lemma \ref{lem::convergence_indicators}.
\end{proof}

\subsubsection{Convergence of component sizes and surplus edges}
As in the finite third moment case, one must ensure that the largest components are explored early during the exploration process, i.e., we need Lemma~\ref{lem:large-com-explored-early-intro} to hold under Assumption~\ref{assumption-infinte-intro} after replacing $n^{2/3}$ by $b_n$. 
One may try to adapt the argument of Lemma~\ref{lem:large-com-explored-early-intro}, but there is a more direct and simpler approach.
The idea is that since the critical behavior is primarily governed by the asymptotics of the high-degree vertices, removing the vertices of high degree makes the graph more and more subcritical, a feature that is not present in the finite third moment case. 
This idea can be leveraged to obtain the proof that large connected components are with high probability explored in time $O(b_n)$, as well as showing the $\ell^2_{\shortarrow}$ tightness for the vector of component sizes; see Proposition~\ref{c2:tightness-comp-size}.

Let $\mathcal{G}^{\sss[K]}$ be the random graph obtained by removing all edges attached to vertices $1,\dots,K$ and let $\boldsymbol{d}'$ be the obtained degree sequence.  
Now, conditionally on the set of removed half-edges, $\mathcal{G}^{\sss [K]}$ is still a configuration model with some degree sequence $\boldsymbol{d}'$ with $d_i'\leq d_i$ for all $i\in [n]\setminus [K]$ and $d_i'=0$ for $i\in [K]$. Further, the criticality parameter of $\mathcal{G}^{\sss [K]}$ satisfies 
 \begin{equation}\label{eqn:nu-K-intro}
  \begin{split}
   \nu^{\sss [K]}_n&= \frac{\sum_{i\in [n]} d_i'(d'_i-1)}{\sum_{i\in [n]} d_i'}\leq \frac{\sum_{i\in [n]}d_i(d_i-1)-\sum_{i=1}^Kd_i(d_i-1)}{\ell_n-2\sum_{i=1}^Kd_i}\\
   &=\nu_n-C_1n^{2\alpha -1}L(n)^2\sum_{i\leq K}\theta_i^2=\nu_n-C_1c_n^{-1}\sum_{i\leq K}\theta_i^2
  \end{split}
 \end{equation}for some constant $C_1>0$.
 Since $\boldsymbol{\theta}\notin \ell^2_{\shortarrow}$, $K$ can be chosen large enough such that $\nu^{\sss[K]}_n$ becomes arbitrarily small uniformly for all $n$. 
\eqref{eqn:nu-K-intro} plays the same role in the infinite third-moment case as \eqref{nu-n-i:nu-n:relation-intro} in the finite third-moment case. 
We refer the reader to Lemma~\ref{c2lem::tail_sum_squares} for the exact details. 

In Section~\ref{sec:finite-third-moment}, we have expressed the limiting number of surplus edges in Lemma~\ref{lem:surp:poisson-conv-intro} in terms of the reflected version of the scaling limit of the exploration process. 
This deduction holds in the infinite third moment case as well, but we have to replace $R(t)$ by the reflected version of $\mathbf{S}_{\infty}$.
Thus, the finite-dimensional convergence of $\mathbf{Z}_n(\lambda)$ follows. 
The tightness argument involves establishing \eqref{eqn_sufficient_for_U_0_convergence-intro}. 
Finally, an analogue of \eqref{eqn_vertices_of_degree_k-ord-intro} also holds in this case; see Proposition~\ref{c2prop-surp-u-0}.

\subsubsection{Proof for uniform simple graphs} 
All the scaling limit results for component sizes and surplus edges hold for the critical configuration model.
Let us now outline the strategy to transfer those results to the case of $\mathrm{UM}_n(\bld{d})$, the uniformly chosen simple graph with given degree distribution.
Recall that the law of $\mathrm{UM}_n(\bld{d})$ is the same as $\CM$, conditionally on the graph being simple.  
However, since the results about $\mathbf{Z}_n(\lambda)$ are distributional convergence results, it is not evident whether the results are still true conditionally on the graph being simple.
Particularly, in the infinite third-moment case, a related question about critical $\CM$ with iid degree sequence was stated as a conjecture in \cite{Jo10}.
\begin{theorem}\label{thm:simplicity-intro}
Conditionally on $\CM$ being simple, $\mathbf{Z}_n(\lambda)\dto \mathbf{Z}(\lambda)$, where $\mathbf{Z}(\lambda)$ is the scaling limit for $\CM$.
\end{theorem}
We refer the reader to Section~\ref{c2sec:simple-graphs} for the technical details and only explain the idea here. 
Recall from \cite[Theorem 1.1]{J09c} that 
\begin{eq}
\liminf_{n\to\infty}\prob{\CM \text{ is simple}}>0.
\end{eq}
Thus, the tightness of $\mathbf{Z}_n(\lambda)$ in the $\Unot$ topology follows directly, and we only need to prove that the finite-dimensional convergence remains valid.
%Let $X$ denote the first $K$ coordinates of $\mathbf{Z}_n(\lambda)$.
%Then, it suffices to show that, as $n\to\infty$,
%\begin{equation}\label{simple-fact-intro}
%\expt{f(X)\ind{\CM \text{ is simple}}} - \expt{f(X)} \prob{\CM \text{ is simple}} \to 0.
%\end{equation}
Note that the graph $\mathrm{UM}_n(\bld{d})$ can be generated by sampling $\CM$ until we get a uniform simple graph. 
Suppose one can show that the exploration process does not encounter any self-loops or multiple edges in time $O(b_n)$ with high probability. 
Then, we can keep the graph explored up to time $O(b_n)$ and re-sample the uniform perfect matching on the half-edges explored after time $\Omega(b_n)$, until the latter one gives a simple graph. 
This will give us a sample from $\mathrm{UM}_n(\bld{d})$ with high probability. 
However, the large components are explored in time $O(b_n)$, and these components remain fixed under re-sampling for the latter construction.
Therefore, the finite-dimensional scaling limit for $\mathbf{Z}_n(\lambda)$ does not change, conditionally on the graph being simple. 
Let us now argue that no self-loops or multiple edges are explored before time~$O(b_n)$.

Let $\ell_n':=\ell_n-2Tb_n$. 
%We first estimate the number of multiple edges or self-loops discovered in the graph up to time $Tb_n$. 
Let $v$ be a vertex being \emph{explored} before time $O(b_n)$, 
and  $(e_1,\dots,e_r)$ the set of half-edges of $v$. 
Note that, while pairing $e_i$, it creates a self-loop with probability at most $(d_{v}-i)/\ell_n'$ and creates a multiple edge with probability at most $(i-1)/\ell_n'$. 
%Therefore,
%vertex in the breadth-first exploration given by Algorithm~\ref{algo-expl}, $d_{v_l}$  the degree of $v_l$ and  $(e_1,\dots,e_r)$ the ordered set of active half-edges of $v_l$ when $v_l$ is declared to be exploring. Note that, for $l\leq Tb_n$, $e_i$ creates a self-loop with probability at most $(d_{v_l}-i)/\ell_n'$ and creates a multiple edge with probability at most $(i-1)/\ell_n'$. Therefore,
Therefore, conditionally on $\mathscr{F}_{l-1}$, the expected number of self-loops/multiple edges discovered while exploring the vertex $v_l$ at the $l$-th step is at most $2d_{v_l}^2/\ell_n'$.
Thus, for any~$T>0$,
\begin{align*}
 &\expt{\#\{\text{self-loops or multiple edges discovered up to time }Tb_n}\\
 &\hspace{1cm}\leq \frac{2}{\ell_n'}\E\bigg[\sum_{i\in [n]}d_i^2\mathcal{I}^n_i(Tb_n)\bigg]\\
 &\hspace{1cm}=\frac{2}{\ell_n'}\E\bigg[\sum_{i=1}^K d_i^2\mathcal{I}^n_i(Tb_n)\bigg]+\frac{2}{\ell_n'}\E\bigg[\sum_{i=K+1}^nd_i^2\mathcal{I}^n_i(Tb_n)\bigg],
 \end{align*}where $\mathcal{I}^n_i(l)=\ind{i\in \mathscr{V}_l}$. 
Now, for every fixed $K\geq 1$,
\begin{equation}
 \frac{2}{\ell_n'}\E\bigg[\sum_{i=1}^K d_i^2\mathcal{I}^n_i(Tb_n)\bigg]\leq \frac{2}{\ell_n'} \sum_{i=1}^K d_i^2 \to 0,
\end{equation} since $2\alpha-1<0$. 
Moreover, the size-biased ordering of the vertices implies that 
\begin{eq}\label{eq:estimate-exploration-indicator-intro}
\prob{\mathcal{I}^n_i(Tb_n)=1}\leq Tb_nd_i/\ell_n'. 
\end{eq}
Therefore, for some constant $C>0$,
\begin{equation}
 \begin{split} 
  \frac{2}{\ell_n'}\E\bigg[\sum_{i=K+1}^nd_i^2\mathcal{I}_i^n(Tb_n)\bigg]\leq \frac{Tb_n}{\ell_n'^2}\sum_{i=K+1}^nd_i^3\leq C \bigg( a_n^{-3}\sum_{i=K+1}^nd_i^3 \bigg), 
 \end{split}
\end{equation}which, by Assumption~\ref{assumption-infinte-intro}, tends to zero if we first take $\limsup_{n\to\infty}$ and then take $\lim_{K\to\infty}$. Consequently, for any fixed $T>0$, as $n\to\infty$,
\begin{align*}
 \prob{\text{at least one self-loop/multiple edge is discovered before time }Tb_n}\to 0,
\end{align*}which yields Theorem~\ref{thm:simplicity-intro}. 

One may note that Theorem~\ref{thm:simplicity-intro} implies that the scaling limit of a critical $\CM$ is the same as that of a critical $\UM$. 
Using Janson's construction, this yields the scaling limit of $\mathrm{CM}_n(\bld{d}, p_c(\lambda))$, conditioned to be simple. 
This is different than $\mathrm{UM}_n(\bld{d}, p_c(\lambda))$, since here first the graph is conditioned on simplicity, and after that percolation is performed. 
However, the above argument goes through if we perform the exploration process on $\mathrm{CM}_n(\bld{d}, p_c(\lambda))$ directly.

\section{Evolution over the critical window} \label{sec:intro-Q2}
In this section, we will aim to describe the evolution of the vector $\mathbf{Z}_n(\lambda)$ asymptotically. 
We discuss the following theorem:
\begin{theorem}\label{thm:mul:conv-intro} 
%Suppose that {\rm Assumption~\ref{assumption2}} holds, and $\ell_n/n = \mu +o(n^{-\zeta})$ for some $\eta<\zeta<1/2$.
Fix any $k\geq 1$, $-\infty<\lambda_1<\dots<\lambda_k<\infty$. Then, there exists a version $\mathbf{AMC}=(\mathrm{AMC}(\lambda))_{\lambda\in\R}$ of the augmented multiplicative coalescent such that, as $n\to\infty$,
\begin{equation}
 \left(\mathbf{Z}_n(\lambda_1), \dots\mathbf{Z}_n(\lambda_k)\right) \dto \left(\mathrm{AMC}(\lambda_1),\dots,\mathrm{AMC}(\lambda_k)\right)
\end{equation}with respect to the $(\mathbb{U}^0_{\shortarrow})^k$ topology.
\end{theorem}  
This will be based on the analysis in \cite{DHLS15,DHLS16}.
We first introduce the candidate for the limit, the \emph{augmented multiplicative coalescent} (AMC).
Next we describe an alternative construction  for the percolation process on $\CM$.
The main problem with the percolation process $(G_n(p))_{p\in [0,1]}$ on a random graph $G_n$ is that this process is non-Markovian, unless the information about $G_n$ is incorporated into the starting sigma-algebra. 
However, in the context of $\CM$, this special construction allows us to compare the percolation process over the critical window with another dynamically growing Markovian graph process.
Then we discuss how the new graph process gives rise to AMC. 
The proof outline in this section is illustrated for the infinite third-moment case.
Although the proof for the finite third-moment case \cite{DHLS15} was given for the evolution of the component sizes only, a similar proof holds there as well (See Remark~\ref{c2:rem-AMC-finite-third}). 
Also, for sake of simplicity, we will only consider the convergence of $(\mathbf{Z}_n(\lambda_1), \mathbf{Z}_n(\lambda_2))$, and the finite-dimensional convergence follows similarly. 
\subsection{Augmented multiplicative coalescent} 
Let us now describe in detail a Markov process $(\mathbf{Z}(\lambda))_{\lambda\in\R}$, called the \emph{augmented multiplicative coalescent} (AMC) process. 
Think of  a collection of particles in a system with $\mathbf{X}(\lambda)$ describing their masses (corresponding to limiting component sizes) and $\mathbf{Y}(\lambda)$ describing an additional attribute (corresponding to surplus edges) at time $\lambda$. 
Let $K_1,K_2>0$ be constants. 
The evolution of the system at time $\lambda$ takes place according to the following rule:
\begin{itemize}
\item[$\rhd$] For $i\neq j$, at rate $K_1X_i(\lambda)X_j(\lambda)$,  the $i$-th and $j$-th components merge and create a new component of mass $X_i(\lambda)+X_j(\lambda)$ and attribute $Y_i(\lambda)+Y_j(\lambda)$.  
\item[$\rhd$] For any $i\geq 1$, at rate $K_2X_i^2(\lambda)$, $Y_i(\lambda)$ increases to $Y_i(\lambda)+1$. 
\end{itemize}
Of course, at each event time, the indices are re-organized to give a proper element of $\mathbb{U}^0_{\shortarrow}$.
The case when ignoring $\mathbf{Y}(\lambda)$ is called the multiplicative coalescent which was studied extensively in \cite{A97,AL98} in the context of understanding the evolution of the component sizes. 
The augmented version was proposed in~\cite{BBW12} to study the joint evolution of component sizes and surplus edges.
 In \cite{BBW12}, the authors showed in \citep[Theorem 3.1]{BBW12} that this is a \emph{nearly} Feller process.
To understand this precisely, let $(T_{\lambda})_{\lambda \in \R}$ denote the semigroup of operators corresponding to augmented multiplicative coalescent. 
Let $f:\Unot\mapsto \R$ be a bounded continuous function, and $(\bld{z}_n)_{n\geq 1}$ be a sequence in $\Unot$ such that $\bld{z}_n \to \bld{z}$. Further assume that $\bld{z}  =(x_i,y_i)_{i\geq 1}\in \Unot$ is such that $\sum_i x_i = \infty$. 
Then, as $n\to\infty$
\begin{eq}\label{eq:near-Feller-intro}
T_\lambda(f(\bld{z}_n)) \to T_\lambda(f(\bld{z})).
\end{eq} 
Thus, \eqref{eq:near-Feller-intro} does not hold for any $\bld{z} \in \Unot$, which is the reason why this is called a \emph{nearly Feller} property.

\subsection{An alternative construction of the percolation process}
Consider the percolation process $(\mathrm{CM}_n(\bld{d},p))_{p\in [0,1]}$, coupled through the Harris coupling. 
We give an alternative construction of the percolation process $(\mathrm{CM}_n(\bld{d},p))_{p\in [0,1]}$, that allows us to study the evolution of the percolated graphs.
\begin{algo}\label{algo:cons-perc-intro} \normalfont Let $(U_e)_{e\in [\frac{\ell_n}{2}]}$ be a finite collection of iid uniform $[0,1]$ random variables.
Construct a collection of graphs $(G_n(p))_{p\in [0,1]}$ using the following two steps:
\begin{itemize}
 \item[\textrm{(S0)}] 
 Construct the process of edge arrivals $\bld{E}_n = (E_n(p))_{p\in [0,1]}$, where \linebreak $E_n(p) = \#\{e:U_e\leq~p\}$.
 \item[\textrm{(S1)}] Initially, $G_n(0)$ is a graph only consisting of isolated vertices with no paired half-edges. 
 At each time point $p$ where $E_n(p)$ has a jump, choose two unpaired half-edges uniformly at random and pair them. 
 The graph $G_n(p)$ is obtained by adding this edge to $G_n(p-)$. 
\end{itemize} 
\end{algo} 
Algorithm~\ref{algo:cons-perc-intro}~(S0) can be regarded as the birth of edges, and (S1) ensures that the edges of the graph $G_n(p)$ are obtained from a uniform perfect matching of the corresponding half-edges. 
The fact that $(G_n(p))_{p\in [0,1]}$ has the same distribution as the percolation process can be proved by showing that the finite-dimensional distributions are equal.
%As a closing remark, we would like to remind the reader that the above construction is specific to the percolation process on $\CM$, and does not work in general. 
The special case that $G_n(p)$ has the same distribution as $\mathrm{CM}_n(\bld{d},p)$ was proved in \cite{F07} for each fixed $p$.
The finite-dimensional convergence requires generalizing those arguments, which is done in Section~\ref{c1:sec:perc-alt-cons}.

\subsection{Comparison to a Markovian dynamic construction} 
The problem with the alternative construction in Algorithm~\ref{algo:cons-perc-intro} is that (S1)
depends on the arrival of edges during the whole process.
Thus the graph process is non-Markovian. 
Let us now describe a dynamically evolving graph process from \cite{BBSX14} which is Markovian and at the same time approximates the percolation process over the critical window.
\begin{algo}\label{algo:dyn-cons-alt-intro} \normalfont 
Let $s_1(t)$ denote the number of unpaired half-edges at time~$t$. 
Thus $s_1(0) = \ell_n$.
Let $\Xi_n$ be an inhomogeneous Poisson process with rate $s_1(t)$ at time $t$. 
%Let $e_1<e_2<\dots$ be the event times of $\Xi_n$.
\begin{itemize}
\item[\textrm{(S1)}] At each event time of $\Xi_n$, choose two unpaired half-edges uniformly at random and pair them. 
Thus $s_1(t)$ decreases by two.
The graph $\mathcal{G}_n(t)$ is obtained by adding this edge to $\mathcal{G}_n(t-)$. 
\end{itemize} 
\end{algo} 
\noindent 
Note that $(\cG_n(t))_{t\geq 0}$ is Markovian.
In fact, many properties of this graph process such as the number of unpaired half-edges can be represented using the random time change  of a unit-rate Poisson process \cite{EK86}, and thus can be shown to converge to some solution of a differential equation. 
The reader is referred \cite{wormald1995differential} for an introduction to this differential equation method.

The graph process $(\cG_n(t))_{t\geq 0}$  turns out to approximate the percolation process in the critical regime. 
To state this formally, let us define
\begin{equation}\label{defn:t-n-lambda-intro}
t_c(\lambda)=\frac{1}{2}\log\bigg(\frac{\nu_n}{\nu_n-1}\bigg)+\frac{1}{2(\nu_n-1)}\frac{\lambda}{c_n}.
\end{equation}
\begin{proposition}\label{prop:coupling-whp-intro} Fix $-\infty<\lambda_1<\lambda_2<\infty$. 
There exists a coupling such that with high probability
\begin{eq}
 \mathcal{G}_n(t_c(\lambda)-\varepsilon_n)\subset &\mathrm{CM}_n(\bld{d},p_c(\lambda)) \subset\mathcal{G}_n(t_c(\lambda)+\varepsilon_n),\quad \forall \lambda \in [\lambda_1,\lambda_2], \\
 \mathrm{CM}_n(\bld{d},p_c(\lambda)-\varepsilon_n)&\subset \mathcal{G}_n(t_c(\lambda))\subset \mathrm{CM}_n(\bld{d},p_c(\lambda)+\varepsilon_n),\ \forall \lambda \in [\lambda_1,\lambda_2],
\end{eq}where $\varepsilon_{n}=n^{-\gamma_0}$, for some $\eta<\gamma_0<1/2$.
\end{proposition}
\noindent Notice the similarity between Algorithm~\ref{algo:cons-perc-intro}~(S1) and Algorithm~\ref{algo:dyn-cons-alt-intro}~(S1).
In both processes, two unpaired half-edges, chosen uniformly at random without replacement, are paired. 
We can couple the $k$-th uniform choice to be exactly the same for both  processes. 
Thus, it is enough to compare the total number of edges, i.e., the total number of times (S1) has been executed in both algorithms.
Since one edge is created per execution of (S1), it is enough to show that with high probability the following holds: For all $\lambda \in [\lambda_1,\lambda_2]$
\begin{equation}\label{eq:coup-reduc}
e(\mathcal{G}_n(t_c(\lambda)-\varepsilon_n))\leq e(\mathrm{CM}_n(\bld{d},p_c(\lambda))) \leq e(\mathcal{G}_n(t_c(\lambda)+\varepsilon_n)). 
\end{equation}
The quantity $e(\mathcal{G}_n(t_c(\lambda)\mp \varepsilon_n))$ can be estimated  using the differential equation method. 
After some computations, it can be shown that the sandwiching inequality in \eqref{eq:coup-reduc} holds with expectation of $e(\mathrm{CM}_n(\bld{d},p_c(\lambda)))$ (see Proposition~\ref{c1:prop:coupling-whp}). 
Therefore, it is enough to establish suitable concentration inequalities for $e(\mathrm{CM}_n(\bld{d},p_c(\lambda)))$, uniformly over $\lambda\in [\lambda_1,\lambda_2]$.
We can think of $\#\{U_e\leq p\}/(\ell_n/2)$ as an empirical distribution function on $[0,1]$. 
Thus, concentration inequalities of empirical measures such as the Dvoretzky-Kiefer-Wolfowitz inequality \cite{M90} can be applied to conclude~\eqref{eq:coup-reduc}.

As a consequence of Proposition~\ref{prop:coupling-whp-intro}, it is also enough to prove Theorem~\ref{thm:mul:conv-intro} for $(\mathcal{G}_n(t_c(\lambda)))_{\lambda\in \R}$. 
This is very handy, because the Markovian nature allows us to keep track of our functionals of interest. 
Further, Theorem~\ref{thm_surplus-intro-heavy} also holds for $\mathcal{G}_n(t_c(\lambda))$.
Therefore, in the later parts of this section, we consider $\mathbf{Z}_n(\lambda)$ for the graph $\mathcal{G}_n(t_c(\lambda))$ instead.

\subsection{Convergence to AMC}\label{sec:conv-to-AMC-intro}
We write $\mathscr{C}_{\sss (i)}(\lambda)$ for the $i$-th largest component of $\mathcal{G}_n(t_n(\lambda))$, and define $\mathcal{O}_i(\lambda)$ to be the number of unpaired half-edges in $\mathscr{C}_{\sss (i)}(\lambda)$. 
Think of $\mathcal{O}_i(\lambda)$ as the \emph{mass} of $\mathscr{C}_{\sss (i)}(\lambda)$. 
Let $\mathbf{Z}_n^o(\lambda)$ denote the vector in $\Unot$, where the $|\mathscr{C}_{\sss (i)}(\lambda)|$'s are replaced by $\mathcal{O}_i(\lambda)$'s. 
Firstly, using the differential equation method, it can be shown that with high probability 
\begin{eq}
\ell_n^o(\lambda) := \sum_{i\geq 1} \mathcal{O}_i(\lambda) =  \frac{n\mu(\nu-1)}{\nu}(1+\oP(1)).
\end{eq}
Moreover, during the evolution of Algorithm~\ref{algo:dyn-cons-alt-intro}, between time $[t_c(\lambda),t_c(\lambda+\dif \lambda)]$, the $i$-th and $j$-th largest components merge at rate
 \begin{equation}\label{rate:function-intro}
2\mathcal{O}_{i}(\lambda) \mathcal{O}_{j}(\lambda)\times\frac{1}{\ell_n^o(\lambda)-1}\times \frac{1}{2(\nu_n-1)c_n}\approx \frac{\nu}{\mu(\nu-1)^2} \big(b_n^{-1}\mathcal{O}_{i}(\lambda)\big)\big(b_n^{-1}\mathcal{O}_{j}(\lambda)\big),
\end{equation}and create a component with open half-edges $\mathcal{O}_{i}(\lambda)+\mathcal{O}_{j}(\lambda)-2$ and surplus edges $\mathrm{SP}(\mathscr{C}_{\sss(i)}(\lambda))+\mathrm{SP}(\mathscr{C}_{\sss(j)}(\lambda))$. 
Also, a surplus edge is created in $\mathscr{C}_{\sss(i)}(\lambda)$ at rate
\begin{align*}%\label{rate:function-spls-intro}
\mathcal{O}_i(\lambda)(\mathcal{O}_i(\lambda)-1)\times\frac{1}{\ell_n^o(\lambda)-1}\times \frac{1}{2(\nu_n-1)c_n}\approx \frac{\nu}{2\mu(\nu-1)^2} \big(b_n^{-1}\mathcal{O}_{i}(\lambda)\big)^2,
\end{align*}and $\mathscr{C}_{\sss(i)}(\lambda)$ becomes a component with surplus edges  $\mathrm{SP}(\mathscr{C}_{\sss(i)}(\lambda))+1$  and open half-edges $\mathcal{O}_{i}(\lambda)-2$. 
Thus $(\mathbf{Z}_n^o(\lambda))_{\lambda\in [\lambda_1,\lambda_2]}$ does \emph{not} exactly evolve as an AMC process, but it is close. 
Let us now outline two key steps for reaching the proof of Theorem~\ref{thm:mul:conv-intro} from the above heuristics.

\paragraph*{Comparison to an exact AMC.} 
If $(\mathbf{Z}_n^o(\lambda))_{\lambda\in [\lambda_1,\lambda_2]}$ would evolve as an exact multiplicative coalescent, then $(\mathbf{Z}_n^o(\lambda_1),\mathbf{Z}_n^o(\lambda_2))$ would converge by an application of the nearly Feller property in \eqref{eq:near-Feller-intro}. 
Unfortunately, that is not the case, since two half-edges are lost after each pairing, which makes the masses deplete. 
If there were no such depletion of mass, then the vector of open half-edges, along with the surplus edges, would in fact merge as an exact AMC. 
Thus, one can modify the graph process, where after time $t_c(\lambda_1)$, the paired half-edges are replaced with a newly born half-edge to the corresponding vertex; see Section~\ref{c1:sec:modified-C1}. 
Let $(\bar{\mathbf{Z}}_n^o(\lambda))_{\lambda\in [\lambda_1,\lambda_2]}$ denote the corresponding quantity under this modified algorithm. 
Then,  $\mathbf{Z}_n^o(\lambda_1) = \bar{\mathbf{Z}}_n^o(\lambda_1)$ and the nearly Feller property of AMC yields that $(\bar{\mathbf{Z}}_n^o(\lambda_1),\bar{\mathbf{Z}}_n^o(\lambda_2))$ converges. 
%\todo{define $d_{\sss \mathbb{U}}$}
Finally the convergence of $(\mathbf{Z}_n^o(\lambda_1),\mathbf{Z}_n^o(\lambda_2))$ is concluded by establishing that 
\begin{eq}
d_{\sss \mathbb{U}} (\mathbf{Z}_n^o(\lambda_2), \bar{\mathbf{Z}}_n^o(\lambda_2))) \pto 0,
\end{eq}where $d_{\sss \mathbb{U}}$ denotes the metric corresponding to the $\Unot$-topology.
We refer the reader to Section~\ref{c2sec:conv-amc} for the formal deduction.

\paragraph*{Open half-edge vs component sizes.}
Finally, Theorem~\ref{thm:mul:conv-intro} is about the joint convergence of component sizes and surplus edges.
Thus, in order to conclude Theorem~\ref{thm:mul:conv-intro}, it needs to be shown that, for each fixed $\lambda$, as $n\to\infty$, 
\begin{eq}
\mathrm{d}_{\sss\mathbb{U}}(\mathbf{Z}^o_n(\lambda),\kappa \mathbf{Z}_n(\lambda))\pto 0,
\end{eq}for some $\kappa >0$. 
This can be proved using our exploration process and martingale arguments. See Section~\ref{c2sec:conv-amc} for further details.

\section{Global metric structure in the infinite third-moment case} \label{sec:intro-Q3}
In this section, we consider the metric structure of $\mathscr{C}_{\sss (i)}(p_c(\lambda))$ for $\CM$ in the infinite third-moment setting. 
This section is based on the results in~\cite{BDHS17} Chapter~\ref{chap:mspace}. 
The global metric structure limit in the finite third-moment case has been derived in \cite{BBSX14,BS16}.
%As one might expected, the technical parts become even more difficult and requires several definitions. 
Suppose that $\CM$ satisfies Assumption~\ref{assumption-infinte-intro}~(i),~(ii).
For simplicity, we ignore the slowly-varying functions here.
The percolation parameter satisfies 
\begin{equation}
p_c(\lambda) = \frac{1}{\nu_n}\big(1+\lambda n^{-\frac{\tau-3}{\tau-1}}\big), \quad -\infty<\lambda<\infty.
\end{equation}
We simply write $\mathscr{C}_{\sss (i)}(\lambda)$ to denote the $i$-th largest component of $\mathrm{CM}_n(\bld{d},p_c(\lambda))$. 
Recall from Section~\ref{sec:key-question} that $\mathscr{C}_{\sss (i)}(\lambda)$ can be viewed as a random measured metric space. 
Write $\sS_*$ for the space of all measured metric spaces equipped with the Gromov weak topology (see Section~\ref{c3:sec:definitions-full}) and let $\sS_*^{\N}$ denote the corresponding product space with the accompanying product topology. 
%For each $n\geq 1$, view $\big( n^{-\eta}\mathscr{C}_{\sss (i)}^p(\lambda) \big)_{i\geq 1} $ as an object in $\sS_*^{\N}$ by appending an infinite sequence of empty metric spaces after enumerating the components in $\mathrm{CM}_n(\bld{d},p_n(\lambda))$. 
The goal is to show the following theorem:

\begin{theorem}\label{thm:main-mspace-intro} There exists a sequence of random measured metric spaces \linebreak $(\mathscr{M}_i(\lambda))_{i\geq 1}$ such that on $\mathscr{S}_*^\N$, as $n\to\infty$,
\begin{equation}\label{eq:thm:main-mspace-intro}
 \big( n^{-\eta}\mathscr{C}_{\sss (i)}(\lambda) \big)_{i\geq 1} \dto  \big(\mathscr{M}_i(\lambda)\big)_{i\geq 1}.
\end{equation} 
\end{theorem}
The description of the limiting metric space appearing in Theorem~\ref{thm:main-mspace-intro} requires several definitions and concepts.
An interested reader is referred to Section~\ref{c3:sec:definitions-full} for an explicit description. 
The organization of this section is as follows: 
In Section~\ref{sec:BHS15-results-intro} we start by describing the results and proof ideas from \cite{BHS15}, where the above theorem was established in the context of Norros-Reittu random graphs. 
The results allow us to explain a \emph{universality theorem} in Section~\ref{sec:universality-theorem-intro}, which identifies a domain of attraction for the same scaling limits as \cite{BHS15}. 
In Section~\ref{sec:mspace-complete-proof-intro}, we argue how this universality theorem can be applied to deduce the scaling limit in Theorem~\ref{thm:main-mspace-intro}. 
In Section~\ref{sec:GHP-convergence-intro}, we describe the idea of establishing the so-called \emph{global mass lower bound} which can be used to improve the underlying topology of convergence in Theorem~\ref{thm:main-mspace-intro}.
Due to the technical nature of these results, the proof ideas will be more sketchy than the previous sections, with the detailed treatment left to Chapters~\ref{chap:mspace}, and~\ref{chap:mspace-GHP}.

\subsection{Scaling limit for Norros-Reittu model} \label{sec:BHS15-results-intro}
Before going into the proof ideas in \cite{BDHS17}, let us briefly describe the results from~\cite{BHS15}, along with key proof ideas.
In this section, we write $\NRx$ to denote the random graph obtained by keeping edge $(i,j)$ independently with probability $1-\e^{-qx_ix_j}$. 
Thus this is same as the Norros-Reittu model defined in Section~\ref{sec:intro-models}, where the normalization has been changed for the  sake of simplicity.
 The scaling limit result in \eqref{eq:thm:main-mspace-intro} was derived for $\NRx$, and the candidate scaling limit was identified for the heavy-tailed regime. 
\paragraph*{${\vp}$-trees and their limit.}
To create the context, first let us describe a random tree known as a $\vp$-tree.
Fix $m \geq 1$, and a non-increasing sequence $\vp = (p_i)_{i\in [m]}$ with $p_i > 0$ for all $i\in [m]$, and $\sum_{i\in [m]}p_i=1$.
Then the law of the $\vp$-tree, denoted by $\sT_m^{\mathbf{p}}$, is given by
\begin{equation}
\label{eqn:p-tree-def-intro}
	\PR(\sT_m^{\mathbf{p}}= \vt)= \prod_{v\in [m]} p_v^{d_v(\vt)}, \quad \text{ for any tree }\mathbf{t}\text{ on }m\text{ vertices.}
\end{equation}
In an ordered $\vp$-tree, children of each individual are assigned a uniform order. 
It was shown in \cite{CP99,P01} that the random tree $\sT_m^{\mathbf{p}}$, after assigning length $\sigmap := (\sum_{i\in [m]}p_i^2)^{1/2}$ to each edge, converges in distribution to the so-called inhomogeneous continuum random tree in the Gromov-Hausdorff topology. 
If for each fixed $i\geq 1$, $p_i/\sigmap \to \beta_i,$ for some $(\beta_i)_{i\geq 1} \in \ell^2_{\shortarrow}\setminus\ell^1_{\shortarrow}$, then the limiting object turns out to be structurally completely different from the classical Brownian continuum random tree. 
This case exhibits the \emph{heavy-tail effect}, in the sense that the limiting structure contain vertices, so-called \emph{hubs}, of infinite degree almost everywhere. 

\paragraph*{{A construction of the components of $\NRx$}.}
A novel construction of the connected components of $\NRx$ in the critical regime was proposed in \cite{BSW14,BHS15}. 
Let $\sC_{\sss (i)}(q)$ denote the $i$-th largest component of $\NRx$. 
Define, for $i\geq 1$,
	\begin{equation}\label{eq:p-n-a-NR-intro}
		\vp_n^{\sss(i)} := \bigg( \frac{x_v}{\sum_{v \in \sC_{\sss (i)}(q)}x_v }  \bigg)_{v \in \sC_{\sss (i)}(q)}, \quad a_n^{\sss(i)}:= q\bigg(\sum_{v\in \sC_{\sss (i)}(q)} x_v\bigg)^2.
	\end{equation}
Then, $\sC_{\sss (i)}(q)$ can be informally generated in the following two steps: 
\begin{algo}\label{algo:tilt-p-intro}\begin{itemize}
\item[$\mathrm{(S1)}$] Generate a tilted $\vp$-tree with $\mathbf{p}_n^i$ in  \eqref{eq:p-n-a-NR-intro}, and some tilting function $L(\cdot)$.
\item[$\mathrm{(S2)}$] Generate a mixed Poisson random variable $N$, and add $N$ many surplus edges.
\end{itemize}
\end{algo}
\noindent The formal description of the above algorithm is given in Section~\ref{c3:sec:definitions-full}, which requires several technical definitions. 
It turns out that all the functionals involved in the formal construction of Algorithm~\ref{algo:tilt-p-intro} depend only on the quantities defined in \eqref{eq:p-n-a-NR-intro}. 
Thus, the graph in Algorithm~\ref{algo:tilt-p-intro} can be generated for any $\vp$ and $a$ instead of specific choices in \eqref{eq:p-n-a-NR-intro}. 
We denote such a graph by~$\cG_m(\vp,a)$.

\paragraph*{Scaling limits for critical $\NRx$.}
The graph $\cG_m(\vp,a)$ is generated from a $\vp$-tree $\sT_m^{\mathbf{p}}$, after tilting the distribution by the function $L(\cdot) = L_m(\cdot)$, and then adding only finitely many shortcuts. 
Thus, provided that $(L_m)_{m\geq 1}$ is uniformly integrable, it is not difficult to imagine that the distance in $\cG_m(\vp,a)$ should scale similarly as the $\vp$-tree, which is $\sigmap$. 
However, one needs to establish a distributional convergence result which turns out to be significantly harder. 
The following result was proved in \cite{BHS15}, and the limiting object was identified as a function of the inhomogeneous continuum random tree after an appropriate tilt:
 \begin{theorem} \label{thm:BHS15-intro}
  Suppose that $\sigmap\to 0 $, and for each fixed $i\geq 1$, $p_i/\sigmap \to \beta_i$, where $\bld{\beta}=(\beta_i)_{i\geq 1}\in\ell^2_{\shortarrow}\setminus\ell^1_{\shortarrow}$. 
  Moreover, there exists a constant $\gamma >0$ such that $a\sigmap\to\gamma$. 
  Then, as $m\to\infty$,
\begin{eq}
\sigmap \mathcal{G}_m(\mathbf{p},a) \text{ converges in distribution in the Gromov-weak topology.}
\end{eq}  
\end{theorem}
\noindent The proof of Theorem~\ref{thm:BHS15-intro} consists of showing that the tilting functions are converging in distribution, and the operation of creating shortcuts on the space of ``trees" is continuous.
Next, the idea is to apply Theorem~\ref{thm:BHS15-intro} with the parameters  in~\eqref{eq:p-n-a-NR-intro}. 
Therefore, one needs to prove distributional convergence results for these parameters and also obtain the asymptotics of $\sigma(\vp_n^{\sss(i)})$.
These asymptotics can be obtained using exploration processes on $\NRx$. We refer to \cite{BHS15} for further details.

\subsection{The universality theorem}\label{sec:universality-theorem-intro}
In this section, we describe the \emph{universality} theorem which forms the basis of the results in \cite{BDHS17}.
A similar result for the Erd\H{o}s-R\'enyi universality class was established in~\cite{BBSX14}.
Let us first describe the universality theorem, and then discuss how this can be applied to obtain the proof of Theorem~\ref{thm:main-mspace-intro}.

The idea is to replace each of the vertices in the graph $\cG_m(\vp,a)$ by so-called \emph{blobs}.
Blobs are a collection $\{(M_i,\mathrm{d}_i,\mu_i)\}_{i\in [m]}$ of  connected, compact measured metric spaces.
Consider  an independent collection of random points $\mathbf{X}:=(X_{i,j})_{i,j\in [m]}$ such that $X_{i,j}\sim \mu_i$ for all $i,j$. Further, $\mathbf{X}$ is independent of~$\cG_m(\vp,a)$.
Put an edge of \emph{length} one between the pair of points
$$\{(X_{i,j},X_{j,i}): (i,j) \text{ is an edge of }\cG_m(\vp,a)\},$$ and denote the resulting graph by $\tilde{\cG}_m^{\sss \mathrm{bl}}(\vp,a)$.
$\tilde{\cG}_m^{\sss \mathrm{bl}}(\vp,a)$ inherits the metric from the graph-distance and the distances within blobs; see Section~\ref{c3:sec:univ-thm} for an exact description. 
Let $u_i:=\E[\mathrm{d}_i(X_i,X_i')]$ where $X_i,X_i'\sim \mu_i$ independently, $B_m:= \sum_{i\in [m]}p_iu_i$ and $\Delta_i = \mathrm{diameter}(M_i)$. 
\begin{theorem}[Universality theorem]\label{thm:univesalty-intro}
Suppose that the assumptions of \textrm{Theorem~\ref{thm:BHS15-intro}} hold, and additionally $\lim_{m\to\infty}\frac{\sigma(\mathbf{p})\max_{i\in [m]}\Delta_i}{B_m+1}=0.$
%hold. Then, with $\tilde{\mathcal{G}}_m^{\sss \mathrm{bl}}(\mathbf{p}, a)$ defined in \eqref{defn:blob-Gm}, 
As $m\to\infty$,
\begin{equation}
\frac{\sigma(\mathbf{p})}{B_m+1}  \tilde{\mathcal{G}}_m^{\sss \mathrm{bl}}(\mathbf{p}, a) ,\text{ and } \sigmap \mathcal{G}_m(\mathbf{p},a) \text{ have the same limit}
 \end{equation}with respect to the Gromov-weak topology. 
 \end{theorem}
\noindent 
The proof of Theorem~\ref{thm:univesalty-intro} studies the effect of introducing surplus edges, or shortcuts in a $\vp$-tree. 
The proof uses the birthday construction of $\vp$-trees from \cite{CP99}.
A detailed proof is provided in Section~\ref{c3:sec:univ-thm}.

To this end, let us observe that Norros-Reittu random graphs have a direct relation to multiplicative coalescence in the following sense: 
Consider a system of $n$ vertices, with vertex $i$ having mass $x_i$.
Now, at rate $x_ix_i$, an edge is created between $i$ and $j$. 
The obtained graph at time $q$ is distributed as $\NRx$, defined in Section~\ref{sec:BHS15-results-intro}.
Also, if we track the sum of $x_i$'s in each component, it evolves as an exact multiplicative coalescent.
Thus the multiplicative coalescence evolution essentially gives rise to Norros-Reittu random graphs.
Next consider Algorithm~\ref{algo:dyn-cons-alt-intro}, and the modification of replacing open half-edges after time $t_c(\lambda_1)$  given in Section~\ref{sec:conv-to-AMC-intro}, where now $\lambda_1= \lambda_1(n) \to -\infty$. 
Therefore, at the beginning of the modification, the graph process is in the barely subcritical regime.
The modified process runs as an exact multiplicative coalescent, essentially giving rise to a \emph{superstructure} of a Norros-Reittu graph on top of the barely subcritical components. 
Let us denote this graph by $\bar{H}_n$, and denote the graph produced by Algorithm~\ref{algo:dyn-cons-alt-intro}  by $H_n$. 
Now one can apply Theorem~\ref{thm:univesalty-intro} to the graph $\bar{H}_n$, with the components in the barely subcritical regime serving as blobs.
Finally, we obtain the metric structure of $H_n$ by comparing its  structures with $\bar{H}_n$.

\subsection{Final steps in completing the proof}\label{sec:mspace-complete-proof-intro}
\paragraph*{Properties at the barely subcritical regime.} 
In order to estimate several functionals like the quantities in \eqref{eq:p-n-a-NR-intro}, and in Theorem~\ref{thm:univesalty-intro}, we need to obtain precise asymptotics of functionals of a barely subcritical configuration model, whose components serve as blobs. 
This is an interesting question in its own right, which was not studied previously for $\CM$, in the $\tau\in (3,4)$ universality class. 
In order to calculate the quantities in \eqref{eq:p-n-a-NR-intro}, we wish to verify the entrance boundary conditions for the ``behavior at $-\infty$" of the multiplicative coalescent \cite{AL98}, which characterize asymptotics of several functionals related to the multiplicative coalescent in terms of the asymptotics on the entrance boundary. 
The barely subcritical regime lies on the entrance boundary, and the verification of the Aldous-Limic entrance boundary yields the desired asymptotics. 
Further, one also needs to obtain bounds on the maximum diameter of the barely subcritical components $\max_{i\in [m]}\Delta_i$, and the within component average distance $B_m$ in order to apply Theorem~\ref{thm:univesalty-intro}.

\paragraph*{Structural comparison of $H_n$ and $\bar{H}_n$.}
With the asymptotics obtained in the barely subcritical regime, Theorem~\ref{thm:univesalty-intro} applies to $\bar{H}_n$. 
Finally, a structural comparison between the components of $H_n$ and $\bar{H}_n$ completes the proof of Theorem~\ref{thm:main-mspace-intro}.
Let $\sC_{\sss (i)}$ and $\bar{\sC}_{\sss (i)}$ denote the $i$-th largest component of $H_n$ and $\bar{H}_n$ respectively. 
Then the following structural comparisons allow us to conclude that the limit of largest connected components are the same in the Gromov-weak topology:
\begin{enumerate}
\item[$\rhd$] For any $i\geq 1$, $\mathscr{C}_{\sss (i)} \subset \bar{\mathscr{C}}_{\sss (i)}$ with high probability. 
Note that, under the replacement scheme of open half-edges, $\cup_{j\leq i}\mathscr{C}_{\sss (j)} \subset \cup_{j\leq i}\bar{\mathscr{C}}_{\sss (j)}$ almost surely for any $i\geq 1$. 
Therefore, this statement can be concluded by showing that the component sizes $(|\mathscr{C}_{\sss (i)}|)_{i\geq 1}$ and $(|\bar{\mathscr{C}}_{\sss (i)}|)_{i\geq 1}$ have the same limit. 
\item[$\rhd$] The ``mass" of $\bar{\mathscr{C}}_{\sss (i)}\setminus \mathscr{C}_{\sss (i)}$ converges to zero in probability.

\item[$\rhd$] For any pair of vertices $u,v\in \mathscr{C}_{\sss (i)}(\lambda)$, with high probability, the shortest path between them is exactly the same in $\mathscr{C}_{\sss (i)}$ and $\bar{\mathscr{C}}_{\sss (i)}$. 
This is obtained by showing that the number of surplus edges with at least one endpoint in  $\bar{\mathscr{C}}_{\sss (i)}\setminus \mathscr{C}_{\sss (i)}$ converges to zero in probability. 
\end{enumerate}
For further details about the formal statements and the verification of the above properties, the reader is referred to Section~\ref{c3:sec:proof-metric-mc}.

\subsection{Gromov-Hausdorff-Prokhorov convergence of the critical  components}\label{sec:GHP-convergence-intro}
We now describe the so-called global lower mass-bound property of the critical components. 
The property basically establishes a lower bound on the number of vertices within small neighborhood of the connected components.
If there is a single path of length $n^{\eta}\log(n)$, then since $\rho>\eta$ and $|\sC_{\sss (i)}(p_c(\lambda))| = \Theta_{\sss \PR}(n^{\rho})$, we do not see any members from that path if we sample finitely many points from $\sC_{\sss (i)}(p_c(\lambda))$ uniformly at random. 
For this reason, the Gromov-weak convergence does not take into whether there is a \emph{thin long path} in the component. 
Further, since the Gromov-weak convergence is defined on the space of complete separable metric spaces, the limit may not be a compact metric space. 
The global lower mass bound rules out the existence of such thin long paths.
Consequently, this implies that the scaling limit in Theorem~\ref{thm:main-mspace-intro} holds with respect to the Gromov-Hausdorff-Prokhorov (GHP) topology, the limiting metric space is compact, and the global distance related functionals (e.g.~the diameter) converges after the appropriate rescaling.
This relation between the Gromov-weak convergence and GHP convergence was studied in \cite{ALW16}.

We consider a critical configuration model and denote the $i$-th largest connected component of $\rCM_n(\bld{d})$ by $\csi$.
For each $v\in [n]$ and $\delta>0$, let $\mathcal{N}_v(\delta)$ denote the $\delta n^{\eta}$ neighborhood of $v$ in $\rCM_n(\bld{d})$. 
For each $i\geq 1$, define 
\begin{equation} \label{eq:m-i-defn}
 \mathfrak{m}_i^n(\delta) = \inf_{v\in\mathscr{C}_{\sss (i)}}n^{-\rho}| \mathcal{N}_v(\delta)|.
\end{equation}
%For $\CM$ satisfying Assumption~\ref{assumption1} and \eqref{defn:criticality}, the total mass of components $n^{-\rho}\sum_{v\in \csi} w_v$ is known to converge to some non-degenerate random variable with support $(0,\infty)$ \cite[Theorem 21]{DHLS16}. 
%Therefore, it is enough to rescale by $n^{\rho}$ in \eqref{eq:m-i-defn} instead of the total weight of the components as given in \eqref{eq:defn:GLM}.
%The following theorem is the main result of this paper:
\begin{theorem}[Global lower mass-bound]
\label{thm:gml-bound-intro} 
For any $\delta>0$, $(\mathfrak{m}_i^n(\delta)^{-1})_{n \geq 1}$ is a tight sequence. 
\end{theorem}
In Chapter~\ref{chap:mspace-GHP}, Theorem~\ref{thm:gml-bound-intro} is proved under a more general setting, but the proof requires some additional technical assumptions on the degree distribution on top of Assumption~\ref{assumption-infinte-intro} (see Assumption~\ref{c4:assumption1}).
The additional assumption is satisfied for power law degrees.
Following the above heuristic description, Theorem~\ref{thm:gml-bound-intro} now yields several interesting corollaries.  
%The scaling limit for largest critical percolation clusters $\sC_{\sss (i)}(p_c(\lambda))$, viewed as a measured metric space, was derived in Chapter~\ref{c4:chap:mspace} with respect to the Gromov-weak topology.
Using the results from \cite{ALW16}, Theorem~\ref{thm:gml-bound-intro} establishes that the convergence in Theorem~\ref{thm:main-mspace-intro} holds with respect to the GHP topology. 
This in particular establishes that the limiting metric spaces in \cite{BHS15,BDHS17} are compact almost surely.
Due to technical reasons, some additional conditions are imposed on $\bld{\theta}$.
For example, the assumption is satisfied for $\theta_i \in [L_1(i)i^{-a_1}, L_2(i)i^{-a_2}]$, where $a_1,a_2 \in (1/3,1/2)$, and $L_1,L_2$ are slowly varying functions.
This is much less restrictive than assuming $\theta_i = i^{-\alpha}$ as in \cite{BHS15}.
The compactness of the limiting metric spaces in \cite{BHS15,BDHS17} was also established under some regularity conditions in a very recent preprint~\cite{BDW18} %\todo{check reference} 
using a completely independent method as in this paper. 
In addition to the compactness of the limiting metric space, we also have the convergence of the diameters, i.e.,
\begin{eq}
\big(n^{-\eta}\diam(\sC_{\sss (i)}(p_c(\lambda)))\big)_{i\geq 1} \dto (X_i)_{i\geq 1}
\end{eq}
with respect to the product topology, where $(X_i)_{i\geq 1}$ is a non-degenerate random vector. 
In fact $X_i$ corresponds to the diameter of the limiting object of $\sC_{\sss (i)}(p_c(\lambda))$ from \cite{BDHS17}.
%\paragraph*{Note about the assumptions.}
%\todo[inline]{\eqref{c4:eq:m-i-defn} suffices as sum of weigths is $\Theta(n^\rho)$.}

Let us just briefly describe the key ideas in the proof of Theorem~\ref{thm:gml-bound-intro}, which consists of two main steps. 
The first step is to show that the neighborhoods of the high-degree vertices, or \emph{hubs}, have mass  $\Theta(n^{\rho})$. 
Secondly, the probability of any small~$\varepsilon n^{\eta}$ neighborhood not containing hubs is arbitrarily small.
These two facts, summarized in Propositions~\ref{c4:prop:size-nbd bound} and~\ref{c4:prop:diamter-small-comp} below, together ensure that the total mass of any neighborhood of $\sC_{\sss(i)}$ of radius $\varepsilon n^{\eta}$ is bounded away from zero. 
These facts were proved in \cite{BHS15} in the context of inhomogeneous random graphs.
%However, especially the proof of Proposition~\ref{c4:prop:diamter-small-comp} is completely different in Chapter~\ref{chap:mspace-GHP}. 
The main advantage in \cite{BHS15} was that the breadth-first exploration of components could be dominated by a branching process with \emph{mixed Poisson} progeny distribution that is \emph{independent of $n$}. 
The above facts allow one to use existing literature and estimate the probabilities that a long path exists in the branching process in \cite{BHS15}.
However, such a technique is specific to rank-one inhomogeneous random graphs and does not work in the cases where the above stochastic domination is not possible. 
This was partly a motivating reason for this work.
Moreover, along the way we derive results about exponential bounds for the number of edges in the large components (Proposition~\ref{c4:lem:volume-large-deviation}), and 
a coupling of the neighborhood exploration with a branching process with stochastically larger progeny distribution (Section~\ref{c4:sec:BP-approximation}), which is interesting in its own right.
The details are left to Chapter~\ref{chap:mspace-GHP}.

%
%
%
%
%the extension of the scaling limit result in Theorem~\ref{thm:main-mspace-intro} in the so-called 
%
%
%\todo[inline]{Add GHP convergence}

\section{Analysis in the infinite second-moment case}
\label{sec:intro-infinite-second}
We next discuss the critical behavior for percolation when the asymptotic empirical degree distribution is approximately a power law with exponent $\tau \in (2,3)$, i.e., the degree distribution has infinite second moment, but finite second moment. 
As canonical random graph models on which percolation acts, we take $\CM$, $\ECM$ and $\GRG$.
The latter two models only allow for single edges, which is the reason for  referring to them as models with a single-edge constraint.
In Section~\ref{sec:infinte-second-CM-intro}, we describe the results for  $\CM$, and in Section~\ref{sec:infinite-second-ECM-intro} those for $\ECM$ and $\GRG$.
The results are based on ongoing work \cite{DHL18}.

\subsection{Results for the configuration model} 
\label{sec:infinte-second-CM-intro}
We start by describing the assumptions on the degree distribution. 
Fix any $\tau \in (2,3)$. 
We denote 
\begin{equation}\label{eqn:notation-const-intro}
 \alpha= 1/(\tau-1),\quad \rho=(\tau-2)/(\tau-1),\quad \eta=(3-\tau)/(\tau-1).
\end{equation}
Note that the $\eta$ in \eqref{eqn:notation-const-intro} is different than in Section~\ref{sec:infinite-third-moment-intro}.
We assume the following about the degree sequences $(\bld{d}_n)_{n\geq 1}$:
\begin{assumption}
\label{assumption-infinite-second}
\normalfont  
\begin{enumerate}[(i)] 
\item  (\emph{High-degree vertices}) For any  $i\geq 1$, 
$ n^{-\alpha}d_i\to \theta_i,$
where the vector $\boldsymbol{\theta}=(\theta_1,\theta_2,\dots)\in \ell^2_{\shortarrow}\setminus \ell^1_{\shortarrow}$. 
\item  (\emph{Moment assumptions}) 
%\begin{equation}
 $\lim_{n\to\infty}\frac{1}{n}\sum_{i\in [n]}d_i= \mu,$ and $$ \lim_{K\to\infty}\limsup_{n\to\infty}n^{-2\alpha} \sum_{i=K+1}^{n} d_i^2=0.$$
\end{enumerate}
\end{assumption}
\noindent Assumption~\ref{assumption-infinite-second}~(i) fixes the asymptotics of the high-degree vertices in a similar manner as Assumption~\ref{assumption-infinte-intro}~(i), and characterizes jumps of an associated exploration process that we describe in detail below. 
As before, we will consider a size-biased exploration process. 
Assumption~\ref{assumption-infinite-second}~(ii) says that the expectation of this size-biased distribution is carried predominantly by the contribution due to the hubs. 
Note that, under Assumption~\ref{assumption-infinite-second}, $\alpha>1/2$, and consequently the criticality parameter for $\CM$ satisfies $\nu_n= \Theta(n^{2\alpha-1})$, which tends to infinity as $n\to\infty$.
The critical behavior for percolation on $\CM$ is observed for values of $p$ given by  
\begin{equation}\label{eq:crit-value-tau23-intro}
p_c=p_c(\lambda):= \frac{\lambda}{\nu_n}(1+o(1)), \quad \lambda \in (0,\infty).
\end{equation}
The nature of the critical window for $\tau\in (2,3)$ is fundamentally different than in the finite second-moment case. 
Here, the graph becomes more and more subcritical (or supercritical) as $\lambda \to 0$ (or $\lambda\to \infty$), contrary to the $\lambda\to \mp \infty$ scenario for $\tau>4$ and $\tau\in (3,4)$.

%For each fixed $\lambda>0$, the graph $\CMP$ is at the critical window. 
%
To describe the results for the component sizes and the surplus edges,  recall that $\mathbf{Z}_n(\lambda)$ denotes the vector $(n^{-\rho}|\sCi(p_c(\lambda))|,\SP(\sCi(p_c(\lambda))))_{i\geq 1}$, ordered as an element of~$\Unot$.
A vertex is called isolated if it has degree zero in the graph $\CMP$. 
We define the component size corresponding to an isolated vertex to be zero. 
This is required because in this case $2\rho<1$. 
When we perform percolation with probability $p_c(\lambda)$, we see order $n$ isolated vertices and thus $n^{-2\rho}\times (\#$ isolated vertices$)$ tends to infinity, which destroys the $\ell^2_{\shortarrow}$-tightness of the component sizes. 
%Recall that $\mathbf{Z}_n(\lambda)$ denotes the vector of component sizes and surplus edges ordered as an element of $\Unot$.
The following theorem gives the asymptotics for the component sizes and the complexity for $\CMP$: 
\begin{theorem}[Component sizes and complexity]\label{thm:tau23-main-intro} Under  Assumption~\ref{assumption-infinite-second}, as $n\to\infty$,
\begin{equation}
\mathbf{Z}_n(\lambda) \dto \mathbf{Z}(\lambda)
\end{equation}with respect to the $\Unot$ topology, where $\mathbf{Z}(\lambda)$ is some non-degenerate random described in detail in Theorem~\ref{c5:thm:main}.
\end{theorem}
Our next result shows that the diameter of the largest connected components is of constant order, which yields further insight into the distance structure of these critical components. 
Let $\Delta_i^n$ denote the diameter of $\sCi(p_c(\lambda))$.
\begin{theorem}[Diameter of largest clusters]\label{thm:diameter-large-comp} Under Assumption~\ref{assumption-infinite-second}, $(\Delta_i^n)_{n\geq1}$ is a tight sequence of random variables, for any $i\geq 1$.
\end{theorem}
%
%
%\noindent Note that if $\tau\in (2,3)$, 
%This is the reason why we must ignore the contributions due to isolated vertices, when considering the convergence of the component sizes in the $\ell^2_{\shortarrow}$ topology.
%Note that an isolated vertex with loops do not create an isolated component. 

In order to establish that \eqref{eq:crit-value-tau23-intro} gives the critical value, we further investigate the barely sub/supercritical regimes which are defined respectively by $p_n\ll p_c(\lambda)$ and $p_n \gg p_c(\lambda)$. 
We prove the following theorem for the barely-subcritical regime:
\begin{theorem}[Barely subcritical regime]\label{thm:barely-subcrit-tau23-intro} Suppose that $\frac{\log(n)}{\ell_n}\ll p_n\ll p_c(\lambda)$ and Assumption~\ref{assumption-infinite-second} holds. Then, for each fixed $i\geq 1$, as $n\to\infty$,
\begin{equation}
\frac{|\sCi(p_n)|}{n^\alpha p_n} \pto \theta_i.
\end{equation}
\end{theorem}
%\todo{Not $\theta_i$?}
For the result about the barely supercritical regime, we need one further mild technical assumption, which is as follows:
Let $D_n^*$ denote the degree of a vertex chosen in a size-biased manner with the sizes being $(d_i/\ell_n)_{i\in [n]}$.
Then, there exists a constant $\kappa>0$ such that 
\begin{equation}\label{eq:asymp-laplace}
1 - \E[\e^{-tp_n^{1/(3-\tau)}D_n^*}] = \kappa t^{\tau-2} p_n^{(\tau-2)/(3-\tau)}(1+o(1)).
\end{equation}
Condition \eqref{eq:asymp-laplace} is related to the Abel-Tauberian theorem \cite[Chapter XIII.5]{Fel91}, but due to the joint scaling of $p_n$ and $D_n^*$, this has to be stated as an assumption.
In Chapter~\ref{chap:infinite-second}, we show that \eqref{eq:asymp-laplace} is satisfied 
if $d_i = (1-F)^{-1}(i/n)$ for some distribution function $F$ supported on non-negative integers, and $(1-F)(x) = C k^{-(\tau-1)},$ for  $k \leq x< k+1.$
The next theorem considers the barely supercritical regime:
\begin{theorem}[Barely supercritical regime]\label{thm:barely-supercrit-tau23-intro} 
Suppose that $p_n\gg p_c(\lambda)$, and Assumption~\ref{assumption-infinite-second} and \eqref{eq:asymp-laplace} holds. Then, as $n\to\infty$,
\begin{equation}
\frac{|\sC_{\sss (1)}(p_n)|}{np_n^{1/(3-\tau)}} \pto \frac{\mu\kappa^{1/(3-\tau)}}{2^{(\tau-2)/(3-\tau)}}, \quad \frac{\rE(\sC_{\sss (1)}(p_n))}{np_n^{1/(3-\tau)}} \pto \frac{\mu\kappa^{1/(3-\tau)}}{2^{(4-\tau)/(3-\tau)}},
\end{equation}and for all $i\geq 2$, $|\sCi(p_n)| = \oP(np_n^{1/(3-\tau)})$, $\rE(\sCi(p_n)) = \oP(np_n^{1/(3-\tau)})$, where $\rE(G)$ denotes the number of edges in the graph $G$.
\end{theorem}
In the next section, we will briefly discuss the main challenges in proving Theorem~\ref{thm:tau23-main-intro}. 
The reader is referred to Chapter~\ref{chap:infinite-second} for a further rigorous treatment of all the above mentioned results.

\subsubsection*{Proof ideas for Theorem~\ref{thm:tau23-main-intro}.}
The proof consists of two key steps:
Set up a suitable exploration process which converges to a stochastic process; 
and analyze the scaling limit of the exploration process. 

\paragraph*{The exploration process.}
In Section~\ref{sec:Janson-construction-intro}, Janson's construction in Algorithm~\ref{algo:janson-construction-intro} played a crucial role in representing the percolated configuration model as a configuration model, and thus one could use the exploration process on a configuration model to make conclusions about the percolated graph. 
Unfortunately, this technique does not work anymore when $p_c \to 0$, because in that case red vertices outnumber non-red vertices, which makes the discovery of the non-red vertices rare during the exploration process. 
However, we can still \emph{approximate} $\CMP$ by a suitable configuration model, which is described as follows:
\begin{algo}\label{algo-alt-cons-perc-tau23-intro}
\begin{itemize}\normalfont
 \item[(S0)] Retain each half-edge with probability $p_n$. If the total number of retained half-edges is odd, attach a \emph{dummy} half-edge with vertex 1.
 \item[(S1)] Perform a uniform perfect matching between the half-edges retained in~(S0).
Pair unpaired half-edges sequentially with a uniformly chosen unpaired half-edge until all half-edges are paired. 
 The paired half-edges create edges in the graph, and we call the resulting graph $\mathcal{G}_n(p_n)$.
\end{itemize}
\end{algo}
Let $\Mtilde{\bld{d}}=(\tilde{d}_1,\dots,\tilde{d}_n)$ be the degree sequence induced by Algorithm~\ref{algo-alt-cons-perc-tau23-intro}~(S1).
Then $\cG_n(p_n)$ is distributed as $\rCM_n(\Mtilde{\bld{d}})$.
Moreover, the following proposition states that it is enough to consider the scaling limit of $\mathbf{Z}_n(\lambda)$ in $\cG_n(p_n)$:
\begin{proposition} \label{prop:coupling-lemma-intro}
Let $p_n$ be such that $\ell_np_n \gg \log(n)$. 
Then there exists a sequence $(\varepsilon_n)_{n\geq 1}$ with $\varepsilon_n\to 0 $, and a coupling such that, with high probability,
\begin{equation}
\mathcal{G}_n(p_n(1-\varepsilon_n))\subset\mathrm{CM}_n(\bld{d},p_n)\subset\mathcal{G}_n(p_n(1+\varepsilon_n)).
\end{equation}
\end{proposition}
\noindent 
The proof of Proposition~\ref{prop:coupling-lemma-intro} is provided in Proposition~\ref{c5:prop:coupling-lemma}. 
We can now set up the exploration process on $\rCM_n(\Mtilde{\bld{d}})$.
Consider the same exploration algorithm and process as in the $\tau\in (3,4)$ case defined in \eqref{eq:exploration-intro-tau34} on the graph $\rCM_n(\Mtilde{\bld{d}})$. 
 Recall that if $\cI_i^n(l)$ denotes the indicator that vertex $i$ is discovered before time $l$, then the exploration process is given by
   \begin{equation}\label{eq:expl-process-crit-tau23-intro}
    S_n(l)= \sum_{i\in [n]} \tilde{d}_i \mathcal{I}_i^n(l)-2l.
   \end{equation} 
   %Recall the notation in \eqref{eqn:notation-power*}. 
   Define the re-scaled version $\bar{\mathbf{S}}_n$ of $\mathbf{S}_n$ by $\bar{S}_n(t)= n^{-\rho}S_n(\lfloor tn^{\rho} \rfloor)$. Then,
   \begin{equation} \label{eqn::scaled_process-tau23-intro}
    \bar{S}_n(t)= n^{-\rho} \sum_{i\in [n]}\tilde{d}_i \mathcal{I}_i^n(tn^{\rho})-2t + o(1).
   \end{equation}
Now, using the estimate of the exploration probability in the above exploration process from  \eqref{eq:estimate-exploration-indicator-intro} 
\begin{eq}\label{error-larger-K-tau23-intro}
n^{-\rho}\E\bigg[\sum_{i>K}\tilde{d}_i\mathcal{I}_i^n(tn^{\rho})\Big\vert \Mtilde{\bld{d}}\bigg]\leq  \frac{t\sum_{i>K}\tilde{d}_i^2}{\tell_n -2tn^{\rho}}.
\end{eq}   
   Using Assumption~\ref{assumption-infinite-second}, along with the fact that $\Mtilde{d}_i\sim \mathrm{Bin}(d_i,p_n)$ independently over $i\in [n]$, it is not difficult to show that the probability that the final term in \eqref{error-larger-K-tau23-intro} is more than $\varepsilon$ tends to zero in the iterated limit \linebreak $\lim_{K\to\infty}\limsup_{n\to\infty}$. 
   See Lemma~\ref{c5:lem:perc-degrees} for more details. 
Therefore, it is enough to find the scaling limit of \eqref{eqn::scaled_process-tau23-intro} by truncating the sum upto the first $K$ terms and then taking the iterated limit as $n\to\infty$ and then $K\to\infty$.
Upon a closer inspection, one can verify that an analogue of Lemma~\ref{lem::convergence_indicators} is true for this case as well, which yields the following result:
\begin{theorem} \label{thm::convegence::exploration_process-tau23-intro}
Under  Assumption~\ref{assumption-infinite-second}, as $n\to\infty,$
 \begin{equation}
  \bar{\mathbf{S}}_n \dto \bar{\mathbf{S}}_\infty
 \end{equation} with respect to the Skorohod $J_1$ topology, where  the limiting process is defined by
 \begin{equation}\label{defn::limiting::process-tau23-intro}
\iS(t) =  \lambda\sum_{i=1}^{\infty} \theta_i\mathcal{I}_i(t)- 2 t,
\end{equation}for $\mathcal{I}_i(s):=\ind{\xi_i\leq s }$ with $\xi_i\sim \mathrm{Exp}(\theta_i/\mu)$ independently,
\end{theorem}

\paragraph*{Analysis of the limiting process.} 
The limiting process \eqref{defn::limiting::process-tau23-intro} has turned up for the first time in the critical random graph literature, and its description is not covered by the general framework provided by Aldous and Limic~\cite{AL98}.
One needs to establish several properties of the $\biS$ process to conclude that the rescaled component sizes converge to its ordered excursion lengths. 
For example, one first needs to show whether it is at all possible to order the excursion lengths.
Further, the function mapping a c\`adl\`ag function to its largest excursion is only continuous on a subset of \emph{good} c\`adl\`ag functions under the Skorohod $J_1$ topology, see Definition~\ref{c2defn::good_function}. 
Therefore, one needs to verify that the sample paths of \eqref{defn::limiting::process-tau23-intro} are good almost surely. 
The following proposition allows us to establish all those good properties:
\begin{proposition} \label{prop:limit-properties-tau23-intro}
\begin{itemize}
\item[(P1)] As $t\to\infty$, $\iS(t) \asto -\infty$. Thus, $\biS$ does not have an  excursion of infinite length almost surely.
\item[(P2)] For any $\delta >0$, $\biS$ has only finitely many excursions of length at least $\delta$ almost surely. 
\item[(P3)] Let $\mathcal{R}$ denote the set of excursion end-points of $\biS$. 
Then $\mathcal{R}$ does not have an isolated point.
\item[(P4)] For any $t>0$, $\PR(\iS(t)=\inf_{u\leq t}\iS(u))=0$.
\end{itemize}
\end{proposition}
\noindent 
The proof is mostly technical and is provided in Section~\ref{c5:sec:properties-exploration}.

\paragraph*{Completing the proof.} Let us now briefly outline the final ingredients of the proof of Theorem~\ref{thm::convegence::exploration_process-tau23-intro}. 
Firstly, if $A_l$ denotes the number of active half-edges after stage $l$ while implementing the exploration algorithm, then note that the probability of creating a surplus edge at time $i$ conditionally on $\mathscr{F}_{i-1}$ is given by
\begin{equation}
 \frac{A_{i-1}-1}{\tilde{\ell}_n-2i-1}= \frac{A_{i-1}}{\tilde{\ell}_n}(1+O(i/n))+O(n^{-1}),
\end{equation} uniformly for $i\leq Tn^{\rho}$ for any $T>0$. 
Therefore, the instantaneous rate of creating surplus edges at time $tn^{\rho}$, conditional on the past, is 
\begin{equation}\label{eqn:intensity-tau23-intro}
 n^{\rho}\frac{A_{\floor{tn^\rho}}}{n^{2\rho}\frac{\mu^2}{\sum_{i\geq 1}\theta_i^2}}\left( 1+o(1)\right) +o(1)= \frac{\sum_{i\geq 1}\theta_i^2}{\mu^2}\refl{\bar{S}_n(t)}\left( 1+o(1)\right) +o(1).
\end{equation}
As in the $\tau>4$ and $\tau\in (3,4)$ cases, this gives the asymptotics for the surplus edges within components. 
Finally, as in Sections~\ref{sec:finite-third-moment},~\ref{sec:infinite-third-moment-intro}, to conclude that the largest component sizes and surplus edges converge to ordered excursion lengths of $\biS$, one needs to show that the largest components are explored before time $O(n^{\rho})$, and $\mathbf{Z}_n(\lambda)$ is tight in $\Unot$.
The reader is referred to Section~\ref{c5:sec:CM-proof} for the final details of this proof.

\subsection{Effect of the single-edge constraint}
\label{sec:infinite-second-ECM-intro}
In this section, we will consider two random graph models that do not allow for self-loops and multiple-edges: the generalized random graph $\GRG$ and the erased configuration model $\ECM$. 
The critical window for percolation is given by 
\begin{equation}\label{eq:scaling-window-tau23-single-intro}
p_c =p_c(\lambda):= \lambda n^{-\frac{3-\tau}{2}}(1+o(1)), \quad \lambda \in (0,\infty).
\end{equation} 
Note that the critical value in \eqref{eq:scaling-window-tau23-single-intro} is strictly larger in order than \eqref{eq:crit-value-tau23-intro}.
In fact, this is the only regime of $\tau$ where the 
exponent for the critical window changes after deleting the self-loops and multiple edges of $\CM$.

Let us first state the result for $\GRG$. 
We assume the following about the sequence of weights:
\begin{assumption}\label{assumption-GRG-intro}
\normalfont
For some $\tau \in (2,3)$, consider the distribution function satisfying $(1-F)(x) = C x^{-(\tau-1)}$ and let $w_i = (1-F)^{-1}(i/n)$. 
\end{assumption} 
\noindent In the above case, if $W_n$ denotes the weight of a vertex chosen uniformly at random from $[n]$, then 
\begin{equation}
\E[W_n] = \frac{1}{n}	\sum_{i\in [n]}w_i \to \mu = \E[W].
\end{equation}
Moreover, $$n^{-\alpha}w_i  = c_{\sss F}i^{-\alpha},$$ for some constant $c_F >0$.
Throughout $c_{\sss F}$ will denote the special constant appearing above.
Assumption~\ref{assumption-GRG-intro} is strictly stronger than Assumption~\ref{assumption-infinite-second} in the sense that Assumption~\ref{assumption-GRG-intro} specifies not only the high-degree vertices but all the $w_i$'s. 
This is required in the proofs as one needs precise estimates of quantities like $\E[W_n \ind{W_n\geq K_n}]$.
See Lemma~\ref{c5:lem:order-estimates} for many such required estimates. 

Let $\sCi(p)$ denote the $i$-th largest component of $\rGRG(\bw,p)$, and define $W_{\sss (i)}(p):= \sum_{k\in \sCi(p)} w_k$. 
We will consider the scaling limits of $(W_{\sss (i)}(p_c))_{i\geq 1}$ and $(\sCi(p_c))_{i\geq 1}$. 
To describe the limiting object, consider the graph $G_\infty(\lambda)$ on the vertex set $\Z_+$, where the vertices $i$ and $j$ are joined independently by Poisson$(\lambda_{ij})$ many edges with $\lambda_{ij}$ given by 
\begin{equation}\label{defn:lambda-ij-intro}
\lambda_{ij}:=\lambda^2\int_0^\infty \theta_i(x) \theta_j(x) \dif x, \quad \theta_i(x):= \frac{c_{\sss F}^2i^{-\alpha} x^{-\alpha}}{\mu+c_{\sss F}^2i^{-\alpha}x^{-\alpha}}.
\end{equation}
Let $W_{\sss (i)}^{\infty}(\lambda)$ denote the $i$-th largest element of the set $$\Big\{\sum_{i\in \sC}\theta_i: \sC \text{ is a connected component of }G_{\infty}(\lambda)\Big\}.$$ 
The following describes the component sizes of $\mathrm{GRG}_n(\bw,p_c(\lambda))$:
\begin{theorem}[Critical regime for $\GRG$] \label{thm:main-crit-tau23-single-intro}
There exists an absolute constant $\lambda_0$ such that for any $\lambda\in (0,\lambda_0)$,
under \textrm{Assumption~\ref{assumption-GRG-intro}}, as $n\to \infty$,
\begin{equation} \label{eq:weight-single-edge-intro}
n^{-\alpha} (W_{\sss (i)}(p_c(\lambda)))_{i\geq 1} \dto (W_{\sss (i)}^{\infty}(\lambda))_{i\geq 1},
\end{equation} and 
\begin{equation} \label{eq:compsize-single-edge-intro}
(n^{\alpha}p_c)^{-1} (|\sC_{\sss (i)}(p_c(\lambda))|)_{i\geq 1} \dto (W_{\sss (i)}^{\infty}(\lambda))_{i\geq 1},
\end{equation} with respect to the $\ell^2_{\shortarrow}$ topology.
\end{theorem}
For the erased configuration model, we will assume that $\bld{d}$ satisfies Assumption~\ref{assumption-GRG-intro}.
Since $\bld{d}$ can only take integer values, the support of $F$ is taken to be the set of non-negative integers.
The limiting object for $\ECM$ is similar to that in $\GRG$ by now taking
\begin{equation}\label{defn:lambda-ij-ECM-intro}
\theta_i(x):=1-\e^{-\frac{c_{\sss F}^2i^{-\alpha}x^{-\alpha}}{\mu}}.
\end{equation}
\begin{theorem}[Critical regime for $\ECM$]\label{thm:main-ECM-intro}
There exists an absolute constant $\lambda_0$ such that for any $\lambda\in (0,\lambda_0)$,
under  Assumption~\ref{assumption-GRG-intro},
the scaling limit results in Theorem~\ref{thm:main-crit-tau23-single-intro} holds for $\mathrm{ECM}_n(\bld{d},p_c(\lambda))$ with identical limit objects described by \eqref{defn:lambda-ij-ECM-intro} above.
\end{theorem}
%\subsubsection{Behavior in the near-critical regimes} 
Next, we state the result about the barely subcritical regime under the single-edge constraint. 
The following result holds for percolation on both $\GRG$ and $\ECM$:
\begin{theorem}\label{thm:barely-subcrit-single-edge}
Suppose that \textrm{Assumption~\ref{assumption-GRG-intro}} holds and $p_n \ll p_c(\lambda)$. 
Then, for any fixed $i\geq 1$, as $n\to\infty$, 
\begin{equation}
\frac{|\sC_{\sss (i)}(p_n)|}{n^{\alpha}p_n} \pto c_{\sss F}i^{-\alpha},\quad \text{and}\quad \frac{W_{\sss (i)}(p_n)}{n^{\alpha}} \pto c_{\sss F}i^{-\alpha}.
\end{equation}
\end{theorem}
\noindent 
Under the single-edge constraint, the exact asymptotics in the barely supercritical case is left to future work. 
In the proofs under the single-edge constraint, coming up with a tractable exploration process for the clusters seems challenging.
The only tool we have is an estimate of the connection probabilities of hubs via an intermediate vertex, which allows us to estimate expectations of several moments of component sizes and total weights of those component. 
These are often referred to as susceptibility functions. 
The susceptibility functions allow us to ignore negligible contributions on the total weights of cluster using the first-moment method. 
Unfortunately, the first-moment method does not work in the barely-supercritical regime, or for high values of $\lambda$ in Theorems~\ref{thm:main-crit-tau23-single-intro},~\ref{thm:main-ECM-intro}. 
This is the reason for assuming $\lambda \in (0,\lambda_0)$ in those theorems. 
The proof for general $\lambda$ is an open question.

The critical window changes due to the single-edge constraint, as noted in \eqref{eq:crit-value-tau23-intro} and \eqref{eq:scaling-window-tau23-single-intro}.
However, in both cases, the critical window is the regime where hubs start getting connected. 
More precisely, the critical window is given by those values of $p$ such that for any fixed $i, j\geq 1$
\begin{eq}
\lim_{n\to\infty} \PR(i, j\text{ are in the same component of the }p \text{-percolated graph} ) \in (0,1).
\end{eq}
For the configuration model, hubs are connected directly with strictly positive probability.
In $\CM$, vertices $i$ and $j$ share $d_id_j/(\ell_n-1)$ edges in expectation.
Thus for hubs with $d_i=  O(n^{\alpha})$ and $d_j = O(n^{\alpha})$, $O(1)$ many edges survive after percolation in expectation in the critical window \eqref{eq:crit-value-tau23-intro}.
On the other hand, whenever $p \to 0$, hubs are never connected directly under the single-edge constraint.
We will see in Chapter~\ref{chap:infinite-second} and in the heuristic arguments below that the value $p_c$ in \eqref{eq:scaling-window-tau23-single-intro} is such that the hubs are connected to each other via intermediate vertices which have degree $\Theta(n^{\rho})$.
Intuitively, in the barely subcritical regime, all the hubs are in different components.
Hubs start forming the critical components as $p$ varies over the critical window, and finally in the barely super-critical regime the giant component is formed which contains all the hubs. 
This feature is also observed in the $\tau \in (3,4)$ case \cite{BHL12}.

In the next section, we only outline the proof of Theorem~\ref{thm:main-crit-tau23-single-intro}. 
The proof of Theorem~\ref{thm:main-ECM-intro} uses similar arguments, but additional complications arise due to the dependence between occurrence of edges. 
We refer the reader to Section~\ref{c5:sec:proof-GRG} for rigorous derivations of the above results.

\subsubsection*{Proof ideas for Theorem~\ref{thm:main-crit-tau23-single-intro}.}
The key idea of the proof is to first consider total weights of components. 
In this section, we will use the notation $C$ as a generic notation for a positive constant that may only depend on $F$.
Let $\sC(i)$ denote the component containing vertex $i$ and $W(i) = \sum_{v\in \sC(i)}w_v$. 
Further, let $W_k(i):= \sum_{v\in \sC(i),\ \dst(i,v) = k}w_v$, where $\dst(\cdot,\cdot)$ denotes the graph-distance.
The idea is to show that the primary contribution to $W(i)$ comes from vertices at finite, even distance.
This is because hubs are not connected directly, but via intermediate vertices.
To identify negligible contributions to $W(i)$, we use the first-moment method.
Note that $\E[W_k(i)] \leq \sum_{j \in [n]} w_j f_k(i,j)$, where $f_k(i,j)$ denotes the probability that there is a path of length $k$ from $i$ to~$j$. 
The key ingredient in our proof is the following lemma which allows us to compute $f_k(i,j)$.
\begin{lemma}[Two-hop connection probabilities]\label{lem:2-nbd-prob-intro}
For all $n\geq 1$, and $i,j\in [n]$, 
\begin{equation}\label{2hop-path-prob-pref-attachment-n-intro}
p_{ij}(2):= p_c^2 \sum_{v\in [n]} \frac{w_iw_v^2w_j}{(\ell_n+w_iw_v)(\ell_n+w_jw_v)} \leq \frac{C\lambda^2}{(i\wedge j)^{1-\alpha} (i\vee j)^{\alpha}}.
\end{equation}
\end{lemma}
Note that $p_{ij}(2)$ is the expected number of connections between $i$ and $j$ via an intermediate vertex. 
The upper bound in \eqref{2hop-path-prob-pref-attachment-n-intro} is exactly the same as the connection probabilities in a preferential attachment model \cite{DHH10,BR04} (See \cite[Lemma 2.2]{DHH10}).
Therefore, existing path-counting estimates for the preferential attachment model \cite[Lemma 2.4]{DHH10} yield, for $1-\alpha<b<\alpha$,
\begin{equation}\label{eq:geometric-decay-intro}
f_{2k}(i,j) \leq \frac{(C\lambda^2)^{k}}{(i\wedge j)^b(i\vee j)^{1-b}}.
\end{equation}
The geometric bound in \eqref{eq:geometric-decay-intro} gives
\begin{eq}\label{eq:path-count-geom-intro}
\E[W_{2k}(i)] &\leq \sum_{j\in [n]}w_{j} f_{2k}(i,j) \\
&\leq n^{\alpha}(c_0\lambda^2)^k \bigg[\sum_{j<i} \frac{1}{j^{\alpha}} \frac{1}{j^{b}i^{1-b}} + \sum_{j>i} \frac{1}{j^{\alpha}} \frac{1}{i^{b}j^{1-b}}\bigg] \\
&\leq C n^\alpha (c_0\lambda^2)^k \Big[\frac{1}{i^{1-b}} + \frac{1}{i^{\alpha}}\Big] \leq C (c_0\lambda^2)^k\frac{n^{\alpha}}{i^{1-b}}\\
&\leq C (c_0\lambda^2)^k w_i i^{b- (1-\alpha)}.
\end{eq}
The final term decays geometrically with $k$ when $c_0\lambda^2<1$.
This is the precise reason why the condition $\lambda\in (0,\lambda_0)$ is needed in Theorem~\ref{thm:main-crit-tau23-single-intro}.
Suitable upper bounds on $\E[W_{2k+1}(i)]$ can also be obtained using \eqref{eq:geometric-decay-intro}. 
Thus the next proposition follows using the first-moment method:
\begin{proposition} \label{prop:tail-particular-component-intro}
For any fixed $i\geq 1$, $\varepsilon > 0$, and $\lambda\in (0,\lambda_0)$
\begin{gather*}
\lim_{K\to\infty}\limsup_{n\to\infty}\PR\bigg(\sum_{k>K}W_{2k}(i) > \varepsilon n^{\alpha}\bigg) = 0,  \\ \lim_{n\to\infty} \PR\bigg(\sum_{k=0}^\infty W_{2k+1}(i) > \varepsilon n^{\alpha} \bigg) = 0.
\end{gather*}
\end{proposition}
Hence, the primary contribution to $W(i)$ comes from  weights of vertices at finite, even distance.
The next proposition goes one step further and says that, even among the vertices at finite, even distance, the major contribution comes from the hubs.
For $\delta>0$, define $V_{L}(\delta) := \{v:w_v>\delta n^{\alpha}\}$, and  $W_{k}(i,\delta):= \sum_{v\notin V_L(\delta), \dst(v,i)=k}w_v$. 
\begin{proposition}\label{prop-non-hub-contribution-K-nbd-intro}
For any fixed $i\geq 1$, $K\geq 1$, and $\varepsilon>0$, 
\begin{equation}
\lim_{\delta \to 0}\limsup_{n\to\infty}\PR\bigg(\sum_{k=1}^K W_{2k}(i,\delta)>\varepsilon n^{\alpha} \bigg) =0.
\end{equation}
\end{proposition}
Combining Propositions~\ref{prop:tail-particular-component-intro},~\ref{prop-non-hub-contribution-K-nbd-intro}, obtaining asymptotics of $W(i)$ now boils down to identifying the hubs which are in $\sC(i)$.
From \eqref{2hop-path-prob-pref-attachment-n-intro}, hubs are connected to each other via some intermediate vertices with probability bounded away from zero. 
Let $X_{ij}$ denote the number of paths of length 2 from $i$ to $j$.
Note that, for any $i,j\in V_L(\delta)$ ($i\neq j$),
\begin{eq}\label{eq:distn-Xij-intro}
X_{ij} = \sum_{v\neq i,j} \ber\bigg(\frac{w_iw_jw_v^2p_c^2}{(\ell_n+w_iw_v)(\ell_n+w_jw_v)}\bigg),
\end{eq}with the different Bernoulli random variables in the sum \eqref{eq:distn-Xij-intro} being independent.
Now, the primary contribution in the sum \eqref{eq:distn-Xij-intro} comes from vertices with weight $\Theta(n^{\rho})$.
In fact using some estimates of the moments of $\bld{w}$, we can show that 
\begin{gather*}
\sum_{v: w_v<\delta n^{\rho}}\ber\bigg(\frac{w_iw_jw_v^2p_c^2}{(\ell_n+w_iw_v)(\ell_n+w_jw_v)}\bigg) \leq C\delta^{3-\tau}, \\ \sum_{v: w_v>\delta^{-1} n^{\rho}}\ber\bigg(\frac{w_iw_jw_v^2p_c^2}{(\ell_n+w_iw_v)(\ell_n+w_jw_v)}\bigg) \leq C\delta^{\tau-1},
\end{gather*}
%
%$\E[(I)] \leq C_1\delta^{3-\tau},$ and $(III) \leq C \delta^{\tau-1}$; 
see Section~\ref{c5:sec:hub-structure}. Thus,
\begin{eq}\label{eq:Xij-approximation-intro}
X_{ij} &= \sum_{v: w_v\in [\delta n^{\rho},\delta^{-1}n^{\rho}]} \ber\bigg(\frac{w_iw_jw_v^2p_c^2}{(\ell_n+w_iw_v)(\ell_n+w_jw_v)}\bigg) + E(\delta,n) \\
&= X_{ij}(\delta) + E(\delta,n),
\end{eq} where for any $\varepsilon>0$ 
$\lim_{\delta \to 0}\limsup_{n\to\infty}\PR(E(\delta,n) >\varepsilon) = 0.$
Define 
\begin{eq}
\lambda_{ij}(\delta) = \sum_{v: w_v\in [\delta n^{\rho},\delta^{-1}n^{\rho}]} \frac{w_iw_jw_v^2p_c^2}{(\ell_n+w_iw_v)(\ell_n+w_jw_v)} .
\end{eq}
Thus, the above term can be approximated by a Poisson random variable using Stein's method and 
\begin{equation}
\lim_{\delta\to 0} \lim_{n\to\infty} \lambda_{ij}(\delta) = \lambda^2\int_0^\infty \theta_i(x) \theta_j(x) \dif x, \quad \theta_i(x):= \frac{c_{\sss F}^2i^{-\alpha} x^{-\alpha}}{\mu+ c_{\sss F}^2 i^{-\alpha}x^{-\alpha}}.
\end{equation}
%Further, $(X_{ij}(\delta))_{i,j\in V_L}$ is also asymptotically independent. 
Recall the description of the graph $G_{\infty}(\lambda)$ from Theorem~\ref{thm:main-crit-tau23-single-intro}.
The above proves that the limit of $n^{-\alpha}W(i)$ is basically $\sum_{v\in C(i)}v^{-\alpha}$, $C(i)$ being the connected component containing $i$ in $G_{\infty}(\lambda)$.
$|\sC(i)| = p_c(\lambda)W(i)(1+\oP(1))$ is proved using the second-moment method.
Finally, to prove the scaling limit of the ordered vector of component sizes and weights, we show that the vectors are tight in $\ell^2_{\shortarrow}$ in Section~\ref{c5:sec:l2-tightness}. 
This completes the sketch of the proof of Theorem~\ref{thm:main-crit-tau23-single-intro}.

\section{Summary of contributions}
In summary, we analyze the critical window for the percolation process on random graph models such as $\CM$, $\UM$, $\ECM$ and $\GRG$.
When the degree distribution satisfies a power law with exponent $\tau$, 
three universality classes arise depending on whether $\tau > 4$ (finite third moment), $\tau\in (3,4)$ (infinite third moment) and $\tau \in (2,3)$ (infinite second moment). 
Let us summarize the main contributions of this thesis below: 
\paragraph*{Component sizes and complexity for finite third-moment case.}
In Chapter~\ref{chap:thirdmoment}, we obtain precise asymptotics for the component sizes and the surplus edges for  $\mathrm{CM}_n(\boldsymbol{d},p_c(\lambda))$ and $\mathrm{UM}_n(\boldsymbol{d},p_c(\lambda))$ in the \emph{critical window} of the phase transition under a finite third-moment condition. 
The main contribution of this work is that we derive the strongest scaling limit results in the literature under optimal assumptions. 
This finite third-moment assumption is also necessary for Erd\H{o}s-R\'enyi type scaling limits, since, amongst other reasons, the third-moment appears in the scaling limit. 
Also, we prove the joint convergence of the component sizes and the surplus edges under a strong topology namely the $\Unot$-topology, which improves the previously known results \cite{R12,Jo10,NP10b} substantially. 
The re-scaled vector of component sizes (ordered in a decreasing manner) is shown to converge to the ordered excursion lengths of a reflected inhomogeneous Brownian motion with a negative parabolic drift.
Moreover, the surplus edges converge jointly with the component sizes under $\Unot$-topology to Poisson random variables with parameters being the areas under the above mentioned ordered excursion lengths. 

\paragraph*{Component sizes and complexity for infinite third-moment case.}
In Chapter~\ref{chap:secondmoment}, we consider the critical behavior for the component sizes and surplus edges in the infinite third-moment case. 
We consider a general set of assumptions, which include the case that the empirical degree distribution satisfies $\prob{D_n\geq k}\sim L_0(k)/k^{\tau-1}$ for some $\tau\in (3,4)$ and $L_0(\cdot)$ a slowly-varying function.
The largest connected components turn out to be of the order $n^{(\tau-2)/(\tau-1)}L(n)^{-1}$ and the width of the scaling window is of the order $n^{(\tau-3)/(\tau-1)}L(n)^{-2}$ for some slowly-varying function $L(\cdot)$. 
 The joint distribution of the re-scaled component sizes and the surplus edges is shown to converge in distribution to a suitable limiting random vector under $\Unot$-topology.
The scaling limits for the re-scaled ordered component sizes can be described in terms of the ordered excursions of a certain thinned L\'evy process that only depends on the asymptotics of the \emph{high}-degree vertices.
This universality class was first identified in \cite{BHL12} in the context of Norros-Reittu random graphs. 
Further, the scaling limits for the surplus edges can be described by Poisson random variables with the parameters being the areas under the excursions of the thinned L\'evy process. 
%The results hold for general critical configuration models under much general assumptions than $\prob{D_n\geq k}\sim L_0(k)/k^{\tau-1}$.
The results also hold conditioned on the graph being simple, thus solving an open question \cite[Conjecture 8.5]{Jo10}.

\paragraph*{Evolution of components and surplus edges.}
As  $\lambda$ increases over the critical window, the component sizes and surplus edges jointly evolve, with components merging with each other, and more surplus edges getting created.
In Chapters~\ref{chap:thirdmoment} and~\ref{chap:secondmoment}, the evolution of the component sizes and surplus edges is shown to converge to a version of the \emph{augmented multiplicative coalescent} process both in the finite third-moment and infinite third-moment regimes.
%that was first introduced in \cite{BBW12}. 
In fact, in the $\tau\in (3,4)$ case, our results imply that there exists a version of the augmented multiplicative coalescent process whose one-dimensional distribution can be described by the excursions of a thinned L\'evy process and a Poisson process with the intensity being proportional to the thinned L\'evy process, which is also novel.

\paragraph*{Metric structure of critical components in the infinite third-moment case.}
In Chapter~\ref{chap:mspace}, we consider the metric structure of the critical components for $\mathrm{CM}_n(d,p_c(\lambda))$, with degree-exponent $\tau\in (3,4)$.
In this context, candidate limit law of maximal components with each edge rescaled to have length $1/n^{(\tau-3)/(\tau - 1)}$ was established recently in \cite{BHS15}. 
%It was also shown that one fundamental model (rank-one model, closely related to the Chung-Lu and Norros-Reittu model) converges to the asserted limits. 
In this work, we establish sufficient uniform asymptotic negligibility (UAN) conditions for a random graph model in the barely subcritical regime which, in combination with the appropriate merging dynamics of components as one increases edge density through the critical regime, implies convergence to limits obtained in~\cite{BHS15}. 
This result identifies the domain of attraction for the limit laws established in \cite{BHS15}, which holds for general sequences of dynamically evolving graphs.
%	This is described in Theorem \ref{thm:univesalty}. 
%	Since we need to setup a number of constructs (including the limit objects), a statement of the result is deferred till all of these objects have been defined.
 As a canonical example, we analyze the critical regime for percolation on the uniform random graph model (and the closely associated configuration model) with prescribed degree distribution that converges to a heavy-tailed degree distribution. 
	%Limit laws for the metric structure of maximal components in the critical regime are described in Theorems~\ref{thm:main}~and~\ref{thm:main-simple}. 
 In order to carry out the above analysis and in particular check the UAN assumptions, we establish refined bounds for various susceptibility functionals and diameter in the barely subcritical regime of the configuration model which are of independent interest.
 In Chapter~\ref{chap:mspace-GHP}, we prove the global lower mass-bound property for these critical components, which proves the convergence of the largest components with respect to the Gromov-Hausdorff-Prokhorov topology.
 The latter yields the compactness of the scaling limit in Chapter~\ref{chap:mspace}, as well as scaling limits of global functionals like diameter.

\paragraph{Component sizes in the infinite second-moment case.}
In Chapter~\ref{chap:infinite-second}, we consider a new universality class which corresponds to the degree exponent $\tau\in (2,3)$. 
In this regime, the critical behavior is observed when the percolation probability tends to zero with the network size. 
We identify the critical window for the configuration model, the erased configuration model and the generalized random graph. 
The critical window for graphs with single-edges is given by $p_c \sim \lambda n^{-(3-\tau)/2}$, which is much larger than $p_c \sim n^{-(3-\tau)/(\tau-1)} $ for the multigraph $\CM$. 
This feature is unique to the critical behavior in the $\tau \in(2,3)$ regime.
The component sizes in both cases scale as $n^{\alpha} p_c$. 
 For the configuration model multigraph, we obtain scaling limits for the largest component sizes and surplus edges under a strong topology. 
Further, the diameter of the largest components is shown to be a tight sequence of random variables. 
To establish that the scaling limits correspond to the critical behavior, we further look at the near-critical behavior and derive the asymptotics for the component sizes in the so-called barely sub/supercritical regimes.
On the other hand, under the single-edge constraint, we identify the scaling limit of the largest component sizes in the part of the critical window, where the criticality parameter is sufficiently small. 
The proof where the criticality parameter can be arbitrary is an ongoing work.

 This is the first work on critical percolation on random graphs in the $\tau\in (2,3)$ setting, thus the techniques are novel. 
The primary difficulty in this setting is that the exploration process approach does not work. 
For the configuration model, this difficulty is circumvented by sandwiching the percolated graphs by two configuration models, which yield the same scaling limits for the component sizes. 
The main novelty in the proof of the configuration model is the analysis of the limiting exploration process. 
 On the other hand, in the single-edge constraint scenario, the proofs 
require a more detailed understanding of the structure of the critical components. 
It turns out that the hubs do not connect to each other directly, but there are some special vertices that interconnect hubs. 
This interconnected structure forms the \emph{core} of the critical components, and the 1-neighborhood of the core spans the critical components.
We primarily use path counting techniques here since the exploration process approach does not seem to work anymore. 
For path-counting, we compare the connection probabilities between the hubs to the connection probabilities in a preferential attachment model, which is interesting in its own right.

%\bibliographystyle{apa}
%\bibliography{thesis}
%
%
%%
%%\todo[inline]{Fix chapter references}
%%\todo[inline]{Refer to Clara's paper}
%%
%%
%
%
%\end{document} 
%\cleardoublepage

\chapter[Critical window: Finite third moment]{Critical window for the configuration model: finite third moment degrees}
\label{chap:thirdmoment}
{\small \paragraph*{Abstract.}
    We investigate the component sizes of the critical configuration model, as well as the related problem of critical percolation  on a supercritical configuration model.
 We show that, at criticality, the finite third moment assumption on the asymptotic degree distribution is enough to guarantee that the component sizes are $O(n^{2/3})$ and the re-scaled component sizes converge to the excursions of an inhomogeneous Brownian Motion with a parabolic drift. 
This identifies the minimal condition for the critical behavior to be in the Erd\H{o}s-R\'enyi universality class. 
 We use percolation to study the evolution of these component sizes while passing through the critical window and  show that the vector of percolation cluster-sizes, considered as a process in the critical window, converge to the multiplicative coalescent process in finite dimensions. This behavior was first observed for Erd\H{o}s-R\'enyi random graphs by Aldous (1997) and our results provide support for the empirical evidences that the nature of the phase transition for a wide array of random-graphs are universal in nature. 
 Further, we show that the re-scaled component sizes and surplus edges converge jointly under a strong topology, at each fixed location of the scaling window.} 
\vspace{.3cm}

\noindent {\footnotesize Based on the manuscript: Souvik Dhara, Remco van der Hofstad, Johan S.H. van Leeuwaarden, and Sanchayan Sen, \emph{Critical window for the configuration model: finite third moment degrees} (2016), Electronic Journal of Probability 22, no.~16, 1--33
}
\vfill

In this chapter, we focus on the critical behavior of the \emph{configuration model}, and critical percolation on these graphs when the empirical degree distribution satisfies a finite third-moment condition. 
We include detailed proofs of all scaling limit results mentioned in Chapter~\ref{chap:introduction} about the finite third-moment case.
The scaling limit result for the component sizes and surplus edges holds for the critical configuration model which includes critical percolation on the configuration model as a special case.
We also study percolation on a super-critical configuration model to show that the scaled vectors of component sizes at multiple locations of the percolation scaling window converge jointly to the finite-dimensional distributions of a multiplicative coalescent process. 
The scaling limit results show that component sizes and surplus edges of $\mathrm{CM}_n(\boldsymbol{d})$ in the critical regime, for a large collection of possible degree sequences~$\boldsymbol{d}$, lie in the same universality class as for the Erd\H{o}s-R\'enyi random graph~\cite{A97}.
Before stating the main results, we need to introduce some notation and concepts.

\section{Definitions and notation} \label{c1:sub_sec_notation}
Recall the definitions from Chapter~\ref{sec:notation-intro}.
For a triangular array of random variables $(f_{k,n})_{k,n\geq 1}$, we write phrases like $f_{k,n} = \OP(n^{\alpha})$ (respectively  $\oP(n^{\alpha})$), uniformly over $k\leq n^{\beta}$ to mean that $\sup_{k\leq n^{\alpha}}|f_{k,n}| = \OP(n^{\alpha})$ (respectively $\oP(n^{\alpha})$).
We also write $f_n=O_{\sss E}(a_n)$ (respectively $f_n=o_{\sss E}(a_n)$) to denote that $\sup_{n\geq 1}\expt{a_n^{-1}f_n}<\infty$ (respectively $\lim_{n\to\infty}\expt{a_n^{-1}f_n}=0$).

In this chapter, $\mathbf{B}^{\lambda}_{\mu,\eta}$ denotes an inhomogeneous Brownian motion with a parabolic drift, given by
\begin{equation}\label{c1:def:inhomogen:BM}
B^{\lambda}_{\mu,\eta}(s)=\frac{\sqrt{\eta}}{\mu} B(s) +\lambda s-\frac{\eta s^2}{2\mu^{3}},
\end{equation}
 where $\mathbf{B}= ( B(s) )_{s \geq 0}$ is a standard Brownian motion, and $\mu>0$, $\eta>0$ and $\lambda\in \R$ are constants. Define the reflected version of $\mathbf{B}^{\lambda}_{\mu,\eta}$ as
\begin{equation} \label{c1:defn::reflected-BM}
W^{\lambda}(s) = B^{\lambda}_{\mu,\eta}(s) - \min_{0 \leq t \leq s} B^{\lambda}_{\mu,\eta}(t).
\end{equation}
For a function $f\in \mathbb{C}[0,\infty)$, an interval $\gamma=(l,r)$ is called an \emph{excursion above past minima} or simply an \emph{excursion} of $f$ if $f(l)=f(r)=\min_{u\leq r}f(u)$ and $f(x)>f(r)$ for all $l<x<r$.   $|\gamma|=r(\gamma)-l(\gamma)$ will denote the length of the excursion $\gamma$.
\par Also, define the counting process of marks $\mathbf{N}^\lambda= ( N^\lambda(s) )_{s \geq 0}$ to be a unit-jump process with intensity $\beta W^\lambda(s)$ at time $s$ conditional on $( W^\lambda(u) )_{u \leq s}$ so that
\begin{equation} \label{c1:defn::counting-process}
N^\lambda(s) - \int\limits_{0}^{s} \beta W^\lambda(u)du
\end{equation} is a martingale (see \citep{A97}). 
For an excursion $\gamma$, let $N(\gamma)$ denote the number of marks in the interval $[l(\gamma),r(\gamma)]$.
\begin{remark} \label{c1:defn_U_0_process}\normalfont By \cite[Lemma 25]{A97}, the excursion lengths  of $\mathbf{B}^{\lambda}_{\mu,\eta}$ can be rearranged in decreasing order of length and the ordered excursion lengths can be considered as a vector in $\ell^2_{\shortarrow}$, almost surely. Let $\boldsymbol{\gamma}^\lambda = ( | \gamma^\lambda_{j}|)_{j\geq1}$ be the ordered excursion lengths of $\mathbf{B}^{\lambda}_{\mu,\eta}$. Then,  $( | \gamma_j^\lambda | , N(\gamma_j^\lambda))_{ j \geq 1} $ can be ordered as an element of  $\mathbb{U}^0_{\shortarrow}$ almost surely by \cite[Theorem 3.1~(iii)]{BBW12}. We denote this element of $\mathbb{U}^0_{\shortarrow}$ by $\mathbf{Z}(\lambda)= ((Y_j^\lambda,N_j^\lambda))_{j\geq 1}$ obtained from $(| \gamma_j^\lambda \big| , N(\gamma_j^\lambda))_{ j \geq 1} $.
\end{remark}
Finally, we define a Markov process $\mathbb{X}:=(\mathbf{X}(s))_{s\in\R}$ on $\mathbb{D}(\R,\ell^2_{\shortarrow})$, called the \emph{multiplicative coalescent process}. 
Think of $\mathbf{X}(s)$ as a collection of masses of some particles (possibly infinite) in a system at time $s$. 
Thus the $i^{th}$ particle has mass $X_i(s)$ at time~$s$. 
The evolution of the system takes place according to the following rule at time $s$: At rate $X_i(s)X_j(s)$,  particles $i$ and $j$ merge into a new particle of mass $X_i(s)+X_j(s)$.
This process has been extensively studied in \cite{A97,AL98}. In particular, Aldous~\cite[Proposition 5]{A97} showed that this is a Feller process.

\section{Main results}
\label{c1:sec_results}
%  Consider $n$ vertices labeled by $[n]:=\{1,2,...,n\}$ and a sequence of degrees $\boldsymbol{d} = ( d_i )_{i \in [n]}$ such that $\ell_n = \sum_{i \in [n]}d_i$ is even. For convenience we suppress the dependence of the degree sequence on $n$ in the notation. The configuration model on $n$ vertices with degree sequence $\boldsymbol{d}$ is constructed as follows:
% \begin{itemize}
% \item[] Equip vertex $j$ with $d_{j}$ stubs, or \emph{half-edges}. Two half-edges create an edge once they are paired. Therefore, initially we have $\ell_n=\sum_{i \in [n]}d_i$ half-edges. 
% We pick any one half-edge and pair it with a uniformly chosen half-edge from the remaining unpaired half-edges and keep repeating the above procedure until we exhaust all the unpaired half-edges.
% \end{itemize}
%  Note that the graph constructed by the above procedure may contain self-loops or multiple edges. It can be shown \cite[Proposition 7.15]{RGCN1} that, conditionally on $\mathrm{CM}_{n}(\boldsymbol{d})$ being simple, the law of such graphs is uniform over all possible simple graphs with degree sequence $\boldsymbol{d}$.
 In this section, we discuss the main results in this chapter. 
 We start by recalling the definition of the configuration model from Chapter~\ref{sec:intro-models}, which is denoted by $\CM$.  
 Our results are twofold and concern (i) general $\mathrm{CM}_n(\boldsymbol{d})$ at criticality, and (ii) critical percolation on a super-critical configuration model, both under a finite third moment assumption.
\subsection{Configuration model results}
For each $n\geq 1$, let $\bld{d}=\bld{d}_n=(d_i)_{i\in [n]}$ be a degree sequence such that $\ell_n = \sum_{i\in [n]}d_i$ is even. 
We suppress $n$ in the notation of the degree sequence to simplify writing.
 We consider a sequence of configuration models $(\mathrm{CM}_n(\boldsymbol{d}))_{n\geq 1}$ satisfying the following conditions:
 \begin{assumption} \label{c1:assumption1}
 \normalfont Let $D_n$ denote the degree of a vertex chosen uniformly at random independently of the graph. Then the following holds as $n\to\infty$:
 \begin{enumerate}[(i)]
  \item \label{c1:assumption1-1}(\emph{Weak convergence of $D_n$}) $D_n \dto D$ for some random variable $D$ such that $\mathbb{E}[D^3] < \infty $.
 \item \label{c1:assumption1-2}(\emph{Uniform integrability of $D_n^3$})
%  \begin{equation}
%   \label{c1:assumption_eq2}
    $\expt{D_n^3}= \frac{1}{n}\sum_{i\in [n]}d_{i}^{3} \to \mathbb{E}\big[ D^{3}\big].$
%  \end{equation}
  \item \label{c1:assumption1-3}(\emph{Critical window}) 
  %\begin{equation}
   $\nu_{n}:= \frac{\sum_{i\in [n]}d_i(d_i-1)}{\sum_{i\in [n]}d_i} =1+\lambda n^{-1/3}+o(n^{-1/3}),$
  %\end{equation} 
  for some constant $\lambda\in \mathbb{R}$,
  \item \label{c1:assumption1-4}$\prob{D=1}>0$.
 \end{enumerate}
 \end{assumption}
  Suppose that $\mathscr{C}_{\sss(1)}$, $\mathscr{C}_{\sss(2)}$,... are the connected components of $\mathrm{CM}_{n}(\boldsymbol{d})$ in decreasing order of size. 
  In case of a tie, order the components according to the values of the minimum indices of vertices in those components. For a connected graph $G$, let $\surp{G}$:= $($number of edges in $G) - (|G| - 1)$ denote the number of surplus edges. Intuitively, this measures the deviation of $G$ from a tree-like structure. Let  $\sigma_{r}= \expt{D^r}$ and consider the reflected Brownian motion, the excursions, and the counting process $\mathbf{N}^\lambda$ as defined in Section \ref{c1:sub_sec_notation} with parameters
 \begin{equation}\label{c1:parameter}
 \mu:=\sigma_1, \quad \eta:= \sigma_{3} \mu - \sigma_{2}^{2},\quad \beta := 1/ \mu.
 \end{equation} Let $\boldsymbol{\gamma}^{\lambda}$ denote the vector of excursion lengths of the process $\mathbf{B}^\lambda_{\mu,\eta}$, arranged in non-increasing order. 
 The next two theorems are our main results for the critical configuration model:
 \begin{theorem}
 \label{c1:thm_main} Fix any $\lambda\in \mathbb{R}$. Under Assumption~\ref{c1:assumption1},
  \begin{equation}
   n^{-2/3}\big( | \mathscr{C}_{\sss (j)} | \big)_{j\geq 1} \xrightarrow{\mathcal{L}} \boldsymbol{\gamma}^\lambda
  \end{equation}
  with respect to the $\ell^2_{\shortarrow}$ topology.
 \end{theorem}
 Recall the definition of $\mathbf{Z}(\lambda)$ from Remark~\ref{c1:defn_U_0_process}. Order the vector component sizes and surplus edges $ \big( n^{-2/3}\big| \mathscr{C}_{\sss (j)} \big|, \surp{\mathscr{C}_{\sss (j)}} \big)_{j\geq 1}$ as an element of $\mathbb{U}^0_{\shortarrow}$ and denote it by $\mathbf{Z}_n(\lambda)$.
 \begin{theorem} \label{c1:thm_surplus}
  Fix any $\lambda\in \mathbb{R}$. Under Assumption~\ref{c1:assumption1},
  \begin{equation} \label{c1:eqn_thm_surplus}
  \mathbf{Z}_n(\lambda) \xrightarrow{\mathcal{L}} \mathbf{Z}(\lambda)
  \end{equation} with respect to the $\mathbb{U}^0_{\shortarrow}$ topology.
 \end{theorem}
In words, Theorem~\ref{c1:thm_main} gives the precise asymptotic distribution of the component sizes re-scaled by $n^{2/3}$ and Theorem~\ref{c1:thm_surplus} gives the asymptotic number of surplus edges in each component jointly with their sizes.

\begin{remark}\normalfont The strength of Theorems~\ref{c1:thm_main} and~\ref{c1:thm_surplus} lies in Assumption~\ref{c1:assumption1}. Clearly, Assumption~\ref{c1:assumption1} is satisfied when the distribution of $D$ satisfies an asymptotic power-law relation with finite third moment, i.e., $\PR(D\geq x)\sim x^{-(\tau-1)}(1+o(1))$ for some $\tau >4$. Also, if a random degree-sequence satisfies Assumption~\ref{c1:assumption1} with high probability, then Theorems~\ref{c1:thm_main} and~\ref{c1:thm_surplus} hold conditionally on the degrees. In particular, when the degree sequence consists of an i.i.d sample from a distribution with $\E[D^3]<\infty$ \cite{Jo10}, then Assumption~\ref{c1:assumption1} is satisfied almost surely. We will later see that  degree sequences in the percolation scaling window also satisfy Assumption~\ref{c1:assumption1}. 
\end{remark}

\subsection{Percolation results}
Bond percolation on a graph $G$ refers to deleting edges of $G$ independently with equal probability~$p$. 
In the case $G$ is a random graph, the deletion of edges is also independent of $G$.
 Consider bond percolation on $\mathrm{CM}_n(\boldsymbol{d})$ with probability $p_n$, yielding  $\mathrm{CM}_n(\boldsymbol{d}, p_n)$. 
 We assume the following:
\begin{assumption} \label{c1:assumption2} \normalfont
 \begin{enumerate}[(i)]
  \item \label{c1:assumption2-1} Assumption~\ref{c1:assumption1}~\eqref{c1:assumption1-1} and \eqref{c1:assumption1-2}  hold for the degree sequence and  $\mathrm{CM}_n(\boldsymbol{d})$ is super-critical, i.e.
  \begin{equation}
   \nu_n=\frac{\sum_{i\in [n]}d_i(d_i-1)}{\sum_{i\in [n]}d_i}\to \nu= \frac{\expt{D(D-1)}}{\expt{D}} >1.
  \end{equation}
  \item \label{c1:assumption2-2}(\emph{Critical window for percolation}) For some $\lambda\in \mathbb{R}$,
  \begin{equation}
  p_{n}=p_n(\lambda):=\frac{1}{\nu_n} \bigg( 1+ \frac{\lambda}{n^{1/3} }\bigg).
  \end{equation}
  \end{enumerate}
 \end{assumption}
Note that $p_n(\lambda)$, as defined in Assumption~\ref{c1:assumption2}~\ref{c1:assumption2-2}, is always non-negative for $n$ sufficiently large. 
Now, suppose $\tilde{d}_i\sim \mathrm{Bin}(d_i,\sqrt{p_n})$, $n_+:=\sum_{i\in [n]}(d_i-\tilde{d}_i)$ and $\tilde{n}=n+n_+$.  Consider the degree sequence $\Mtilde{\boldsymbol{d}}$ consisting of $\tilde{d}_i$ for $i\in [n]$ and $n_+$ additional vertices of degree 1, i.e. $\tilde{d}_i=1$ for $i\in [\tilde{n}]\setminus [n]$. 
We will show later that the degree $\tilde{D}_n$ of a random vertex from this degree sequence satisfies Assumption~\ref{c1:assumption1}~\eqref{c1:assumption1-1},~\eqref{c1:assumption1-2} almost surely for some random variable $\tilde{D}$ with $\mathbb{E}[\tilde{D}^3]<\infty$.
Moreover, $\tilde{n}/n\to 1+\mu(1-\nu^{-1/2}) = \zeta$ almost surely. 
Now, using the notation in Section~\ref{c1:sub_sec_notation}, define $\tilde{\gamma}_{j}^\lambda = \zeta^{2/3} \bar{\gamma}_j^\lambda$, where $\bar{\gamma}_j^\lambda$ is the $j^{th}$ largest excursion of the inhomogeneous Brownian motion $\mathbf{B}^\lambda_{\mu,\eta}$ with the parameters 
\begin{equation}\label{c1:value-parameters}
\mu=\mathbb{E}[\tilde{D}],\quad \eta=\mathbb{E}[\tilde{D}^3]\mathbb{E}[\tilde{D}]-\mathbb{E}^2[\tilde{D}^2], \quad \beta=1/\mathbb{E}[\tilde{D}].
\end{equation} Define the process $\tilde{\mathbf{N}}$ as in \eqref{c1:defn::counting-process}   with the parameter values given by \eqref{c1:value-parameters}. Denote  the $j^{th}$ largest cluster of $\mathrm{CM}_{n}(\boldsymbol{d},p_n(\lambda))$ by  $\mathscr{C}^{p}_{\sss (j)}(\lambda)$. 
Also, let $\mathbf{Z}_n^p(\lambda)$ denote the vector in $\mathbb{U}^0_{\shortarrow}$ obtained by rearranging critical percolation clusters (re-scaled by $n^{2/3}$) and their surplus edges and $\tilde{\mathbf{Z}}(\lambda)$ denote the vector in $\mathbb{U}^0_{\shortarrow}$ obtained by rearranging $( (\sqrt{\nu}| \tilde{\gamma}_j^\lambda | , \tilde{N}(\tilde{\gamma}^\lambda_j)))_{ j\geq 1} $.
    \begin{theorem} \label{c1:thm_percolation}  Under Assumption \ref{c1:assumption2},
    \begin{equation} \label{c1:eqn_thm_percolation}
   \mathbf{Z}_n^p(\lambda) \xrightarrow{\mathcal{L}}  \tilde{\mathbf{Z}}(\lambda)
  \end{equation}  with respect to the $\mathbb{U}^0_{\shortarrow}$ topology.
 \end{theorem}
 Next we consider the percolation clusters for multiple values of $\lambda$. 
 There is a very natural way to couple $(\mathrm{CM}_n(\boldsymbol{d},p_n(\lambda))_{\lambda\in \R}$ described as follows:
Suppose that each edge $(ij)$ of $\mathrm{CM}_n(\boldsymbol{d})$ has an associated i.i.d uniform random variable $U_{ij}$, and the $U_{ij}$'s are also independent of $\mathrm{CM}_n(\boldsymbol{d})$. Now, delete edge $(ij)$ if $U_{ij}>p_{n}(\lambda)$. The obtained graph is distributed as $\mathrm{CM}_n(\boldsymbol{d},p_n(\lambda))$. 
Moreover, if we fix the set of uniform random variables and change $\lambda$, this produces a coupling between the graphs $(\mathrm{CM}_n(\boldsymbol{d},p_n(\lambda))_{\lambda\in \R}$.
 The next theorem shows that the convergence of the component sizes holds jointly in finitely many locations within the critical window, under the above described coupling:
 \begin{theorem} \label{c1:thm_multiple_convergence}
Let us denote $\mathbf{C}_n(\lambda)=(n^{-2/3}|\mathscr{C}^p_{\sss (j)}(\lambda)| )_{j\geq 1}$. Suppose that 
%$|\ell_n/n-\mu| = o(n^{-\gamma}),$ for some $1/3<\gamma<1/2$ and  
Assumption \ref{c1:assumption2} holds. 
For any $k\geq 1$ and $-\infty<\lambda_0< \lambda_1<\dots<\lambda_{k-1}<\infty$,
 \begin{equation} \label{c1:eqn_thm_multiple_convergence}
 \big( \mathbf{C}_n(\lambda_0), \mathbf{C}_n(\lambda_1), \dots, \mathbf{C}_n(\lambda_{k-1})\big)  \xrightarrow{\mathcal{L}}\sqrt{\nu} (\tilde{\boldsymbol{\gamma}}^{ \lambda_0},\tilde{\boldsymbol{\gamma}}^{ \lambda_1}, \dots, \tilde{\boldsymbol{\gamma}}^{\lambda_{k-1}})
 \end{equation}
with respect to the $(\ell^2_{\shortarrow})^k$ topology. 
 \end{theorem}
\begin{remark}\normalfont \label{c1:rem:mult-coal-heuristics}
The coupling for the limiting process in Theorem~\ref{c1:thm_multiple_convergence} is given by the multiplicative coalescent process described in Section~\ref{c1:sub_sec_notation}. 
This will become more clear when we describe the ideas of the proof. 
To understand this intuitively, notice that the component $\mathscr{C}_{\sss (i)}^p(\lambda)$ consists of some paired half-edges which form the edges of the percolated graph, and some \emph{open half-edges} which were deleted due to percolation.
Denote by $\mathcal{O}_i^p(\lambda)$, the total number of open half-edges of $\mathscr{C}_{\sss (i)}^p(\lambda)$. 
One can think of $\mathcal{O}_i^p$ as the mass of $\mathscr{C}_{\sss (i)}^p$.
Now, as we change the value of the percolation parameter from $p_n(\lambda)$ to $p_n(\lambda+d\lambda)$, exactly one edge is added to the graph and the two endpoints are chosen  proportional to the number of open half-edges of the components  of $\mathrm{CM}_n(\boldsymbol{d},p_n(\lambda))$. 
 By the above heuristics, $\mathscr{C}_{\sss (i)}^p$ and $\mathscr{C}_{\sss (j)}^p$ merge at rate proportional to $\mathcal{O}_i^p\mathcal{O}_j^p$ and creates a component of mass $\mathcal{O}_i^p+\mathcal{O}_j^p-2$. 
Later, we will show that the mass of a component is approximately proportional to the component size. 
Therefore, the component sizes merge \emph{approximately} like the multiplicative coalescent over the critical scaling window.
\end{remark}

\begin{remark}\normalfont  Janson~\cite{J09} studied the phase transition of the maximum component size for percolation on a super-critical configuration model. 
The critical value was shown to be $p=1/\nu$. 
This is precisely the reason behind taking $p_n$ of the form given by  Assumption~\ref{c1:assumption2}~\eqref{c1:assumption2-2}. 
The width of the scaling window is intimately related to the asymptotics of the susceptibility function $\sum_i|\mathscr{C}_{\sss (i)}|^2/n$. 
In fact, if $\sum_i|\mathscr{C}_{\sss (i)}|^2\sim n^{1+\eta}$, then the width of the critical window turns out to be $n^{\eta}$ and the largest component sizes are of the order $n^{(1+\eta)/2}$.
This has been universally observed in the random graph literature \cite{A97,NP10b,R12,Jo10,BHL10,DLV14}, even when the scaling limit is not in the same universality class as Erd\H{o}s-R\'enyi random graphs \cite{BHL12,DHLS16} and the same turns out to be the case in this chapter.
\end{remark}

\begin{remark}\normalfont Theorems~\ref{c1:thm_main} and~\ref{c1:thm_surplus} also hold for configuration models conditioned on simplicity. We do not give a proof here. The arguments in \cite[Section 7]{Jo10} can be followed verbatim to obtain a proof of this fact. As a result,  Theorems~\ref{c1:thm_percolation} and~\ref{c1:thm_multiple_convergence} also hold, conditioned on simplicity.
\end{remark}

The rest of the chapter is organized as follows: In Section~\ref{c1:sec_literature_overview}, we give a brief overview of the relevant literature. This will enable the reader  to understand better the relation  of this work  to the large body of literature already present. Also, it will become clear why the choices of the parameters in  Assumption~\ref{c1:assumption1}~\eqref{c1:assumption1-3} and  Assumption~\ref{c1:assumption2}~\eqref{c1:assumption2-2} should correspond to the critical scaling window. We prove Theorems~\ref{c1:thm_main} and \ref{c1:thm_surplus} in Section~\ref{c1:sec_proof}.  In Section~\ref{c1:sec_vertex} we find the asymptotic degree distribution in each component. This is used along with Theorem~\ref{c1:thm_surplus} to establish Theorem~\ref{c1:thm_percolation} in Section~\ref{c1:sec_percolation}. In Section~\ref{c1:sec_multidimensional}, we analyze the evolution of the component sizes over the percolation critical window and prove Theorem~\ref{c1:thm_multiple_convergence}.

\section{Discussion} \label{c1:sec_discussion}
\subsection{Relation to precious work}\label{c1:sec_literature_overview}
\noindent \textbf{Erd\H{o}s-R\'enyi type behavior.} We first explain what `Erd\H{o}s-R\'enyi type behavior' means. The study of \emph{critical window} for random graphs started with the seminal paper \cite{A97} on Erd\H{o}s-R\'enyi random graphs with $p=n^{-1}(1+\lambda n^{-1/3})$. Aldous showed  in this regime that the largest components are of asymptotic size $n^{2/3}$ and the ordered component sizes (scaled by $n^{2/3}$) asymptotically have the same distribution as the ordered excursion lengths of a Brownian motion with a negative parabolic drift. 
Aldous also considered a natural coupling of the re-scaled vectors of component sizes as $\lambda$ varies, and viewed it as a dynamic $\ell^2_{\shortarrow}$-valued stochastic process. It was shown that the dynamic process can be described by a process called the \emph{standard multiplicative coalescent}, which has the Feller property. This implies the convergence of the component sizes jointly for different $\lambda$ values.
 In Theorem~\ref{c1:thm_multiple_convergence}, we show that similar results hold for the configuration model under a very general set of assumptions. Of course, for general configuration models, there is no obvious way to couple the graphs such that the location parameter in the scaling window varies and percolation seems to be the most natural way to achieve this. By \cite{F07, J09}, percolation on a configuration model can be viewed as a configuration model with a random degree sequence  and this is precisely the reason for studying percolation in this chapter.\vspace{.2cm}  \\ 
\textbf{Universality and optimal assumptions.} In \cite{BHL10} it was shown that, inside the critical scaling window, the ordered component sizes (scaled by $n^{2/3}$) of an inhomogeneous random graph with $$p_{ij}= 1- \exp{\bigg(\frac{-(1+\lambda n^{-1/3})w_iw_j}{\sum_{k \in [n]} w_k} \bigg)}$$ converge to the ordered excursion lengths of an inhomogeneous Brownian motion with a parabolic drift under only a finite third-moment assumption on the weight distribution. 
We establish a counterpart of this for the configuration model in Theorem~\ref{c1:thm_main}. 
Later Nachmias and Peres~\cite{NP10b} studied the case of percolation scaling window on random regular graphs; for percolation on the configuration model similar results were obtained by Riordan~\cite{R12} for bounded  maximum degrees. 
Joseph~\cite{Jo10} obtained the same scaling limits as Theorem~\ref{c1:thm_main} for the component sizes when the degrees form an i.i.d.~sample from a distribution having finite third moment. 
Theorems~\ref{c1:thm_surplus} and~\ref{c1:thm_percolation} prove stronger versions of all these existing results for the configuration model under the optimal assumptions. 
Further, in Theorem~\ref{c1:thm_multiple_convergence}, we give a dynamic picture for percolation cluster sizes in the critical window and show that this dynamics can be approximated by the multiplicative coalescent.\vspace{.2cm} \\
\textbf{Comparison to branching processes.} In \cite{MR95,JL09} the phase transition for the component sizes of $\mathrm{CM}_n(\boldsymbol{d})$ was identified in terms of the parameter $\nu=\mathbb{E}[D(D-1)]/\mathbb{E}[D]$.
The local neighborhoods of the configuration model can be approximated by a branching process $\mathcal{X}$ which has $\nu$ as its expected progeny and thus, when $\nu >1$, $\mathrm{CM}_n(\boldsymbol{d})$ has a  component $\mathscr{C}_{\max}$ of approximate size $\rho n$, where $\rho$ is the survival probability of $\mathcal{X}$. Further, the progeny distribution of $\mathcal{X}$ has finite variance when $\mathbb{E}[D^3]<\infty$. Now, for a branching process with mean $\approx 1+\varepsilon$ and finite variance $\sigma^2$, the survival probability is approximately $2\sigma^{-2}\varepsilon$ for small $\varepsilon >0$. This seems to suggest that the largest component size under Assumption~\ref{c1:assumption1} should be of the order $n^{2/3}$ since $\varepsilon=\Theta(n^{-1/3})$. Theorem~\ref{c1:thm_main} mirrors this intuition and shows that in fact all the largest component sizes are of the order $n^{2/3}$.

\subsection{Proof ideas} The proof of Theorem \ref{c1:thm_main} uses a standard functional central limit theorem argument. 
Indeed we associate a suitable semi-martingale with the graph obtained from an \emph{exploration algorithm} used to explore the connected components of $\mathrm{CM}_n(\boldsymbol{d})$. 
The martingale part is then shown to converge to an inhomogeneous Brownian motion, and the drift part is shown to converge to a parabola. 
The fact that the component sizes can be expressed in terms of the hitting times of the semi-martingale implies the finite-dimensional convergence of the component sizes. 
The convergence with respect to $\ell^2_{\shortarrow}$ is then concluded using size-biased point process arguments formulated by Aldous~\cite{A97}. 
Theorem~\ref{c1:thm_surplus} requires a careful estimate of the tail probability of the distribution of  surplus edges  when the component size is small and we obtain this using martingale estimates in Lemma~\ref{c1:lem_surplus_delta_bound}.  Theorem~\ref{c1:thm_percolation} is proved by showing that the percolated degree sequence satisfies Assumption~\ref{c1:assumption1} almost surely. 
Finally, we prove Theorem \ref{c1:thm_multiple_convergence} in Section~\ref{c1:sec_multidimensional}.
The key challenges here are that, for each fixed $n$, the components do not merge according to their component sizes, and that the components do not merge exactly like a multiplicative coalescent over the scaling window. 
Thus the main theme of the proof lies in approximating the evolution of the component sizes over the percolation scaling window with a suitable dynamic process that is an exact multiplicative coalescent. 
%\newpage

\section{{Proofs of Theorems \ref{c1:thm_main} and \ref{c1:thm_surplus}}}
\label{c1:sec_proof}
 \subsection{The exploration process} \label{c1:exploration} Let us explore the graph sequentially using a natural approach outlined in~\cite{R12}. At step $k$, divide the set of half-edges into three groups; sleeping half-edges $\mathcal{S}_{k}$, active half-edges $\mathcal{A}_{k}$, and  dead half-edges $\mathcal{D}_{k}$. The depth-first exploration process can be summarized in the following algorithm:
 \begin{algo}[DFS exploration]\label{c1:algo:1}\normalfont At $k=0$, $\mathcal{S}_{k}$ contains all the half-edges and $\mathcal{A}_{k}$, $\mathcal{D}_{k}$ are empty. While ($\mathcal{S}_{k}\neq\varnothing$ or $\mathcal{A}_{k}\neq\varnothing$) we do the following at  stage $k+1$:
 \begin{itemize}
 \item[S1] If $\mathcal{A}_{k}\neq\varnothing$, then take the smallest half-edge $a$ from $\mathcal{A}_{k}$.
 \item[S2] Take the half-edge $b$ from $\mathcal{S}_{k}$ that is paired to $a$. Suppose $b$ is attached to a vertex $w$ (which is necessarily  not discovered yet). Declare $w$ to be discovered,  let $r= d_w-1$ and $b_{w1}, b_{w2}, \dots b_{wr}$ be the half-edges of $w$ other than $b$. Declare $b_{w1}$, $b_{w2}$,..., $b_{wr} ,b$ to be smaller than all other half-edges in $\mathcal{A}_{k}$. 
 Also order the half-edges of $w$ among themselves as $b_{w1} > b_{w2}>\dots  >b_{wr} >b$. 
 Now identify $\mathcal{B}_{k} \subset \mathcal{A}_{k} \cup \{ b_{w1}, b_{w2},\dots, b_{wr} \}$ as the collection of all half-edges in $\mathcal{A}_{k} $ paired to one of the $b_{wi}$'s and the corresponding $b_{wi}$'s. Similarly identify $\mathcal{C}_{k} \subset \{b_{w1}, b_{w2},\dots,b_{wr} \}$ which is the collection of self-loops incident to $w$. Finally, declare $\mathcal{A}_{k+1} = \mathcal{A}_{k} \cup \{b_{w1}, b_{w2},\dots,b_{wr} \} \setminus \big(\mathcal{B}_{k} \cup \mathcal{C}_{k}\big)$, $\mathcal{D}_{k+1} = \mathcal{D}_{k} \cup \{ a,b \} \cup \mathcal{B}_{k} \cup \mathcal{C}_{k}$ and $\mathcal{S}_{k+1}= \mathcal{S}_{k} \setminus \big( \{ b \} \cup \{b_{w1}, b_{w2},...,b_{wr} \} \big)$. Go to stage $k+2$.
  \item[S3] If $\mathcal{A}_{k}= \varnothing$ for some $k$, then take out one half-edge $a$ from $\mathcal{S}_{k}$ uniformly at random and identify the vertex $v$ incident to it. Declare $v$ to be discovered. Let $r= d_v-1$ and assume that $a_{v1}$, $a_{v2}$,..., $a_{vr}$ are the half-edges of $v$ other than $a$ and identify the collection of half-edges involved in self-loops $\mathcal{C}_{k}$ as in Step 2. Order the half-edges of $v$ as $a_{v1}>a_{v2}>\dots>a_{vr}>a$. Set $\mathcal{A}_{k+1} = \{ a, a_{v1}$, $a_{v2}$,..., $a_{vr}\} \setminus \mathcal{C}_{k}$, $\mathcal{D}_{k+1} = \mathcal{D}_{k} \cup \mathcal{C}_{k}$, and $\mathcal{S}_{k+1}= \mathcal{S}_{k} \setminus  \{ a, a_{v1}, a_{v2},...,a_{vr} \} $. Go to stage $k+2$.
 \end{itemize}
 \end{algo}
 In words, we explore a new vertex at each stage and throw away all the half-edges involved in  a loop/multiple edge/cycle  with the vertex set already discovered before proceeding to the next stage.
 The ordering of the half-edges is such that the connected components of $\mathrm{CM}_{n}(\boldsymbol{d})$ are explored in the depth-first way. We call the half-edges of  $\mathcal{B}_{k} \cup \mathcal{C}_{k}$ $cycle$ half-edges because they create loops, cycles or multiple edges in the graph. Let
 \begin{equation}\label{c1:name:c-B-A}
 A_k:=| \mathcal{A}_{k} |,\quad c_{(k+1)}:= (| \mathcal{B}_{k} | +| \mathcal{C}_{k} | )/2 ,\quad  U_k:= | \mathcal{S}_{k} |.
  \end{equation} 
  Let $d_{\sss(j)}$ be the degree of the $j^{th}$ explored vertex and  define the following process:

 \begin{equation}\label{c1:walk1}
 S_{n}(0)=0, \quad S_{n}(i)=\sum_{j=1}^{i}(d_{\sss(j)} -2-2c_{\sss(j)}).
 \end{equation}
 
 The process $\mathbf{S}_{n}= ( S_{n}(i))_{i \in [n]}$ ``encodes the component sizes as lengths of path segments above past minima'' as discussed in \cite{A97}. Suppose $\mathscr{C}_{i}$ is the $i^{th}$ connected component explored by the above exploration process. Define
\begin{equation} \label{c1:eq:tau-k}
\tau_{k}=\inf \big\{ i:S_{n}(i)=-2k \big\}.
\end{equation}
Then  $\mathscr{C}_{k}$ is discovered between the times $\tau_{k-1}+1$ and $\tau_k$ and  $|\mathscr{C}_{k}|=\tau_{k}-\tau_{k-1}$.

\subsection{Size-biased exploration}
The vertices are explored in a size-biased manner with sizes proportional to their degrees, i.e., if we denote by $v_{\sss(i)}$ the $i^{th}$ explored vertex in Algorithm~\ref{c1:algo:1} and by $d_{\sss(i)}$ the degree of $v_{\sss(i)}$, then $\forall j\in \mathscr{V}_{i-1},$
 \begin{equation}
 \mathbb{P}\big(v_{\sss (i)}=j \vert v_{\sss (1)},v_{\sss (2)},...,v_{\sss (i-1)}\big)=\frac{d_{j}}{\sum_{k\notin \mathscr{V}_{i-1}} d_{k}}= \frac{d_{j}}{\sum_{k \in [n]} d_k - \sum_{k=1}^{i-1} d_{\sss (k)}},
 \end{equation}
 where $\mathscr{V}_{i}$ denotes the first $i$ vertices to be discovered in the above exploration process.
 The following lemma will be used crucially in the proof of Theorem~\ref{c1:thm_main}:
\begin{lemma} \label{c1:lem_con_1} Suppose that Assumption~\ref{c1:assumption1} holds and denote $\sigma_r = \mathbb{E} [ D^r ] $ and $\mu = \mathbb{E} [ D ]$. Then for all $t > 0 $, as $n \to \infty$,
\begin{equation} \label{c1:lem_eq2}
   \sup_{ u\leq t } \Big| n^{-2/3} \sum_{i=1}^{\lfloor n^{2/3} u \rfloor} d_{\sss (i)} - \frac{\sigma_{2}u}{\mu} \Big| \xrightarrow{\mathbb{P}} 0,
  \end{equation} and
  \begin{equation} \label{c1:lem_eq1}
   \sup_{ u\leq t } \Big| n^{-2/3} \sum_{i=1}^{\lfloor n^{2/3} u \rfloor} d_{\sss (i)}^{2} - \frac{\sigma_{3}u}{\mu} \Big| \xrightarrow{\mathbb{P}} 0.
  \end{equation}
 \end{lemma}
 The proof of this lemma follows from the two lemmas stated below:
 \begin{lemma}[{\cite[Lemma 8.2]{BSW14}}]\label{c1:lem_general}
Consider a weight sequence $(w_i)_{i\in [n]}$ and let $m=m(n)\leq n$ be increasing with $n$. Let $\{v(i)\}_{i\in[n]}$ be the size-biased reordering of indices $[n]$, where the size of index $i$ is $d_i/\ell_n$. 
Define $\gamma_n = \sum_{i\in[n]}w_id_i/\ell_n$ and  $Y(t) = (m\gamma_n)^{-1}\sum_{i=1}^{\lfloor mt\rfloor} w_{\sss v(i)}$. Further, let $d_{\max} = \max_{i\in [n]}d_i$, and $w_{\max} = \max_{i\in [n]}w_i$. Assume that 
\begin{equation}\label{c1:eq:conditions-size-biased}
 \lim_{n\to\infty}md_{\max}/\ell_n = 0,\quad \text{and}\quad \lim_{n\to\infty} (m\gamma_n)^{-1}w_{\max} = 0.
\end{equation}Then, for any $t>0$, as $n\to\infty$, $
 \sup_{u\leq t}|Y(t)-t|\pto 0.
$
 \end{lemma}
\begin{lemma} \label{c1:lem_d_max} Assumption~\ref{c1:assumption1} implies
 \begin{equation}
  \lim_{k\to \infty}\lim_{n\to \infty} \frac{1}{n} \sum_{j\in [n]}\mathbf{1}_{\{ d_{j}>k\} }d_{j}^r=0,\quad r=1,2,3.
 \end{equation}
For $r=3$, in particular, this implies $d_{\max}^{3}=o(n)$.
\end{lemma}

\subsection{Estimate of cycle half-edges} The following lemma gives an estimate of the number of cycle half-edges created up to time $t$. This result is proved in \cite{R12} for bounded degrees. In our case, it follows from Lemma~\ref{c1:lem_con_1} as we show below:
\begin{lemma} \label{c1:lem_back_edges}
 For Algorithm~\ref{c1:algo:1}, if $A_k=\big| \mathcal{A}_k \big|$, $ B_k:= \big| \mathcal{B}_k \big|$, and $ C_k:= \big| \mathcal{C}_k \big|$, then
 \begin{equation} \label{c1:lem_back_edges_B}
  \mathbb{E} \big[ B_k \vert \mathscr{F}_k \big] = (1+o_{\sss \mathbb{P}}(1))\frac{2A_k}{U_k}  + O_{\sss \mathbb{P}}(n^{-{2/3}})
 \end{equation} and
 \begin{equation} \label{c1:lem_back_edges_C}
  \mathbb{E} \big[ C_k \vert \mathscr{F}_k \big] = O_{\sss \mathbb{P}}(n^{-1})
 \end{equation} uniformly for $k \leq tn^{2/3}$ and any $t >0$, where $\mathscr{F}_k$ is the sigma-field generated by the information revealed up to stage $k$.   Further, all the $O_{\sss\mathbb{P}} $ and $o_{\sss\mathbb{P}} $ terms in \eqref{c1:lem_back_edges_B} and \eqref{c1:lem_back_edges_C} can be replaced by $O_{\sss E}$ and $o_{\sss E}$.
 \end{lemma}

 \begin{proof}
  Suppose $ U_k:= \big| \mathcal{S}_k \big|$. First note that by \eqref{c1:lem_eq2}
  \begin{equation}
   \frac{U_k}{n}= \frac{1}{n} \sum_{j \in [n]} d_j - \frac{1}{n} \sum_{j=1}^{k} d_{\sss (j)} = \mathbb{E} [ D ] + o_{\sss \mathbb{P}}(1)
  \end{equation} uniformly over $k \leq tn^{2/3}$. 
  Let $a$ be the half-edge that is being explored at stage $k+1$.  
  Now, each of the $(A_k-1)$ half-edges of $\mathcal{A}_k \setminus \{ a \}$ is equally likely to be paired with a half-edge of $v_{\sss (k+1)}$, thus creating two elements of $\mathcal{B}_k$. Also, given $\mathscr{F}_k$ and $v_{\sss (k+1)}$, the probability that a half-edge of $\mathcal{A}_k \setminus \{ a \}$ is paired to one of the half-edges of $v_{\sss (k+1)}$ is $(d_{\sss(k+1)}-1)/(U_{k}-1)$. Therefore,
 \begin{equation}\label{c1:lem_back_edges_eqn2}
 \begin{split}
 \mathbb{E} \big[ B_k \vert \mathscr{F}_k, v_{\sss(k+1)} \big] = 2(A_k-1)\frac{d_{\sss(k+1)}-1}{U_{k}-1}= 2 \big( d_{\sss(k+1)} -1 \big) \frac{A_k}{U_k-1} - 2 \frac{d_{\sss(k+1)} -1 }{U_k-1}.
 \end{split}
 \end{equation}
 Hence,
 \begin{equation} \label{c1:lem_back_edges_eqn1}
  \mathbb{E} \big[ B_k \vert \mathscr{F}_k \big] = 2 \mathbb{E}\big[ d_{\sss(k+1)} -1 \vert \mathscr{F}_k \big] \frac{A_k}{U_k-1} - 2 \frac{\mathbb{E}\big[ d_{\sss (k+1)} -1 \vert \mathscr{F}_k \big] }{U_k-1}.
 \end{equation}
 Now, using \eqref{c1:lem_eq2} and \eqref{c1:lem_eq1},
  \begin{equation}
 \mathbb{E}\big[ d_{\sss(k+1)} -1 \vert \mathscr{F}_k \big] = \frac{\sum_{j \notin \mathscr{V}_k} d_j (d_j-1)}{\sum_{j \notin \mathscr{V}_k} d_j}  = \frac{\sum_{j \in [n]}d_j^2}{\sum_{j \in [n]}d_j}-1+o_{\sss\mathbb{P}}(1) = 1+o_{\sss\mathbb{P}}(1).
 \end{equation} uniformly over $k \leq tn^{2/3}$, where the last step follows from Assumption~\ref{c1:assumption1}~\eqref{c1:assumption1-3}. 
 Further, using the fact $\PR(D=1)>0$, $U_k\geq c_0n $ for some constant $c_0>0$ uniformly over $k\leq tn^{2/3}$.
 Thus, \eqref{c1:lem_back_edges_eqn1} gives \eqref{c1:lem_back_edges_B}.  The fact that all the $O_{\sss\mathbb{P}}$, $o_{\sss\mathbb{P}}$ can be replaced by $O_{\sss E}$, $o_{\sss E}$ follows from $\sum_{j\in [n]}d_j^r-kd_{\max}^r \leq \sum_{j\notin\mathscr{V}_k}d_j^r\leq  \sum_{j\in [n]}d_j^r$ for $r=1,2$, together with $d_{\max}=o(n^{1/3})$. To prove \eqref{c1:lem_back_edges_C}, note that
 \begin{equation}
  \mathbb{E} \big[ C_k \vert \mathscr{F}_k, v_{\sss (k+1)} \big] = 2(d_{\sss (k+1)}-2)\frac{d_{\sss (k+1)}-1}{U_{k}-1}.
 \end{equation}
 By Assumption \ref{c1:assumption1} and \eqref{c1:lem_eq2} 
 \begin{equation}\label{c1:expt-dk2-F-k}\mathbb{E}[d_{\sss(k+1)}^2|\mathscr{F}_k] = \frac{\sum_{j\notin \mathscr{V}_k} d_j^3}{\sum_{j\notin \mathscr{V}_k} d_j} \leq \frac{\sum_{j\in [n]} d_j^3}{\sum_{j\in [n]} d_j+o_{\sss \mathbb{P}}(n^{2/3})} = O_{\sss\mathbb{P}}(1),
 \end{equation}uniformly for $k\leq tn^{2/3}$. Therefore,
 \begin{equation}\label{c1:lem_back_edges_eqn3}
 \begin{split}
  \mathbb{E} \big[ C_k \vert \mathscr{F}_k \big] = O_{\sss\mathbb{P}}(n^{-1})
  \end{split}
 \end{equation} uniformly over   $k \leq tn^{2/3}$. Again, the $O_{\sss\mathbb{P}}$ term can be replaced by $O_{\sss E}$, as argued before.
 \end{proof}

 \subsection{Key ingredients} \label{c1:sec-key-ingredients}
 For any $\mathbb{D}[0,\infty)$-valued process $\mathbf{X}_n$ define $\bar{X}_n(u):= n^{-1/3} X_n( \lfloor n^{2/3}u \rfloor )$ and  $ \bar{\mathbf{X}}_{n}:=( \bar{X}_n(u) )_{u \geq 0} $. The following result is the main ingredient for proving Theorem~\ref{c1:thm_main}. Recall the definition of $\mathbf{B}^\lambda_{\mu,\eta}$ from \eqref{c1:def:inhomogen:BM} with parameters given in \eqref{c1:parameter}. 
 \begin{theorem}[Convergence of the exploration process] \label{c1:thm_main1}
 Under Assumption~\ref{c1:assumption1}, as $n \to \infty$,
  \begin{equation}
   \bar{\mathbf{S}}_{n} \xrightarrow{\mathcal{L}} \mathbf{B}^\lambda_{\mu,\eta}
  \end{equation}
 with respect to the $Skorohod$ $J_{1}$ topology.
 \end{theorem}
  As in \cite{Jo10}, we will prove this by approximating  $\mathbf{S}_{n}$ by a simpler process defined as
\begin{equation} \label{c1:walk2} 
  s_{n}(0)=0, \quad s_{n}(i)=\sum_{j=1}^{i}(d_{\sss(j)} -2).
 \end{equation}
 Note that the difference between the processes $\mathbf{S}_n$ and $\mathbf{s}_n$ is due to the cycles, loops, and multiple-edges encountered during the exploration.
Following the approach of \cite{Jo10}, it will be enough to prove the following:
 \begin{proposition} \label{c1:thm_main2} Under Assumption~\ref{c1:assumption1}, as $n \to \infty$,
  \begin{equation}
   \bar{\mathbf{s}}_{n} \xrightarrow{\mathcal{L}} \mathbf{B}^\lambda_{\mu,\eta}
  \end{equation}
  with respect to  the $Skorohod$ $J_{1}$ topology.
\end{proposition}
 \begin{remark} \label{c1:remark_cont_cadlag} \normalfont It will be shown that the distributions of $\bar{\mathbf{S}}_n$ and $\bar{\mathbf{s}}_n$ are very close as $n\to\infty$, and therefore, Proposition~\ref{c1:thm_main2} implies Theorem \ref{c1:thm_main1}. This is achieved by proving that we will not see \emph{too many} cycle half-edges up to the time $\lfloor n^{2/3}u\rfloor$ for any  fixed $u >0$.
 \end{remark}
From here onwards we will look at the continuous versions of the processes $\bar{\mathbf{S}}_{n}$ and $\bar{\mathbf{s}}_{n}$ by linearly interpolating between the values at the jump points and write it using the same notation. It is easy to see that these continuous versions differ from their c\`adl\`ag versions by at most $n^{-1/3}d_{\max}=o(1)$ uniformly on $[0,T]$, for any $T>0$. Therefore, the convergence in law of the continuous versions implies the convergence in law of the c\`adl\`ag versions and vice versa.
Before proceeding to show that Theorem~\ref{c1:thm_main1} is a consequences of Proposition~\ref{c1:thm_main2}, we will need to bound the difference of these two processes in a suitable way. We need the following lemma.  Recall the definition of $c_{\sss (k+1)}:= (B_k+C_k)/2$ from \eqref{c1:name:c-B-A}.
\begin{lemma} \label{c1:cor_c_k}
 Fix $t >0$ and $M>0$ (large). Define the event
 $$E_{n}(t,M):=\big\{\max_{s\leq t}\{\bar{s}_{n}(s)-\min_{u\leq s} \bar{s}_{n}(u)\} < M \big\}.$$ Then
 \begin{equation} \label{c1:cor_c_k_eqn}
  \limsup\limits_{n \to \infty} \sum_{k \leq tn^{2/3}} \mathbb{E} \big[ c_{\sss(k)} \mathbf{1}_{E_{n}(t,M)} \big] < \infty.
 \end{equation}
 \end{lemma}

 \begin{proof} Lemma~\ref{c1:cor_c_k} is similar to \cite[Lemma 6.1]{Jo10}. We add a brief proof here.  Note that, for all large $n$, $A_k \leq Mn^{1/3}$ on $E_{n}(t,M)$, because
  \begin{equation}
   A_k = S_n(k)-\min\limits_{j \leq k} S_n(j)= s_n(k) - 2 \sum_{j=1}^{k} c_{\sss(j)} - \min\limits_{j \leq k} S_n(j) \leq s_n(k)-\min\limits_{j \leq k} s_n(j),
  \end{equation} where the last step follows by noting that $\min_{j \leq k} s_n(j) \leq \min_{j \leq k} S_n(j) + 2 \sum_{j=1}^{k} c_{\sss(j)}$.  By Lemma~\ref{c1:lem_back_edges},
  \begin{equation}
  \mathbb{E} \big[ c_{\sss(k)} \mathbf{1}_{E_{n}(t,M)} \big]  \leq \frac{Mn^{1/3}}{\mu n} + o(n^{-2/3})= \frac{M}{\mu}n^{-2/3} + o(n^{-2/3})
  \end{equation} uniformly for $k \leq tn^{2/3}$. 
  Summing over $1 \leq k \leq tn^{2/3}$ and taking the $\limsup$ completes the proof.
 \end{proof}
The proof of the fact that Theorem~\ref{c1:thm_main1} follows from Proposition~\ref{c1:thm_main2} and Lemma~\ref{c1:cor_c_k} is standard (see \cite[Section 6.2]{Jo10}) and we skip the proof for the sake of brevity. 
  From here onward the main focus of this section will be to prove Proposition~\ref{c1:thm_main2}. We use the martingale functional central limit theorem in a similar manner as~\cite{A97}.
  \begin{proof}[Proof of Proposition~\ref{c1:thm_main2}] Let $ \{\mathscr{F}_{i} \}_{i \geq 1} $ be the natural filtration defined in Lemma \ref{c1:lem_back_edges}. Recall the definition of $s_n(i)$ from \eqref{c1:walk2}. By the Doob-Meyer decomposition \cite[Theorem 4.10]{KS91} we can write
  \begin{equation}\label{c1:split_up}
    s_{n}(i) = M_{n}(i)+A_{n}(i),\quad s_{n}^{2}(i) = H_{n}(i)+B_{n}(i),
  \end{equation}
where
 \begin{subequations}
  \begin{equation} \label{c1:defn_martingale}
   M_{n}(i)= \sum_{j=1}^{i} \big(d_{\sss(j)}-\mathbb{E} \big[ d_{\sss(j)}\vert \mathscr{F}_{j-1} \big] \big),
  \end{equation}
  \begin{equation}
   A_{n}(i)= \sum_{j=1}^{i} \mathbb{E}\big[d_{\sss(j)}-2 \vert \mathscr{F}_{j-1} \big],
  \end{equation}
  \begin{equation}
   B_{n}(i)= \sum_{j=1}^{i} \big( \mathbb{E} \big[ d_{\sss(j)}^{2}\vert \mathscr{F}_{j-1} \big]-\mathbb{E}^{2} \big[ d_{\sss(j)}\vert \mathscr{F}_{j-1} \big] \big).
  \end{equation}
 \end{subequations}

Recall that for a discrete-time stochastic process $(X_n(i))_{i\geq 1}$, we denote $\bar{X}_n(t)=n^{-1/3}X_n(\lfloor tn^{2/3}\rfloor )$. 
Our result follows from the martingale functional central limit theorem  \cite[Theorem 2.1]{W07} if we can prove the following four conditions: For any $u >0$,
\begin{subequations}
 \begin{equation}\label{c1:condition1}
  \sup_{s\leq u}\big| \bar{A}_{n}(s)-\lambda s+\frac{\eta s^2}{2\mu^{3}}\big| \xrightarrow{\mathbb{P}} 0,
 \end{equation}
 \begin{equation} \label{c1:condition2}
  n^{-1/3}\bar{B}_{n}(u)\xrightarrow{\mathbb{P}} \frac{\eta}{\mu^{2}} u,
 \end{equation}
 \begin{equation} \label{c1:condition3}
  \mathbb{E}\big[\sup_{s\leq u}\big| \bar{M}_{n}(s)-\bar{M}_{n}(s-)\big| ^{2}\big] \to 0,
 \end{equation}
 and
 \begin{equation} \label{c1:condition4}
  n^{-1/3}\mathbb{E}\big[\sup_{s\leq u}\vert \bar{B}_{n}(s)-\bar{B}_{n}(s-)\vert\big] \to 0.
 \end{equation}
\end{subequations}
\par Indeed \eqref{c1:condition1} gives rise to the quadratic drift term of the limiting distribution. Conditions \eqref{c1:condition2}, \eqref{c1:condition3}, \eqref{c1:condition4} are the same as \cite[Theorem 2.1, Condition (ii)]{W07}. The facts that the jumps of both the martingale and the quadratic-variation process go to zero and that the quadratic variation process is converging to the quadratic variation of an inhomogeneous Brownian Motion, together imply the convergence of the martingale term. The validation of these conditions is given separately in the subsequent part of this section.
\end{proof}

\begin{lemma} The conditions \eqref{c1:condition2}, \eqref{c1:condition3}, and \eqref{c1:condition4} hold.
\end{lemma}

\begin{proof}
 Denote by $\sigma_{r}(n)=\frac{1}{n} \sum_{i\in [n]}d_{i}^{r},\: r=2,3$ and $\mu(n)=\frac{1}{n} \sum_{i\in [n]}d_{i}$. To prove \eqref{c1:condition2}, it is enough to prove that
 \begin{equation}
   n^{-2/3}B_{n}(\lfloor un^{2/3}\rfloor) \xrightarrow{\mathbb{P}}  \frac{\sigma_3 \mu - \sigma_{2}^{2}}{\mu^{2}} u.
 \end{equation}
 Recall that $\mathbb{E}[ d_{\sss(i)}^{2}\vert \mathscr{F}_{i-1} ] = \sum_{j \notin \mathscr{V}_{i-1}}d_{j}^{3}/\sum_{j \notin \mathscr{V}_{i-1}}d_{j}.$  
 Further, uniformly over $i \leq un^{2/3}$,
 \begin{equation}\label{c1:sum-deg-explored}
 \sum_{j \notin \mathscr{V}_{i-1}}d_{j} = \sum_{j \in [n]}d_{j} + \OP(d_{\max}i) = \ell_n+\oP(n).
 \end{equation}
  Assume that, without loss of generality, $j\mapsto d_{j}$ is non-increasing.  Then, uniformly over $i \leq un^{2/3}$,
 \begin{equation} \label{c1:eq_lem_1}
 \bigg| \sum_{j \notin \mathscr{V}_{i-1}}d_{j}^{3} - n\sigma_{3}(n) \bigg| \leq \sum_{j=1}^{un^{2/3}} d_{j}^{3}.
 \end{equation}
 For each fixed $k$,
 \begin{eq}
 \frac{1}{n} \sum_{j=1}^{un^{2/3}} d_{j}^{3} &\leq \frac{1}{n}\sum_{j=1}^{un^{2/3}} \mathbf{1}_{ \{ d_{j} \leq k \} } d_{j}^{3} + \frac{1}{n}\sum_{j \in [n]} \mathbf{1}_{ \{ d_{j} > k \} } d_{j}^{3} \\
 &\leq k^{3}un^{-1/3} + \frac{1}{n}\sum_{j \in [n]} \mathbf{1}_{ \{ d_{j} > k \} } d_{j}^{3} =o(1),
 \end{eq}
 where we first let $n \to \infty$ and then $k \to \infty$ and use Lemma \ref{c1:lem_d_max}. Therefore, the right-hand side of \eqref{c1:eq_lem_1} is $o(n)$ and we conclude that, uniformly over $i \leq un^{2/3}$,
 \begin{equation}
 \mathbb{E}\big[d_{\sss(i)}^{2}\vert \mathscr{F}_{i-1}\big] = \frac{\sigma_{3}}{\mu} + \oP(1).
 \end{equation}
A similar argument gives
  \begin{equation}
   \mathbb{E}\big[d_{\sss(i)} \vert \mathscr{F}_{i-1}\big] = \frac{\sigma_{2}}{\mu} + \oP(1),
  \end{equation}
and \eqref{c1:condition2} follows by noting that the error term is $\oP(1)$, uniformly over $i \leq un^{2/3}$.
 The proofs of \eqref{c1:condition3} and \eqref{c1:condition4} are rather short and we present them below. For \eqref{c1:condition3}, we bound
 \begin{align}
   \mathbb{E} \Big[ \sup_{s \leq u} &\vert \bar{M}_{n}(s) - \bar{M}_{n}(s -) \vert^{2} \Big]= n^{-2/3} \mathbb{E} \Big[ \sup_{k \leq un^{2/3}} \vert M_{n}(k) - M_{n}(k-1) \vert^{2} \Big] \nonumber\\
    & = n^{-2/3} \mathbb{E} \Big[ \sup_{k \leq un^{2/3}} \big| d_{\sss(k)} - \mathbb{E}[ d_{\sss(k)} \vert \mathscr{F}_{k-1}] \big|^{2} \Big]\nonumber\\
    & \leq n^{-2/3} \mathbb{E} \Big[ \sup_{k \leq un^{2/3}}  d_{\sss(k)}^{2} \Big] + n^{-2/3} \mathbb{E} \Big[ \sup_{k \leq un^{2/3}} \mathbb{E}^2\big[ d_{\sss(k)} \vert \mathscr{F}_{k-1} \big] \Big] \nonumber\\
    & \leq 2n^{-2/3}d_{\max}^2. 
    \end{align}
Similarly,  \eqref{c1:condition4} gives
 \begin{align}
n^{-1/3} \mathbb{E} \big[ \sup_{s \leq u} \vert &\bar{B}_{n}(s) - \bar{B}_{n}(s -) \vert^{2} \big]= n^{-2/3} \mathbb{E} \big[ \sup_{k \leq un^{2/3}} \vert B_{n}(k) - B_{n}(k-1) \vert \big] \hspace{2cm}\nonumber \\
& = n^{-2/3} \mathbb{E} \big[ \sup_{k \leq un^{2/3}} \mathrm{var} \big( d_{\sss(k)} \vert \mathscr{F}_{k-1} \big) \big]\\& \leq 2n^{-2/3} d_{\max}^{2},\nonumber
 \end{align} and Conditions \eqref{c1:condition3} and \eqref{c1:condition4} follow from Lemma \ref{c1:lem_d_max} using $d_{\max}=o(n^{1/3})$.
 \end{proof}
 Next, we prove Condition \eqref{c1:condition1} which requires some more work. Note that
\begin{align}\label{c1:the-split-up}
 &\mathbb{E} \big[ d_{\sss(i)} -2 \vert \mathscr{F}_{i-1} \big]  = \frac{\sum_{j \notin \mathscr{V}_{i-1}} d_{j}(d_{j}-2)}{\sum_{j \notin \mathscr{V}_{i-1}} d_{j}}\nonumber\\
 &= \frac{\sum_{j \in [n]} d_{j}(d_{j}-2)}{\sum_{j \in [n]} d_{j}}- \frac{\sum_{j \in \mathscr{V}_{i-1}} d_{j}(d_{j}-2)}{\sum_{j \in [n]} d_{j}} + \frac{\sum_{j \notin \mathscr{V}_{i-1}} d_{j}(d_{j}-2)\sum_{j \in \mathscr{V}_{i-1}} d_{j} }{\sum_{j \notin \mathscr{V}_{i-1}} d_{j}\sum_{j \in [n]} d_{j}} \nonumber\\
 &= \frac{\lambda}{n^{1/3}} - \frac{\sum_{j \in \mathscr{V}_{i-1}} d_{j}^{2}}{\sum_{j \in [n]} d_{j}} + \frac{\sum_{j \notin \mathscr{V}_{i-1}} d_{j}^{2}\sum_{j \in \mathscr{V}_{i-1}} d_{j} }{\sum_{j \notin \mathscr{V}_{i-1}} d_{j}\sum_{j \in [n]} d_{j}} +o(n^{-1/3}),
\end{align} where the last step follows from Assumption~\ref{c1:assumption1}~\eqref{c1:assumption1-3}.
Therefore,
\begin{equation} \label{c1:eq_cond_1}
\begin{split}
A_{n}(k) &= \sum_{i=1}^{k}\mathbb{E} \big[ d_{\sss(i)} -2 \vert \mathscr{F}_{i-1} \big] \\
&= \frac{k\lambda}{n^{1/3}} - \sum_{i=1}^{k} \frac{\sum_{j \in \mathscr{V}_{i-1}} d_{j}^{2}}{\sum_{j \in [n]} d_{j}} + \sum_{i=1}^{k}\frac{\sum_{j \notin \mathscr{V}_{i-1}} d_{j}^{2}\sum_{j \in \mathscr{V}_{i-1}} d_{j} }{\sum_{j \notin \mathscr{V}_{i-1}} d_{j}\sum_{j \in [n]} d_{j}} + o(kn^{-1/3}).
\end{split}
\end{equation}
 The following lemma estimates the sums on the right-hand side of \eqref{c1:eq_cond_1}:

 \begin{lemma} \label{c1:lem_con_2}
  For all $u>0$, as $n \to \infty$,
  \begin{equation} \label{c1:lem_eq3}
   \sup_{s \leq u} \bigg| n^{-1/3} \sum_{i=1}^{\lfloor s n^{2/3} \rfloor} \sum_{j=1}^{i-1} \frac{d_{\sss(j)}^{2}}{\ell_n}- \frac{\sigma_{3} s^{2}}{2\mu^{2}} \bigg| \xrightarrow{\mathbb{P}} 0
  \end{equation} and
  \begin{equation} \label{c1:lem_eq4}
   \sup_{s \leq u} \bigg| n^{-1/3} \sum_{i=1}^{\lfloor s n^{2/3} \rfloor} \sum_{j=1}^{i-1} \frac{d_{\sss(j)}}{\ell_n}- \frac{\sigma_{2} s^{2}}{2\mu^{2}} \bigg| \xrightarrow{\mathbb{P}} 0.
  \end{equation} Consequently, 
  \begin{equation} \label{c1:lem:estimate-drift}
\sup_{s \leq u} \bigg|n^{-1/3}\sum_{i=1}^{\lfloor s n^{2/3} \rfloor}\frac{\sum_{j \notin \mathscr{V}_{i-1}} d_{j}^{2}\sum_{j \in \mathscr{V}_{i-1}} d_{j} }{\sum_{j \notin \mathscr{V}_{i-1}} d_{j}\sum_{j \in [n]} d_{j}} - \frac{\sigma_{2}^{2}s^{2}}{2\mu^{3}}\bigg| \xrightarrow{\mathbb{P}} 0.
\end{equation}
 \end{lemma}
 \begin{proof}
  Notice that
   \begin{equation}\begin{split}
     &\sup_{s \leq u} \Big| n^{-1/3} \sum_{i=1}^{\lfloor s n^{2/3} \rfloor} \sum_{j=1}^{i-1} \frac{d_{\sss (j)}^{2}}{\ell_n}- \frac{\sigma_{3} s^{2}}{2\mu^{2}} \Big| = \sup_{k \leq un^{2/3}} \Big| n^{-1/3} \sum_{i=1}^{k} \sum_{j=1}^{i-1} \frac{d_{\sss (j)}^{2}}{\ell_n}- \frac{\sigma_{3} k^{2}}{2\mu^{2}n^{4/3}} \Big| \\
 & \leq \frac{1}{\ell_{n}} \sup_{k \leq un^{2/3}} \Big| n^{-1/3} \sum_{i=1}^{k} \Big(\sum_{j=1}^{i-1} d_{\sss (j)}^{2}- \frac{\sigma_{3} (i-1)}{\mu} \Big) \Big| \\
 &\hspace{1cm}+ \sup_{k \leq un^{2/3}} \Big| \frac{k \sigma_3}{2 \mu \ell_n n^{1/3}} \Big| + \sup_{k \leq un^{2/3}} \Big| \frac{k^2 \sigma_3}{2 \mu \ell_n n^{1/3}} -\frac{k^2 \sigma_3}{2 \mu^2 n^{4/3}} \Big|\\
 & \leq \frac{1}{\ell_n} n^{-1/3} un^{2/3} \sup_{i \leq un^{2/3}} \Big| \sum_{j=1}^{i}   d_{\sss (j)}^{2}- \frac{\sigma_{3} i}{\mu}  \Big| + o(1)+ \frac{\sigma_3 n^{-1/3}}{2 \mu} \Big| \frac{1}{\ell_n} - \frac{1}{n \mu} \Big| u^2 n^{4/3}\\
 & \leq \frac{u}{\mu + o(1)} \sup_{s \leq u} \Big| \Big(n^{-2/3}\sum_{j=1}^{\lfloor s n^{2/3} \rfloor} d_{\sss(j)}^{2}- \frac{\sigma_{3} s}{\mu} \Big) \Big| +o(1).
 \end{split}
   \end{equation} and \eqref{c1:lem_eq3} follows from \eqref{c1:lem_eq1} in Lemma~\ref{c1:lem_con_1}. The proof of \eqref{c1:lem_eq4} is similar and it follows from~\eqref{c1:lem_eq2}.
 We now show \eqref{c1:lem:estimate-drift}. Recall that $\sigma_2(n) = \frac{ 1}{n}\sum_{i \in [n]} d_i^2$ and observe
\begin{equation}
 \frac{1}{n}\sum_{j \notin \mathscr{V}_{i-1}} d_{j}^{2} = \sigma_{2}(n)-\frac{1}{n} \sum_{j \in \mathscr{V}_{i-1}} d_{j}^{2}= \sigma_2(n) +o_{\sss\mathbb{P}}(1)
\end{equation} uniformly over $i \leq un^{2/3}$ where we use Lemma \ref{c1:lem_con_1} to conclude the uniformity. Similarly, \eqref{c1:sum-deg-explored} implies that $\sum_{j \notin \mathscr{V}_{i-1}} d_{j} = \ell_n+o_{\sss\mathbb{P}}(n)$ uniformly over $i \leq un^{2/3}$. Therefore,
\begin{equation}
n^{-1/3}\sum_{i=1}^{k}\frac{\sum_{j \notin \mathscr{V}_{i-1}} d_{j}^{2}\sum_{j \in \mathscr{V}_{i-1}} d_{j} }{\sum_{j \notin \mathscr{V}_{i-1}} d_{j}\sum_{j \in [n]} d_{j}} = \frac{n\sigma_{2}(n)+o_{\sss\mathbb{P}}(n)}{\ell_n+o_{\sss\mathbb{P}}(n)} n^{-1/3}\sum_{i=1}^{k}\frac{\sum_{j \in \mathscr{V}_{i-1}} d_{j}}{\ell_n}
\end{equation}
 and Assumption~\ref{c1:assumption1}, combined with \eqref{c1:lem_eq4}, completes the proof.
\end{proof}
 \begin{lemma}
 Condition \eqref{c1:condition1} holds.
 \end{lemma}
 \begin{proof}
  The proof follows by using  Lemma \ref{c1:lem_con_2} in  \eqref{c1:eq_cond_1}.
 \end{proof}
 \subsection{Finite-dimensional convergence of the ordered component sizes} Note that the convergence of the exploration process in Theorem~\ref{c1:thm_main1} implies that, for any large $T>0$, the $k$-largest components explored up to time $Tn^{2/3}$ converge to the  $k$-largest excursions above past minima of $\mathbf{B}^\lambda_{\mu,\eta}$ up to time~$T$. Therefore, we can conclude the finite dimensional convergence of the ordered components sizes in the whole graph if we can show that the large components are explored \emph{early} by the exploration process. The following lemma formalizes the above statement:
 \begin{lemma}\label{c1:lem:large-com-explored-early}Let $\mathscr{C}_{\max}^{\sss \geq T}$ denote the largest component which is started exploring after time $Tn^{2/3}$ in Algorithm~\ref{c1:algo:1}. Then, for any $\delta >0$,
 \begin{equation}\label{c1:large-com-explored-early}
  \lim_{T\to\infty}\limsup_{n\to\infty}\prob{|\mathscr{C}_{\max}^{\sss \geq T}|>\delta n^{2/3}}=0.
 \end{equation}
 \end{lemma}
 \noindent Let us first state the two main ingredients to complete the proof of Lemma~\ref{c1:lem:large-com-explored-early}: 
 \begin{lemma}[{\cite[Lemma 5.2]{J09b}}]\label{c1:lem:janson-lemma}  Consider $\mathrm{CM}_n(\boldsymbol{d})$ with $\nu_n<1$ and let $\mathscr{C}(V_n)$ denote the component containing the vertex $V_n$, where $V_n$ is a vertex chosen uniformly at random independently of the graph~$\mathrm{CM}_n(\boldsymbol{d})$. Then,
 \begin{equation}
  \expt{|\mathscr{C}(V_n)|}\leq 1+\frac{\expt{D_n}}{1-\nu_n}.
 \end{equation}  
 \end{lemma}

 \begin{lemma}\label{c1:lem-time-nu-rel}Define, $\nu_{n,i}=\sum_{j\notin \mathscr{V}_{i-1}}d_j(d_j-1)/\sum_{j\notin \mathscr{V}_{i-1}}d_j.$ There exists  some constant $C_0>0$ such that for any $T>0$,
 \begin{equation}\label{c1:nu-n-i:nu-n:relation}
 \nu_{n,Tn^{2/3}}= \nu_n-C_0 Tn^{-1/3}+o_{\sss\mathbb{P}}(n^{-1/3}).
\end{equation}
 \end{lemma}
 \begin{proof}
  Using a similar split up as in \eqref{c1:the-split-up}, we have
\begin{equation}\label{c1:nu-n-i:nu-n:split}
 \nu_{n,i}= \nu_n+\frac{\sum_{j \in \mathscr{V}_{i-1}}d_j(d_j-1)}{\ell_n}- \frac{\sum_{j\notin \mathscr{V}_{i-1}}d_j(d_j-1)\sum_{j\in \mathscr{V}_{i-1}}d_j}{\ell_n\sum_{j\notin \mathscr{V}_{i-1}}d_j}.
\end{equation}Now, \eqref{c1:lem_eq2} and \eqref{c1:lem_eq1} give that, uniformly over $i\leq Tn^{2/3}$,\
\begin{subequations}
\begin{equation}
 \frac{\sum_{j\notin \mathscr{V}_{i-1}}d_j(d_j-1)}{\sum_{j\notin \mathscr{V}_{i-1}}d_j}= \frac{\sum_{j\in [n]}d_j(d_j-1)+o_{\sss\mathbb{P}}(n^{2/3})}{\sum_{j\in [n]}d_j+o_{\sss\mathbb{P}}(n^{2/3})}=1+o_{\sss\mathbb{P}}(n^{-1/3}),
\end{equation}
\begin{equation}
 \sum_{j\in \mathscr{V}_{i-1}}d_j(d_j-2)=\Big(\frac{\sigma_3}{\mu}-2\Big)(i-1)+o_{\sss \mathbb{P}}(n^{2/3}).
\end{equation}
\end{subequations}Further, note that $\sigma_3-2\mu = \mathbb{E}[D(D-1)(D-2)]+ \mathbb{E}[D(D-2)]>0$, by Assumption~\ref{c1:assumption1}~\eqref{c1:assumption1-3}, and~\eqref{c1:assumption1-4}. Therefore, \eqref{c1:nu-n-i:nu-n:split} gives \eqref{c1:nu-n-i:nu-n:relation}.
 \end{proof}
 \begin{proof}[Proof of Lemma~\ref{c1:lem:large-com-explored-early}] Let $i_{\sss T}:=\inf\{\tau_k:\tau_k>Tn^{2/3}\} $, where $\tau_k$ is defined by \eqref{c1:eq:tau-k}. 
 Thus, $i_{\sss T}$ denotes the first time we finish exploring a component after time $Tn^{2/3}$. Note that, conditional on the explored vertices up to time $i_{\sss T}$, the remaining graph $\bar{\mathcal{G}}$ is still a configuration model. Let $\bar{\nu}_n=\sum_{i\in \bar{\mathcal{G}}}d_i(d_i-1)/\sum_{i\in \bar{\mathcal{G}}}d_i$ be the criticality parameter of $\bar{\mathcal{G}}$. Then, using \eqref{c1:nu-n-i:nu-n:relation}, we can conclude that 
 \begin{equation}\label{c1:eq:bar-nu-n}
 \bar{\nu}_n\leq \nu_n- C_0Tn^{-1/3}+\oP(n^{-1/3}).
 \end{equation} 
Thus, as we explore more vertices, the graph becomes more subcritical. 
 Take $T>0$ such that $\lambda -C_0T <0$. Thus, with high probability, $\bar{\nu}_n<1$. 
 Denote the component corresponding to a randomly chosen vertex from $\bar{\mathcal{G}}$ by $\mathscr{C}^{\sss \geq T}(V_n)$, and the $i^{\sss th}$ largest component of $\bar{\mathcal{G}}$ by $\mathscr{C}_{\sss (i)}^{\sss \geq T}$. 
 Also, let $\bar{\PR}$ denote the probability measure conditioned on $\mathscr{F}_{i_{\sss T}}$, and let $\bar{\E}$ denote the corresponding expectation. 
 Now, for any $\delta >0$,
 \begin{equation}
  \begin{split} 
  \bar{\PR}\bigg( \sum_{i\geq 1}&|\mathscr{C}_{\sss (i)}^{\sss \geq T}|^2>\delta^2 n^{4/3}  \bigg)\leq \frac{1}{\delta^2 n^{4/3}}\sum_{i\geq 1}\bar{\E}\big( |\mathscr{C}^{\sss \geq T}_{\sss(i)}|^2\big)\\
  &\leq  \frac{1}{\delta^2 n^{1/3}}\bar{\E}\big( |\mathscr{C}^{\sss \geq T}(V_n)|\big)\leq \frac{1}{\delta^2(-\lambda+C_0T+\oP(1))},
  \end{split}
\end{equation} where the second step follows from the Markov inequality and the last step follows by combining Lemma~\ref{c1:lem:janson-lemma} and \eqref{c1:eq:bar-nu-n}. Noting that $\bar{\nu}_n<1$ with high probability, we get
\begin{equation}
 \limsup_{n\to\infty}\prob{|\mathscr{C}_{\max}^{\sss \geq T}|>\delta n^{2/3}}\leq \frac{C}{\delta^2 T},
\end{equation} for some constant $C>0$ and large $T>0$ and the proof follows.
 \end{proof}
 \begin{theorem}\label{c1:thm:conv-fd-comp}
  The convergence in Theorem~\ref{c1:thm_main} holds with respect to the product topology.
 \end{theorem}
 \begin{proof}
  The proof follows from Theorem~\ref{c1:thm_main1} and Lemma~\ref{c1:lem:large-com-explored-early}.
 \end{proof}
 \subsection{{Proof of Theorem~\ref{c1:thm_main}}}  \label{c1:sec_l2_tightness}
 The proof of Theorem~\ref{c1:thm_main} follows  using a similar argument as \cite[Section 3.3]{A97}. 
 However, the proof is a bit tricky since the components are explored in a size-biased manner with sizes being the total degree in the components (not the component sizes as in \citep{A97}). 
 For a sequence of random variables $\mathbf{Y}=(Y_i)_{i\geq 1}$ satisfying $\sum_{i\geq 1}Y_i^2<\infty$ almost surely, define $\boldsymbol{\xi}:=(\xi_i)_{i\geq 1}$ such that $\xi_i|\mathbf{Y}\sim \mathrm{Exp}(Y_i)$ and the coordinates of $\boldsymbol{\xi}$ are independent conditional on $\mathbf{Y}$.  
 For $a\geq 0$, let $\mathscr{S}(a):=\sum_{\xi_i\leq a}Y_i$. Then the \emph{size biased point process} is defined to be the random collection of points $\Xi:=\{(\mathscr{S}(\xi_i),Y_i)\}_{i\geq 1}$ (see \cite[Section 3.3]{A97}).  
 We will use Lemma 8, Lemma 14 and Proposition 15 from \cite{A97}. 
 Let $\mathfrak{C}:=\{\mathscr{C}: \mathscr{C}\text{ is a component of }\mathrm{CM}_n(\boldsymbol{d})\}$. 
 Consider the collection $\boldsymbol{\xi}:=(\xi(\mathscr{C}))_{\mathscr{C}\in \mathfrak{C}}$ such that conditional on $(\sum_{k\in \mathscr{C}}d_k, |\mathscr{C}|)_{\mathscr{C}\in \mathfrak{C}}$, $\xi(\mathscr{C})$ has an exponential distribution with rate $n^{-2/3}\sum_{k\in \mathscr{C}}d_k$ independently over $\mathscr{C}$. Then the order in which Algorithm~\ref{c1:algo:1} explores the components can be obtained by ordering the components according to their $\xi$-value.  Recall that $\mathscr{C}_i$ denotes the $i^{th}$ explored component by Algorithm~\ref{c1:algo:1} and let $D_i:=\sum_{k\in\mathscr{C}_i}d_k$. Define the size-biased point process
 \begin{equation}\Xi_n:=\Big(n^{-2/3} \sum_{j=1}^{i} D_j  , \hspace{.2cm}n^{-2/3} D_i \Big)_{i \geq 1}. 
 \end{equation}  Also define the point processes
 \begin{gather*} 
 \Xi_{n}^{'} := \Big( n^{-2/3} \sum_{j=1}^i \big| \mathscr{C}_j \big|, \hspace{.2cm} n^{-2/3} \big| \mathscr{C}_i\big|  \Big)_{i \geq 1}, \\
  \Xi_{\infty} := \big\{ \big( l(\gamma), |\gamma| \big):\text{ }\gamma \text{ an excursion of } \mathbf{B}^\lambda_{\mu,\eta} \big\}, 
 \end{gather*} where we recall that $l(\gamma)$ are the left endpoints of the excursions of $\mathbf{B}^\lambda_{\mu,\eta}$ and $|\gamma|$ is the length of the excursion $\gamma$ (see \eqref{c1:defn::reflected-BM}). Note that $\Xi_n'$ is not a size-biased point process. However, applying \cite[Lemma 8]{A97} and Theorem \ref{c1:thm_main1}, we get $ \Xi_{n}^{'} \xrightarrow{\sss \mathcal{L}} \Xi_{\infty}$.  We claim that 
 \begin{equation}\label{c1:conv:SBPP}
 \Xi_n \xrightarrow{\mathcal{L}} 2 \Xi_{\infty}.
 \end{equation}
 To verify the claim, note that \eqref{c1:lem_eq2}  and Assumption~\ref{c1:assumption1}~\eqref{c1:assumption1-3} together imply, for any $t>0$,
 \begin{equation}\label{c1:eqn:deg-close-comp}
  \sup_{u \leq t} \big| n^{-2/3} \sum_{i=1}^{\lfloor un^{2/3} \rfloor} d_{\sss(i)} - \frac{\sigma_2}{\mu}u \big|
  = \sup_{u \leq t} \big| n^{-2/3} \sum_{i=1}^{\lfloor un^{2/3} \rfloor} d_{\sss(i)} - 2u \big| \xrightarrow{\mathbb{P}} 0,
 \end{equation}since $\sigma_2/\mu =\E[D^2]/\E[D]=2$. 
  Thus, \eqref{c1:conv:SBPP} follows using \eqref{c1:eqn:deg-close-comp}.
 Now, the point process $2 \Xi_{\infty}$ satisfies all the conditions of \cite[Proposition 15]{A97} as shown by Aldous. 
 Thus, \cite[Lemma 14]{A97} gives
  \begin{align} \label{c1:eqn_tightness}
	\big\{ D_{\sss (i)}\big\}_{i \geq 1}\text{ is tight in } \ell^2_{\shortarrow}.
  \end{align} 
This implies that $\big( n^{-2/3} \big| \mathscr{C}_{\sss (i)} \big| \big)_{i \geq 1}$ is tight in $\ell^2_{\shortarrow}$ by simply observing that $|\mathscr{C}_i|\leq \sum_{k\in\mathscr{C}_i}d_k+1$. Therefore, the proof of Theorem~\ref{c1:thm_main} is complete using Theorem~\ref{c1:thm:conv-fd-comp}.
\qed

\subsection{{Proof of Theorem~\ref{c1:thm_surplus}}} \label{c1:sec_surplus_edges}
The proof of Theorem~\ref{c1:thm_surplus} is completed in two separate lemmas below. 
In Lemma~\ref{c1:lem:surp:poisson-conv} we first show that the convergence in Theorem~\ref{c1:thm_surplus} holds with respect to the $\ell^2_{\shortarrow}\times \mathbb{N}^{\infty}$ topology. The tightness of $(\mathbf{Z}_n)_{n\geq 1}$ with respect to the $\mathbb{U}^0_{\shortarrow}$ topology is ensured in Lemma~\ref{c1:sufficient-U0-conv-condn}.

 \begin{lemma} \label{c1:lem:surp:poisson-conv} Let $N_n^\lambda(k)$ be the number of surplus edges discovered up to time $k$ and $\bar{N}^\lambda_n(u) = N_n^\lambda(\lfloor un^{2/3} \rfloor)$. Then, as $n\to\infty$,
 \begin{equation}\bar{\mathbf{N}}_n^\lambda\dto \mathbf{N}^\lambda,
 \end{equation} where $\mathbf{N}^\lambda$ is defined in \eqref{c1:defn::counting-process}.
 \end{lemma}
 \begin{proof}
 Recall the definitions of $a$, $b$, $\mathcal{A}_k$, $\mathcal{B}_k$, $\mathcal{C}_k$, $\mathcal{S}_k$ from Section \ref{c1:exploration}. 
 Recall also that $ A_k:= \big| \mathcal{A}_k \big|$, $ B_k:= \big| \mathcal{B}_k \big|$, $ C_k:= \big| \mathcal{C}_k \big|$, $ U_k:= \big| \mathcal{S}_k \big|$, $c_{(k+1)}:= (\big| \mathcal{B}_{k} \big| +\big| \mathcal{C}_{k} \big|)/2 $ from Section~\ref{c1:exploration}. Notice that $A_k=S_n(k)-\min_{j \leq k} S_n(j)$. 
 From Lemma~\ref{c1:lem_back_edges}, we can conclude that, uniformly over $k\leq un^{2/3}$,
 \begin{equation} \label{c1:eqn_surplus_intensity}
  \mathbb{E} \big[ c_{(k+1)} \vert \mathscr{F}_k \big] = \frac{A_k}{\mu n}+ O_{\sss\mathbb{P}}(n^{-1}).
 \end{equation}
 The counting process $\mathbf{N}_n^\lambda$ has conditional intensity (conditioned on $\mathscr{F}_{k -1}$) given by \eqref{c1:eqn_surplus_intensity}. Writing the conditional intensity in  \eqref{c1:eqn_surplus_intensity} in terms of $\bar{\mathbf{S}}_n$, we get that the conditional intensity of the re-scaled process $\bar{\mathbf{N}}^\lambda_n$ is given by 
 \begin{equation} \label{c1:rate:surplus:scaled}
 \frac{1}{\mu} [\bar{S}_n(u)-\min_{\tilde{u} \leq u} \bar{S}_n(\tilde{u})]+ o_{\sss \PR}(1).
 \end{equation} 
 Denote by  $\bar{W}_n(u):=\bar{S}_n(u)-\min_{\tilde{u} \leq u} \bar{S}_n(\tilde{u})$ which is  the reflected version of~$\bar{\mathbf{S}}_n$. By Theorem~\ref{c1:thm_main},  
 \begin{equation}\bar{\mathbf{W}}_n\dto\mathbf{W}^\lambda,
 \end{equation} where $\mathbf{W}^\lambda$ is defined in \eqref{c1:defn::reflected-BM}. 
 Therefore, we can assume that there exists a probability space such that $\bar{\mathbf{W}}_n\to\mathbf{W}^\lambda$ almost surely. Using \cite[Theorem 1; Chapter 5.3]{LS89}, and the continuity of the sample paths of $\mathbf{W}^\lambda$, we conclude the proof.
\end{proof} 
\begin{lemma}\label{c1:sufficient-U0-conv-condn} The vector $(\mathbf{Z}_n)_{n\geq 1}$ is tight with respect to the $\mathbb{U}^0_{\shortarrow}$ topology.
\end{lemma}
The proof of Lemma~\ref{c1:sufficient-U0-conv-condn} makes use of the following crucial estimate of the probability that a component with small size has a very large number of surplus edges:
\begin{lemma} \label{c1:lem_surplus_delta_bound}
Assume that $\lambda <0.$ Let $V_n$ denote a vertex chosen uniformly at random, independent of the graph $\mathrm{CM}_n(\boldsymbol{d})$ and let $\mathscr{C}(V_n)$ denote the component containing $V_n$.  Let $\delta_k=\delta k^{-0.12}$. Then, for $\delta > 0$ (small),
\begin{equation}
 \prob{\surp{\mathscr{C}(V_n)}\geq K,|\mathscr{C}(V_n)|\in (\delta_K n^{2/3},2\delta_Kn^{2/3})}\leq \frac{C\sqrt{\delta}}{n^{1/3}K^{1.1}},
\end{equation}
 where $C$ is a fixed constant independent of $n,\delta, K$.
\end{lemma}
\begin{proof}[Proof of Lemma~\ref{c1:sufficient-U0-conv-condn}]
To simplify the notation, we write $Y_i^n=n^{-2/3} |\mathscr{C}_{\sss (i)}|$ and $N_i^n=$\# $\{$surplus edges in $\mathscr{C}_{\sss(i)}\}$. Let $Y_i$, $N_i$ denote the distributional limits of $Y_i^n$ and $N_i^n$ respectively. Recall from Remark~\ref{c1:defn_U_0_process} that $\mathbf{Z}(\lambda)$ is almost surely $\mathbb{U}^0_{\shortarrow}$-valued. Using Lemma~\ref{c1:lem:surp:poisson-conv}, the proof of Lemma~\ref{c1:sufficient-U0-conv-condn} is complete if we can show that, for any $\eta >0$
 \begin{equation} \label{c1:eqn_sufficient_for_U_0_convergence}
 \lim_{\varepsilon\to 0}\limsup_{n\to\infty}\PR\bigg( \sum_{Y_i^n\leq \varepsilon} Y_i^n N_i^n> \eta \bigg)=0.
 \end{equation} 
First, consider the case $\lambda <0$. For every $\eta,\varepsilon >0$ sufficiently small 
\begin{align}
  &\PR\bigg( \sum_{Y_i^n\leq \varepsilon} Y_i^n N_i^n> \eta \bigg)\leq \frac{1}{\eta}\E \bigg[\sum_{i=1}^{\infty}Y_i^n N_i^n \1_{\{ Y_i^n\leq \varepsilon\}} \bigg]\\
  &= \frac{n^{-2/3}}{\eta} \E \bigg[\sum_{i=1}^{\infty}|\mathscr{C}_{\sss(i)}| N_i^n \1_{\{ |\mathscr{C}_{\sss(i)}|\leq \varepsilon n^{2/3}\}} \bigg]= \frac{n^{1/3}}{\eta}\expt{\mathrm{SP}(\mathscr{C}(V_n))\1_{\{ |\mathscr{C}(V_n)|\leq \varepsilon n^{2/3}\}}}\nonumber\\
  &= \frac{n^{1/3}}{\eta}\sum_{k=1}^{\infty}\sum_{i\geq \log_2(1/(k^{0.12}\varepsilon))}\PR\bigg(\mathrm{SP}(\mathscr{C}(V_n))\geq k, |\mathscr{C}(V_n)|\in \bigg(\frac{n^{2/3}}{2^{i+1}k^{0.12}},\frac{n^{2/3}}{2^{i}k^{0.12}} \bigg] \bigg)\nonumber\\
  &\leq \frac{C}{\eta} \sum_{k=1}^{\infty}\frac{1}{k^{1.1}}\sum_{i\geq \log_2(1/(k^{0.12}\varepsilon))} 2^{-(1/2)i} \leq \frac{C}{\eta}\sum_{k=1}^{\infty}\frac{\sqrt{\varepsilon}}{k^{1.04}}  =O(\sqrt{\varepsilon}),
 \end{align}
  where we have used Lemma~\ref{c1:lem_surplus_delta_bound}. Therefore, \eqref{c1:eqn_sufficient_for_U_0_convergence} holds when $\lambda <0$. Now consider the case $\lambda >0$.  For $T>0$ (large), let \begin{equation}
 \mathcal{K}_n:=\{i: Y_i^n\leq \varepsilon, \mathscr{C}_{\sss (i)} \text{ is explored before }Tn^{2/3}\}.
\end{equation}Then, by applying the Cauchy-Schwarz inequality,
\begin{equation}\label{c1:eq:SP-C-T}
\begin{split}
\sum_{i\in \mathcal{K}_n}&Y_i^nN_i^n\leq \Big( \sum_{i\in \mathcal{K}_n}(Y_i^n)^2\Big)^{1/2}\times  \Big( \sum_{i\in \mathcal{K}_n}(N_i^n)^2\Big)^{1/2}\\
&\leq \Big( \sum_{i\in \mathcal{K}_n}(Y_i^n)^2\Big)^{1/2}\times (\# \text{ surplus edges  explored before }Tn^{2/3})
\end{split}
\end{equation}
For the case $\lambda >0$, we can use similar ideas as the proof of Lemma~\ref{c1:lem:large-com-explored-early}, i.e., we can run the exploration process till $Tn^{2/3}$ and the unexplored graph becomes a configuration model with negative criticality parameter for large $T>0$, by \eqref{c1:nu-n-i:nu-n:relation}.  Thus, the proof can be completed using \eqref{c1:eq:SP-C-T}, the $\ell^{2}_{\shortarrow}$ convergence of the component sizes given by Theorem~\ref{c1:thm_main} and Lemma~\ref{c1:lem:surp:poisson-conv}, and the proof for the case $\lambda<0$.
\end{proof}
\begin{proof}[Proof of Lemma~\ref{c1:lem_surplus_delta_bound}] 
To complete the proof of Lemma~\ref{c1:lem_surplus_delta_bound}, we will use martingale techniques coupled with Lemma~\ref{c1:lem:janson-lemma}. Fix $\delta > 0$ (small). First we describe another way of exploring $\mathscr{C}(V_n)$ which turns out to be convenient to work with.
 \begin{algo}[Exploring components of uniform vertices]\label{c1:algo:2}\normalfont Consider the following exploration of $\mathscr{C}(V_n)$: \begin{itemize}
 \item[(S0)] Initialize all half-edges to be alive. Choose a vertex from $[n]$ uniformly at random and declare all its half-edges active.
 \item[(S1)] In the next step, take any active half-edge and pair it uniformly with another alive half-edge. Kill these paired half-edges. Declare all the half-edges corresponding to the new vertex (if any) active. Keep repeating (S1) until the set of active half-edges is empty.
\end{itemize}
\end{algo}Unlike Algorithm~\ref{c1:algo:1}, we need not see a new vertex at each stage and we explore only two half-edges at each stage. In this proof, $\mathscr{F}_l$ denotes the sigma-field containing information revealed up to stage~$l$ by Algorithm~\ref{c1:algo:2} and $\mathscr{V}_l$ denotes the vertex set discovered up to time $l$. Recall that we denote by $D_n$ the degree of $V_n$. Define the exploration process~$\mathbf{s}_n'$ by,
\begin{equation}
 s_n'(0)=D_n,\ s_n'(l)= \sum_{i\in [n]} d_i\mathcal{I}_i^n(l)-2l,
\end{equation} where $\mathcal{I}_i^n(l)= \ind{i\in \mathscr{V}_l}$. 
Therefore, $s_n'(l)$ counts the number of active half-edges at time $l$, until $\mathscr{C}(V_n)$ is explored. Note that $\mathscr{C}(V_n)$ is explored when $\mathbf{s}'_n$ hits zero and the hitting time to zero gives the number of edges in $\mathscr{C}(V_n)$, since exactly one edge is being explored at each time step.
We will use a generic constant $C$ to denote a positive constant that can be different in different equations.  For $H>0$,  let \begin{equation} \label{c1:defn:gamma}
\gamma := \inf \{ l\geq 1: s_n'(l)\geq H \text{ or }  s_n'(l)= 0 \}\wedge 2\delta n^{2/3}.
\end{equation} Note that
\begin{equation}\label{c1:exploration:super_martingale}
\begin{split}
 \E[s_n'(l+1)&-s_n'(l)\vert \mathscr{F}_l]= \sum_{i\in [n]}d_i\prob{i\in \mathscr{V}_{l+1}\vert \mathscr{F}_l,\mathcal{I}_i^n(l) = 0} -2\\
 &= \frac{ \sum_{i\notin \mathscr{V}_l}d_i^2}{\ell_n-2l-1}-2\leq \frac{ \sum_{i\in [n]}d_i^2}{\ell_n-2l-1}-2\\
 & =\frac{\lambda}{n^{1/3}}+o(n^{-1/3})+\frac{2l+1}{\ell_n-2l-1}\times \frac{\sum_{i\in [n]}d_i^2}{\ell_n}   \leq 0
\end{split}
\end{equation} uniformly over $l\leq 2\delta n^{2/3}$ for all small $\delta >0$ and large $n$, where the last step follows from the fact that $\lambda<0$. Therefore, $\{s_n'(l)\}_{l= 1}^{2\delta n^{2/3}}$ is a super-martingale. The optional stopping theorem now implies
  \begin{equation}
   \mathbb{E}\left[D_n\right] \geq \mathbb{E}\left[s_n'(\gamma)\right] \geq H \mathbb{P}\left( s'_n(\gamma) \geq H \right).
  \end{equation} Thus,
  \begin{equation} \label{c1:eqn::bound_geq_H_at_stopping_time}
    \mathbb{P}\left( s'_n(\gamma) \geq H \right) \leq \frac{\expt{D_n}}{H}.
  \end{equation}
We put $H=n^{1/3}K^{1.1}/\sqrt{\delta}$. To simplify the notation, we write $s_n'[0,t]\in A$ to denote that $s_n'(l)\in A,$ for all $ l\in [0,t]$.  Notice that, for $K\geq 1$,
 \begin{equation}\label{c1:surp:sup:less}\begin{split}
  &\prob{\surp{\mathscr{C}(V_n)}\geq K,|\mathscr{C}(V_n)|\in (\delta_K n^{2/3},2\delta_Kn^{2/3})}\\
  &\leq \prob{s_n'(\gamma)\geq H}\\
  & \hspace{.5cm}+\prob{\surp{\mathscr{C}(V_n)}\geq K, s_n'[0,2\delta_K n^{2/3}]< H, s_n'[0,\delta_K n^{2/3}]>0}.
  \end{split}
 \end{equation}
 Here we have used the fact that if there is at least one surplus edge in $\mathscr{C}(V_n)$, the number of edges in $\mathscr{C}(V_n)$ is at least $\mathscr{C}(V_n)$. Therefore, $|\mathscr{C}(V_n)|>\delta_Kn^{2/3}$ implies $s_n'[0,\delta_K n^{2/3}]>0$.
 Let us denote the event that surplus edges appear at times  $l_1,\dots,l_K$,  $s_n'[0,2\delta_K n^{2/3}]< H$, and $s_n'[0,\delta_K n^{2/3}]>0$ by $\mathrm{SPB}(l_1,\dots,l_K)$.
   Now,
 \begin{equation}
 \begin{split}
   &\prob{\surp{\mathscr{C}(V_n)}\geq K, s_n'[0,2\delta_K n^{2/3}]< H, s_n'[0,\delta_K n^{2/3}]>0}\\
  &\hspace{.5cm}\leq \sum_{1\leq l_1<\dots< l_K\leq 2\delta_K n^{2/3}} \prob{\mathrm{SPB}(l_1,\dots,l_K)}\\
  &\hspace{.5cm}=\sum_{1\leq l_1<\dots<l_K\leq 2\delta_K n^{2/3}}\expt{\ind{0<s_n'[0,l_K-1]<H, \mathbf{SP}(l_K-1)=K-1}Y},
 \end{split}
 \end{equation}
 where
 \begin{align}
  Y&=\prob{K^{th}\text{ surplus occurs at }l_K,  s_n'[l_K,2\delta_K n^{2/3}]< H, s_n'[l_K,\gamma]>0\mid \mathscr{F}_{l_K-1} }\nonumber\\
  &\leq \frac{CK^{1.1}n^{1/3}}{\ell_n\sqrt{\delta}}\leq \frac{CK^{1.1}}{n^{2/3}\sqrt{\delta}}.
 \end{align}
 Therefore, using induction, 
 \begin{equation}\label{c1:exploration:bounded:surplus}
 \begin{split}
  &\prob{\surp{\mathscr{C}(V_n)}\geq K, s_n'[0,2\delta_K n^{2/3}]< H, s_n'[0,\delta_K n^{2/3}]>0}\\
  &\hspace{1cm}\leq C\bigg( \frac{K^{1.1}}{\sqrt{\delta}n^{2/3}}\bigg)^K\frac{(2\delta n^{2/3})^{K-1}}{K^{0.12(K-1)}(K-1)!}\sum_{l_1=1}^{2\delta_K n^{2/3}}\prob{|\mathscr{C}(V_n)|\geq l_1}\\
  &\hspace{1cm}\leq C \frac{\delta^{K/2}}{K^{1.1}n^{2/3}}  \expt{|\mathscr{C}(V_n)|},
  \end{split}
 \end{equation}where we have used the fact that $$\#\{1\leq l_2<\dots<l_k\leq 2\delta n^{2/3}\}\leq(2\delta n^{2/3})^{K-1}/(K-1)!$$ and the Stirling approximation for $(K-1)!$ in the last step. Since $\lambda <0$, we can use Lemma~\ref{c1:lem:janson-lemma} to conclude that for all sufficiently large $n$
 \begin{equation} \label{c1:expectation:random:vert:comp}
  \expt{|\mathscr{C}(V_n)|}\leq Cn^{1/3},
 \end{equation} for some constant $C>0$ and we get the desired bound for \eqref{c1:surp:sup:less}.
  The proof of Lemma~\ref{c1:lem_surplus_delta_bound} is now complete by applying \eqref{c1:eqn::bound_geq_H_at_stopping_time}~and~\eqref{c1:exploration:bounded:surplus} in \eqref{c1:surp:sup:less}.
\end{proof}

\section{{Degree distribution within components}} \label{c1:sec_vertex}
In this section, we compute the number of vertices of degree $k$ in each connected component at criticality. This will be useful in  Sections~\ref{c1:sec_percolation} and \ref{c1:sec_multidimensional}. Such an estimate was proved in \cite[Theorem 2.4]{JL09} for supercritical graphs under stronger moment assumptions.
\begin{lemma}\label{c1:lem:sec:vertex} Denote by $N_k(t)$ the number of vertices of degree $k$ discovered up to time $t$. For any $t>0$, uniformly over $k$,
  \begin{equation}
   \sup\limits_{u \leq t} \big| n^{-2/3} N_k(un^{2/3})-\frac{kn_k}{\ell_n} u \big| =  O_{\sss\mathbb{P}}((kn^{1/3})^{-1}).
  \end{equation}
 \end{lemma}
 \begin{proof}
  By setting $w_i=\mathbf{1}_{ \{ d_i=k \} }$ in Lemma~\ref{c1:lem_general}  we can directly conclude that 
  \begin{equation} \sup\limits_{u \leq t} \big| n^{-2/3} N_k(un^{2/3})-\frac{kn_k}{\ell_n} u \big|\xrightarrow{\mathbb{P}}0.
  \end{equation} However, one can repeat same arguments as leading to the proof of Lemma~\ref{c1:lem_general} and obtain that
  \begin{eq} \label{c1:lem_martingale_eqn}
  &\mathbb{P} \Big( \sup\limits_{u \leq t} \Big| n^{-2/3} N_k(un^{2/3})-\frac{kn_k}{\ell_n} u \Big| > \frac{A}{kn^{1/3}} \Big)  \\
  &\hspace{1cm} \leq \frac{3\Big( k^3 s^2 \frac{r_k}{( \mathbb{E}[D] )^2 } + \sqrt{  s \frac{k^3r_k}{\mathbb{E}[D]}} \Big)}{A}+ o(1).
  \end{eq} Now, we can use the finite third-moment assumption to conclude that the numerator in the right hand side can be taken to be uniform over $k$. Thus, the proof follows.
 \end{proof}
  Define $v_k(G):=$ the number of vertices of degree $k$ in the connected graph $G$.
As a corollary to Lemma~\ref{c1:lem:sec:vertex} and  \eqref{c1:large-com-explored-early}, we can deduce  that
  \begin{equation}\label{c1:eqn_vertices_of_degree_k-ord}
   v_k \big( \mathscr{C}_{\sss(j)} \big) = \frac{kr_k}{\mathbb{E}[D]} \big| \mathscr{C}_{\sss (j)} \big| +O_{\sss\mathbb{P}}\big((k^{-1}n^{1/3})\big).
  \end{equation}Moreover, the following holds: Let $\mathrm{ord}(\boldsymbol{x})$ denote the vector with elements of $\boldsymbol{x}$ ordered in a non-increasing manner. 
\begin{lemma} \label{c1:vertices_of_degree_k_ord}
For each $k\geq 1$ denote by $\mathbf{V}_k^n:=(n^{-2/3}v_k ( \mathscr{C}_j ))_{j\geq 1}$. Then, the sequence $\{\mathrm{ord}(\mathbf{V}_k^n)\}_{n\geq 1}$ is tight in $\ell^2_{\shortarrow}$.
\end{lemma}
\begin{proof}
 Note that for any $j\geq 1$, $v_k(\mathscr{C}_{\sss(j)})\leq |\mathscr{C}_{\sss(j)}|$ uniformly over $k$. The proof now follows from \eqref{c1:eqn_vertices_of_degree_k-ord} and $\ell^2_{\shortarrow}$ tightness of the component sizes given in Theorem~\ref{c1:thm_main}.
\end{proof}

\begin{remark}\normalfont Define $\mathbf{V}^n:=(n^{-2/3}v_k(\mathscr{C}_j))_{k,j\geq 1}$. Then $\{\mathrm{ord}(\mathbf{V}^n)\}_{n\geq 1}$ is also tight in $\ell^2_{\shortarrow}$.
\end{remark}

\section{Critical percolation}
\label{c1:sec_percolation}

%\subsection{Percolation on Configuration Model} \label{c1:percolation_defn}
Let $p = p_n \in (0,1)$ be the percolation parameter. Recall the notation $\mathrm{CM}_{n}(\boldsymbol{d},p)$ for the random graph obtained after deleting edges of $\mathrm{CM}_{n}(\boldsymbol{d})$ independently with probability $1-p$. Suppose, $\boldsymbol{d}'$ is the random degree sequence obtained after percolation. 
Fountoulakis~\cite{F07} showed that, given $\boldsymbol{d}'$, the law of $\mathrm{CM}_{n}(\boldsymbol{d},p)$  is same as the law of $\mathrm{CM}_{n}(\boldsymbol{d}')$.  We will use the following construction of $\mathrm{CM}_{n}(\boldsymbol{d},p)$ due to Janson~\cite{J09}:
\begin{algo} \label{c1:algo:3}
\normalfont \begin{itemize}
 \item[(S1)] For each half-edge $e$, let $v_e$ be the vertex to which $e$ is attached. With probability $1-\sqrt{p}$, one detaches $e$ from $v_e$ and associates $e$ to a new vertex $v'$. Color the new vertex $red$. This is done independently for every existing half-edge. Let $n_+$ be the number of red vertices created and $\tilde{n}=n+n_+$.  Suppose, $\Mtilde{\boldsymbol{d}} = ( \tilde{d}_i )_{i \in [\tilde{n}]}$ is the new degree sequence obtained by the above procedure, i.e. $\tilde{d}_i \sim \text{Bin} (d_i, \sqrt{p})$ for $i \in [n]$ and $\tilde{d}_i=1$ for $i \in [\tilde{n}] \setminus [n]$.
 \item[(S2)] Construct $\mathrm{CM}_{\tilde{n}}(\Mtilde{\boldsymbol{d}})$, independently of (S1).
 \item[(S3)] Delete all the red vertices.
 \end{itemize}
 \end{algo}
\begin{remark}\label{c1:rem:alt-s3}
 \normalfont
 It was argued in \cite{J09} that the obtained multigraph also has the same distribution as $\mathrm{CM}_{n}(\boldsymbol{d},p)$ if we replace (S3) by
 \begin{itemize}
 \item[(S3$'$)] Instead of deleting red vertices, choose any $n_+$ degree-one vertices uniformly at random, independently of (S1) and (S2), and delete them.
 \end{itemize}
\end{remark}
\begin{remark}\normalfont The construction of $\mathrm{CM}_{\tilde{n}}(\Mtilde{\boldsymbol{d}})$ in Algorithm~\ref{c1:algo:3} consists of two stages of randomization, the first one is described by (S1), and the second one by (S2). We will consider the following probability space to describe the randomization arising from Algorithm~\ref{c1:algo:3}~(S1): Suppose we have a sequence of degree sequences $(\boldsymbol{d})_{n\geq 1}$. 
Let  $\mathbb{P}_p^n$ denote the  probability measure induced on $\mathbb{N}^{\infty}$ by Algorithm~\ref{c1:algo:3}~(S1). 
Denote the product measure of $(\PR_p^n)_{n\geq 1}$ by $\mathbb{P}_p$. Thus (S1) is performed independently on $\boldsymbol{d}=\boldsymbol{d}(n)$ as $n$ varies.  All the almost sure statements in this section will be with  respect to the probability measure $\mathbb{P}_p$. 
\end{remark}
\begin{remark} \label{c1:remark-perc} \normalfont
The idea of the proof of Theorem~\ref{c1:thm_percolation} is as follows.  We show that $\Mtilde{\boldsymbol{d}}$, under Assumption~\ref{c1:assumption2}, satisfies Assumption~\ref{c1:assumption1} $\mathbb{P}_p$ almost surely and then estimate the number of vertices to be deleted from each component using Lemma~\ref{c1:lem:sec:vertex}. 
Since deleting a degree-one vertex does not break up any component, we can just subtract this from the component sizes of $\mathrm{CM}_{\tilde{n}}(\Mtilde{\boldsymbol{d}})$ to get the component sizes of $\mathrm{CM}_{n}(\boldsymbol{d},p_n(\lambda))$. 
Since the degree-one vertices do not get involved in surplus edges, deleting degree-one vertices does not change the number of surplus edges.
\end{remark}
\subsection{{Proof of Theorem~\ref{c1:thm_percolation}}}
We now consider the critical window corresponding to percolation. The goal is to prove Theorem~\ref{c1:thm_percolation}. Let $n_j$ and $\tilde{n}_j$ be the number of vertices of degree $j$ before and after performing Algorithm~\ref{c1:algo:3}~(S1) respectively. Further let 
\begin{equation}\tilde{\nu}_n = \frac{\sum_{i \in [\tilde{n}]}\tilde{d}_{i}\big( \tilde{d}_i-1 \big)}{\sum_{i \in [\tilde{n}]}\tilde{d}_{i}}.
\end{equation} 
For convenience we write $r_j=\mathbb{P}(D=j)$. Denote by $\tilde{n}_{jl}$, the number of vertices that had degree $l$ before and have degree $j$ after performing Algorithm~\ref{c1:algo:3}~(S1). Therefore, $\tilde{n}_{jl} \sim \text{Bin}\big( n_l,b_{lj}(\sqrt{p_n}) \big) $, where $b_{lj}(\sqrt{p_n})= \binom{l}{j} (\sqrt{p_n})^j (1-\sqrt{p_n})^{l-j}$. Using the strong law of large numbers for triangular arrays, note that $\mathbb{P}_p$ almost surely,
$\tilde{n}_{jl} = n_l b_{lj}(\sqrt{p_n}) + o(n_l) = nr_l b_{lj}(\sqrt{p_n}) + o(n_l).$  Now, $\sum_{l\geq 1}|n_l/n-r_l|\to 0$ and therefore, for all $j \geq 2$,  $\mathbb{P}_p$ almost surely
 \begin{equation}\label{c1:estimate:n-j-tilde}
  \frac{\tilde{n}_j}{n} = \frac{\sum_{l=j}^{\infty}\tilde{n}_{jl}}{n} = \sum_{l=j}^{\infty}r_lb_{lj}(\sqrt{p}_n)+ o(1).
 \end{equation}
 Also, $n_+ = \sum_{i \in [n]}\big( d_i - \tilde{d}_i \big) \sim \text{Bin}(\ell_n, 1-\sqrt{p_n})$. Therefore, using similar arguments as  \eqref{c1:estimate:n-j-tilde} again, $\mathbb{P}_p$ almost surely,
 \begin{equation} \label{c1:estimate_n+}
 \begin{split}
  \frac{n_+}{n} & = \mathbb{E}(D) \big( 1- \sqrt{p_n} \big) + o(1),  			
  \end{split}
 \end{equation}
\begin{equation} \label{c1:estimate_n_1_tilde}
\frac{\tilde{n}_{1}}{n} =\frac{\sum_{l=1}^{\infty}\tilde{n}_{1l}+n_+}{n}= \frac{\sum_{l=1}^{\infty}\tilde{n}_{1l}}{n}+\mathbb{E}(D) \big( 1- \sqrt{p_n} \big) + o(1),
\end{equation}
and
 \begin{equation}\label{c1:limit-n-tilde}
  \frac{\tilde{n}}{n} =1+\frac{n_+}{n}=1+\mathbb{E}(D) \big( 1- \sqrt{p_n} \big) + o(1).
 \end{equation}
Denote $\tilde{r}_l = \mathbb{P}( \tilde{D}=l )= \lim_{n \to \infty} \tilde{n}_l/\tilde{n}$. 
Let $\tilde{D}_n$ denote the degree of a uniformly chosen vertex from $[\tilde{n}]$, independently of the graph $\mathrm{CM}_{\tilde{n}}(\Mtilde{\boldsymbol{d}})$.
Thus, \eqref{c1:estimate:n-j-tilde} and \eqref{c1:limit-n-tilde} imply that $\tilde{D}_n\xrightarrow{\sss d} \tilde{D}$. The following lemma verifies the rest of the conditions for $\Mtilde{\boldsymbol{d}}$ in Assumption~\ref{c1:assumption1}:
 \begin{lemma} \label{c1:lem_percolation_condition} The statements below are true  $\mathbb{P}_p$ almost surely:
  \begin{enumerate}
   \item Under Assumption~\ref{c1:assumption2}~\ref{c1:assumption2-1} and for $r= 1,2,3$,
    \begin{equation}
     \frac{1}{\tilde{n}}\sum_{i \in [n]} \tilde{d}_i^r =  \frac{1}{\tilde{n}}\sum_{j \in [n]} j^r \tilde{n}_j \xrightarrow{n \to \infty}
     \mathbb{E} [ \tilde{D}^r ].
    \end{equation}
   \item Under Assumption~\ref{c1:assumption2},
    \begin{equation}
      \tilde{\nu}_n = 1+\lambda n^{-1/3}+o(n^{-1/3}).
    \end{equation}
\end{enumerate}
  \end{lemma}
\begin{proof}
We will make use of \cite[Corollary 2.27]{JLR00}. Suppose $Z_1$, $Z_2$, ..., $Z_N$ are independent random variables with $Z_i$ taking values in $\Lambda_i$ and $f:\prod_{i=1}^N \Lambda_i \to \mathbb{R}$ satisfies the following:
 If two vectors $z,z' \in \prod_{i=1}^N \Lambda_i$ differ only in the $i^{th}$ coordinate, then $| f(z)- f(z') | \leq c_i$ for some constant $c_i$.
Then, for any $t>0$, the random variable $X= f(Z_1, Z_2, \dots , Z_N)$ satisfies
\begin{equation} \label{c1:janson_lemma_bound}
 \mathbb{P} \Big( \big| X- \mathbb{E}[X] \big| > t \Big) \leq 2 \exp \Big( -\frac{t^2}{2\sum_{i =1}^{N} c_i^2} \Big).
\end{equation}
 Now let $I_{ij}$ denote the indicator of the $j^{th}$ half-edge corresponding to vertex $i$ to be kept after Algorithm~\ref{c1:algo:3}~(S1). Then $I_{ij} \sim \text{Ber} (\sqrt{p_n})$ independently for $j \in [d_i]$, $i \in [n]$. Let
 \begin{equation}
 \mathbf{I}:= (I_{ij})_{j \in [d_i], i \in [n]} \ \text{ and }\  f_1(\mathbf{I}):=\sum_{i\in [n]} \tilde{d_i}(\tilde{d}_i-1).
 \end{equation}Note that $f_1(\mathbf{I})=\sum_{i\in [\tilde{n}]}\tilde{d}_i(\tilde{d}_i-1)$ since the degree-one vertices do not contribute to the sum. One can check that, by changing the status of one half-edge corresponding to vertex $k$, we can change $f_1(\cdot)$ by at most $2(d_{k}+1)$. Therefore, \eqref{c1:janson_lemma_bound} yields
 \begin{eq}
 &\mathbb{P}_p \Big( \Big|\sum_{i\in [n]} \tilde{d_i}(\tilde{d}_i-1)- p_n \sum_{i\in [n]} d_i(d_i-1) \Big| >t \Big) \\
 &\hspace{.2cm}\leq 2 \exp \bigg( -\frac{t^2}{8\sum_{i \in [n]} d_i (d_{i}+1)^2}\bigg).
 \end{eq}
 By setting $t= n^{1/2+ \varepsilon}$ for some suitably small $\varepsilon >0$, using the finite third moment conditions and the Borel-Cantelli lemma we conclude that  $\mathbb{P}_p$ almost surely,
 \begin{equation}\sum_{i\in [n]} \tilde{d_i}(\tilde{d}_i-1)= p_n \sum_{i\in [n]} d_i(d_i-1) +O(n^{1/2+\varepsilon}), 
 \end{equation}and in particular,
  \begin{equation}\label{c1:estimate:nu-n-num}\sum_{i\in [\tilde{n}]} \tilde{d_i}(\tilde{d}_i-1)=\sum_{i\in [n]} \tilde{d_i}(\tilde{d}_i-1)= p_n \sum_{i\in [n]} d_i(d_i-1) +o(n^{2/3}).
 \end{equation}
 Similarly, take $f_2(\mathbf{I})=\sum_{i\in [n]}\tilde{d}_i(\tilde{d}_i-1)(\tilde{d}_i-2)$ and note that changing the status of one bond changes $f_2(\cdot)$ by at most $[2(d_k+1)]^2$. Thus, \eqref{c1:janson_lemma_bound} gives
 \begin{equation}
 \begin{split}  &\mathbb{P}_p \Big( \Big| f_2(\mathbf{I})- p_n^{3/2} \sum_{i\in [n]} d_i(d_i-1)(d_i-2) \Big| >t \Big) \\
 &\hspace{2cm}\leq 2 \exp \bigg( -\frac{t^2}{32\sum_{i \in [n]} d_i (d_{i}+1)^4} \bigg)\\
 &\hspace{2cm}  \leq \exp \bigg( -\frac{t^2}{32d_{\max}(d_{\max}+1)\sum_{i \in [n]}  (d_{i}+1)^3} \bigg),
  \end{split}
 \end{equation}which implies that, $\mathbb{P}_p$ almost surely,
 \begin{eq}\label{c1:estimate-third-mom-perc}
  \sum_{i\in [\tilde{n}]}\tilde{d}_i(\tilde{d}_i-1)(\tilde{d}_i-2)&=\sum_{i\in [n]}\tilde{d}_i(\tilde{d}_i-1)(\tilde{d}_i-2)\\
  &=p_n^{3/2}\sum_{i\in [n]} d_i(d_i-1)(d_i-2)+o(n),
 \end{eq}since $d_{\max}^2\sum_{i\in [n]}(d_i+1)^3=o(n^{5/3})$.
Now, to prove Lemma~\ref{c1:lem_percolation_condition}~{(1)}, note that the case $r=1$ follows by simply observing that $\sum_{i\in \tilde{n}}\tilde{d}_i=\sum_{i\in [n]}d_i$. The cases $r=2,3$ follow from \eqref{c1:estimate:nu-n-num} and \eqref{c1:estimate-third-mom-perc}.
Finally, to see Lemma~\ref{c1:lem_percolation_condition}~{(2)}, note that
\begin{equation}
\begin{split}
  \tilde{\nu}_n  & = \frac{\sum_{i \in [\tilde{n}]}\tilde{d}_i(\tilde{d}_i-1)}{\sum_{i \in [\tilde{n}]}\tilde{d}_i}
 								  =\frac{p_n \sum_{i \in [n]} d_i \big( d_i -1 \big) + o \big( n^{2/3} \big)}{\sum_{i \in [n]}d_i} \\
 								 &= \frac{p_n \sum_{i \in [n]} d_i(d_i-1)}{\sum_{i \in [n]} d_i}+o(n^{-1/3})= 1+\frac{\lambda}{n^{1/3}}+o(n^{-1/3}),
 \end{split}
\end{equation} by \eqref{c1:estimate:nu-n-num} and this completes the proof of Lemma~\ref{c1:lem_percolation_condition}.
\end{proof}
We will denote by $\tilde{\mathscr{C}}_{\sss (j)}$, the $j^{th}$ largest component of $\mathrm{CM}_{\tilde{n}}(\Mtilde{\boldsymbol{d}})$. 
To conclude Theorem~\ref{c1:thm_percolation} we also need to estimate the number of deleted vertices from each component. 
Recall from Remark~\ref{c1:rem:alt-s3} that $\mathrm{CM}_n(\boldsymbol{d},p_n(\lambda))$ can be obtained from $\mathrm{CM}_{\tilde{n}}(\Mtilde{\boldsymbol{d}})$ by deleting the relevant number of degree-one vertices \emph{uniformly} at random. Let $v^d_1(\tilde{\mathscr{C}}_{\sss (j)})$ be the number of degree-one vertices of $\tilde{\mathscr{C}}_{\sss (j)}$ that are deleted while creating $\mathrm{CM}_{n}(\boldsymbol{d},p_{n}(\lambda))$ from  $\mathrm{CM}_{\tilde{n}}(\Mtilde{\boldsymbol{d}})$. Since the vertices are to be chosen uniformly from all degree-one vertices, the number of vertices to be deleted from $\tilde{\mathscr{C}}_{\sss (j)}$ is asymptotically the total number of degree-one vertices in $\tilde{\mathscr{C}}_{\sss (j)}$ times the proportion of degree-one  vertices to be deleted. Therefore,
\begin{equation} \label{c1:degree_one_vertices}
\begin{split}
 v^d_1(\tilde{\mathscr{C}}_{\sss (j)}) &= \frac{n_+}{\tilde{n}_1}v_1(\tilde{\mathscr{C}}_{\sss (j)})+ o_{\sss\mathbb{P}}(n^{2/3})= \frac{n_+}{\tilde{n}_1} \frac{\tilde{n}_1}{\sum_{k=0}^{\infty} k\tilde{n}_k} \big| \tilde{\mathscr{C}}_{\sss (j)}\big| + o_{\sss\mathbb{P}}(n^{2/3})\\
 & = \frac{n_+}{\ell_n}\big| \tilde{\mathscr{C}}_{\sss (j)} \big|+ o_{\sss\mathbb{P}}(n^{2/3})= \frac{\mathbb{E}[D]\big(1-\sqrt{p}_n\big)}{\mathbb{E}[D]}\big| \tilde{\mathscr{C}}_{\sss (j)} \big|+ o_{\sss\mathbb{P}}(n^{2/3})\\
 & = \big(1-\sqrt{p}_n\big) \big| \tilde{\mathscr{C}}_{\sss (j)}\big| + o_{\sss\mathbb{P}}(n^{2/3}),
 \end{split}
\end{equation} where the third equality follows from \eqref{c1:eqn_vertices_of_degree_k-ord}. 
The proof of Theorem \ref{c1:thm_percolation} is now complete by using the $\ell^{2}_{\shortarrow}$ convergence in Lemma~\ref{c1:vertices_of_degree_k_ord}, \eqref{c1:degree_one_vertices} and Remark~\ref{c1:remark-perc}.

\section{{Joint convergence at multiple locations in the critical window}} \label{c1:sec_multidimensional}
We will prove Theorem~\ref{c1:thm_multiple_convergence} in this section. 
In Section~\ref{c1:sec:perc-alt-cons}, we give a construction of the joint distribution of the percolated graphs for different percolation parameters that are coupled in a way described in Theorem~\ref{c1:thm_multiple_convergence}.
In Section~\ref{c1:sec:dynamic-construction}, we compare the process of percolated graphs with a different graph process that turns out to be easier to work with.
As discussed in Remark~\ref{c1:rem:mult-coal-heuristics}, let the mass of a component be the number of open half-edges (re-scaled by $n^{2/3}$). 
The alternatively constructed graph process can be modified in such a way that the vector of masses evolves according to an \emph{exact} multiplicative coalescent as discussed in Section~\ref{c1:sec:modified-C1}. 
Thus the joint convergence result at multiple locations of the scaling window can be deduced for the modified process using the Feller property of the multiplicative coalescent.
Further, the modified process remains \emph{close} to the dynamic construction. 
In Section~\ref{c1:sec:open-he}, the vector of masses are shown to be asymptotically proportional to the component sizes and we combine all the above observations in Section~\ref{c1:sec-mul-conv-thm-proof} to complete the proof of Theorem~\ref{c1:thm_multiple_convergence}.
\subsection{Construction of the percolated graph process} 
\label{c1:sec:perc-alt-cons}
We start by explaining a way to construct the graph process $(\mathrm{CM}_n(\boldsymbol{d},p)_{p\in [0,1]}$. 
Fix any $p_1<p_2<\dots<p_m$ and consider $(\mathrm{CM}_n(\bld{d},p_i))_{i\in [m]}$. 
Recall that each edge $e$ of $\CM$ has an independent uniform $[0,1]$ random variable $U_e$ associated to it and  $\mathrm{CM}_n(\bld{d},p_i)$ is obtained from $\CM$
 by keeping only those edges $e$ with $U_e\leq p_i$. This couples the graphs $(\mathrm{CM}_n(\bld{d},p_i))_{i\in [m]}$. 
 Moreover, under this coupling, $\mathrm{CM}_n(\bld{d},p_i)$ is distributed as the graph obtained from edge percolation on $\mathrm{CM}_n(\bld{d},p_{i+1})$ with probability $p_i/p_{i+1}$ for all $i<m$. 
 The following two lemmas are modifications of \cite[Lemmas~3.1,~3.2]{F07} that lead to the construction of Algorithm~\ref{c1:algo:cons-perc} below.
  For a graph $G$, let $\rE(G)$ denote the set of edges of $G$. For a sub-graph $G$ of $\CM$, let $\mathcal{H}(G)$ denote the set of half-edges that are part of some edge in $G$ and $\mathcal{H} = \mathcal{H}(\CM)$.
\begin{lemma}\label{c1:lem:perc-cons-1} For $k_1\leq \dots\leq k_m$, conditionally on $\{|\rE(\mathrm{CM}_n(\bld{d},p_i))| = k_i:i\leq m\}$, the half-edges in $\mathrm{CM}_n(\bld{d},p_i)$ can be generated sequentially as follows: Let $k_0 = 0$, $\mathcal{H} (\mathrm{CM}_n(\bld{d},p_{0}))= \varnothing$. For each $i\leq m$, declare $\mathcal{H}(\mathrm{CM}_n(\bld{d},p_{i})) = \mathcal{H}(\mathrm{CM}_n(\bld{d},p_{i-1}))\cup \mathcal{H}_i$, where $\mathcal{H}_i$ is uniformly chosen among all the subsets of size $2k_i-2k_{i-1}$ of $\mathcal{H}\setminus \cup_{j<i}\mathcal{H}_i$.
\end{lemma} 
\begin{lemma}\label{c1:lem:perc-cons-2} Let $d_k(i,i+1) $ be the number of half-edges attached to vertex $k$ in the graph $\mathrm{CM}_n(\bld{d},p_{i+1})$ that are not in $\mathrm{CM}_n(\bld{d},p_i)$. For any $i\geq 1$, conditionally on the event $\{\bld{d}(j,j+1) = \bld{d}_0(j,j+1):j\leq m\}$ and $\mathcal{H}(\mathrm{CM}_n(\bld{d},p_{i-1}))$, the perfect matching of $\mathcal{H}(\mathrm{CM}_n(\bld{d},p_{i}))\setminus \mathcal{H}(\mathrm{CM}_n(\bld{d},p_{i-1}))$ constituting the edges $\rE(\mathrm{CM}_n(\bld{d},p_{i})\setminus\mathrm{CM}_n(\bld{d},p_{i-1}))$ is a uniform perfect matching, where we have assumed that $p_0 = 0$.
\end{lemma}
\begin{algo}\label{c1:algo:cons-perc} \normalfont Let $(U_i)_{i\geq 1}$ be a finite collection of i.i.d uniform $[0,1]$ random variables. Construct a collection of graphs $(G_n(\lambda))_{\lambda\in\R}$ using the following two steps:
\begin{itemize}
 \item[\textrm{(S0)}]  Construct the process $\bld{E}_n = (E_n(\lambda))_{\lambda\in\R}$, where $E_n(\lambda) = \#\{i:U_i\leq p_n(\lambda)\}$.
 \item[\textrm{(S1)}] Initially, $G_n(-\infty)$ is a graph only consisting of isolated vertices with no paired half-edges. At each time point $\lambda$ where $E_n(\lambda)$ has a jump, choose two unpaired half-edges uniformly at random and pair them. The graph $G_n(\lambda)$ is obtained by adding this edge to $G_n(\lambda-)$. 
\end{itemize} 
\end{algo} 
Algorithm~\ref{c1:algo:cons-perc}~(S0) can be regarded as the birth of edges and Algorithm~\ref{c1:algo:cons-perc} (S1) ensures that the edges of the graph $G_n(\lambda)$ are obtained from a uniform perfect matching of the corresponding half-edges. 
Using Lemmas~\ref{c1:lem:perc-cons-1}~and~\ref{c1:lem:perc-cons-2}, $(G_n(\lambda))_{\lambda\in\R}$ and $(\mathrm{CM}_n(\bld{d},p_n(\lambda)))_{\lambda\in \R}$ have the same finite-dimensional distributions. Therefore, $(G_n(\lambda))_{\lambda\in\R}$ and $(\mathrm{CM}_n(\bld{d},p_n(\lambda)))_{\lambda\in \R}$ have the exact same distribution. 
We complete this section by adding proofs of Lemmas~\ref{c1:lem:perc-cons-1},~and~\ref{c1:lem:perc-cons-2} which are in the same spirit as the arguments of \cite[Lemmas~3.1,~3.2]{F07}.
\begin{proof}[Proof of Lemma~\ref{c1:lem:perc-cons-1}] Assume that $k=2$ for the sake of simplicity. 
Observe that the total number of perfect matchings of $2k$ objects is given by $2k!/(k!2^k) = (2k-1)!!$.
Let $H_1$, $H_2$ be two disjoint subsets of $\mathcal{H}$ with $|H_1| = 2k_1$, $|H_2| = 2k_2-2k_1$.
Let $\mathcal{E}_1$ denote the event that a uniform perfect matching of all the half-edges contains also perfect matchings of the half-edges in $H_1$ and $H_2$. Then,
\begin{equation}\label{c1:prob-E1}
\prob{\mathcal{E}_1} = \frac{(2k_1-1)!! (2k_2-2k_1-1)!! (\ell_n-2k_2-1)!!}{(\ell_n-1)!!}.
\end{equation}
Also, for percolation on any (random) graph, conditional on the set of edges of the graph and the fact that $k$ edges have been retained by percolation, the choice of the retained edges is uniformly distributed among all subsets of size $k$ of the set of edges. Let $\mathcal{E}_2$ denote the event that $|\mathcal{H}(\mathrm{CM}_n(\bld{d},p_{1}))| = 2k_1$, and  $|\mathcal{H}(\mathrm{CM}_n(\bld{d},p_{2}))| = 2k_2$. It follows that
\begin{gather*}
\prob{\mathcal{H}(\mathrm{CM}_n(\bld{d},p_{2}))=H_1\cup H_2\mid \mathcal{E}_1,\mathcal{E}_2}= \frac{1}{\binom{\ell_n/2}{k_2}},\\
\prob{\mathcal{H}(\mathrm{CM}_n(\bld{d},p_{1}))=H_1\mid \mathcal{E}_1,\mathcal{E}_2,\mathcal{H}(\mathrm{CM}_n(\bld{d},p_{2}))=H_1\cup H_2} = \frac{1}{\binom{k_2}{k_1}}.
\end{gather*}
 Thus, conditional on the event $\mathcal{E}_2$, the probability that $\mathcal{H}(\mathrm{CM}_n(\bld{d},p_{1})) = H_1$ and $\mathcal{H}(\mathrm{CM}_n(\bld{d},p_{2}))\setminus \mathcal{H}(\mathrm{CM}_n(\bld{d},p_{1}))=H_2$
 % conditional on the fact that $|\mathcal{H}(\mathrm{CM}_n(\bld{d},p_{1}))| = 2k_1$, and  $|\mathcal{H}(\mathrm{CM}_n(\bld{d},p_{2}))| = 2k_2$ 
 is given by 
 \begin{equation}\label{c1:perfect-matching-1}
 \begin{split}
 \frac{(2k_1-1)!! (2k_2-2k_1-1)!! (\ell_n-2k_2-1)!!}{(\ell_n-1)!!} \frac{1}{\binom{\ell_n/2}{k_2}\binom{k_2}{k_1}} = \frac{1}{\binom{\ell_n}{2k_1}\binom{\ell_n-2k_1}{2k_2-2k_1}},
 \end{split}
 \end{equation}which does not depend on $H_1$ or $H_2$, and the proof follows.
\end{proof}
\begin{proof}[Proof of Lemma~\ref{c1:lem:perc-cons-2}]
 Fix two disjoint subsets $H_1$, $H_2$ of $\mathcal{H}$ such that $|H_1| = 2k_1$, $|H_2| = 2k_2-2k_1$.
 As in the proof of Lemma~\ref{c1:lem:perc-cons-1}, let $\mathcal{E}_2$ denote the event that $|\mathcal{H}(\mathrm{CM}_n(\bld{d},p_{1}))| = 2k_1$, and  $|\mathcal{H}(\mathrm{CM}_n(\bld{d},p_{2}))| = 2k_2$.
An identical argument as the proof of \eqref{c1:perfect-matching-1} now gives, conditionally on $\mathcal{E}_2$, the probability that $\mathcal{H}(\mathrm{CM}_n(\bld{d},p_1)) = H_1$, $\mathcal{H}(\mathrm{CM}_n(\bld{d},p_2))\setminus \mathcal{H}(\mathrm{CM}_n(\bld{d},p_1)) = H_2$, and given perfect matchings on $\mathcal{H}(\mathrm{CM}_n(\bld{d},p_{1}))$, $\mathcal{H}(\mathrm{CM}_n(\bld{d},p_2))\setminus \mathcal{H}(\mathrm{CM}_n(\bld{d},p_1))$ have been observed, is given by 
\begin{equation}\label{c1:perfect-matching-2}
\begin{split}
\frac{1}{\binom{\ell_n/2}{k_2}\binom{k_2}{k_1}}\frac{(\ell_n-2k_2-1)!!}{(\ell_n-1)!!}.
\end{split}
\end{equation} 
Let $\rD(H)$ denote the degree sequence induced by the set of half-edges $H$, and  $S$ denote the collection of \emph{disjoint} pairs $(H_1,H_2)$ such that $|H_1| = 2k_1$, $|H_2| = 2k_2-2k_1$, $\rD(H_1) = \bld{d}_0(0,1)$, and $\rD(H_2) = \bld{d}_0(1,2)$.
Then, conditionally on $\mathcal{E}_2$, the probability that $\bld{d}(0,1) = \bld{d}_0(0,1)$,  $\bld{d}(1,2) = \bld{d}_0(1,2)$, and given particular perfect matchings have been observed on $\mathcal{H}(\mathrm{CM}_n(\bld{d},p_1))$ and $\mathcal{H}(\mathrm{CM}_n(\bld{d},p_2))\setminus \mathcal{H}(\mathrm{CM}_n(\bld{d},p_1))$, is  
\begin{equation}\label{c1:perfect-matching-3}
 \sum_{(H_1,H_2)\in S} \frac{1}{\binom{\ell_n/2}{k_2}\binom{k_2}{k_1}}\frac{(\ell_n-2k_2-1)!!}{(\ell_n-1)!!}  = \frac{|S|}{\binom{\ell_n/2}{k_2}\binom{k_2}{k_1}}\frac{(\ell_n-2k_2-1)!!}{(\ell_n-1)!!}.
\end{equation}
Moreover, by Lemma~\ref{c1:lem:perc-cons-1}, the probability that $\bld{d}(0,1) = \bld{d}_0(0,1)$,  $\bld{d}(1,2) = \bld{d}_0(1,2)$, conditionally on $\mathcal{E}_2$, is given by 
  \begin{equation}\label{c1:perfect-matching-4}
   \frac{|S|}{\binom{\ell_n}{2k_1}\binom{\ell_n-2k_1}{2k_2-2k_1}}. 
  \end{equation}
Now,  \eqref{c1:perfect-matching-3} and \eqref{c1:perfect-matching-4} together yield that the probability that two particular perfect matchings are observed on $\mathcal{H}(\mathrm{CM}_n(\bld{d},p_1))$ and $\mathcal{H}(\mathrm{CM}_n(\bld{d},p_2))\setminus \mathcal{H}(\mathrm{CM}_n(\bld{d},p_1))$, conditional on $\bld{d}(0,1) = \bld{d}_0(0,1)$,  $\bld{d}(1,2) = \bld{d}_0(1,2)$ is given by 
\begin{equation}
\frac{1}{\binom{\ell_n/2}{k_2}\binom{k_2}{k_1}}\frac{(\ell_n-2k_2-1)!!}{(\ell_n-1)!!}\binom{\ell_n}{2k_1}\binom{\ell_n-2k_1}{2k_2-2k_1} = \frac{1}{(2k_1-1)!!(2k_2-2k_1-1)!!},
\end{equation}
 and the proof is complete.
\end{proof}

\subsection{The dynamic construction}\label{c1:sec:dynamic-construction}
Let us now describe a dynamic construction of $\CM$ that turns out to be easier to work with. This dynamic construction was introduced in \cite{BBSX14} to study the metric-space limits of the large components of the percolated configuration model. 
It will be shown that the graphs generated by this dynamic construction at a suitable range of time \emph{approximate} the  process $(\mathrm{CM}_n(\boldsymbol{d},p_n(\lambda)))_{\lambda\in\R}$.  
\begin{algo} \label{c1:algo-dyn-cons} \normalfont At time $t=0$, assume that there are $d_i$ \emph{open} half-edges associated with vertex $i$, for all $i\in [n]$. Associate i.i.d unit rate exponential clocks to each of the open half-edges. Each time an exponential clock rings, the corresponding half-edge selects another open half-edge uniformly at random and gets paired to it. The two paired half-edges are declared to be closed and the associated exponential clocks are removed. The process continues until the open half-edges are exhausted.
\end{algo}
Let $\mathcal{G}_n(t)$ denote the graph generated upto time $t$. Notice that $\mathcal{G}_n(\infty)$ is distributed as $\CM$ since each half-edge chooses to pair with another uniformly chosen open half-edge. 
Denote the total number of open-half-edges remaining at time $t$ while implementing Algorithm~\ref{c1:algo-dyn-cons} by $s_1(t)$. 
The graph process, given by Algorithm~\ref{c1:algo-dyn-cons}, can also be constructed as follows:
\begin{algo}\label{c1:algo:dyn-cons-alt} \normalfont Let $\Xi_n$ be an inhomogeneous Poisson process with rate $s_1(t)$ at time $t$. Let $e_1<e_2<\dots$ be the event times of $\Xi_n$.
\begin{itemize}
\item[\textrm{(S1)}] At each event time, choose two unpaired half-edges uniformly at random and pair them. The graph $\mathcal{G}_n(t)$ is obtained by adding this edge to $\mathcal{G}_n(t-)$. 
\end{itemize} 
\end{algo} 
Notice the similarity between Algorithm~\ref{c1:algo:cons-perc}~(S1) and Algorithm~\ref{c1:algo:dyn-cons-alt}~(S1). 
Now, the idea is to compare the number of half-edges that have been paired by Algorithms~\ref{c1:algo:cons-perc}~and~\ref{c1:algo:dyn-cons-alt}.
For that, we need the following lemma that describes the evolution of the count of the total number of open half-edges in Algorithm~\ref{c1:algo:dyn-cons-alt}:
\begin{lemma}[{\cite[Lemma 8.2]{BBSX14}}]\label{c1:lem:total-open-he}  Let $s_1(t)$ denote the total number of open half-edges at time $t$. Suppose that 
%$|\ell_n/n-\mu| = o(n^{-\gamma}),$ for some $1/3<\gamma<1/2$ and  
Assumption \ref{c1:assumption2} holds. Then, for any $T>0$ and some $1/3<\gamma<1/2$, 
\begin{equation}\label{c1:eqn:s-1-he}
 \sup_{t\leq T}\Big|\frac{1}{\ell_n}s_1(t)- \e^{-2t}\Big|= \oP(n^{-\gamma}).
\end{equation}
\end{lemma}
 Notice that the proof of \cite[Lemma 8.2]{BBSX14} is stated only under some more stringent assumptions, however the identical argument can be carried out under Assumption~\ref{c1:assumption2}.
%and the fact that $|\ell_n/n-\mu| = o(n^{-\gamma})$.
The next proposition ensures that the graphs generated by percolation in Algorithm~\ref{c1:algo:cons-perc} and the dynamic construction in Algorithm~\ref{c1:algo-dyn-cons} are uniformly close in the critical window.
Define
\begin{equation}\label{c1:defn:t-n-lambda}
t_n(\lambda)=\frac{1}{2}\log\bigg(\frac{\nu_n}{\nu_n-1}\bigg)+\frac{1}{2(\nu_n-1)}\frac{\lambda}{n^{1/3}}.
\end{equation}
\begin{proposition}\label{c1:prop:coupling-whp} Fix $-\infty<\lambda_\star<\lambda^\star<\infty$. There exists a coupling such that with high probability
\begin{equation}
 \mathcal{G}_n(t_n(\lambda)-\varepsilon_n)\subset \mathrm{CM}_n(\bld{d},p_n(\lambda)) \subset\mathcal{G}_n(t_n(\lambda)+\varepsilon_n),\quad \forall \lambda \in [\lambda_\star,\lambda^\star]
\end{equation}where $\varepsilon_{n}=cn^{-\gamma_0}$, for some $1/3<\gamma_0<1/2$ and the constant $c$ does not depend on $\lambda$.
\end{proposition}
\begin{proof}
Notice the similarity between Algorithm~\ref{c1:algo:cons-perc}~(S1) and Algorithm~\ref{c1:algo:dyn-cons-alt}~(S1). 
Let $\#\mathrm{E}(G)$ denote the number of edges in a graph $G$. 
Suppose with high probability the following holds: $\forall \lambda \in [\lambda_\star,\lambda^\star]$
\begin{eq}\label{c1:eq:coup-reduc}
 \#\rE(\mathcal{G}_n(t_n(\lambda)-\varepsilon_n))\leq \#\rE(\mathrm{CM}_n(\bld{d},p_n(\lambda))) \leq \#\rE(\mathcal{G}_n(t_n(\lambda)+\varepsilon_n)), .
\end{eq} 
On the event $\{\#\rE(\mathrm{CM}_n(\bld{d},p_n(\lambda))) \leq \#\rE(\mathcal{G}_n(t_n(\lambda)+\varepsilon_n)), \forall \lambda \in [\lambda_\star,\lambda^\star]\}$, the choice of the uniform pair of half-edges at the $k^{th}$ pairing in Algorithm~\ref{c1:algo:cons-perc}~(S1)  can be taken to be exactly the same as the $k^{th}$ pairing in  Algorithm~\ref{c1:algo:dyn-cons-alt}~(S1).
 Under the above coupling $\mathrm{CM}_n(\bld{d},p_n(\lambda_\star)) \subset\mathcal{G}_n(t_n(\lambda_\star)+\varepsilon_n)$.
 Moreover, since $\#\rE(\mathrm{CM}_n(\bld{d},p_n(\lambda)))$ is dominated by  $\#\rE(\mathcal{G}_n(t_n(\lambda)+\varepsilon_n))$, uniformly over  $\lambda \in [\lambda_\star,\lambda^\star]$,  
the above coupling also yields $ \mathrm{CM}_n(\bld{d},p_n(\lambda)) \subset\mathcal{G}_n(t_n(\lambda)+\varepsilon_n)$ for all $\lambda\in [\lambda_\star,\lambda^\star]$. 
Further, on the event \linebreak $\{ \#\rE(\mathcal{G}_n(t_n(\lambda)-\varepsilon_n))\leq\#\rE(\mathrm{CM}_n(\bld{d},p_n(\lambda))) , \forall \lambda \in [\lambda_\star,\lambda^\star]\},$ under the same coupling, $\mathcal{G}_n(t_n(\lambda)-\varepsilon_n)\subset\mathrm{CM}_n(\bld{d},p_n(\lambda)) $ for all $\lambda\in [\lambda_\star,\lambda^\star]$. 
Thus, it remains to show \eqref{c1:eq:coup-reduc}. An application of Lemma~\ref{c1:lem:total-open-he} along with \eqref{c1:defn:t-n-lambda} yields, for some $1/3<\gamma_0<\gamma<1/2$, with high probability,
\begin{equation}\label{c1:edges-dyn-cons}
\bigg| \#\mathrm{E}(\mathcal{G}_n(t_n(\lambda))) - \bigg(\frac{\ell_n}{2\nu_n} +\frac{\lambda\ell_n}{2\nu_nn^{1/3}} +\frac{n\varepsilon_n(\nu_n-1)}{\nu_n}\bigg)\bigg| \leq n^{1-\gamma}, \quad \lambda \in [\lambda_\star,\lambda^\star].
\end{equation}
Notice that the total number of half-edges in $\mathrm{CM}_n(\bld{d},p_n(\lambda))$ follows a binomial distribution with parameters $\ell_n/2$ and $p_n(\lambda)$. Thus, with high probability,
\begin{equation}\label{c1:edges-perc}
 \bigg|\#\mathrm{E}(\mathrm{CM}_n(\bld{d},p_n(\lambda)))-\bigg(\frac{\ell_n}{2\nu_n} +\frac{\lambda\ell_n}{2\nu_nn^{1/3}}\bigg)\bigg|\leq n^{1-\gamma}, \quad \lambda \in [\lambda_\star,\lambda^\star].
\end{equation}The fact that the error can be chosen to be uniform over $\lambda\in [\lambda_\star,\lambda^\star]$ follows from the DKW inequality \cite{M90}. Thus,  \eqref{c1:edges-dyn-cons} and \eqref{c1:edges-perc} together show that, with high probability,
\begin{equation}
 \#\rE(\mathrm{CM}_n(\bld{d},p_n(\lambda))) \leq \#\rE(\mathcal{G}_n(t_n(\lambda)+\varepsilon_n)), \quad \forall \lambda \in [\lambda_\star,\lambda^\star].
\end{equation}The other part follows similarly and the proof is now complete.
\end{proof}
\begin{remark}\label{c1:rem:modified-prop-coup} \normalfont Notice that the proof of Proposition~\ref{c1:prop:coupling-whp} can be directly modified to show that there exists a coupling such that, with high probability,
\begin{equation}
 \mathrm{CM}_n(\bld{d},p_n(\lambda)-\varepsilon_n)\subset \mathcal{G}_n(t_n(\lambda))\subset \mathrm{CM}_n(\bld{d},p_n(\lambda)+\varepsilon_n),\quad \forall \lambda \in [\lambda_\star,\lambda^\star],
\end{equation}where $\varepsilon_{n}=cn^{-\gamma_0}$, for some $1/3<\gamma_0<1/2$ and the constant $c$ does not depend on~$\lambda$. Therefore, the scaling limits of different functionals like re-scaled component-sizes, surplus edges for $\mathcal{G}_n(t_n(\lambda))$ and $\mathrm{CM}_n(\bld{d},p_n(\lambda))$ are the same. 
\end{remark}
\subsection{The modified process}\label{c1:sec:modified-C1}
From here onward, we often augment $\lambda$ to a predefined notation to emphasize the dependence on~$\lambda$. 
We write $\mathscr{C}_{\sss (i)}(\lambda)$  for the $i^{th}$ largest component of $\mathcal{G}_n(t_n(\lambda))$ and define 
\begin{equation}
\mathcal{O}_i(\lambda)=\# \text{ open half-edges in }\mathscr{C}_{\sss (i)}(\lambda).
\end{equation}
Think of $\mathcal{O}_i(\lambda)$ as the \emph{mass} of the component $\mathscr{C}_{\sss (i)}(\lambda)$. 
Define the vector $\mathbf{C}_n(\lambda) = (n^{-2/3}|\mathscr{C}_{\sss (i)}(\lambda)|)_{i\geq 1}$, and $\mathbf{O}_n(\lambda) = (n^{-2/3}\mathcal{O}_i(\lambda))_{i \geq 1}$.
 Let $\ell_n^o(\lambda) = \sum_{i\geq 1}\mathcal{O}_i(\lambda)$. By Lemma~\ref{c1:lem:total-open-he} and \eqref{c1:defn:t-n-lambda}, $\ell_n^o(\lambda) \approx n\mu(\nu-1)/\nu$. Now, observe that, during the evolution of the graph process generated  by Algorithm~\ref{c1:algo-dyn-cons}, during the time interval  $[t_n(\lambda),t_n(\lambda+\dif \lambda)]$, the $i^{th}$ and $j^{th}$ ($i> j$) largest components, merge at rate 
 \begin{eq}\label{c1:rate:function}
&2\mathcal{O}_{i}(\lambda) \mathcal{O}_{j}(\lambda)\times\frac{1}{\ell_n^o(\lambda)-1}\times \frac{1}{2(\nu_n-1)n^{1/3}}\\
&\hspace{.5cm}\approx \frac{\nu}{\mu(\nu-1)^2} \big(n^{-2/3}\mathcal{O}_{i}(\lambda)\big)\big(n^{-2/3}\mathcal{O}_{j}(\lambda)\big),
\end{eq}and create a component with $\mathcal{O}_{i}(\lambda)+\mathcal{O}_{j}(\lambda)-2$ open half-edges.
Thus the open half-edges $(\mathbf{O}_n(\lambda))_{\lambda\in\R}$ does \emph{not} evolve as a multiplicative coalescent, but it is close. 
The fact that two half-edges are killed after pairing, makes the masses (the number of open half-edges) of the components  deplete. 
If there were no such depletion of mass, then the vector of open half-edges would in fact  merge as multiplicative coalescent. 
Let us formalize this  idea below:
\begin{algo}\label{c1:algo:modify-dyn-cons} \normalfont Initialize $\bar{\mathcal{G}}_n(t_n(\lambda_\star)) = \mathcal{G}_n(t_n(\lambda_\star))$.  Let $\mathscr{O}$ denote the set of open half-edges in the graph $\mathcal{G}_n(t_n(\lambda_\star))$, $\bar{s}_1 = |\mathscr{O}|$ and $\bar{\Xi}_n$ denote a Poisson process with rate $\bar{s}_1$. At each event time of the Poisson process $\bar{\Xi}_n$, select two half-edges from $\mathscr{O}$ and create an edge between the corresponding vertices. However, the selected half-edges are kept alive, so that they can be selected again.
\end{algo} 
\begin{remark}\label{c1:rem:modify-AMC}\normalfont The only difference between Algorithm~\ref{c1:algo:dyn-cons-alt} and Algorithm~\ref{c1:algo:modify-dyn-cons}, is that the \emph{paired} half-edges are not discarded and thus more edges are created by Algorithm~\ref{c1:algo:modify-dyn-cons}. Thus, there is a natural coupling between the graphs generated by Algorithms~\ref{c1:algo:dyn-cons-alt}~and~\ref{c1:algo:modify-dyn-cons} such that $\mathcal{G}_n(t_n(\lambda))\subset \bar{\mathcal{G}}_n(t_n(\lambda))$ for all $\lambda\in [\lambda_\star,\lambda^\star]$, with probability one. In the subsequent part of this section, we always work under this coupling. The extra edges that are created by Algorithm~\ref{c1:algo:modify-dyn-cons} will be called \emph{bad} edges.
\end{remark}
\begin{remark} \label{c1:rem:MC-exact-limit}\normalfont In the subsequent part of this chapter, we shall augment a predefined notation with a bar to denote the corresponding quantity for $\bar{\mathcal{G}}_n(t_n(\lambda))$. 
 Denote $\beta_n = (\bar{s}_1(\nu_n-1)n^{1/3})^{1/2}$ and $\bar{\mathbf{O}}_n'(\lambda)$ denote the vector $\ord((\beta_n^{-1}\bar{\mathcal{O}}_i(\lambda))_{i\geq 1})$.  
 By the description in Algorithm~\ref{c1:algo:modify-dyn-cons}, $(\bar{\mathbf{O}}_n'(\lambda))_{\lambda\geq \lambda_\star}$ evolves as a standard multiplicative coalescent.
Further, note that there exists a constant $c>0$ such that $\beta_n = cn^{2/3}(1+\oP(1))$ which enables us to deduce the scaling limit results for $(\bar{\mathbf{O}}_n(\lambda))_{\lambda\geq \lambda_\star}$ from $(\bar{\mathbf{O}}_n'(\lambda))_{\lambda\geq \lambda_\star}$.
\end{remark}

\subsubsection*{Multiplicative coalescent with mass and weight}
The Feller property of the multiplicative coalescent \cite[Proposition 5]{A97} ensures the joint convergence of the number of open half-edges in each component of $\bar{\mathcal{G}}_n(t_n(\lambda))$ at multiple values of $\lambda$ as we shall see below. 
To deduce the scaling limits involving the components sizes let us consider a dynamic process that is further augmented by a certain weight. 
Initially, the system consists of particles (possibly infinitely many) where particle $i$ has mass $x_i$, and weight $z_i$. 
Let $(X_i(t),Z_i(t))_{i\geq 1}$ denote the vector of masses, and weights at time $t$. 
The dynamics of the system is described as follows:
\begin{itemize}
\item[]   At time~$t$, particles $i$ and $j$ coalesce at rate $X_i(t)X_j(t)$ and create a particle with mass $X_i(t)+X_j(t)$, and weight $Z_i(t)+Z_j(t)$.
\end{itemize}
Denote by  $\mathrm{MC}_2(\mathbf{x},\mathbf{z},t)$ the vector $(X_i(t), Z_i(t))_{i\geq 1}$ with initial mass $\mathbf{x}$, and weight $\mathbf{z}$. 
We shall need the following theorem:
\begin{theorem}\label{c1:thm:AMC-2D}
Suppose that $(\mathbf{x}_n,\mathbf{z}_n) \to (\mathbf{x},\mathbf{x})$ in $(\ell^2_{\shortarrow})^2$. Then, for any $t\geq 0$
\begin{equation}
\mathrm{MC}_2(\mathbf{x}_n,\mathbf{z}_n,t)\dto \mathrm{MC}_2(\mathbf{x},\mathbf{x},t).
\end{equation}
\end{theorem}
\begin{proof}
For $\mathbf{x}_n = (x_i^n)_{i\geq 1}$ and $\mathbf{z}_n = (z_i^n)_{i\geq 1}$,  let $\mathbf{w}_n^+ = \mathrm{ord}(x_i^n\vee z_i^n)$, $\mathbf{w}_n^-=\mathrm{ord}(x_i^n\wedge z_i^n)$, where $\mathrm{ord}$ denotes the decreasing ordering of the elements. 
Notice that $\mathbf{w}_n^+ \to \mathbf{x}$, and $\mathbf{w}_n^- \to \mathbf{x}$ in $\ell^2_{\shortarrow}$.
Using the Feller property of the multiplicative coalescent \cite[Proposition 5]{A97}, it follows that
\begin{equation}\label{c1:limit-ub-lb}
 \mathrm{MC}_2(\mathbf{w}_n^+,\mathbf{w}_n^+,t)\dto  \mathrm{MC}_2(\mathbf{x},\mathbf{x},t), \quad \mathrm{MC}_2(\mathbf{w}_n^-,\mathbf{w}_n^-,t)\dto  \mathrm{MC}_2(\mathbf{x},\mathbf{x},t),
\end{equation}with respect to the $(\ell^2_{\shortarrow})^2$ topology. 
Now suppose that $\mathrm{MC}_2(\mathbf{w}_n^+,\mathbf{w}_n^+,t)$ and $\mathrm{MC}_2(\mathbf{w}_n^-,\mathbf{w}_n^-,t)$ are coupled through the subgraph coupling (see \cite[Page 838]{A97}). 
For $(\mathbf{x},\mathbf{z})\in (\ell_{\shortarrow}^2)^2$, denote $\|(\mathbf{x},\mathbf{z})\|_{\sss 22} = (\sum_{i\geq 1}x_i^2)^{1/2}+(\sum_{i\geq 1}z_i^2)^{1/2}$.
Under the subgraph coupling, \eqref{c1:limit-ub-lb} yields
\begin{equation}
\|\mathrm{MC}_2(\mathbf{w}_n^+,\mathbf{w}_n^+,t)\|_{\sss 22}^2-\|\mathrm{MC}_2(\mathbf{w}_n^-,\mathbf{w}_n^-,t)\|_{\sss 22}^2 \pto 0.
\end{equation}
Moreover,
\begin{equation}
 \|\mathrm{MC}_2(\mathbf{w}_n^-,\mathbf{w}_n^-,t)\|_{\sss 22}^2 \leq \|\mathrm{MC}_2(\mathbf{x}_n,\mathbf{z}_n,t)\|_{\sss 22}^2 \leq \|\mathrm{MC}_2(\mathbf{w}_n^+,\mathbf{w}_n^+,t)\|_{\sss 22}^2.
\end{equation}
Hence, using \cite[Corollary~18~(a)]{A97}, under the subgraph coupling,
\begin{eq}
 &\|\mathrm{MC}_2(\mathbf{w}_n^+,\mathbf{w}_n^+,t) - \mathrm{MC}_2(\mathbf{x}_n,\mathbf{z}_n,t)\|_{\sss 22}^2\\
 &\hspace{.2cm}\leq \|\mathrm{MC}_2(\mathbf{w}_n^+,\mathbf{w}_n^+,t)\|_{\sss 22}^2 - \|\mathrm{MC}_2(\mathbf{x}_n,\mathbf{z}_n,t)\|_{\sss 22}^2 \pto 0,
\end{eq}and the proof follows.
\end{proof}
\subsection{Asymptotics for the open half-edges}\label{c1:sec:open-he}
In this section, we show that the open half-edges in the components of $\mathcal{G}_n(t_n(\lambda))$ are \emph{approximately} proportional to the component sizes. 
This will enable us to apply Theorem~\ref{c1:thm:AMC-2D} for deducing the scaling limits of the required quantities for the graph $\bar{\mathcal{G}}_n(t_n(\lambda))$.
\begin{lemma}\label{c1:thm:open-comp}
 There exists a constant $\kappa > 0$ such that, for any $\lambda\in\R$ and $i\geq 1$,
 \begin{equation}\label{c1:open-he}
  \mathcal{O}_i(\lambda)= \kappa |\mathscr{C}_{\sss (i)}(\lambda)|+o_{\sss \PR}(b_n).
 \end{equation}Further, $(\mathbf{O}_n(\lambda))_{n\geq 1}$ is tight in  $\ell^2_{\shortarrow}$ and consequently $$n^{-4/3}\sum_{i\geq 1} (\mathcal{O}_i(\lambda)- \kappa |\mathscr{C}_{\sss (i)}(\lambda)|)^2 \pto 0.$$ 
\end{lemma}
\begin{proof} Let $(d_k^\lambda)_{k\in [n]}$ denote the degree sequence of $\mathrm{CM}_n(\bld{d},p_n(\lambda))$ and define
\begin{equation}
\mathcal{O}_i^p(\lambda) = \sum_{k\in \mathscr{C}_{\sss (i)}^p(\lambda)}(d_k-d_k^\lambda) = \sum_{k\in \mathscr{C}_{\sss (i)}^p(\lambda)}d_k-2(|\mathscr{C}_{\sss (i)}^p(\lambda)|-1+\mathrm{SP}(\mathscr{C}_{\sss (i)}^p(\lambda))).
\end{equation}
 Using Remark~\ref{c1:rem:modified-prop-coup} and the fact that the surplus edges in the large components is tight, it is enough to prove the lemma by replacing $\mathcal{O}_i(\lambda)$ by $\mathcal{O}_i^p(\lambda)$ and $\mathscr{C}_{\sss (i)}(\lambda)$ by $\mathscr{C}_{\sss (i)}^p(\lambda)$.
For a component $\tilde{\mathscr{C}}$ of $\mathrm{CM}_{\tilde{n}}(\Mtilde{\boldsymbol{d}})$, the corresponding component $\tilde{\mathscr{C}}^p$ in the percolated graph is obtained by cleaning up $R(\tilde{\mathscr{C}})$ red degree-one vertices, see Algorithm~\ref{c1:algo:3}. 
Thus, the number of open half-edges in $\tilde{\mathscr{C}}^p$ is given by 
\begin{equation}\label{c1:relation-deg-def}
 \sum_{k\in \tilde{\mathscr{C}}\cap [n]}d_k-\sum_{k\in \tilde{\mathscr{C}}\cap [n]}\tilde{d}_k+R(\tilde{\mathscr{C}}).
\end{equation} 
Now, all the three terms appearing in the right hand side of \eqref{c1:relation-deg-def} can be estimated using Lemma~\ref{c1:lem_general}. 
Indeed, we can consider weights $w_{i1} =d_i$, $w_{i2}=\tilde{d}_i$, and $w_{i3}=$ the number of red neighbors of vertex $i$ in $\mathrm{CM}_{\tilde{n}}(\Mtilde{\bld{d}})$. The conditions in~\eqref{c1:eq:conditions-size-biased} are satisfied by Lemma~\ref{c1:lem_percolation_condition}, and observing that 
\begin{equation}
 \max\{\max_iw_{i1}, \max_iw_{i2}, \max_{i}w_{i3}\}\leq d_{\max} = o(n^{1/3}).
\end{equation}
Note that, using an argument identical to Lemma~\ref{c1:lem_percolation_condition}, $(1/n) \sum_{i\in [\tilde{n}]}w_{ik}\tilde{d}_i$  converges $\PR_p$ almost surely, for all $k=1,2,3$. 
Now, \eqref{c1:open-he} is a consequence of Lemma~\ref{c1:lem:large-com-explored-early}.
Denote 
\begin{eq}
D_i = \sum_{k\in \tilde{\mathscr{C}}_{\sss (i)}\cap [n]}d_k&, \quad\tilde{D}_i = \sum_{k\in \tilde{\mathscr{C}}_{\sss (i)}\cap [n]}\tilde{d}_k,\\
 \mathbf{D}_n = \mathrm{ord}((D_i)_{i\geq 1})&,\quad \text{and}\quad \tilde{\mathbf{D}}_n = \mathrm{ord}((\tilde{D}_i)_{i\geq 1}).
\end{eq} 
Using \eqref{c1:eqn_tightness}, $(\tilde{\mathbf{D}}_n)_{n\geq 1}$ is tight in $\ell^2_{\shortarrow}$. Further $w_{i3}\leq d_i$ for all $i$. Thus, for the $\ell^2_{\shortarrow}$ tightness of $(\mathbf{O}_n(\lambda))_{n\geq 1}$, it is enough to show the $\ell^2_{\shortarrow}$ tightness of $(\mathbf{D}_n)_{n\geq 1}$.
Denote the conditional probability, conditioned on the uniform perfect matching in Algorithm~\ref{c1:algo:3}~(S2), by $\tilde{\PR}(\cdot)$. 
Notice that, since Algorithm~\ref{c1:algo:3}~(S1), and~(S2) are carried out independently, $\tilde{D}_i \sim \mathrm{Bin}(D_i,\sqrt{p_n})$ under $\tilde{\PR}$. 
Using standard concentration inequalities \cite[(2.9)]{JLR00}, it follows that 
\begin{equation}
 \tilde{\PR}(\tilde{D}_i< D_i\sqrt{p_n}(1-\sqrt{p_n}))\leq 2\e^{-D_ip_n^{3/2}/3},
\end{equation}and thus for $\mathcal{I} = \{k:D_k>n^{\varepsilon}\}$, the union bound yields
\begin{equation}\label{c1:domination-tilde-orginal-degree}
  \PR(\exists i\in \mathcal{I}:D_i > a\tilde{D}_i) \to 0,
\end{equation}for some constant $a>0$. Let $\mathcal{E}_n$ denote the corresponding event in \eqref{c1:domination-tilde-orginal-degree}. Thus, for any $\eta >0$, 
\begin{equation}
 \PR\bigg(n^{-4/3}\sum_{k>K,k\in \mathcal{I}}D_k^2>\eta\bigg) \leq \PR\bigg(n^{-4/3}\sum_{k>K}\tilde{D}_k^2>\frac{\eta}{a}\bigg)+ \PR(\mathcal{E}_n) \to 0,
\end{equation}if we first take the limit as $n\to\infty$, and then $K\to\infty$, and use the $\ell^2_{\shortarrow}$ tightness of $(\tilde{\mathbf{D}}_n)_{n\geq 1}$. 
Further, $\sum_{k\notin \mathcal{I}}D_k^{2}\leq n^{1+2\varepsilon}=o(n^{4/3})$, if $\varepsilon<1/6$. This completes the proof of the $\ell^2_{\shortarrow}$ tightness of $(\mathbf{D}_n)_{n\geq 1}$ and consequently that of $(\mathbf{O}_n(\lambda))_{n\geq 1}$.
\end{proof}

\subsection{{Proof of Theorem~\ref{c1:thm_multiple_convergence}}}
 \label{c1:sec-mul-conv-thm-proof}
We will consider the case $k=2$ only, since the case for general $k$ can be proved inductively. Fix $-\infty<\lambda_0<\lambda_1<\infty$.
Suppose that the modified Algorithm~\ref{c1:algo:modify-dyn-cons} starts at time $\lambda_{\star} = \lambda_0$. 
By Lemma~\ref{c1:thm:open-comp} and Theorem~\ref{c1:thm_percolation}, $(\mathbf{O}_n(\lambda_0),\kappa\mathbf{C}_n(\lambda_0))$ converges in distribution to $\kappa\sqrt{\nu}(\tilde{\bld{\gamma}}^{\lambda_0},\tilde{\bld{\gamma}}^{\lambda_0})$. Now, from Remark~\ref{c1:rem:MC-exact-limit}, an application of Theorem~\ref{c1:thm:AMC-2D} gives
\begin{equation}\label{c1:joint-conv-2}
 (\mathbf{C}_n(\lambda_0),\bar{\mathbf{C}}_n(\lambda_1)) \dto \sqrt{\nu}(\tilde{\bld{\gamma}}^{\lambda_0},\tilde{\bld{\gamma}}^{\lambda_1}).
\end{equation}
The fact that the limiting distribution corresponding to $\bar{\mathbf{C}}_n(\lambda_1)$ is equal to $\sqrt{\nu}\tilde{\bld{\gamma}}^{\lambda_1}$ follows from the Feller property of multiplicative coalescent, \cite[Theorem 2]{AL98}, and Theorem~\ref{c1:thm:AMC-2D}. For $\mathbf{x},\mathbf{y}\in \ell^2_{\shortarrow}$, denote $\mathbf{x}\preceq\mathbf{y}$ if $\mathbf{x}$ is the vector in decreasing order of elements $\{y_{ij}:i,j\geq 1\}$ such that $\sum_{j}y_{ij}\leq y_i$ for all $i\geq 1$. 
Thus if $\mathbf{y}$ is obtained by \emph{coalescing} elements of $\mathbf{x}$, then $\mathbf{x}\preceq\mathbf{y}$. 
Under the coupling in Remark~\ref{c1:rem:modify-AMC}, it follows that $\mathbf{C}_{n}(\lambda)\preceq \bar{\mathbf{C}}_{n}(\lambda)$ almost surely, for each $\lambda\geq \lambda_0$. 
Using \cite[Corollary~18~(a)]{A97}, it follows that
\begin{equation}\label{c1:bound-norm}
\|\bar{\mathbf{C}}_n(\lambda_1)-\mathbf{C}_n(\lambda_1)\|_{\sss 2}^2\leq \|\bar{\mathbf{C}}_n(\lambda_1)\|_{\sss 2}^2-\|\mathbf{C}_n(\lambda_1)\|_{\sss 2}^2,
\end{equation}where $\|\cdot\|_{\sss 2}$ denotes the $\ell^2$-norm. The final ingredient is the following straightforward lemma:

\begin{lemma}\label{c1:lem_ordering_convergence} Suppose $X_n$, $Y_n$ are non-negative random variables such that $X_n\leq Y_n$ a.s. and $X_n\xrightarrow{\mathcal{L}}X$, $Y_n\xrightarrow{\mathcal{L}}X$. Then, $$Y_n-X_n\xrightarrow{\mathbb{P}}0.$$
\end{lemma}
\begin{proof}
 Note that  $((X_n,Y_n))_{n\geq 1}$ is tight in $\mathbb{R}^2$. Thus, for any $(n'_i)_{i\geq 1}$ there exists a subsequence $(n_i)_{i\geq 1}\subset (n'_i)_{i\geq 1}$ such that $(X_{n_i},Y_{n_i})\xrightarrow{\sss \mathcal{L}}(Z_1,Z_2).$  Using the marginal distributional limits we get $Z_1\stackrel{\sss \mathcal{L}}{=} X$, $Z_2\stackrel{\sss \mathcal{L}}{=} X$.  Also the joint distribution of $(Z_1,Z_2)$ is concentrated on the line $y=x$ in the $xy$ plane.  Thus, $(X_{n_i},Y_{n_i})\xrightarrow{\sss \mathcal{L}} (X,X)$. This limiting distribution does not depend on the subsequence $(n_i)_{i\geq 1}$. Thus the tightness of $((X_n,Y_n))_{n\geq 1}$ implies $(X_{n},Y_{n})\xrightarrow{\sss \mathcal{L}} (X,X)$. The proof is now complete.
\end{proof}
\noindent Now, observe that $\|\mathbf{C}_n(\lambda_1)\|_{\sss 2}^2\leq \|\bar{\mathbf{C}}_n(\lambda_1)\|_{\sss 2}^2$ and $\|\mathbf{C}_n(\lambda_1)\|_{\sss 2}^2$, and  $\|\bar{\mathbf{C}}_n(\lambda_1)\|_{\sss 2}^2$ have the same distributional limit by Theorem~\ref{c1:thm_surplus}, and \eqref{c1:joint-conv-2}.  
Thus, using Lemma~\ref{c1:lem_ordering_convergence}, it follows that $\|\bar{\mathbf{C}}_n(\lambda_1)\|_{\sss 2}^2-\|\mathbf{C}_n(\lambda_1)\|_{\sss 2}^2\xrightarrow{\sss\mathbb{P}} 0$, and \eqref{c1:joint-conv-2}, \eqref{c1:bound-norm} yield 
\begin{equation}
 (\mathbf{C}_n(\lambda_0),\mathbf{C}_n(\lambda_1)) \dto \sqrt{\nu}(\tilde{\bld{\gamma}}^{\lambda_0},\tilde{\bld{\gamma}}^{\lambda_1}).
\end{equation}
Finally, the proof of Theorem~\ref{c1:thm_multiple_convergence} is completed by applying Proposition~\ref{c1:prop:coupling-whp}.\qed

\section{Conclusion} 
In this chapter, we have shown that whenever the third moment of the empirical degree distribution converges, the critical window for the configuration model is given by $\nu_n = 1+\lambda n^{-1/3}$, and the largest component sizes have $\Theta(n^{2/3})$ vertices and $O(1)$ surplus edges.
Theorem~\ref{c1:thm_surplus} identifies the precise limiting distribution of the rescaled component sizes and surplus edges, and the convergence results hold under $\Unot$ topology.
We apply these results to percolated $\CM$.
Analyzing the exploration process directly on percolated $\CM$ is difficult.
This is because many paired half-edges are not retained by percolation during the exploration, which changes the number of available half-edges of an unexplored vertex. 
For this reason, one has to keep updating the degree distribution of unexplored vertices, which becomes difficult to track when the maximum degree is unbounded \cite{NP10b}.
We circumvent this difficulty by using Janson's construction of the percolated configuration model.
Further, for the joint convergence of the percolated clusters over the critical window, we give a construction of the percolation process in Algorithm~\ref{c1:algo:cons-perc}, which allows us to approximate percolated graphs by a dynamically growing Markovian graph process. 
A further difficulty for proving Theorem~\ref{c1:thm_multiple_convergence} was that even the later Markovian graph process is only \emph{approximately} multiplicative coalescent.
The ideas for dealing with this approximate multiplicative coalescent are general, and we believe that these techniques are applicable to many other dynamic graph models. 

%We conclude this chapter with some open problems:
%\subsection*{Open problems.}
%  \begin{enumerate}[(i)]
%  \item Theorem~\ref{c1:thm_multiple_convergence} proves the joint convergence at finitely many locations in the scaling window. However, the convergence of $(\mathbf{C}_n(\lambda))_{\lambda\in \mathbb{R}}$ as a process in $\mathbb{D}((-\infty,\infty),\ell^2_{\shortarrow})$ should also hold provided that one can verify a suitable tightness criterion.
%  \item 
%  
%  A reason for studying percolation in this chapter is to understand the minimal spanning tree of the giant component. For a super-critical configuration model with i.i.d edge weights, it should be the case that the minimal spanning tree can be described by the critically percolated graph at a very high location of the scaling window. 
%Such results were obtained in \cite{ABGM13} for the minimal spanning tree on a complete graph. 
%The study of minimal spanning trees is still an open question, even for random regular graphs.
%    \end{enumerate}

%\section*{Acknowledgement} This research has been supported by the Netherlands Organisation for Scientific
%Research (NWO) through Gravitation Networks grant 024.002.003. In addition, RvdH has been supported by VICI grant 639.033.806 and JvL has been supported by the European Research Council (ERC).

%
%\bibliographystyle{apa}
%\bibliography{thesis}
%
%\end{document}
 
\cleardoublepage

\chapter[Critical window: Infinite third moment]{Heavy-tailed configuration models at criticality}
\label{chap:secondmoment}
{\small \paragraph*{Abstract.}
 We study the critical behavior of the component sizes for the configuration model when the tail of the degree distribution of a randomly chosen vertex is a regularly-varying function with exponent $\tau-1$ with $\tau\in (3,4)$.
 The component sizes are shown to be of the order $n^{(\tau-2)/(\tau-1)}L(n)^{-1}$ for some slowly-varying function $L(\cdot)$.
 We show that the re-scaled  ordered component sizes converge in distribution to the ordered excursions of a thinned L\'evy process.
 This proves that the scaling limits for the component sizes for these heavy-tailed configuration models are in a different universality class compared to those for the Erd\H{o}s-R\'enyi random graphs. 
 Also the joint  re-scaled vector of ordered component sizes and their surplus edges is shown to have a distributional limit under a strong topology. 
 Our proof resolves a conjecture by Joseph, \emph{Ann. Appl. Probab.} (2014) about the scaling limits of uniform simple graphs with i.i.d.~degrees in the critical window, and sheds light on the relation between the scaling limits obtained by Joseph and in this chapter, which appear to be quite different.
  Further, we use percolation to study the evolution of the component sizes and the  surplus edges within the critical scaling window, whose finite-dimensional distributions are shown to converge to the augmented multiplicative coalescent process introduced by Bhamidi et al., \emph{Probab. Theory Related Fields} (2014).
  The main results of this chapter  are proved under rather general assumptions on the vertex degrees. 
  We also discuss how these assumptions are satisfied by some of the frameworks that have been studied previously.
}

\vspace{.3cm} 
\noindent {\footnotesize Based on the manuscript: Souvik Dhara, Remco van der Hofstad, Johan S.H. van Leeuwaarden, and Sanchayan Sen, \emph{Heavy-tailed configuration models at criticality} (2016), arXiv:1612.00650
}

%\vfill
%\documentclass[10pt,a4paper,reqno]{memoir}
%\usepackage{dhara}
%
%\begin{document}
%%\chapter{Metric space}

In this chapter, we focus on the critical behavior of the configuration model, and critical percolation when the third moment of the empirical degree distribution tends to infinity. 
We include detailed proofs of the scaling limit results related to component sizes and surplus edges described in Chapter~\ref{chap:introduction}. 
As in Chapter~\ref{chap:thirdmoment}, the scaling limit result are shown to hold for critical percolation on the configuration model. 
We also study the evolution of both component sizes and surplus edges over the critical window, and describe its asymptotic distribution by a version of the augmented multiplicative coalescent process. 
The scaling limits lie in the universality class identified in \cite{BHL12}, and are fundamentally different than in Chapter~\ref{chap:thirdmoment}.
The results in this chapter provide a detailed understanding of the component sizes and surplus edges for  heavy-tailed graphs in the critical window. 
 Before stating the main results, we need to introduce some
notation and concepts.

\section{Definitions and notation}\label{c2sec:notation}
Recall the definitions from Chapter~\ref{sec:notation-intro}.
Let $(\mathbb{U}^0_{\shortarrow})^k$ denote the $k$-fold product space of $\mathbb{U}^0_{\shortarrow}$. For any $\mathbf{z}\in \mathbb{U}_{\shortarrow}$, $\ord(\mathbf{z})$ will denote the element of $\mathbb{U}^0_{\shortarrow}$ obtained by suitably ordering the coordinates of $\mathbf{z}$. 
 We often use the boldface notation $\mathbf{X}$ for the process $( X(s) )_{s \geq 0}$, unless stated otherwise. $\mathbb{D}[I,E]$ will denote the space of c\`adl\`ag functions from an interval $I$ to the metric space $E=(E,\mathrm{d})$ equipped with Skorohod $J_1$-topology. 
 %$\mathbb{D}[0,t]$ (resp. $\mathbb{D}[0,\infty)$) simply denotes the case $I=[0,t]$ (resp. $[0,\infty)$) with $E=\R$.
Consider a decreasing sequence $ \boldsymbol{\theta}=(\theta_1,\theta_2,\dots)\in \ell^3_{\shortarrow}\setminus \ell^2_{\shortarrow}$. Denote by  $\mathcal{I}_i(s):=\ind{\xi_i\leq s }$ where $\xi_i\sim \mathrm{Exp}(\theta_i/\mu)$ independently, and $\mathrm{Exp}(r)$ denotes the exponential distribution with rate $r$.  Consider the process \begin{equation}\label{c2defn::limiting::process}
\bar{S}^\lambda_\infty(t) =  \sum_{i=1}^{\infty} \theta_i\left(\mathcal{I}_i(t)- (\theta_i/\mu)t\right)+\lambda t,
\end{equation}for some $\lambda\in\mathbb{R}, \mu >0$ and define the reflected version of $\bar{S}_\infty^{\lambda}(t)$ by
\begin{equation} \label{c2defn::reflected-Levy}
 \refl{ \bar{S}_\infty^{\lambda}(t)}= \bar{S}_\infty^{\lambda}(t) - \min_{0 \leq u \leq t} \bar{S}_\infty^{\lambda}(u).
\end{equation}The process of the form \eqref{c2defn::limiting::process} was termed \emph{thinned} L\'evy processes in \cite{BHL12} (see also \cite{AHKL16,HKL14}), since the summands are thinned versions of  Poisson processes.
For any function $f\in \mathbb{D}[0,\infty)$,  define $\ubar{f}(x)=\inf_{y\leq x}f(y)$.  $\mathbb{D}_+[0,\infty)$ is the subset of $\mathbb{D}[0,\infty)$ consisting of functions with positive jumps only. Note that $\ubar{f}$ is continuous when $f\in \mathbb{D}_+[0,\infty)$. An \emph{excursion} of a function $f\in \mathbb{D}_+[0,T]$ is an interval $(l,r)$ such that 
\begin{eq}\label{c2def:excursion}\min\{f(l-),f(l)&\}=\ubar{f}(l)=\ubar{f}(r)=\min\{f(r-),f(r)\} \\
\quad \text{and}\quad &f(x)>\ubar{f}(r),\ \forall x\in (l,r)\subset [0,T].
\end{eq}Excursions of a function $f\in \mathbb{D}_+[0,\infty)$ are defined similarly.
 We will use $\gamma$ to denote an excursion, as well as the length of the excursion $\gamma$ to simplify notation. 

 Also, define the counting process $\mathbf{N}$ to be the Poisson process that has intensity $\refl{ \bar{S}_\infty^{\lambda}(t)}$ at time $t$ conditional on $( \bar{S}_\infty^{\lambda}(u) )_{u \leq t}$. Formally, $\mathbf{N}$ is characterized as the counting process for which 
\begin{equation} \label{c2defn::counting-process}
N(t) - \int\limits_{0}^{t}\refl{ \bar{S}_\infty^{\lambda}(u)}\dif u
\end{equation} is a martingale.  We use  the notation $N(\gamma)$ to denote the number of marks in the interval $\gamma$.

Finally, we define a Markov process $(\mathbf{Z}(s))_{s\in\R}$ on $\mathbb{D}(\R,\mathbb{U}^0_{\shortarrow})$, called the augmented multiplicative coalescent (AMC) process. Think of  a collection of particles in a system with $\mathbf{X}(s)$ describing their masses and $\mathbf{Y}(s)$ describing an additional attribute at time $s$. Let $K_1,K_2>0$ be constants. The evolution of the system takes place according to the following rule at time $s$:
\begin{itemize}
\item[$\rhd$] For $i\neq j$, at rate $K_1X_i(s)X_j(s)$,  the $i^{th}$ and $j^{th}$ component merge and create a new component of mass $X_i(s)+X_j(s)$ and attribute $Y_i(s)+Y_j(s)$.  
\item[$\rhd$] For any $i\geq 1$, at rate $K_2X_i^2(s)$, $Y_i(s)$ increases to $Y_i(s)+1$. 
\end{itemize}Of course, at each event time, the indices are re-organized to give a proper element of $\mathbb{U}^0_{\shortarrow}$.
This process was first introduced in \cite{BBW12} to study the joint behavior of the component sizes and the surplus edges over the critical window.
 In \cite{BBW12}, the authors extensively study the properties of the  standard version of AMC, i.e., the case  $K_1=1,K_2=1/2$ and showed in \citep[Theorem 3.1]{BBW12} that this is a (nearly) Feller process, a property that will play a crucial rule in the final part of this chapter.

\begin{remark}\normalfont Notice that the summation term in \eqref{c2defn::limiting::process}, after replacing $\theta_i$ by $\mu\theta_i$, is of the form 
$V^{\boldsymbol{\theta}}(s)= \mu^{\alpha}\sum_{i=1}^{\infty} \big(\theta_i\ind{\xi_i\leq s}-\theta_i^2s \big),$ where $\xi_i\sim \mathrm{Exp}(\theta_i)$ independently over $i$ and  $\boldsymbol{\theta}\in \ell^3_{\shortarrow}\setminus\ell^2_{\shortarrow}$. Therefore, by \cite[ Lemma 1]{AL98}, the process $\refl{\bar{\mathbf{S}}_\infty^{\lambda}}$ has no infinite excursions almost surely and only finitely many excursions with length at least $ \delta$, for any $\delta >0$.
\end{remark}

\section{Main results}
\subsection{Main results for critical configuration models}\label{c2sec:results}
 Throughout this chapter we will use the shorthand notation
\begin{eq}\label{c2eqn:notation-power*}
 \alpha= 1/(\tau-1),\qquad \rho=(\tau-2)/(\tau-1),\qquad \eta=(\tau-3)/(\tau-1),\\
a_n= n^{\alpha}L(n),\qquad b_n=n^{\rho}(L(n))^{-1},\qquad c_n=n^{\eta} (L(n))^{-2},
\end{eq}where $\tau\in (3,4)$ and $L(\cdot)$ is a slowly-varying function.
We state our results under the following assumptions:

\begin{assumption}\label{c2assumption1}
\normalfont Fix $\tau \in (3,4)$. Let $\boldsymbol{d}=(d_1,\dots,d_n)$ be a degree sequence such that the following conditions hold:
\begin{enumerate}[(i)] 
\item \label{c2assumption1-1} (\emph{High-degree vertices}) For any fixed $i\geq 1$, 
\begin{equation}\label{c2defn::degree}
 \frac{d_i}{a_n}\to \theta_i,
\end{equation}where $\boldsymbol{\theta}=(\theta_1,\theta_2,\dots)\in \ell^3_{\shortarrow}\setminus \ell^2_{\shortarrow}$. 
\item \label{c2assumption1-2} (\emph{Moment assumptions}) Let $D_n$ denote the degree of a vertex chosen uniformly at random from $[n]$, independently of $\mathrm{CM}_n(\boldsymbol{d})$. Then, $D_n\dto D$, for some integer-valued random variable $D$ and 
\begin{eq}
 \frac{1}{n}\sum_{i\in [n]}d_i\to \mu:=\expt{D},& \quad \frac{1}{n}\sum_{i\in [n]}d_i^2\to \E[D^2],\\
 \quad  \lim_{K\to\infty}\limsup_{n\to\infty}\bigg(a_n^{-3} &\sum_{i=K+1}^{n} d_i^3\bigg)=0.
\end{eq}
\item \label{c2assumption1-3} (\emph{Critical window}) For some $\lambda \in \mathbb{R}$,
\begin{equation}\label{c2critical-window}
 \nu_n(\lambda):=\frac{\sum_{i\in [n]}d_i(d_i-1)}{\sum_{i\in [n]}d_i}=1+\lambda c_n^{-1}+o(c_n^{-1}).
\end{equation}
\item \label{c2assumption1-4} Let $n_1$ be the number of vertices of degree-one. Then $n_1=\Theta(n)$, which is equivalent to assuming that $\prob{D=1}>0$.
\end{enumerate}
\end{assumption} 
Note that Assumption~\ref{c2assumption1}~\eqref{c2assumption1-1}-\eqref{c2assumption1-2} implies $\liminf_{n\to\infty}\E[D_n^3]=\infty$. 
The following three results hold for any $\mathrm{CM}_n(\boldsymbol{d})$ satisfying Assumption~\ref{c2assumption1}:

\begin{theorem}\label{c2thm::conv:component:size} Consider $\mathrm{CM}_n(\boldsymbol{d})$ with the degrees satisfying \textrm{Assumption~\ref{c2assumption1}}. Denote the $i^{th}$-largest cluster of $\mathrm{CM}_n(\boldsymbol{d})$ by $\mathscr{C}_{\sss (i)}$. Then, 
 \begin{equation}
  \left( b_n^{-1}|\mathscr{C}_{\sss (i)}|\right)_{i\geq 1}\dto\left(\gamma_i(\lambda)\right)_{i\geq 1},
 \end{equation}
 with respect to the $\ell^2_{\shortarrow}$-topology where $\gamma_i(\lambda)$ is the length of the $i^{th}$ largest excursion of the process $\bar{\mathbf{S}}_\infty^{\lambda} $, while $b_n$ and the constants $\lambda, \mu$ are defined in \eqref{c2eqn:notation-power*} and \textrm{Assumption~\ref{c2assumption1}}.
\end{theorem}  

\begin{theorem}\label{c2thm:spls}
 Consider $\mathrm{CM}_n(\boldsymbol{d})$ with the degrees satisfying \textrm{Assumption~\ref{c2assumption1}}. Let $\mathrm{SP}(\mathscr{C}_{\sss (i)})$ denote the number of surplus edges in  $\mathscr{C}_{\sss (i)}$  and define the vectors $\mathbf{Z}_n:= \ord( b_n^{-1}|\mathscr{C}_{\sss (i)}|,\mathrm{SP}(\mathscr{C}_{\sss (i)}))_{i\geq 1}$, $\mathbf{Z}:=\ord(\gamma_i(\lambda), N(\gamma_i))_{i\geq 1}$. Then, as $n\to\infty$,
 \begin{equation}\label{c2thm:eqn:spls}
  \mathbf{Z}_n\dto\mathbf{Z}
 \end{equation}with respect to the $\mathbb{U}^0_{\shortarrow}$ topology, where $\mathbf{N}$ is  defined in \eqref{c2defn::counting-process}.
\end{theorem}
\begin{theorem}\label{c2thm:simple-graph}
 The results in \textrm{Theorems~\ref{c2thm::conv:component:size}} and\textrm{~\ref{c2thm:spls}} also hold for uniform random graphs with degree $\bld{d}$.
\end{theorem}
\begin{remark}\normalfont
The only previous work to understand  the critical behavior of the configuration model with heavy-tailed degrees was by \citet{Jo10} where the degrees were assumed to be i.i.d an sample from an exact power-law distribution and the results were obtained for the component sizes of $\mathrm{CM}_n(\boldsymbol{d})$ (Theorem~\ref{c2thm::conv:component:size}). 
We will see that Assumption~\ref{c2assumption1} is satisfied for i.i.d degrees in Section~\ref{c2sec:iid-degrees}. Thus, a quenched version of \cite[Theorem~8.3]{Jo10} follows from our results. Further, if the degrees are chosen approximately as the weights chosen in \cite{BHL12}, then our results continue to hold. This sheds light on the relation between the scaling limits in \cite{BHL12}~and~\cite{Jo10} (see Remark~\ref{c2rem:jos-vs-this}).
Moreover, Theorem~\ref{c2thm:simple-graph} resolves \cite[Conjecture 8.5]{Jo10}.
\end{remark}

\begin{remark}\label{c2rem:gen-functional1}\normalfont The conclusions of Theorems~\ref{c2thm::conv:component:size},~\ref{c2thm:spls},~and~\ref{c2thm:simple-graph} hold for more general functionals of the components. Suppose that each vertex $i$ has a weight $w_i$ associated to it and let $\mathscr{W}_i$ denote the total weight of the component $\mathscr{C}_{\sss (i)}$, i.e., $\mathscr{W}_i = \sum_{k\in \mathscr{C}_{\sss (i)}} w_k$. Then, under some regularity conditions on the weight sequence $\bld{w}=(w_i)_{i\in [n]}$, in Section~\ref{c2sec:comp-functional} we will  show  that the scaling limit for $\mathbf{Z}^w_n:= \ord( b_n^{-1}\mathscr{W}_i,\mathrm{SP}(\mathscr{C}_{\sss (i)}))_{i\geq 1}$ is given by  $\mathbf{Z} = \ord(\kappa\gamma_i(\lambda), N(\gamma_i))_{i\geq 1}$, where the constant $\kappa$ is given by  
$$\kappa = \lim_{n\to\infty}\frac{\sum_{i\in [n]}d_iw_i}{\sum_{i\in[n]}d_i}.$$ Observe that, for $w_i= \ind{d_i=k}$, $\mathscr{W}_i$ gives the asymptotic number of vertices of degree $k$ in the $i^{th}$ largest component.
\end{remark}

\begin{remark}\normalfont It might not be immediate why we should work with Assumption~\ref{c2assumption1}. We will see in Section~\ref{c2sec:example} that Assumption~\ref{c2assumption1} is satisfied by the degree sequences in some important and natural cases. The reason to write the assumptions in this form is to make the properties of the degree distribution explicit (e.g. in terms of moment conditions and the asymptotics of the highest degrees) that jointly lead to this universal critical limiting behavior. We explain the significance of Assumption~\ref{c2assumption1} in more detail in Section~\ref{c2sec:discussion}.
\end{remark}

\subsection{Percolation on heavy-tailed configuration models} 
Percolation refers to deleting each edge of a graph independently with probability $1-p$.  Consider  percolation on a configuration model $\mathrm{CM}_n(\boldsymbol{d})$ under the following assumptions:
\begin{assumption} \label{c2assumption2} \normalfont 
 \begin{enumerate}[(i)]
  \item \label{c2assumption2-1} Assumption~\ref{c2assumption1}~\eqref{c2assumption1-1},~and~\eqref{c2assumption1-2} hold for the degree sequence and $\mathrm{CM}_n(\boldsymbol{d})$ is super-critical, i.e.,
  \begin{equation}
   \nu_n=\frac{\sum_{i\in [n]}d_i(d_i-1)}{\sum_{i\in [n]}d_i}\to \nu =\frac{\expt{D(D-1)}}{\expt{D}}>1.
  \end{equation}
  \item \label{c2assumption2-2} (\emph{Critical window for percolation}) The percolation parameter $p_n$ satisfies
  \begin{equation}
  p_{n}=p_n(\lambda):=\frac{1}{\nu_n} \big( 1+ \lambda c_n^{-1}+o(c_n^{-1}) \big)
  \end{equation}for some $\lambda\in \mathbb{R}$.
  \end{enumerate}
 \end{assumption}
\noindent Let $\mathrm{CM}_n(\boldsymbol{d},p_n(\lambda))$ denote the graph obtained through percolation on $\mathrm{CM}_n(\boldsymbol{d})$ with bond retention probability $p_n(\lambda)$. The following result gives the asymptotics for the ordered component sizes and the surplus edges for $\mathrm{CM}_n(\boldsymbol{d},p_n(\lambda))$:
\begin{theorem}\label{c2thm:percolation}
Consider $\mathrm{CM}_n(\boldsymbol{d},p_n(\lambda))$ satisfying \textrm{Assumption~\ref{c2assumption2}}.
Let $\tilde{\mathbf{S}}_{\infty}^{\lambda}$ denote the process in \eqref{c2defn::limiting::process} with $\theta_i$ replaced by $\theta_i/\sqrt{\nu}$, and
 $\mathscr{C}_{\sss(i)}^p$ denote the $i^{th}$ largest component of $\mathrm{CM}_n(\boldsymbol{d},p_n)$ and let $\mathbf{Z}_n^p(\lambda):=\ord( b_n^{-1}|\mathscr{C}_{\sss(i)}^p|,\mathrm{SP}(\mathscr{C}_{\sss(i)}^p))_{i\geq 1}$,  $\mathbf{Z}^p(\lambda):=\ord((\nu^{1/2}\tilde{\gamma}_i(\lambda),N(\tilde{\gamma}_i(\lambda)))_{i\geq 1}$, where $\tilde{\gamma}_i(\lambda)$ is the largest excursion of $\tilde{\mathbf{S}}_{\infty}^{\lambda}$. Then, for any $\lambda\in \mathbb{R}$, as $n\to \infty$,
\begin{equation}\label{c2eqn:perc:limit}
 \mathbf{Z}_n^p(\lambda)\dto \mathbf{Z}^p(\lambda)
\end{equation}with respect to the $\mathbb{U}^0_{\shortarrow}$ topology.
\end{theorem}
 Now, consider a graph $\mathrm{CM}_n(\boldsymbol{d})$ satisfying Assumption~\ref{c2assumption2}~\eqref{c2assumption2-1}. To any edge $(ij)$ between vertices $i$ and $j$ (if any), associate an independent uniform random variable $U_{(ij)}$. Note that the graph obtained by keeping only those edges satisfying $U_{(ij)}\leq p_n(\lambda)$ is distributed as $\mathrm{CM}_n(\boldsymbol{d},p_n(\lambda))$. This construction naturally couples the graphs $(\mathrm{CM}_n(\boldsymbol{d},p_n(\lambda)))_{\lambda\in\R}$ using the same set of uniform random variables. Our next result shows that the evolution of the component sizes and the surplus edges of  $\mathrm{CM}_n(\boldsymbol{d},p_n(\lambda))$, as $\lambda$ varies, can be described by a version of the augmented multiplicative coalescent process described in Section~\ref{c2sec:notation}:
\begin{theorem}\label{c2thm:mul:conv} Suppose that \textrm{Assumption~\ref{c2assumption2}} holds, and $\ell_n/n = \mu +o(n^{-\zeta})$ for some $\eta<\zeta<1/2$.
Fix any $k\geq 1$, $-\infty<\lambda_1<\dots<\lambda_k<\infty$. Then, there exists a version $\mathbf{AMC}=(\mathrm{AMC}(\lambda))_{\lambda\in\R}$ of the augmented multiplicative coalescent such that, as $n\to\infty$,
\begin{equation}
 \left(\mathbf{Z}_n^p(\lambda_1), \dots\mathbf{Z}_n^p(\lambda_k)\right) \dto \left(\mathbf{AMC}(\lambda_1),\dots,\mathbf{AMC}(\lambda_k)\right)
\end{equation}with respect to the $(\mathbb{U}^0_{\shortarrow})^k$ topology, where at each $\lambda$, $\mathrm{AMC}(\lambda)$ is distributed as the limiting object in~\eqref{c2eqn:perc:limit}.
\end{theorem}  
\begin{remark}\label{c2:rem-AMC-finite-third}
\normalfont
 Theorem~\ref{c2thm:mul:conv} also holds when $\E[D_n^3]\to \E[D^3]<\infty$ with $\alpha=\eta=1/3$, $\rho =2/3$ and $L(n)=1$. This improves \cite[Theorem 4]{DHLS15}, which was proved only for the cluster sizes.
\end{remark}
\begin{remark}\normalfont Theorem~\ref{c2thm:mul:conv}, in fact, shows that there exists a version of the AMC process whose distribution at each fixed $\lambda$ can be described by the excursions of a thinned L\'evy process and an associated Poisson process. 
This did not appear in \cite{BBW12,BM14}, since the scaling limits in their settings were described in terms of the excursions of a Brownian motion with parabolic drift.   
\end{remark}
\begin{remark}\label{c2rem:additional-assumption}\normalfont The additional assumption in Theorem~\ref{c2thm:mul:conv} about the asymtotics  $\ell_n/n$ is required only in one place for Proposition~\ref{c2prop:coupling-whp} and the rest of the proof works under Assumption~\ref{c2assumption2} only. That is why we have separated this assumption from the set of conditions in Assumption~\ref{c2assumption2}. 
It is worthwhile mentioning that the condition is not stringent at all, e.g., we will see that this condition is satisfied under the two widely studied set-ups in Section~\ref{c2sec:example}. 
\end{remark}

\begin{remark}\normalfont 
As we will see in Section~\ref{c2sec:conv-amc}, the proof of Theorem~\ref{c2thm:mul:conv} can be extended to more general functionals of the components. For example, the evolution of the number of degree $k$ vertices along with the surplus edges can also be described by an AMC process. The key idea here is that these component functionals become approximately proportional to the component sizes in the critical window  and thus the scaling limit for the component functionals becomes a constant multiple of the scaling limit for the component sizes.
\end{remark}

\section{Important examples}\label{c2:important-examples}
\subsection{Power-law degrees with small perturbation}\label{c2sec:example}
As discussed in the introduction, our main goal is to obtain results for the critical configuration model with $\prob{D_n\geq k}\sim L_0(k)k^{-(\tau-1)}$ for some $\tau\in (3,4)$. 
In this section, we consider such an example and show that the conditions of Assumption~\ref{c2assumption1} are satisfied. Thus, the results in Section~\ref{c2sec:results} hold for \CM\   in the following set-up that is closely related to the model studied in~\cite{BHL12} for rank-1 inhomogeneous random graphs.
\par Fix $\tau\in (3,4)$. Suppose that $F$ is the distribution function of a discrete non-negative random variable $D$ such that
 \begin{equation} \label{c2defn::F}
 G(x)=1-F(x)= \frac{C_FL_0(x)}{x^{\tau-1}}(1+o(1))\quad \text{as } x\to\infty,
 \end{equation}
where $L_0(\cdot)$ is a slowly-varying function so that the tail of the distribution is decaying  like a regularly-varying function. Recall that the inverse of a locally bounded non-increasing function $f:\R\to\R$ is defined as $f^{-1}(x):=\inf\{y:f(y)\leq x\}$. Therefore, using \cite[Theorem 1.5.12]{BGT89}, \begin{equation}\label{c2eqn:inverse}
 G^{-1}(x)=\frac{C_F^{1/(\tau-1)}L(1/x)}{x^{1/(\tau-1)}}(1+o(1))\quad \text{as } x\to 0,
\end{equation}where $L(\cdot)$ is another slowly-varying function. Note that \cite[Theorem 1.5.12]{BGT89} is stated for positive exponents only. Since our exponent is negative, the asymptotics in \eqref{c2eqn:inverse} holds for $x\to 0$. 
Suppose that the random variable $D$ is such that
\begin{equation}\label{c2defn:critical}
 \nu=\frac{\expt{D(D-1)}}{\expt{D}}=1.
\end{equation}Define the degree sequence $\boldsymbol{d}_{\lambda}$ by taking the degree of the $i^{th}$ vertex to be
 \begin{equation}\label{c2defn::degree:powerlaw}
  d_i=d_i(\lambda):=G^{-1}(i/n)+\delta_{i,n}(\lambda),
  \end{equation}where the $\delta_{i,n}(\lambda)$'s are non-negative integers satisfying the asymptotic equivalence 
  \begin{equation}\label{c2defn:delta:i} \delta_{i,n}(\lambda) \sim \lambda G^{-1}(i/n) c_n^{-1},\quad \text{as } n\to\infty.
  \end{equation}The $\delta_{i,n}(\lambda)$'s are chosen in such a way that Assumption~\ref{c2assumption1}~\eqref{c2assumption1-4} is satisfied.
Fix any $K\geq 1$. 
Notice that  \eqref{c2eqn:inverse} and \eqref{c2defn:delta:i} imply that, for all large enough $n$ (independently of $K$), the first $K$ largest degrees $(d_i)_{i\in [K]}$ satisfy 
\begin{equation}\label{c2defn:high-degree}
 d_i= \left(\frac{n^{\alpha}C_F^{\alpha}L(n/i)}{i^{\alpha}}\right) \left( 1+\lambda c_n^{-1}+o(c_n^{-1})\right).
\end{equation}
Therefore, $\boldsymbol{d}_{\lambda}$ satisfies Assumption~\ref{c2assumption1}~\eqref{c2assumption1-1} with $\theta_i=(C_F/i)^{\alpha}$. 
The next two lemmas verify Assumption~\ref{c2assumption1}~\eqref{c2assumption1-2},~\eqref{c2assumption1-3}:

\begin{lemma} \label{c2lem::order_moments}The degree sequence $\boldsymbol{d}_{\lambda}$ defined in \eqref{c2defn::degree:powerlaw} satisfies \textrm{Assumption~\ref{c2assumption1}~\eqref{c2assumption1-2}}.
\end{lemma}
\begin{proof}
 Note that, by \eqref{c2defn:high-degree}, $d_1^2=o(n)$.
 Also, since $G^{-1}$ is non-increasing 
 \begin{equation}
  \int_{0}^1G^{-1}(x)\dif x-\frac{d_1}{n} \leq \frac{1}{n}\sum_{i\in [n]} G^{-1}(i/n)\leq \int_{0}^1G^{-1}(x)\dif x.
 \end{equation}
  Therefore,
 \begin{eq}\label{c2tot-deg-error-1}
  \frac{1}{n}\sum_{i\in [n]} d_i&=\frac{1}{n}\sum_{i\in [n]}G^{-1}(i/n)(1+O(c_n^{-1}))\\
  &= \int_{0}^1G^{-1}(x)\dif x +O(d_1/n)+O(c_n^{-1})= \expt{D}+O(b_n^{-1}).
 \end{eq} 
 Similarly, $\sum_{i\in [n]}d_i^2=n\E[D^2]+O(d_1^2)=n\E[D^2]+o(n)$. 
 To prove the condition involving the third-moment, we use Potter's theorem \cite[Theorem 1.5.6]{BGT89}. 
 First note that $3\alpha-1 = (4-\tau)/(\tau-1)>0$ since $\tau\in (3,4)$. 
 Fix $0<\delta<\alpha -1/3$ and $A>1$ and choose $C=C(\delta,A)$ such that for all $i\leq nC^{-1}$, $L(n/i)/L(n)< Ai^{\delta}$. Therefore, \eqref{c2eqn:inverse} implies 
 \begin{equation}\label{c2eq:3rd-moment-example}
 a_n^{-3} \sum_{i>K}d_i^3\leq A \sum_{i>K}i^{-3\alpha+3\delta}+ \frac{\sup_{1\leq x\leq C}L(x)^3}{L(n)^3} \sum_{i>nC^{-1}}i^{-3\alpha}.
 \end{equation} 
 From our choice of $\delta$, $-3\alpha+3\delta <-1$ and therefore $\sum_{i\geq 1}i^{-3\alpha+3\delta}<\infty$. By \cite[Lemma 1.3.2]{BGT89},  $\sup_{1\leq x\leq C}L(x)^3<\infty$. Moreover, $\sum_{i>nC^{-1}}i^{-3\alpha}=O(n^{1-3\alpha})$ and $1-3\alpha<0$. Thus,  the proof follows by first taking $n\to \infty$ and then $K\to\infty$. 
\end{proof}
\begin{lemma}\label{c2lem::nu-n} The degree sequence $\boldsymbol{d}_{\lambda}$ defined in \eqref{c2defn::degree:powerlaw} satisfies  \textrm{Assumption~\ref{c2assumption1}~\eqref{c2assumption1-3}}, i.e., there exists $\lambda_0\in \R$ such that
 \begin{equation} \label{c2eqn::lem::nu_n}
  \nu_n(\lambda)= 1+(\lambda+\lambda_0) c_n^{-1}+o(c_n^{-1}).
 \end{equation}
\end{lemma} 
\begin{proof}
 Firstly, Lemma \ref{c2lem::order_moments} guarantees  the convergence of the second moment of the degree sequence. However, \eqref{c2eqn::lem::nu_n} is more about obtaining sharper asymptotics for $\nu_n(\lambda)$. We use similar arguments as in  \cite[Lemma 2.2]{BHL12}. Denote  $\nu_n:=\nu_n(0)$. Note that $\nu_n(\lambda)=\nu_n(1+\lambda c_n^{-1})+o(c_n^{-1})$, so it is enough to verify that \begin{equation}
\nu_n=1+\lambda_0c_n^{-1}+o(c_n^{-1}).
\end{equation}
Consider $d_i(0)$ as given in \eqref{c2defn::degree:powerlaw} with $\lambda=0$. Lemma~\ref{c2lem::order_moments} implies
\begin{equation}
 \nu_n = \frac{\sum_{i\in [n]} d_i(0)^2}{n\expt{D}}-1+o(c_n^{-1}).
\end{equation}
 Fix any $K\geq 1$.  We have
 \begin{equation}
  \int_{K/n}^{1} G^{-1}(u)^2 \dif u - \frac{d_K^2}{n}\leq \frac{1}{n}\sum_{i=K+1}^n d_i^2 \leq \int_{K/n}^{1} G^{-1}(u)^2 \dif u.
 \end{equation}Now by \eqref{c2defn::degree:powerlaw}, $d_K^2/n= \Theta(K^{-2\alpha}L(n/K)^2n^{-\eta})$. Therefore, 
 \begin{eq} \label{c2expr::nu_difference}
  &\nu-\nu_n \\
  &= \frac{1}{\expt{D}}\left( \sum_{i=1}^{K} \int_{(i-1)/n}^{i/n}G^{-1}(u)^2 \dif u- \frac{1}{n}\sum_{i=1}^K d_i^2 \right)+O(K^{-2\alpha}L(n/K)^2n^{-\eta}).
 \end{eq} Again, using \eqref{c2defn::degree:powerlaw}, 
 \begin{equation}\label{c2expr::first_sum}
  \frac{1}{n}\sum_{i=1}^K d_i^2  
  = n^{-\eta} \sum_{i=1}^K \left( \frac{C_F}{i}\right)^{2\alpha}L(n/i)^2+o(c_n^{-1})=c_n^{-1}\sum_{i=1}^K \left( \frac{C_F}{i}\right)^{2\alpha}+\varepsilon(c_n,K),
 \end{equation} where the last equality follows using the fact that $L(\cdot)$ is a slowly-varying function. Note that the error term $\varepsilon(c_n,K)$ in \eqref{c2expr::first_sum} satisfies \linebreak $\lim_{n\to\infty} c_n\varepsilon(c_n,K)=0$ for each fixed $K\geq 1$. Again,
 \begin{equation} \label{c2expr::second_sum}
 \begin{split}
   \sum_{i=1}^K \int_{(i-1)/n}^{i/n} G^{-1}(u)^2 \dif u &= n^{-\eta} \sum_{i=1}^K \int_{(i-1)}^{i}\left( \frac{C_F}{u}\right)^{2\alpha}L(n/u)^2\dif u+o(c_n^{-1})\\
   &=c_n^{-1}\sum_{i=1}^K \int_{(i-1)}^{i}\left( \frac{C_F}{u}\right)^{2\alpha}\dif u +\varepsilon'(c_n,K),
   \end{split}
 \end{equation}where $\lim_{n\to\infty} c_n\varepsilon'(c_n,K)=0$ for each fixed $K\geq 1$.
 Thus combining \eqref{c2expr::nu_difference}, \eqref{c2expr::first_sum}, and  \eqref{c2expr::second_sum} and first letting $n\to \infty$ and then $K\to\infty$, we get
 \begin{equation}
  \lim_{n\to\infty} c_n(\nu_n-\nu)=\lambda_0,
 \end{equation}where 
 \begin{equation}\lambda_0 =-\frac{C_F^{2\alpha}}{\expt{D}}\sum_{i=1}^{\infty} \left(  \int_{i-1}^{i}u^{-2\alpha}\dif u-i^{-2\alpha} \right).
 \end{equation}
 Using Euler-Maclaurin summation \cite[Page 333]{H49} it can be seen that  $\lambda_0$ is finite which completes the proof. 
\end{proof}

\begin{remark}
 \normalfont
 Note that if we add approximately $cn^{1-\eta}$ ($c>0$ is a constant) ones in the degree sequence given in \eqref{c2defn::degree:powerlaw}, then  we end up with another configuration model for which $\lim_{n\to\infty}n^{\eta}(\nu_n-\nu)= \zeta'$ with $\zeta> \zeta'$. Similarly, deleting $cn^{1-\eta}$ ones from the degree sequence increases the new $\zeta$ value. This gives an obvious way to perturb the degree sequence in such a way that the configuration model is in different locations within the critical scaling window. In our proofs, we will only use the precise asymptotics of the \emph{high}-degree vertices. Thus, a small (suitable) perturbation in the degrees of the \emph{low}-degree vertices does not change the scaling behavior fundamentally, except for a change in the location inside the scaling window.
\end{remark}  

\begin{remark} \normalfont If $\nu$ in \eqref{c2defn:critical} is larger than one, then the degree sequence satisfies Assumption~\ref{c2assumption2}. Therefore, the results for critical percolation also hold in this setting. 
\eqref{c2tot-deg-error-1} implies that the additional assumption in Theorem~\ref{c2thm:mul:conv} is also satisfied.

\end{remark}
\subsection{Random degrees sampled from a power-law distribution}\label{c2sec:iid-degrees}
  We now consider the set-up discussed in \cite{Jo10}. Let $D_1,\dots, D_n$ be i.i.d samples from a distribution $F$, where $F$ is defined in \eqref{c2defn::F}. Therefore, the asymptotic relation in \eqref{c2eqn:inverse} holds.
   Consider the random degree sequence $\boldsymbol{d}$ where $d_i=D_{\sss(i)}$, $D_{\sss(i)}$ being the $i^{th}$ order statistic of $(D_1,\dots, D_n)$. We show that $\boldsymbol{d}$ satisfies Assumption~\ref{c2assumption1} almost surely under a suitable coupling. We use a coupling from \cite[Section 13.6]{B68}. 
   Let $(E_1,E_2,\dots )$ be an i.i.d sequence of unit rate exponential random variables and let $\Gamma_i:= \sum_{j=1}^iE_j$. Let
  \begin{equation}
   \bar{d}_i=\bar{D}_{\sss(i)}=G^{-1}(\Gamma_i/\Gamma_{n+1}).
  \end{equation}   
It can be checked that $(d_1,\dots,d_n)\stackrel{\sss d}{=}(\bar{d}_1,\dots,\bar{d}_n)$ and therefore, we will ignore the bar in the subsequent notation. Note that, by the strong law of large numbers, $\Gamma_{n+1}/n \asto 1$. Thus,  for each fixed $i\geq 1$, $\Gamma_{n+1}/(n\Gamma_i)\asto 1/\Gamma_i$. 
Using \eqref{c2eqn:inverse}, we see that $\boldsymbol{d}$ satisfies Assumption~\ref{c2assumption1}~\eqref{c2assumption1-1} almost surely under this coupling with $\theta_i=(C_F/\Gamma_i)^{\alpha}$. 
The first two conditions of Assumption~\ref{c2assumption1}~\eqref{c2assumption1-2} are trivially satisfied by $\boldsymbol{d}$  almost surely using the strong law of large numbers.
 Using the third condition, we first claim that
\begin{equation}\label{c2eq:gamma-summable}
\PR\bigg(\sum_{i=1}^{\infty}\Gamma_i^{-3\alpha}<\infty\bigg)=1. 
\end{equation}To see \eqref{c2eq:gamma-summable}, note that $\Gamma_i$ has a Gamma distribution with shape parameter $i$ and scale parameter 1.  Thus, for $i>3\alpha$,
\begin{equation}
\begin{split} \E[\Gamma_i^{-3\alpha}] = \frac{\Gamma(i-3\alpha)}{\Gamma(i)}=i^{-3\alpha}(1+O(1/i)),
\end{split}
\end{equation}where $\Gamma (x)$ is the Gamma function and the last equality follows from  Stirling's approximation. Therefore,
 \begin{equation}
 \E\bigg[\sum_{i=1}^{\infty}\Gamma_i^{-3\alpha}\bigg]=\sum_{i=1}^{\infty}\E\big[\Gamma_i^{-3\alpha}\big]<\infty
\end{equation}and \eqref{c2eq:gamma-summable} follows. Now, using the fact that $\Gamma_{n+1}/n\asto 1$, we can use arguments identical to \eqref{c2eq:3rd-moment-example} to show that $\lim_{K\to\infty}\limsup_{n\to\infty} a_n^{-3}\sum_{i>K}d_i^3=0$ on the event $\{\sum_{i=1}^{\infty}\Gamma_i^{-3\alpha}<\infty\}\cap \{\Gamma_{n+1}/n\to 1\}$. Thus, we have shown that the third condition of Assumption~\ref{c2assumption1}~\eqref{c2assumption1-2} holds almost surely.
\par To see Assumption~\ref{c2assumption1}~\eqref{c2assumption1-3}, an argument similar to Lemma~\ref{c2lem::nu-n} can be carried out to prove that 
\begin{equation}
 \lim_{n\to\infty}c_n(\nu_n-\nu)\asto \Lambda_0,
\end{equation}where 
\begin{equation}\label{c2defn:Lambda-0}\Lambda_0:= -\frac{C_F^{2\alpha}}{\expt{D}}\sum_{i=1}^{\infty} \left(  \int_{\Gamma_{i-1}}^{\Gamma_i}u^{-2\alpha}\dif u-\Gamma_i^{-2\alpha} \right).
\end{equation}Therefore, the results in Section~\ref{c2sec:results} hold conditionally on the degree sequence if we assume the degrees to be i.i.d samples from a distribution of the form \eqref{c2defn::F}. 
For the percolation results, notice that the additional condition in Theorem~\ref{c2thm:mul:conv} is a direct consequence of the convergence rates of sums of i.i.d sequences of random variables~\cite[Corollary 3.22]{K2006}.

\begin{remark}\label{c2rem:jos-vs-this}\normalfont Let us recall the limiting object obtained in \cite[Theorem 8.1]{Jo10} and compare this with the limiting object $\bar{\mathbf{S}}_\infty^{\sss \Lambda_0}$, defined in \eqref{c2defn::limiting::process} with $\Lambda_0$ given by \eqref{c2defn:Lambda-0}. We will prove an analogue of \cite[Theorem 8.1]{Jo10} in Theorem~\ref{c2thm::convegence::exploration_process}. 
Although we use a different exploration process from \cite{Jo10}, the fact that the component sizes are \emph{huge} compared to the number of cycles in a component, means that one can prove Theorem~\ref{c2thm::convegence::exploration_process} for the exploration process in \cite{Jo10} also. 
This will indirectly imply that Joseph's limiting exploration process in \cite[Theorem 8.1]{Jo10} obeys the law of $\bar{\mathbf{S}}_\infty^{\sss \Lambda_0}$, averaged out over the $\Gamma$-values. 
This is counter intuitive, given the vastly different descriptions of the two processes; for example our process does not have independent increments. 
We do not have a direct way to prove the above mentioned claim. 
\end{remark}
\section{Discussion}\label{c2sec:discussion}
 \textbf{Assumptions on the degree distribution.} Let us now briefly explain the significance of Assumption~\ref{c2assumption1}. Unlike the finite third-moment case \cite{DHLS15}, the high-degree vertices dictate the scaling limit in Theorem~\ref{c2thm::conv:component:size} and therefore it is essential to fix their asymptotics through Assumption~\ref{c2assumption1}~\eqref{c2assumption1-1}. Assumption~\ref{c2assumption1}~\eqref{c2assumption1-3} defines the critical window of the phase transition and Assumption~\ref{c2assumption1}~\eqref{c2assumption1-4} is reminiscent of the fact that a configuration model with negligibly small amount of degree-one vertices is always supercritical. Assumption~\ref{c2assumption1}~\eqref{c2assumption1-2} states the finiteness of the first two moments of the degree distribution and fixes the asymptotic order of the third moment. The order of the third moment is crucial in our case. The derivation of the scaling limits for the components sizes is based on the analysis of a walk which encodes the information about the component sizes in terms of the excursions above its past minima \cite{A97,R12,DHLS15,BHL12,BHL10}. Now, the increment distribution turns out to be the size-biased distribution with the sizes being the degrees. Therefore, the third-moment assumption controls the variance of the increment distribution. Another viewpoint is that the components can be locally approximated by a branching process $\mathcal{X}_n$ with the variance of the same order as the third moment of the degree distribution. Thus Assumption~\ref{c2assumption1}~\eqref{c2assumption1-2} controls the order of the survival probability of $\mathcal{X}_n$, which is intimately related to the asymptotic size of the largest components. \vspace{.2cm}
\\ 
 \textbf{Connecting the barely subcritical and supercritical regimes.} The barely subcritical (supercritical) regime corresponds to the case when $\nu_n(\lambda_n)=1+\lambda_nc_n^{-1}$ for some $\lambda_n\to -\infty$ ($\lambda_n\to \infty$) and $\lambda_n = o(c_n^{-1})$. \citet{J08} showed that the size of the $k^{th}$ largest cluster for a subcritical configuration model (i.e., the case $\nu_n\to\nu$ and $\nu<1$) is $d_k/(1-\nu)$ (see \cite[Remark 1.4]{J08}). In \cite{BDHS17}, we show that this is indeed the case for the entire barely subcritical regime, i.e., the size of the $k^{th}$ largest cluster is $d_k/(1-\nu_n(\lambda_n))=\Theta(b_n|\lambda_n|^{-1})$.
 In the barely supercritical case, the giant component can be \emph{locally} approximated  by a  branching process $\mathcal{X}_n$ having variance of the order $a_n^3/n$ and the size of the giant component is of the order $n\rho_n$, where $\rho_n$ is the survival probability of $\mathcal{X}_n$~\cite{HJL16}. 
The asymptotic size of the giant component turns out to be $\Theta(b_n|\lambda_n|)$.
 Therefore, the fact that the sizes of the maximal components in the critical scaling window are $\Theta(b_n)$ for $\lambda_n=\Theta(1)$ proves a continuous phase transition property for the configuration model within the whole critical regime. 
  \vspace{.2cm}\\
\textbf{Percolation.} The main reason to study percolation in this chapter is to understand the evolution of the component sizes and the surplus edges over the critical window in Theorem~\ref{c2thm:mul:conv}. 
It turns out that a precise characterization of the evolution of the percolation clusters is necessary for understanding the minimal spanning tree of the giant component with i.i.d.~weights on each edge \cite{ABGM13}.
Also, since the percolated configuration model is again a configuration model \cite{F07,J09}, the natural way to study the evolution of the clusters sizes of configuration models over the critical window is through percolation. \vspace{.2cm} \\
\textbf{Universality.} The limiting object in Theorem~\ref{c2thm::conv:component:size} is identical to that in \cite[Theorem 1.1]{BHL12} for rank-1 inhomogeneous random graphs. 
Thus, $\mathrm{CM}_n(\boldsymbol{d})$ with regularly-varying tails falls onto the domain of attraction of the new universality class studied in \cite{BHL12}. This is again confirming the predictions made by statistical physicists that the nature of the phase transition does not depend on the precise details of the model. Our scaling limit fits into the general class of limits predicted in \cite{AL98}. 
In the notation of \cite[(6)]{AL98}, the scaling limits $\mathrm{CM}_n(\boldsymbol{d})$, under Assumption~\ref{c2assumption1}, give rise to the case $\kappa=0$. 
To understand this, let us discuss some existing works. In \cite{A97,AP00,Jo10,BHL10}, and Chapter~\ref{chap:thirdmoment} the limiting component sizes are  described by the excursions of a Brownian motion with a parabolic drift. 
All these models had a common property: if the component sizes in the barely subcritical regime are viewed as masses then (i) these masses merge as approximate multiplicative coalescents in the critical window, and (ii) each individual mass is negligible/``dust" compared to the sum of squares of the masses in the barely subcritical regime. Indeed, (ii) is observed in \cite[(10)]{A97}, \cite[(4)]{AP00}. 
In the case of \cite{BHL12} and this chapter, the barely-subcritical component sizes do not become negligible due to the existence of the high-degree vertices (see \cite[Theorem 1.3]{BHL12}). As discussed in \cite[Section 1.4]{AL98}, these \emph{large} barely-subcritical clusters can be thought of as nuclei, not interacting with each other and ``sweeping up the smaller clusters in such a way that the relative masses converge''.  It will be fascinating to find a class of random graphs, used to model real-life networks, that has both the nuclei and a good amount of dust in the barely-subcritical regime, so that the scaling limits predicted by \cite{AL98} can be observed in complete generality.
 \vspace{.2cm}\\
\textbf{Component sizes and the width of the critical window}. We have already discussed how the width of the scaling window and the order of the maximal degrees should lead the asymptotic size of the components to be of the order $b_n$. For the finite third-moment case, the size of the largest component is of the order $n^{2/3}\gg b_n$. We do not have a very intuitive explanation for the reduced sizes of the components except for the fact that a similar property is true for the survival probability of a slightly supercritical branching process. The width of the critical window decreases by a factor of $L(n)^{-2}$ as compared to \cite{BHL12} if the size of the high-degree vertices increases by a factor of $L(n)$ (see \eqref{c2eqn:notation-power*}). Indeed, an increase in the degrees of the high-degree vertices is expected to start the merging of the barely-subcritical nuclei earlier, resulting in an increase in the width of the critical window. The fact that the width decreases by a factor of $L(n)^{-2}$ comes out of our calculations. \vspace{.2cm} \\
% \textbf{Open problems}. \\ (i) A natural question is to study what the component sizes, viewed as metric spaces, look like.  Recently, \cite{BHS15} studied this problem for rank-1 inhomogeneous random graphs for heavy-tailed weights. In a work in progress \citet{BDHS17}, we show that the metric space structure of $\mathrm{CM}_n(\boldsymbol{d})$ is in the  same universality class of the rank-one inhomogeneous model, as shown in \cite{BHS15}. This is the first step in understanding the minimal spanning tree problem (see \cite{ABGM13}).\\
%(ii) As discussed in Section~\ref{c2sec:iid-degrees} (see Remark~\ref{c2rem:jos-vs-this}), it will be interesting to get a direct proof of the fact that the limiting object in \cite[Theorem 8.1]{Jo10} is obtained by averaging the distribution of $\mathbf{S}_{\infty}^{\sss \Lambda_0}$ over the collections $(\Gamma_i)_{i\geq 1}$. \\
% (iii) We have only shown the finite-dimensional convergence in Theorem~\ref{c2thm:mul:conv}. 
% It is an open question to obtain a suitable tightness criterion that would imply the process level convergence of the vector of component sizes and surplus edges over the whole critical window.
%\vspace{.2cm}\\
\textbf{Overview of the proofs}.  The proofs of Theorems~\ref{c2thm::conv:component:size} and \ref{c2thm:spls} consist of two important steps. First, we define an exploration algorithm on the graph that explores one edge of the graph at each step. The algorithm produces a walk, termed exploration process, that encodes the information about the number of edges in the explored components in terms of the hitting times to its past minima. In Section~\ref{c2sec:conv-expl}, the exploration process, suitably rescaled, is shown to converge. The surplus edges in the components are asymptotically negligible  compared to the component sizes; these two facts together give us the finite-dimensional scaling limit of the re-scaled component sizes. The proof of Theorem~\ref{c2thm::conv:component:size} follows from the asymptotics of the susceptibility function in Section~\ref{c2:tightness-comp-size}.
 The joint convergence of the component sizes and surplus edges is proved by verifying a uniform tightness condition on the surplus edges in Section~\ref{c2sec:surplus}. Then, in Section~\ref{c2sec:simple-graphs}, we exploit the idea that the large components are explored before any self-loops or multiple edges are created and conclude the proof of Theorem~\ref{c2thm:simple-graph}. The proof of Theorem~\ref{c2thm:percolation} is completed by showing that the percolated degree sequence is again a configuration model satisfying Assumption~\ref{c2assumption1}. Section~\ref{c2sec:conv-amc} is devoted to the proof of Theorem~\ref{c2thm:mul:conv} which exploits different properties of the augmented multiplicative coalescent process.

\section{Convergence of the exploration process}\label{c2sec:conv-expl}
  We start by describing how the connected components in the graph can be explored while generating the random graph simultaneously:
\begin{algo}[Exploring the graph]\label{c2algo-expl}\normalfont  
%Consider the configuration model $\mathrm{CM}_{n}(\boldsymbol{d})$. 
The algorithm carries along vertices that can be alive, active, exploring and killed and half-edges that can be alive, active or killed. 
We sequentially explore the graph as follows:
\begin{itemize}
\item[(S0)] At stage $i=0$, all the vertices and the half-edges are \emph{alive} but none of them are \emph{active}. Also, there are no \emph{exploring} vertices. 
\item[(S1)]  At each stage $i$, if there is no active half-edge at stage $i$, choose a vertex $v$ proportional to its degree among the alive (not yet killed) vertices and declare all its half-edges to be \emph{active} and declare $v$ to be \emph{exploring}. If there is an active vertex but no exploring vertex, then declare the \emph{smallest} vertex to be exploring.
\item[(S2)] At each stage $i$, take an active half-edge $e$ of an exploring vertex $v$ and pair it uniformly to another alive half-edge $f$. Kill $e,f$. If $f$ is incident to a vertex $v'$ that has not been discovered before, then declare all the half-edges incident to $v'$ active, except $f$ (if any). 
If $\mathrm{degree}(v')=1$ (i.e. the only half-edge incident to $v'$ is $f$) then kill $v'$. Otherwise, declare $v'$ to be active and larger than all other vertices that are alive. After killing $e$, if $v$ does not have another active half-edge, then kill $v$ also.

\item[(S3)] Repeat from (S1) at stage $i+1$ if not all half-edges are already killed.
\end{itemize}
\end{algo}
Algorithm~\ref{c2algo-expl} gives a \emph{breadth-first} exploration of the connected components of $\mathrm{CM}_n(\boldsymbol{d})$. Define the exploration process by
   \begin{equation}\label{c2defn:exploration:process}
    S_n(0)=0,\quad
     S_n(l)=S_n(l-1)+d_{(l)}J_l-2,
    \end{equation} where $J_l$ is the indicator that a new vertex is discovered at time $l$ and $d_{(l)}$ is the degree of the new vertex chosen at time $l$ when $J_l=1$.  Suppose $\mathscr{C}_{k}$ is the $k^{th}$ connected component explored by the above exploration process and define
$\tau_{k}=\inf \big\{ i:S_{n}(i)=-2k \big\}.$
Then  $\mathscr{C}_{k}$ is discovered between the times $\tau_{k-1}+1$ and $\tau_k$, and  $\tau_{k}-\tau_{k-1}-1$ gives the total number of edges in $\mathscr{C}_k$.
 Call a vertex \emph{discovered} if it is either active or killed. Let $\mathscr{V}_l$ denote the set of vertices discovered up to time $l$ and $\mathcal{I}_i^n(l):=\ind{i\in\mathscr{V}_l}$. Note that 
   \begin{equation}
    S_n(l)= \sum_{i\in [n]} d_i \mathcal{I}_i^n(l)-2l=\sum_{i\in [n]} d_i \left( \mathcal{I}_i^n(l)-\frac{d_i}{\ell_n}l\right)+\left( \nu_n(\lambda)-1\right)l.
   \end{equation} 
   Recall the notation in \eqref{c2eqn:notation-power*}. Define the re-scaled version $\bar{\mathbf{S}}_n$ of $\mathbf{S}_n$ by $\bar{S}_n(t)= a_n^{-1}S_n(\lfloor b_nt \rfloor)$. Then, by Assumption~\ref{c2assumption1}~\eqref{c2assumption1-3},
   \begin{equation} \label{c2eqn::scaled_process}
    \bar{S}_n(t)= a_n^{-1} \sum_{i\in [n]}d_i\left( \mathcal{I}_i^n(tb_n)-\frac{d_i}{\ell_n}tb_n \right)+\lambda t +o(1).
   \end{equation}Note the similarity between the expressions in \eqref{c2defn::limiting::process} and \eqref{c2eqn::scaled_process}. We will prove the following:
   \begin{theorem} \label{c2thm::convegence::exploration_process} Consider the process $\bar{\mathbf{S}}_n:= (\bar{S}_n(t))_{t\geq 0}$ defined in \eqref{c2eqn::scaled_process} and recall the definition of  $\bar{\mathbf{S}}_\infty:=  (\bar{S}_\infty(t))_{t\geq 0} $ from \eqref{c2defn::limiting::process}. Then, 
 \begin{equation}
  \bar{\mathbf{S}}_n \dto \bar{\mathbf{S}}_\infty
 \end{equation} with respect to the Skorohod $J_1$ topology.
\end{theorem}
 The proof of Theorem~\ref{c2thm::convegence::exploration_process} is completed by showing that the summation term in  \eqref{c2eqn::scaled_process} is predominantly carried by the first \emph{few} terms and the limit of the first few terms gives rise to the limiting process given in \eqref{c2defn::limiting::process}. 
 Fix $K\geq 1$ to be large. 
 Denote by $\mathscr{F}_l$ the sigma-field containing the information generated up to time $l$ by Algorithm~\ref{c2algo-expl}. 
 Also, let $\Upsilon_l$ denote the set of time points  up to time $l$ when a component was discovered and $\upsilon_l=|\Upsilon_l|$. 
 Note that we have lost $2(l-\upsilon_l)$ half-edges by time $l$. 
 Thus, on the set $\{\mathcal{I}_i^n(l)=0\}$,
  \begin{equation}\label{c2eqn:increment:indicator}
  \begin{split}
   \prob{\mathcal{I}_i^n(l+1)=1\big| \mathscr{F}_l}=
   \begin{cases} \frac{d_i}{\ell_n-2(l-\upsilon_l)-1} & \text{ if } l\notin \Upsilon_l,\\
    \frac{d_i}{\ell_n-2(l-\upsilon_l)} & \text{ otherwise } 
   \end{cases}
   \end{split}
  \end{equation}and,  uniformly over $l\leq Tb_n$, 
  \begin{equation}\label{c2eq:prob-ind}
  \prob{\mathcal{I}_i^n(l+1)=1\big| \mathscr{F}_l} \geq \frac{d_i}{\ell_n}\quad\text{ on the set } \{\mathcal{I}_i^n(l)=0\}.
  \end{equation} 
 Denote $
  M_n^K(l)=a_n^{-1} \sum_{i>K}d_i\big( \mathcal{I}_i^n(l)-\frac{d_i}{\ell_n}l \big).$ Then,
 \begin{equation}
 \begin{split}
  &\E\big[M_n^K(l+1)-M_n^K(l) \big| \mathscr{F}_l\big]=\E\bigg[\sum_{i=K+1}^na_n^{-1}d_i \left(\mathcal{I}^n_i(l+1)-\mathcal{I}_i^n(l)-\frac{d_i}{\ell_n}\right)\Big| \mathscr{F}_l\bigg]\\
  &= \sum_{i=K+1}^n a_n^{-1}d_i \left(\E\big[\mathcal{I}^n_i(l+1)\big| \mathscr{F}_l\big]\ind{\mathcal{I}_i^n(l)=0} - \frac{d_i}{\ell_n} \right)\geq 0.
  \end{split}
 \end{equation} Thus $(M_n^K(l))_{l= 1}^{Tb_n}$ is a sub-martingale.  Further, \eqref{c2eqn:increment:indicator} implies that, uniformly for all $l\leq Tb_n$,
  \begin{equation}\label{c2prob-ind-lb}
   \prob{\mathcal{I}_i^n(l)=0} \geq \left(1-\frac{d_i}{\ell_n'} \right)^l,
  \end{equation} where $\ell_n'=\ell_n-2Tb_n-1$.
  Thus, Assumption~\ref{c2assumption1}~\eqref{c2assumption1-2} gives
  \begin{equation} \begin{split}
    \big| &\E[M_n^K(l)]\big|= a_n^{-1} \sum_{i=K+1}^n d_i\left( \prob{\mathcal{I}_i^n(l)=1}-\frac{d_i}{\ell_n}l \right)
    \\& \leq a_n^{-1} \sum_{i=K+1}^n d_i\left( 1-\left(1-\frac{d_i}{\ell_n'} \right)^l-\frac{d_i}{\ell_n'}l \right)+a_n^{-1}l\sum_{i\in [n]}d_i^2\left(\frac{1}{\ell_n'}-\frac{1}{\ell_n}\right)\\
    &\leq \frac{l^2}{2\ell_n'^2 a_n } \sum_{i=K+1}^n d_i^3+o(1) 
    \\&\leq \frac{T^2n^{2\rho}n^{3\alpha}L(n)^3}{2\ell_n'^2L(n)^2n^{\alpha}L(n)}\left( a_n^{-3}\sum_{i=K+1}^{n}d_i^3\right)+o(1)\\
    &=C\bigg(a_n^{-3}\sum_{i=K+1}^{n}d_i^3\bigg)+o(1),
  \end{split}
  \end{equation} for some constant $C>0$, where we have used the fact that 
  \begin{eq}
  &a_n^{-1}l\sum_{i\in [n]}d_i^2\Big(\frac{1}{\ell_n'}-\frac{1}{\ell_n}\Big)=O(n^{2\rho+1-\alpha-2}/L(n)^3)\\
  &\hspace{.5cm}=O(n^{(\tau-4)/(\tau-1)}/L(n)^3)=o(1),
  \end{eq} uniformly for $l\leq Tb_n$. Therefore, uniformly over $l\leq Tb_n$,
  \begin{equation}\label{c2expectation::M_n^K}
   \lim_{K\to\infty}\limsup_{n\to\infty}\big| \E[M_n^K(l)]\big|=0.
  \end{equation} Now, note that for any $(x_1,x_2,\dots)$, $0\leq a+b \leq x_i$ and $a,b>0$ one has $\prod_{i=1}^R(1-a/x_i)(1-b/x_i)\geq \prod_{i=1}^R (1-(a+b)/x_i)$. Thus, by \eqref{c2eqn:increment:indicator}, for all $l\geq 1$ and $i\neq j$, 
  \begin{equation}\label{c2neg:correlation}
  \prob{\mathcal{I}_i^n(l)=0, \mathcal{I}_j^n(l)=0}\leq \prob{\mathcal{I}_i^n(l)=0}\prob{\mathcal{I}_j^n(l)=0}
  \end{equation} and therefore $\mathcal{I}_i^n(l)$ and $\mathcal{I}^n_j(l)$ are negatively correlated. Observe also that, uniformly over $l\leq Tb_n$, 
  \begin{eq}\label{c2var-ind-ub}
   &\var{\mathcal{I}_i^n(l)}\leq  \prob{\mathcal{I}_i^n(l)=1} \\
   &\leq \sum_{l_1=1}^l\prob{\text{vertex  }i \text{ is first discovered at stage }l_1 }\leq \frac{ld_i }{\ell_n'}.
  \end{eq}  
  Therefore, using the negative correlation in \eqref{c2neg:correlation}, uniformly over $l\leq Tb_n$, 
  \begin{equation} \label{c2variance::M_n^k}
   \begin{split}
    \var{M_n^K(l)}&\leq a_n^{-2}\sum_{i=K+1}^n d_i^2 \var{\mathcal{I}_i^n(l)} \leq \frac{l}{\ell_n'a_n^2}\sum_{i=K+1}^n d_i^3 \leq Ca_n^{-3}\sum_{i=K+1}^{n}d_i^3,
   \end{split}
  \end{equation}for some constant $C>0$ and by using Assumption~\ref{c2assumption1}~\eqref{c2assumption1-2} again,
  \begin{equation}
   \lim_{K\to\infty}\limsup_{n\to\infty} \var{M_n^K(l)}= 0,
  \end{equation}uniformly for $l\leq Tb_n$.
 Now we can use the super-martingale inequality \cite[Lemma 2.54.5]{RW94} stating that for any super-martingale $(M(t))_{t\geq 0}$, satisfying $M(0)=0$, 
 \begin{equation}\label{c2eqn:supmg:ineq}
  \varepsilon \prob{\sup_{s\leq t}|M(s)|>3\varepsilon}\leq 3\expt{|M(t)|}\leq 3\left(|\expt{M(t)}|+\sqrt{\var{M(t)}}\right).
 \end{equation}
  Using \eqref{c2expectation::M_n^K}, \eqref{c2variance::M_n^k}, and \eqref{c2eqn:supmg:ineq}, together with the fact that $(-M_n^K(l))_{l=1}^{Tb_n}$ is a super-martingale, we get, for any $\varepsilon >0 $,
 \begin{equation} \label{c2tail::martingale}
  \lim_{K\to\infty}\limsup_{n\to\infty}\PR\bigg(\sup_{l\leq Tb_n}|M_n^K(l)|> \varepsilon \bigg)=0.
 \end{equation}
Define the truncated exploration process
\begin{equation}\label{c2eqn::truncated_scaled_process}
  \bar{S}_n^K(t)= a_n^{-1} \sum_{i=1}^Kd_i\left( \mathcal{I}_i^n(tb_n)-\frac{d_i}{\ell_n}tb_n \right)+\lambda t.
\end{equation}
 Define $\mathcal{I}_i^n(tb_n)=\mathcal{I}_i^n(\floor{tb_n})$ and recall that $\mathcal{I}_i(s):=\ind{\xi_i\leq s }$ where $\xi_i\sim \mathrm{Exp}(\theta_i/\mu)$. 
 \begin{lemma} \label{c2lem::convergence_indicators}
   Fix any $K\geq 1$. As $n\to\infty$,
   \begin{equation}
    \left( \mathcal{I}_i^n(tb_n) \right)_{i\in[K],t\geq 0} \dto \left( \mathcal{I}_i(t) \right)_{i\in[K],t\geq 0}.
   \end{equation}
 \end{lemma}
\begin{proof} By noting that $(\mathcal{I}_i^n(tb_n))_{t\geq 0}$ are indicator processes, it is enough to show that 
\begin{equation}
 \prob{\mathcal{I}_i^n(t_ib_n)=0,\ \forall i\in [K]} \to \prob{\mathcal{I}_i(t_i)=0,\ \forall i\in [K]} = \exp \Big( -\mu^{-1}\sum_{i=1}^{K} \theta_it_i\Big).
\end{equation} for any $t_1,\dots,t_K\in \mathbb{R}$. Now, 
\begin{equation} \label{c2lem::eqn::expression1}
 \prob{\mathcal{I}_i^n(m_i)=0,\ \forall i\in [K]}=\prod_{l=1}^{\infty}\Big(1-\sum_{i\leq K:l\leq m_i}\frac{d_i}{\ell_n-\Theta(l)} \Big),
\end{equation}where the $\Theta(l)$ term arises from the expression in \eqref{c2eqn:increment:indicator} and noting that $\upsilon_l\leq l$. Taking logarithms on both sides of \eqref{c2lem::eqn::expression1} and using the fact that $l\leq \max m_i=\Theta(b_n)$ we get 
\begin{equation}\label{c2lem::eqn::ex1}
 \begin{split}
  \prob{\mathcal{I}_i^n(m_i)=0\, \forall i\in [K]}&= \exp\Big( - \sum_{l=1}^{\infty}\sum_{i\leq K:l\leq m_i} \frac{d_i}{\ell_n}+o(1) \Big)\\
  &= \exp\Big( -\sum_{i\in [K]} \frac{d_im_i}{\ell_n} +o(1) \Big).
 \end{split}
\end{equation} Putting $m_i=t_ib_n$, Assumption~\ref{c2assumption1}~\eqref{c2assumption1-1},~\eqref{c2assumption1-2} gives
\begin{equation} \label{c2lem::eqn::expression2}
 \frac{m_id_i}{\ell_n}= \frac{\theta_it_i}{\mu} (1+o(1)).
\end{equation}
Hence \eqref{c2lem::eqn::ex1} and \eqref{c2lem::eqn::expression2}  complete the proof of Lemma \ref{c2lem::convergence_indicators}.
\end{proof}

\begin{proof}[Proof of Theorem~\ref{c2thm::convegence::exploration_process}]
 The proof of Theorem~\ref{c2thm::convegence::exploration_process} now follows from \eqref{c2eqn::scaled_process}, \eqref{c2tail::martingale} and Lemma~\ref{c2lem::convergence_indicators} by first taking the limit as $n\to \infty$ and then taking the limit as $K\to\infty$.
\end{proof}

\begin{theorem}\label{c2thm:conv:refl:process} Recall the definition of $\refl{\bar{\mathbf{S}}_\infty}$ from \eqref{c2defn::reflected-Levy}. As $n\to\infty$,
\begin{equation}
 \refl{\bar{\mathbf{S}}_n} \dto \refl{\bar{\mathbf{S}}_\infty}.
\end{equation}
\end{theorem} 
\begin{proof}
 This follows from Theorem~\ref{c2thm::convegence::exploration_process} and the fact that the reflection is Lipschitz continuous with respect to the Skorohod $J_1$ topology (see \cite[Theorem 13.5.1]{W02}).
\end{proof}

%\section{Convergence of component sizes}\label{c2sec:conv-comp-size} In this section, we complete the proof of Theorem~\ref{c2thm::conv:component:size}. First, we prove a tail summability condition that ensures that the vector of ordered component sizes is tight in $\ell^2_{\shortarrow}$. 
This also implies that Algorithm~\ref{c2algo-expl} explores the \emph{large} components before time $Tb_n$ for large $T$. 
Next, we show that the function mapping an element of $\mathbb{D}[0,\infty)$ to its largest excursions, is continuous on a special subset $A$ of $\mathbb{D}[0,\infty)$ and the process $\refl{\bar{\mathbf{S}}_{\infty}}$ has sample paths in $A$ almost surely. Therefore, Theorem~\ref{c2thm::convegence::exploration_process} gives the scaling limit of the number of edges in the components ordered as a non-increasing sequence. 
Finally, we show that the number of surplus edges discovered up to time  $Tb_n$ are negligible and thus the convergence of the component sizes in Theorem~\ref{c2thm::conv:component:size} follows.

\subsection{Tightness of the component sizes}
\label{c2:tightness-comp-size}The following proposition establishes a uniform tail summability condition that is required for the tightness of the (scaled) ordered vector of component sizes with respect to the $\ell^2_{\shortarrow}$ topology: 
\begin{proposition}\label{c2prop-l2-tightness} For any $\varepsilon >0$,
\begin{equation}
 \lim_{K\to\infty}\limsup_{n\to\infty}\PR\bigg(\sum_{i>K}|\mathscr{C}_{\sss(i)}|^2>\varepsilon b_n^2\bigg)=0.
\end{equation}
\end{proposition}
Roughly speaking, the proof is based on the fact that the graph, obtained by removing a large number of high-degree vertices, yields a graph that approaches subcriticality. More precisely,  we prove Lemma~\ref{c2lem::tail_sum_squares} below to complete the proof of Proposition~\ref{c2prop-l2-tightness}. This fact is not true for the finite third-moment setting  \cite{DHLS15}. 
However, since the large-degree vertices guide the scaling behavior in the infinite third-moment case, the observation in Lemma~\ref{c2lem::tail_sum_squares} saves some computational complexity, and gives a different proof of the $\ell^2_{\shortarrow}$ tightness than the arguments with size-biased point processes originally described in \cite{A97}. 
\begin{lemma} \label{c2lem::tail_sum_squares} Consider $\mathrm{CM}_n(\boldsymbol{d})$ satisfying \textrm{Assumption~\ref{c2assumption1}}.
Let $\mathcal{G}^{\sss[K]}$ be the random graph obtained by removing all edges attached to vertices $1,\dots,K$ and let $\boldsymbol{d}'$ be the obtained degree sequence. Suppose $V_n$ is a random vertex of $\mathcal{G}^{\sss[K]}$ chosen independently of the graph and let $\mathscr{C}^{\sss[K]}(V_n)$ be the corresponding component. Let $\{\mathscr{C}_{\sss (i)}^{\sss [K]}:i\geq 1 \}$ be the components of $\mathcal{G}^{\sss [K]}$, ordered according to their sizes. Then,
\begin{equation}\label{c2expt-cvn-K-removed}
\lim_{K\to\infty} \limsup_{n\to\infty} c_n^{-1}\expt{|\mathscr{C}^{\sss [K]}(V_n)|}=0.
\end{equation}Consequently, for any $\varepsilon > 0$,
\begin{equation}\label{c2tail-sum-squares-K}
 \lim_{K\to\infty}\limsup_{n\to\infty} \PR\bigg(\sum_{i\geq 1}\big|\mathscr{C}_{\sss (i)}^{\sss [K]}\big|^2> \varepsilon b_n^2\bigg)=0.
\end{equation}
\end{lemma}

\begin{proof}
We make use of a result due to \citet{J09b} regarding bounds on the susceptibility functions for the configuration model. 
In fact, \cite[Lemma 5.2]{J09b} shows that, for any configuration model $\mathrm{CM}_n(\boldsymbol{d})$ with $\nu_n<1$, 
 \begin{equation}\label{c2bound::expt-cluster-size}
  \expt{|\mathscr{C}(V_n)|} \leq 1+\frac{\expt{D_n}}{1-\nu_n}.
 \end{equation}  Now, conditional on the set of removed half-edges, $\mathcal{G}^{\sss [K]}$ is still a configuration model with some degree sequence $\boldsymbol{d}'$ with $d_i'\leq d_i$ for all $i\in [n]\setminus [K]$ and $d_i'=0$ for $i\in [K]$. Further, the criticality parameter of $\mathcal{G}^{\sss [K]}$ satisfies 
 \begin{equation}\label{c2eqn:nu-K}
  \begin{split}
   \nu^{\sss [K]}_n&= \frac{\sum_{i\in [n]} d_i'(d'_i-1)}{\sum_{i\in [n]} d_i'}\leq \frac{\sum_{i\in [n]}d_i(d_i-1)-\sum_{i=1}^Kd_i(d_i-1)}{\ell_n-2\sum_{i=1}^Kd_i}\\
   &=\nu_n-C_1n^{2\alpha -1}L(n)^2\sum_{i\leq K}\theta_i^2=\nu_n-C_1c_n^{-1}\sum_{i\leq K}\theta_i^2
  \end{split}
 \end{equation}for some constant $C_1>0$.
 Since $\boldsymbol{\theta}\notin \ell^2_{\shortarrow}$, $K$ can be chosen large enough such that $\nu^{\sss[K]}_n < 1$ uniformly for all $n$. Also $\sum_{i\in [n]}d'_i=\ell_n+o(n)$ for each fixed $K$. Let $\mathbb{E}_K[\cdot]$ denote the conditional expectation, conditioned on the set of removed half-edges. Using \eqref{c2bound::expt-cluster-size} on $\mathcal{G}^{\sss [K]}$, we get
 \begin{equation}
 \begin{split}
  \mathbb{E}_K\big[|\mathscr{C}^{\sss[K]}(V_n)|\big]&\leq \frac{C_2}{1-\nu^{\sss[K]}_n}\leq \frac{C_2}{1-\nu_n+C_1c_n^{-1}\sum_{i\leq K}\theta_i^2}\leq  \frac{C_2c_n}{-\lambda+C_1\sum_{i\leq K}\theta_i^2},
 \end{split}
 \end{equation}for some constant $C_2>0$. Using the fact that $\boldsymbol{\theta}\notin \ell^2_{\shortarrow}$, this concludes the proof of \eqref{c2expt-cvn-K-removed}. The proof of \eqref{c2tail-sum-squares-K} follows from \eqref{c2expt-cvn-K-removed} by using the Markov inequality and the observation that
 \begin{equation} \label{c2comp-vs-rnd-choice}
 \mathbb{E}\bigg[\sum_{i\geq 1}|\mathscr{C}_{\sss (i)}^{\sss [K]}|^2\bigg]= n\expt{|\mathscr{C}^{\sss [K]}(V_n)|}.
 \end{equation}
\end{proof} 
\begin{proof}[Proof of Proposition~\ref{c2prop-l2-tightness}] Denote the sum of squares of the component sizes excluding the components containing vertices $1,2,\dots, K$ by $\mathscr{S}_K$.
 Note that \begin{equation}
 \sum_{i>K}|\mathscr{C}_{\sss (i)}|^2\leq \mathscr{S}_K\leq \sum_{i\geq 1} |\mathscr{C}_{\sss(i)}^{\sss [K]}|^2. 
 \end{equation}  Thus, Proposition~\ref{c2prop-l2-tightness} follows from Lemma~\ref{c2lem::tail_sum_squares}.
\end{proof}

\subsection{Large components are explored early}
An important consequence of Proposition~\ref{c2prop-l2-tightness} is that after time $\Theta(b_n)$, Algorithm~\ref{c2algo-expl} does not explore large components. The precise statement needed to complete our proof is given below. This is an essential ingredient to conclude the convergence of the component sizes from the convergence of the exploration process since Theorem~\ref{c2thm::convegence::exploration_process} only gives information about the components explored on the time scale of the order $b_n$.
\begin{lemma}\label{c2lem:no-large-comp-later} Let $\mathscr{C}_{\max}^{\sss \geq T}$ be the largest among those components which are started exploring after time $Tb_n$ by \textrm{Algorithm~\ref{c2algo-expl}}. Then, for any $\varepsilon >0$,
\begin{equation}
 \lim_{T\to\infty}\limsup_{n\to\infty}\prob{|\mathscr{C}_{\max}^{\sss \geq T}|>\varepsilon b_n}=0.
\end{equation}

\end{lemma}

\begin{proof}

 Define the event $$\mathscr{A}_{\sss K,T}^n:= \{\text{all the vertices of }[K] \text{ are explored before time }Tb_n\}.$$ Recall the definition of $\mathscr{C}_{\sss (i)}^{\sss [K]}$ from Lemma~\ref{c2lem::tail_sum_squares}. Firstly, note that 
 \begin{equation}\label{c2eq:CgeqT1}
  \prob{|\mathscr{C}_{\max}^{\sss \geq T}|>\varepsilon b_n, \ \mathscr{A}_{\sss K,T}^n}\leq \PR\bigg(\sum_{i\geq 1}\big|\mathscr{C}_{\sss (i)}^{\sss [K]}\big|^2> \varepsilon^2 b_n^2\bigg).
 \end{equation}Moreover, using \eqref{c2eqn:increment:indicator} and the fact that $d_jb_n=\Theta(n)$, we get 
 \begin{equation}\label{c2eq:CgeqT2}
  \begin{split} \prob{(\mathscr{A}_{\sss K,T}^{n})^c}&=\prob{\exists j\in [K]: j \text{ is not explored before }Tb_n}\\
  &\leq \sum_{j=1}^K \prob{j \text{ is not explored before }Tb_n}\\
  &\leq \sum_{j=1}^K\left(1-\frac{d_j}{\ell_n-\Theta(Tb_n)} \right)^{Tb_n}\leq \sum_{j=1}^K \e^{-CT},
  \end{split}
 \end{equation} where $C>0$ is a constant that may depend on $K$. Now, by \eqref{c2eq:CgeqT1},
 \begin{equation}
  \prob{|\mathscr{C}_{\max}^{\sss \geq T}|>\varepsilon b_n}\leq \PR\bigg(\sum_{i\geq 1}\big|\mathscr{C}_{\sss (i)}^{\sss [K]}\big|^2> \varepsilon^2 b_n^2\bigg) + \prob{(\mathscr{A}_{\sss K,T}^{n})^c}.
 \end{equation} The proof follows by taking $\limsup_{n\to\infty}$, $\lim_{T\to\infty}$, $\lim_{K\to\infty}$ respectively and using  \eqref{c2tail-sum-squares-K}, \eqref{c2eq:CgeqT2}. 
\end{proof}

%\subsection{Sample path properties} Recall the definition of an excursion from \eqref{c2def:excursion}. 
Define the set of excursions of a function $f$ by
\begin{equation}
\mathcal{E}:= \{(l,r): (l,r) \text{ is an excursion of }f\}.
\end{equation}  
We also denote  the set of excursion end-points by $\mathcal{Y}$, i.e.,
\begin{equation}
\mathcal{Y}:= \{r>0: (l,r)\in \mathcal{E}\}.
\end{equation}
\begin{defn}\label{c2defn::good_function}\normalfont A  function $f\in \mathbb{D}_+[0,T]$ is said to be \emph{good} if the following holds:
\begin{enumerate}[(a)]
\item  $\mathcal{Y}$ does not have an isolated point and the complement of $\cup_{(l,r)\in \mathcal{E}}(l,r)$ has Lebesgue measure zero;
\item $f$ does not attain a local minimum at any point of $\mathcal{Y}$.
\end{enumerate} 
\end{defn}

\begin{remark}\label{c2rem:cont-at-r} \normalfont We claim that if a function $f\in \mathbb{D}_+[0,T]$ is good, then $f$ is continuous on $\mathcal{Y}$. To see this, fix  any $\delta>0$ and denote the set of excursions of length at least $\delta$ by $\mathcal{E}_\delta$. Let $r$ be the excursion endpoint of an excursion in $\mathcal{E}_\delta$ and suppose that $f(r)>f(r-)$. Thus, there is no excursion endpoint in $(r-\delta,r)$. Moreover, since $f$ is right-continuous, there exists $\delta '>0 $ such that $f(x)>f(r-)+\varepsilon$ for all $x\in (r,r+\delta')$, where $\varepsilon = (f(r)-f(r-))/2>0$. Thus there is no excursion endpoint on $(r-\delta,r+\delta')$ and thus $r$ is an isolated point contradicting Definition~\ref{c2defn::good_function}. We conclude that $f$ is continuous at excursion endpoints of the excursions in $\mathcal{E}_{\delta}$, and since $\delta>0$ is arbitrary the claim is established.
\end{remark}
Let $\mathcal{L}_i(f)$ be the length of the $i^{th}$ largest excursion of $f$ and define $\Phi_m:\mathbb{D}_+[0,T]\to \mathbb{R}^m$ by 
 \begin{equation}
 \Phi_m(f)= (\mathcal{L}_1(f), \mathcal{L}_2(f), \dots, \mathcal{L}_m(f)).
 \end{equation}Note that $\Phi_m(\cdot)$ is well-defined for any good function  defined in  Definition~\ref{c2defn:good_function-infty}.  
 \begin{lemma}\label{c2lem::good:function:continuity} Suppose that $f\in \mathbb{D}_+[0,T]$ is good. Then, $\Phi_m$ is continuous at $f$ with respect to the subspace topology on $\mathbb{D}_+[0,T]$ induced by the Skorohod $J_1$ topology.
 \end{lemma}
\begin{proof}
 We extend the arguments of \cite[Proposition 22]{NP10b}. The proof  here is for $m=1$ and  similar arguments hold for $m>1$.  Let $\mathfrak{L}$ denote the set of continuous functions $\Lambda:\mathbb{R}_+\to\mathbb{R}_+$  that are strictly increasing and $\Lambda(0)=0, \Lambda(T)=T$.   Suppose $E_1=(l,r)$ is the longest excursion of $f$ on $[0,T]$, thus $\Phi_1(f)=r-l$. For any $\varepsilon > 0$ (small), choose $\delta >0$ such that 
 \begin{equation}\label{c2f-large-interval}
 f(x)> \min\{f(r-),f(r)\}+\delta\quad \forall x\in (l+\varepsilon,r-\varepsilon).
 \end{equation}
 Let $||\cdot||$ denote the sup-norm on $[0,T]$.  Take any sequence of functions $f_n\in \mathbb{D}_+[0,T]$ such that $f_n\to f$, i.e., there exists $\{\Lambda_n\}_{n\geq 1} \subset \mathfrak{L}$ such that for all large enough $n$, 
 \begin{equation}\label{c2f_n-f-close}
 ||f_n\circ \Lambda_n-f||< \frac{\delta}{6}\  \text{ and }\  ||\Lambda_n-I||< \varepsilon,
 \end{equation}where $I$ is the identity function. 
 Now, by Remark~\ref{c2rem:cont-at-r}, $f$ is continuous at $r$. This implies that $f(r-)=f(r)$, and using \eqref{c2f-large-interval} and \eqref{c2f_n-f-close}, for all large enough $n$,
 \begin{equation} \label{c2liminf_f_excursion}
  f_n(y)> f_n\circ \Lambda_n(r)+\frac{2\delta}{3}\quad \forall y \in (l+2\varepsilon, r-2\varepsilon).
 \end{equation}Further, using the continuity of $f$ at $r$, $f_n(r)\to f(r)$ and thus, for all sufficiently large $n$,
 \begin{equation}
  |f_n\circ\Lambda_n(r)-f_n(r)|\leq |f_n\circ\Lambda_n(r)-f(r)|+|f_n(r)-f(r)|< \frac{\delta}{3}.
 \end{equation} Hence, \eqref{c2liminf_f_excursion} implies that, for all sufficiently large $n$,
 \begin{equation}
  f_n(y)> f_n(r)+\frac{\delta}{3}\quad \forall y\in (l+2\varepsilon, r-2\varepsilon).
 \end{equation} Thus, for any $\varepsilon>0$, we have
\begin{equation}\label{c2eq:liminf-exc}
\liminf_{n\to\infty}\Phi_1(f_n)\geq r-l-4\varepsilon= \Phi_1(f)-4\varepsilon.
\end{equation}
Now we turn to a suitable upper bound on $\limsup_{n\to\infty}\Phi_1(f_n)$. First, we claim that one can find $r_1,\dots,r_k\in \mathcal{Y}$ such that $r_1\leq \Phi_1(f)+\varepsilon, T-r_k< \Phi_1(f)+\varepsilon, $ and $r_i-r_{i-1}\leq \Phi_1(f)+\varepsilon, \forall i=2, \dots,k$. The claim is a consequence of Definition~\ref{c2defn::good_function}~(a). 
 Now, Definition \ref{c2defn::good_function}~(b) implies that for any small $\varepsilon >0$, there exists $\delta >0$ and $x_i\in (r_i,r_i+\varepsilon)$ such that $f(r_i)-f(x_i)> \delta$ $\forall i$. Again, since $r_i$ is a continuity point of $f$, $f_n(r_i)\to f(r_i)$. Thus, using \eqref{c2f_n-f-close}, for all large enough $n$,
 \begin{equation}
  f_n(r_i)-f_n(\Lambda_n(x_i))> \frac{\delta}{2}.
 \end{equation}
 Now, $\Lambda_n(x_i)\in (r_i,r_i+\varepsilon)$ for all sufficiently large $n$, since $x_i\in (r_i,r_i+\varepsilon)$. Thus, for all large enough $n$, there exists a point $z_i^n\in (r_i,r_i+\varepsilon)$ such that
 \begin{equation}
  f_n(r_i)-f_n(z_i^n)> \frac{\delta}{2}.
 \end{equation}Also the function $f_n$ only has positive jumps and $\ubar{f}_n(r_i)\to \ubar{f}(r_i)$, as $\ubar{f}_n$ is continuous, where we recall that $\ubar{f}(x)=\inf_{y\leq x}f(y)$. Therefore, $f_n$ must have an excursion end point on $(r_i,r_i+\varepsilon)$ for all large enough $n$. Also, using the fact that the complement of $\cup_{(l,r)\in \mathcal{E}}(l,r)$ has Lebesgue measure zero, $f$ has an excursion endpoint  $r_i^0\in(l_i-\varepsilon,l_i)$. The previous argument shows that $f_n$ has to have an excursion endpoint in $(r_i^0, r_i^0+\varepsilon)$ and thus in $(l_i-\varepsilon,l_i+\varepsilon)$, for all large $n$. Therefore, for any $\varepsilon >0$,
 \begin{equation}\label{c2eq:limsup-exc}
 \limsup_{n\to\infty} \Phi_1(f_n)\leq \Phi_1(f)+3\varepsilon.
\end{equation}  Hence the proof follows from  \eqref{c2eq:liminf-exc} and \eqref{c2eq:limsup-exc}.
\end{proof}

\begin{remark}\label{c2rem:excursion-area-cont} \normalfont For $f\in\mathbb{D}_{+}[0,T]$, let $\mathcal{A}_i(f)$ denote the area under the excursion $\mathcal{L}_i(f)$. Let $(f_n)_{n\geq 1}$ be a sequence of functions on $f\in\mathbb{D}_{+}[0,\infty)$ such that $f_n\to f$, with respect to the Skorohod $J_1$ topology, where $f$ is good. 
Then, \eqref{c2f_n-f-close}, \eqref{c2eq:liminf-exc} and \eqref{c2eq:limsup-exc} also implies that $(\mathcal{A}_1(f_n),\dots,\mathcal{A}_m(f_n))$ converges to $(\mathcal{A}_1(f),\dots,\mathcal{A}_m(f))$, for any $m\geq 1$.
\end{remark}

\begin{defn}\label{c2defn:good_function-infty}\normalfont A stochastic process $\mathbf{X}\in \mathbb{D}_+[0,\infty)$ is said to be good if 
\begin{enumerate}[(a)]
\item The sample paths are good almost surely when restricted to $[0,T]$, for every fixed $T>0$;
\item $\mathbf{X}$ does not have an infinite excursion almost surely;
\item For any $\varepsilon >0$, $\mathbf{X}$ has only finitely many excursions of length more than $\varepsilon$ almost surely.
\end{enumerate} 
\end{defn}
\begin{lemma}\label{c2lem:levy-as-good}The thinned L\'evy process $\mathbf{S}_\infty^\lambda$ defined in \eqref{c2defn::limiting::process} is good.
\end{lemma}
\begin{proof}
 Let us make use of the properties of the process $\mathbf{S}_\infty^\lambda$ that were established in \citep{AL98}. $\mathbf{S}_\infty^\lambda$ satisfies Definition~\ref{c2defn:good_function-infty}~(b),(c) by \cite[(8)]{AL98}.   The fact that the excursion endpoints of $\mathbf{S}_\infty^\lambda$ do not have any isolated points almost surely follows directly from \citep[Proposition 14 (d)]{AL98}. Further,  \citep[Proposition 14 (b)]{AL98} implies that, for any $u>0$, $\prob{S_\infty^\lambda(u)=\inf_{u'\leq u}S_\infty^\lambda(u')}=0$. Taking the integral with respect to the Lebesgue measure and interchanging the limit by using Fubini's theorem, we conclude that almost surely 
 \begin{equation}
  \int_0^T \ind{S_\infty^\lambda(u)=\inf_{u'\leq u}S_\infty^\lambda(u')}\mathrm{d}u=0,
 \end{equation}which verifies Definition~\ref{c2defn::good_function}~(a). Now, let $\mathbf{L}$ be the L\'evy process defined as
 \begin{equation}\label{c2defn:perfect-levy}
  L(t)=\sum_{i=1}^{\infty} \theta_i\left(\mathcal{N}_i(t)- (\theta_i/\mu)t\right)+\lambda t,
 \end{equation}where $(\mathcal{N}_i(t))_{t\geq 0}$ is a Poisson process with rate $\theta_i$ which are independent for different $i$. Via the natural coupling that states $\mathcal{I}_i(t)\leq \mathcal{N}_i(t)$, we can assume that $S_\infty^\lambda(t)\leq L(t)$ for all $t>0$. Using \cite[Theorem VII.1]{Ber96},
 \begin{equation}\label{c2defn:perfect-levy-1}
  \inf \{t>0: L(t)<0\}=0, \quad \text{almost surely.}
 \end{equation}
 Moreover, for any stopping time $\mathcal{T}>0$, $(S_\infty^\lambda(\mathcal{T}+t)-S_\infty^\lambda(\mathcal{T}))_{t\geq 0}$, conditioned on the sigma-field $\sigma (S_\infty^\lambda(s):s\leq \mathcal{T})$, is distributed as a process  defined in \eqref{c2defn::limiting::process} for some random $\boldsymbol{\theta}$ and $\Lambda$.  Now we can take $\mathcal{T}$ to be an excursion endpoint and  an application of \eqref{c2defn:perfect-levy-1} verifies Definition~\ref{c2defn::good_function}~(b).
\end{proof}

%\subsection{Finite-dimensional convergence}
As described in Section~\ref{c2sec:conv-expl}, the excursion lengths of the exploration process $\bar{\mathbf{S}}_n$ gives the total number of edges in the explored components. \linebreak 
Lemma~\ref{c2lem:surp:poisson-conv} below estimates the number of surplus edges in the components explored upto time $\Theta(b_n)$. This enables us to compute the scaling limits for the component sizes using the results from the previous section and complete the proof of Theorem~\ref{c2thm::conv:component:size}.
\begin{lemma} \label{c2lem:surp:poisson-conv} Let $N_n^\lambda(k)$ be the number of surplus edges discovered up to time $k$ and $\bar{N}^\lambda_n(u) = N_n^\lambda(\lfloor ub_n \rfloor)$. Then, as $n\to\infty$,
 \begin{equation}
 (\bar{\mathbf{S}}_n,\bar{\mathbf{N}}_n^\lambda)\dto (\mathbf{S}_{\infty}^\lambda,\mathbf{N}^\lambda),
 \end{equation} where $\mathbf{N}^\lambda$ is defined in \eqref{c2defn::counting-process}.
 \end{lemma}
 \begin{proof}
  We write $
N_n^{\lambda}(l)=\sum_{i=2}^l \xi_i$,
where $\xi_i=\ind{\mathscr{V}_i=\mathscr{V}_{i-1}}$. Let $A_i$ denote the number of active half-edges after stage $i$ while implementing Algorithm~\ref{c2algo-expl}. Note that 
\begin{equation}
 \prob{\xi_i=1\vert \mathscr{F}_{i-1}}=\frac{A_{i-1}-1}{\ell_n-2i-1}= \frac{A_{i-1}}{\ell_n}(1+O(i/n))+O(n^{-1}),
\end{equation} uniformly for $i\leq Tb_n$ for any $T>0$. 
Therefore, the instantaneous rate of change of the re-scaled process $\bar{\mathbf{N}}^{\lambda}$ at time $t$, conditional on the past, is 
\begin{equation}\label{c2eqn:intensity}
 b_n\frac{A_{\floor{tb_n}}}{n\mu}\left( 1+o(1)\right) +o(1)= \frac{1}{\mu}\refl{\bar{S}_n(t)}\left( 1+o(1)\right) +o(1).
\end{equation}
Recall from  Theorem~\ref{c2thm:conv:refl:process} that $\mathrm{refl}(\bar{\mathbf{S}}_n)\dto \mathrm{refl}(\bar{\mathbf{S}}_\infty)$.  Then, by the Skorohod representation theorem, we can assume that $\mathrm{refl}(\bar{\mathbf{S}}_n)\to \mathrm{refl}(\bar{\mathbf{S}}_\infty)$ almost surely on some probability space. Observe that $(\int_{0}^t \refl{\bar{S}_{\infty}(u)}du)_{t\geq 0}$ has continuous sample paths. Therefore, the conditions of \cite[Corollary 1, Page 388]{LS89} are satisfied and the proof is complete.
 \end{proof}
\begin{theorem} \label{c2thm::component-sizes-finite-dim}
 For any $m\geq 1$, as $n\to \infty$
 \begin{equation}
  b_n^{-1}\big( |\mathscr{C}_{\sss (1)}|, |\mathscr{C}_{\sss (2)}|, \dots, |\mathscr{C}_{\sss(m)}|\big) \dto (\gamma_1(\lambda), \gamma_2(\lambda), \dots, \gamma_m(\lambda))
 \end{equation} with respect to the product topology, where $\gamma_i(\lambda)$ is the $i^{th}$ largest excursion of $\bar{\mathbf{S}}_{\infty}$ defined in \eqref{c2defn::limiting::process}.
\end{theorem}
\begin{proof}
Fix any $m\geq 1$. Let $\mathscr{C}_{\sss (i)}^{\sss T}$ be the $i^{th}$ largest component explored by Algorithm~\ref{c2algo-expl} up to time $Tb_n$. Denote by $\mathscr{D}_{\sss (i)}^{\sss \mathrm{ord},T}$ the $i^{th}$ largest value of $(\sum_{k\in \mathscr{C}_{\sss(i)}^{\sss T}}d_k)_{i\geq 1}$. Let $g:\R^m\mapsto \R$ be a bounded continuous function. By Lemma~\ref{c2lem:levy-as-good} the sample paths of $\bar{\mathbf{S}}_{\infty}$ are almost surely good. Thus, using Theorem~\ref{c2thm::convegence::exploration_process}, Lemma~\ref{c2lem::good:function:continuity} gives
\begin{eq}
 \lim_{n\to\infty} &\expt{g\Big((2b_n)^{-1}\big( \mathscr{D}_{\sss(1)}^{\sss\mathrm{ord},T}, \mathscr{D}_{\sss(2)}^{\sss\mathrm{ord},T}, \dots, \mathscr{D}_{\sss(m)}^{\sss\mathrm{ord},T}\big)\Big)} \\
 &= \expt{g\big(\gamma_1^{\sss T}(\lambda), \gamma_2^{\sss T}(\lambda), \dots, \gamma_m^{\sss T}(\lambda)\big)},
\end{eq}where $\gamma_i^{\sss T}(\lambda)$ is the $i^{th}$ largest excursion of $\bar{\mathbf{S}}_{\infty}$ restricted to $[0,T]$. Now the support of the joint distribution of $(\gamma_i^{\sss T}(\lambda))_{i\geq 1}$ is concentrated on$\{(x_1,x_2,\dots): x_1>x_2>\dots\}$. Thus, using Lemma~\ref{c2lem:surp:poisson-conv}, it follows that 
\begin{equation}\label{c2finite-dim-D}
\lim_{n\to\infty} \expt{g\Big(b_n^{-1}\big( |\mathscr{C}_{\sss (1)}^{\sss T}|, |\mathscr{C}_{\sss (2)}^{\sss T}|, \dots, |\mathscr{C}_{\sss (m)}^{\sss T}|\big)\Big)} = \expt{g\big(\gamma_1^{\sss T}(\lambda), \gamma_2^{\sss T}(\lambda), \dots, \gamma_m^{\sss T}(\lambda)\big)}.
\end{equation}Since $\mathbf{S}_{\infty}^{\lambda}$ satisfies Definition~\ref{c2defn:good_function-infty}~(b), (c), it follows that
\begin{equation}\label{c2no-inf-exc}
 \lim_{T\to\infty}\expt{g\big(\gamma_1^{\sss T}(\lambda), \gamma_2^{\sss T}(\lambda), \dots, \gamma_m^{\sss T}(\lambda)\big)}=\expt{g\big(\gamma_1(\lambda), \gamma_2(\lambda), \dots, \gamma_m(\lambda)\big)}
\end{equation}
 Finally, using Lemma~\ref{c2lem:no-large-comp-later}, the proof of Theorem~\ref{c2thm::component-sizes-finite-dim} is completed by \eqref{c2finite-dim-D} and \eqref{c2no-inf-exc}.

\end{proof}

\begin{proof}[Proof of Theorem~\ref{c2thm::conv:component:size}]
The proof of Theorem~\ref{c2thm::conv:component:size} now follows directly from Theorem~\ref{c2thm::component-sizes-finite-dim} and Proposition~\ref{c2prop-l2-tightness}.
\end{proof}

\section{Proof of Theorem~\ref{c2thm:spls}}\label{c2sec:surplus}
 The goal of this section is to prove the joint convergence of the component sizes and the surplus edges as described in Theorem~\ref{c2thm:spls}. We start with a preparatory lemma:
\begin{lemma}\label{c2lem:spls:prod} The convergence in \eqref{c2thm:eqn:spls} holds with respect to the $\ell^2_{\shortarrow}\times \mathbb{N}^{\infty}$ topology.
\end{lemma}
\begin{proof}
 Note that Lemma~\ref{c2lem:no-large-comp-later} already states that we do not see large components being explored after the time  $Tb_n$ for large $T>0$. Thus the proof is a consequence of Lemmas~\ref{c2lem::good:function:continuity},~\ref{c2lem:surp:poisson-conv}, Remark~\ref{c2rem:excursion-area-cont} and Theorem~\ref{c2thm::conv:component:size}.
\end{proof}
Recall the definition of the metric $\mathrm{d}_{\sss \mathbb{U}}$ from Chapter~\ref{c1:sub_sec_notation}. 
Using Lemma~\ref{c2lem:spls:prod}, it now remains to obtain a uniform summability condition on the tail of the sum of products of the scaled component sizes and surplus edges. This is formally stated in Proposition~\ref{c2prop-surp-u-0} below. The proof is completed in the similar spirit as the finite third-moment case \cite{DHLS15}.
 \begin{proposition}\label{c2prop-surp-u-0} For any $\varepsilon >0$,
 \begin{equation}
 \lim_{\delta\to 0}\limsup_{n\to\infty} \PR\bigg(\sum_{i: |\mathscr{C}_{(i)}|\leq \delta b_n }|\mathscr{C}_{\sss(i)}|\times \surp{\mathscr{C}_{\sss(i)}}> \varepsilon b_n\bigg)=0.
 \end{equation}
 \end{proposition} The following estimate will be the crucial ingredient to complete the proof of Proposition~\ref{c2prop-surp-u-0}. The proof of Lemma~\ref{c2lem:sp-cv-n} is postponed to Appendix~\ref{c2sec_appendix} since this uses similar ideas as \cite{DHLS15}.
\begin{lemma} \label{c2lem:sp-cv-n}
Assume that $\limsup_{n\to\infty} c_n(\nu_n-1)<0$. Let $V_n$ denote a vertex chosen uniformly at random, independently of the graph $\mathrm{CM}_n(\boldsymbol{d})$ and let $\mathscr{C}(V_n)$ denote the component containing $V_n$.  Let $\delta_k=\delta k^{-0.12}$. Then, for $\delta > 0$ sufficiently small,
\begin{equation}
 \prob{\surp{\mathscr{C}(V_n)}\geq K,|\mathscr{C}(V_n)|\in (\delta_K b_n,2\delta_Kb_n)}\leq \frac{C\sqrt{\delta}}{a_nK^{1.1}}
\end{equation}
 where $C$ is a fixed constant independent of $n,\delta, K$. 
 \end{lemma}

\begin{proof}[Proof of Proposition~\ref{c2prop-surp-u-0} using Lemma~\ref{c2lem:sp-cv-n}]First consider the case $\lambda<0$. Fix any $\varepsilon, \eta >0$. Note that
\begin{equation}\label{c2eq:u-0-calc}
 \begin{split}
  &\PR\bigg( \sum_{|\mathscr{C}_{\sss (i)}|\leq \varepsilon b_n} |\mathscr{C}_{\sss (i)}|\surp{\mathscr{C}_{\sss (i)}}> \eta b_n \bigg)\leq \frac{1}{\eta b_n}\E \bigg[\sum_{i=1}^{\infty}|\mathscr{C}_{\sss (i)}| \surp{\mathscr{C}_{\sss (i)}} \1_{\{ |\mathscr{C}_{\sss (i)}|\leq \varepsilon b_n\}} \bigg]
  \\&= \frac{a_n}{\eta}\expt{\mathrm{SP}(\mathscr{C}(V_n))\1_{\{ |\mathscr{C}(V_n)|\leq \varepsilon b_n\}}}\\
  &= \frac{a_n}{\eta}\sum_{k=1}^{\infty}\sum_{i\geq \log_2(1/(k^{0.12}\varepsilon))}\PR\bigg(\mathrm{SP}(\mathscr{C}(V_n))\geq k, |\mathscr{C}(V_n)|\in \Big(\frac{b_n}{2^{i+1}k^{0.12}}, \frac{b_n}{2^{i}k^{0.12}}\Big] \bigg)\\
  &\leq \frac{C}{\eta} \sum_{k=1}^{\infty}\frac{1}{k^{1.1}}\sum_{i\geq \log_2(1/(k^{0.12}\varepsilon))} 2^{-i/2} \leq \frac{C}{\eta}\sum_{k=1}^{\infty}\frac{\sqrt{\varepsilon}}{k^{1.04}}  =O(\sqrt{\varepsilon}),
 \end{split}
\end{equation} where the last-but-two step follows from Lemma~\ref{c2lem:sp-cv-n}.
 The proof of Proposition~\ref{c2prop-surp-u-0} now follows for $\lambda <0$. 
\par Now consider the case $\lambda > 0$. Fix a large integer $R\geq 1$ such that $\lambda - \sum_{i=1}^R\theta_i^2<0$. This can be done because $\boldsymbol{\theta}\notin \ell^{2}_{\shortarrow}$. Using \eqref{c2eq:CgeqT1}, for any $\eta>0 $, it is possible to choose $T>0$ such that for all sufficiently large $n$,
\begin{equation}\label{c2eq:early-expl}
 \prob{\text{all the vertices }1,\dots,R \text{ are explored within time }Tb_n }> 1-\eta.
\end{equation} Let $T_e$ denote the first time after $Tb_n$ when we finish exploring a component. By Theorem~\ref{c2thm::convegence::exploration_process}, $(b_n^{-1}T_e)_{n\geq 1}$ is a tight sequence. Let $\mathcal{G}^*_T$ denote the graph obtained by removing the components explored up to time $T_e$. Then, $\mathcal{G}^*_T$ is again a configuration model conditioned on its degrees. Let $\nu_n^*$ denote the value of the criticality parameter for $\mathcal{G}^*$. Note that 
\begin{equation}
 \sum_{i\notin \mathscr{V}_{\sss T_e}}d_i\geq \ell_n-2Tb_n \implies  \sum_{i\notin \mathscr{V}_{\sss T_e}}d_i = \ell_n+\oP(n),
\end{equation}and thus conditionally on $\mathscr{F}_{\sss T_e}$ and the fact that $(1,\dots, R)$ are explored within time $Tb_n$,
\begin{equation}\label{c2ub-nu*}
 \begin{split}
 \nu_n^*\leq \frac{\sum_{i\in [n]}d_i^2-\sum_{i=1}^Rd_i^2}{\sum_{i\notin \mathscr{V}_{\sss T_e}}d_i}-1=1+c_n^{-1}\big(\lambda-\sum_{i=1}^R\theta_i^2\big) +o(c_n^{-1}).
 \end{split}
\end{equation}
Therefore, combining \eqref{c2eq:early-expl}, \eqref{c2ub-nu*}, we can use Lemma~\ref{c2lem:sp-cv-n}  on $\mathcal{G}^*_T$ since $c_n(\nu_n^*-1)<0$. Thus, if $\mathscr{C}_{\sss(i)}^*$ denotes the $i^{th}$ largest component of $\mathcal{G}_T^*$, then
\begin{equation}\label{c2surplus-positivelamba1} \lim_{T\to\infty}\lim_{\delta\to 0}\limsup_{n\to\infty}\PR\bigg(\sum_{i: |\mathscr{C}_{(i)}^*|\leq \delta b_n }|\mathscr{C}_{\sss(i)}^*|\times \surp{\mathscr{C}_{\sss(i)}^*}> \varepsilon b_n\bigg)=0.
\end{equation} To conclude the proof for the whole graph $\mathrm{CM}_n(\boldsymbol{d})$ (with $\lambda >0$), let $$\mathcal{K}_n^T:=\{i:|\mathscr{C}_{\sss(i)}|\leq \delta b_n, |\mathscr{C}_{\sss(i)}| \text{ is explored before the time }T_e  \}.$$ Note that
\begin{equation}
 \begin{split}
  \sum_{i \in \mathcal{K}_n^T}|\mathscr{C}_{\sss (i)}|\cdot \mathrm{SP}(\mathscr{C}_{\sss (i)})&\leq \Big( \sum_{i\in \mathcal{K}_n}|\mathscr{C}_{\sss (i)}|^2\Big)^{1/2}\times \Big(\sum_{i\in \mathcal{K}_n}\mathrm{SP}(\mathscr{C}_{\sss (i)})^2 \Big)^{1/2}\\
  &\leq  \bigg( \sum_{|\mathscr{C}_{\sss (i)}|\leq \delta b_n}|\mathscr{C}_{\sss (i)}|^2\bigg)^{1/2}\times \mathrm{SP}(T_e),
 \end{split}
\end{equation}where $\mathrm{SP}(t)$ is the number of surplus edges explored up to time $tb_n$ and we have used the fact that $\sum_{i\in\mathcal{K}_n}\mathrm{SP}(\mathscr{C}_{\sss (i)})^2\leq (\sum_{i\in\mathcal{K}_n}\mathrm{SP}(\mathscr{C}_{\sss (i)}))^2\leq \mathrm{SP}(T_e)^2$. From Lemma~\ref{c2lem:surp:poisson-conv} and Proposition~\ref{c2prop-l2-tightness} we can conclude that for any $T>0$,
\begin{equation}\label{c2surplus-positivelamba2}
 \lim_{\delta\to 0}\limsup_{n\to\infty}\PR\bigg(\sum_{i \in \mathcal{K}_n^T}|\mathscr{C}_{\sss (i)}|\cdot \mathrm{SP}(\mathscr{C}_{\sss (i)})>\varepsilon b_n\bigg)=0.
\end{equation}
  The proof is now complete for the case $\lambda > 0$ by combining \eqref{c2surplus-positivelamba1} and \eqref{c2surplus-positivelamba2}.
\end{proof}
\section{Proof for simple graphs} \label{c2sec:simple-graphs}
In this section, we give a proof of Theorem~\ref{c2thm:simple-graph}. Let $\PR_s(\cdot)$ (respectively $\E_s[\cdot]$) denote the probability measure (respectively the expectation) conditionally on the graph $\mathrm{CM}_n(\boldsymbol{d})$ being simple. For any process $\mathbf{X}$ on $\mathbb{D}([0,\infty),\R)$, we define $\mathbf{X}^T:=(X(t))_{t\leq T}$. Thus the truncated process $\mathbf{X}^T$ is $\mathbb{D}([0,T],\R)$-valued. Now, by \cite[Theorem 1.1]{J09c}, $\liminf_{n\to\infty}\PR(\mathrm{CM}_n(\boldsymbol{d})\text{ is simple})>0$. This fact ensures that, under the conditional measure $\PR_s$,  $(b_n^{-1}|\mathscr{C}_{\sss (i)}|)_{i\geq 1}$ is tight with respect to the $\ell^2_{\shortarrow}$ topology.
 Therefore, to conclude Theorem~\ref{c2thm:simple-graph}, it suffices to show that the exploration process $\bar{\mathbf{S}}_n$, defined in \eqref{c2eqn::scaled_process},  has the  same limit (in distribution) under $\PR_s$ as obtained in Theorem~\ref{c2thm::convegence::exploration_process} so that the finite-dimensional limit of $(b_n^{-1}|\mathscr{C}_{\sss (i)}|)_{i\geq 1}$ remains unchanged under $\PR_s$. Thus, it is enough to show that for any bounded continuous function $f:\mathbb{D}([0,T],\R)\mapsto\R$,
\begin{equation}\label{c2diff:expt:bounded:cont}
 \big| \E[f(\bar{\mathbf{S}}_n^T)]-\E_s[f(\bar{\mathbf{S}}_n^T)]\big|\to 0.
\end{equation}
Let $\ell_n':=\ell_n-2Tb_n$. We first estimate the number of multiple edges or self-loops discovered in the graph up to time $Tb_n$. 
Let $v_l$ denote the \emph{exploring} vertex in the breadth-first exploration given by Algorithm~\ref{c2algo-expl}, $d_{v_l}$  the degree of $v_l$ and  $(e_1,\dots,e_r)$ the ordered set of active half-edges of $v_l$ when $v_l$ is declared to be exploring. Note that, for $l\leq Tb_n$, $e_i$ creates a self-loop with probability at most $(d_{v_l}-i)/\ell_n'$ and creates a multiple edge with probability at most $(i-1)/\ell_n'$. Therefore,
\begin{equation}
 \expt{\#\{\text{self-loops/multiple edges discovered while exploring}v_l\}|\mathscr{F}_{l-1}}\leq \frac{2d_{v_l}^2}{\ell_n'}.
\end{equation}Thus, for any $T>0$,
\begin{equation}
\begin{split}
 &\expt{\#\{\text{self-loops or multiple edges discovered up to time }Tb_n}\\
 &\leq \frac{2}{\ell_n'}\E\bigg[\sum_{i\in [n]}d_i^2\mathcal{I}^n_i(Tb_n)\bigg]\\
 &=\frac{2}{\ell_n'}\E\bigg[\sum_{i=1}^K d_i^2\mathcal{I}^n_i(Tb_n)\bigg]+\frac{2}{\ell_n'}\E\bigg[\sum_{i=K+1}^nd_i^2\mathcal{I}^n_i(Tb_n)\bigg],
 \end{split}
\end{equation}where $\mathcal{I}^n_i(l)=\ind{i\in \mathscr{V}_l}$. Now, using Assumption~\ref{c2assumption1}~\eqref{c2assumption1-1}, for every fixed $K\geq 1$,
\begin{equation}
 \frac{2}{\ell_n'}\E\bigg[\sum_{i=1}^K d_i^2\mathcal{I}^n_i(Tb_n)\bigg]\leq \frac{2}{\ell_n'} \sum_{i=1}^K d_i^2 \to 0,
\end{equation} since $2\alpha-1<0$. Moreover, recall from  \eqref{c2eq:prob-ind} that $\prob{\mathcal{I}^n_i(Tb_n)=1}\leq Tb_nd_i/\ell_n'$. Therefore, for some constant $C>0$,
\begin{equation}
 \begin{split} 
  \frac{2}{\ell_n'}\E\bigg[\sum_{i=K+1}^nd_i^2\mathcal{I}_i^n(Tb_n)\bigg]\leq \frac{Tb_n}{\ell_n'^2}\sum_{i=K+1}^nd_i^3\leq C \bigg( a_n^{-3}\sum_{i=K+1}^nd_i^3 \bigg), 
 \end{split}
\end{equation}which, by Assumption~\ref{c2assumption1}~\eqref{c2assumption1-2}, tends to zero if we first take $\limsup_{n\to\infty}$ and then take $\lim_{K\to\infty}$. Consequently, for any fixed $T>0$, as $n\to\infty$,
\begin{equation}
 \prob{\text{at least one self-loop or multiple edge is discovered before time }Tb_n}\to 0.
\end{equation}
Now,
 \begin{equation}\label{c2simple-after-Tbn-enough}
 \begin{split}
  &\expt{f(\bar{\mathbf{S}}_n^T) \ind{\CM \text{ is simple}}}\\
  & =\expt{f(\bar{\mathbf{S}}_n^T) \ind{\text{no self-loops or multiple  edges found after }Tb_n}}+o(1)\\
  &= \expt{f(\bar{\mathbf{S}}_n^T) \prob{\text{no self-loops or multiple  edges found after  }T b_n\vert \mathscr{F}_{\sss Tb_n}}} +o(1).
  \end{split}
 \end{equation}Define, $T_e=\inf\{l\geq Tb_n: \text{a component is finished exploring at time } l\}$. Using the fact that $(b_n^{-1}T_e)_{n\geq 1}$ is a tight sequence, the limit of the expected number of loops or multiple edges discovered between time $Tb_n$ and $T_e$ is again zero. As in the proof of Proposition~\ref{c2prop-surp-u-0}, consider the graph $\mathcal{G}^*$, obtained by removing the components obtained up to time $T_e$. Thus, $\mathcal{G}^*$ is a configuration model, conditioned on its degree sequence. Let $\nu^*_n$ be the criticality parameter. Then, we claim that $\nu^*_n\xrightarrow{\sss \PR} 1$. To see this note that $\sum_{i\notin \mathscr{V}_{\sss T_e}}d_i=\ell_n+\oP(n)$. 
Further, note that by Assumption~\ref{c2assumption1}~\eqref{c2assumption1-2} \eqref{c2eqn:increment:indicator}, for any $t>0$,
\begin{equation}\label{c2finite-expt-di2}
\begin{split}
\limsup_{n\to\infty}  \E\bigg[a_n^{-2}\sum_{i\in [n]}d_i^2\mathcal{I}_i(tb_n)\bigg] \leq\limsup_{n\to\infty} a_n^{-2}tb_n\frac{\sum_{i\in [n]}d_i^3}{\ell_n-2tb_n}<\infty,
\end{split}
\end{equation} 
which implies that  
 $\sum_{i\notin \mathscr{V}_{\sss T_e}}d_i^2=\sum_{i\in [n]}d_i^2+\oP(n)$ and thus the claim is proved. Since the degree distribution has finite second moment, using  \cite[Theorem 7.11]{RGCN1} we get \begin{equation}\label{c2prob-of-simple}
\prob{\mathcal{G}^* \text{ is simple}\Big\vert\mathscr{F}_{\sss T_e}}\pto \e^{-3/4}.
\end{equation}Now using  \eqref{c2simple-after-Tbn-enough}, \eqref{c2prob-of-simple} and the dominated convergence theorem, we conclude that
\begin{equation}
 \expt{f(\bar{\mathbf{S}}_n^T) \ind{\CM \text{ is simple}}}=  \expt{f(\bar{\mathbf{S}}_n^T)}\prob{\CM \text{ is simple}}+o(1).
\end{equation}Therefore, \eqref{c2diff:expt:bounded:cont} follows and the proof of Theorem~\ref{c2thm:simple-graph} is complete.
\qed
\section{Scaling limits for component functionals}\label{c2sec:comp-functional}
Suppose that vertex $i$ has an associated weight $w_i$. The total weight of the component $\mathscr{C}_{\sss (i)}$ is denoted by $\mathscr{W}_i = \sum_{k\in \mathscr{C}_{\sss (i)}} w_k$. The goal of this section is to derive the scaling limits for $(\mathscr{W}_i )_{i\geq 1}$ when the weight sequence satisfies some regularity conditions given below:
\begin{assumption}\label{c2assumption-weight} \normalfont The weight sequences $\bld{w} = (w_i)_{i\in [n]}$ satisfies
\begin{enumerate}[(i)]
\item \label{c2assumption-weight-1} $\sum_{i\in [n]}w_i = O(n)$, and  $\lim_{n\to\infty}\frac{1}{\ell_n}\sum_{i\in [n]} d_i w_i = \mu_{w}$.
\item \label{c2assumption-weight-2} $\max\{\sum_{i\in [n]}d_iw_i^2,\sum_{i\in [n]}d_i^2w_i\} = O(a_n^3)$.
\end{enumerate}
\end{assumption}
\begin{theorem}\label{c2thm:comp-functionals}Consider $\CM$ satisfying \textrm{Assumption~\ref{c2assumption1}} and a weight sequence $\bld{w}$ satisfying \textrm{Assumption~\ref{c2assumption-weight}}. Denote $\mathbf{Z}^w_n= \ord( b_n^{-1}\mathscr{W}_i,\mathrm{SP}(\mathscr{C}_{\sss (i)}))_{i\geq 1}$ and $\mathbf{Z}^w:=\ord(\mu_{w}\gamma_i(\lambda), N(\gamma_i))_{i\geq 1}$, where $\gamma_i(\lambda)$, and  $N(\gamma_i)$ are defined in \textrm{Theorem~\ref{c2thm:spls}}. As $n\to\infty$,
\begin{equation}\label{c2eq:weight-conv-amc}
 \mathbf{Z}_n^w \dto \mathbf{Z}^w,
\end{equation}with respect to the $\mathbb{U}^0_{\shortarrow}$ topology.
\end{theorem}
The proof of Theorem~\ref{c2thm:comp-functionals} can be decomposed in two main steps: the first one is to obtain the finite-dimensional limits of $\mathbf{Z}_n^w$ and then prove the $\mathbb{U}^0_{\shortarrow}$ convergence. The finite-dimensional limit is a consequence of the fact that the total weight of the clusters is approximately equal to the cluster sizes. The argument for the tightness with respect to the $\mathbb{U}^0_{\shortarrow}$ topology is similar to Propositions~\ref{c2prop-l2-tightness}~and~\ref{c2prop-surp-u-0} and therefore we only provide a sketch with pointers to all the necessary ingredients. Recall that $\mathcal{I}_i^n(l) = \ind{i\in \mathscr{V}_l}$, where $\mathscr{V}_l$ is the set of discovered vertices upto time $l$ by Algorithm~\ref{c2algo-expl}.
\begin{lemma}\label{c2lem:weight-approx-size}Under \textrm{Assumptions~\ref{c2assumption1},~\ref{c2assumption-weight}}, for any $T>0$, 
\begin{equation}\label{c2weight-expl-prop}
 \sup_{u\leq T}\bigg| \sum_{i\in [n]} w_i\mathcal{I}_i^n(ub_n)-\frac{\sum_{i\in [n]}d_iw_i}{\ell_n}ub_n\bigg|=\OP(a_n).
\end{equation} Consequently, for each fixed $i\geq 1$,
\begin{equation}\label{c2weight-approx-size}
   \mathscr{W}_i = \mu_{w} \big| \mathscr{C}_{\sss (i)} \big| +\oP(b_n).   
  \end{equation}
\end{lemma}
\begin{proof}Fix any $T>0$. Define ,
\begin{equation}
W_n(l)=\sum_{i\in [n]}w_i\mathcal{I}_i^n(l) - \frac{\sum_{i\in [n]}d_iw_i}{\ell_n} l.
\end{equation}
 The goal is to use the supermartingale inequality \eqref{c2eqn:supmg:ineq} in the same spirit as in the proof of \eqref{c2tail::martingale}. Firstly, observe from \eqref{c2eq:prob-ind} that
\begin{equation}
 \begin{split}
 &\E[W_n(l+1)-W_n(l) | \mathscr{F}_l]\\
 &=\E\bigg[\sum_{i\in [n]}w_i \left(\mathcal{I}^n_i(l+1)-\mathcal{I}_i^n(l)\right)\Big| \mathscr{F}_l\bigg]-\frac{\sum_{i\in [n]}d_iw_i}{\ell_n}\\
  &= \sum_{i\in [n]} w_i\E\big[\mathcal{I}^n_i(l+1)\big| \mathscr{F}_l\big]\ind{\mathcal{I}_i^n(l)=0} -\frac{\sum_{i\in [n]}d_iw_i}{\ell_n}\geq 0,
  \end{split}
 \end{equation} 
uniformly over $l\leq Tb_n$ and therefore, $(W_n(l))_{l=1}^{Tb_n}$ is a sub-martingale. Let $\ell_n'=\ell_n-2Tb_n-1$. Using \eqref{c2prob-ind-lb}, we compute
\begin{equation}\label{c2expt-W-n}
\begin{split}
 \big|\E[W_n(l)]\big|&=\sum_{i\in [n]}w_i\left(\prob{\mathcal{I}_i^n(l)=1}-\frac{d_i}{\ell_n}\right)\\
 &\leq \sum_{i\in [n]} w_i\bigg(1-\bigg(1-\frac{d_i}{\ell_n'}\bigg)^l-\frac{d_i}{\ell_n'}l\bigg) + l\sum_{i\in [n]}w_i \bigg(\frac{d_i}{\ell_n'}-\frac{d_i}{\ell_n}\bigg)\\
 &\leq 2(2Tb_n)^2 \frac{\sum_{i\in [n]}d_i^2w_i}{\ell_n'^2}=O(b_n^2a_n^3/n^2) = O(a_n),
\end{split}
\end{equation}uniformly over $l\leq Tb_n$. Also, using \eqref{c2neg:correlation}, \eqref{c2var-ind-ub}, and Assumption~\ref{c2assumption-weight}~\eqref{c2assumption-weight-2},
\begin{equation}\label{c2var-W-n}
\mathrm{Var}(W_n(l))\leq \sum_{i\in [n]} w_i^2 \mathrm{var}(\mathcal{I}_i^n(l))\leq Tb_n\frac{\sum_{i\in [n]}d_iw_i^2}{\ell_n'} =O(a_n^2),
\end{equation}uniformly over $l\leq Tb_n$. Using \eqref{c2eqn:supmg:ineq}, \eqref{c2expt-W-n} and \eqref{c2var-W-n}, we conclude the proof of \eqref{c2weight-expl-prop}.
The proof of \eqref{c2weight-approx-size} follows using Lemma~\ref{c2lem:no-large-comp-later} and simply observing that $a_n=o(b_n)$. 
\end{proof}
\begin{proof}[Proof of Theorem~\ref{c2thm:comp-functionals}]
Lemma~\ref{c2lem:weight-approx-size} ensures the finite-dimensional convergence in \eqref{c2eq:weight-conv-amc}. Thus, the proof is complete if we can show that, for any $\varepsilon > 0$ 
\begin{subequations}
\begin{equation}\label{c2eq:suff-U0-conv-1}
 \lim_{K\to \infty}\limsup_{n\to\infty}\PR\bigg(\sum_{i>K}\mathscr{W}_i^2>\varepsilon b_n^2\bigg)=0,
\end{equation}and
\begin{equation}\label{c2eq:suff-U0-conv-2}
\lim_{\delta \to 0}\limsup_{n\to\infty}\PR\bigg( \sum_{|\mathscr{C}_{\sss (i)}|\leq \delta b_n} \mathscr{W}_i\times \surp{\mathscr{C}_{\sss (i)}}> \varepsilon b_n\bigg) = 0.
\end{equation}
\end{subequations}
The arguments for proving \eqref{c2eq:suff-U0-conv-1}, and \eqref{c2eq:suff-U0-conv-2} are similar to those for ropositions~\ref{c2prop-l2-tightness}, and~\ref{c2prop-surp-u-0} and thus we only sketch a brief outline. 
Denote $\ell_n^w = \sum_{i\in [n]}w_i$. 
The main ingredient to the proof of Proposition~\ref{c2prop-l2-tightness} is Lemma~\ref{c2lem::tail_sum_squares}, and the proof of Lemma~\ref{c2lem::tail_sum_squares} uses the fact that the expected sum of squares of the cluster sizes can be written in terms of susceptibility functions in \eqref{c2comp-vs-rnd-choice} and then we made use of the estimate for the susceptibility function in \eqref{c2bound::expt-cluster-size}. Let $V_n'$ denote a vertex chosen according to the distribution $(w_i/\ell_n^w)_{i\in [n]}$, independently of the graph. Notice that for any $\mathrm{CM}_n(\bld{d})$,
\begin{equation}\label{c2W-vs-WVn}
 \E\bigg[\sum_{i\geq 1 }\mathscr{W}_i^2\bigg]=   \ell_n^w\E\big[ \mathscr{W}(V_n')\big]. 
\end{equation}
Now, \cite[Lemma 5.2]{J09b} can be extended using an identical argument  to compute the weight-based susceptibility function in the right hand side of \eqref{c2W-vs-WVn}. See Lemma~\ref{c2lem:gen-path-count} given in Appendix~\ref{c2sec:appendix-gen-path-counting}. The proof of \eqref{c2eq:suff-U0-conv-2} can also be completed using an identical argument as Proposition~\ref{c2prop-surp-u-0} by observing that 
\begin{equation}
 \PR\bigg( \sum_{|\mathscr{C}_{\sss (i)}|\leq \delta b_n} \mathscr{W}_i\times\surp{\mathscr{C}_{\sss (i)}}> \varepsilon b_n \bigg)\leq \frac{\ell_n^w}{\varepsilon b_n}\expt{\mathrm{SP}(\mathscr{C}(V_n'))\1_{\{ |\mathscr{C}(V_n')|\leq \delta b_n\}}}.
\end{equation} 
Moreover, an analogue of Lemma~\ref{c2lem:sp-cv-n} also holds for $V_n'$ (see Appendix~\ref{c2sec_appendix}), and the proof of \eqref{c2eq:suff-U0-conv-2} can now be completed in an identical manner as the proof of Proposition~\ref{c2prop-surp-u-0}.
\end{proof}
While studying percolation in the next section, we will need an estimate for the proportion of degree-one vertices in the large components. In fact, an application of Theorem~\ref{c2thm:comp-functionals}, yields the following result about the degree composition of the largest clusters:
\begin{corollary}\label{c2cor:degree-k} Consider $\mathrm{CM}_n(\boldsymbol{d})$ satisfying \textrm{Assumption~\ref{c2assumption1}}. Let $v_k(G)$ denote the number of vertices of degree $k$ in the graph $G$.  Then, for any fixed $i\geq 1$,
\begin{equation} \label{c2eqn_vertices_of_degree_k}
   v_k \big( \mathscr{C}_{\sss(i)} \big) = \frac{kr_k}{\mu} \big| \mathscr{C}_{\sss (i)} \big| +\oP(b_n),   
  \end{equation}
  where $r_k=\mathbb{P}(D=k)$. 
  Denote $\mathbf{Z}^k_n= \ord( b_n^{-1} v_k ( \mathscr{C}_{\sss(i)} ),\mathrm{SP}(\mathscr{C}_{\sss (i)}))_{i\geq 1}$, $\mathbf{Z}^k:=\ord(\frac{kr_k}{\mu}\gamma_i(\lambda), N(\gamma_i))_{i\geq 1}$, where $\gamma_i(\lambda)$, and  $N(\gamma_i)$ are defined in \textrm{Theorem~\ref{c2thm:spls}}. As $n\to\infty$,
\begin{equation}\label{c2eq:conv-amc-k}
 \mathbf{Z}_n^k \dto \mathbf{Z}^k,
\end{equation}with respect to the $\mathbb{U}^0_{\shortarrow}$ topology. 
\end{corollary}
\begin{proof}
 The proof follows directly from Theorem~\ref{c2thm:comp-functionals} by putting $w_i = \ind{d_i=k}$. The fact that this weight sequence satisfies Assumption~\ref{c2assumption-weight} is a consequence of Assumption~\ref{c2assumption1}.
\end{proof}

\section{Percolation}\label{c2sec:perc} 
In this section, we study critical percolation on the configuration model for fixed $\lambda\in\R$ and complete the proof of Theorem~\ref{c2thm:percolation}. 
As discussed earlier, $\mathrm{CM}_n(\boldsymbol{d},p)$ is obtained by first constructing $\mathrm{CM}_n(\boldsymbol{d})$ and then deleting each edge with probability $1-p$, independently of each other, and  the graph $\CM$. 
An interesting property of the configuration model is that $\mathrm{CM}_n(\boldsymbol{d},p)$ is also  distributed as a configuration model conditional on the degrees \cite{F07}. The rough idea here is to show that the degree distribution of $\mathrm{CM}_n(\bld{d},p_n(\lambda))$ satisfies Assumption~\ref{c2assumption1}, where $p_n(\lambda)$ is given by Assumption~\ref{c2assumption2}. This allows us to invoke Theorem~\ref{c2thm:spls} and complete the proof of Theorem~\ref{c2thm:percolation}. 
Recall from Assumption~\ref{c2assumption2} that $\nu=\lim_{n\to\infty}\nu_n>1$, and $p_n=p_n(\lambda)=\nu_n^{-1}(1+\lambda c_n^{-1})$. We start by describing an algorithm due to \citet{J09} that is easier to work with.
\begin{algo}[Construction of $\mathrm{CM}_n(\boldsymbol{d},p_n)$] \label{c2algo:perc}
\normalfont Initially, vertex $i$ has $d_i$ half-edges incident to it. For each half-edge $e$, let $v_e$ be the vertex to which $e$ is incident.
\begin{itemize}
 \item[(S1)]  With probability $1-\sqrt{p_n}$, one detaches $e$ from $v_e$ and associates $e$ to a new vertex $v'$ of degree-one. Color the new vertex $red$. This is done independently for every existing half-edge and we call this whole process $explosion$. Let $n_+$ be the number of red vertices created by explosion and $\tilde{n}=n+n_+$.  Denote the degree sequence obtained from the above procedure by $\Mtilde{\boldsymbol{d}} = ( \tilde{d}_i )_{i \in [\tilde{n}]}$, i.e., $\tilde{d}_i \sim \text{Bin} (d_i, \sqrt{p_n})$ for $i \in [n]$ and $\tilde{d}_i=1$ for $i \in [\tilde{n}] \setminus [n]$; 
 \item[(S2)] Construct $\mathrm{CM}_{\tilde{n}}(\Mtilde{\boldsymbol{d}})$ independently of (S1);
 \item[(S3)] Delete all the red vertices and the edges attached to them.
 \end{itemize}
 \end{algo}
It was also shown in \cite{J09} that the obtained multigraph has the same distribution as $\mathrm{CM}_{n}(\boldsymbol{d},p)$ if we replace (S3) by
 \begin{itemize}
 \item[(S3$'$)] Instead of deleting red vertices, choose $n_+$ degree-one vertices uniformly at random without replacement, independently of (S1), and (S2) and delete them. 
 \end{itemize}

\begin{remark}\normalfont Notice that Algorithm~\ref{c2algo:perc}~(S1) induces a probability measure $\mathbb{P}_p^n$ on $\N^\infty$. Denote their product measure by $\mathbb{P}_p$. In words, for different $n$, (S1) is carried out independently.  All the almost sure statements about the degrees in this section will be with  respect to the probability measure $\mathbb{P}_p$. 
\end{remark} 
 Let us first show that $\Mtilde{\boldsymbol{d}}$ also satisfies Assumption~\ref{c2assumption1}~\eqref{c2assumption1-2}. Note that the total number of half-edges remains unchanged during the explosion in Algorithm~\ref{c2algo:perc}~(S1) and therefore, $\sum_{i\in [\tilde{n}]}\tilde{d}_i=\sum_{i\in [n]}d_i$ and by Assumption~\ref{c2assumption2}~\eqref{c2assumption2-1},
\begin{equation}\label{c2eq:perc-mu}
 \frac{1}{n}\sum_{i\in [\tilde{n}]}\tilde{d}_i \to \mu \quad \PR_p \text{ a.s.}
\end{equation}This verifies the first moment condition in Assumption~\ref{c2assumption1}~\eqref{c2assumption1-2} for the percolated degree sequence $\PR_p$ a.s.
 Let $I_{ij}$:= the indicator of the $j^{th}$ half-edge corresponding to vertex $i$ being kept after the explosion. Then $I_{ij} \sim \text{Ber} (\sqrt{p_n})$ independently for $i \in [n]$, $j \in [d_i]$. Let 
 \begin{equation} 
 \mathbf{I}:= (I_{ij})_{j \in [d_i], i \in [n]}  \quad\text{and} \quad  f_1(\mathbf{I}):=\sum_{i\in [n]} \tilde{d_i}(\tilde{d}_i-1).
 \end{equation}Note that $f_1(\mathbf{I})=\sum_{i\in [\tilde{n}]}\tilde{d}_i(\tilde{d}_i-1)$ since the degree-one vertices do not contribute to the sum. One can check that by changing the status of one half-edge corresponding to vertex $k$ we can change $f_1$ by at most $2(d_{k}+1)$. Therefore an application  of \cite[Corollary 2.27]{JLR00} yields 
 \begin{eq}\label{c2eqn::prob:ineq:third}
 &\mathbb{P}_p \Big( \Big|\sum_{i\in [n]} \tilde{d_i}(\tilde{d}_i-1)- p_n \sum_{i\in [n]} d_i(d_i-1) \Big| >t \Big)\\
 & \leq 2 \exp \bigg( -\frac{t^2}{2\sum_{i \in [n]} d_i (d_{i}+1)^2}\bigg).
 \end{eq}
Now by Assumption~\ref{c2assumption2}~\eqref{c2assumption2-1}, $\sum_{i\in [n]}d_i^3=O(a_n^3)$. If we set $t=n^{1-\varepsilon}c_n^{-1}$, then $t^2/(\sum_{i\in [n]}d_i^3)$ is of the order $n^{\alpha-2\varepsilon}/L(n)$. Thus, choosing $\varepsilon<\alpha/2$, using \eqref{c2eqn::prob:ineq:third} and the Borel-Cantelli lemma we conclude that 
 \begin{equation}\label{c2perc:2nd-moment}\sum_{i\in [n]} \tilde{d_i}(\tilde{d}_i-1)= p_n \sum_{i\in [n]} d_i(d_i-1) +o(nc_n^{-1}) \quad \mathbb{P}_p\text{ a.s.} 
 \end{equation}Thus, using Assumption~\ref{c2assumption2}, the second moment condition in Assumption~\ref{c2assumption1} \eqref{c2assumption1-2} is verified for the percolated degree sequence $\PR_p$ a.s. Let $\tilde{d}_{\sss (i)}$ denote the $i^{th}$ largest value of $(\tilde{d}_i)_{i\in [\tilde{n}]}$. The third-moment condition in Assumption~\ref{c2assumption1}~\eqref{c2assumption1-2} is obtained by noting that $\tilde{d}_i\leq d_i$ for all $i\in [n]$ and  
 \begin{equation}\label{c2eq:third-moment-perc}\begin{split}
  &\lim_{K\to\infty}\limsup_{n\to\infty}a_n^{-3}\sum_{i=K+1}^{\tilde{n}}\tilde{d}_{\sss (i)}^{3}\leq \lim_{K\to\infty}\limsup_{n\to\infty}a_n^{-3}\sum_{i=K+1}^{\tilde{n}}\tilde{d}_{i}^{3}\\ &\leq \lim_{K\to\infty}\limsup_{n\to\infty}a_n^{-3}\Big(\sum_{i=K+1}^{n}\tilde{d}_{i}^{3}+ n_+\Big)\leq  \lim_{K\to\infty}\limsup_{n\to\infty}a_n^{-3}\Big(\sum_{i=K+1}^{n}d_{i}^{3}+ n_+\Big), 
\end{split}
\end{equation}which tends to zero $\PR_p$ a.s., where we have used Assumption~\ref{c2assumption2}~\eqref{c2assumption2-1} and the fact that $a_n^{-3}n_+\to 0$, $\PR_p$ a.s., which follows by observing that $n_+\sim\mathrm{Bin}(\ell_n,1-\sqrt{p_n})$. To see that $\Mtilde{\boldsymbol{d}}$ satisfies Assumption~\ref{c2assumption1}~\eqref{c2assumption1-3} note that by \eqref{c2perc:2nd-moment},
 \begin{equation}
  \frac{\sum_{i\in [\tilde{n}]}\tilde{d}_i(\tilde{d}_i-1)}{\sum_{i\in [\tilde{n}]}\tilde{d}_i}=p_n\frac{\sum_{i\in [n] }d_i(d_i-1)}{\sum_{i\in [n] }d_i}+o(c_n^{-1})=1+\lambda c_n^{-1}+o(c_n^{-1})
 \end{equation}$\PR_p \text{ a.s.,}$ where the last step follows from Assumption~\ref{c2assumption2}~\eqref{c2assumption2-2}. Assumption~\ref{c2assumption1}~\eqref{c2assumption1-4} is trivially satisfied by $\Mtilde{\boldsymbol{d}}$. 
Finally, in order to verify Assumption~\ref{c2assumption1}~\eqref{c2assumption1-1}, it suffices to show that 
 \begin{equation}\label{c2eq:assump-1-perc}
  \frac{\tilde{d}_{\sss(i)}}{a_n}\to \theta_i\sqrt{p}, \quad \PR_p \text{ a.s.,}
 \end{equation}where $p=1/\nu$.
 Recall that $\Mtilde{d}_i\sim \mathrm{Bin}(d_i,\sqrt{p_n})$. A standard concentration inequality for the binomial distribution \cite[(2.9)]{JLR00} yields that, for any $0<\varepsilon\leq 3/2$, 
\begin{equation} 
 \PR(|\tilde{d}_i-d_i\sqrt{p_n}|>\varepsilon d_i\sqrt{p_n})\leq 2\mathrm{exp}(-\varepsilon^2d_i\sqrt{p_n}/3),
 \end{equation}
  and using the Borel-Cantelli lemma it follows that $\mathbb{P}_p$ almost surely, $\Mtilde{d}_i=d_i\sqrt{p_n}(1+o(1))$ for all fixed $i$. 
Moreover, an application of \eqref{c2eq:third-moment-perc} yields that 
 \begin{equation}
  \lim_{K\to\infty}\limsup_{n\to\infty}a_n^{-3}\max_{i>K}\tilde{d}_i^3  =0.
 \end{equation}
  Now, since $\boldsymbol{\theta}$ is an ordered vector, the proof of \eqref{c2eq:assump-1-perc} follows.
 
To summarize, the above discussion in \eqref{c2eq:perc-mu},~\eqref{c2perc:2nd-moment},~\eqref{c2eq:third-moment-perc},~and~\eqref{c2eq:assump-1-perc} yields that the degree sequence $\Mtilde{d}$ satisfies all the conditions in Assumption~\ref{c2assumption1}. Therefore, Theorem~\ref{c2thm:spls} can be applied to $\mathrm{CM}_{\tilde{n}}(\Mtilde{\boldsymbol{d}})$.
Denote by $\tilde{\mathscr{C}}_{(i)}$ the $i^{th}$ largest component of $\mathrm{CM}_{\tilde{n}}(\Mtilde{\boldsymbol{d}})$. 
Let $\tilde{\mathbf{Z}}_n= \ord( b_n^{-1}|\tilde{\mathscr{C}}_{(i)}|,\mathrm{SP}(\tilde{\mathscr{C}}_{(i)})_{i\geq 1}$ and $\tilde{\mathbf{Z}}:=\ord(\tilde{\gamma}_i(\lambda), N(\tilde{\gamma}_i))_{i\geq 1}$, where $\gamma_i(\lambda)$, and  $N(\gamma_i)$ are defined in \textrm{Theorem~\ref{c2thm:percolation}}. Now,  Theorem~\ref{c2thm:spls} implies
\begin{equation}
\tilde{\mathbf{Z}}_n \dto \tilde{\mathbf{Z}},
\end{equation}with respect to the $\mathbb{U}^0_{\shortarrow}$ topology.

Since the percolated degree sequence satisfies Assumption~\ref{c2assumption1} $\mathbb{P}_p$ a.s., \eqref{c2eqn_vertices_of_degree_k} holds for $\tilde{\mathscr{C}}_{\sss (i)}$ also. Let $v^d_1(\tilde{\mathscr{C}}_{\sss (i)})$ be the number of degree-one vertices of $\tilde{\mathscr{C}}_{\sss (i)}$ which are deleted while creating the graph $\mathrm{CM}_{n}(\boldsymbol{d},p_{n})$ from  $\mathrm{CM}_{\tilde{n}}(\Mtilde{\boldsymbol{d}})$. Since the vertices are to be chosen uniformly from all degree-one vertices as described in (S3$'$), 
\begin{equation} \label{c2degree_one_vertices}
\begin{split}
 v^d_1(\tilde{\mathscr{C}}_{\sss (i)}) &= \frac{n_+}{\tilde{n}_1}v_1(\tilde{\mathscr{C}}_{\sss (i)})+ \oP(b_n)= \frac{n_+}{\tilde{n}_1} \frac{\tilde{n}_1}{\ell_n} \big| \tilde{\mathscr{C}}_{\sss (i)}\big| + \oP(b_n) = \frac{n_+}{\ell_n}\big| \tilde{\mathscr{C}}_{(i)} \big|+ \oP(b_n)\\
 & = \frac{\mu\big(1-\sqrt{p}_n\big)+o(1)}{\mu+o(1)}\big| \tilde{\mathscr{C}}_{\sss (i)} \big|+ \oP(b_n) = \big(1-\sqrt{p}_n\big) \big| \tilde{\mathscr{C}}_{\sss (i)}\big| + \oP(b_n),
 \end{split}
\end{equation}where the last-but-one equality follows by observing that $n_+\sim\mathrm{Bin}(\ell_n,1-\sqrt{p_n})$. Now, notice that by removing degree-one vertices, the components are not broken up, so the  vector of component sizes for percolation can be obtained by just  subtracting the number of red vertices from the component sizes of $\mathrm{CM}_{\tilde{n}}(\Mtilde{\boldsymbol{d}})$. Moreover, the removal of degree-one vertices does not effect the count of surplus edges.
Therefore, the proof of Theorem~\ref{c2thm:percolation} is complete by using Corollary~\ref{c2cor:degree-k}. 

\section{Convergence to augmented multiplicative coalescent}
\label{c2sec:conv-amc}
Let us give an overview of the organization of this section:
In Section~\ref{c2sec:dyn-cons-coup}, we discuss an alternative dynamic  construction that approximates the  percolated graph process, coupled in a natural way.
This construction enables us to compare the coupled percolated graphs with a dynamic construction.
Then, we describe a modified system that evolves as an exact augmented multiplicative coalescent and the rest of the section is devoted to comparing the exact augmented multiplicative coalescent and the corresponding quantities for the graphs generated by the dynamic construction.
The ideas are similar to \cite[Section 8]{DHLS15}, and we only give the overall idea and the necessary details specific to this chapter.

\subsection{The dynamic construction and the coupling} \label{c2sec:dyn-cons-coup}
Let us consider graphs generated dynamically as follows: 
\begin{algo}\label{c2algo:dyn-cons-2} \normalfont Let $s_1(t)$ be the total number of unpaired or \emph{open} half-edges at time $t$, and  $\Xi_n$ be an inhomogeneous Poisson process with rate $s_1(t)$ at time $t$. 
\begin{itemize}
\item[\textrm{(S0)}] Initially, $s_1(0)=\ell_n$, and $\mathcal{G}_n(0)$ is the empty graph on vertex set $[n]$. 
\item[\textrm{(S1)}] At each event time of $\Xi_n$, choose two open half-edges uniformly at random and pair them. The graph $\mathcal{G}_n(t)$ is obtained by adding this edge to $\mathcal{G}_n(t-)$. Decrease $s_1(t)$ by two. Continue until $s_1(t)$ becomes zero.
\end{itemize} 
\end{algo} 
Notice that $\mathcal{G}_n(\infty)$ is distributed as $\CM$ since an open  half-edge is paired with another uniformly chosen open half-edge. 
The next proposition ensures that the graph process generated by Algorithm~\ref{c2algo:dyn-cons-2} \emph{sandwiches} the graph process $(\mathrm{CM}_n(\bld{d},p_n(\lambda)))_{\lambda\in\R}$. This result was proved in \cite[Proposition~28]{DHLS15}. The proof is identical under Assumption~\ref{c2assumption2} and therefore is omitted here.
Define,
\begin{equation}\label{c2defn:t-n-lambda}
t_n(\lambda)=\frac{1}{2}\log\bigg(\frac{\nu_n}{\nu_n-1}\bigg)+\frac{1}{2(\nu_n-1)}\frac{\lambda}{c_n}.
\end{equation}
\begin{proposition}\label{c2prop:coupling-whp} Fix $-\infty<\lambda_\star<\lambda^\star<\infty$. There exists a coupling such that with high probability
\begin{subequations}
\begin{equation}
 \mathcal{G}_n(t_n(\lambda)-\varepsilon_n)\subset \mathrm{CM}_n(\bld{d},p_n(\lambda)) \subset\mathcal{G}_n(t_n(\lambda)+\varepsilon_n),\quad \forall \lambda \in [\lambda_\star,\lambda^\star]
\end{equation} and 
\begin{equation}\label{c2eq:prop-coup-2}
 \mathrm{CM}_n(\bld{d},p_n(\lambda)-\varepsilon_n)\subset \mathcal{G}_n(t_n(\lambda))\subset \mathrm{CM}_n(\bld{d},p_n(\lambda)+\varepsilon_n),\quad \forall \lambda \in [\lambda_\star,\lambda^\star]
\end{equation}
\end{subequations}where $\varepsilon_{n}=cn^{-\gamma_0}$, for some $\eta<\gamma_0<1/2$ and the constant $c$ does not depend on $\lambda$.
\end{proposition}
From here onward, we augment $\lambda$ to a predefined notation to emphasize the dependence on~$\lambda$. 
We write $\mathscr{C}_{\sss (i)}(\lambda)$  for the $i^{th}$ largest component of $\mathcal{G}_n(t_n(\lambda))$ and define 
\begin{equation}
\mathcal{O}_i(\lambda)=\# \text{ open half-edges in }\mathscr{C}_{\sss (i)}(\lambda).
\end{equation}
Think of $\mathcal{O}_i(\lambda)$ as the \emph{mass} of the component $\mathscr{C}_{\sss (i)}(\lambda)$. 
Let $\mathbf{Z}_n^o(\lambda)$ denote the vector of the number of open half-edges (re-scaled by $b_n$) and surplus edges of $\mathcal{G}_n(t_n(\lambda))$, ordered as an element of $\mathbb{U}^0_{\shortarrow}$. 
For a process $\mathbf{X}$, we will write $\mathbf{X}[\lambda_\star,\lambda^\star]$ to denote the restricted process $(X(\lambda))_{\lambda\in[\lambda_\star,\lambda^\star]}$. 
 Let $\ell_n^o(\lambda) = \sum_{i\geq 1}\mathcal{O}_i(\lambda)$. 
Note that 
\begin{equation}\label{c2eq:asympt-ell-n-o}
\ell_n^o(\lambda) =  \frac{n\mu(\nu-1)}{\nu}(1+\oP(1)).
\end{equation} 
\eqref{c2eq:asympt-ell-n-o} is a consequence of \cite[Lemma 8.2]{BBSX14} since the proof only uses the facts that $|\ell_n/n-\mu|=o(n^{-\gamma})$ for all $\gamma<1/2$, and $\sum_{i\in [n]}d_i(d_i-1)/\ell_n\to\nu$.  
Now, observe that, during the evolution of the graph process generated  by Algorithm~\ref{c2algo:dyn-cons-2}, during the time interval $[t_n(\lambda),t_n(\lambda+\dif \lambda)]$, the $i^{th}$ and $j^{th}$ ($i> j$) largest components, merge at rate 
 \begin{equation}\label{c2rate:function}
2\mathcal{O}_{i}(\lambda) \mathcal{O}_{j}(\lambda)\times\frac{1}{\ell_n^o(\lambda)-1}\times \frac{1}{2(\nu_n-1)c_n}\approx \frac{\nu}{\mu(\nu-1)^2} \big(b_n^{-1}\mathcal{O}_{i}(\lambda)\big)\big(b_n^{-1}\mathcal{O}_{j}(\lambda)\big),
\end{equation}and create a component with $\mathcal{O}_{i}(\lambda)+\mathcal{O}_{j}(\lambda)-2$ open half-edges and \linebreak $\mathrm{SP}(\mathscr{C}_{\sss(i)}(\lambda))+\mathrm{SP}(\mathscr{C}_{\sss(j)}(\lambda))$ surplus edges. 
Also, a surplus edge is created in $\mathscr{C}_{\sss(i)}(\lambda)$ at rate
\begin{equation}\label{c2rate:function-spls}
\mathcal{O}_i(\lambda)(\mathcal{O}_i(\lambda)-1)\times\frac{1}{\ell_n^o(\lambda)-1}\times \frac{1}{2(\nu_n-1)c_n}\approx \frac{\nu}{2\mu(\nu-1)^2} \big(b_n^{-1}\mathcal{O}_{i}(\lambda)\big)^2,
\end{equation}and $\mathscr{C}_{\sss(i)}(\lambda)$ becomes a component with surplus edges  $\mathrm{SP}(\mathscr{C}_{\sss(i)}(\lambda))+1$  and open half-edges $\mathcal{O}_{i}(\lambda)-2$. Thus $\mathbf{Z}_n^o[\lambda_\star,\lambda^\star]$ does \emph{not} evolve as an AMC process but it is close. 
The fact that two half-edges are killed after pairing, makes the masses (the number of open half-edges) of the components deplete. If there were no such depletion of mass, then the vector of open half-edges, along with the surplus edges, would in fact  merge as an augmented multiplicative coalescent. 
Let us define the modified process \cite[Algorithm~7]{DHLS15} that in fact evolves as augmented multiplicative coalescent:

{\begin{algo}\label{c2algo:modify-dyn-cons} \normalfont Initialize $\bar{\mathcal{G}}_n(t_n(\lambda_\star)) = \mathcal{G}_n(t_n(\lambda_\star))$.  Let $\mathscr{O}$ denote the set of open half-edges in the graph $\mathcal{G}_n(t_n(\lambda_\star))$, $\bar{s}_1 = |\mathscr{O}|$ and $\bar{\Xi}_n$ denote a Poisson process with rate $\bar{s}_1$. At each event time of the Poisson process $\bar{\Xi}_n$, select two half-edges from $\mathscr{O}$ and create an edge between the corresponding vertices. However, the selected half-edges are kept alive, so that they can be selected again.
\end{algo} 

\begin{remark}\label{c2rem:modify-AMC}\normalfont The only difference between Algorithms~\ref{c2algo:dyn-cons-2}~and~\ref{c2algo:modify-dyn-cons}, is that the \emph{paired} half-edges are not discarded and thus more edges are created by Algorithm~\ref{c2algo:modify-dyn-cons}. Thus, there is a natural coupling between the graphs generated by Algorithms~\ref{c2algo:dyn-cons-2}~and~\ref{c2algo:modify-dyn-cons} such that $\mathcal{G}_n(t_n(\lambda))\subset \bar{\mathcal{G}}_n(t_n(\lambda))$ for all $\lambda\in [\lambda_\star,\lambda^\star]$, with probability one. In the subsequent part of this section, we will always work under this coupling. The extra edges that are created by Algorithm~\ref{c2algo:modify-dyn-cons} will be called \emph{bad} edges.
\end{remark}
 In the subsequent part of this chapter, we will augment a predefined notation with a bar to denote the corresponding quantity for $\bar{\mathcal{G}}_n(t_n(\lambda))$. 
 Denote $\beta_n = (\bar{s}_1(\nu_n-1)c_n)^{1/2}$ and $\bar{\mathbf{Z}}_n^{o,{\sss \mathrm{scl}}}(\lambda)$ denote the vector \linebreak $\ord(\beta_n^{-1}\bar{\mathcal{O}}_i(\lambda),\mathrm{SP}(\bar{\mathscr{C}}_{\sss (i)}(\lambda)))_{i\geq 1}$. 
 Using an argument identical to \eqref{c2rate:function},~and~\eqref{c2rate:function-spls}, it follows that $\bar{\mathbf{Z}}_n^{o,{\sss \mathrm{scl}}}[\lambda_\star,\lambda^\star]$ evolves as a standard augmented multiplicative coalescent.
 Note that there exists a constant $c>0$ such that $\beta_n = cb_n(1+\oP(1))$, and therefore the scaling limit of any finite-dimensional distributions of $\bar{\mathbf{Z}}_n^{o}[\lambda_\star,\lambda^\star]$ can be obtained from $\bar{\mathbf{Z}}_n^{o,{\sss \mathrm{scl}}}[\lambda_\star,\lambda^\star]$.

\subsubsection{Augmented multiplicative coalescent with mass and weight}
The near Feller property of the  augmented multiplicative coalescent \cite[Theorem 3.1]{BBW12} ensures the joint convergence of the number of open half-edges in each component together with the surplus edges of $\bar{\mathcal{G}}_n(t_n(\lambda))$. 
To deduce the scaling limits involving the components sizes let us consider a dynamic process that is further augmented by weight.
Initially, the system consists of particles (possibly infinitely many) where particle $i$ has mass $x_i$, weight $z_i$ and an attribute $y_i$. 
Let $(X_i(t),Z_i(t),Y_i(t))_{i\geq 1}$ denote masses, weights, and attribute values  at time $t$. 
The dynamics of the system is described as follows: At time $t$,
\begin{itemize}
\item[$\rhd$]  particles $i$ and $j$ coalesce at rate $X_i(t)X_j(t)$ and create a particle with mass $X_i(t)+X_j(t)$, weight $Z_i(t)+Z_j(t)$ and attribute $Y_i(t)+Y_j(t)$.
\item[$\rhd$]  for each $i$, attribute $Y_i(t)$ increases by 1 at rate $ Y_i^2(t)/2$.
\end{itemize}
For $(\bld{x},\bld{y}), (\bld{z},\bld{y})\in\mathbb{U}^0_{\shortarrow}$, we write $(\bld{x},\bld{z},\bld{y})$ for $((\bld{x},\bld{y}),(\bld{z},\bld{y}))\in (\mathbb{U}^0_{\shortarrow})^2$. 
Denote by  $\mathrm{MC}_2(\bld{x},\bld{z},t)$ and $\mathrm{AMC}_2(\bld{x},\bld{z},\bld{y},t)$ respectively the vector $(X_i(t), Z_i(t))_{i\geq 1}$ and $(X_i(t), Z_i(t),Y_i(t))_{i\geq 1}$ with initial mass $\bld{x}$, weight $\bld{z}$ and attribute value $\bld{y}$. 
We will need the following theorem:
\begin{theorem}\label{c2thm:AMC-2D}
Suppose that $(\bld{x}_n,\bld{z}_n,\bld{y}_n) \to (\bld{x},\bld{x},\bld{y})$ in $(\mathbb{U}^0_{\shortarrow})^2$ and $\sum_{i}x_i=\infty$. Then, for any $t\geq 0$
\begin{equation}
\mathrm{AMC}_2(\bld{x}_n,\bld{z}_n,\bld{y_n})\dto \mathrm{AMC}_2(\bld{x},\bld{x},\bld{y}).
\end{equation}
\end{theorem}
\begin{proof}
By \cite[Theorem 29]{DHLS15},
 \begin{equation}\label{c2mul-coal-2d}
  \mathrm{MC}_2(\bld{x}_n,\bld{z}_n,t) \dto \mathrm{MC}_2(\bld{x},\bld{x},t).
 \end{equation}
For $\bld{x}_n = (x_i^n)_{i\geq 1}$, and $\bld{z}_n = (z_i^n)_{i\geq 1}$  let $\bld{w}_n^+ = \mathrm{sort}(x_i^n\vee z_i^n)$, $\bld{w}_n^-=\mathrm{sort}(x_i^n\wedge z_i^n)$, where $\mathrm{sort}$ denotes the decreasing ordering of the elements. 
Notice that $\bld{w}_n^+ \to \bld{x}$, and $\bld{w}_n^- \to \bld{x}$ in $\ell^2_{\shortarrow}$.
Let us denote by $\mathrm{AMC}_1(\bld{x},\bld{y},t)$ the usual augmented multiplicative coalescent process at time $t$ with starting state $(\bld{x},\bld{y})$.
Now, since $\sum_i x_i =\infty$, we can use the near Feller property \cite[Theorem~3.1]{BBW12} to conclude that $\mathrm{AMC}_1(\bld{x}_n,\bld{y}_n,t)\dto \mathrm{AMC}_1(\bld{x},\bld{y},t)$.
Moreover, $\mathrm{AMC}_2(\bld{w}_n^+,\bld{w}_n^+,y_n,t)$ and 
$\mathrm{AMC}_2(\bld{w}_n^-,\bld{w}_n^-,y_n,t)$ converges to the same limit. 
For $(\bld{x},\bld{z},\bld{y})\in(\mathbb{U}^0_{\shortarrow})^2$, if $S_{\sss \mathrm{pr}}(\bld{x},\bld{z},\bld{y}) = \sum_{i}z_iy_i$, then under the subgraph coupling
\begin{equation}
S_{\sss\mathrm{pr}}(\mathrm{AMC}_2(\bld{w}_n^+,\bld{w}_n^+,y_n,t))- S_{\sss\mathrm{pr}}(\mathrm{AMC}_2(\bld{w}_n^-,\bld{w}_n^-,y_n,t)) \pto 0,
\end{equation}
which implies that
\begin{eq}\label{c2AMC-ded}
 &(\mathrm{AMC}_1(\bld{x}_n,\bld{y}_n,t),S_{\sss\mathrm{pr}}(\mathrm{AMC}_2(\bld{x}_n,\bld{z}_n,y_n,t))) \\
 &\hspace{.6cm}\dto (\mathrm{AMC}_1(\bld{x},\bld{y},t),S_{\sss\mathrm{pr}}(\mathrm{AMC}_2(\bld{x},\bld{x},y,t))).
\end{eq}
Now, using \eqref{c2mul-coal-2d}, \eqref{c2AMC-ded}, an application of  \cite[Lemma 4.11]{BBW12} concludes the proof. 
\end{proof}
\subsection{Asymptotics for the open half-edges}
The following lemma shows that the number of open half-edges in $\mathcal{G}_n(t_n(\lambda))$ is \emph{approximately} proportional to the component sizes. 
This will enable us to apply Theorem~\ref{c2thm:AMC-2D} for deducing the scaling limits of the required quantities for the graph $\bar{\mathcal{G}}_n(t_n(\lambda))$.
\begin{lemma}\label{c2thm:open-comp}
 There exists a constant $\kappa > 0$ such that, for any $i\geq 1$,
 \begin{equation}\label{c2open-he}
  \mathcal{O}_i(\lambda)= \kappa |\mathscr{C}_{\sss (i)}(\lambda)|+o_{\sss \PR}(b_n).
 \end{equation}Further, $(\mathbf{Z}_n^o(\lambda))_{n\geq 1}$ is tight in  $\mathbb{U}^0_{\shortarrow}$. 
\end{lemma}
\begin{proof} Let $(d_k^\lambda)_{k\in [n]}$ denote the degree sequence of $\mathrm{CM}_n(\bld{d},p_n(\lambda))$ and define
\begin{equation}
\mathcal{O}_i^p(\lambda) = \sum_{k\in \mathscr{C}_{\sss (i)}^p(\lambda)}(d_k-d_k^\lambda) = \sum_{k\in \mathscr{C}_{\sss (i)}^p(\lambda)}d_k-2(|\mathscr{C}_{\sss (i)}^p(\lambda)|-1+\mathrm{SP}(\mathscr{C}_{\sss (i)}^p(\lambda))).
\end{equation}
 Using \eqref{c2eq:prop-coup-2} and the fact that the number of surplus edges in the large components are tight, it is enough to prove the lemma by replacing $\mathcal{O}_i(\lambda)$ by $\mathcal{O}_i^p(\lambda)$ and $\mathscr{C}_{\sss (i)}'(\lambda)$ by $\mathscr{C}_{\sss (i)}^p(\lambda)$.
For a component $\tilde{\mathscr{C}}$ of $\mathrm{CM}_{\tilde{n}}(\tilde{\boldsymbol{d}})$, the corresponding component in the percolated graph is obtained by cleaning up $R(\tilde{\mathscr{C}})$ red degree-one vertices. Thus, the degree deficiency of that percolated cluster is given by 
\begin{equation}\label{c2relation-deg-def}
 \sum_{k\in \tilde{\mathscr{C}}\cap [n]}d_k-\sum_{k\in \tilde{\mathscr{C}}\cap [n]}\tilde{d}_k+2R(\tilde{\mathscr{C}}).
\end{equation} Now, all the three terms appearing in the right hand side of \eqref{c2relation-deg-def} can be estimated using Theorem~\ref{c2thm:comp-functionals}, where we recall from Section~\ref{c2sec:perc} that $\Mtilde{\bld{d}}$ satisfies Assumption~\ref{c2assumption1}. The proof is now complete.
\end{proof}
For an element $\mathbf{z} = (x_i,y_i)_{i\geq 1} \in \mathbb{U}^0_{\shortarrow}$ and a constant $c>0$, denote $c \mathbf{z} = (c x_i,y_i)_{i\geq 1}$.  Thus, Lemma~\ref{c2thm:open-comp} states that, for each fixed $\lambda$, $\mathbf{Z}^o_n(\lambda)$ is close to $\kappa \bZ_n(\lambda)$. The following lemma states that formally:
\begin{corollary}\label{c2cor-fixed-lambda}  For each fixed $\lambda$, as $n\to\infty$, $\mathrm{d}_{\sss\mathbb{U}}(\mathbf{Z}^o_n(\lambda),\kappa \bZ_n(\lambda))\pto 0$.
\end{corollary}
\begin{proof}Let
$\pi_k,T_k:\mathbb{U}^0_{\shortarrow}\mapsto\mathbb{U}^0_{\shortarrow}$ be the functions such that for $\mathbf{z}=((x_i,y_i))_{i\geq 1}$, $\pi_k(\mathbf{z})$ consists of only $(x_i,y_i)$ for $i\leq k$ and zeroes in other coordinates, and $T_k(\mathbf{z})$ consists only of $(x_i,y_i)$ for $i>k$. Thus,
\begin{equation}\label{c2split-up-fixed-lambda}
\begin{split}
 \mathrm{d}_{\mathbb{U}}\left(\mathbf{Z}_n^o(\lambda),\kappa\mathbf{Z}_n(\lambda)\right)\leq\mathrm{d}_{\mathbb{U}}\left(\pi_K(\mathbf{Z}_{n}^o(\lambda)),\pi_K(\kappa\mathbf{Z}_n(\lambda))\right)&\\
 +\|T_K(\mathbf{Z}_n^o(\lambda))\|_{\sss\mathbb{U}}+\|T_K(\kappa\mathbf{Z}_n(\lambda))\|_{\sss\mathbb{U}}&.
 \end{split}
\end{equation}
 Now, for each fixed $K\geq 1$ the first term in the right hand side of \eqref{c2split-up-fixed-lambda} converges in probability to zero, by \eqref{c2open-he}. Also, using the tightness of both $(\bZ_n(\lambda))_{n\geq 1}$ and $(\bZ_n^o(\lambda))_{n\geq 1}$ with respect to the $\mathbb{U}^0_{\shortarrow}$ topology, it follows that for any $\varepsilon>0$,
\begin{equation}
\lim_{K\to\infty}\lim_{n\to\infty}\prob{\|T_K(\mathbf{Z}_n(\lambda))\|_{\sss \mathbb{U}}>\varepsilon}=\lim_{K\to\infty}\lim_{n\to\infty}\prob{\|T_K(\mathbf{Z}_n^o(\lambda))\|_{\sss \mathbb{U}}>\varepsilon}=0,
\end{equation}and the proof is now complete.
\end{proof}

\subsection{Comparison between the dynamic construction and the modified process}
Suppose that, at time $\lambda_\star$, we have colored the components $(\mathscr{C}_{\sss (i)}(\lambda_\star))_{i\in [M]}$  blue, say, and then let Algorithms~\ref{c2algo:dyn-cons-2}~and~\ref{c2algo:modify-dyn-cons} evolve. 
Additionally, we color all the components blue that get connected to one of the blue components during the evolution. 
Let $\mathcal{C}_M(\lambda)$, $\bar{\mathcal{C}}_M(\lambda)$ denote the union of all such blue components in $\mathcal{G}_n(t_n(\lambda))$ and $\bar{\mathcal{G}}_n(t_n(\lambda))$. 
In this section, we show that (i) no bad edges are created that are surplus edge of some component, (ii) $|\bar{\mathcal{C}}_M(\lambda)|-|\mathcal{C}_M(\lambda)|$ is asymptotically negligible, 
(iii) no bad edge is created between the large components, and (iv) with sufficiently large probability, the largest components of $\bar{\mathcal{G}}_n(t_n(\lambda))$ are contained within $\bar{\mathcal{C}}_M(\lambda)$, where $M$ is large.
These facts together ensure that the scaling limit for the largest connected components and surplus edges of $\mathcal{G}_n(t_n(\lambda))$ and $\bar{\mathcal{G}}_n(t_n(\lambda))$ are identical.
Consider the coupled evolution of Algorithms~\ref{c2algo:dyn-cons-2}~and~\ref{c2algo:modify-dyn-cons}. 
Thus, in the modified set-up, more components get merged due to the creation of bad-edges.
Denote $\mathcal{B}_M(\lambda) = |\bar{\mathcal{C}}_M(\lambda)|-|\mathcal{C}_M(\lambda)|$ and $B_{\sss \mathrm{SP}}(\lambda)$ the number of bad-edges that are created as surplus edge of some component. 

\begin{lemma} \label{c2lem:bad-estimation}
For any $\lambda\geq \lambda_\star$, $B_{\sss \mathrm{SP}}(\lambda)\pto 0$ and 
for all $M\geq 1$,  $b_n^{-1} \mathcal{B}_M(\lambda) \pto 0$.  
\end{lemma}
\begin{proof}
 Before going into the proof, recall Algorithm~\ref{c2algo:modify-dyn-cons}, and all the definitions. 
 A bad edge is created if, during some event time of $\bar{\Xi}_n$, a half-edge from $\mathscr{O}$ is selected that was already selected before. 
 Now, for some given pair $(e_0,f_0)$, $e_0\neq f_0$, the number of ways in which one can choose a pair $(e,f)$, $e\neq f$ such that $e=e_0$, or $f=f_0$, is given by $2\bar{s}_1-3$. 
 Thus, the bad edges are created between times $[t_n(\lambda),t_n(\lambda+\dif \lambda)]$ at rate $(2(\nu_n-1)\bar{s}_1c_n)^2/ (2\bar{s}_1-3)$.
 Denote $\mathcal{I}_M = \mathcal{I}_M(\lambda)= \{i:\bar{\mathscr{C}}_{\sss (i)}(\lambda)\subset \bar{\mathcal{C}}_M(\lambda)\}$.
 The created bad edge adds an additional mass of $|\bar{\mathscr{C}}_{\sss (i)}(\lambda)|$ to $\bar{\mathcal{C}}_M(\lambda)$ if one end is from $\bar{\mathcal{C}}_M(\lambda)$ (for which there are $\sum_{i\in \mathcal{I}_M}\mathcal{O}_i(\lambda)$ possibilities) and the other half-edge is in $\bar{\mathscr{C}}_{\sss (i)}(\lambda)$.
 The created bad edge is a surplus edge if both of its endpoints come from the same component. 
For any semi-martingale $(Y_t)_{t\geq 0}$, we write $\mathrm{D}(Y)(t)$ and $\mathrm{QV}(Y)(t)$, respectively to denote the compensator and the quadratic variation, i.e., 
\begin{equation}
Y_t - \mathrm{D}(Y)(t), \quad\text{and}\quad (Y_t - \mathrm{D}(Y)(t))^2 - \mathrm{QV}(Y)(t)
\end{equation}are both martingales.
Now, $\mathrm{D}(B_{\sss\mathrm{SP}}(\lambda))\geq 0$, $\mathrm{D}(b_n^{-1}\mathcal{B}_1(\lambda)) \geq 0$, and for some constants $C_1,C_2>0$
\begin{subequations}
\begin{equation}
\begin{split}
\mathrm{D}(B_{\sss\mathrm{SP}})(\lambda) &= \int_{\lambda_\star}^{\lambda} \frac{2\bar{s}_1-3}{4(\nu_n-1)^2\bar{s}_1^2c_n^2} \sum_{i\geq 1} \binom{\bar{\mathcal{O}}_i(\lambda')}{2}\dif \lambda'\\
&\leq \frac{C_1 n}{b_n^2}\int_{\lambda_\star}^{\lambda} \|\bar{\mathbf{O}}_n(\lambda')\|_{\sss 2}^2\dif \lambda'+\oP(1)\\
 & \leq  \frac{C_1 n}{b_n^2}(\lambda^\star -\lambda_\star) \|\bar{\mathbf{O}}_n(\lambda^\star)\|_{\sss 2}^2 +\oP(1),
 \end{split}
\end{equation}
\begin{equation}
\begin{split}
&\mathrm{D}(b_n^{-1}\mathcal{B}_1)(\lambda)
\leq b_n^{-1}\int_{\lambda_\star}^{\lambda} \frac{2\bar{s}_1-3}{4(\nu_n-1)^2\bar{s}_1^2c_n^2} \sum_{i\in \mathcal{I}_M(\lambda)}\bar{\mathcal{O}}_i(\lambda')\sum_{i\geq 1} \bar{\mathcal{O}}_i(\lambda') |\bar{\mathscr{C}}_{\sss (i)}(\lambda')| \dif \lambda'\\
  & \leq  \frac{C_2n}{b_n^2}\int_{\lambda_\star}^{\lambda} \bigg(b_n^{-1}\sum_{i=1}^M\bar{\mathcal{O}}_{\sss (i)}(\lambda')\bigg) \|\bar{\mathbf{O}}_n(\lambda')\|_{\sss 2}\|\bar{\mathbf{C}}_n(\lambda')\|_{\sss 2} \dif \lambda' +\oP(1)\\
  & \leq \frac{C_2n}{b_n^2}(\lambda^\star-\lambda_\star) \bigg(b_n^{-1}\sum_{i=1}^M\bar{\mathcal{O}}_{\sss (i)}(\lambda^\star)\bigg) \|\bar{\mathbf{O}}_n(\lambda^\star)\|_{\sss 2}\|\bar{\mathbf{C}}_n(\lambda^\star)\|_{\sss 2} +\oP(1),
\end{split}
\end{equation}
\end{subequations}
where $\bar{\mathcal{O}}_{\sss (i)}$ denotes the $i^{th}$ largest value of $(\bar{\mathcal{O}}_i)_{i\geq 1}$. 
%Thus, 
%\begin{equation}
% \sup_{\lambda \in [\lambda_\star,\lambda^\star]}\mathrm{D}(B_{\sss\mathrm{SP}})(\lambda) \pto 0, \quad\text{and}\quad \sup_{\lambda \in [\lambda_\star,\lambda^\star]}\mathrm{D}(b_n^{-1}\mathcal{B}_1)(\lambda) \pto 0.
%\end{equation}
Further,
\begin{subequations}
\begin{equation}
\mathrm{QV}(B_{\sss\mathrm{SP}})(\lambda) \leq \frac{C_1 n}{b_n^2}(\lambda^\star -\lambda_\star) \|\bar{\mathbf{O}}_n(\lambda^\star)\|_{\sss 2}^2 +\oP(1),
\end{equation}and
\begin{equation}
\begin{split}
&\mathrm{QV}(b_n^{-1}\mathcal{B}_1)(\lambda)
\leq b_n^{-2}\int_{\lambda_\star}^{\lambda} \frac{2\bar{s}_1-3}{4(\nu_n-1)^2\bar{s}_1^2c_n^2} \sum_{i\in \mathcal{I}_M(\lambda)}\bar{\mathcal{O}}_i(\lambda')\sum_{i\geq 1} \bar{\mathcal{O}}_i(\lambda') |\bar{\mathscr{C}}_{\sss (i)}(\lambda')|^2 \dif \lambda'\\
  & \leq  \frac{C_2n}{b_n^2}\int_{\lambda_\star}^{\lambda} \bigg(b_n^{-1}\sum_{i=1}^M\bar{\mathcal{O}}_{\sss (i)}(\lambda')\bigg)\big(b_n^{-1}|\bar{\mathscr{C}}_{\sss (1)}(\lambda')|\big) \|\bar{\mathbf{O}}_n(\lambda')\|_{\sss 2}\|\bar{\mathbf{C}}_n(\lambda')\|_{\sss 2} \dif \lambda' +\oP(1)\\
  &\leq \frac{C_2n}{b_n^2}(\lambda^\star-\lambda_\star) \bigg(b_n^{-1}\sum_{i=1}^M\bar{\mathcal{O}}_{\sss (i)}(\lambda^\star)\bigg)\big(b_n^{-1}|\bar{\mathscr{C}}_{\sss (1)}(\lambda^\star)|\big) \|\bar{\mathbf{O}}_n(\lambda^\star)\|_{\sss 2}\|\bar{\mathbf{C}}_n(\lambda^\star)\|_{\sss 2} +\oP(1).
\end{split}
\end{equation}
\end{subequations}
Recall that using Lemma~\ref{c2thm:open-comp}, an application of Theorem~\ref{c2thm:AMC-2D} yields $(\bar{\mathbf{Z}}_n(\lambda))_{n\geq 1}$ is tight in $\mathbb{U}^0_{\shortarrow}$.
The proof now follows using the fact that $n/b_n^{2}\to 0$.
\end{proof}
Suppose that a bad edge is being created at time $\lambda'$. 
Now, this bad edge may be created by choosing the open half-edges from $\mathscr{C}_{\sss (i)}(\lambda')$ and $\mathscr{C}_{\sss (j)}(\lambda')$ for $1\leq i,j\leq M$. For fixed $M$, let $F_M(\lambda)$ denote the number of such bad-edges created upto time $\lambda$. Using an argument identical to Lemma~\ref{c2lem:bad-estimation} one can show the following:
\begin{lemma}\label{c2lem:no-bad-large}
For any $\lambda\geq \lambda_\star$ and $M\geq 1$, $F_M(\lambda)\pto 0$.
\end{lemma}
The following is the last ingredient that will be needed in the proof:
\begin{lemma} \label{c2lem:large-comp-contain} 
Fix any $\lambda \in [\lambda_\star,\lambda^\star]$. For any $\varepsilon>0$, and $K\geq 1$, there exists $M = M(\varepsilon,K)$ such that
\begin{equation}
\limsup_{n\to\infty}\prob{\bar{\mathscr{C}}_{\sss (1)}(\lambda), \dots,\bar{\mathscr{C}}_{\sss (K)}(\lambda) \text{ are not contained in }\bar{\mathcal{C}}_M(\lambda)}\leq \varepsilon.
\end{equation}
\end{lemma}
\begin{proof}
 Let $\mathcal{I}_M:= \{i:\bar{\mathscr{C}}_{\sss (i)}(\lambda)\subset \bar{\mathcal{C}}_M(\lambda)\}$. It is enough to show that, for any $\varepsilon>0$, there exists $M$ such that 
 \begin{equation}\label{c2trunc-large-1}
  \limsup_{n\to\infty}\PR\bigg(\sum_{i\notin\mathcal{I}_M}|\bar{\mathscr{C}}_{\sss (i)}(\lambda)|^2>\varepsilon b_n^2 \bigg)\leq \varepsilon.
 \end{equation} 
For any $M\geq 1$, consider the merging dynamics of Algorithm~\ref{c2algo:modify-dyn-cons}, where at time $\lambda_\star$, all the components $(\bar{\mathscr{C}}_{\sss (i)}(\lambda_\star))_{i\in [M]}$ are removed. We refer to the above evolution as $M$-truncated system.
We augment a previously defined notation with a superscript $>M$ to denote the corresponding quantity for the $M$-truncated system.
We assume that the $M$-truncated system and the modified system are coupled in a natural way that at each event time of the modified truncated system, an edge is created in the $M$-truncated system if both the half-edges are selected from the outside of $\cup_{i=1}^M\bar{\mathscr{C}}_{\sss (i)}(\lambda_\star)$. 
Under this coupling,
\begin{equation}\label{c2trunc-large-2}
\sum_{i\notin\mathcal{I}_M}|\bar{\mathscr{C}}_{\sss (i)}(\lambda)|^2 \leq \sum_{i\geq 1} |\bar{\mathscr{C}}_{\sss (i)}^{\sss >M}(\lambda)|^2.
\end{equation}
Now, using Lemma~\ref{c2thm:open-comp}, an application of Theorem~\ref{c2thm:AMC-2D} yields that \linebreak $(\bar{\mathbf{Z}}_n(\lambda))_{n\geq 1}$ is tight in $\mathbb{U}^0_{\shortarrow}$.
Thus the proof now follows.
\end{proof}

\subsection{Proof of Theorem~\ref{c2thm:mul:conv}}
We now have all the ingredients to complete the proof of Theorem~\ref{c2thm:mul:conv}. 
For simplicity in writing, we only give a proof for the case $k=2$ since the proof for general $k$ is identical.
Take $\lambda_\star = \lambda_1$. 
Using Lemma~\ref{c2thm:open-comp}, Theorem~\ref{c2thm:AMC-2D} implies
\begin{equation}\label{c2eq:conv-modi}
(\bar{\mathbf{Z}}_n(\lambda_1),\bar{\mathbf{Z}}_n(\lambda_2))\dto (\bar{\mathbf{Z}}(\lambda_1),\bar{\mathbf{Z}}(\lambda_1,\lambda_2)),
\end{equation}for some random elements $\mathbf{Z}(\lambda_1),\mathbf{Z}(\lambda_1,\lambda_2)$ of $\mathbb{U}^0_{\shortarrow}$.
Now, $\bar{\mathbf{Z}}_n(\lambda_1) = \mathbf{Z}_n(\lambda_1)$.
Moreover, using Lemmas~\ref{c2lem:bad-estimation},~\ref{c2lem:no-bad-large},~and~\ref{c2lem:large-comp-contain} and the facts that both $(\bar{\mathbf{Z}}_n(\lambda_2))_{n\geq 1}$ and $(\mathbf{Z}_n(\lambda_2))_{n\geq 1}$ converge, it follows that (see the argument in Corollary~\ref{c2cor-fixed-lambda})
\begin{equation}
\mathrm{d}_{\sss \mathbb{U}}(\bar{\mathbf{Z}}_n(\lambda_2),\mathbf{Z}_n(\lambda_2))\pto 0.
\end{equation} 
Thus, $(\mathbf{Z}_n(\lambda_1),\mathbf{Z}_n(\lambda_2))$ converge jointly. 
Moreover, the limiting object $\mathbf{Z}(\lambda_1,\lambda_2)$ appearing in \eqref{c2eq:conv-modi} does not depend on $\lambda_1$ by Theorem~\ref{c2thm:spls}. 
Now, using induction, there exists a version of the augmented multiplicative coalescent $\mathbf{AMC} = (\mathrm{AMC}(\lambda))_{\lambda\in\R}$ such that for any $k \geq 1$
\begin{equation}
 (\mathbf{Z}_n(\lambda_1),\dots,\mathbf{Z}_n(\lambda_k))\dto (\mathrm{AMC}(\lambda_1),\dots,\mathrm{AMC}(\lambda_k)).
\end{equation}
Finally, the proof of Theorem~\ref{c2thm:spls} is completed by using Proposition~\ref{c2prop:coupling-whp}.  \qed
\section{Conclusion}
In this chapter, we have shown that, when the third moment of the empirical degree distribution tends to infinity, the critical window for the configuration model is primarily dictated by the vertices of highest degree or hubs.
In fact, the asymptotics of hubs completely specify the scaling limits for the component sizes and the surplus edges. 
The proof techniques in this case is completely different than in Chapter~\ref{chap:thirdmoment}.
Since the increment distribution of the exploration process has infinite third moment, the analysis does not fall under the framework of invariance principles such as Martingale FCLT. 
Moreover, since the limiting process does not have independent increments, general methods for stochastic process convergence are not applicable.
The core of the analysis rests on the fact that the hubs cause jumps in the exploration process, and the contribution due to the low-degree vertices turns out to be asymptotically negligible. 
The proof for $\ell^2_{\shortarrow}$ convergence is also more direct in this case, as compared to the size-biased point processes in Chapter~\ref{c1:sec_l2_tightness}.
The fact that the graph becomes more subcritical only after the removal of the hubs plays a crucial role in the analysis.
As in Chapter~\ref{chap:thirdmoment}, we use Janson's construction to study critical percolation.
Further, the evolution over the critical window is studied for both the component sizes and the surplus edges.
In fact, the scaling limit in Theorem~\ref{c2thm:mul:conv} establishes that there exists a version of augmented multiplicative coalescent with finite-dimensional distributions being described by excursions of a thinned L\'evy process.

%
%is given by $\nu_n = 1+\lambda n^{-1/3}$, and the largest component sizes have $\Theta(n^{2/3})$ vertices and $O(1)$ surplus edges.
%Theorem~\ref{c1:thm_surplus} identifies the precise limiting distribution of the rescaled component sizes and surplus edges, and the convergence results hold under $\Unot$ topology.
%We apply these results to percolated $\CM$.
%Analyzing the exploration process directly on percolated $\CM$ is difficult.
%This is because many paired half-edges are not retained by percolation during the exploration, which changes the number of available half-edges of an unexplored vertex. 
%For this reason, one has to keep updating the degree distribution of unexplored vertices, which becomes difficult to track when the maximum degree is unbounded \cite{NP10b}.
%We circumvent this difficulty by using Janson's construction of the percolated configuration model.
%Further, for the joint convergence of the percolated clusters over the critical window, we give a construction of the percolation process in Algorithm~\ref{c1:algo:cons-perc}, which allows us to approximate percolated graphs by a dynamically growing Markovian graph process. 
%A further difficulty for proving Theorem~\ref{c1:thm_multiple_convergence} was that even the later Markovian graph process is only \emph{approximately} multiplicative coalescent.
%The ideas for dealing with this approximate multiplicative coalescent are general, and we believe that these techniques are applicable to many other dynamic graph models. 

\begin{subappendices}
\section{Appendix: Path counting}\label{c2sec:appendix-gen-path-counting} In this section,  we derive a generalization of \cite[Lemma 5.1]{J09b} by extending their argument.  
Let $V_n'$ denote  the vertex chosen according to the distribution $F_n$  on $[n]$, independently of the graph. 
%Further, let $V_n$, $V_n^{\sss F_n}$ be independent. 
Also, let $D_n'$ denote the degree of $V_n'$, $D_n$ denote the degree of a uniformly chosen vertex (independently of the graph) and  $\mathscr{C}(v)$ denote the connected component containing $v$.
\begin{lemma}\label{c2lem:gen-path-count}
 Let $\bld{w} = (w_i)_{i\in [n]}$ be a weight sequence and consider $\mathrm{CM}_n(\bld{d})$ such that $\nu_n<1$. Then,
 \begin{equation}
  \E\bigg[\sum_{i\in \mathscr{C}(V_n')}w_i\bigg]\leq \E\big[w_{V_n'}\big]+\frac{ \expt{D_n'}\E\big[ D_nw_{V_n}\big]}{\expt{D_n}(1-\nu_n)}.
 \end{equation}
\end{lemma}
\begin{proof} Consider all possible paths of length $l$ starting from $V_n'$ and the $w$-value at the end of those paths. 
If we sum over all such paths together with a sum over all possible $l$, then we obtain an upper bound on $\sum_{i\in \mathscr{C}(V_n')}w_i$. Write $\E_v[\cdot]$ for the expectation conditional on $V_n'=v$.
 Thus,
 \begin{equation}
  \begin{split}
   \E_v\bigg[\sum_{i\in \mathscr{C}(V_n')}w_i\bigg]\leq w_v+d_v \sum_{l\geq 1}\sum_{\substack{x_1,\dots,x_l\\x_i\neq x_j, \forall i\neq j}} \frac{\prod_{i=1}^{l-1}d_{x_i}(d_{x_i}-1)d_{x_l}w_{x_l}}{(\ell_n-1)\dots(\ell_n-2l+1)}.
  \end{split}
 \end{equation} 
 Now, using the exactly same arguments as \cite[Lemma 5.1]{J09b}, it follows that
 \begin{equation}
  \E\bigg[\sum_{i\in \mathscr{C}(V_n')}w_i\bigg]\leq \E\big[w_{V_n'}\big] + \frac{ \expt{D_n'}\expt{D_nw_{V_n}}}{\expt{D_n}}\sum_{l\geq 1}\nu_n^{l-1},
 \end{equation}and this completes the proof.
\end{proof}
\section{Appendix: Proof of Lemma~\ref{c2lem:sp-cv-n}}
\label{c2sec_appendix}
\begin{proof}[\nopunct]
The proof is an adaptation of the proof of \cite[Lemma 20]{DHLS15}. 
Let $V_n'$ denote the vertex chosen according to the distribution $F_n$  on $[n]$, independently of the graph and let $D_n'$ denote the degree of $V_n'$. 
Suppose that $\limsup_{n\to\infty}\E[D_n']<\infty$.
We use a generic constant $C$ to denote a positive constant independent of $n,\delta,K$. 
Consider the graph exploration described in Algorithm~\ref{c2algo-expl}, but now we start by choosing vertex $V_n'$ at Stage 0 and declaring all its half-edges active. 
The exploration process is still given by \eqref{c2defn:exploration:process} with $S_n(0)=D_n'$.
Note that $\mathscr{C}(V_n')$ is explored when $\mathbf{S}_n$ hits zero. For $H>0$,  let \begin{equation} \label{c2defn:gamma}
\gamma := \inf \{ l\geq 1: S_n(l)\geq H \text{ or }  S_n(l)= 0 \}\wedge 2\delta_K b_n.
\end{equation} Note that
\begin{equation}\label{c2exploration:super_martingale}
\begin{split}
 \E\bigg[S_n(l+1)-S_n(l)&\vert \left( \mathcal{I}_i^n(l)\right)_{i=1}^n\bigg]= \sum_{i\in [n]}d_i\prob{i\notin \mathscr{V}_l, i\in \mathscr{V}_{l+1}\vert \left( \mathcal{I}_i^n(l)\right)_{i=1}^n} -2\\
 &= \frac{ \sum_{i\notin \mathscr{V}_l}d_i^2}{\ell_n-2l-1}-2\leq \frac{ \sum_{i\in [n]}d_i^2}{\ell_n-2l-1}-2\\:
 & =\lambda c_n^{-1}+o(c_n^{-1})+\frac{2l+1}{\ell_n-2l-1}\times \frac{\sum_{i\in [n]}d_i^2}{\ell_n}   \leq 0
\end{split}
\end{equation} uniformly over $l\leq 2\delta_K b_n$ for all small $\delta >0$ and large $n$, where the last step follows from the fact that $\lambda<0$. Therefore, $\{S_n(l)\}_{l= 1}^{2\delta_Kb_n}$ is a super-martingale. The optional stopping theorem now implies
  \begin{equation}
   \mathbb{E}\left[D_n'\right] \geq \mathbb{E}\left[S_n(\gamma)\right] \geq H \mathbb{P}\left( S_n(\gamma) \geq H \right).
  \end{equation} Thus,
  \begin{equation} \label{c2eqn::bound_geq_H_at_stopping_time}
    \mathbb{P}\left( S_n(\gamma) \geq H \right) \leq \frac{\expt{D_n'}}{H}.
  \end{equation}
Put $H=a_nK^{1.1}/\sqrt{\delta}$. To simplify the writing, we  write $S_n[0,t]\in A$ to denote that $S_n(l)\in A,$ for all $ l\in [0,t]$.  Notice that
 \begin{equation}\label{c2surp:sup:less}\begin{split}
  &\prob{\surp{\mathscr{C}(V_n')}\geq K,|\mathscr{C}(V_n')|\in (\delta_K b_n,2\delta_Kb_n)}\\
  &\leq \prob{S_n(\gamma)\geq H}+\prob{\surp{\mathscr{C}(V_n')}\geq K, S_n[0,2\delta_K b_n]< H, S_n[0,\delta_K b_n]>0}.
  \end{split}
 \end{equation}Define $A(l_1,\dots,l_K)$ to be the event that $\{\text{surpluses occur at times } l_1,\dots,l_K,$ and $S_n[0,2\delta_K b_n]< H, S_n[0,\delta_K b_n]>0\}$. Now,
 \begin{equation}
  \begin{split}
   &\prob{\surp{\mathscr{C}(V_n')}\geq K, S_n[0,2\delta_K b_n]< H, S_n[0,\delta_K b_n]>0}\\
  &\leq \sum_{1\leq l_1<\dots< l_K\leq 2\delta_K b_n} \prob{A(l_1,\dots,l_K)}\\
  &=\sum_{1\leq l_1<\dots<l_K\leq 2\delta_K b_n}\expt{\ind{0<S_n[0,l_K-1]<H, \mathrm{SP}(l_K-1)=K-1}Y},
  \end{split}
 \end{equation}
 where
 \begin{equation}
 \begin{split}
  Y&=\prob{K^{th}\text{ surplus occurs at time }l_K,  S_n[l_K,2\delta_Kb_n]< H, S_n[l_K,\gamma]>0\mid \mathscr{F}_{l_K-1} }\\
  &\leq \frac{CK^{1.1}a_n}{\ell_n\sqrt{\delta}}\leq \frac{CK^{1.1}}{b_n\sqrt{\delta}}.
 \end{split}
 \end{equation}Therefore, using induction, \eqref{c2surp:sup:less} yields
 \begin{equation}\label{c2exploration:bounded:surplus}
 \begin{split}
  &\prob{\surp{\mathscr{C}(V_n')}\geq K, S_n[0,2\delta_K b_n]< H, S_n[0,\delta_K b_n]>0}\\
  &\leq C\bigg( \frac{K^{1.1}}{\sqrt{\delta}b_n}\bigg)^K\frac{(2\delta b_n)^{K-1}}{K^{0.12(K-1)}(K-1)!}\sum_{l_1=1}^{2\delta_K b_n}\prob{|\mathscr{C}(V_n')|\geq l_1}\\
  &\leq C \frac{\delta^{K/2}}{K^{1.1}b_n}  \expt{|\mathscr{C}(V_n')|},
  \end{split}
 \end{equation}where we have used the fact that $\#\{1\leq l_2,\dots,l_K\leq 2\delta b_n\}=(2\delta b_n)^{K-1}/(K-1)!$ and Stirling's approximation for $(K-1)!$ in the last step. Since $\lambda <0$, we can use Lemma~\ref{c2lem:gen-path-count} to conclude that for all sufficiently large $n$
 \begin{equation} \label{c2expectation:random:vert:comp}
  \expt{|\mathscr{C}(V_n)|}\leq Cc_n,
 \end{equation} for some constant $C>0$ and we get the desired bound for \eqref{c2surp:sup:less}.
  The proof of Lemma~\ref{c2lem:sp-cv-n} is now complete.
\end{proof}
\end{subappendices}

%\section*{Acknowledgement} 
%We sincerely thank Shankar Bhamidi for several helpful discussions.
%This research have been supported by the Netherlands Organisation for Scientific
%Research (NWO) through Gravitation Networks grant 024.002.003. In addition, RvdH has been supported by VICI grant 639.033.806, JvL has been supported by the European Research Council (ERC), and SS has been supported by EPSRC grant EP/J019496/1 and a CRM-ISM fellowship. 
%\bibliographystyle{apa}
%\bibliography{thesis}
%
%
%\end{document} 
\cleardoublepage

\chapter[Metric space limit]{Metric space limit for critical components in the infinite third-moment regime}
\label{chap:mspace}
{\small \paragraph*{Abstract.}
This chapter establishes general universality principles for random network models whose component sizes in the critical regime lie in the multiplicative coalescent universality class with heavy-tailed degrees resulting in hubs. 
For graphs whose components evolve exactly as a multiplicative coalescent in this regime, scaling limits for the metric structure of maximal components were derived in \cite{BHS15}. 
In this chapter, we derive sufficient uniform asymptotic negligibility conditions for general network models to satisfy in the barely subcritical regime such that, if the evolution of the components can be \emph{approximated} by a multiplicative coalescent as one transitions from the barely subcritical regime through the critical regime, then the maximal components belong to the same universality class as in~\cite{BHS15}. 
As a canonical example, we study critical percolation on configuration models with heavy-tailed degrees. 
Of independent interest,  we derive refined asymptotics for various susceptibility functions and the maximal diameter in the barely subcritical regime. 
 These estimates, coupled with the universality result,  allow us to derive the asymptotic metric structure of the large components through the critical scaling window for percolation.
}

\vspace{.3cm} 
\noindent {\footnotesize Based on the manuscript: 
Shankar Bhamidi, Souvik Dhara, Remco van der Hofstad, Sanchayan Sen; 
\emph{Universality for critical heavy-tailed network models: Metric structure of maximal components} (2017),	arXiv:1703.07145}
\vfill
The aim of this chapter is to understand universality principles for the metric structure of the critical components when the degree distribution satisfies an infinite third-moment condition. 
%second regime with $\tau\in (3,4)$.
 % and in particular providing a refined analysis of the metric structure of the maximal components.
To describe our results, we start with an analogy. 
In classical limit theorems for sums of independent random variables, there are two major steps: 
 (1) Identifying possible limit laws (e.g. normal distribution, stable laws, etc.), and (2) understanding uniform asymptotic negligibility conditions under which sums of random variables (appropriately re-scaled) converge to the appropriate limit.  
In the context of critical random graphs with degree-exponent $\tau\in (3,4)$, candidate limit law of maximal components with each edge rescaled to have length $n^{-(\tau-3)/(\tau - 1)}$ was established in \cite{BHS15}. 
% It was also shown that one fundamental model (rank-one model) converges to the asserted limits.
 In this chapter we establish sufficient uniform asymptotic negligibility (UAN) conditions for a random graph model in the barely subcritical regime which, coupled with appropriate merging dynamics of components as one increases edge density through the critical regime, implies convergence to limits established in~\cite{BHS15}.   
	This is described in Theorem \ref{c3:thm:univesalty}. 
	 As a canonical example, we analyze the critical regime for percolation on the uniform random graph model and the configuration model with a prescribed heavy-tailed degree distribution (see Theorems~\ref{c3:thm:main}~and~\ref{c3:thm:main-simple}).
	 Of independent interest, we obtain refined estimates for various susceptibility functionals and bounds on the diameter of largest connected components in the barely-subcritical regime for the configuration model; these are described in Theorems~\ref{c3:thm:susceptibility} and~\ref{c3:thm:diam-max}. 
%(a) 
%	% Since we need to set up a number of constructs (including the limit objects), a statement of the result is deferred till all of these objects have been defined.
%%
%\\	\noindent
%(b)  
%	% Limit laws for the metric structure of maximal components in the critical regime are described in Theorems~\ref{c3:thm:main}~and~\ref{c3:thm:main-simple}.
%	
%	\noindent
%(c) 
%% These estimates furnish insight about the mesoscopic properties of the maximal critical components which are essential to verify the UAN conditions in Theorem~\ref{c3:thm:univesalty}.
%% Further, we crucially make use of these mesoscopic properties along with previously established \cite{DHLS16} macroscopic properties (namely component sizes, number of surplus edges) to derive the scaling limits in  Theorems~\ref{c3:thm:main}~and~\ref{c3:thm:main-simple}.

\paragraph*{Organization of the chapter.}
 Section~\ref{c3:sec:res} describes the canonical random graph model motivating this work and describes associated results. 
A full description of the limit objects and notions of convergence of   metric space valued random variables are deferred to Section~\ref{c3:sec:definitions-full}. 
Section~\ref{c3:sec:discussion} has a detailed discussion of related work and relevance of this work. 
Section~\ref{c3:sec:univ-thm} describes and proves the general universality result. 
Sections~\ref{c3:sec:entrance-bdd-proofs} and \ref{c3:sec:proof-metric-mc}  prove results about the configuration model. % in the barely subcritical regime.
% Section~\ref{c3:sec:proof-metric-mc} combines the above estimates with a delicate coupling of the evolution of the configuration model through the critical scaling window to finish the proof of Theorem~\ref{c3:thm:main}.
%\todo[inline]{Check finally.} 

\section{Main results} 
\label{c3:sec:res}
Owing to technical overhead, the statement of our main universality result is deferred to Section~\ref{c3:sec:univ-thm}. In this section, we present the results about the largest connected components obtained via percolation on the uniform random graph model and the configuration model. 
We defer definitions of the limiting objects as well as notions of convergence of measured metric spaces to Section~\ref{c3:sec:definitions-full}.   %We start by describing this random graph model. 
\subsection{Critical percolation on the configuration model: the metric structure}
%\paragraph*{\emph{The configuration model}} Consider $n$ vertices labeled by $[n]:=\{1,2,...,n\}$ and a non-increasing sequence of degrees $\boldsymbol{d} = ( d_i )_{i \in [n]}$ such that $\ell_n = \sum_{i \in [n]}d_i$ is even. For notational convenience, we suppress the dependence of the degree sequence on $n$. The configuration model on $n$ vertices having degree sequence $\boldsymbol{d}$ is constructed as follows \cite{B80,MR95}:
% \begin{itemize}
% \item[] Equip vertex $j$ with $d_{j}$ stubs, or \emph{half-edges}. Once paired, two half-edges create an edge. Initially we have $\ell_n=\sum_{i \in [n]}d_i$ half-edges. Pick any one half-edge and pair it with a uniformly chosen half-edge from the remaining unpaired half-edges.  Repeat above procedure till all the unpaired half-edges are exhausted. 
% \end{itemize}
%  Let $\CM$ denote the graph constructed by the above procedure.
%  $\CM$ may contain self-loops or multiple edges. 
%  Let $\mathrm{UM}_n(\bld{d})$ denote the graph chosen uniformly at random from the collection of all {\em simple} graphs with degree sequence $\boldsymbol{d}$.
%  It can be \cite[Proposition 7.13]{RGCN1} shown that the law $\mathrm{CM}_{n}(\boldsymbol{d})$ conditioned on being simple, is that of $\mathrm{UM}_n(\bld{d})$. 
% It was further shown in \cite{J09c} that, under rather general assumptions, the asymptotic probability of the graph being simple is positive. 
%  
% Recall the definition of the space \gls{ell2}.
For $p>0$, define metric space
$\ell^p_{\shortarrow}=\{(x_i)_{i\geq 1}: x_i> x_{i+1} , \sum_i x_i^p<\infty\},$  with the metric $d(\bld{x}, \bld{y})= ( \sum_{i} |x_i-y_i|^p)^{1/p}.$
 Fix $\tau\in (3,4)$.  Throughout this chapter we set: % we use the following functionals of $\tau$:
\begin{equation}\label{c3:eqn:notation-const}
 \alpha= 1/(\tau-1),\qquad \rho=(\tau-2)/(\tau-1),\qquad \eta=(\tau-3)/(\tau-1).
\end{equation}
%We next formulate the conditions on the degrees that we rely on.
 \begin{assumption}[Degree sequence]\label{c3:assumption1}
\normalfont  For each $n\geq 1$, let $\bld{d}=\boldsymbol{d}_n=(d_i)_{i\in [n]}$ be a degree sequence. 
We assume the following about $(\boldsymbol{d}_n)_{n\geq 1}$ as $n\to\infty$:
\begin{enumerate}[(i)] 
\item \label{c3:assumption1-1} (\emph{High degree vertices}) For $i\geq 1$, 
%\begin{equation}\label{c3:defn::degree}
% n^{-\alpha}d_i\to \theta_i,
%\end{equation}
$n^{-\alpha}d_i\to \theta_i$, where $\boldsymbol{\theta}=(\theta_i)_{i\geq 1}\in \ell^3_{\shortarrow}\setminus \ell^2_{\shortarrow}$. 
\item \label{c3:assumption1-2} (\emph{Moment assumptions}) 
Let $D_n$ denote the degree of a vertex chosen uniformly at random, independently of $\mathrm{CM}_n(\boldsymbol{d})$. Then, $D_n$ converges in distribution to some discrete random variable $D$, and 
\begin{eq}
 \frac{1}{n}\sum_{i\in [n]}d_i\to &\mu := \E[D], \quad \frac{1}{n}\sum_{i\in [n]}d_i^2 \to \mu_2:=\E[D^2],\\
  &\lim_{K\to\infty}\limsup_{n\to\infty}n^{-3\alpha} \sum_{i=K+1}^{n} d_i^3=0.
\end{eq}
%$
% \frac{1}{n}\sum_{i\in [n]}d_i\to \mu := \E[D], \quad \frac{1}{n}\sum_{i\in [n]}d_i^2 \to \mu_2:=\E[D^2],\quad  \lim_{K\to\infty}\limsup_{n\to\infty}n^{-3\alpha} \sum_{i=K+1}^{n} d_i^3=0.$
%\item \label{c3:assumption1-4} Let $n_1$ be the number of degree one vertices. Then $n_1=\Theta(n)$, which is equivalent to assuming that $\prob{D=1}>0$.
\end{enumerate}
\end{assumption}   
As discussed in Chapter~\ref{chap:introduction}, the component sizes of $\CM$ undergo a phase transition \cite{JL09,MR95} depending on the parameter
\begin{equation}
 \nu_n = \frac{\sum_{i\in [n]}d_i(d_i-1)}{\sum_{i\in [n]}d_i} \to \nu =\frac{\expt{D(D-1)}}{\expt{D}}.
\end{equation} 
Precisely, when $\nu>1$, $\CM$ is super-critical in the sense that there exists a unique \emph{giant} component whp, and when $\nu<1$, all the components have size $\oP(n)$.
% and $\CM$ is subcritical.
  In this chapter, we will always assume that 
\begin{equation}\label{c3:eq:super-crit-start}
 \nu>1, \quad \text{i.e.}\quad  \CM \text{ is supercritical.}
\end{equation}
 The focus of this chapter is to study \emph{critical percolation} on $\CM$. Percolation refers to deleting each edge of a graph independently with probability $1-p$.  
%In case of percolation on random graphs, the deletion of edges are also independent from the underlying graph.
 Let $\CMPN$, and $\mathrm{UM}_n(\bld{d},p_n)$ denote the graphs obtained from percolation with probability $p_n$ on graphs $\mathrm{CM}_n(\boldsymbol{d})$ and $\mathrm{UM}_n(\bld{d})$, respectively. 
 For $p_n\to p$, it was shown in \cite{J09} that the critical point for the phase transition of the component sizes is $p=1/\nu$. 
 The critical window for percolation was studied in Chapters~\ref{chap:thirdmoment} and~\ref{chap:secondmoment} to obtain the asymptotics of the largest component sizes and corresponding surplus edges. 
 In this chapter, we will assume that $\CMP2$ is in the critical window, i.e.,
 \begin{equation}\label{c3:eq:critical-window-defn}
  p_n = p_n(\lambda) = \frac{1}{\nu_n}+\frac{\lambda}{n^{\eta}}+o(n^{-\eta}).
 \end{equation}
Let $\mathscr{C}_{\sss (i)}^p(\lambda)$ denote the $i$-th largest component of $\rCM(\bld{d},p_n(\lambda))$. 
Each component $\mathscr{C}$ can be viewed as a measured metric space with (i) the metric being the graph distance where each edge has length one, (ii) the measure being proportional to the counting measure, i.e., for any $A\subset \mathscr{C}$, the measure of $A$ is given by $\mu_{\sss \mathrm{ct},i}(A) = |A|/|\mathscr{C}|$. 
For a generic measured metric space $M = (M,\mathrm{d},\mu)$ and $a>0$, write $aM$ to denote the measured metric space $(M,a\mathrm{d},\mu)$.  
Write $\sS_*$ for the space of all measured metric spaces equipped with the Gromov weak topology (see Section \ref{c3:sec:defn:GHP-weak}) and let $\sS_*^{\N}$ denote the corresponding product space with the accompanying product topology. For each $n\geq 1$, view $\big( n^{-\eta}\mathscr{C}_{\sss (i)}^p(\lambda) \big)_{i\geq 1} $ as an object in $\sS_*^{\N}$ by appending an infinite sequence of empty metric spaces after enumerating the components in $\mathrm{CM}_n(\bld{d},p_n(\lambda))$. 
The main results for the configuration model are as follows:
\begin{theorem}\label{c3:thm:main}Consider $\mathrm{CM}_n(\bld{d},p_n(\lambda))$ satisfying \textrm{Assumption~\ref{c3:assumption1}}, \eqref{c3:eq:super-crit-start} and \eqref{c3:eq:critical-window-defn} for some $\lambda\in\R$. There exists a sequence of random measured metric spaces $(\mathscr{M}_i(\lambda))_{i\geq 1}$ such that on $\mathscr{S}_*^\N$ as $n\to\infty$
\begin{equation}\label{c3:eq:thm:main}
 \big( n^{-\eta}\mathscr{C}_{\sss (i)}^p(\lambda) \big)_{i\geq 1} \dto  \big(\mathscr{M}_i(\lambda)\big)_{i\geq 1}.
\end{equation} %with respect to the topology defined in Section~\ref{c3:sec:defn:GHP-weak}. 
\end{theorem}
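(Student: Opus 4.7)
The plan is to deduce Theorem~\ref{c3:thm:main} from the general universality principle (Theorem~\ref{c3:thm:univesalty}) by verifying its hypotheses for percolation on $\mathrm{CM}_n(\bld d)$ at $p_n(\lambda_0)$ with $\lambda_0\in\R$ taken far into the barely subcritical regime, and then coupling percolation as $\lambda$ grows from $\lambda_0$ to the target value so as to propagate the convergence through the critical window. For $\lambda\ge \lambda_0$ one realizes $\mathrm{CM}_n(\bld d,p_n(\lambda))$ by independently retaining each edge deleted at stage $\lambda_0$ with probability $(p_n(\lambda)-p_n(\lambda_0))/(1-p_n(\lambda_0))$; under the half-edge pairing description this yields a size-biased merging dynamics which, at the metric scale $n^{-\eta}$ and the mass scale $n^{-\rho}$, is expected to converge to the augmented multiplicative coalescent underlying the limit constructed in~\cite{BHS15}.

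The bulk of the work is to verify the UAN conditions required by Theorem~\ref{c3:thm:univesalty} at the initial time $\lambda_0$. The three main inputs are: (i) convergence of the rescaled component masses $\bigl(n^{-\rho}|\mathscr{C}_{\sss (i)}^p(\lambda_0)|\bigr)_{i\ge 1}$ in $\ell^3_{\shortarrow}$, together with matching asymptotics for the susceptibility functionals $\sum_i|\mathscr{C}_{\sss (i)}^p(\lambda_0)|^k$ for $k=2,3$, supplied by Theorem~\ref{c3:thm:susceptibility}; (ii) a metric UAN statement of the form $n^{-\eta}\,\mathrm{diam}(\mathscr{C}_{\sss (i)}^p(\lambda_0))\to 0$ uniformly in $i$ as first $n\to\infty$ and then $\lambda_0\to-\infty$, which is the content of Theorem~\ref{c3:thm:diam-max}; and (iii) negligibility of surplus edges inside the barely subcritical components, which follows from a standard second-moment computation fed by the susceptibility bounds in~(i).

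With (i)--(iii) in hand, Theorem~\ref{c3:thm:univesalty} yields convergence of $\bigl(n^{-\eta}\mathscr{C}_{\sss (i)}^p(\lambda)\bigr)_{i\ge 1}$ in $\sS_*^\N$ to a limit obtained by evolving the limiting coalescent from time $\lambda_0$ to time $\lambda$, started from the entrance law encoded in~(i). The auxiliary parameter $\lambda_0$ is then removed by sending $\lambda_0\to-\infty$ on the limit side and appealing to Feller-type continuity of the limiting coalescent at its entrance boundary as developed in~\cite{BHS15}; this step simultaneously identifies $(\mathscr{M}_i(\lambda))_{i\ge 1}$ with the object constructed there.

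The hardest step should be pushing the metric UAN estimate (ii) to the correct scale: a priori a barely subcritical component of size $\Theta(n^{\rho})$ could have diameter of comparable polynomial order, and obtaining a bound strong enough to beat $n^{\eta}$ is the heart of Theorem~\ref{c3:thm:diam-max} and will presumably require a careful exploration-process analysis combined with tight control of the tail of the size-biased degree distribution. A secondary difficulty is coordinating the two limits $n\to\infty$ and $\lambda_0\to-\infty$: one must show that the ordered rescaled masses, surpluses and metric contributions converge jointly and tightly in $\sS_*^\N$, so that the entrance-boundary argument produces genuine Gromov-weak convergence and not merely finite-dimensional consistency of the marginals.
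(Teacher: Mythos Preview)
Your high-level picture is right---treat barely subcritical clusters as blobs, check the UAN hypotheses of Theorem~\ref{c3:thm:univesalty}, and let the superstructure be governed by a rank-one/multiplicative-coalescent mechanism---but you have misidentified the scale of the ``barely subcritical'' starting point, and this is a genuine gap. You propose to start at $p_n(\lambda_0)$ with $\lambda_0\in\R$ large and negative, i.e.\ still inside the critical window $n^{-\eta}$. At that scale the blobs have size $\Theta(n^{\rho})$ and diameter $\Theta(n^{\eta})$, so Assumption~\ref{c3:assm:blob-diameter} fails: the blobs are \emph{not} metrically negligible relative to the target scale $n^{\eta}$. Theorem~\ref{c3:thm:diam-max} does not rescue this: it is stated under Assumption~\ref{c3:assumption-w}, where the criticality gap is $\lambda_0 n^{-\delta}$ with $0<\delta<\eta$ strictly, and its conclusion $\Delta_{\max}=O(n^{\delta}\log n)$ is only $o(n^{\eta})$ because $\delta<\eta$. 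Your double limit $\lambda_0\to-\infty$ after $n\to\infty$, together with Feller continuity of the limit at its entrance boundary, would be a substantial extra piece of work that the paper never undertakes.

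The paper instead fixes once and for all a single $\delta$ with $\eta/2<\delta<\eta$ and works with the dynamic half-edge pairing $\mathcal{G}_n(t)$ of Algorithm~\ref{c3:algo:dyn-cons}, stopped at a barely subcritical time $t_n$ corresponding to the $n^{-\delta}$ scale (see~\eqref{c3:eq:def-subcrit-time}). At this scale Theorems~\ref{c3:thm:susceptibility} and~\ref{c3:thm:diam-max} give exactly the entrance-boundary and diameter estimates needed. The superstructure between $t_n$ and $t_c(\lambda)$ is then not analyzed as a dynamical coalescent but is replaced by a \emph{modified} process (Algorithm~\ref{c3:algo:modified-MC}) whose superstructure at the target time is distributionally a Norros--Reittu graph $\mathrm{NR}_n(\bld{x},q)$ on the blobs; Theorem~\ref{c3:thm:univesalty} is applied \emph{statically} to each component of this graph via Proposition~\ref{c3:prop:generate-nr-given-partition}. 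The remaining work (Section~\ref{c3:sec:struct-compare}) is to show that the modified and original graphs have asymptotically the same component sizes, surpluses, and shortest paths, and finally to pass from $\mathcal{G}_n(t_c(\lambda))$ to $\mathrm{CM}_n(\bld d,p_n(\lambda))$ via the sandwich coupling of Lemma~\ref{c3:lem:coupling-whp}. No $\lambda_0\to-\infty$ limit, and no continuity of the limit object at the entrance boundary, is ever invoked.
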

\begin{theorem}\label{c3:thm:main-simple}
Under \textrm{Assumption~\ref{c3:assumption1}}, \eqref{c3:eq:super-crit-start} and \eqref{c3:eq:critical-window-defn} for some $\lambda\in\R$, the convergence in \eqref{c3:eq:thm:main} also holds for the components of $\mathrm{UM}_n(\bld{d},p_n(\lambda))$, with the identical limiting object.
\end{theorem}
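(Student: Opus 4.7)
The plan is to reduce Theorem~\ref{c3:thm:main-simple} to Theorem~\ref{c3:thm:main} via the classical identity $\mathrm{UM}_n(\bld{d},p_n(\lambda)) \stackrel{d}{=} \mathrm{CM}_n(\bld{d},p_n(\lambda)) \mid E_n$, where $E_n$ is the event that $\CM$ is a simple graph. This identity holds because bond percolation and the simplicity event are both deterministic functions of the underlying edge set and the (external) percolation coins, so percolation commutes with the conditioning. Writing $\bld{Y}_n^{\star} := \bigl(n^{-\eta}\mathscr{C}_{\sss(i)}^{p}(\lambda)\bigr)_{i\geq 1}$ for the CM ($\star=\mathrm{C}$) and UM ($\star=\mathrm{U}$) instances, we have for any bounded continuous $F:\sS_*^\N \to \R$ that
\[
\E\bigl[F(\bld{Y}_n^{\mathrm{U}})\bigr] \;=\; \frac{\E\bigl[F(\bld{Y}_n^{\mathrm{C}})\,\mathbf{1}_{E_n}\bigr]}{\mathbb{P}(E_n)},
\]
so it suffices to establish: (a) $\mathbb{P}(E_n) \to c$ for some $c \in (0,1]$, and (b) asymptotic independence of $\bld{Y}_n^{\mathrm{C}}$ and $\mathbf{1}_{E_n}$.

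For (a), Assumption~\ref{c3:assumption1} yields $\mu_2 = \E[D^2] < \infty$, while Assumption~\ref{c3:assumption1}\eqref{c3:assumption1-1} forces $\max_{i\in[n]} d_i = O(n^\alpha)$ with $\alpha = 1/(\tau-1) < 1/2$, so $\max_i d_i = o(\sqrt{n})$. The classical asymptotic-simplicity theorem of Janson~\cite{J09} then gives joint convergence of the self-loop count $S_n$ and multi-edge-pair count $M_n$ in $\CM$ to a pair of independent Poisson variables with parameters depending only on $\mu$ and $\mu_2$; in particular $\mathbb{P}(E_n) \to c > 0$.

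For (b), I would use an Aldous-style truncation. Fix $K\ge 1$, reveal the pairings of the half-edges attached to the top-$K$ hubs $\{1,\dots,K\}$, and regard the rest as a uniform pairing of the residual half-edges. On one hand, each hub contributes only $O(d_i^2/\ell_n) = O(n^{2\alpha - 1}) = o(1)$ to the expected $S_n + M_n$, so the entire contribution of the top-$K$ hubs to $(S_n, M_n)$ vanishes in the limit, and the simplicity event becomes, up to $o_\mathbb{P}(1)$, measurable with respect to the residual bulk pairing. On the other hand, the scaling limit $\bld{Y}$ is, by the universality mechanism of Theorem~\ref{c3:thm:univesalty} together with the barely-subcritical inputs of Theorems~\ref{c3:thm:susceptibility}--\ref{c3:thm:diam-max}, controlled by the top-$K$ hubs with error vanishing as $K\to\infty$ (using $\boldsymbol{\theta}\in\ell^3_{\shortarrow}$ and Assumption~\ref{c3:assumption1}\eqref{c3:assumption1-2}). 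A standard diagonal argument then produces the joint convergence $(\bld{Y}_n^{\mathrm{C}}, \mathbf{1}_{E_n}) \dto (\bld{Y}, B)$, with $B$ a Bernoulli$(c)$ variable independent of $\bld{Y}$, which combined with the display yields $\E[F(\bld{Y}_n^{\mathrm{U}})] \to \E[F(\bld{Y})]$, i.e.\ \eqref{c3:eq:thm:main} for the UM model.

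The main obstacle is making (b) precise in the Gromov-weak topology for the \emph{entire} infinite sequence of rescaled components, rather than merely for component sizes where the analogous decoupling is classical. The exploration that produces the metric scaling limit touches many bulk vertices, and one must verify that the few edges forming self-loops or multi-edges become asymptotically disjoint, in a sense compatible with the $n^{-\eta}$ metric rescaling, from the edges used to glue together the limiting components. Modulo this bookkeeping, the passage from CM to UM is a soft Poissonisation fact applied on top of Theorem~\ref{c3:thm:main}.
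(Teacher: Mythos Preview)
Your overall reduction is correct and matches the paper: write $\E[F(\bld{Y}_n^{\mathrm{U}})]=\E[F(\bld{Y}_n^{\mathrm{C}})\mathbf{1}_{E_n}]/\PR(E_n)$, use $\liminf_n\PR(E_n)>0$, and establish the asymptotic factorisation $\E[F(\bld{Y}_n^{\mathrm{C}})\mathbf{1}_{E_n}]-\E[F(\bld{Y}_n^{\mathrm{C}})]\PR(E_n)\to 0$. Part~(a) is exactly as in the paper.

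The decomposition you propose for~(b), however, does not go through as stated, and you have correctly flagged the reason yourself. Revealing the pairings of the top-$K$ hubs does \emph{not} make $\bld{Y}_n^{\mathrm{C}}$ approximately measurable: the Gromov-weak structure of the large components depends on all the edges inside them, most of which are bulk-to-bulk. So the two pieces of information you try to separate (hub edges vs.\ bulk pairing) are not the right split---the component metric sits on the bulk side, not the hub side.

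The paper uses a different and cleaner separation: by \emph{exploration time} rather than by vertex identity. Run the size-biased breadth-first exploration of $\mathrm{CM}_n(\bld{d},p_n(\lambda))$ (conditionally on the percolated degrees $\bld{d}^p$) up to time $Tn^{\rho}$; with high probability the $K$ largest components are fully revealed by then, so $X=(\mathscr{C}_{(i)}^p(\lambda))_{i\le K}$ is $\mathscr{F}_{Tn^{\rho}}$-measurable on a good event. The crucial estimate is that the event $\mathcal{B}_{n,T}$, that \emph{some vertex discovered by time $Tn^{\rho}$} is incident to a self-loop or multi-edge in $\CM$, has vanishing probability. Indeed
\[
\PR_p(\mathcal{B}_{n,T})\;\le\;\frac{1}{\ell_n-1}\,\E_p\Big[\sum_{i\in[n]} d_i^2\,\mathcal{I}_i^n(Tn^{\rho})\Big],
\]
and one splits the sum: the first $K$ terms contribute $O(\sum_{i\le K}d_i^2/\ell_n)=O(Kn^{2\alpha-1})=o(1)$, while for $i>K$ the size-biased bound $\PR_p(\mathcal{I}_i^n(Tn^{\rho})=1)\le Tn^{\rho}d_i^p/(\sum_j d_j^p-2Tn^{\rho})$ together with $d_i^p\le d_i$ gives a contribution of order $n^{-3\alpha}\sum_{i>K}d_i^3$, which tends to zero by the third-moment tail condition in Assumption~\ref{c3:assumption1}\eqref{c3:assumption1-2}. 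On $\mathcal{B}_{n,T}^c$, the simplicity of $\CM$ is the simplicity of the unexplored part, which conditionally on $\mathscr{F}_{Tn^{\rho}}$ is again a configuration model whose simplicity probability converges to the same constant. This yields the factorisation directly in the Gromov-weak topology, without any appeal to ``hubs control the metric''.

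So the missing idea is: do not try to isolate the hubs; instead isolate the \emph{explored region} that carries the large components, and show that region is whp disjoint from all self-loops and multi-edges.
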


\begin{remark}\normalfont
The limiting objects are precisely described in Section~\ref{c3:sec:descp-limit}. The conclusion of Theorem~\ref{c3:thm:main} holds if the measure $\mu_{\sss ct , i }$ on $\mathscr{C}_{\sss (i)}^p$ is replaced by more general measures, see
% Indeed, define the probability measure $\mu_{w,i}$ proportional to $ \sum_{i\in A}w_i$.
% If $w_i$ satisfies some regularity conditions (see Assumption~\ref{c3:assumption-w} below).
% This will be discussed in more detail in
Remark~\ref{c3:rem:switch-measure}.
\end{remark}

\subsection{Mesoscopic properties of the critical clusters: barely subcritical regime} 
One of the main ingredients in the proof of Theorem~\ref{c3:thm:main} is a refined analysis of various susceptibility functions in the \emph{barely subcritical} regime which are of independent interest.
%(see Section~\ref{c3:sec:overview} below). 
In this context we prove general statements about the susceptibility functions applicable not just to percolation on the supercritical configuration model, rather to any barely subcritical configuration model.
Since percolation on a configuration model yields a configuration model \cite{F07,J09}, the above yields susceptibility functions for percolation on a configuration model as a special case.
 % To set this up we need to describe the necessary setting, where each vertex in the network is associated with both a degree and a weight.

\begin{assumption}[Barely subcritical degree sequence]
	\label{c3:assumption-w}
\normalfont 
Let $\boldsymbol{d}'=(d_1',\dots,d_n')$ be a degree sequence and let $w_{\sss (\cdot )}:[n]\mapsto\R_+$ be such that
\begin{enumerate}[(i)] 
\item Assumption~\ref{c3:assumption1} holds for $\bld{d}'$ with some $\bld{c}\in \ell^3_{\shortarrow}\setminus \ell^2_{\shortarrow}$, and 
\begin{eq}
 \lim_{n\to\infty}\frac{1}{n}\sum_{i\in [n]}d_i'&=\mu_{d},\quad \lim_{n\to\infty}\frac{1}{n}\sum_{i\in [n]}w_i=\mu_{w},\\
 & \lim_{n\to\infty}\frac{1}{n}\sum_{i\in [n]}d_i'w_i=\mu_{d,w}.
 \end{eq} 
 \item $\max\{ \sum_{i\in [n]} w_i^3, \sum_{i\in [n]}d_i'^2 w_i,\sum_{i\in [n]}d_i' w_i^2 \} = O(n^{3\alpha}). $ \vspace{.1cm}
 \item (\emph{Barely subcritical regime}:) There exists $0<\delta<\eta$ and $\lambda_0>0$ such that
\begin{equation}\label{c3:defn:barely-subcrit}
 \nu_n'=\frac{\sum_{i\in [n]}d_i'(d_i'-1)}{\sum_{i\in [n]}d_i'}=1-\lambda_0 n^{-\delta}+o(n^{-\delta}).
\end{equation}
\end{enumerate}
\end{assumption} 
We will consider a configuration model, where vertex $i$ has degree $d_i'$ and weight $w_i$. 
%The degrees and weights are assumed to satisfy Assumption~\ref{c3:assumption-w}. 
Let $\mathscr{C}'_{\sss (i)}$ denote the $i$-th largest component of $\rCM_n(\bld{d}')$, $\mathscr{W}_{\sss (i)}=\sum_{k\in\mathscr{C}_{\sss (i)}'}w_k$ and define the weight-based susceptibility functions as
\begin{equation}\label{c3:defn:susc-w}
 s_r^\star = \frac{1}{n}\sum_{i\geq 1}\mathscr{W}_{\sss (i)}^r\ ;\ r\geq 1, \qquad
 s_{pr}^\star = \frac{1}{n}\sum_{i\geq 1} \mathscr{W}_{\sss (i)}|\mathscr{C}_{\sss(i)}'|.
\end{equation}
Also, define the weighted distance-based susceptibility as 
\begin{equation}\label{c3:defn:susc-dist}
\mathcal{D}_n^\star=\frac{1}{n}\sum_{i,j\in [n]}w_iw_j\mathrm{d}(i,j)\ind{i,j \text{ are in the same connected component}},
\end{equation} where $\mathrm{d}$ denotes the graph distance in the component $\mathscr{C}_{\sss (k)}'$ for which $i,j\in \mathscr{C}_{\sss (k)}'$. 
The goal is to show that the quantities defined in \eqref{c3:defn:susc-w} and \eqref{c3:defn:susc-dist} satisfy asymptotic regularity conditions.
 These are summarized in the following theorem:
\begin{theorem}[Susceptibility functions]\label{c3:thm:susceptibility} Under \textrm{Assumption~\ref{c3:assumption-w}}, as $n\to\infty$, 
 \begin{eq}
  n^{-\delta}s_2^\star\pto \frac{\mu_{d,w}^2}{\mu_d\lambda_0}, &\quad n^{-\delta}s_{pr}^\star \pto \frac{\mu_{d,w}}{\lambda_0}, \quad n^{-(\alpha+\delta)}\mathscr{W}_{\sss (j)}\pto \frac{\mu_{d,w}}{\mu_d\lambda_0}c_j, \\
  n^{-3\alpha-3\delta+1}s_3^\star&\pto \bigg(\frac{\mu_{d,w}}{\mu_d\lambda_0}\bigg)^3\sum_{i=1}^{\infty}c_i^3, \quad n^{-2\delta}\mathcal{D}_n^\star\pto \frac{\mu_{d,w}^2}{\mu_d\lambda_0^2}. 
 \end{eq}
\end{theorem}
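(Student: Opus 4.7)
The plan is to combine two-vertex path counting in the configuration model with a second-moment concentration argument, and then to handle the component-indexed statements separately via a hub analysis. I would first rewrite
\[
 s_{pr}^\star = \frac{1}{n}\sum_{u,v} w_u \ind{u\leftrightarrow v}, \qquad s_2^\star = \frac{1}{n}\sum_{u,v} w_u w_v \ind{u\leftrightarrow v}, \qquad \mathcal{D}_n^\star = \frac{1}{n}\sum_{u,v} w_u w_v\, \mathrm{d}(u,v)\, \ind{u\leftrightarrow v},
\]
and apply the standard configuration-model path-counting estimate
\[
 \mathbb{P}\bigl(u\leftrightarrow v,\ \mathrm{d}(u,v)=k\bigr) = \frac{d_u' d_v'}{n\mu_d}(\nu_n')^{k-1}(1+o(1)),
\]
valid uniformly for path-lengths $k$ not too large. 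Summing the geometric series, using $1-\nu_n' \sim \lambda_0 n^{-\delta}$, and absorbing the diagonal $u=v$ and the large-$k$ tail via the moment bounds in Assumption~\ref{c3:assumption-w}(ii) yields the claimed leading-order asymptotics for $\E[s_{pr}^\star]$, $\E[s_2^\star]$, and $\E[\mathcal{D}_n^\star]$.

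To upgrade expectation to convergence in probability I would compute the variances by a four-vertex path-counting expansion; the overlap contributions (paths sharing a hub vertex) are precisely the reason for the $O(n^{3\alpha})$ bounds on $\sum w_i^3$, $\sum d_i'^2 w_i$, and $\sum d_i' w_i^2$ prescribed in Assumption~\ref{c3:assumption-w}(ii). For the per-component claim $n^{-(\alpha+\delta)} \mathscr{W}_{\sss(j)} \pto c_j \mu_{d,w}/(\mu_d\lambda_0)$, the key structural input is that with high probability the $j$-th largest component is exactly the cluster of the $j$-th highest-degree vertex $v_j^\star$ (with $d'_{v_j^\star}\sim c_j n^\alpha$). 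Pairwise disjointness of the top-$K$ hub clusters follows from a first-moment path-counting bound: the expected number of paths between $v_i^\star$ and $v_j^\star$ is $\lesssim c_i c_j n^{2\alpha+\delta-1}/(\mu_d\lambda_0) = o(1)$, since $2\alpha + \eta = 1$ and $\delta < \eta$. Given disjointness, a branching-process comparison with offspring mean $\nu_n'$ started from $d'_{v_j^\star}$ progeny yields $\mathscr{W}(v_j^\star) = d'_{v_j^\star}\cdot \tfrac{\mu_{d,w}/\mu_d}{1-\nu_n'}(1+\oP(1))$ via a martingale bound on the accumulated explored weight.

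Finally, $s_3^\star$ is dominated by the top components, so I would split $s_3^\star = \frac{1}{n}\sum_{j\le K}\mathscr{W}_{\sss(j)}^3 + R_K$. The main term converges to $\bigl(\mu_{d,w}/(\mu_d\lambda_0)\bigr)^3\sum_{j\le K}c_j^3\cdot n^{3\alpha+3\delta-1}$ by the previous paragraph, while the tail satisfies $R_K \le \mathscr{W}_{\sss(K+1)}\cdot s_2^\star$. \textbf{The main obstacle} is showing $n^{-(\alpha+\delta)}\mathscr{W}_{\sss(K+1)} \pto 0$ as $K\to\infty$ uniformly in $n$: this amounts to controlling the weight of clusters seeded by vertices just below the top-$K$ hubs, which requires the branching-process comparison to hold uniformly as the truncation level grows, together with careful handling of the self-avoidance corrections to the path-counting formula when $k \asymp n^{\delta}$, where the geometric tail is no longer negligible and contributes non-trivially to the second-moment bookkeeping.
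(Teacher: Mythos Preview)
Your treatment of $s_2^\star$, $s_{pr}^\star$ and $\mathcal{D}_n^\star$ via two-vertex path counting plus Chebyshev is essentially the paper's argument, and your exploration/martingale picture for $\mathscr{W}(v_j^\star)$ is also what the paper does. There are, however, two genuine gaps.

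First, your tail bound $R_K \le \mathscr{W}_{\sss(K+1)}\cdot s_2^\star$ is too crude to close the $s_3^\star$ argument. Plugging in the target scales $\mathscr{W}_{\sss(K+1)}\asymp c_{K+1}n^{\alpha+\delta}$ and $s_2^\star\asymp n^{\delta}$ gives
\[
 n^{1-3\alpha-3\delta}R_K \;\lesssim\; c_{K+1}\,n^{1-2\alpha-\delta}\;=\;c_{K+1}\,n^{\eta-\delta},
\]
which diverges since $\delta<\eta$; no amount of sending $K\to\infty$ rescues this. The paper's resolution is different and is the one idea you are missing: \emph{delete} the top $K$ vertices. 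In the resulting configuration model the criticality parameter drops to
\[
 \nu_n^{\sss K}\;\le\;\nu_n' - C\,n^{-\delta}\sum_{i\le K}c_i^2,
\]
and because $\bld{c}\notin\ell^2$ the factor $\sum_{i\le K}c_i^2\to\infty$. Feeding this stronger subcriticality back into the second-moment path bound $\E\big[(\mathscr{W}^{\sss K}(V_n^*))^2\big]=O\big(n^{3\alpha-1}/(1-\nu_n^{\sss K})^3\big)$ yields $\E\big[\sum_i(\mathscr{W}_{\sss(i)}^{\sss K})^3\big]=O\big(n^{3\alpha+3\delta}/(1+C\sum_{i\le K}c_i^2)^3\big)$, which is exactly the uniform-in-$n$ tail control you need.

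Second, your hub-disjointness argument (expected number of paths between $v_i^\star$ and $v_j^\star$ is $o(1)$) is correct but insufficient for the identification $\mathscr{W}_{\sss(j)}=\mathscr{W}(v_j^\star)$. Disjointness tells you the $K$ hub clusters are distinct components, but it does not rule out a component containing \emph{none} of the top-$K$ hubs having weight exceeding $\mathscr{W}(v_j^\star)$. The paper closes this via the same deletion trick: on the event that some non-hub component has weight at least $r n^{\alpha+\delta}$, one has $\sum_i(\mathscr{W}_{\sss(i)}^{\sss K})^3>r^3 n^{3(\alpha+\delta)}$, whose probability vanishes as $K\to\infty$ by the previous paragraph. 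So the deletion device is the single missing ingredient that resolves both your identified obstacle for $s_3^\star$ and the gap in the $\mathscr{W}_{\sss(j)}$ identification.
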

 For a connected graph $G$, $\Delta(G)$ denotes the diameter of the graph, and for any arbitrary graph~$G$, $\Delta_{\max}(G):=\max \Delta(\mathscr{C})$, where the maximum is taken over all connected components ${\mathscr{C}\subset G}$. 
 We simply write $\Delta_{\max}$ for $\Delta_{\max}(\rCM_n(\bld{d}'))$. 
\begin{theorem}[Maximum diameter]\label{c3:thm:diam-max} Under \textrm{Assumption~\ref{c3:assumption-w}}, as $n\to\infty$, $$\PR(\Delta_{\max}>6n^{\delta}\log(n))\to 0.$$
\end{theorem}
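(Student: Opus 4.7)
The plan is to bound $\PR(\Delta_{\max} > L)$ with $L = 6 n^\delta \log n$ via a first-moment argument on long simple paths. The starting observation is that on $\{\Delta_{\max} > L\}$ there must exist two vertices $u \neq v$ in a common connected component of $\rCM_n(\bld{d}')$ with graph distance $d(u,v) = k$ for some $k > L$; in particular, at least one simple path of length exactly $k$ from $u$ to $v$ is present in the graph. A union bound combined with Markov's inequality therefore gives
\[
\PR(\Delta_{\max} > L) \;\leq\; \sum_{u \neq v} \sum_{k > L} \E[N_{u,v}(k)],
\]
where $N_{u,v}(k)$ denotes the number of simple $u$-$v$ paths of length $k$ in $\rCM_n(\bld{d}')$.

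The key technical input is the first-moment estimate
\[
\E[N_{u,v}(k)] \;\leq\; \frac{d'_u d'_v}{\ell_n}\, (\nu_n')^{k-1},
\]
which I would derive by the standard path-enumeration recipe for the configuration model: a specific simple path $u = v_0, v_1, \ldots, v_{k-1}, v_k = v$ contributes $d'_u d'_v \prod_{j=1}^{k-1} d'_{v_j}(d'_{v_j}-1)$ orderings of half-edges and is realized in the uniform random perfect matching with probability $\prod_{j=1}^{k}(\ell_n - 2j+1)^{-1}$; summing over intermediate $(k-1)$-tuples and invoking $\sum_v d'_v(d'_v-1) = \ell_n \nu_n'$ yields the stated bound. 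The $(1+o(1))$ correction in $\prod_j(\ell_n - 2j + 1) \approx \ell_n^k$ is harmless in the relevant range since $k \cdot \max_i d'_i \leq L \cdot O(n^\alpha) = o(n)$, using that $\alpha + \delta < \alpha + \eta = \rho < 1$ for $\tau \in (3,4)$.

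Summing over pairs via $\sum_u d'_u = \ell_n$ gives $\sum_{u\neq v}\E[N_{u,v}(k)] \leq \ell_n(\nu_n')^{k-1}$, and summing the geometric series in $k$,
\[
\PR(\Delta_{\max} > L) \;\leq\; \frac{\ell_n\,(\nu_n')^L}{1 - \nu_n'} \;=\; O\!\left(n^{1+\delta}\right)\,(\nu_n')^L,
\]
using $\ell_n = \Theta(n)$ and $1 - \nu_n' = \lambda_0 n^{-\delta}(1+o(1))$ from \textrm{Assumption~\ref{c3:assumption-w}}. Finally $(\nu_n')^L = \exp(-L \lambda_0 n^{-\delta}(1+o(1))) = n^{-6\lambda_0(1+o(1))}$, so the right-hand side is $O(n^{1+\delta - 6\lambda_0}) \to 0$ once the leading constant in $L$ is chosen large enough relative to $\lambda_0$ and $\delta$; the specific constant $6$ is merely convenient, and what is actually needed downstream in the proof of Theorem~\ref{c3:thm:main} is that \emph{some} constant multiple of $n^\delta\log n$ works.

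The main obstacle is the first-moment bound in the middle step: one has to absorb the self-avoidance constraint into an upper bound and to justify the approximation $\prod_{j=1}^k (\ell_n - 2j+1) \approx \ell_n^k$ uniformly in $k$ up to $n^\delta \log n$ in the presence of hubs with $d'_v$ of order $n^\alpha$, which is where the moment conditions on $\bld{d}'$ in \textrm{Assumption~\ref{c3:assumption-w}} are essential. Once this uniform estimate is in place, the rest of the proof is a routine geometric-sum computation.
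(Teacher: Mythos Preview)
Your argument is correct and follows the same first-moment path-counting route as the paper. The only cosmetic difference is that the paper, instead of summing over endpoint pairs, fixes a vertex $i$ and uses the triangle inequality to note that $\Delta(\mathscr{C}'(i))>6n^{\delta}\log n$ forces a simple path of length at least $3n^{\delta}\log n$ emanating from $i$, then applies $\E[P_l]\leq d_i'(\nu_n')^{l-1}$ and a union bound over $i\in[n]$; your remark that the leading constant must in general be tuned to $\lambda_0$ applies equally to the paper's version.
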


\begin{remark}\normalfont 
Note that $w_i=1$ for all $i\in [n]$ implies that $\mathscr{W}_{\sss (i)}=|\mathscr{C}_{\sss (i)}|$, and thus Theorem~\ref{c3:thm:susceptibility} holds for the usual susceptibility functions defined in terms of the component sizes (cf. \cite{J09b}).  
In the proof of Theorem~\ref{c3:thm:main}, we will require a more general weight function.% , where $w_i$ is taken to be the number of half-edges deleted due to percolation from vertex $i$.
\end{remark}

\section{Discussion}\label{c3:sec:discussion}
In this section, we describe related work and discuss the relevance of the results in this chapter. 
\paragraph*{Related work.}
A wide array of universality conjectures have been postulated about functionals of network models. % and their relationships with the degree exponent of the model.
Of particular relevance here is \cite{braunstein2003optimal,braunstein2007optimal}, where via simulations the so-called strong disorder regime (which in the extremal case is the minimal spanning tree where edges have i.i.d.~positive random edge weights) was studied.
% resulting in the conjecture in the introduction.  % , which in the extremal case is the minimal spanning tree where edges have i.i.d positive random edge weights with continuous distribution. This is closely related to the structure of the components under critical percolation.
% In the above papers, via simulation-based arguments, the authors suggested a qualitative transition in the asymptotics for the minimal spanning tree, depending on whether the degree exponent $\tau$ of the model satisfies $\tau > 4$ or $\tau\in (3,4)$.
From the probabilisitic combinatorics community % relevant to our work.
% As described in Section~\ref{c3:sec:overview}
both for the universality principle and the results for the configuration model, a major role is played by the \emph{multiplicative coalescent}. 
The process was rigorously constructed in \cite{A97} whilst a complete description of the entrance boundary of this Markov process was laid out in \cite{AL98}. 
In the context of the critical regime for random graphs especially with heavy tails, \emph{component sizes} for the closely related rank-one random graph model were derived in \cite{H13,BHL12}. 
These were then extended to the configuration model in \cite{Jo10} culminating in a complete description of component sizes and surplus edges in \cite{DHLS16}. 
Rigorous results for the metric-space structure of components in the heavy-tailed regime was first derived in \cite{BHS15}; the limiting objects are described in Section~\ref{c3:sec:descp-limit}. 
In this chapter, we develop general conditions under which the metric structure of the critical components are identical to those for critical rank-one inhomogeneous random graphs as derived in~\cite{BHS15}, and hence derive Theorem~\ref{c3:thm:main}.
Similar universality principles for $\tau > 4$ were derived in \cite{BBSX14}. 
% The metric space scaling limit of the largest components at criticality was obtained in \cite{bhamidi-sen} for $\CM$ and
% $\mathrm{UM}_n(\bld{d})$, when $\tau>4$.
\paragraph*{Proof techniques in the barely subcritical regime.}
 A key part of the contributions of this chapter is a refined analysis of the barely subcritical regime for $\CM$ in the heavy-tailed regime;
 % . The subcritical regime for various models has inspired quite a bit of activity over the last few years,
  for related results see e.g.  \cite{J08,J09b,janson2008susceptibility,janson2009susceptibility} and the references therein. The bounds in this chapter, in particular the extension to the barely subcritical regime are new. 
The proof techniques are also novel, and involve a combination of generalizing path-counting techniques \cite{J09b}, formalizing branching process heuristics, as well as leveraging the differential equation method  \cite{wormald1995differential} to analyze various susceptibility functions in the barely subcritical regime. % ;
% the reader can find more in Sections~\ref{c3:sec:entrance-bdd-proofs} and \ref{c3:sec:proof-metric-mc}.
%\vspace{.2cm}
%\paragraph*{\emph{Extensions and future work}}
%%\begin{enumerate}
%%\item Theorem~\ref{c3:thm:main} shows the structural convergence with respect to the Gromov-weak topology. 
%%In a future work, we hope to strengthen the convergence to the Gromov-Hausdorff-Prokhorov topology, under which the results for the $\tau>4$ universality class has been derived in \cite{BBSX14,ABG09,BBW12}.
%%\item It was shown in \cite{BHS15} that the limiting object in Theorem~\ref{c3:thm:main} has Minkowski dimension $(\tau-2)/(\tau-3)$. 
%%However, it was also conjectured that Hausdorff dimension should equal $(\tau-2)/(\tau-3)$.  
% It turns out that the study of the critical component structures play a crucial role in the study of the metric structure of the minimal spanning tree \cite{ABGM13,addario-berry-sen}.
%This chapter takes a crucial step towards this study for graphs with heavy-tailed degrees. 
%%\item  
%%The study of minimal spanning trees is still an open question even for the case $\tau>4$. 
%%\end{enumerate}
\begin{remark} \normalfont 
In an ongoing work, \cite{CG17} derives the scaling limit of the maximal components at criticality for $\CM$ when the degrees are i.i.d samples from a power-law distribution with $\tau\in (3,4)$, and \cite{GHS17} investigates the properties of these limiting objects, obtained via appropriate tilts of Levy trees \cite{duquesne-legall-levy-tree-book}.  
Interestingly, the description of the limiting objects in the i.i.d setting turns out to be quite different than Theorem~\ref{c3:thm:main}.
It will be interesting to explore the connections between the results in the above papers and the current work. 
\end{remark}

\section{Convergence of metric spaces, discrete structures and limit objects}
\label{c3:sec:definitions-full}

The aim of this section is to define the proper notion of convergence relevant to this chapter (Section~\ref{c3:sec:defn:GHP-weak}),
set-up  discrete structures required in the statement and proof of the universality result in Theorem~\ref{c3:thm:univesalty} (Sections~\ref{c3:defn:super-graph}, \ref{c3:sec:tree-space}, \ref{c3:sec:p-trees}), and describe limit objects that arise in Theorem~\ref{c3:thm:main} (Sections~\ref{c3:sec:inhom-cont-tree} and~\ref{c3:sec:descp-limit}).

\subsection{Gromov-weak topology}
\label{c3:sec:defn:GHP-weak}
A complete separable measured metric space (denoted by $(X, \mathrm{d} , \mu)$) is a complete, separable metric space $(X,\mathrm{d})$ with an associated probability measure $\mu$ on the Borel sigma algebra $\cB(X)$.
The Gromov-weak topology is defined on $\mathscr{S}_0$, the space of 
all complete and separable measured metric spaces  (see \cite[Section 2.1.2]{BHS15}, \cite{GPW09,G07}).
% that are equipped with a probability measure on the associated Borel sigma-field
 The notion is formulated based on the philosophy of finite-dimensional convergence.
 Two measured metric spaces $(X_1,\dst_1,\mu_1)$, $(X_2,\dst_2,\mu_2)$ 
 are considered to be equivalent if there exists an isometry  $\psi:\mathrm{support}(\mu_1)\mapsto \mathrm{support}(\mu_2)$ such that $\mu_2=\mu_1 \circ \psi^{-1}$. 
 Let $\mathscr{S}_*$ be the space of all equivalence classes of $\mathscr{S}_0$. 
 We abuse the notation by not distinguishing between a metric space and its corresponding equivalence class. 
 Fix $l\geq 2$, $(X,\dst,\mu)\in \mathscr{S}_*$. 
 Given any collection of points $\mathbf{x}=(x_1,\dots,x_l)\in X^l$, define $\mathrm{D}(\mathbf{x}):=(\dst(x_i,x_j))_{ i,j\in [l]}$ to be the matrix of pairwise distances of the points in $\mathbf{x}$. A function $\Phi: \mathscr{S}_*\mapsto \R$ is called a polynomial if there exists a bounded continuous function $\phi:\R^{l^2}\mapsto \R$ such that 
\begin{equation}\label{c3:defn:polynomials}
 \Phi((X,\dst,\mu))=\int \phi(\mathrm{D}(\mathbf{x}))d\mu^{\sss\otimes l},
\end{equation}
where $\mu^{\sss\otimes l}$ denotes the $l$-fold product measure. A sequence $\{(X_n,\dst_n,\mu_n)\}_{n\geq 1}$ $\subset \mathscr{S}_*$ is said to converge to $(X,\dst,\mu)\in \mathscr{S}_*$ if and only if $\Phi((X_n,\dst_n,\mu_n))\to \Phi((X,\dst,\mu))$ for all polynomials $\Phi$ on~$\mathscr{S}_*$. By \cite[Theorem 1]{GPW09},  $\mathscr{S}_*$ is a Polish space under the Gromov-weak topology.

\subsection{Super graphs}\label{c3:defn:super-graph} 
Our \emph{super graphs} consist of three main ingredients: 1) a collection of metric spaces called \emph{blobs}, 2) a graphical super-structure determining the connections between the blobs, 3) connection points or junction points at each blob. In more detail,  super graphs contain the following structures:
 \begin{enumerate}[(a)]
 \item \label{c3:defn:blob:supstrct:2} \textbf{Blobs}: A collection $\{(M_i,\dst_i,\mu_i)\}_{i\in [m]}$ of  connected, compact measured metric spaces.
  \item \textbf{Superstructure}: A (random) graph $\mathcal{G}$ with vertex set $[m]$. The graph has a weight sequence $\mathbf{p}=(p_i)_{i\in [m]}$ associated to the vertex set $[m]$. We regard $M_i$ as the $i$-th vertex of $\mathcal{G}$. 
  \item \textbf{Junction points}:  An independent collection of random points $\mathbf{X}:=(X_{i,j}:i,j\in [m])$ such that $X_{i,j}\sim \mu_i$ for all $i,j$. Further, $\mathbf{X}$ is independent of~$\mathcal{G}$.
 \end{enumerate} Using these three ingredients, define a metric space $(\bar{M},\bar{\dst}, \bar{\mu})=\Gamma(\mathcal{G},\mathbf{p}, \mathbf{M},\mathbf{X})$, with $\bar{M}= \sqcup_{i\in [m]}M_i$, by putting an edge of \emph{length} one between the pair of points
$\{(X_{i,j},X_{j,i}): (i,j) \text{ is an edge of }\mathcal{G}\}.$
 The distance metric $\bar{\dst}$ is the natural metric obtained from the graph distance and the inter-blob distance on a path. More precisely, for any $x,y\in \bar{M}$ with $x\in M_{j_1}$ and $y\in M_{j_2}$,
 \begin{equation}\label{c3:defn:distance-metric-blob}
  \bar{\mathrm{d}}(x,y)= \inf \Big\{k+\dst_{j_1}(x,X_{j_1,i_1})+\sum_{l=1}^{k-1} \dst_{i_l}(X_{i_l,i_{l-1}},X_{i_{l+1},i_{l}})+\dst_{j_2}(X_{j_2,i_{k-1}},y)\Big\},
 \end{equation}where the infimum is taken over all paths $(i_1,\dots,i_{k-1})$ in $\mathcal{G}$ and all $k\geq 1$ and we interpret $i_0$ and $i_{k}$ as $j_1$ and $j_2$ respectively.
  The measure $\bar{\mu}$ is given by  $\bar{\mu}(A):=\sum_{i\in [m]}p_i\mu_i(A \cap M_i)$, for any measurable subset $A$ of $\bar{M}$. Note that there is a one-to-one correspondence between the components of $\mathcal{G}$ and $\Gamma(\mathcal{G},\mathbf{p},\mathbf{M},\mathbf{X})$ as the blobs are connected. 
  \subsection{Space of trees with edge lengths, leaf weights, root-to-leaf measures, and blobs}\label{c3:sec:tree-space}
In the proof of the main results we need the following spaces built on top of the space of discrete trees. 
The first space $\vT_{\sss IJ}$ was formulated in \cite{AP99,AP00a} where it was used to study trees spanning a finite number of random points sampled from an inhomogeneous continuum random tree (as described in the next section).

%\nomenclature[vtij]{$\vT_{IJ}$}
%{Space of tress with $I$ leaves all labelled, and $J$ other labeled ``hub'' vertices and further every edge has strictly positive edge length.}
\subsubsection{The space $\vT_{\sss IJ}$.} Fix $I\geq 0$ and $J\geq 1$. Let $\vT_{\sss IJ}$ be the space of trees with each element $\vt\in \vT_{\sss IJ}$ having the following properties:
\begin{enumerate}
	\item There are exactly $J$ leaves labeled $1+, \ldots, J+$, and the tree is rooted at the labeled vertex $0+$.
	\item There may be extra labeled vertices (called hubs) with  labels in $\set{1,\ldots, I}$. (It is possible that only some, and not all, labels in $\set{1,\ldots, I}$ are used.)
%There may also be unlabeled branch points of degree $\geq 3$.
	\item Every edge $e$ has a strictly positive edge length $l_e$.
\end{enumerate}
A tree $\vt\in \vT_{\sss IJ}$ can be viewed as being composed of two parts: 
(1) $\shape(\vt)$ describing the shape of the tree (including the labels of leaves and hubs) but ignoring edge lengths. The set of all possible shapes $\vT_{\sss IJ}^{\sss \shape}$ is obviously finite for fixed $I, J$.
(2) The edge lengths $\vl(\vt):= (l_e:e\in \vt)$. 
We will consider the product topology on $\vT_{\sss IJ}$ consisting of the discrete topology on $\vT_{\sss IJ}^{\sss \shape}$ and the product topology on $\bR^{\sss \mathrm{E}}$, where $\mathrm{E}$ is the number of edges of $\vt$.

%\nomenclature[vtijs]{$\vT_{\sss IJ}^*$}
%{The space $\vT_{\sss IJ}$ where in addition the trees are equipped with leaf weights and root-to-leaf measures. }
\subsubsection{The space $T_{\sss IJ}^*$.} \label{c3:sec:space-of-trees}
% We will need a slightly more general space. 
Along with the three attributes above in $\vT_{\sss IJ}$, the trees in this space have the following two additional properties. Let $\cL(\vt):= \set{1+, \ldots, J+}$ denote the collection of leaves in~$\vt$. 
%\nomenclature[ltv]{$\cL(\vt)$}
%{The collection of non-root leaves in a tree $\vt$.} 
Then every leaf $v \in \cL(\vt) $ has the following attributes:

\begin{enumerate}
	\item[(d)] \textbf{Leaf weights:} A strictly positive number $A(v)$. 
	%Write $\vA(\vt):=(A(v): v\in \cL(\vt))$.
	\item[(e)] \textbf{Root-to-leaf measures:} A probability measure $\nu_{\vt,v}$ on the path $[0+,v]$ connecting the root and the leaf $v$. 
	%Write $\mvnu(\vt):= (\nu_{\vt,v}: v\in \cL(\vt))$ for this collection of probability measures.
	%Here a root-to-leaf path is viewed as an element of $\mathcal{X}$ defined in Section~\ref{c3:sec:GHP-top}.
\end{enumerate}
The path $[0+,v]$ for each $v \in \cL(\vt) $, can be viewed as a compact measured metric space with the measure being  $\nu_{\vt,v}$.
Let $\cX$ denote the space of compact measured metric spaces endowed with the Gromov-Hausdorff-Prokhorov topology (see \cite[Section~2.1.1]{BHS15}).
In addition to the topology on $\vT_{\sss IJ}$, the space $\vT_{\sss IJ}^*$ with the additional two attributes inherits the product topology on $\bR^{\sss J}$ due to leaf weights and $\mathcal{X}^{\sss J}$ due to the paths $[0+,v]$ endowed with $\nu_{\vt,v}$ for each $v \in \cL(\vt) $.
For consistency, we add a conventional state $\partial$ to the spaces $\vT_{\sss IJ}$ and $\vT_{\sss IJ}^*$. 
Its use will be made clear in Section~\ref{c3:sec:univ-thm}.  
  
   For all instances in this chapter, the shape of a tree $\mathrm{shape}(\mathbf{t})$ will be viewed as a subgraph of a graph with $m$ vertices. In that case, the tree will be assumed to inherit the vertex labels from the original graph. We will often write $\mathbf{t}\in T^{*m}_{\sss IJ}$ to emphasize the fact that the vertices of $\mathbf{t}$ are labeled from a subset of $[m]$.

 \subsubsection{The space $\overline{T}^{*m}_{\sss IJ}$.}
 \label{c3:sec:T-space-extended} We enrich the space $T^{*m}_{\sss IJ}$ with some additional elements to accommodate the blobs. 
 Consider $\mathbf{t}\in T_{\sss IJ}^{*m}$ and construct $\bar{t}$ as follows:
Let $(M_i,\dst_i,\mu_i)_{i\in [m]}$ be a collection of blobs and $\mathbf{X}=(X_{ij}:i,j\in [m])$ be the collection of junction points as defined in Section~\ref{c3:defn:super-graph}. Construct the metric space $\bar{t}$ with elements in $\bar{M}(\mathbf{t})= \sqcup_{i \in \mathbf{t}}M_i$, by putting an edge of `length' one between the pair of vertices $ \{(X_{i,j},X_{j,i}): (i,j) \text{ is an edge of }\mathbf{t}\}.$ 
The distance metric is given by~\eqref{c3:defn:distance-metric-blob}.
% Note that $\bar{\mathbf{t}}$ does not have weights as in the definition of super-graphs.
 The path from the leaf $v$ to the root $0+$ now contain blobs. 
 Replace the root-to-leaf measure by $\bar{\nu}_{\vt,v}(A):= \sum_{i\in [0+,v]}\nu_{\vt,v}(i)\mu(M_i\cap A)$ for $A\subset \sqcup_{i\in [0+,v]} M_i$, where $\nu_{\vt,v}$ is the root-to-leaf measure on $[0+,v]$ for $\mathbf{t}$. 
 Notice that $T_{\sss IJ}^{*m}$ can be viewed as a subset of $\overline{T}_{\sss IJ}^{*m}$. 
 In the proof of the universality theorem in Section~\ref{c3:sec:univ-thm}, the blobs will be a fixed collection and, therefore, any $\vt\in T_{\sss IJ}^{*m}$ corresponds to a unique $\bar{\vt}\in\overline{T}_{\sss IJ}^{*m}$. 
 %For any element $\mathbf{t}\in T_{\sss IJ}^{*m}$, we denote the corresponding element in $\overline{T}_{\sss IJ}^{*m}$ with blobs by $\bar{\mathbf{t}}$.
  
% \subsection{$\mathbf{p}$-trees}
% \subsubsection{The birthday construction} \label{c3:sec:p-tree-birthday}
%
%
% \subsection{The limiting metric space}
%
% \subsubsection{Generating tilted ordered $\mathbf{p}$-tree}
%
% \subsubsection{Construction of $\mathcal{G}_{\infty}(\bld{\theta},\gamma)$}

\subsection{\textbf{p}-trees}\label{c3:sec:p-trees}
For fixed $m \geq 1$, write $\bT_m$ and $\bT_m^{\sss \ord}$ for the collection of all rooted trees with vertex set $[m]$ and rooted ordered trees with vertex set $[m]$ respectively. 
%Here we view a rooted tree as being directed with the root being the original progenitor and each edge being directed from child to parent. 
An ordered rooted tree is a rooted tree where children of each individual are assigned an order.
% (meant for example to describe an orientation in a planar embedding, e.g.,  right to left or some notion of age, e.g., oldest to youngest).
We define a random tree model called $\vp$-trees \cite{CP99,P01}, and their corresponding limits, the so-called inhomogeneous continuum random trees, which play a key role in describing the limiting metric spaces.   
Fix $m \geq 1$, and a probability mass function $\vp = (p_i)_{i\in [m]}$ with $p_i > 0$ for all $i\in [m]$.
 A $\vp$-tree is a random tree in $\bT_m$, with law as follows: For any fixed $\vt \in \bT_m$ and $v\in \vt$, write $d_v(\vt)$ for the number of children of $v$ in the tree $\vt$. Then the law of the $\vp$-tree, denoted by $\PR_{\text{tree}}$, is defined as
\begin{equation}
\label{c3:eqn:p-tree-def}
	\PR_{\text{tree}}(\vt) = \PR_{\text{tree}}(\vt; \vp) = \prod_{v\in [m]} p_v^{d_v(\vt)}, \quad \vt \in \bT_m.
\end{equation}
Generating a random $\vp$-tree $\mathscr{T}\sim \PR_{\text{tree}}$ and then assigning a uniform random order on the children of every vertex $v\in \mathscr{T}$ gives a random element with law $\PR_{\ord}(\cdot ; \vp)$ given by
\begin{equation}
\label{c3:eqn:ordered-p-tree-def}
	\PR_{\ord}(\vt) = \PR_{\ord}(\vt; \vp) = \prod_{v\in [m]} \frac{p_v^{d_v(\vt)}}{(d_v(\vt)) !}, \quad \vt \in \bT_m^{\ord}.
\end{equation}
%Obviously a $\vp$-tree can be constructed by first generating an ordered $\vp$-tree with the above distribution and then forgetting about the order.

\subsubsection{The birthday construction of \textbf{p}-trees.} \label{c3:sec:p-tree-birthday}
We now describe a construction of $\vp$-trees, formulated in~\cite{CP99}, that is relevant to this work.  
Let $\vY:=(Y_0, Y_1, \ldots)$ be a sequence of i.i.d.~random variables with distribution $\vp$. 
Let $R_0=0$ and for $l\geq 1$, let $R_l$ denote the $l$-th repeat time, i.e., $R_l=\min\big\{k>R_{l-1}: Y_k\in\{Y_0,\hdots,Y_{k-1}\}\big\}.$
Now consider the directed graph formed via the edges
$
\cT(\vY):= \set{(Y_{j-1}, Y_j): Y_j\notin \set{Y_0, \ldots, Y_{j-1}}, j\geq 1}.$
This gives a tree which we view as rooted at $Y_0$. 
%Intuitively, the process of constructing a tree is as follows: the tree ``grows'' via the addition of new vertices sampled using $\vp$ till it stumbles across a ``repeat'' (a vertex that was already found) when it goes back to the first occurrence of this ``repeat'' and starts growing from that position. 
The following striking result was shown in \cite{CP99}:
\begin{theorem}[{\cite[Lemma~1 and Theorem~2]{CP99}}]
	\label{c3:thm:pit-cam-birthday}
	The random tree $\cT(\vY)$, viewed as an object in $\bT_m$, is distributed as a $\vp$-tree with distribution \eqref{c3:eqn:p-tree-def} independently of $Y_{R_1-1}, Y_{R_2-1}, \ldots$ which are i.i.d with distribution~$\vp$.
\end{theorem}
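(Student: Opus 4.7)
The plan is to prove both assertions simultaneously by computing, for any fixed $\vt\in\bT_m$ rooted at some $y\in[m]$, any $K\geq 1$, and any $z_1,\ldots,z_K\in[m]$, the joint probability
\[
P \;:=\; \PR\bigl(\cT(\vY)=\vt,\ Y_{R_l-1}=z_l \text{ for } l=1,\ldots,K\bigr),
\]
and showing $P=\prod_{v\in[m]}p_v^{d_v(\vt)}\cdot\prod_{l=1}^Kp_{z_l}$. Once this is established for every $K$ and every $(\vt,z_1,\ldots,z_K)$, it delivers all three conclusions at once: marginalizing over the $z_l$'s gives the $\vp$-tree law of $\cT(\vY)$, marginalizing over $\vt$ gives the i.i.d.\ $\vp$ law of $(Y_{R_l-1})_{l\geq 1}$, and the product form is precisely the independence statement.

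Since the event above depends on $\vY$ only through the finite prefix up to time $M:=\max(N_{m-1},R_K)$ (writing $N_0=0<N_1<\cdots<N_{m-1}$ for the successive new-vertex epochs), I would expand $P$ as $\sum_{\vy}\prod_jp_{y_j}$ over compatible prefixes $\vy=(y_0,\ldots,y_M)$ and parameterize such prefixes by (a) the exploration order $\sigma$, a linear extension of $\vt$ with $\sigma(1)=y$, (b) the gap lengths $L_i=N_i-N_{i-1}-1\geq 0$, and (c) the values at ``free'' repeat slots --- slots not already fixed by the constraints $Y_{N_i}=\sigma(i+1)$, $Y_{N_i-1}=\mathrm{par}(\sigma(i+1))$, or $Y_{R_l-1}=z_l$. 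A free sample in the $i$-th gap is constrained only to lie in the already-visited set $S_{i-1}=\{\sigma(1),\ldots,\sigma(i)\}$.

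Summing $\prod_jp_{y_j}$ in stages, the new-vertex positions collectively contribute $p_y\prod_{i=1}^{m-1}p_{\sigma(i+1)}p_{\mathrm{par}(\sigma(i+1))}$, the $K$ constrained pre-repeat positions contribute $\prod_{l=1}^Kp_{z_l}$, and the free slots within the $i$-th gap (summed over values in $S_{i-1}$ and over $L_i\geq 0$) yield a geometric series $(1-s_{i-1})^{-1}$ with $s_{i-1}:=\sum_{v\in S_{i-1}}p_v$; these factors cancel exactly against $(1-s_{i-1})$-factors implicit in the waiting time for the next new-vertex epoch. What remains after cancellation is $\prod_lp_{z_l}$ times a sum over linear extensions $\sigma$ of $p_y\prod_ip_{\sigma(i+1)}p_{\mathrm{par}(\sigma(i+1))}$, and this last sum equals $\prod_vp_v^{d_v(\vt)}$ by the classical identity that summing over orderings of a rooted tree's vertices redistributes the ``parent contributions'' to account exactly for each vertex's degree.

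The principal obstacle is the case analysis around the positions $R_l-1$: such a position may coincide with the root, with a new-vertex epoch $N_i$, or with an earlier repeat slot, and each case imposes a different compatibility constraint between $z_l$ and the tree structure. To keep this manageable, I would first handle the $K=0$ case cleanly (recovering $\PR(\cT(\vY)=\vt)=\prod_vp_v^{d_v(\vt)}$), and then introduce the constraints $Y_{R_l-1}=z_l$ one at a time; adding each new constraint simply replaces a free $\vp$-weighted sum at a single position by the deterministic factor $p_{z_l}$, preserving the rest of the computation. Verifying the cancellation mechanism explicitly on path trees and on the star with $m=3$ would make the geometric-versus-waiting-time combinatorics transparent before writing out the general argument.
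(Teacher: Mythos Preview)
The paper does not prove this; it quotes the result from \cite{CP99} and uses it as a black box. Your strategy---summing $\prod_j p_{y_j}$ over compatible prefixes and showing the product form---is the right one and is essentially what is done in \cite{CP99}, but the accounting you give is incorrect.

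The error is at the position $N_i-1$. When $L_i\geq1$ this is indeed a repeat slot constrained to equal $\mathrm{par}(\sigma(i+1))$ and contributes a separate factor $p_{\mathrm{par}(\sigma(i+1))}$; but when $L_i=0$ it coincides with the new-vertex epoch $N_{i-1}$, already occupied by $\sigma(i)$, and the constraint becomes the \emph{restriction} $\sigma(i)=\mathrm{par}(\sigma(i+1))$ on the discovery order rather than an extra probability factor. There is thus no ``$(1-s_{i-1})$-factor implicit in the waiting time'' available to cancel anything: the geometric sum over $L_i$ \emph{is} the entire waiting-time contribution. Your final expression $\sum_\sigma p_y\prod_ip_{\sigma(i+1)}p_{\mathrm{par}(\sigma(i+1))}$ is independent of $\sigma$ and equals $(\#\text{linear extensions of }\vt)\cdot\prod_v p_v^{1+d_v(\vt)}$, which is not $\prod_v p_v^{d_v(\vt)}$; on the star rooted at $1$ with children $2,3$ it gives $2p_1^3p_2p_3$ rather than the correct $p_1^2$.

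The honest per-$\sigma$ contribution, after summing over gap lengths and free values, has gap factor $p_{\mathrm{par}(\sigma(i+1))}/(1-s_i)$ when $\sigma(i)\ne\mathrm{par}(\sigma(i+1))$ and $(1-s_{i-1})/(1-s_i)$ when $\sigma(i)=\mathrm{par}(\sigma(i+1))$, where $s_i=\sum_{j\le i}p_{\sigma(j)}$. Summing this over linear extensions does collapse to $\prod_v p_v^{d_v(\vt)}$, but through a genuine recursive argument (e.g.\ peel off the last-discovered vertex, or condition on which subtree of the root is entered first), not a one-line ``classical identity''. Your plan for layering in the constraints $Y_{R_l-1}=z_l$ on top of the $K=0$ case is reasonable in spirit, but it too needs the case analysis you flag: when $R_l-1$ happens to be a new-vertex epoch $N_i$, the constraint forces $z_l=\sigma(i+1)$ rather than replacing a free sum by $p_{z_l}$.
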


 \begin{remark}\normalfont
 	\label{c3:rem:p-tree-dist} The independence between the sequence $Y_{R_1-1}, Y_{R_2-1}, \ldots$ and the constructed $\vp$-tree $\cT(\vY)$ is truly remarkable. 
In particular,  let $\cT_r\subset \cT(\vY)$ denote the subtree with vertex set $\set{Y_0, Y_1, \ldots, Y_{R_r-1}}$, namely the tree constructed in the first $R_r$ steps. 
Further take $\tilde{\mathbf{Y}}=(\tilde{Y_1}, \ldots \tilde{Y_r}) $ an i.i.d.~sample from $\vp$ and then construct the subtree $\mathcal{S}_r$ spanned by $\tilde{\mathbf{Y}}$.
Then the above result (formalized as \cite[Corollary 3]{CP99}) implies that 
%
% 	In particular, suppose $\cS$ is a $\vp$-tree with distribution as in \eqref{c3:eqn:p-tree-def} and for fixed $r\geq 1$, let  . Write $\cS_r\subset \cS$ for the tree spanned by these vertices and the root. Let $\cT_r\subset \cT(\vY)$ denote the subtree with vertex set $\set{Y_0, Y_1, \ldots, Y_{R_r-1}}$, namely the tree constructed in the first $R_r$ steps. 
% 	%Here $\cB$ is a mnemonic for ``birthday tree'' and also to distinguish this construction from a generic random tree model with $r$ vertices.   
% 	Then the above result (formalized as \cite[Corollary 3]{CP99}) implies that these can be jointly constructed as
	\begin{equation}
	\label{c3:eqn:p-tree-joint-sample}
		(\tilde{Y_1}, \tilde{Y_2},\ldots, \tilde{Y_r}; \cS_r)\stackrel{d}{=} (Y_{R_1-1}, Y_{R_2-1}, \ldots Y_{R_r -1}; \cT_r).
	\end{equation}
	We will use this fact in Section~\ref{c3:sec:univ-thm} to complete the proof of the universality theorem.
 \end{remark}
 
\subsubsection{Tilted $\mathbf{p}$-trees and connected components of $\mathrm{NR}_n(\bld{x},t)$.} 
Consider the vertex set $[n]$ and assign weight $x_i$ to vertex $i$. 
Now, connect each pair of vertices $i, j$ ($i\neq j$) independently with probability $
		q_{ij}:= 1-\exp(- tx_i x_j).$
The resulting random graph, denoted by $\mathrm{NR}_n(\bld{x},t)$, is known as the Norros-Reittu model or the Poisson graph process \cite{RGCN1}.
For a connected component $\cC\subseteq \mathrm{NR}_n(\bld{x},t)$, 
let $\mass(\cC):= \sum_{i\in \cC} x_i$ and, for any $t\geq 0$, $(\cC_i(t))_{i\geq 1}$ denotes the components in decreasing order of their mass sizes. 
In this section, we describe results from \cite{BSW14} that gave a method of constructing  connected components of $\mathrm{NR}_n(\bld{x},t)$, conditionally on the vertices of the components. 
This construction involves tilted versions of $\vp$-trees introduced in Section \ref{c3:sec:p-trees}. 
Since these trees are parametrized via a driving probability mass function (pmf) $\vp$, it will be easy to parametrize various random graph constructions in terms of pmfs as opposed to vertex weights~$\bld{x}$. Proposition~\ref{c3:prop:generate-nr-given-partition} will relate vertex weights to pmfs.

Fix $n\geq 1$ and $\cV\subset [n]$, and write 
$\bG_{\cV}^{\con}$ for the space of all simple connected graphs with vertex set~$\cV$.
For fixed $a > 0$, and probability mass function $\vp = (p_v)_{ v \in \cV}$, define probability distributions $\PR_{\con}(\cdot; \vp, a, \cV)$ on $\bG_{\cV}^{\con}$ as follows: For $i,j \in \cV$, denote
\begin{equation}
\label{c3:eqn:qij-def-vp}
	q_{ij}:= 1-\exp(-a p_i p_j).
\end{equation}
Then, for $G \in \bG_{\cV}^{\con},$
\begin{equation}
	\label{c3:eqn:pr-con-vp-a-cV-def}
	\PR_{\con}(G; \vp, a, \cV): = \frac{1}{Z(\vp,a)} \prod_{(i,j)\in E(G)} q_{ij} \prod_{(i,j)\notin E(G)} (1-q_{ij}), 
\end{equation}
where $Z(\vp,a)$ is the normalizing constant.
%\begin{equation}
%Z(\vp,a) := \sum_{G \in \bG_{\cV}^{\con}}{\prod_{(i,j)\in E(G)} q_{ij} \prod_{(i,j)\notin E(G)}(1-q_{ij})}.
%\end{equation}
Now let $\cV^{\sss(i)} $ be the vertex set of $\cC_i(t)$ for $i \geq 1$, and note that $(\cV^{\sss(i)})_{i\geq 1}$ denotes a random finite partition of the vertex set $[n]$. %The following result is obvious from the construction of $\cG(\vx,t)$:  
The next proposition yields a construction of the random (connected) graphs $(\cC_{i}(t))_{i\geq 1}$:

\begin{proposition}[{\protect{\cite[Proposition 6.1]{BSW14}}}]
	\label{c3:prop:generate-nr-given-partition}
	Given the partition $(\cV^{\sss(i)})_{i\geq 1}$, define,  for $i\geq 1$,
	\begin{equation}\label{c3:eq:p-n-a-NR}
		\vp_n^{\sss(i)} := \left( \frac{x_v}{\sum_{v \in \cV^{\sss(i)}}x_v } : v \in \cV^{\sss(i)} \right), \quad  a_n^{\sss(i)}:= t\bigg(\sum_{v\in \cV_{\sss(i)}} x_v\bigg)^2.
	\end{equation}
	For each fixed $i \geq 1$, let $G_i \in  \bG_{\cV^{\sss(i)}}^{\con}$ be a connected simple graph with vertex set $\cV^{\sss(i)}$. Then
	\begin{equation}
		\PR\left(\cC_i(t) = G_i, \;\; \forall i \geq 1\ \big|\ (\cV^{\sss(i)})_{i\geq 1} \right) = \prod_{i\geq 1} \PR_{\con}( G_i; \vp_n^{\sss(i)}, a_n^{\sss(i)}, \cV^{\sss(i)}).
	\end{equation}
\end{proposition}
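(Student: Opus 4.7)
}
The plan is to compute the joint probability of the event $\{\cC_i(t) = G_i : i \geq 1\}$ directly from the product form of the edge probabilities in $\mathrm{NR}_n(\bld{x},t)$, then divide by the probability of the conditioning event $\{(\cV^{\sss(i)})_{i\geq 1} \text{ is the component partition}\}$. Because edges in $\mathrm{NR}_n(\bld{x},t)$ are mutually independent with explicit Bernoulli parameters $q_{uv} = 1 - \exp(-t x_u x_v)$, all quantities can be written in closed form and the between-component factors will cancel cleanly.

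First I would fix a vertex partition $(\cV^{\sss(i)})_{i\geq 1}$ of $[n]$ and, for connected simple graphs $G_i \in \bG_{\cV^{\sss(i)}}^{\con}$, write the event $B := \{\cC_i(t) = G_i,\ \forall i \geq 1\}$ as the intersection of three independent edge events: (a) every edge in $E(G_i)$ is present; (b) every non-edge of $G_i$ inside $\cV^{\sss(i)}$ is absent; (c) every edge between distinct blocks $\cV^{\sss(i)}, \cV^{\sss(j)}$ is absent. By edge independence in the Norros--Reittu model,
\begin{equation*}
\PR(B) = \Bigg[\prod_{i \geq 1} \prod_{(u,v) \in E(G_i)} q_{uv} \prod_{\substack{u,v \in \cV^{\sss(i)} \\ (u,v)\notin E(G_i)}} (1-q_{uv})\Bigg] \cdot \prod_{i<j} \prod_{\substack{u \in \cV^{\sss(i)} \\ v\in \cV^{\sss(j)}}} (1-q_{uv}).
\end{equation*}

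Next, I would compute $\PR\bigl((\cV^{\sss(i)})_{i\geq 1} \text{ is the component partition}\bigr)$ by summing $\PR(B)$ over all admissible $(G_i)_{i\geq 1}$, where each $G_i$ ranges over $\bG_{\cV^{\sss(i)}}^{\con}$. Since the between-block factor in the display above does not depend on the $G_i$, it factors out, and the within-block sums separate over $i$. Dividing, the between-block product cancels, giving
\begin{equation*}
\PR\bigl(B \mid (\cV^{\sss(i)})_{i\geq 1}\bigr) = \prod_{i\geq 1} \frac{\prod_{(u,v) \in E(G_i)} q_{uv} \prod_{\substack{u,v \in \cV^{\sss(i)} \\ (u,v)\notin E(G_i)}}(1-q_{uv})}{\sum_{H \in \bG_{\cV^{\sss(i)}}^{\con}} \prod_{(u,v) \in E(H)} q_{uv} \prod_{\substack{u,v \in \cV^{\sss(i)} \\ (u,v)\notin E(H)}}(1-q_{uv})}.
\end{equation*}

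Finally, I would verify the parameter translation. For $u, v \in \cV^{\sss(i)}$, setting $S_i := \sum_{w\in \cV^{\sss(i)}} x_w$, $p_u^{\sss(i)} = x_u/S_i$, and $a_n^{\sss(i)} = t S_i^2$ yields $a_n^{\sss(i)} p_u^{\sss(i)} p_v^{\sss(i)} = t x_u x_v$, so $q_{uv} = 1 - \exp(-a_n^{\sss(i)} p_u^{\sss(i)} p_v^{\sss(i)})$ coincides with the probability in \eqref{c3:eqn:qij-def-vp}. Hence the $i$-th factor above is exactly $\PR_{\con}(G_i; \vp_n^{\sss(i)}, a_n^{\sss(i)}, \cV^{\sss(i)})$ by definition \eqref{c3:eqn:pr-con-vp-a-cV-def}, with $Z(\vp_n^{\sss(i)}, a_n^{\sss(i)})$ matching the denominator. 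The factorization across $i$ yields both the product form and the conditional independence of the components.

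There is no serious obstacle: the argument is essentially bookkeeping, but the step that deserves the most care is confirming that the between-block factor in $\PR(B)$ is common to every admissible choice of $(G_i)_{i\geq 1}$ and hence cancels in the conditional probability; this is what allows conditioning on the \emph{partition} to be equivalent to conditioning on the within-block edge configurations separately for each block, which is the source of the product over $i$ in the statement.
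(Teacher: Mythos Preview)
Your proposal is correct; this is exactly the standard independence-and-cancellation argument one would give. The paper does not supply its own proof here but simply cites \cite[Proposition~6.1]{BSW14}, so there is nothing further to compare against.
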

\begin{algo}\normalfont
The random graph $\mathrm{NR}_n(\bld{x},t)$ can be generated in two stages:
\begin{enumerate}
	\item[(S0)] Generate the random partition $(\cV^{\sss(i)})_{i\geq 1}$ of the vertices into different components.
\item[(S1)] Conditional on the partition, generate the internal structure of each component following the law of $\PR_{\con}(\cdot ; \vp^{\sss(i)}, a^{\sss(i)}, \cV^{\sss(i)})$, independently across different components.
\end{enumerate}
\end{algo}
\noindent Let us now describe an algorithm to generate such  connected components using the distribution in~\eqref{c3:eqn:pr-con-vp-a-cV-def}. To ease notation, let $\cV = [m]$ for some $m\geq 1$ and fix a probability mass function $\vp$ on $[m]$ and a constant $a>0$ and write $\PR_{\con}(\cdot):= \PR_{\con}(\cdot;\vp,a,[m])$ on $\bG_m^{\con}:= \bG_{[m]}^{\con}$. 
%We first need to set up some notation before describing the next result.
%In this section, we start by giving a more explicit description of the algorithm described in Proposition \ref{c3:prop:SBSSXW} via adding permitted edges to a tilted $\vp$-tree. 
%We first set up some notation.  
As a matter of convention, we view ordered rooted trees via their planar embedding using the associated ordering to determine the relative locations of siblings of an individual. 
We think of the left-most sibling as the ``oldest''. 
Further, in a depth-first exploration, we explore the tree from left to right. 
Now given a planar rooted tree $\vt\in \bT_m$, let $\rho$ denote the root and for every vertex $v\in [m]$, let $[\rho,v]$ denote the path connecting $\rho$ to $v$ in the tree. Given this path and a vertex $i\in [\rho,v]$, write $\RC(i,[\rho,v])$ for the set of all children of $i$ that fall to the right of $[\rho,v]$. 
%\nomenclature[rc]{$\RC(i,[\rho,v])$}{For vertex $i$ in a  path $[\rho,v]$, set of all children of $i$ which fall to the right of $[\rho,v]$.} 
%Thus in the depth-first exploration of the tree, when we get to $v$,
Define $\fP(v,\vt):= \cup_{i\in [m]} \RC(i,[\rho,v]).$
	In the terminology of \cite{ABG09,BHS15}, $\fP(v,\vt)$ denotes the set of endpoints of all \emph{permitted edges} emanating from $v$. 
The surplus edges of the graph $G$, sampled from $\PR_{\mathrm{con}}(\cdot)$, are formed only between $v$ and $\fP(v,\vt)$, as $v$ varies.	
	Define
	\begin{equation}
	\label{c3:eqn:amv-def}
		\dA_{\sss(m)}(v):= \sum_{i\in [\rho,v]} \sum_{j\in [m]} p_j \ind{j\in \RC(i,[\rho,v])}.
	\end{equation}
%The function $A_m(\cdot)$ defined in \eqref{c3:eqn:ant-def} is intimately connected to $\dA_{\sss(m)}(\cdot)$. 
Let $(v(1), v(2), \ldots, v(m))$ denote the order of the vertices in the depth-first exploration of the tree~$\vt$. 
Let $y^*(0)=0$ and $y^*(i) = y^*(i-1) + p_{v(i)}$ and define
	\begin{equation}
	\label{c3:eqn:ant-def-new}
		A_{\sss(m)}(u) = \dA_{\sss(m)}(u),\text{ for } u\in (y^*(i-1), y^*(i)],\quad\text{and}\quad \bar{A}_{\sss(m)}(\cdot):= a A_{\sss(m)}(\cdot),
	\end{equation}where $a$ is defined in \eqref{c3:eqn:qij-def-vp}.
%Then the function $A_{\sss(m)}(\cdot)$ associated with an ordered $\vp$-tree has the same distribution as the function $A_{m}(\cdot)$ associated with the tree $\psi_{\vp}(\vX)$, where $\vX=(X_v: v\in[m])$ are i.i.d random variables uniformly distributed on $(0,1)$ .
Define the function
		\begin{equation}
		\label{c3:eqn:Lambda-tree}
			\Lambda_{\sss(m)}(\vt) := a\sum_{v\in [m]} p_v \dA_{\sss(m)}(v).
		\end{equation}
%where $\dA_{\sss(m)}(\cdot)$ is as in \eqref{c3:eqn:amv-def}.
Finally, let $ \mathrm{E}(\vt)$ denote the set of edges of $\vt$, $\mathscr{T}_m^{\mathbf{p}}$ the $\vp$-tree defined in \eqref{c3:eqn:ordered-p-tree-def}, $\fP(\vt)=\cup_{v\in [m]}\fP(v,\vt)$, and define the tilt function  $L : \bT_m^{\ord} \to \bR_+$ by
\begin{equation}
\label{c3:eqn:ltpi-def}
	\displaystyle L(\vt)=\displaystyle L_{\sss (m)}(\vt):= \prod_{(k,\ell)\in \mathrm{E}(\vt)} \left[\frac{\exp(a p_k p_{\ell})- 1}{ap_k p_{\ell}} \right] \exp\bigg(\sum_{(k,\ell) \in \sP(\vt)} a p_k p_{\ell}\bigg),
\end{equation} for $\vt \in \bT_m^{\ord}$.
Recall the (ordered) $\vp$-tree distribution from \eqref{c3:eqn:ordered-p-tree-def}. Using $L(\cdot)$ to tilt this distribution results in the distribution
\begin{equation}
	\label{c3:eqn:tilt-ord-dist-def}
	\PR_{\ord}^\star( \vt) := \PR_{\ord}(\vt) \cdot \frac{L(\vt)}{\E_{\ord}[ L(\mathscr{T}^{\vp}_m)]}, \qquad \vt \in \bT_m^{\ord}.
\end{equation}
While all of these objects depend on the tree $\vt$, we suppress this dependence to ease notation. 

%Now Proposition \ref{c3:prop:SBSSXW} implies we can construct $\tilde\cG_m(\vp,a)$ via the following five steps:

\begin{algo}\label{c3:algo:construction-Pcon}\normalfont
Let $\tilde{\mathcal{G}}_m(\vp,a)$ denote a random graph sampled from $\PR_{\mathrm{con}}(\cdot)$. 
This algorithm gives a construction of $\tilde{\mathcal{G}}_m(\vp,a)$, proved in \cite{BHS15}.
\begin{enumerate}
	\item[(S1)] \textbf{Tilted $\vp$-tree:}
	Generate a tilted ordered $\vp$-tree $\mathscr{T}^{\vp,\star}_m$ with distribution \eqref{c3:eqn:tilt-ord-dist-def}. Now consider the (random) objects $\fP(v,\mathscr{T}^{\vp,\star}_m)$ for $v\in [m]$ and the corresponding (random) functions $\dA_{\sss(m)}(\cdot)$ on $[m]$ and $A_{\sss(m)}(\cdot)$ on $[0,1]$.
		\item[(S2)] \textbf{Poisson number of possible surplus edges:} Let $\cP$ denote a rate-one Poisson process on $\bR_+^2$ that is independent of all other randomness and define
	\begin{equation}
	\label{c3:eqn:poisson-pp}
		\bar{A}_{\sss(m)}\cap {\cP}:= \set{(s,t)\in \cP: s\in [0,1], t\leq \bar{A}_{\sss(m)}(s)}.
	\end{equation}
	 Write $\bar{A}_{\sss(m)}\cap {\cP}:= \{(s_j,t_j):1\leq j\leq N_{\sss(m)}^\star\}$ where $N_{\sss(m)}^\star = |\bar{A}_{\sss(m)}\cap {\cP}|$.
We next use the set $\{(s_j, t_j):1\leq j\leq N_{\sss(m)}^\star\}$ to generate pairs of points $\set{(\cL_j,\cR_j): 1\leq j\leq N_{\sss(m)}^\star}$ in the tree that will be joined to form the surplus edges.
	 \item[(S3)] \textbf{``First'' endpoints:} Fix $j$ and suppose $s_j \in (y^*(i-1), y^*(i)]$ for some $i\geq 1$, where $y^*(i)$ is as given right above \eqref{c3:eqn:ant-def-new}. Then the \emph{first endpoint} of the surplus edge corresponding to $(s_j, t_j)$ is $\cL_j:= v(i)$, where $v(i)$ is defined right below \eqref{c3:eqn:amv-def}.
	 \item[(S4)] \textbf{``Second'' endpoints:} Note that in the interval $(y^*(i-1), y^*(i)]$, the function $\bar{A}_{\sss(m)}$ is of constant height $a\dA_{\sss(m)}(v(i))$. We will view this height as being partitioned into sub-intervals of length $a p_u$ for each element $u\in \fP(v(i),\mathscr{T}^{\vp,\star}_m)$, the collection of endpoints of permitted edges emanating from $\cL_k$. (Assume that this partitioning is done according to some preassigned rule, e.g., using the order of the vertices in $\fP(v(i),\mathscr{T}^{\vp,\star}_m)$.) Suppose $t_j$ belongs to the interval corresponding to $u$. Then the \emph{second endpoint} is $\cR_j = u$. Form an edge between $(\cL_j, \cR_j) = (v(i),u)$.
\item[(S5)] In this construction, it is possible that one creates more than one surplus edge between two vertices. Remove any multiple surplus edges. 
This has vanishing probability in our applications.
\end{enumerate}
\end{algo}
\begin{defn}\label{c3:defn:p-tree-graph}
Consider the connected random graph $\tilde{\mathcal{G}}_m(\mathbf{p},a)$, given by \textrm{Algorithm~\ref{c3:algo:construction-Pcon}}, viewed as a measured metric space via the graph distance and each vertex $v$ is assigned measure $p_v$.
\end{defn}
The following lemma describes the law of $\tilde{\mathcal{G}}_m(\mathbf{p},a)$:
\begin{lemma}[{\cite[Lemma 4.10]{BHS15}}]
	\label{c3:lem:lk-rk-equivalent}
	The random graph $\tilde{\cG}_m(\vp,a)$ generated by \textrm{Algorithm~\ref{c3:algo:construction-Pcon}} has the same law as $\PR_{\mathrm{con}}(\cdot)$. 
	Further, conditionally on $\mathscr{T}^{\vp,\star}_m$,
	\begin{enumerate}
		\item $N_{\sss(m)}^\star$ has Poisson distribution with mean $\Lambda_{\sss(m)}(\mathscr{T}_m^{\vp,\star})$ where $\Lambda_{\sss(m)}$ is as in \eqref{c3:eqn:Lambda-tree};
		\item conditionally further on $N_{\sss(m)}^\star=k$, the first endpoints $(\cL_j)_{j\in [k]}$ can be generated in an i.i.d fashion by sampling from the vertex set $[m]$ with probability distribution
		$
		\cJ^{\sss(m)}(v) \propto p_v \dA_{\sss(m)}(v),$ $v\in [m].
		$;
		\item conditionally further on $N_{\sss(m)}^\star=k$ and the first endpoints $(\cL_j)_{j\in [k]}$, generate the second endpoints in an i.i.d. fashion where conditionally on $\cL_j = v$, the probability distribution of $\cR_j$ is given by
	\begin{equation}
	\label{c3:eqn:right-end-pt-prob}
		Q_{v}^{\sss(m)}(y):= \begin{cases}
			\sum_{u} p_u \ind{u\in \RC(y,[\rho,v])}/\dA_{\sss(m)}(v) & \text{ if } y\in [\rho,v],\\
			0 & \text{ otherwise },
		\end{cases}
	\end{equation}	
 and create an  edge between $\cL_j$ and $\cR_j$ for $1\leq j\leq k$.
	\end{enumerate}
\end{lemma}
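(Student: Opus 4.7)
The proof has two layers. Items (1)--(3) describe the conditional law of the Poisson-driven data $(N^\star_{\sss(m)}, (\cL_j), (\cR_j))$ given the tilted tree $\mathscr{T}^{\vp,\star}_m$, and these follow directly from standard Poisson process theory applied to the construction in steps (S2)--(S4) of Algorithm~\ref{c3:algo:construction-Pcon}. Specifically, (1) holds because $\cP$ is unit-rate and the region under $\bar{A}_{\sss(m)}$ has area $\int_0^1 \bar{A}_{\sss(m)}(s)\,ds = a\sum_v p_v\dA_{\sss(m)}(v) = \Lambda_{\sss(m)}(\mathscr{T}_m^{\vp,\star})$; (2) follows because, conditional on the count, the points are i.i.d.\ uniform in the region, so projecting the $s$-coordinate onto the column $(y^*(i-1),y^*(i)]$ yields mass proportional to the column area $p_{v(i)}\cdot a\dA_{\sss(m)}(v(i))$, matching $\cJ^{\sss(m)}$; and (3) follows because, given $\cL_j=v$, the $t$-coordinate is uniform on $[0,a\dA_{\sss(m)}(v)]$, and the partition rule in (S4) selects each permitted endpoint $u$ with probability $ap_u/(a\dA_{\sss(m)}(v)) = p_u/\dA_{\sss(m)}(v)$, which is precisely $Q_v^{\sss(m)}$.

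For the principal distributional identity $\tilde{\cG}_m(\vp,a)\stackrel{d}{=}\PR_{\con}(\cdot;\vp,a,[m])$, I would fix a connected simple graph $G$ on $[m]$ and compute $\PR[\tilde{\cG}_m(\vp,a)=G]$ by decomposing over the spanning tree produced in step (S1). A realization of the algorithm produces $G$ precisely when the tilted $\vp$-tree $\mathbf{t}$ is a rooted ordered spanning tree of $G$ satisfying $E(G)\setminus E(\mathbf{t})\subseteq \fP(\mathbf{t})$; denote this subset of $\bT_m^{\ord}$ by $\cT^\star(G)$. Writing $p_e=p_up_v$ for $e=\{u,v\}$, the independence of Poisson counts across the rectangles partitioning the graph of $\bar A_{\sss(m)}$ yields
\begin{equation*}
\PR\bigl[\,\mathrm{surplus}=E(G)\setminus E(\mathbf{t})\,\big|\,\mathbf{t}\bigr]=\prod_{e\in E(G)\setminus E(\mathbf{t})}(1-e^{-ap_e})\prod_{e\in \fP(\mathbf{t})\setminus E(G)}e^{-ap_e}.
\end{equation*}
Multiplying by $\PR_{\ord}^\star(\mathbf{t})$ and using the explicit form of $L$ in~\eqref{c3:eqn:ltpi-def}, the factor $\exp(\sum_{e\in \fP(\mathbf{t})}ap_e)$ exactly cancels the $e^{-ap_e}$ terms for $e\in\fP(\mathbf{t})\setminus E(G)$ and promotes each $(1-e^{-ap_e})$ for $e\in E(G)\setminus E(\mathbf{t})$ to $(e^{ap_e}-1)$. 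Combined with the tree-edge tilt $\prod_{e\in E(\mathbf{t})}(e^{ap_e}-1)/(ap_e)$, every edge of $G$ contributes a total factor $(e^{ap_e}-1)$, while each tree edge carries an additional $(ap_e)^{-1}$. After substituting $\PR_{\ord}(\mathbf{t})=\prod_v p_v^{d_v(\mathbf{t})}/d_v(\mathbf{t})!$, the marginal becomes
\begin{equation*}
\PR[\tilde{\cG}_m(\vp,a)=G]=\frac{1}{Z_0}\prod_{e\in E(G)}(e^{ap_e}-1)\sum_{\mathbf{t}\in \cT^\star(G)}\prod_v\frac{p_v^{d_v(\mathbf{t})}}{d_v(\mathbf{t})!}\prod_{e\in E(\mathbf{t})}\frac{1}{ap_e}.
\end{equation*}

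The main obstacle is showing that the residual combinatorial sum over $\cT^\star(G)$ depends only on $\vp$ and $a$, not on $G$. Once this is established, $\PR[\tilde{\cG}_m(\vp,a)=G]$ is proportional to $\prod_{e\in E(G)}(e^{ap_e}-1)=\prod_{e\in E(G)}q_e\cdot\prod_{e\in E(G)}e^{ap_e}$, and multiplying and dividing by $\prod_{e\notin E(G)}e^{-ap_e}=\prod_{e\notin E(G)}(1-q_e)$ reproduces the unnormalized form~\eqref{c3:eqn:pr-con-vp-a-cV-def}, which identifies the normalizing constant and completes the proof. Rather than tackling the residual sum by direct enumeration, I would follow~\cite{BHS15} and establish the identity by reverse engineering: start from $G\sim \PR_{\con}$, perform a depth-first exploration of $G$ with a suitably uniformized child-ordering to produce a random ordered spanning tree $\mathbf{t}(G)$, and verify that the joint law of $\bigl(\mathbf{t}(G),E(G)\setminus E(\mathbf{t}(G))\bigr)$ coincides with the joint law of $(\mathscr{T}_m^{\vp,\star},\mathrm{surplus\ edges})$ generated by Algorithm~\ref{c3:algo:construction-Pcon}. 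This reverse construction pins down the identity implicitly and sidesteps the combinatorial enumeration.
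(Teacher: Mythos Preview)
The paper does not prove this lemma; it is quoted verbatim from \cite[Lemma~4.10]{BHS15} and used as a black box, so there is no ``paper's own proof'' to compare against. Your proposal is therefore being measured against the original source, and your outline is essentially the argument given there: items (1)--(3) are immediate Poisson-process bookkeeping, and the distributional identity is obtained by decomposing over the ordered spanning tree, cancelling the tilt $L$ against the surplus-edge probabilities, and then invoking a reverse depth-first construction to dispose of the residual combinatorial sum. So your plan matches the cited proof.

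One small slip: in your treatment of item~(3) you write that the probability $p_u/\dA_{\sss(m)}(v)$ of selecting a permitted endpoint $u\in\fP(v,\vt)$ ``is precisely $Q_v^{\sss(m)}$''. It is not: $Q_v^{\sss(m)}$ as defined in \eqref{c3:eqn:right-end-pt-prob} is a measure on the \emph{path} $[\rho,v]$, assigning to each ancestor $y$ the total $p$-mass of its right children. What you have computed is the distribution of the algorithm's $\cR_j\in\fP(v,\vt)$; the lemma's $Q_v^{\sss(m)}$ is the pushforward of that distribution under the map $u\mapsto\text{parent}(u)$. This discrepancy between the algorithm (second endpoint off the path) and the lemma (second endpoint on the path) is present in the statement itself and is harmless for the metric-space applications downstream, since in the scaling limit a vertex and its parent collapse; but you should flag the distinction rather than asserting they coincide.
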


%\subsection{Inhomogeneous continuum random trees}
\label{c3:sec:inhom-cont-tree}
In a series of papers \cite{AMP04,AP99,AP00a} it was shown that $\vp$-trees, under various assumptions, converge to inhomogeneous continuum random trees that we now describe. 
%Recall the definition of the space \gls{ell2} and consider the subset $\mathbb{B}\subset \ldown$ given by
%\begin{equation}
%\label{c3:eqn:Theta-def}
%	\mathbb{B}= \bigg\{\bld{\beta}:=(\beta_i:i\geq 1)\in \ldown: \sum_{i=1}^{\infty} \beta_i =\infty,~ \sum_{i=1}^\infty \beta_i^2 = 1\bigg\}.
%\end{equation}
Recall from \cite{LG05,E06} that a real tree is a metric space $(\mathscr{T},d)$ that satisfies the following for every pair $a,b\in \mathscr{T}$:
\begin{enumerate}
	\item There is a \textbf{unique} isometric map $f_{a,b}\colon [0,d(a,b)]\to \mathscr{T}$ such that $f_{a,b}(0)$ $=a,~ f_{a,b}(d(a,b)) =b$.
	\item For any continuous one-to-one map $g:[0,1]\to \mathscr{T}$ with $g(0)=a$ and $g(1)=b$, we have $g([0,1]) = f_{a,b}([0,d(a,b)])$.
\end{enumerate}

\noindent \textbf{Construction of the ICRT:}
%Define the space
%\begin{equation}\label{c3:defn:B-space}
%\mathbb{B}=\bigg\{\bld{\beta}=(\beta_i)_{i\geq 1}\in \ldown: \sum_{i=1}^{\infty} \beta_i =\infty,~ \sum_{i=1}^\infty \beta_i^2 = 1\bigg\}
%\end{equation}
Given $\bld{\beta}\in\ell^2_{\shortarrow}\setminus \ell^1_{\shortarrow}$ with $\sum_i\beta_i^2=1$, we will now define the inhomogeneous continuum random tree $\mathscr{T}^{\bld{\beta}}$. We mainly follow the notation in \cite{AP00a}. Assume that we are working on a probability space $(\Omega, \cF,\PR_{\bld{\beta}})$ rich enough to support the following:
\begin{enumerate}
	\item For each $i\geq 1$, let $\cP_i:= (\xi_{i,1}, \xi_{i,2}, \ldots)$ be rate $\beta_i$ Poisson processes that are independent for different~$i$. The first point of each process $\xi_{i,1}$ is special and is called a \emph{joinpoint}, while the remaining points $\xi_{i,j}$ with $j\geq 2$ will be called \emph{$i$-cutpoints} \cite{AP00a}.
	\item Independently of the above, let $\mvU=(U_j^{\sss(i)})_{i,j\geq 1}$ be a collection of i.i.d.~uniform $(0,1)$ random variables. These are not required to construct the tree but will be used to define a certain function on the tree.
\end{enumerate}
  The \textbf{random} real tree (with marked vertices) $\icrt$ is then constructed as follows:
\begin{enumerate}
	\item  Arrange the cutpoints $\set{\xi_{i,j}: i\geq 1, j\geq 2}$ in increasing order as $0< \eta_1 < \eta_2 < \cdots$. The assumption that $\sum_i \beta_i^2 <\infty$ implies that this is possible. For every cutpoint $\eta_k=\xi_{i,j}$, let $\eta_k^*:=\xi_{i,1}$ be the corresponding joinpoint.
	\item Next, build the tree inductively. Start with the branch $[0,\eta_1]$. Inductively assuming that we have completed step $k$, attach the branch $(\eta_k, \eta_{k+1}]$ to the joinpoint $\eta_k^*$ corresponding to $\eta_k$.
\end{enumerate}
Write $\mathscr{T}_0^{\bld{\beta}}$ for the corresponding tree after one has used up all the branches $[0,\eta_1]$, $\set{(\eta_k, \eta_{k+1}]: k\geq 1}$.
Note that for every $i\geq 1$, the joinpoint $\xi_{i,1}$ corresponds to a vertex with infinite degree. Label this vertex $i$. The ICRT $\icrt$ is the completion of the marked metric tree $\mathscr{T}^{\bld{\beta}}_0$. As argued in \cite[Section~2]{AP00a}, this is a real-tree as defined above which can be viewed as rooted at the vertex corresponding to zero. We call the vertex corresponding to joinpoint $\xi_{i,1}$  \textbf{hub} $i$. Since $\sum_i \beta_i = \infty$, one can check that hubs are almost everywhere dense on $\icrt$.

% \begin{figure}[htbp]
% 	\centering \includestandalone[width=.5\textwidth]{poisson-points}
%
% 	\caption{An illustration of the ICRT construction with four point process $\set{\cP_i:1\leq i\leq 4}$. The red points represent the \emph{joinpoint} of the corresponding point process and the blue points the corresponding cutpoints. The last line contains the union of the four point processes. See Figure \ref{c3:fig:tinf-theta} for the corresponding tree.  }
% 	\label{c3:fig:point-process-icrt}
% \end{figure}
%
% \begin{figure}[htbp]
% 	\centering
% 	\includestandalone[width=.4\textwidth]{line-break}
% 	\caption{The tree constructed via the stick-breaking construction from Figure \ref{c3:fig:point-process-icrt}.}
% 	\label{c3:fig:tinf-theta}
% \end{figure}

%
%\begin{remark}\label{c3:rem:planar-embedding-U-ij}
The uniform random variables $(U_j^{\sss(i)})_{i,j\geq 1}$ give rise to a natural ordering on $\icrt$ (or a planar embedding of $\icrt$) as follows: 
For $i\geq 1$, let $(\mathscr{T}_j^{\sss(i)})_{j\geq 1}$ be the collection of subtrees hanging off the $i$-th hub. Associate $U_j^{\sss(i)}$ with the subtree $\mathscr{T}_j^{\sss(i)}$, and think of $\mathscr{T}_{j_1}^{\sss(i)}$ appearing ``to the right of" $\mathscr{T}_{j_2}^{\sss(i)}$ if $U_{j_1}^{\sss(i)}< U_{j_2}^{\sss(i)}$. This is the natural ordering on $\icrt$ when it is being viewed as a limit of ordered $\vp$-trees. We can think of the pair $(\icrt, \mvU)$ as the \textbf{ordered ICRT}.

\subsection{Continuum limits of components}
\label{c3:sec:descp-limit}
The aim of this section is to give an explicit description of the limiting (random) metric spaces in Theorem \ref{c3:thm:main}.  We start by constructing a specific metric space using the tilted version of the ICRT in Section~\ref{c3:sec:tilt-icrt}. 
Then we describe the limits of maximal components in Section~\ref{c3:sec:limit-component}.

\subsubsection{Tilted ICRTs and vertex identification}
\label{c3:sec:tilt-icrt}
Let $(\Omega, \cF, \PR_{\beta})$ and $\icrt$ be as in Section \ref{c3:sec:inhom-cont-tree}. 
In \cite{AP00a}, it was shown that one can associate a natural probability measure $\mu$, called the \textbf{mass measure}, to $\icrt$, satisfying $\mu(\mathfrak{L}(\icrt))=1$. 
Here we recall that $\mathfrak{L}(\cdot)$ denotes the set of leaves. 
Before moving to the desired construction of the random metric space, we will need to define some more quantities that describes the asymptotic analogues of the quantities appearing in Algorithm~\ref{c3:algo:construction-Pcon}.
Similarly to \eqref{c3:eqn:amv-def}, define
\begin{equation}
\label{c3:eqn:limit-da}
\dA_{\sss(\infty)}(y)=\sum_{i\geq 1}\beta_{i}\bigg(\sum_{j\geq 1}U_j^{\sss(i)}\times\mathbf{1}\{y\in \mathscr{T}_j^{\sss(i)} \}\bigg).\end{equation}
It was shown in \cite{BHS15} that $\dA_{\sss(\infty)}(y)$ is finite for almost every realization of $\icrt$ and for $\mu$-almost every $y\in\icrt$. For $y\in \icrt$, let $[\rho,y]$ denote the path from the root $\rho$ to $y$. For every $y$, define a probability measure on $[\rho,y]$ as
\begin{equation}
\label{c3:eqn:right-end-prob-inft}
Q_{y}^{\sss(\infty)}(v):= \frac{\beta_i U_{j}^{\sss(i)}}{ \dA_{\sss(\infty)}(y)}, \quad\mbox{ if } v\mbox{ is the }i\mbox{-th hub and } y\in \mathscr{T}_j^{\sss(i)}\mbox{ for some }j. 	
\end{equation}
Thus, this probability measure is concentrated on the hubs on the path from $y$ to the root.
Let $\gamma >0$ be a constant.
The choice of the function $\gamma$ is indicated in Assumption~\ref{c3:assm:BHS15}. 
Informally, the construction goes as follows:
% Recall that by construction, for each $i\geq 1$, $\theta_i$ corresponds to a hub with infinite degree, which we will label by $i$. View $\icrt$ as a random metric space with a countable number of distinguished points and let $\sS_{\star}$ denote the corresponding subset of metric spaces. For $i\geq 1$, let $(\mathscr{T}_j^{\sss(i)}:j\geq 1)$ be the collection of real trees obtained by deleting vertex $i$ from $\icrt$.  Write $\otimes \Lambda^{\infty}$ for the distribution of $(U_j^{\sss(i)}:i\geq 1, j\geq 1)$ on $(0,1)^{\infty}$ and write $\overline{\pr}_{\bld{\beta}}:=\pr_{\theta}\otimes\Lambda^{\infty}$ for the joint distribution on $\sS_{\star} \times (0,1)^{\infty}$.
We will first tilt the distribution of the original ICRT $\icrt$ using the exponential functional
	\begin{equation}
	\label{c3:eqn:ltheta-def}
		L_{\sss(\infty)}(\icrt, \mvU):= \exp\bigg(\gamma\int_{y\in \icrt} \dA_{\sss(\infty)}(y)\mu(dy)\bigg)
	\end{equation}
	to get a tilted tree $\tilicrt$. 
	We then generate a random but finite number $N_{\sss(\infty)}^\star$ of pairs of points $\{(x_k, y_k):1\leq k\leq N_{\sss(\infty)}^\star\}$ that will provide the surplus edges. 
	The final metric space is obtained by creating ``shortcuts" by identifying the points $x_k$ and $y_k$. Formally the construction proceeds in four steps:
\begin{enumerate}
	\item \textbf{Tilted ICRT:}  %For $\omega\in\Omega$, let $t_{\omega}$ be the realization of the tree constructed by the stick breaking construction and let $\mu_{\omega}$  be the mass measure on $t_{\omega}$.  Write $\vu_{\omega}=(u_{j,\omega}^{\sss(i)}: i\geq 1, j\geq 1)$ for the realization of the random variables $U_j^{\sss(i)}$ and let
%\[\dA_{(\infty), \omega}(y):=\sum_{i\geq 1}\theta_{i}\left[\sum_{j\geq 1}u_{j,\omega}^{\sss(i)}\times\ind\set{y\in t_{j,\omega}^{\sss(i)}}\right],\]
%where $\set{t_{j,\omega}^{\sss(i)}: j\geq 1}$  denotes the collection of subtrees of the $i$th hub in $t_{\omega}$.
Define ${\PR}_{\beta}^\star$ on $\Omega$ by
\begin{equation}
\frac{d {{\PR}}_{\beta}^\star}{d{{\PR}}_{\beta}}=\frac{\exp\big(\gamma\int_{y\in\mathscr{T}^{\bld{\beta}}_{(\infty)}}\dA_{(\infty)}(y)\mu(dy) \big)}{\E\Big[\exp\big(\gamma\int_{x\in \mathscr{T}^{\bld{\beta}}_{(\infty)}} \dA_{\sss(\infty)}(x)\mu(dx) \big)\Big]}. 
\end{equation}
The expectation in the denominator is with respect to the original measure ${\PR}_{\beta}$. 
%In our proof we will show that this object is finite. 
Write $(\tilicrt, \mu^\star)$ and $\mvU^{\star}=(U_j^{(i), \star})_{i,j\geq 1}$ for the tree and the mass measure on it, and the associated random variables under this change of measure.
%\SSS{Maybe we should write $\dA_{\sss(\infty)}^\star$.}
 \item \textbf{Poisson number of identification points:} Conditionally on the object $((\tilicrt, \mu^\star), \mvU^{\star})$, generate $N_{\sss(\infty)}^\star$ having a $\mathrm{Poisson}(\Lambda_{\sss(\infty)}^\star)$ distribution, where
\begin{equation}
\Lambda_{\sss(\infty)}^\star:= \gamma\int_{y\in \tilicrt}\dA_{\sss(\infty)}(y)\mu^\star(dy)
=\gamma\sum_{i\geq 1}\beta_{i}\bigg[\sum_{j\geq 1}U_j^{(i), \star}\mu^\star(\mathscr{T}_j^{\sss(i), \star})\bigg].
\end{equation}
Here, $(\mathscr{T}_j^{\sss(i), \star})_{j\geq 1}$ denotes the collection of subtrees of hub $i$ in $\tilicrt$. 
%(As mentioned before in Remark \ref{c3:rem:suppress}, $\dA_{\sss(\infty)}(\cdot)$ depends on the realization of the ordered ICRT. $U_j^{(i), \star}$ appears in the expression above as the function $\dA_{\sss(\infty)}$ acts on $y\in\tilicrt$ for which the associated order is described by $\mvU^{\star}$.)

\item \textbf{``First" endpoints (of shortcuts): } Conditionally on (a) and (b), sample $x_k$ from $\tilicrt$  with density proportional to $\dA_{\sss(\infty)}(x)\mu^\star(dx)$ for $1\leq k\leq N_{\sss(\infty)}^\star$.
\item \textbf{``Second'' endpoints (of shortcuts) and identification:} Having chosen $x_k$, choose $y_k$ from the path $[\rho, x_k]$ joining the root $\rho$ and $x_k$ according to the probability measure $Q_{x_k}^{\sss (\infty)}$ as in \eqref{c3:eqn:right-end-prob-inft} but with $U_j^{(i),\star}$ replacing $U_j^{\sss (i)}$.
%$\eta_k$ given by
%\[\dA_{\sss(\infty)}(x_k)\eta_k(\cdot)=\sum_{i\geq 1}\theta_{i}\left[\sum_{j\geq 1}U_j^{\sss(i)}\times\ind\set{x_k\in \mathscr{T}_j^{\sss(i)}}\right]\times\delta_{z_i}(\cdot),\]
(Note that $y_k$ is always a \textbf{hub} on $[\rho, x_k]$.)  Identify $x_k$ and $y_k$, i.e., form the quotient space by introducing the equivalence relation $x_k\sim y_k$ for $1\leq k\leq N_{\sss(\infty)}^\star$.
\end{enumerate}
\begin{defn}\label{c3:def:limiting-space} Fix $\gamma\geq 0$ and $\bld{\beta}\in\ell^2_{\shortarrow}\setminus \ell^1_{\shortarrow}$ with $\sum_i\beta_i^2=1$. Let $\cG_{\sss (\infty)}(\bld{\beta},\gamma)$ be the metric measure space constructed via the four steps above equipped with the measure inherited from the mass measure on $\tilicrt$.	
\end{defn}
%In our proofs, we will always think of the leaf end (of a shortcut or a surplus edge) as the first endpoint, and the second endpoint will be selected from the skeleton.
\subsubsection{Scaling limit for the component sizes and surplus edges}
\label{c3:sec:comp-size-scaling}
Let us describe the scaling limit results for the component sizes and the surplus edges ($\#\text{edges}-\#\text{vertces}+1$) for the largest components of $\mathrm{CM}_n(\bld{d},p_n(\lambda))$ from Chapter~\ref{chap:secondmoment}. 
Although we need to define the limiting object only for describing the limiting metric space, the convergence result will turn out to be crucial in Section~\ref{c3:sec:proof-metric-mc} in the proof of Theorem~\ref{c3:thm:main}, and therefore we state it here as well.
Consider a decreasing sequence $ \boldsymbol{\theta}\in \ell^3_{\shortarrow}\setminus \ell^2_{\shortarrow}$. 
Denote by  $\mathcal{I}_i(s):=\ind{\zeta_i\leq s }$ where $\zeta_i\sim \mathrm{Exp}(\theta_i)$ independently, and $\mathrm{Exp}(r)$ denotes the exponential distribution with rate $r$.  
 Consider the process 
 \begin{equation}\label{c3:defn::limiting::process}
\bar{S}^\lambda_\infty(t) =  \sum_{i=1}^{\infty} \theta_i\left(\mathcal{I}_i(t)- \theta_it\right)+\lambda t,
\end{equation}for some $\lambda\in\mathbb{R}$. 
Define the reflected version of $\bar{S}_\infty^{\lambda}(t)$ by
$ \mathrm{refl}\big( \bar{S}_\infty^{\lambda}(t)\big)= \bar{S}_\infty^{\lambda}(t) - \inf_{0 \leq u \leq t} \bar{S}_\infty^{\lambda}(u).$
The processes of the form \eqref{c3:defn::limiting::process} were termed \emph{thinned} L\'evy processes in \cite{BHL12} since the summands are thinned versions of Poisson processes.
Let $(\Xi_i(\bld{\theta},\lambda))_{i \geq 1}$, $(\xi_i(\bld{\theta},\lambda))_{i\geq 1}$, respectively, denote the vector of excursions and excursion-lengths, ordered according to the excursion lengths in a decreasing manner.
Denote the vector $(\xi_i(\bld{\theta},\lambda))_{i\geq 1}$ by $\bld{\xi}(\bld{\theta},\lambda)$.
The fact that $\bld{\xi}(\bld{\theta},\lambda)$ is always well defined follows from \cite[Lemma 1]{AL98}.
Also, define the counting process of marks $\mathbf{N}$ to be a Poisson process that has intensity $\refl{ \bar{S}_\infty^{\lambda}(t)}$ at time $t$ conditional on $( \refl{ \bar{S}_\infty^{\lambda}(u)} )_{u \leq t}$. %Formally, $\mathbf{N}$ is characterized as the counting process for which $N(s) - \int\limits_{0}^{s}\mathrm{refl}\big( \bar{S}_\infty^{\lambda}(u)\big)\dif u$ is a martingale.  
We use  the notation $\mathscr{N}_i(\bld{\theta},\lambda)$ to denote the number of marks within the i-th largest excursion $\Xi_i(\bld{\theta},\lambda)$.

For a connected graph $G$, let $\mathrm{SP}(G) = \#\text{edges}-\#\text{vertices}+1$ denote its surplus edges. 
In the context of this chapter, we simply write $\xi_i$, $\bld{\xi}$ and $\mathscr{N}_i$ respectively for $\xi_i(\bld{\theta}/(\mu\nu),\lambda/\mu)$, $\bld{\xi}(\bld{\theta}/(\mu\nu),\lambda/\mu)$ and $\mathscr{N}_i(\bld{\theta}/(\mu\nu),\lambda/\mu)$.
%The next proposition yields the scaling limit for the component sizes and the surplus edges of $\mathrm{CM}_n(\bld{d},p_n(\lambda))$:
\begin{proposition}\label{c3:prop:comp-size}
Under \textrm{Assumption~\ref{c3:assumption1}}, as $n\to\infty$,
\begin{equation}\label{c3:eq:limit-objetct-comp-size}
\big(n^{-\rho}|\mathscr{C}_{\sss (i)}^p(\lambda)|,\surp{\mathscr{C}_{\sss (i)}^p(\lambda)}\big)_{i\geq 1} \dto \Big(\frac{1}{\nu}\xi_i,\mathscr{N}_i\Big)_{i\geq 1},
 \end{equation}with respect to the topology on the product space $\ell^2_\shortarrow\times \N^\N$.
\end{proposition}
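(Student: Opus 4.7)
The plan is to prove Proposition~\ref{c3:prop:comp-size} via an exploration-process analysis, following the approach initiated in \cite{AL98,BHL12} and adapted to the configuration model in \cite{Jo10,DHLS16}. First I would reduce percolation to an unpercolated model: since percolation on $\mathrm{CM}_n(\bld{d})$ yields a configuration model on a binomially thinned degree sequence $\bld{d}^p$ (by the Fountoulakis/Janson identity \cite{F07,J09}), it suffices to analyze $\mathrm{CM}_n(\bld{d}^p)$ with criticality parameter $\nu_n^p = p_n \nu_n \approx 1 + \lambda \nu / n^{\eta}$. A routine check shows $\bld{d}^p$ satisfies a version of Assumption~\ref{c3:assumption1} with limiting hub weights $p\bld{\theta}$, so the limiting constants emerge in the form $\bld{\theta}/(\mu\nu)$ and drift $\lambda/\mu$ after all the bookkeeping.

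Next I would run the standard half-edge-by-half-edge exploration of $\mathrm{CM}_n(\bld{d}^p)$. Let $S_n(k)$ denote the number of active half-edges after $k$ pairing steps; the strict descending ladder epochs of $S_n$ correspond to component completions, so excursions of $S_n$ are in bijection with (a size-biased enumeration of) the components. The central step is the functional convergence
\begin{equation}
\big(n^{-\alpha} S_n(\lfloor n^\rho t\rfloor)\big)_{t\geq 0}\dto \bar{S}_\infty^{\lambda/\mu}(t) \text{ with weights } \bld{\theta}/(\mu\nu),
\end{equation}
in the Skorokhod $J_1$ topology on $D[0,T]$. The heavy-tailed structure dictates the form of the limit: a size-biasing computation shows that the exploration time of hub $i$ (with $d_i\sim\theta_i n^\alpha$) converges after rescaling to an $\mathrm{Exp}(\theta_i/(\mu\nu))$ variable $\zeta_i$, which is precisely the argument of the indicator $\mathcal{I}_i$ in \eqref{c3:defn::limiting::process}; upon being discovered, hub $i$ contributes a jump of size $n^{-\alpha}d_i\to\theta_i$ to the rescaled walk. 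The aggregate contribution of small-degree vertices concentrates by Assumption~\ref{c3:assumption1}\eqref{c3:assumption1-2} and yields the compensator $-\theta_i^2 t$ and the deterministic drift $\lambda t/\mu$ coming from $p_n\nu_n-1$. I would establish this convergence through a martingale decomposition of the increments of $S_n$ combined with the tail truncation built into Assumption~\ref{c3:assumption1}, very much along the lines of \cite{DHLS16}.

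Given the walk-level convergence, the component-size statement $n^{-\rho}|\mathscr{C}_{\sss(i)}^p(\lambda)|\dto\xi_i/\nu$ in $\ell^2_\shortarrow$ follows from continuity of the ranked-excursion map at the limiting process (standard off an exceptional null set), together with an $\ell^2$-tightness estimate on long excursions proved exactly as \cite[Lemma 1]{AL98}. For the surplus edges, I would observe that within any excursion, at step $k$ the conditional probability of producing a back edge equals $(S_n(k)-1)$ divided by the total number of unpaired half-edges, which is $\mu n(1+o(1))$. After the time change $t=k/n^\rho$, back edges therefore arrive inside the $i$-th excursion at asymptotic rate $\mathrm{refl}(\bar S_\infty^{\lambda/\mu})(t)$, producing exactly the Poisson mark process defining $\mathscr{N}_i$. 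The joint convergence is obtained by running this argument jointly with the walk convergence on each finite set of excursions.

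The main obstacle is upgrading finite-dimensional excursion/mark convergence to convergence in the product topology on $\ell^2_\shortarrow\times\mathbb{N}^{\mathbb{N}}$. One needs simultaneous control over the contribution of \emph{all} short excursions — both their squared lengths and their accumulated reflected area (which governs surplus edges). The $\ell^2$ tightness of the tail of excursion lengths uses the $\ell^3_\shortarrow$ summability of $\bld{\theta}$; the harder piece is showing that the total number of surplus edges contained outside the top $K$ components is small uniformly in $n$ as $K\to\infty$. I would handle this via barely-subcritical-type susceptibility bounds — precisely the second-moment and distance-weighted estimates underlying Theorem~\ref{c3:thm:susceptibility} — which control $\sum_{i>K}|\mathscr{C}_{\sss(i)}^p|^2$ and the related reflected-area functional uniformly, closing the proof.
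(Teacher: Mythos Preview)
Your proposal is correct and essentially reconstructs the proof given in \cite{DHLS16} (which is the paper's Chapter~\ref{chap:secondmoment}): reduce percolation to a configuration model on the thinned degrees, run the half-edge exploration, prove $J_1$ convergence of the rescaled walk to the thinned L\'evy process, and read off component sizes and surplus via the excursion/mark machinery with $\ell^2$ tail control.

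The paper, however, does not reprove any of this here. It simply invokes Theorem~\ref{c2thm::conv:component:size} from Chapter~\ref{chap:secondmoment} and then, in Appendix~\ref{c3:sec:appendix-rescaling}, carries out a short time/space rescaling computation to reconcile the parametrizations: the earlier chapter uses a different normalization of $p_n(\lambda)$ (so $\lambda$ becomes $\lambda\nu$) and states the limit with parameters $\bld{\theta}/\sqrt{\nu}$, whereas here the limit is written as $\bld{\xi}(\bld{\theta}/(\mu\nu),\lambda/\mu)$. The appendix just checks, via the scaling identity underlying Lemma~\ref{c3:lem:rescale}, that these two descriptions of the excursion lengths and mark process coincide in law. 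So what you sketched is the substantive argument behind the cited result, while the paper's own contribution at this point is only the change-of-variables identification.
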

Proposition~\ref{c3:prop:comp-size} was proved in Chapter~\ref{chap:secondmoment}.
The limiting object in Theorem~\ref{c2thm::conv:component:size} is stated in a slightly different form compared to the right-hand side of \eqref{c3:eq:limit-objetct-comp-size}. 
However, the limiting objects are identical in distribution with suitable rescaling of time and space, and  by observing that $r\mathrm{Exp}(r) \stackrel{\sss d}{=} \mathrm{Exp}(1)$, where $\mathrm{Exp}(r)$ denotes an exponential random variable with rate $r$ (See Appendix~\ref{c3:sec:appendix-rescaling}). In fact, the arguments in Appendix~\ref{c3:sec:appendix-rescaling} establish the following lemma that will be used extensively in Section~\ref{c3:sec:proof-metric-mc}.
\begin{lemma} \label{c3:lem:rescale}  For $\eta_1,\eta_2>0$, $\bld{\theta}\in \ell^3_{\shortarrow}\setminus \ell^2_{\shortarrow}$ and $\lambda\in\R$,
$ \bld{\xi}(\eta_1\bld{\theta},\eta_2\lambda)\eqd \frac{1}{\eta_1}\bld{\xi}\big(\bld{\theta},\frac{\eta_2}{\eta_1^2}\lambda\big).
$
\end{lemma}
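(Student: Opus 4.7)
The plan is to prove the identity by a direct distributional coupling: we rescale time by $\eta_1$ in the process driving the right-hand side and verify that it matches the process driving the left-hand side.

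First, I would exploit the scaling property of the exponential distribution. Writing $\tilde{\mathcal{I}}_i(t) = \ind{\tilde\zeta_i \leq t}$ with $\tilde\zeta_i\sim \mathrm{Exp}(\eta_1\theta_i)$ for the indicators driving $\bar S_\infty^{\eta_2\lambda}$ with rate sequence $\eta_1\bld\theta$, and $\mathcal{I}_i(t) = \ind{\zeta_i\leq t}$ with $\zeta_i\sim\mathrm{Exp}(\theta_i)$ for those driving $\bar S_\infty^{(\eta_2/\eta_1^2)\lambda}$ with rate sequence $\bld\theta$, the identity $\eta_1\tilde\zeta_i \eqd \zeta_i$ yields the joint coupling $\tilde{\mathcal{I}}_i(t) \eqd \mathcal{I}_i(\eta_1 t)$ for all $i,t$.

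Second, I would substitute this coupling into the definition \eqref{c3:defn::limiting::process}. The thinned Lévy process with parameters $(\eta_1\bld\theta,\eta_2\lambda)$ becomes
\begin{equation*}
\sum_{i\geq 1} \eta_1\theta_i\bigl(\mathcal{I}_i(\eta_1 t)-\eta_1\theta_i t\bigr) + \eta_2\lambda t
= \eta_1\sum_{i\geq 1}\theta_i\bigl(\mathcal{I}_i(\eta_1 t) - \theta_i(\eta_1 t)\bigr) + \eta_2\lambda t,
\end{equation*}
which equals $\eta_1\, \bar S_\infty^{(\eta_2/\eta_1^2)\lambda}(\eta_1 t)$ after matching the linear drift coefficient. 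Thus, as processes in $t$, the thinned Lévy process with parameters $(\eta_1\bld\theta,\eta_2\lambda)$ has the same law as $\eta_1\, \bar S_\infty^{(\eta_2/\eta_1^2)\lambda}(\eta_1 \cdot)$.

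Third, since reflection at the running infimum is covariant under positive scaling and under affine time changes, the same identity holds for the reflected process. Consequently, excursions of one process above $0$ correspond bijectively to excursions of the other: an excursion of $\bar S_\infty^{(\eta_2/\eta_1^2)\lambda}$ of length $\ell$ at time $s$ becomes an excursion of the $(\eta_1\bld\theta,\eta_2\lambda)$ process of length $\ell/\eta_1$ at time $s/\eta_1$. Taking these excursion lengths in decreasing order gives $\bld\xi(\eta_1\bld\theta,\eta_2\lambda) \eqd \eta_1^{-1}\bld\xi(\bld\theta,(\eta_2/\eta_1^2)\lambda)$, as claimed.

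The only subtle point is the a.s.\ existence and measurability of the excursion decomposition for the infinite sum $\sum_i\theta_i(\mathcal{I}_i(t)-\theta_i t)$; this is already guaranteed by the well-definedness of $\bld\xi(\bld\theta,\lambda)$ appealed to in the text via \cite[Lemma 1]{AL98}, so no new work is needed there. The argument is essentially a one-line rescaling once the coupling of Poisson/exponential clocks is written out.
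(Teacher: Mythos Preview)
Your proof is correct and follows essentially the same route as the paper. The paper's argument (in Appendix~\ref{c3:sec:appendix-rescaling}) likewise couples the exponential clocks via $r\,\mathrm{Exp}(r)\eqd\mathrm{Exp}(1)$, substitutes $u=\eta_1 t$ in the thinned L\'evy process, and then observes that a multiplicative rescaling in space leaves excursion lengths unchanged; the only cosmetic difference is that the paper carries this out for the specific constants arising in Proposition~\ref{c3:prop:comp-size} and remarks that the general statement follows by the same manipulation, whereas you work directly with generic $\eta_1,\eta_2$.
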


\subsubsection{Limiting component structures}
\label{c3:sec:limit-component}
We are now all set to describe the metric space $M_i$ appearing in Theorem~\ref{c3:thm:main}.
Recall the graph $\mathcal{G}_{\infty}(\bld{\beta},\gamma)$ from Definition~\ref{c3:def:limiting-space}.
Using the notation of Section~\ref{c3:sec:comp-size-scaling}, write $\xi_i^*$ for $\xi_i((\mu(\nu-1))^{-1}\bld{\theta}, (\mu(\nu-1)^2)^{-1}\nu^2\lambda)$ and $\Xi_i^*$ for the excursion corresponding to $\xi_i^*$.
Note that $\xi_i^*$ has the same distribution as $(\nu-1) \xi_i/\nu$, where $\xi_i$ is as in Proposition~\ref{c3:prop:comp-size}. 
Then the limiting space $M_i$ is distributed as 
\begin{equation}
 M_i \eqd \frac{\nu}{\nu-1}\frac{\xi_i^*}{\big(\sum_{v\in \Xi_i^*}\theta_v^2\big)^{1/2}} \mathcal{G}_\infty ( \bld{\theta}^{\sss (i)}, \gamma^{\sss (i)}  ),
\end{equation}where $\bld{\theta}^{\sss (i)} = \big(\frac{\theta_j}{\sum_{v\in \Xi_i^*}\theta_v^2}: j\in \Xi_i^*\big)$ and $\gamma^{\sss (i)}=\frac{\xi_i^*}{\mu(\nu-1)}\big(\sum_{v\in \Xi_i^*}\theta_v^2\big)^{1/2}$.

\section{Universality theorem} \label{c3:sec:univ-thm}
In this section, we develop universality principles that enable us to derive the scaling limits of the components for graphs that can be compared with the critical rank-one inhomogeneous random graph in a suitable sense. 
Our universality theorem closely resembles that in \cite[Theorem~6.4]{BBSX14} which was developed in a different context to derive the scaling limits of the components for general inhomogeneous random graphs with a finite number of types and the configuration model with an exponential moment condition on the degrees. 
We first state the relevant result from~\cite{BHS15} that was used in the context of rank-one inhomogeneous random graphs and then state our main result below. 
The convergence of metric spaces is with respect to the Gromov-weak topology, unless stated otherwise.
Recall the measured metric spaces $\tilde{\mathcal{G}}_m(\vp,a)$ and $\mathcal{G}_{\infty}(\bld{\beta},\gamma)$ defined in \textrm{Definitions~\ref{c3:defn:p-tree-graph} and~\ref{c3:def:limiting-space}}.
%Recall the definition of $\mathbb{B}$ from~\eqref{c3:defn:B-space}.
\begin{assumption} \label{c3:assm:BHS15}  \normalfont \begin{enumerate}[(i)]
 \item  Let $\sigmap := \big(\sum_i p_i^2\big)^{1/2}$. As $m\to\infty$, $\sigmap\to 0 $, and for each fixed $i\geq 1$, $p_i/\sigmap \to \beta_i$, where $\bld{\beta}=(\beta_i)_{i\geq 1}\in\ell^2_{\shortarrow}\setminus\ell^1_{\shortarrow}$, $\sum_i\beta_i^2=1$.
 \item Recall $a$ from \eqref{c3:eqn:qij-def-vp}. There exists a constant $\gamma >0$ such that $a\sigmap\to\gamma$. 
\end{enumerate}
\end{assumption}
Assumption~\ref{c3:assm:BHS15}~(i) is a sufficient condition for the convergence of $\vp$-trees \cite{CP99} when the edges are assigned edge-length $\sigmap$.
Assumption~\ref{c3:assm:BHS15}~(ii) is required for the tilting function $L(\cdot)$ to converge.
This suggests that the tilted $\vp$-tree in Algorithm~\ref{c3:algo:construction-Pcon}~(S1) converges to tilted ICRT.
In \cite{BHS15}, the above fact was proved, and it was further shown that the scaling limit holds after the shortcuts are created during Algorithm~\ref{c3:algo:construction-Pcon}~(S2) onwards.

 \begin{theorem}[{\cite[Theorem 4.5]{BHS15}}] \label{c3:thm:BHS15}
  Under \textrm{Assumption~\ref{c3:assm:BHS15}},
$ \sigmap \tilde{\mathcal{G}}_m(\mathbf{p},a) \dto \mathcal{G}_{\sss (\infty)}(\bld{\beta},\gamma)$, as $m\to\infty$.
\end{theorem}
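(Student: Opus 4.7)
The plan is to prove Gromov-weak convergence via the polynomial characterization in~\eqref{c3:defn:polynomials}: it suffices to show that, for every fixed $\ell\geq 2$, the matrix of pairwise distances between $\ell$ i.i.d.\ samples drawn from the measure on $\sigma_{\vp}\tilde{\mathcal{G}}_m(\vp,a)$ converges in distribution to the analogous distance matrix for $\ell$ i.i.d.\ samples (under $\mu^\star$) from $\cG_{\sss(\infty)}(\bld{\beta},\gamma)$. The birthday construction (Theorem~\ref{c3:thm:pit-cam-birthday}) is perfectly suited for this: it lets me build the $\vp$-tree jointly with a sequence of i.i.d.\ samples from $\vp$ that are manifestly measurable functions of a single i.i.d.\ sequence $\vY$. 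After the tilt and the shortcut step, the resulting samples from the vertex set equipped with probabilities $\vp$ serve as the discrete analogue of the i.i.d.\ $\mu^\star$-samples on the continuum side.

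The first main step is to lift the $\vp$-tree convergence to the tilted $\vp$-tree. By \cite{CP99,AP00a}, Assumption~\ref{c3:assm:BHS15}(i) gives $\sigma_{\vp}\mathscr{T}^{\vp}_m\dto \icrt$ jointly with convergence of the subtrees spanned by the i.i.d.\ samples coming out of the birthday construction, as well as convergence of the mass measure and the planar embedding encoded by the left-right order on children. Applying this to $\log L_{\sss(m)}$ in~\eqref{c3:eqn:ltpi-def}, I would show that the first product contributes $O(a^2\sum_{k,\ell} p_k^2 p_\ell^2)=O((a\sigma_{\vp}^2)^2)\to 0$ using $a\sigma_{\vp}\to\gamma$, while the sum over permitted edges satisfies
\begin{equation}
\sum_{(k,\ell)\in \sP(\mathscr{T}^{\vp}_m)} a\,p_k p_\ell \;=\; a\sum_{v\in[m]} p_v\,\dA_{\sss(m)}(v)\;\dto\;\gamma\!\int_{\tilicrt}\dA_{\sss(\infty)}(y)\,\mu(dy),
\end{equation}
since $a\sigma_{\vp}\to\gamma$ and $\dA_{\sss(m)}(\cdot)/\sigma_{\vp}$ can be identified (via the subtree masses) with a discrete approximant of $\dA_{\sss(\infty)}$ on $\icrt$. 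A uniform integrability argument (bounding $L_{\sss(m)}$ through a truncation at a level indexed by finitely many of the largest $p_i$'s, and controlling the tail by $\sum_{i>K}\beta_i^2$) then upgrades this to convergence of the tilted law $\PR^\star_{\ord}$ to its continuum analogue, so $\sigma_{\vp}\mathscr{T}^{\vp,\star}_m\dto \tilicrt$ with joint convergence of the mass measure and the left-right ordering.

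Next I handle the Poissonized shortcut step described by Algorithm~\ref{c3:algo:construction-Pcon} (S2)--(S5) and Lemma~\ref{c3:lem:lk-rk-equivalent}. Conditionally on the tilted tree, $\Lambda_{\sss(m)}(\mathscr{T}^{\vp,\star}_m)$ is exactly the quantity whose limit I just identified as $\Lambda_{\sss(\infty)}^\star$; therefore $N^\star_{\sss(m)}\dto N^\star_{\sss(\infty)}$ jointly. For the endpoints, the first-endpoint law $\cJ^{\sss(m)}(v)\propto p_v\dA_{\sss(m)}(v)$ converges (as a measure on the embedded tree) to the law $\propto \dA_{\sss(\infty)}(y)\mu^\star(dy)$, and conditional on a first endpoint $v\to x$ the second-endpoint law $Q_v^{\sss(m)}$ in~\eqref{c3:eqn:right-end-pt-prob} is a discrete version of $Q_x^{\sss(\infty)}$ in~\eqref{c3:eqn:right-end-prob-inft}, since the normalizing denominator is the same $\dA_{\sss(m)}$ and the numerator sums the $\vp$-mass of right-children at each hub on the spine $[\rho,v]$, matching $\beta_i U_j^{(i),\star}$ in the limit. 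Using the Skorokhod representation, all of these ingredients can be realized on one probability space with a.s.\ convergence.

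Finally I combine everything. For $\ell$ i.i.d.\ $\vp$-samples delivered by the birthday construction, each pairwise distance in $\sigma_{\vp}\tilde{\cG}_m(\vp,a)$ equals the shortest length of any path in the spanning subtree possibly using a subset of the finitely many shortcut pairs $\{(\cL_j,\cR_j)\}_{j\le N^\star_{\sss(m)}}$. Since $N^\star_{\sss(m)}$ is a.s.\ finite in the limit and the spanning subtrees plus the additional shortcut points converge jointly in $T^*_{IJ}$ (the space of Section~\ref{c3:sec:tree-space}) to the analogous objects in $\tilicrt$, the minimum-over-finitely-many-paths functional is a.s.\ continuous at the limit, and the distance matrices converge. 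This yields $\Phi(\sigma_{\vp}\tilde{\cG}_m(\vp,a))\to \Phi(\cG_{\sss(\infty)}(\bld{\beta},\gamma))$ in distribution for every polynomial~$\Phi$, which is the desired Gromov-weak convergence. The hardest step is the tilting: establishing the uniform integrability of $L_{\sss(m)}(\mathscr{T}^{\vp}_m)$ and controlling the tail contributions from the small $p_i$'s when proving the convergence of the permitted-edge sum to $\gamma\int\dA_{\sss(\infty)}\,d\mu$, because $\dA_{\sss(\infty)}$ is only a.s.\ finite and need not be bounded.
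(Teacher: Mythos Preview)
The paper does not give its own proof of this statement: it is quoted verbatim as \cite[Theorem~4.5]{BHS15} and then used as a black box in Key step~1 of the proof of Theorem~\ref{c3:thm:univesalty}. Your outline is essentially the strategy of \cite{BHS15}, and the specific ingredients you single out---uniform integrability of $L_{\sss(m)}(\mathscr{T}^{\vp}_m)$, convergence of $\Lambda_{\sss(m)}$ and of the first/second endpoint laws $\cJ^{\sss(m)}$, $Q_v^{\sss(m)}$, tightness of $N^\star_{\sss(m)}$, and the birthday-construction reduction to finite spanning subtrees in $\vT_{IJ}^*$---are precisely the lemmas the paper later cites from \cite{BHS15} (Lemmas~\ref{c3:lem:spls-tight-p}, \ref{c3:lem:lk-rk-equivalent}, \ref{c3:lem:uni-int-tilt}) when it invokes this result.
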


For each $m\geq 1$, fix a collection of blobs $\mathbf{M}_m:= \{(M_i, \dst_i, \mu_i):{i\in [m]} \}$. Recall the definition of super graphs from Section~\ref{c3:defn:super-graph} and denote 
\begin{equation}\label{c3:defn:blob-Gm}
\tilde{\mathcal{G}}_m^{\sss \mathrm{bl}}(\mathbf{p},a) = \Gamma(\tilde{\mathcal{G}}_m(\mathbf{p},a),\mathbf{p},\mathbf{M}_m,\mathbf{X}),
\end{equation}where $\mathbf{X} = (X_{ij})_{i,j\in [m]}$, $X_{ij}\sim \mu_i$ independently for each $i$. 
Moreover, $\mathbf{X}$ is independent of the graph $\tilde{\mathcal{G}}_m(\mathbf{p},a)$.
Let $u_i:=\E[\dst_i(X_i,X_i')]$ where $X_i,X_i'\sim \mu_i$ independently and $B_m:= \sum_{i\in [m]}p_iu_i$. 
Let $\Delta_i:=\diam(M_i) $, $\Delta_{\max}:=\max_{i\in [m]}\Delta_i$.
%We make the following assumptions on the blob structure:

\begin{assumption}[Maximum inter-blob-distance] \label{c3:assm:blob-diameter}
 \normalfont 
 $\lim_{m\to\infty}\frac{\sigma(\mathbf{p})\Delta_{\max}}{B_m+1}=0.$ 
\end{assumption}
Assumption~\ref{c3:assm:blob-diameter} basically says that the blobs have negligible diameter compared to the average distances in the metric space.
The next theorem is the universality theorem, which basically says that the introduction of the blobs does not change the scaling limits in \cite[Theorem 4.5]{BHS15} if the distances are normalized accordingly. 
\begin{theorem}[Universality theorem]\label{c3:thm:univesalty}
Under \textrm{Assumptions~\ref{c3:assm:BHS15}},~\textrm{\ref{c3:assm:blob-diameter}}, 
%hold. Then, with $\tilde{\mathcal{G}}_m^{\sss \mathrm{bl}}(\mathbf{p}, a)$ defined in \eqref{c3:defn:blob-Gm}, 
as $m\to\infty$,
\begin{equation}
 \frac{\sigma(\mathbf{p})}{B_m+1}  \tilde{\mathcal{G}}_m^{\sss \mathrm{bl}}(\mathbf{p}, a) \dto \mathcal{G}_{\sss (\infty)}(\bld{\beta},\gamma).
 \end{equation}
%with respect to the Gromov-weak topology, where $\mathcal{G}_{\infty}(\bld{\beta},\gamma)$ is defined in {\rm Definition~\ref{c3:def:limiting-space}}.
\end{theorem}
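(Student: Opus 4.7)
The plan is to leverage Theorem~\ref{c3:thm:BHS15} to control the super-graph skeleton $\tilde{\mathcal{G}}_m(\vp, a)$, and then show that the blob contributions to pairwise distances are essentially deterministic: each super-edge traversed inflates the traversal cost by an additive $B_m$, which is precisely what the normalization $\sigma(\mathbf{p})/(B_m+1)$ absorbs. Since the Gromov-weak topology is characterized by polynomials of the form~\eqref{c3:defn:polynomials}, it suffices to establish, for every fixed $l\geq 2$, joint convergence in distribution of the $(l\times l)$ matrix of rescaled pairwise distances between $l$ i.i.d.\ samples from $\bar{\mu}$. Such samples are conveniently generated hierarchically: first draw $I_k\sim\vp$ independently, then pick $V_k\sim \mu_{I_k}$ independently of the super-graph $\tilde{\mathcal{G}}_m(\vp,a)$ and of the junction points $\mathbf{X}=(X_{ij})$.

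Under this coupling, Theorem~\ref{c3:thm:BHS15} combined with the birthday construction (Remark~\ref{c3:rem:p-tree-dist}) gives
\begin{equation*}
\big(\sigma(\mathbf{p})\, d_{\tilde{\mathcal{G}}_m}(I_k,I_{k'})\big)_{1\leq k,k'\leq l}\ \dto\ \big(d_\infty(Z_k,Z_{k'})\big)_{1\leq k,k'\leq l},
\end{equation*}
where $Z_1,\dots,Z_l$ are i.i.d.\ mass-measure samples from $\mathcal{G}_\infty(\bld{\beta},\gamma)$. For each pair $(k,k')$, let $i_0=I_k,i_1,\dots,i_N=I_{k'}$ be the unique super-graph path of length $N=N_{k,k'}$ between them. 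By~\eqref{c3:defn:distance-metric-blob},
\begin{equation*}
\bar{\dst}(V_k,V_{k'}) = N + \dst_{I_k}(V_k,X_{I_k,i_1}) + \sum_{j=1}^{N-1}\dst_{i_j}(X_{i_j,i_{j-1}},X_{i_j,i_{j+1}}) + \dst_{I_{k'}}(X_{I_{k'},i_{N-1}},V_{k'}).
\end{equation*}
Because the $X_{ij}$ and $V_k$ are jointly independent of the super-graph and of one another, conditional on the path the intermediate summands are independent with means $u_{i_j}$ and bounded by $\Delta_{\max}$.

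Two ingredients complete the argument. First, an Azuma-Hoeffding bound controls the deviation of $\sum_{j=1}^{N-1}\dst_{i_j}(\cdot,\cdot)$ from its conditional mean $\sum_{j=1}^{N-1}u_{i_j}$ by $O_\PR(\Delta_{\max}\sqrt{N})$; since $\sigma(\mathbf{p})N$ is tight by the previous display, Assumption~\ref{c3:assm:blob-diameter} turns this into $o_\PR((B_m+1)/\sigma(\mathbf{p}))$, and the same assumption handles the two endpoint blob contributions. Second, one shows that the intermediate vertices $i_1,\dots,i_{N-1}$ are $\vp$-distributed in an averaged sense, so that
\begin{equation*}
\frac{1}{N}\sum_{j=1}^{N-1}u_{i_j}\ \pto\ \sum_{i\in[m]}p_i u_i\ =\ B_m;
\end{equation*}
this is established via the birthday construction (Theorem~\ref{c3:thm:pit-cam-birthday}), which gives an explicit $\vp$-biased description of the vertices visited along paths in a $\vp$-tree, combined with the observation that only finitely many hubs have $p_i/\sigma(\mathbf{p})$ bounded away from $0$ in the limit. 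Combining the two ingredients,
\begin{equation*}
\frac{\sigma(\mathbf{p})}{B_m+1}\bar{\dst}(V_k,V_{k'}) = \sigma(\mathbf{p})N\cdot\frac{1+B_m}{B_m+1} + o_\PR(1)\ \dto\ d_\infty(Z_k,Z_{k'}),
\end{equation*}
and joint convergence across all $\binom{l}{2}$ pairs follows by tightness and the continuous mapping theorem, yielding the Gromov-weak convergence (the measure on the rescaled blob graph is by construction $\bar{\mu}$, matching the mass measure on the limit).

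The principal technical obstacle is establishing $\tfrac{1}{N}\sum_j u_{i_j}\pto B_m$: one must rule out that paths between typical $\vp$-sampled vertices in the tilted $\vp$-tree systematically prefer vertices of atypical $u$-value. The finitely many heavy hubs contribute at most $o(N)$ intermediate vertices and are harmless, but the bulk of the intermediate vertices has to be shown to sample $\vp$ with enough uniformity to yield the law-of-large-numbers conclusion above. This averaging phenomenon is precisely the content of universality here: the scaling limit depends on the blob structure only through the single scalar $B_m$.
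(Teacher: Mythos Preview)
Your intuition is right---the blob contribution along a typical super-path averages to $B_m$ per edge, which is exactly what the rescaling absorbs---but two steps are not justified and one of them is the crux of the theorem.

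First, a smaller point: your distance formula presupposes a \emph{unique} super-graph path between $I_k$ and $I_{k'}$. The graph $\tilde{\mathcal{G}}_m(\vp,a)$ is not a tree; it is the tilted $\vp$-tree with $N^\star_{\sss(m)}$ surplus edges added (Algorithm~\ref{c3:algo:construction-Pcon}), and the blob geodesic may route through a different super-path than the skeleton geodesic if blobs along one path are systematically thinner. You need control that is uniform over the relevant paths, not just along one candidate geodesic. The paper handles this by conditioning on $N^\star_{\sss(m)}=k$ (tight by Lemma~\ref{c3:lem:spls-tight-p}) and encoding the $k$ shortcuts explicitly through the function $\mathsf{g}_\phi^k$ on $\overline{T}^{*m}_{IJ}$; the resulting estimate (Proposition~\ref{c3:prop:g-blob-close}) is uniform because it comes from a GHP bound on each root-to-leaf segment (Lemma~\ref{c3:lem:path:GHP-conv}).

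Second, and this is the real gap: you assert $\tfrac{1}{N}\sum_j u_{i_j}\to B_m$ ``via the birthday construction,'' but Theorem~\ref{c3:thm:pit-cam-birthday} produces i.i.d.\ $\vp$-samples along root-to-leaf paths only in the \emph{untilted} $\vp$-tree. In $\tilde{\mathcal{G}}_m(\vp,a)$ the tree law has been biased by $L(\cdot)$ and surplus edges have been added, so the intermediate vertices along a geodesic are neither i.i.d.\ nor $\vp$-distributed, and you give no mechanism to transfer the LLN across the tilt. The paper's route is to untilt first---using uniform integrability of $L(\mathscr{T}_m^{\vp})$ (Lemma~\ref{c3:lem:uni-int-tilt}) to reduce \eqref{c3:eq:blobvsG-reduction3} to a statement about the plain $\vp$-tree---and only then invoke the birthday construction. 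At that point the path vertices are genuinely i.i.d.\ $\vp$-samples, the process $\hat Q_k$ in \eqref{c3:Qk-simplif} is an exact martingale, and Doob's inequality (with the variance bound $\mathrm{Var}(\xi_{J_i})\le \Delta_{\max}B_m$) gives the required $o_{\PR}(1)$ using precisely Assumption~\ref{c3:assm:blob-diameter}. Note incidentally that your Azuma bound $O_{\PR}(\Delta_{\max}\sqrt{N})$ with $N\asymp 1/\sigma(\vp)$ only yields $o_{\PR}\big((B_m+1)/\sigma(\vp)\big)$ under the stronger hypothesis $\sqrt{\sigma(\vp)}\,\Delta_{\max}/(B_m+1)\to 0$, which Assumption~\ref{c3:assm:blob-diameter} does not give; the variance-based martingale bound is what makes the stated assumption sufficient.
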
 

\subsection{Completing the proof of Theorem~\ref{c3:thm:univesalty}}
This section is devoted to the proof of Theorem~\ref{c3:thm:univesalty}. 
%Often, we use some concepts and techniques discussed in \cite[Section 4.5]{BHS15} and we try to keep parity with the notation in \cite{BHS15}. 
To simplify notation, we write $\tilde{\mathcal{G}}_m$, $\tilde{\mathcal{G}}_m^{\sss \mathrm{bl}}$ respectively instead of $\tilde{\mathcal{G}}_m(\mathbf{p}, a)$ and $\tilde{\mathcal{G}}_m^{\sss \mathrm{bl}}(\mathbf{p}, a)$.
%First, let us recall the following lemma from \cite{BHS15}:
%\todo[inline]{Text to be added}
\begin{lemma}[{\cite[Lemma 4.11]{BHS15}}]\label{c3:lem:spls-tight-p} Recall the definition of $N_{\sss(m)}^\star$ from \textrm{Algorithm~\ref{c3:algo:construction-Pcon}}.
The sequence of random variables $(N_{\sss(m)}^\star)_{m\geq 1}$ is tight. 
\end{lemma}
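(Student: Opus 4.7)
The plan is to reduce tightness of $N_{\sss(m)}^\star$ to a uniform-in-$m$ bound on the expected Poisson parameter $\E^\star[\Lambda_{\sss(m)}(\mathscr{T}_m^{\vp,\star})]$, and to establish this bound by changing measure back to the unbiased ordered $\vp$-tree and controlling the resulting expectations via the birthday construction of Theorem~\ref{c3:thm:pit-cam-birthday}.

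By Lemma~\ref{c3:lem:lk-rk-equivalent}(1), conditionally on $\mathscr{T}_m^{\vp,\star}$ the variable $N_{\sss(m)}^\star$ is $\mathrm{Poisson}(\Lambda_{\sss(m)}(\mathscr{T}_m^{\vp,\star}))$. Using $\PR(\mathrm{Poisson}(\lambda)\geq K)\leq \lambda/K$,
\begin{equation}
\PR\bigl(N_{\sss(m)}^\star \geq K\bigr) \leq \PR\bigl(\Lambda_{\sss(m)}(\mathscr{T}_m^{\vp,\star}) \geq \sqrt{K}\bigr) + \frac{1}{\sqrt{K}},
\end{equation}
so by Markov's inequality it is enough to prove $\sup_m \E^\star[\Lambda_{\sss(m)}(\mathscr{T}_m^{\vp,\star})] < \infty$.

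Writing $\mathscr{T}_m^\vp$ for the unbiased ordered $\vp$-tree, the tilt~\eqref{c3:eqn:tilt-ord-dist-def} gives
\begin{equation}
\E^\star\bigl[\Lambda_{\sss(m)}(\mathscr{T}_m^{\vp,\star})\bigr] = \frac{\E_{\ord}\bigl[\Lambda_{\sss(m)}(\mathscr{T}_m^\vp)\, L(\mathscr{T}_m^\vp)\bigr]}{\E_{\ord}\bigl[L(\mathscr{T}_m^\vp)\bigr]}.
\end{equation}
For the denominator, the product structure of $L$ in~\eqref{c3:eqn:ltpi-def} with $ap_kp_\ell = O(a\sigma(\vp)^2) = o(1)$ under Assumption~\ref{c3:assm:BHS15} shows $\E_{\ord}[L(\mathscr{T}_m^\vp)]$ is bounded below by a positive constant for all large $m$; this is the same positivity that underlies the existence of the limit tilt $L_{\sss(\infty)}$ in~\eqref{c3:eqn:ltheta-def}. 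For the numerator I would apply Cauchy--Schwarz, reducing to separately bounding $\E_{\ord}[\Lambda_{\sss(m)}^2]$ and $\E_{\ord}[L^2]$.

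Both second-moment bounds I would pursue through the birthday construction: by Theorem~\ref{c3:thm:pit-cam-birthday} a root-to-vertex path in $\mathscr{T}_m^\vp$ is the initial segment of an i.i.d.~$\vp$-sample stopped at its first repeat time, which renders expectations of path functionals of $\dA_{\sss(m)}$ tractable and yields $\E_{\ord}[\Lambda_{\sss(m)}^2]=O(1)$ through estimates in powers of $a\sigma(\vp)^2$. For $\E_{\ord}[L^2]$, the inequality $(e^x-1)/x\leq e^x$ dominates the product factor in~\eqref{c3:eqn:ltpi-def} by an exponential of sums over tree edges and permitted pairs, after which the expectation is controlled via an exponential-moment estimate using $a\sigma(\vp)\to\gamma$ and $\sigma(\vp)\to 0$. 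The main obstacle will be precisely this uniform $L^2$ bound on $L$: pointwise convergence $L(\mathscr{T}_m^\vp) \to L_{\sss(\infty)}(\icrt, \mvU)$ is standard from ICRT convergence, but a uniform second moment requires careful control of the exponential moment of $\sum_{(k,\ell)\in \sP(\vt)} a p_k p_\ell$ over pairs of permitted edges under the $\vp$-tree law.
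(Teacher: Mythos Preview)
The paper does not give its own proof of this lemma: it is quoted verbatim from \cite[Lemma~4.11]{BHS15} and used as a black box. So there is no in-paper argument to compare your proposal against.

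That said, your outline has a real issue at the step you yourself flag. You reduce to a uniform bound on $\E_{\ord}[L(\mathscr{T}_m^\vp)^2]$, but the only control on $L$ available in this paper (and cited from \cite{BHS15}) is \emph{uniform integrability}, not an $L^2$ bound; see Lemma~\ref{c3:lem:uni-int-tilt}. Uniform integrability is strictly weaker, and for exponential functionals of this type an $L^2$ bound is not obviously true and would require substantial additional work. So as written, the Cauchy--Schwarz route does not close.

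There is a cleaner argument that uses exactly the ingredients the paper already cites. First, under the \emph{untilted} law $\PR_{\ord}$, note from \eqref{c3:eqn:Lambda-tree} that $\Lambda_{\sss(m)}(\mathscr{T}_m^\vp)=a\sigma(\vp)\cdot\sigma(\vp)^{-1}\E_{\vp}[\dA_{\sss(m)}(V_1)]$; Lemma~\ref{c3:lem:uni-int-tilt} gives convergence in distribution of $\sigma(\vp)^{-1}\E_{\vp}[\dA_{\sss(m)}(V_1)]$, and $a\sigma(\vp)\to\gamma$ by Assumption~\ref{c3:assm:BHS15}, so $(\Lambda_{\sss(m)}(\mathscr{T}_m^\vp))_m$ is tight under $\PR_{\ord}$. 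Second, pass to the tilted law using only uniform integrability: if $d\PR_{\ord}^\star/d\PR_{\ord}=L/\E_{\ord}[L]$ with $(L)$ uniformly integrable and $\inf_m\E_{\ord}[L]>0$, then for any event $A$,
\[
\PR_{\ord}^\star(A)\;\le\;\frac{K\,\PR_{\ord}(A)+\sup_m\E_{\ord}\bigl[L\,\ind{L>K}\bigr]}{\inf_m\E_{\ord}[L]},
\]
and the second term in the numerator can be made small uniformly in $m$ by choosing $K$ large. Applying this with $A=\{\Lambda_{\sss(m)}>M\}$ gives tightness of $\Lambda_{\sss(m)}(\mathscr{T}_m^{\vp,\star})$ under $\PR_{\ord}^\star$, and then your Poisson tail bound finishes the job. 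This avoids the $L^2$ estimate entirely.
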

Recall the definition of Gromov-weak topology from Section~\ref{c3:sec:defn:GHP-weak}. 
Fix some $l\geq 1$ and take any bounded continuous function $\phi: \R^{\sss l^2}\mapsto\R$. 
We simply write $\Phi(X)$ for $\Phi((X,\dst,\mu))$. 
\paragraph*{\textbf{Key step 1.}}
%The proof of Theorem~\ref{c3:thm:univesalty} is complete if we can show that 
%\begin{equation}
% \lim_{m\to\infty} \E\bigg[\Phi\bigg(\frac{\sigmap}{B_m+1}\tilde{\mathcal{G}}_m^{\sss\mathrm{bl}}\bigg)\bigg]= \expt{\Phi(\mathcal{G}_{\sss (\infty)}(\bld{\beta},\gamma))}.
%\end{equation}
%To simplify notation, 
Let us write the scaled metric spaces as $\tilde{\cG}_m^{\sss \mathrm{s}} = \sigmap \tilde{\mathcal{G}}_m$ and $\tilde{\cG}_m^{\sss \mathrm{bl},\mathrm{s}} = \frac{\sigmap}{B_m+1} \tilde{\mathcal{G}}_m^{\sss \mathrm{bl}}$.
Using Theorem~\ref{c3:thm:BHS15} it is enough to show that
\begin{equation}\label{c3:eq:uni-th:red1}
 \lim_{m\to\infty} \big|\E\big[\Phi\big(\tilde{\cG}_m^{\sss\mathrm{bl}, \mathrm{s}}\big)\big] - \E\big[\Phi(\tilde{\cG}_m^{\sss \mathrm{s}})\big] \big|=0.
\end{equation}
The above is the main step in the proof of Theorem~\ref{c3:thm:univesalty}. 
By \eqref{c3:eq:uni-th:red1}, we need only to compare the structure of $\tilde{\mathcal{G}}_m^{\sss\mathrm{bl}}$ to that of $\tilde{\mathcal{G}}_m$.
\paragraph*{\textbf{Key step 2.}}For any $K\geq 1$, 
$$ \bigg|\E\big[\Phi(\tilde{\cG}_m^{\sss \mathrm{s}})\big]- \sum_{k=0}^{K}\E\big[\Phi(\tilde{\cG}_m^{\sss \mathrm{s}})\1\{N_{\sss(m)}^\star = k\}\big]\bigg| \leq \|\phi\|_{\infty}\prob{N_{\sss(m)}^\star \geq K+1},$$ and the same inequality also holds for $\tilde{\cG}_m^{\sss\mathrm{bl}, \mathrm{s}}$.
%\begin{equation}
% \bigg|\E\bigg[\Phi\bigg(\frac{\sigmap}{B_m+1} \tilde{\mathcal{G}}_m^{\sss\mathrm{bl}}\bigg)\bigg] -\sum_{k=0}^{K}\E\bigg[\Phi\bigg(\frac{\sigmap}{B_m+1} \tilde{\mathcal{G}}_m^{\sss\mathrm{bl}}\bigg)\1\{N_{\sss(m)}^\star = k\}\bigg]\bigg|\leq \|\phi\|_{\infty}\prob{N_{\sss(m)}^\star \geq K+1}.
%\end{equation}
Thus, using Lemma~\ref{c3:lem:spls-tight-p}, 
the proof of \eqref{c3:eq:uni-th:red1} reduces to showing that, for each fixed $k\geq 1$, 
\begin{equation}\label{c3:eq:blobvsG-reduction1}
 \lim_{m\to\infty}\Big|\E\big[\Phi\big(\tilde{\cG}_m^{\sss \mathrm{bl},\mathrm{s}}\big)\1\{N_{\sss(m)}^\star = k\}\big]-\E\big[\Phi(\tilde{\cG}_m^{\sss \mathrm{s}})\1\{N_{\sss(m)}^\star = k\}\big]\Big|=0.
\end{equation}
\paragraph*{\textbf{Main aim of this section.}} 
%The idea of the proof is as follows: 
Below, we define a function $\mathsf{g}_{\phi}^k(\cdot)$ on 
the space $\overline{T}_{IJ}^*$ which captures the behavior of pairwise distances after creating $k$ surplus edges. 
Under Assumption~\ref{c3:assm:blob-diameter}, we show that the introduction of blobs changes the distances within the tilted $\vp$-trees and the $\mathsf{g}_\phi^k$ values negligibly. This completes the proof of \eqref{c3:eq:blobvsG-reduction1}. 
%Let us now give some more details.

For any fixed $k\geq 0$, consider $\mathbf{t}\in T^*_{\sss I,(k+l)}$ with root $0+$, leaves $\bld{i}=(1+,\dots,$ $(k+l)+)$ and root-to-leaf measures $\nu_{\vt,i}$ on the path $[0+,i+]$ for all $1\leq i\leq k+l$. 
We create a graph $G(\mathbf{t})$ by sampling, for each $1\leq i\leq k$, points $i(s)$ on $[0+,i+]$ according $\nu_{\vt,i}$ and connecting $i+$ with $i(s)$. 
Let $\dst_{\sss G(\mathbf{t})}$ denote the distance on $G(\mathbf{t})$ given by the sum of edge lengths in the shortest path. %Recall that $\phi:\R^{l^2}\mapsto \R$. 
Then, the function $\mathsf{g}_{\phi}^k:T_{\sss IJ}^*\mapsto\R$ is defined as

\begin{subequations}
\begin{equation}\label{c3:defn:g-phi-1}
 \mathsf{g}_{\phi}^k(\mathbf{t}) = \E\big[\phi\big(\dst_{\sss G(\mathbf{t})}(i+,j+):k+1\leq i,j\leq k+l\big)\big]\ind{\mathbf{t}\neq \partial},
\end{equation}where $\partial$ is a forbidden state defined as follows: 
Given any $\mathbf{t}\in T_{\sss IJ}^*$,  and a set of vertices $\bld{v}=(v_1,\dots,v_r)$, we denote the subtree of $\mathbf{t}$ spanned by $\bld{v}$ with $\mathbf{t}(\bld{v})$. We declare $\mathbf{t}(\bld{v})=\partial$ if either two vertices in $\bld{v}$ are the same or one of them is an ancestor of another vertex in $\bld{v}$. Thus, if $\mathbf{t}(\bld{v})\neq \partial$, the tree $\mathbf{t}(\bld{v})$ necessarily has $r$ leaves.
Notice that the expectation in \eqref{c3:defn:g-phi-1} is over the choices of $i(s)$-values only.
In our context, $\vt$ is always considered as a subgraph of the graph on vertex set $[m]$ and thus we assume that $\vt$ has inherited the labels from the corresponding graph.
Thus $\vt\in T^{*m}_{\sss I,(k+l)}$.
There is a natural way to extend $\mathsf{g}_{\phi}^k(\cdot)$ to $\overline{T}_{\sss IJ}^{*m}$ as follows:
Consider $\bar{\mathbf{t}}\in \overline{T}_{\sss IJ}^{*m}$ and the corresponding $\mathbf{t}\in T_{\sss IJ}^{*m}$ (see Section~\ref{c3:sec:T-space-extended}). 
Let $0+$, $\bld{i}$, $(\nu_{\vt,i})_{i\in [k+l]}$  and $(i(s))_{i\in [k+l]}$ be as defined above. 
Let $\bar{G}(\bar{\mathbf{t}})$ denote the metric space by introducing an edge of length one between $X_{i+i(s)}$ and $X_{i(s)i+}$, where $X_{ij}$ has distribution  $\mu_i$ for all $j\geq 1$, independently of each other and other shortcuts. 
For $k+1\leq i\leq k+l$, $X_i\in M_{x_i}$ have distribution $\mu_{x_i}$ independently for all $i\geq 1$. 
Let $\bar{\dst}_{\sss \bar{G}(\mathbf{\bar{t}})}$ denote the distance on $\bar{G}(\mathbf{\bar{t}})$. Then, let
\begin{equation}\label{c3:defn:g-phi-2}
 \mathsf{g}_{\phi}^k(\bar{\mathbf{t}}) = \E\big[\phi\big(\bar{\dst}_{\sss \bar{G}(\mathbf{\bar{t}})}(X_i,X_j):k+1\leq i,j\leq k+l\big)\big]\ind{\mathbf{t}\neq \partial},
\end{equation}
\end{subequations}where the expectation is taken over the collection of random variables $X_{i+i(s)}$ and $X_{i(s)i+}$.  
At this moment, we urge the reader to recall the construction in Algorithm~\ref{c3:algo:construction-Pcon}, Lemma~\ref{c3:lem:lk-rk-equivalent} and all the associated notations.
Now, conditional on $\mathscr{T}_m^{ \mathbf{p},\star}$, we can construct the tree $\mathscr{T}_m^{ \mathbf{p},\star}(\tilde{\mathbf{V}}_m^{k,k+l})$ where 
\begin{enumerate}[(a)]
 \item $\tilde{\mathbf{V}}_m^{k,k+l}= (\tilde{V}_1^m,\dots,\tilde{V}_k^m, V_{k+1}^m,\dots , V_{k+l}^m)$ is an independent collection of vertices;
 \item $\tilde{V}_i^m$ is distributed as $\cJ^{\sss (m)}(\cdot)$, for $1\leq i\leq k$ and $V_i^m$ is distributed as $\mathbf{p}$, for $k+1 \leq i \leq k+l$.
\end{enumerate} Note that, by \cite[(4.30)]{BHS15}, $ \lim_{m\to\infty} \PR(\mathscr{T}_m^{ \mathbf{p},\star}(\tilde{\mathbf{V}}_m^{k,k+l})= \partial)=0.$
Now, whenever $\mathscr{T}_m^{ \mathbf{p},\star}(\tilde{\mathbf{V}}_m^{k,k+l})\neq \partial$, $\mathscr{T}_m^{ \mathbf{p},\star}(\tilde{\mathbf{V}}_m^{k,k+l})$ can be considered as an element of $T^{*m}_{\sss I,k+l}$  using the leaf-weights $(\mathfrak{G}_{\sss (m)}(\tilde{V}_i))_{i=1}^k$, $(\mathfrak{G}_{\sss (m)}(V_i))_{i=k+1}^{k+l}$ and root-to-leaf measures given by $(Q_{\tilde{V}_i}^m(\cdot))_{i=1}^k$, $(Q_{V_i}^m(\cdot))_{i=k+1}^{k+l}$.
 Let $\bar{\mathscr{T}}_m^{ \mathbf{p},\star}(\tilde{\mathbf{V}}_m^{k,k+l})$ denote the element corresponding to $\mathscr{T}_m^{ \mathbf{p},\star}(\tilde{\mathbf{V}}_m^{k,k+l})$ with blobs. 
 Thus, $\bar{\mathscr{T}}_m^{ \mathbf{p},\star}(\tilde{\mathbf{V}}_m^{k,k+l})$ is viewed as an element of $\overline{T}_{\sss IJ}^{*m}$. 
Let $\mathbf{V}_m=(V_1,\dots,V_{k+l})$ be an i.i.d.~collection of random variables with distribution $\mathbf{p}$. Let $\E_{\mathbf{p},\star}$ denote the expecation conditionally on $\mathscr{T}_m^{\mathbf{p},\star}$ and $N_{\sss(m)}^\star$. The proof of \eqref{c3:eq:blobvsG-reduction1} now reduces to
\begin{eq} \label{c3:eq:blobvsG-reduction2}
&\Big|\E\big[\Phi\big(\tilde{\cG}_m^{\sss \mathrm{bl},\mathrm{s}}\big)\1\{N_{\sss(m)}^\star = k\}\big]-\E\big[\Phi(\tilde{\cG}_m^{\sss\mathrm{s}})\1\{N_{\sss(m)}^\star = k\}\big]\Big|\\
&\hspace{.5cm}= \bigg| \E\bigg[\E_{\mathbf{p},\star}\Big[\mathsf{g}_{\phi}^{k}\Big(\frac{\sigma(\mathbf{p})}{B_m+1}\bar{\mathscr{T}}_m^{\mathbf{p},\star}(\tilde{\mathbf{V}}_m^{k,k+l}) \Big)\Big]\ind{N_{\sss (m)}^{\star}=k}\bigg]\\
&\hspace{1cm}- \E\Big[\E_{\mathbf{p},\star}\big[\mathsf{g}_{\phi}^{k}\big(\sigma(\mathbf{p})\mathscr{T}_m^{\mathbf{p},\star}(\tilde{\mathbf{V}}_m^{k,k+l})\big)\big]\ind{N_{\sss (m)}^{\star}=k}\Big]\bigg| +o(1).
\end{eq}
Notice that the tilting does not affect the blobs themselves but only the superstructure. Recall also the definition of the tilting function $L(\cdot)$ from \eqref{c3:eqn:ltpi-def}. 
Using the fact that $\cJ^{\sss (m)}(v) \propto p_v \mathfrak{G}_{\sss (m)}(v)$, 
%\begin{equation}
%\begin{split}
%\E_{\mathbf{p},\star}\Big[\mathsf{g}_{\phi}^{k}\Big(\frac{\sigma(\mathbf{p})}{B_m+1}\bar{\mathscr{T}}_m^{\mathbf{p},\star}(\tilde{\mathbf{V}}_m^{k,k+l}) \Big)\Big]= \frac{\E_{\mathbf{p},\star}\Big[\prod_{i=1}^k\mathfrak{G}_{\sss (m)}(V_i)\mathsf{g}_{\phi}^{k}\Big(\frac{\sigma(\mathbf{p})}{B_m+1}\bar{\mathscr{T}}_m^{\mathbf{p},\star}(\mathbf{V}_m) \Big)\Big]}{\big(\E_{\mathbf{p},\star}[\mathfrak{G}_{\sss (m)}(V_1)]\big)^k},
%\end{split}
%\end{equation} and
\begin{equation}
\begin{split}
\E_{\mathbf{p},\star}\big[\mathsf{g}_{\phi}^{k}\big(\sigma(\mathbf{p})\mathscr{T}_m^{\mathbf{p},\star}(\tilde{\mathbf{V}}_m^{k,k+l})\big) \big]= \frac{\E_{\mathbf{p},\star}\big[\prod_{i=1}^k\mathfrak{G}_{\sss (m)}(V_i)\mathsf{g}_{\phi}^{k}\big(\sigma(\mathbf{p})\mathscr{T}_m^{\mathbf{p},\star}(\mathbf{V}_m)\big) \big]}{\big(\E_{\mathbf{p},\star}[\mathfrak{G}_{\sss (m)}(V_1)]\big)^k},
\end{split}
\end{equation} and an identical expression holds by replacing $\sigma(\mathbf{p})\mathscr{T}_m^{\mathbf{p},\star}$ by $\frac{\sigma(\mathbf{p})}{B_m+1}\bar{\mathscr{T}}_m^{\mathbf{p}}$.
Denote the expectation conditionally on $\mathscr{T}_m^{\mathbf{p}}$ and $N_{\sss (m)}$ by $\E_{\mathbf{p}}$ and simply write $\bar{\mathscr{T}}_m^{\mathbf{p},{\sss \mathrm{s}}}$, $\mathscr{T}_m^{\mathbf{p},{\sss \mathrm{s}}}$ for $\frac{\sigma(\mathbf{p})}{B_m+1}\bar{\mathscr{T}}_m^{\mathbf{p}}(\mathbf{V}_m)$,  $\sigma(\mathbf{p})\mathscr{T}_m^{\mathbf{p}}(\mathbf{V}_m)$ respectively. Now, \eqref{c3:eq:blobvsG-reduction2} simplifies to 
\begin{align}\label{c3:eq:blobvsG-reduction3}
 &\Big|\E\big[\Phi\big(\tilde{\cG}_m^{\sss \mathrm{bl},\mathrm{s}}\big)\1\{N_{\sss(m)}^\star = k\}\big]-\E\big[\Phi(\tilde{\cG}_m^{\sss\mathrm{s}})\1\{N_{\sss(m)}^\star = k\}\big]\Big|\nonumber\\
 &\hspace{.6cm}\leq \frac{1}{\expt{L(\mathscr{T}_m^{\mathbf{p}})}}\bigg|\E\bigg[\frac{\E_{\mathbf{p}}\big[\prod_{i=1}^k\mathfrak{G}_{\sss (m)}(V_i)\mathsf{g}_{\phi}^{k}\big(\bar{\mathscr{T}}_m^{\mathbf{p},{\sss \mathrm{s}}} \big)\big]}{\big(\E_{\mathbf{p}}[\mathfrak{G}_{\sss (m)}(V_1)]\big)^k} L(\mathscr{T}_m^{\mathbf{p}})\ind{N_{\sss (m)}=k}\bigg] \nonumber\\
 & \hspace{1cm} - \E\bigg[\frac{\E_{\mathbf{p}}\big[\prod_{i=1}^k\mathfrak{G}_{\sss (m)}(V_i)\mathsf{g}_{\phi}^{k}\big(\mathscr{T}_m^{\mathbf{p},{\sss \mathrm{s}}}\big) \big]}{\big(\E_{\mathbf{p}}[\mathfrak{G}_{\sss (m)}(V_1)]\big)^k}L(\mathscr{T}_m^{\mathbf{p}})\ind{N_{\sss (m)}=k}\bigg] \bigg|.
\end{align}
%We will prove the following:
\begin{proposition}\label{c3:prop:g-blob-close} As $m\to\infty$,
 $\big|\mathsf{g}_{\phi}^{k}\big(\bar{\mathscr{T}}_m^{\mathbf{p},{\sss \mathrm{s}}}\big)-\mathsf{g}_{\phi}^{k}\big(\mathscr{T}_m^{\mathbf{p},{\sss \mathrm{s}}} \big) \big|\pto 0.$
\end{proposition}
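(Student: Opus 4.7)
The plan is to reduce the functional-level convergence to a pointwise comparison of pairwise leaf-distances in the blobbed object $\bar G(\bar{\mathbf{t}})$ versus the plain object $G(\mathbf{t})$, where $\mathbf{t}=\mathscr{T}_m^{\mathbf{p}}(\mathbf{V}_m)$ is the subtree spanned by $\mathbf{V}_m$. Since $\phi$ is bounded and continuous and the scaled distances $\sigma(\mathbf{p})\dst_{G(\mathbf{t})}(i+,j+)$ are tight by Theorem~\ref{c3:thm:BHS15}, it suffices via bounded convergence to show that under the joint law of $\mathbf{t}$ and the blob-randomness $(i(s),X_{ij},X_i)$,
\begin{equation*}
\frac{\sigma(\mathbf{p})}{B_m+1}\bar\dst_{\bar G(\bar{\mathbf{t}})}(X_i,X_j)-\sigma(\mathbf{p})\dst_{G(\mathbf{t})}(i+,j+)\pto 0
\end{equation*}
for each pair $i,j\in\{k+1,\ldots,k+l\}$. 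Conditioning on $\mathbf{t}$ and using continuity of $\phi$ then yields $\E\bigl[|\mathsf{g}_\phi^k(\bar{\mathbf{t}})-\mathsf{g}_\phi^k(\mathbf{t})|\bigr]\to 0$, which implies the stated convergence in probability.

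To carry out the pointwise comparison, condition on $\mathbf{t}$ and on the attachment vertices $i(s)$ of the $k$ surplus edges. Let $v_0=i+,v_1,\ldots,v_{N_{ij}}=j+$ denote the vertices along the shortest path from $i+$ to $j+$ in $G(\mathbf{t})$. In the blobbed space the same combinatorial path traverses each intermediate blob $M_{v_r}$ between two independent $\mu_{v_r}$-distributed junction points, producing
\begin{equation*}
\bar\dst_{\bar G(\bar{\mathbf{t}})}(X_i,X_j)=N_{ij}+\sum_{r=1}^{N_{ij}-1}D_r+R_{ij},
\end{equation*}
where, given the path, the $D_r$ are independent with $\E[D_r]=u_{v_r}$ and $D_r\in[0,\Delta_{v_r}]$, and $R_{ij}\le 2\Delta_{\max}$ bounds the initial and final in-blob segments involving $X_i$ and $X_j$. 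Writing $\sigma(\mathbf{p})N_{ij}=\tfrac{\sigma(\mathbf{p})}{B_m+1}(1+B_m)N_{ij}$, the scaled gap reduces to
\begin{equation*}
\frac{\sigma(\mathbf{p})}{B_m+1}\Bigl(\sum_{r=1}^{N_{ij}-1}D_r-B_m N_{ij}\Bigr)+O\Bigl(\tfrac{\sigma(\mathbf{p})\Delta_{\max}}{B_m+1}\Bigr),
\end{equation*}
and the additive error is $o(1)$ by Assumption~\ref{c3:assm:blob-diameter}.

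It remains to show the centered sum is $o_P(1)$. Decompose it as $\sum_r(D_r-u_{v_r})+(\sum_r u_{v_r}-B_m N_{ij})$. For the first piece, $\mathrm{Var}(D_r)\le u_{v_r}\Delta_{\max}$ (since $D_r\in[0,\Delta_{v_r}]$ has mean $u_{v_r}$), so its conditional variance is at most $\Delta_{\max}\sum_r u_{v_r}$. For the second, the $u_{v_r}$ lie in $[0,\Delta_{\max}]$ and typically average to $B_m$, so fluctuations are of order $\sqrt{N_{ij}B_m\Delta_{\max}}$. After the $\sigma(\mathbf{p})/(B_m+1)$-scaling, both contributions become $O_P\bigl((\sigma(\mathbf{p})\Delta_{\max}/(B_m+1))^{1/2}\bigr)=o(1)$, provided one has the a priori bounds $N_{ij}=O_P(1/\sigma(\mathbf{p}))$ and $\sum_r u_{v_r}=O_P(B_m/\sigma(\mathbf{p}))$. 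The main obstacle is establishing these two bounds: tightness of $\sigma(\mathbf{p})N_{ij}$ follows from convergence of the scaled spanning $\mathbf{p}$-tree to a finite subtree of the ICRT, as established in \cite{AP00a,CP99}, while the bound on $\sum_r u_{v_r}$ follows from the birthday construction of Theorem~\ref{c3:thm:pit-cam-birthday}, which realizes the path vertices between two $\mathbf{p}$-sampled leaves as a subset of an i.i.d.\ sample from $\mathbf{p}$ to which a weak law of large numbers applies.
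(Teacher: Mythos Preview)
Your approach and the paper's share the same core idea---trace the $G$-geodesic through the blobs and show the in-blob fluctuations are negligible after scaling---but there are two genuine gaps.

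First, the displayed formula $\bar\dst_{\bar G(\bar{\mathbf{t}})}(X_i,X_j)=N_{ij}+\sum_r D_r+R_{ij}$ is only an \emph{upper} bound on $\bar\dst_{\bar G}$: following the $G$-geodesic through $\bar G$ gives one candidate path, but the $\bar G$-geodesic may take a different combinatorial route. The reverse inequality has to be argued separately; the paper does this by the symmetric observation that any $\bar G$-path projects to a $G$-path of comparable length. Second, and more seriously, the concentration step for $\sum_r u_{v_r}-B_m N_{ij}$ tacitly assumes the geodesic vertices $v_1,\ldots,v_{N_{ij}}$ behave like an i.i.d.\ $\mathbf{p}$-sample. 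The birthday construction does realize the whole spanning subtree as a subset of an i.i.d.\ sequence, but a weak law on the full sequence gives no control on a sub-sum over an arbitrary sub-path, which is what a leaf-to-leaf geodesic through surplus edges is. The paper resolves this by first proving a stronger intermediate result (Lemma~\ref{c3:lem:path:GHP-conv}) for each root-to-leaf path separately: along such a path the partial sums $\hat Q_k=\tfrac{\sigma(\mathbf{p})}{B_m+1}\sum_{i\le k}(B_m-\xi_{J_i})$ form a genuine martingale in the birthday variables $J_i$, and Doob's maximal inequality yields $\sup_{k\le R^*}|\hat Q_k|=o_P(1)$ \emph{uniformly over all prefixes}. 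This uniform control then transfers to any sub-segment of any root-to-leaf path, and the leaf-to-leaf geodesic decomposes into at most $O(k)$ such segments. Without the maximal-inequality step you cannot pass from the i.i.d.\ structure of the birthday sequence to control of the specific sub-path you need.
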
 
We first show that it is enough to prove Proposition~\ref{c3:prop:g-blob-close} to complete the proof of \eqref{c3:eq:blobvsG-reduction3}, but before that we first need to state some results. 
The proof of Proposition~\ref{c3:prop:g-blob-close} is deferred till the end of this section.
%We will make crucial use of the following: 
\begin{lemma}[{\cite[Proposition 4.8, Theorem 4.15]{BHS15}}] \label{c3:lem:uni-int-tilt} $(L(\mathscr{T}_m^{\mathbf{p}}))_{m\geq 1}$ is uniformly integrable. Also, for each $k\geq 0$, the quantity
	\begin{align}\label{c3:eqn:jt-convg}
&\bigg(\Ep\bigg[\frac{\dA_{\sss(m)}(V_1^{\sss(m)})}{\sigma(\vp)}\bigg],  ~\Ep\bigg[\bigg(\prod_{i=1}^k \frac{\dA_{\sss(m)}(V_i^{\sss(m)})}{\sigma(\vp)}\bigg) \mathsf{g}_\phi^{k}\big(\mathscr{T}_m^{\mathbf{p},{\sss \mathrm{s}}}\big)  \bigg]\bigg)
	\end{align} converges in distribution to some random variable.
\end{lemma}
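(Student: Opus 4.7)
The proof of the lemma amounts to importing Proposition~4.8 and Theorem~4.15 of~\cite{BHS15} into the present notation. The plan splits into two parts: a moment estimate yielding uniform integrability of $L(\mathscr{T}_m^{\vp})$, and a joint-convergence argument for the two conditional averages in~\eqref{c3:eqn:jt-convg}, both built on top of the scaling limit of $\vp$-trees to the ICRT under Assumption~\ref{c3:assm:BHS15}.

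For the uniform integrability, I would first bound $L(\vt)$ pointwise. Using $(e^x-1)/x \le e^x$ for $x\ge 0$, the first product in~\eqref{c3:eqn:ltpi-def} is at most $\exp\bigl(a\sum_{(k,\ell)\in \mathrm{E}(\vt)} p_kp_\ell\bigr)$. The key algebraic identity is
\[
a \sum_{(k,\ell) \in \sP(\vt)} p_k p_\ell \;=\; a \sum_{v \in [m]} p_v \sum_{u \in \sP(v,\vt)} p_u \;=\; \Lambda_{\sss(m)}(\vt),
\]
which follows directly from the definitions of $\sP(v,\vt)$, $\dA_{\sss(m)}(v)$ and $\Lambda_{\sss(m)}(\vt)$. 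Combining these two bounds gives $L(\vt) \le \exp\bigl(\Lambda_{\sss(m)}(\vt) + a\sum_{(k,\ell)\in\mathrm{E}(\vt)} p_kp_\ell\bigr)$, where the second term in the exponent is negligible under Assumption~\ref{c3:assm:BHS15} since $a \sigma(\vp) \to \gamma$ and $\sigma(\vp) \to 0$. It then suffices to control an exponential moment of $\Lambda_{\sss(m)}(\mathscr{T}_m^{\vp})$. Using the birthday construction of Theorem~\ref{c3:thm:pit-cam-birthday}, one rewrites $\sum_v p_v \dA_{\sss(m)}(v)$ as a double sum over the i.i.d.\ sequence $(Y_j)_{j\geq 0}$; a direct computation then bounds the resulting moment-generating function uniformly in $m$, yielding $\sup_m \E\bigl[L(\mathscr{T}_m^{\vp})^{1+\varepsilon}\bigr] < \infty$ for some $\varepsilon > 0$ and hence uniform integrability.

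For the joint convergence, I would combine the scaling limit of $\sigma(\vp)\mathscr{T}_m^{\vp}$ to the ordered ICRT $(\icrt,\mvU)$ from~\cite{AP99,CP99} with the fact that the i.i.d.\ $\vp$-samples $\mathbf{V}_m=(V_1,\ldots,V_{k+l})$ converge jointly with the tree to $k+l$ i.i.d.\ samples from the mass measure $\mu$ on $\icrt$. The key input, established in~\cite[Section~4]{BHS15}, is that the rescaled spanning subtree $\sigma(\vp)\mathscr{T}_m^{\vp}(\mathbf{V}_m)$, equipped with leaf weights $\dA_{\sss(m)}(V_i)/\sigma(\vp)$ and root-to-leaf measures $Q_{V_i}^{\sss(m)}$, converges in distribution in the space $T_{\sss IJ}^{*m}$ to the corresponding continuum object with leaf weights $\dA_{\sss(\infty)}(y_i)$ and root-to-leaf measures $Q_{y_i}^{\sss(\infty)}$. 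Continuity of $\mathsf{g}_\phi^k : T_{\sss IJ}^* \to \R$ in the product topology of Section~\ref{c3:sec:tree-space}, combined with the continuous mapping theorem, then gives the joint distributional limit of
\[
\bigg(\frac{\dA_{\sss(m)}(V_1)}{\sigma(\vp)},\ \prod_{i=1}^k \frac{\dA_{\sss(m)}(V_i)}{\sigma(\vp)}\cdot \mathsf{g}_\phi^k\bigl(\sigma(\vp)\mathscr{T}_m^{\vp}(\mathbf{V}_m)\bigr)\bigg).
\]
To upgrade this to convergence of the $\Ep$-conditional expectations in~\eqref{c3:eqn:jt-convg}, one needs uniform integrability of the integrands in $\mathbf{V}_m$, which is supplied by the same exponential-moment control used for $L$ above and by $\|\phi\|_\infty < \infty$.

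The main obstacle is establishing joint continuity of $\mathsf{g}_\phi^k$ together with the joint weak convergence of the spanning subtrees \emph{equipped with their root-to-leaf measures}, since these measures enter nonlinearly through the random sampling of endpoints $i(s)$ in the definition~\eqref{c3:defn:g-phi-1} of $\mathsf{g}_\phi^k$. One must verify that the empirical root-to-leaf distributions $Q_{V_i}^{\sss(m)}$ not only converge weakly to $Q_{y_i}^{\sss(\infty)}$ but do so \emph{jointly} across the $k+l$ sampled vertices and uniformly enough to pass the limit through the inner expectation in~\eqref{c3:defn:g-phi-1}. Once this enriched joint convergence of the spanning subtree is in place, the lemma follows by standard weak-convergence arguments.
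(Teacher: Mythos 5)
The paper does not actually prove this lemma: it is imported verbatim from \cite{BHS15} (their Proposition~4.8 and Theorem~4.15), so there is no in-paper argument to compare yours against, and your sketch should be judged as a reconstruction of the cited proof. At that level your overall architecture is the right one (pointwise bound on $L$ reducing to $\Lambda_{\sss(m)}$, birthday construction, convergence of the sampled spanning subtree with its leaf weights and root-to-leaf measures, continuity of $\mathsf{g}_\phi^k$), and your identity $a\sum_{(k,\ell)\in\sP(\vt)}p_kp_\ell=\Lambda_{\sss(m)}(\vt)$ is correct; note only that $a\sum_{(k,\ell)\in\mathrm{E}(\vt)}p_kp_\ell\le a\max_v p_v\le a\sigma(\vp)\to\gamma$ is bounded rather than negligible, which is still enough.

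There are, however, two genuine gaps. First, the uniform integrability hinges entirely on the assertion that ``a direct computation bounds the resulting moment-generating function uniformly in $m$,'' i.e.\ $\sup_m\E[L(\mathscr{T}_m^{\vp})^{1+\varepsilon}]<\infty$. That exponential-moment control of $\Lambda_{\sss(m)}(\mathscr{T}_m^{\vp})$ is precisely the hard content of \cite[Proposition 4.8]{BHS15} (it rests on tail bounds for the exploration excursion of $\vp$-trees, cf.\ their Lemma~4.9), so as written this part of your proof is an unproved claim rather than an argument. Second, for the joint convergence you conflate two different statements: joint weak convergence of the integrand $\bigl(\prod_i\dA_{\sss(m)}(V_i)/\sigma(\vp)\bigr)\mathsf{g}_\phi^k(\cdot)$ together with uniform integrability over $\mathbf{V}_m$ does not yield convergence in distribution of the \emph{conditional} expectations $\Ep[\cdot]$, which are random variables measurable with respect to $\mathscr{T}_m^{\vp}$. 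What is needed is that these conditional expectations are (almost surely continuous) functionals of tree data that itself converges --- in \cite{BHS15} this is done by expressing them through the exploration function $A_{\sss(m)}(\cdot)$ and the root-to-leaf measures and proving convergence of that enriched object; this is exactly the ``obstacle'' you name in your final paragraph and leave unresolved, and it is the substance of the cited Theorem~4.15 rather than a routine verification. So your proposal outlines the correct strategy but does not close either of the two steps that make the cited result nontrivial.
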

%\begin{lemma}[{\cite[Theorem 4.15]{BHS15}}] 
%\label{c3:lem:-limitwo-blob} 
%For each $k\geq 0$, the quantity
%	\begin{align}\label{c3:eqn:jt-convg}
%&\bigg(\Ep\bigg[\frac{\dA_{\sss(m)}(V_1^{\sss(m)})}{\sigma(\vp)}\bigg],  ~\Ep\bigg[\bigg(\prod_{i=1}^k \frac{\dA_{\sss(m)}(V_i^{\sss(m)})}{\sigma(\vp)}\bigg) \mathsf{g}_\phi^{k}\big(\sigma(\vp)\mathscr{T}_m^{\vp}({\vV}_{k,k+\ell}^{\sss(m)})\big)  \bigg]\bigg)
%	\end{align} converges in distribution to some random variable. 
%\end{lemma}
%We also rely on the following properties of convergence of random variables:
\begin{fact}\label{c3:fact:unif-int} Consider three sequences of random variables  $(X_{m})_{m\geq 1}$, $(Y_m)_{m\geq 1}$ and $(Y_m')_{m\geq 1}$ with (i) $(X_m)_{m\geq 1}$ is uniformly integrable, (ii) $(Y_m)_{m\geq 1}$ and $(Y_m')_{m\geq 1}$ are almost surely bounded and (iii) $Y_m-Y_m'\xrightarrow{\sss \PR} 0$. Then, as $m\to\infty$, $$\expt{|X_mY_m-X_mY_m'|}\to 0.$$
\end{fact}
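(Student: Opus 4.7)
The plan is to factor $|X_m Y_m - X_m Y_m'| = |X_m|\,|Y_m - Y_m'|$ and then split the expectation according to the size of $|X_m|$, using uniform integrability to control the large-$|X_m|$ tail and bounded convergence to kill the bulk. Let $C$ be a uniform almost sure bound on both $|Y_m|$ and $|Y_m'|$, so that $|Y_m - Y_m'| \leq 2C$ almost surely. Fix $\varepsilon > 0$. For any threshold $K > 0$, write
\begin{equation*}
\E[|X_m|\,|Y_m - Y_m'|] = \E\bigl[|X_m|\,|Y_m - Y_m'|\,\mathbf{1}\{|X_m|\leq K\}\bigr] + \E\bigl[|X_m|\,|Y_m - Y_m'|\,\mathbf{1}\{|X_m| > K\}\bigr].
\end{equation*}

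For the tail term, the uniform bound $|Y_m - Y_m'|\leq 2C$ gives the estimate $2C\,\E[|X_m|\,\mathbf{1}\{|X_m|>K\}]$, and by the uniform integrability hypothesis~(i) we can pick $K = K(\varepsilon)$ large enough so that $\sup_m \E[|X_m|\,\mathbf{1}\{|X_m|>K\}] < \varepsilon/(4C)$, making this contribution smaller than $\varepsilon/2$ uniformly in $m$. With $K$ now fixed, the bulk term is bounded by $K\,\E[|Y_m - Y_m'|]$; since $Y_m - Y_m' \to 0$ in probability by hypothesis~(iii) and $|Y_m - Y_m'|$ is dominated by the constant $2C$, the dominated convergence theorem (equivalently, bounded convergence) implies $\E[|Y_m - Y_m'|] \to 0$, so we can choose $m_0$ with $K\,\E[|Y_m - Y_m'|] < \varepsilon/2$ for all $m \geq m_0$.

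Combining the two estimates gives $\E[|X_m Y_m - X_m Y_m'|] < \varepsilon$ for all $m\geq m_0$, which proves the claim. There is no real obstacle here: the argument is a standard $\varepsilon/2$-splitting, and the only subtlety is the order of quantifiers, namely that $K$ must be chosen \emph{first} via uniform integrability (uniformly in $m$), and only then is $m$ sent to infinity using the bounded convergence argument on the bulk.
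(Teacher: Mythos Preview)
Your proof is correct. The paper states this result as a ``Fact'' without giving a proof, treating it as standard; your $\varepsilon/2$-splitting via uniform integrability on $\{|X_m|>K\}$ and bounded convergence on $\{|X_m|\leq K\}$ is exactly the routine argument one would supply, and there is nothing to compare against.
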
 
\begin{fact}\label{c3:fact:2} Suppose that $(X_m)_{m\geq 1}$ is a sequence of random variables such that for every $m\geq 1$, there exists a further sequence $(X_{m,r})_{r\geq 1}$ satisfying (i) for each fixed $m\geq 1$, $X_{m,r}\xrightarrow{\sss \PR} 0$ as $r\to\infty$, and (ii) $\lim_{r\to\infty}\limsup_{m\to\infty}\PR(|X_m-X_{m,r}|>\varepsilon)=0$ for any $\varepsilon>0$. Then $X_m \xrightarrow{\sss \PR} 0$ as $m\to\infty$.
\end{fact}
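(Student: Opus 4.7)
The plan is to prove this via a classical $\varepsilon/2$ triangle-inequality argument, taking the two available limits in the correct order. I would first fix $\varepsilon>0$ and use the set inclusion
\[
\{|X_m|>\varepsilon\}\ \subseteq\ \{|X_m-X_{m,r}|>\varepsilon/2\}\ \cup\ \{|X_{m,r}|>\varepsilon/2\},
\]
valid for every $m,r\geq 1$, to obtain the union bound $\PR(|X_m|>\varepsilon)\leq \PR(|X_m-X_{m,r}|>\varepsilon/2)+\PR(|X_{m,r}|>\varepsilon/2)$. The two terms on the right are to be controlled by hypotheses (ii) and (i), respectively.

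For the argument to go through, hypothesis (i) must be read in its natural form, namely that for each fixed $r\geq 1$ one has $X_{m,r}\to 0$ in probability as $m\to\infty$ (the literally-stated form fails, as may be seen from $X_m\equiv 1$ with $X_{m,r}=\mathbf{1}\{r<m\}$, which satisfies both the stated (i) and (ii) without $X_m\to 0$). With (i) in this corrected form, the key move is to first hold $r$ fixed and take $\limsup_{m\to\infty}$ in the union bound: the second term vanishes by (i), leaving
\[
\limsup_{m\to\infty}\PR(|X_m|>\varepsilon)\ \leq\ \limsup_{m\to\infty}\PR(|X_m-X_{m,r}|>\varepsilon/2).
\]

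Letting $r\to\infty$ and applying hypothesis (ii) then forces the right-hand side to zero, and since $\varepsilon>0$ was arbitrary, this yields $X_m\to 0$ in probability, as claimed. The only thing to watch is the order of the limits (first $m\to\infty$ with $r$ frozen, then $r\to\infty$); reversing them would decouple (ii) from (i) and break the argument, because the approximating family $(X_{m,r})_{r\geq 1}$ depends on $m$. Beyond this bookkeeping there is no real obstacle — the statement is the standard diagonal device for upgrading an iterated-limit approximation scheme to a single-parameter convergence statement, and will be invoked later (e.g.\ in the proof of Proposition~\ref{c3:prop:g-blob-close}) by taking $X_{m,r}$ to be a truncation of $X_m$ at level $r$.
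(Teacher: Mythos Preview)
Your proof is correct, and you have correctly identified a genuine typo in the statement: hypothesis (i) should read ``for each fixed $r\geq 1$, $X_{m,r}\xrightarrow{\scriptscriptstyle\mathbb{P}}0$ as $m\to\infty$'' rather than the printed version with the roles of $m$ and $r$ swapped. Your counterexample $X_m\equiv 1$, $X_{m,r}=\mathbf{1}\{r<m\}$ is valid and shows the literal statement is false. The paper's own application of this fact (in the paragraph following it) confirms your reading: condition (i) is verified there ``due to Proposition~\ref{c3:prop:g-blob-close}'', which is a statement about $m\to\infty$, not $r\to\infty$.

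The paper does not supply a proof of this fact; it is stated as a standard lemma and used as a black box. Your triangle-inequality argument with the iterated limits taken in the correct order (first $\limsup_{m\to\infty}$ with $r$ frozen, then $r\to\infty$) is exactly the intended proof, and your remark about the order of limits being essential is the only subtlety worth flagging.
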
 
\begin{proof}[Proof of \eqref{c3:eq:blobvsG-reduction3} from Proposition~\ref{c3:prop:g-blob-close}]
 By Lemma~\ref{c3:lem:uni-int-tilt} and Fact~\ref{c3:fact:unif-int}, the proof of \eqref{c3:eq:blobvsG-reduction3} reduces to showing
\begin{equation}\label{c3:eq:blobvsG-reduction4}
\Ep\bigg[\bigg(\prod_{i=1}^k \frac{\dA_{\sss(m)}(V_i^{\sss(m)})}{\sigma(\vp)}\bigg) \big(\mathsf{g}_{\phi}^{k}\big(\bar{\mathscr{T}}_m^{\mathbf{p},{\sss \mathrm{s}}}\big)-\mathsf{g}_{\phi}^{k}\big(\mathscr{T}_m^{\mathbf{p},{\sss \mathrm{s}}} \big) \big)  \bigg] \pto 0.
\end{equation}
Let $X_m$ denote the term inside the expectation in \eqref{c3:eq:blobvsG-reduction4}.
Further, sample the set of leaves $\mathbf{V}_m$ independently $r$ times on the same tree $\mathscr{T}_m^\vp$ and let $X_{m}^i$ denote the observed value of $X_m$ in the $i$-th sample. 
Further, let $X_{m,r} = r^{-1}\sum_{i=1}^rX_m^i$.
Obviously, condition (i) in Fact~\ref{c3:fact:2} is satisfied due to Proposition~\ref{c3:prop:g-blob-close}. 
To verify condition (ii), note that $\Ep(X_{m,r}) = X_m$ and therefore Chebyshev's inequality yields
\begin{eq}
\prob{|X_m-X_{m,r}|>\varepsilon}&\leq \frac{\E[X_m^2]}{\varepsilon^2r} \leq \frac{4\|\phi\|_{\infty}^2}{\varepsilon^2r}\frac{\expt{\dA_{\sss (m)}(V_1^m)^{2k}}}{\sigmap^{2k}}.
\end{eq}
The final term is uniformly bounded over $m\geq 1$ and vanishes as $r\to\infty$.
For an interested reader, using the notation of \cite{BHS15}, the above is a consequence of the fact that $\|\dA_{\sss (m)}\|_{\infty}\leq \|F^{\mathrm{exc},\vp}\|_{\infty}$ and \cite[Lemma 4.9]{BHS15}.
\end{proof}

%\subsection{Proof of Proposition~\ref{c3:prop:g-blob-close}}\label{c3:sec:proof-prop-blob-G}%In this section, we prove that the introduction of blobs satisfying Assumption~\ref{c3:assm:blob-diameter} does not change the value of $\mathsf{g}_\phi^k(\cdot)$ considerably. More precisely, one has the following proposition: 
%This section is devoted to the proof of Proposition~\ref{c3:prop:g-blob-close}, which completes the proof of Theorem~\ref{c3:thm:univesalty}, as argued above.
In this section, we will use the notion of Gromov-Hausdorff-Prokhorov topology on the collection of $(X,\dst,\mu)$, where $(X,\dst)$ is a compact metric space and $\mu$ is a probability measure on the corresponding Borel sigma algebra. 
Without re-defining all the required notions, we refer the reader to \cite[Section~2.1.1]{BHS15}.
 We further recall the notation $\mathrm{dis}$ for distortion and $D(\mu;\mu_1,\mu_2)$ for discrepancy of measures as defined in \cite[Section~2.1.1]{BHS15}.
Denote the root of $\mathscr{T}_m^{\vp}(\mathbf{V}_m)$ by $0+$ and the $j$th leaf by $j+$.
Let $\mathscr{M}_j^m:= \{[0+,j+],\dst,\nu_j\}$ be the random measured metric space 
%(pointed at the root $0+$)
 with the corresponding root-to-leaf measure $\nu_j$. 
 Let $\vmbar{12}{\mathscr{M}}_j^m:= \{ \bar{M}_j, \bar{\dst}, \bar{\nu}_j \}$ be the measured metric space   with $\bar{M}_j:=\sqcup_{i\in [0+,j+]}M_i$ and the induced root-to-leaf measure \linebreak $\bar{\nu}_j(A)=\sum_{i\in [0+,j+]}\nu_j(\{i\})\mu_i(A\cap M_i)$. 
For convenience, we have suppressed the dependence on $\mathscr{T}_m^{\vp}(\mathbf{V}_m)$ in the notation. 
 $\mathscr{M}_j^m$ is coupled to $\vmbar{12}{\mathscr{M}}_j^m$ in the obvious way that the superstructure of $\vmbar{12}{\mathscr{M}}_j^m$ is given by $\mathscr{M}_j^m$.
 We need the following lemma to prove Proposition~\ref{c3:prop:g-blob-close}:
\begin{lemma}\label{c3:lem:path:GHP-conv}For $j\geq 1$, as $m\to\infty$, $\dGHP \big(\sigma(\mathbf{p})\mathscr{M}_j^m, \frac{\sigma(\mathbf{p})}{B_m+1}\vmbar{12}{\mathscr{M}}_j^m \big) \pto 0.$
\end{lemma}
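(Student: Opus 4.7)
The plan is to construct an explicit correspondence $R$ between the two metric spaces, pair it with a coupling measure having vanishing discrepancy, and control the distortion via concentration of the within-blob contributions along the path.

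Define $R := \{(v, x) : v \in [0+, j+],\ x \in M_v\}$, and let $\pi$ be the probability measure on $R$ obtained by first sampling $V \sim \nu_j$ and then, conditional on $V = v$, sampling $X \sim \mu_v$. Since $\bar{\nu}_j(A) = \sum_{v \in [0+,j+]} \nu_j(\{v\})\,\mu_v(A \cap M_v)$ by definition, the marginals of $\pi$ coincide with $\nu_j$ and $\bar{\nu}_j$, so the Prokhorov discrepancy vanishes identically. For $(v, x), (v', x') \in R$ with $k := d_{\mathscr{M}_j^m}(v, v')$, the blob metric decomposes as $\bar{d}(x, x') = k + W$, where $W$ is the total within-blob distance traversed by the segment from $x$ to $x'$, so the pairwise distortion equals $\tfrac{\sigma(\vp)}{B_m+1}\,|k B_m - W|$. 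Writing $v_0, \ldots, v_L$ for the vertices along $[0+, j+]$ and $W_\ell$ for the running within-blob sum from a reference point in $M_{0+}$ to one in $M_{v_\ell}$, an endpoint correction of at most $2\sigma(\vp)\Delta_{\max}/(B_m+1)$ (which tends to $0$ by Assumption~\ref{c3:assm:blob-diameter}) reduces the problem to bounding $\tfrac{\sigma(\vp)}{B_m+1}\max_{\ell\leq L} |W_\ell - \ell B_m|$.

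Conditional on the path vertices, the increments of $W_\ell$ are independent, bounded by $\Delta_{v_\ell}$, with mean $u_{v_\ell}$ and variance at most $u_{v_\ell}\Delta_{v_\ell}$ (using $X\leq \Delta$ implies $\mathrm{Var}(X)\leq \E[X]\Delta$). Doob's maximal inequality handles the stochastic fluctuation $\max_\ell|W_\ell - \sum_{i\leq\ell} u_{v_i}|$, bounding it by $O_P\bigl(\sqrt{\Delta_{\max}\,\sum_i u_{v_i}}\bigr)$. For the deterministic bias $\max_\ell|\sum_{i\leq\ell}u_{v_i} - \ell B_m|$, a Chebyshev-type argument (coupling the path vertices with an i.i.d.\ $\vp$-sample via the birthday construction in Theorem~\ref{c3:thm:pit-cam-birthday}), using $\mathrm{Var}_{V\sim \vp}(u_V) \leq B_m \Delta_{\max}$, gives $O_P\bigl(\sqrt{L B_m \Delta_{\max}}\bigr)$. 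Together with $L = O_P(\sigma(\vp)^{-1})$ from the rescaled $\vp$-tree to ICRT convergence underlying Theorem~\ref{c3:thm:BHS15}, both contributions combine to yield
\[
\tfrac{\sigma(\vp)}{B_m+1}\,\max_\ell |W_\ell - \ell B_m| = O_P\!\left(\sqrt{\tfrac{\sigma(\vp)\,\Delta_{\max}}{B_m+1}}\right) \pto 0
\]
by Assumption~\ref{c3:assm:blob-diameter}; hence $\mathrm{dis}(R) \pto 0$.

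The principal technical obstacle is the bias estimate: the path vertices $(v_0, \ldots, v_L)$ of a $\vp$-tree are \emph{not} i.i.d.\ $\vp$-samples, so concentration of $\ell^{-1}\sum_{i\leq \ell} u_{v_i}$ around $B_m$ requires the explicit coupling afforded by the birthday construction (Theorem~\ref{c3:thm:pit-cam-birthday} and \eqref{c3:eqn:p-tree-joint-sample}). Once this coupling is used to transfer the i.i.d.\ Chebyshev estimate to the path, the remaining steps are the routine concentration above and the application of Assumption~\ref{c3:assm:blob-diameter}, completing the proof.
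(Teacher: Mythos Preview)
Your proposal is correct and follows essentially the same route as the paper: the same correspondence and coupling measure (so zero discrepancy), the reduction to a supremum over partial sums along the path, the use of the birthday construction to identify the law of the path vertices, Doob's maximal inequality, and Assumption~\ref{c3:assm:blob-diameter} to conclude. The only difference is that the paper is slightly more economical: since under the birthday construction the path vertices are \emph{exactly} an i.i.d.\ $\vp$-sample up to the first repeat time, the within-blob distances $\xi_{J_i}$ are themselves i.i.d.\ with mean $B_m$, so $\hat{Q}_k=\tfrac{\sigma(\vp)}{B_m+1}\sum_{i\le k}(B_m-\xi_{J_i})$ is already a martingale and your two-step split into ``stochastic fluctuation'' plus ``deterministic bias'' is unnecessary.
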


%\todo[inline]{Souvik: Pointed at what?}
\begin{proof}
 We prove this for $j=1$ only. The proof for $j\geq 2$ is identical. 
 For $x\in \bar{M}_1$, we denote its corresponding vertex label by $i(x)$, i.e., $i(x)=k$ iff $x\in M_k$. Consider the correspondence $C_m$ and the measure $\mathfrak{m}$ on the product space $[0+,1+]\times \bar{M}_1$ defined as
 \begin{equation}\label{c3:corr-blob-vs-path}
 C_m:=\{(i,x): i\in [0+,1+], x\in M_i\}, \quad \mathfrak{m}(\{i\}\times A)=\nu_1(\{i\})\bar{\nu}_1(A\cap M_i).
 \end{equation} 
%and consider  
% \begin{equation} \label{c3:product-measure}
% 
% \end{equation} 
  Note that the discrepancy of $\mathfrak{m}$ satisfies $D(\mathfrak{m}; \nu_1,\bar{\nu}_1)=0$. Further, $\mathfrak{m}(C_m^c)=0.$ Therefore, Lemma~\ref{c3:lem:path:GHP-conv} follows if we can prove that
 \begin{equation}\label{c3:distortion:expression}
  \dis(C_m):= \sup_{x,y\in \bar{M}_1} \Big\{ \sigma(\mathbf{p})\dst(i(x),i(y))-\frac{\sigma(\mathbf{p})}{B_m+1}\bar{\dst}(x,y) \Big\} \pto 0.
 \end{equation}
To simplify the expression for $\dis(C_m)$, suppose that $i(x)$ is an ancestor of $i(y)$ on the path from $0+$ to $1+$. Then, 
\begin{gather*}
 \dst(i(x),i(y))=\dst(0+,i(y))-\dst(0+,i(x)),\\
 \bar{\dst}(x_0,y)-\bar{\dst}(x_0,x) \leq \bar{\dst}(x,y)\leq \bar{\dst}(x_0,y)-\bar{\dst}(x_0,x)+2\Delta_{\max},
\end{gather*}for any $x_0\in M_{\sss 0+}$. 
This implies that
\begin{equation}\label{c3:eq:distortion-reduction}
\begin{split}
 &\sup_{x,y\in \bar{M}_1} \Big\{ \sigma(\mathbf{p})\dst(i(x),i(y))-\frac{\sigma(\mathbf{p})}{B_m+1}\bar{\dst}(x,y) \Big\}\\
 &\hspace{.8cm}\leq 2 \sup_{y \in \bar{M}_1}\Big\{ \sigma(\mathbf{p})\dst(0+,i(y))-\frac{\sigma(\mathbf{p})}{B_m+1}\bar{\dst}(x_0,y) \Big\}+\frac{2 \sigmap \Delta_{\max}}{B_m+1}.
 \end{split}
\end{equation}
Further, replacing $y$ by any other point $y'$ in the right hand side in \eqref{c3:eq:distortion-reduction} incurs an error of at most $\Delta_{\max}$. Now, write the path $[0+,1+]$ as $0+=i_0\to i_1\to \dots \to i_{R^*-2} \to i_{R^*-1}=1+.$ Then
\begin{equation}\label{c3:eqn:estimate-dis}
 \dis(C_m)\leq 2\sup_{k\leq R^*-1} \Big| \sigmap \dst(i_0,i_k)-\frac{\sigmap}{B_m+1}\bar{\dst}(X_{\sss i_0,i_1}, X_{\sss i_k,i_{k+1}}) \Big|+\frac{6\sigmap \Delta_{\max}}{B_m+1},
\end{equation}where $(X_{i,j})_{i,j\in [m]}$ are the junction-points.
Using Assumption~\ref{c3:assm:blob-diameter} and \eqref{c3:eqn:estimate-dis}, it is now enough to show that for any $\varepsilon >0$, 
\begin{equation}\label{c3:prob:dst:paths-mg}
 \lim_{m\to\infty} \PR\bigg(\sup_{k\leq R^*-1}\Big| \sigmap \dst(i_0,i_k)-\frac{\sigmap}{B_m+1}\dst(X_{\sss i_0,i_1}, X_{\sss i_k,i_{k+1}}) \Big| >\varepsilon\bigg) = 0.
\end{equation}
Denote the term inside the above supremum  by $Q_k$. Then,
\begin{equation}\label{c3:Qk-simplif}
\begin{split}
 Q_k&:=\bigg[ \sigmap \dst(i_0,i_k)-\frac{\sigmap}{B_m+1}\bar{\dst}\big(X_{\sss i_0,i_1}, X_{\sss i_k,i_{k+1}}\big) \bigg]\\
 &=\bigg[ \sigmap k - \frac{ \sigmap}{B_m+1}\bigg(k+\sum_{j=1}^k\dst_{i_j}\big(X_{\sss i_j,i_{j-1}}, X_{\sss i_j, i_{j+1}}\big)\bigg)\bigg]\\
 &=\frac{\sigmap}{B_m+1} \bigg[\sum_{j=1}^k\Big( B_m-\dst_{i_j}\big(X_{\sss i_j,i_{j-1}}, X_{\sss i_j, i_{j+1}}\big)  \Big) \bigg].
 \end{split}
\end{equation}
 Recall the construction of the path $[0+,1+]$ via the birthday problem from Section~\ref{c3:sec:p-tree-birthday}. Take $\mathbf{J}:=(J_i)_{i\geq 1}$ such that $J_i$ are i.i.d.~samples from $\mathbf{p}$. 
 Further let $\boldsymbol{\xi}:= (\xi_i)_{i\in [m]}$ be an independent sequence such that $\xi_i$ is the distance between two points, chosen randomly from $M_i$ according to $\mu_i$. 
 Further, let $\mathbf{J}$ and $\boldsymbol{\xi}$ be independent. Then $R^*$ can be thought of as the first repeat time of the sequence $\mathbf{J}$. Thus, $Q_k$ in \eqref{c3:Qk-simplif} has the same distribution as 
 \begin{equation}
  \begin{split}
    \hat{Q}_k:= \frac{\sigmap}{B_m+1} \sum_{i=1}^k\big(   B_m -\xi_{J_i} \big).
  \end{split}
 \end{equation}
From the birthday construction $\expt{\xi_{J_i}}= \sum_{i\in [m]}p_iu_i=B_m$ and $(\xi_{J_i})_{i \geq 1}$ is an independent sequence. Therefore, $(\hat{Q}_k)_{k\geq 0}$ is a martingale. Further, 
\begin{equation}
\mathrm{Var}(\hat{Q}_k)\leq \bigg(\frac{\sigmap}{B_m+1}\bigg)^2k\Delta_{\max}\sum_{i\in [m]}p_iu_i=\frac{\sigmap^2k\Delta_{\max}B_m}{(B_m+1)^2}.
\end{equation} Thus, by Doob's martingale inequality \cite[Lemma 2.54.5]{RW94}, it follows that, for any $\varepsilon>0$ and $T>0$,
\begin{equation}\label{c3:eqn:supbound:Q}
 \PR\bigg(\sup_{k\leq T}|\hat{Q}_k|>\varepsilon \bigg)\leq \frac{T\sigmap^2\Delta_{\max}B_m}{(B_m+1)^2\varepsilon^2}.
\end{equation}Recall from \cite[Theorem 4]{CP99} that $(\sigmap R^*)_{m\geq 1}$ is a tight sequence of random variables. 
The proof now follows using Assumption~\ref{c3:assm:blob-diameter}.
%
%
%
%
%Thus, for any $ \delta >0$, we can choose $K(\delta)$ such that 
%\begin{equation}
% \prob{R^*>K(\delta)/\sigmap}\leq \delta/2.
%\end{equation}
%Thus, \eqref{c3:eqn:supbound:Q} yields
%\begin{equation}\label{c3:eq:proof-repeattime}
%\begin{split}
% \PR\bigg(\sup_{k\leq R^*-1}Q_k>\varepsilon \bigg)&\leq \delta/2+\PR\bigg(\sup_{k\leq K(\delta)/\sigmap}\hat{Q}_k>\varepsilon \bigg) \leq \delta/2+ \frac{K(\delta)B_m}{\varepsilon^2(B_m+1)}\frac{\sigmap\Delta_{\max}}{B_m+1},
% \end{split}
%\end{equation}and the proof now follows from Assumption~\ref{c3:assm:blob-diameter} and the fact that $B_m/(B_m+1)=O(1)$.
% 
\end{proof}
\begin{proof}[Proof of Proposition~\ref{c3:prop:g-blob-close} using Lemma~\ref{c3:lem:path:GHP-conv}] 
%To simplify the writing, w
We use objects defined in \eqref{c3:corr-blob-vs-path}, \eqref{c3:distortion:expression} in the proof of Lemma~\ref{c3:lem:path:GHP-conv} for all the path metric spaces with $j\leq k$. We assume that we are working on a probability space such that the convergence \eqref{c3:distortion:expression} holds almost surely for all $j\leq k$. 
To summarize, for fixed $\varepsilon>0$ and for each $j\leq k$, we can choose the correspondence $C_m^j$ and a measure $\mathfrak{m}_j$ of $[0+,j+]\times \bar{M}_j$ satisfying (i) $(i,X_{ik})\in C_m^j$, for all $i,k\in [0+,j+]$, (ii) $\dis(C_m^j)<\varepsilon/2k$ almost surely, and (iii) $D(\mathfrak{m}_j;\nu_j,\bar{\nu}_j)=0$ and $\mathfrak{m}_j((C^j_m)^c)=0$. 
Recall the definitions of the function $\mathsf{g}_{\phi}^k$ from \eqref{c3:defn:g-phi-1}, \eqref{c3:defn:g-phi-2} and the associated graphs $G(\cdot)$, $\bar{G}(\cdot)$. We simply write $G$ and $\bar{G}$ for $G(\sigma(\mathbf{p})\mathscr{T}_m^{\mathbf{p}}(\mathbf{V}_m))$ and $\bar{G}(\frac{\sigma(\mathbf{p})}{B_m+1}\bar{\mathscr{T}}_m^{\mathbf{p}}(\mathbf{V}_m))$, respectively. 
Let $\mathfrak{m}^{\sss \otimes k}$ denote the $k$-fold product measure of $\mathfrak{m}_j$ for $j\leq k$. 
We denote the graph distance on a graph $H$ by $\dst_{\sss H}$.
Note that 
\begin{equation}\label{c3:g-phi-compare}
\begin{split}
&\Big|\mathsf{g}_{\phi}^{k}\big(\sigma(\mathbf{p})\mathscr{T}_m^{\mathbf{p}}(\mathbf{V}_m)\big)-\mathsf{g}_{\phi}^{k}\Big(\frac{\sigma(\mathbf{p})}{B_m+1}\bar{\mathscr{T}}_m^{\mathbf{p}}(\mathbf{V}_m) \Big) \Big|\\
&\leq \E\big[\big|\phi\big((\dst_{\sss G}(i+,j+))_{i,j=k+1}^{k+l}\big)- \phi\big((\dst_{\sss \bar{G}}(X_i,X_j))_{i,j=k+1}^{k+l}\big)\big|\big],
\end{split}
\end{equation} where $X_i\sim \mu_i$ independently for $i\in [m]$, and the above expectation is with respect to the measure $\mathfrak{m}^{\sss \otimes k}$.
% \cmt{since the shortcuts are created independently}. 
%Assume that the $\mathbf{p}$-tree is constructed via the birthday construction. Thus $\mathscr{T}_m^{\mathbf{p}}(\mathbf{V}_m)$ is the tree constructed upto the $(k+l)^{th}$ repeat time $R^*_{k+l}$. If $i_0\to\dots\to i_{\sss R^*_{k+l}}$ be the order of vertices appearing upto time $R^*_{k+l}$, then define 
%\begin{equation}
% L_{k+l}= \frac{\sigmap}{B_m+1}\sum_{i=1}^{R^*_{k+l}}\big(\xi_{J_i}-B_m\big).
%\end{equation}
%Using the same arguments as \eqref{c3:eq:proof-repeattime}, it follows that for any $k,l$, as $m\to\infty$
%\begin{equation}
% L_{k+l}\pto 0.
%\end{equation}
Recall the notation while defining $\mathsf{g}_\phi^k(\cdot)$ in \eqref{c3:defn:g-phi-1}, \eqref{c3:defn:g-phi-2}. Notice that for any point $k\in [0+,i+]$ and $x_k\in M_k$ and $x_{i(s)}\in M_{i(s)}$,
\begin{equation}\label{c3:shortcut-contribute}
 | \dst_{\sss \mathbf{t}}(k,i(s))- \dst_{\sss\bar{\mathbf{t}}}(x_k,x_{i(s)})|\leq \frac{\varepsilon}{2k}.
\end{equation}
Now, for any path $i+$ to $j+$ in $G$, we can essentially take the same path from $X_i$ to $X_j$ in $\bar{G}$ and take the corresponding inter-blob paths on the way. 
The distance traversed in $\bar{G}$ in this way gives an upper bound on $\dst_{\sss \bar{G}}(X_i,X_j)$. Notice that, by \eqref{c3:shortcut-contribute}, taking a shortcut contributes at most $\varepsilon / 2k$ to the difference of the distance traveled in $G$ and $\bar{G}$. Also, traversing a shortcut edge contributes $\sigmap B_m/(B_m+1)$ and there are at most $k$ shortcuts on the path. 
Furthermore, it may be required to reach the relevant junction points from $X_i$ and $X_j$ and that contributes at most $2 \sigmap \Delta_{\max}/(B_m+1)$. Thus, for $k+1\leq i,j\leq k+l$, and sufficiently large $m$,
\begin{equation}
 \dst_{\sss \bar{G}}(X_i,X_j)\leq \dst_{\sss G}(i+,j+)+\frac{\varepsilon}{2}+\frac{k\sigmap B_m}{B_m+1}+\frac{2\sigmap \Delta_{\max}}{B_m+1}\leq \dst_{\sss G}(i+,j+)+\varepsilon. 
\end{equation}
%
%$ \dst_{\sss \bar{G}}(X_i,X_j)\leq 
%%\dst_{\sss G}(i+,j+)+\frac{\varepsilon}{2}+\frac{k\sigmap B_m}{B_m+1}+\frac{2\sigmap \Delta_{\max}}{B_m+1}\leq 
%\dst_{\sss G}(i+,j+)+\varepsilon.$
By symmetry we can conclude the lower bound also and the continuity of $\phi(\cdot)$ (see \cite[Theorem 4.18]{BHS15}) along with \eqref{c3:g-phi-compare} completes the proof of Proposition~\ref{c3:prop:g-blob-close}.
\end{proof} 

%\section{Entrance boundary conditions: Proofs of Theorems~\ref{c3:thm:susceptibility}~and~\ref{c3:thm:diam-max}} 
\label{c3:sec:entrance-bdd-proofs}
At this moment, we urge the reader to recall the definitions from \eqref{c3:defn:barely-subcrit}, \eqref{c3:defn:susc-w} and \eqref{c3:defn:susc-dist}. 
The configuration model graphs considered in this section will be assumed to have degree sequence $\bld{d}'$ and the vertices have an associated weight sequence $\bld{w}$ such that  Assumption~\ref{c3:assumption-w} is satisfied. 
We treat the different terms arising in Theorem~\ref{c3:thm:susceptibility} in different subsections. 
%In the rest of the paper (Sections~\ref{c3:sec:entrance-bdd-proofs},~\ref{c3:sec:proof-metric-mc}), some components of the proofs that are similar to \cite{DHLS16,BBSX14,BHS15} are skipped here with appropriate references. 
\subsection{Analysis of $s_2^\star$}\label{c3:sec:s2} 
The asymptotics of $s_2^\star$ is a consequence of the Chebyshev inequality. In the following lemma, we compute its mean and variance. 
Consider the size-biased distribution on the vertex set $[n]$ with sizes $(w_i)_{i\in [n]}$.
 Let $V_n$ and $V_n^{*}$, respectively, denote a vertex chosen  uniformly at random and according to the size-biased distribution, independently of the underlying graph $\rCM_n(\bld{d}')$. 
Let $D_n'$, $W_n$ (respectively $D_n^*$, $W_n^*$) denote the degree and weight of $V_n$ (respectively $V_n^*$). 
For a vertex $v\in [n]$, let $\mathscr{W}(v):=\sum_{k\in\mathscr{C}'(v)}w_k$, where $\mathscr{C}'(v)$ denotes the component of $\rCM_n(\bld{d}')$ containing $v$.

\begin{lemma}\label{c3:lem:expt-weight-random} Under \textrm{Assumption~\ref{c3:assumption-w}}, (i) $
 \expt{\mathscr{W}(V_n^*)}= \frac{\expt{D_n^*}\expt{D_n' W_n}}{\expt{D_n'}(1-\nu_n')}(1+o(1))$, (ii) 
 $  \E\big[\big(\mathscr{W}(V_n^*)\big)^2\big] = O\big(\frac{n^{3\alpha-1}}{(1-\nu_n')^3}\big)$, and $\E\big[\big(\mathscr{W}(V_n^*)\big)^3\big]= o(n^{1+2\delta}).$
\end{lemma}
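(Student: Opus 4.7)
The plan is to compute each moment directly via path counting in the configuration model, exploiting the subcritical branching-process heuristic. Let $\ell_n := \sum_i d'_i$ denote the total degree. The starting point is the identity
\[
\E[\mathscr{W}(V_n^*)^k] = \frac{1}{\sum_i w_i} \sum_{v, u_1, \dots, u_k \in [n]} w_v w_{u_1} \cdots w_{u_k}\, \PR(v, u_1, \dots, u_k \text{ lie in the same component}),
\]
valid for each $k \geq 1$. For part (i), I would establish
\[
\PR(u \sim v) = \frac{d'_u d'_v}{\ell_n(1-\nu_n')}(1+o(1)), \qquad u \neq v,
\]
using the standard configuration-model estimate that the expected number of simple length-$k$ paths between two fixed vertices is asymptotically $(d'_u d'_v / \ell_n)(\nu_n')^{k-1}$; the geometric series converges since $\nu_n' < 1$, and the Markov bound $\PR(u \sim v) \le \E[\#\text{paths}(u \to v)]$ is matched by a lower bound obtained by subtracting the expected number of pairs of distinct paths, which is negligible in the barely subcritical window. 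Summing and dividing by $\sum_i w_i = n\mu_w(1+o(1))$ reproduces the claimed formula after identifying $(\sum_v d'_v w_v)^2/((\sum_i w_i) \cdot \ell_n) = \E[D_n^*]\E[D_n' W_n]/\E[D_n']$.

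For parts (ii) and (iii), I would apply a Steiner-decomposition approach: in a tree-like component, for $v, u_1, u_2$ (respectively, $v, u_1, u_2, u_3$) to all be connected, there must be one (resp.\ one or two) internal Steiner points joined to the specified vertices by edge-disjoint paths. For (ii), summing contributions over a Steiner point $w$ and three paths of lengths $k_1, k_2, k_3 \geq 1$ gives the upper bound
\[
\sum_{v,u_1,u_2} w_v w_{u_1} w_{u_2}\, \PR(v\sim u_1, v \sim u_2) = O\!\left(\frac{\bigl(\sum_w (d'_w)^{3}\bigr)\bigl(\sum_u d'_u w_u\bigr)^3}{\ell_n^3(1-\nu_n')^3}\right);
\]
combined with $\sum_w (d'_w)^3 = O(n^{3\alpha})$ (from Assumption~\ref{c3:assumption1}), $\sum_u d'_u w_u = \Theta(n)$, $\ell_n = \Theta(n)$, and division by $\sum_i w_i = \Theta(n)$, this yields the bound in (ii). Part (iii) is analogous, summed over the two Steiner-tree topologies on four leaves (the star at a single degree-$4$ Steiner point and the ``H''-shape with two degree-$3$ Steiner points); in either case, the moment conditions of Assumption~\ref{c3:assumption-w}(ii) combined with $\delta < \eta = (\tau-3)/(\tau-1)$ convert the accumulated $(1-\nu_n')^{-c}$ factors into controlled polynomial powers of $n^\delta$, yielding the $o(n^{1+2\delta})$ bound.

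The main obstacle will be the uniform control of the path-counting estimates themselves: one must (a) justify replacing the exact product $\prod_{i=1}^{k-1} d'_{u_i}(d'_{u_i}-1) / (\ell_n - 2i - 1)$ over intermediate vertices by the clean factor $(\nu_n')^{k-1}$, with errors summable up to the relevant cut-off $O(n^\delta \log n)$ suggested by Theorem~\ref{c3:thm:diam-max}; and (b) show that contributions from non-simple configurations (paths revisiting vertices, or path systems sharing vertices or forming cycles) are asymptotically negligible compared to the tree-like principal term. Point (b) is where the subcritical gap $1 - \nu_n' \sim \lambda_0 n^{-\delta}$ enters crucially, since each extra cycle costs a factor of order $\nu_n'$ per edge, but quantifying this precisely requires a careful enumeration of degenerate path configurations.
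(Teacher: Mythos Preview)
Your proposal is correct and follows essentially the same path-counting strategy as the paper: upper bounds via the expected number of paths (summing the geometric series in $\nu_n'$), matching lower bounds for part~(i) by subtracting the contribution of double-path configurations, and for parts~(ii)--(iii) an enumeration of the tree topologies (your ``Steiner decomposition'') spanning $V_n^*$ and the target vertices. The only refinement to note is that the paper's enumeration is slightly finer than yours because $V_n^*$ can itself be a branching vertex of the spanning tree rather than a leaf---this yields extra cases (e.g.\ terms involving $\E[D_n^*(D_n^*-1)]$ and $\E[D_n^*(D_n^*-1)(D_n^*-2)]$) that are controlled via the cross-moment bounds $\sum d_i'^2 w_i, \sum d_i' w_i^2 = O(n^{3\alpha})$ in Assumption~\ref{c3:assumption-w}(ii), but these are of lower order and do not affect the stated bounds.
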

%\begin{lemma}\label{c3:lem:moment-weight-random}Under {\rm Assumption~\ref{c3:assumption-w}}, $
%  \E\big[\big(\mathscr{W}(V_n^*)\big)^2\big] = O\big(\frac{n^{3\alpha-1}}{(1-\nu_n')^3}\big)$, and \\ $\E\big[\big(\mathscr{W}(V_n^*)\big)^3\big]= o(n^{1+2\delta}).$
%\end{lemma}
% We obtain the asymptotics of $s_2^\star$ using the estimates from Lemma~\ref{c3:lem:expt-weight-random}.
 %~and~\ref{c3:lem:moment-weight-random}. 
 \begin{proof}[Asymptotics of $s_2^\star$]Denote $\ell_n^w=\sum_{i\in [n]}w_i$. Firstly, if $\E_{\bld{d}'}$ denotes the conditional expectation given $\mathrm{CM}_n(\bld{d}')$, then for any $r\geq 1$,
\begin{equation}\label{c3:eq:identity-moment}
\begin{split}
\E_{\bld{d}'}\big[\big(\mathscr{W}(V_n^*)\big)^{r-1}\big] 
%= \E_{\bld{d}'}\bigg[\bigg(\sum_{k\in \mathscr{C}'(V_n^*)}w_k\bigg)^{r-1}\bigg]
= \sum_{i\geq 1}\sum_{k\in \mathscr{C}_{\sss (i)}'}\frac{w_k}{\ell_n^w}\bigg(\sum_{k\in \mathscr{C}_{\sss (i)}'}w_k\bigg)^{r-1} = \frac{1}{\ell_n^w}\sum_{i\geq 1}\big(\mathscr{W}_{\sss (i)}\big)^r.
 \end{split}
\end{equation} Therefore, using Lemma~\ref{c3:lem:expt-weight-random} and \eqref{c3:defn:barely-subcrit}, it follows from Assumption~\ref{c3:assumption-w} that
\begin{equation} \label{c3:expt:s2}
 n^{-\delta}\expt{s_2^\star}= \frac{\ell_n^w}{n} n^{-\delta}\expt{\mathscr{W}(V_n^*)}\to \frac{\mu_{d,w}^2}{\mu_d\lambda_0},
\end{equation}where we have used the fact that $\expt{D_n^*}\to \mu_{d,w}/\mu_w$. It remains to compute the variance. Let $U_n^*$ denote another vertex chosen in a size-biased way with the sizes being $(w_i)_{i\in [n]}$, independently of the graph $\mathrm{CM}_n(\bld{d}')$ and $V_n^*$.  
Then \eqref{c3:eq:identity-moment} yields
 \begin{equation}\label{c3:eq:moment2-s2}
 \begin{split}
  &\expt{(s_2^{\star})^2}
  =\frac{1}{n^2}\E\bigg[\sum_{i,j\geq 1}\mathscr{W}_{\sss (i)}^2\mathscr{W}_{\sss (j)}^2\bigg]
  =\frac{1}{n^2}\E\bigg[\sum_{i\geq 1}\mathscr{W}_{\sss (i)}^4\bigg]+\frac{1}{n^2}\E\bigg[\sum_{i\neq j}\mathscr{W}_{\sss (i)}^2\mathscr{W}_{\sss (j)}^2\bigg]\\
  & = \frac{\ell_n^w}{n}\frac{1}{n}\expt{\big(\mathscr{W}(V_n^*)\big)^3}+\bigg(\frac{\ell_n^w}{n}\bigg)^2\expt{\mathscr{W}(U_n^*)\mathscr{W}(V_n^*)\ind{U_n^*\notin \mathscr{C}'(V_n^*)}},
 \end{split}
\end{equation}
where the second term in the third equality follows using similar arguments as in \eqref{c3:eq:identity-moment}. 
Denote the last two terms of \eqref{c3:eq:moment2-s2} by $(\mathbf{I})$ and $(\mathbf{II})$ respectively.
To estimate~$(\mathbf{II})$, observe that, conditionally on the graph $\mathscr{C}'(V_n^*)$, the graph obtained by removing $\mathscr{C}'(V_n^*)$ from $\rCM_n(\bld{d}')$ is again a configuration model with the induced degree sequence $\Mtilde{\boldsymbol{d}}$ and number of vertices $\tilde{n}$. Let $\tilde{\nu}_n$ denote the corresponding criticality parameter. 
 In the proof of Lemma~\ref{c3:lem:expt-weight-random}~(i), we will see that the upper bound holds whenever $\tilde{\nu}_n<1$ (see Remark~\ref{c3:rem:ub-susc}). Thus, let us first show that 
\begin{equation}\label{c3:fact:graphs}
\text{for all sufficiently large }n,\ \prob{\tilde{\nu}_n<1\mid \mathscr{C}'(V_n^*)}=1, \text{ almost surely.}
\end{equation}
Denote $\ell_n' = \sum_{i\in [n]}d_i'$. To see \eqref{c3:fact:graphs}, first notice that 
 \begin{equation}
\begin{split}
 &\tilde{\nu}_n-1 = \nu_n'-1-\frac{\sum_{j \in \mathscr{C}'(V_n^*)} d_{j}'(d_{j}'-2)}{\sum_{j \in [n]} d_{j}'}+(\tilde{\nu}_n-1)\frac{\sum_{j \in \mathscr{C}'(V_n^*)} d_{j}'}{\ell_n'}.
% 
% \frac{\sum_{j \notin \mathscr{C}'(V_n^*)} d_{j}'(d_{j}'-2)}{\sum_{j \notin \mathscr{C}'(V_n^*)} d_{j}'}\\
% &= \frac{\sum_{j \in [n]} d_{j}'(d_{j}'-2)}{\sum_{j \in [n]} d_{j}'}- \frac{\sum_{j \in \mathscr{C}'(V_n^*)} d_{j}'(d_{j}'-2)}{\sum_{j \in [n]} d_{j}'} + \frac{\sum_{j \notin \mathscr{C}'(V_n^*)} d_{j}'(d_{j}'-2)\sum_{j \in \mathscr{C}'(V_n^*)} d_{j}' }{\sum_{j \notin \mathscr{C}'(V_n^*)} d_{j}'\sum_{j \in [n]} d_{j}'}\\
% &\leq \nu_n'-1+\frac{2}{\ell_n'}+(\tilde{\nu}_n-1)\frac{\sum_{j \in \mathscr{C}'(V_n^*)} d_{j}'}{\ell_n'}.
 \end{split}
\end{equation}
Moreover, for any connected graph $\mathcal{G}$, $\sum_{i\in \mathcal{G}}d_i'(d_i'-2)\geq -2$ (this can be proved by induction) 
so that
$(\tilde{\nu}_n-1)\big(1-\sum_{j \in \mathscr{C}'(V_n^*)} d_{j}'/\ell_n'\big)\leq \nu_n'-1+\frac{2}{\ell_n'}. $
%Observing that the second term in the left hand side is positive and the right hand side is negative for sufficiently large $n$, 
The proof of \eqref{c3:fact:graphs} now follows. 
As mentioned above, now we can apply the upper bound from Lemma~\ref{c3:lem:expt-weight-random}. 
Therefore,
 \begin{equation}\label{c3:weight-conditioned-comp}
\begin{split}
 &\expt{\mathscr{W}(U_n^*)\ind{U_n^* \notin \mathscr{C}'(V_n^*)} \big\vert \mathscr{C}'(V_n^*)}\\
 &= \frac{\sum_{i\notin \mathscr{C}'(V_n^*)}w_i}{\ell_n^w}\expt{\mathscr{W}(U_n^*) \big\vert \mathscr{C}'(V_n^*), U_n^* \notin \mathscr{C}'(V_n^*)}\\
 %& \leq \frac{\sum_{i\notin \mathscr{C}'(V_n^*)}w_i}{\ell_n^w} \times \frac{\big(\sum_{i\notin \mathscr{C}'(V_n^*)}d_i'w_i\big)^2\sum_{i\notin\mathscr{C}'(V_n^*)}d_i'}{\sum_{i\notin \mathscr{C}'(V_n^*)} w_i\sum_{i\notin \mathscr{C}'(V_n^*)} d_i' (-\sum_{i\notin\mathscr{C}'(V_n^*)}d_i'(d_i'-2)) }\\
 &\leq \frac{\big(\sum_{i\in [n]}d_i'w_i\big)^2}{\ell_n^w\ell_n'(1-\nu_n'-2/\ell_n')}= \expt{\mathscr{W}(V_n^*)}\big(1+O(1/n)\big).
 \end{split}
\end{equation}
%where the last but one step again follows from the fact that for any connected graph $\mathcal{G}$, $\sum_{v\in \mathcal{G}}d_i'(d_i'-2)\geq -2$. 
Thus,
\begin{equation}\label{c3:cross-prod-var-s2}
 \expt{\mathscr{W}(U_n^*)\mathscr{W}(V_n^*)\ind{U_n^* \notin \mathscr{C}'(V_n^*)} }\leq \big(\expt{\mathscr{W}(V_n^*)}\big)^2\big(1+O(1/n)\big).
\end{equation} 
We conclude that \eqref{c3:eq:moment2-s2}, \eqref{c3:cross-prod-var-s2} together with Lemma~\ref{c3:lem:expt-weight-random} implies that $\var{s_2^\star}=o(n^{2\delta})$.
 Thus, we can use the Chebyshev inequality and \eqref{c3:expt:s2} to conclude that $$n^{-\delta}s_2^\star \xrightarrow{\sss \PR} \mu_{d,w}^2/(\mu_d\lambda_0).$$ 
 \end{proof} 
\begin{proof}[Proof of Lemma~\ref{c3:lem:expt-weight-random}~$(i)$]  
We use path-counting techniques for configuration models from \cite[Lemma 5.1]{J09b}. 
%In fact, an identical argument to the proof of \cite[Lemma 5.1]{J09b} shows that, for any $l\geq 1$, 
%\begin{equation}\label{c3:eq:ineq-path}
% \sum_{x_i\neq x_j, \forall i\neq j}\frac{\prod_{i=1}^{l-1}d_{x_i}'(d_{x_i}'-1)}{(\ell_n'-1)\cdots (\ell_n'-2l+1)}\leq \nu_n'^{l-1}.
%\end{equation}
  Let $\mathcal{A}(k,l)$ denote the event that there exists a path of length $l$ from $V_n^*$ to $k$ and $\mathcal{A}'(k,l)$ the event that there exist two different paths, one of length $l$ and another one of length at most $l$, from $V_n^*$ to $k$. Notice that 
\begin{subequations}
\begin{equation}\label{c3:sus1-expr-1}
 \begin{split}
  \expt{\mathscr{W}(V_n^*)} &\leq \E\bigg[\sum_{k\in [n]}w_k\ind{V_n^{*}\leadsto k}\bigg]=\expt{W_n^*} +  \sum_{l\geq 1}\sum_{k\in [n]}w_{k}\prob{\mathcal{A}(k,l)},
 \end{split}
 \end{equation}
\begin{equation}\label{c3:sus1-expr-2}
 \expt{\mathscr{W}(V_n^*)} \geq \sum_{l\geq 1}\sum_{k\in [n]}w_{k}\prob{\mathcal{A}(k,l)}-\sum_{l\geq 1}\sum_{k\in [n]}w_{k}\prob{\mathcal{A}'(k,l)}.
\end{equation}
\end{subequations}
  Now, by Assumption~\ref{c3:assumption-w}, \eqref{c3:sus1-expr-1} yields
\begin{equation}\label{c3:sus-simpl-1}
 \begin{split}
  &\expt{\mathscr{W}(V_n^*)} 
  \leq \expt{W_n^*}+ \expt{D_n^{*}}\sum_{l = 1}^{\infty}\sum_{k\in [n]}w_k\sum_{x_i\neq x_j, \forall i\neq j}\frac{\prod_{i=1}^{l-1}d_{x_i}'(d_{x_i}'-1)d_{k}'}{(\ell_n'-1)\cdots (\ell_n'-2l+1)}\\
  &\leq \expt{W_n^*}+\frac{\expt{D_n^*}\expt{D_n' W_n}}{\expt{D_n'}-1/n}\sum_{l=1}^{\infty}\nu_n'^{l-1} = \frac{\expt{D_n^*}\expt{D_n' W_n}}{\expt{D_n'}(1-\nu_n')}(1+o(1)),
 \end{split}
 \end{equation}where in the third step, we have used the fact that $$\sum_{x_i\neq x_j, \forall i\neq j}\frac{\prod_{i=1}^{l-1}d_{x_i}'(d_{x_i}'-1)}{(\ell_n'-1)\cdots (\ell_n'-2l+1)}\leq \nu_n'^{l-1}$$ from \cite[Lemma 5.1]{J09}.
 For the computation of the lower bound, 
 %fix a constant $C_1$ such that $d_1'n^{\eta}=C_1\ell_n'(1+o(1))$. Thus, $C_1=c_1/\mu_d$. Observe that
 observe that
 \begin{equation}\label{c3:sus-lb}
  \begin{split}
   &\sum_{l=1}^{n^{\eta}/\log(n)}\sum_{k\in [n]}w_{k}\prob{\mathcal{A}(k,l)} 
   \\&\geq \E[D_n^*]\sum_{l= 1}^{n^{\eta}/\log(n)}\sum_{k\in [n]} w_k\sum_{x_i\neq x_j, \forall i\neq j}\frac{1}{\ell_n'^{l-1}}\prod_{i=1}^{l-1}d_{x_i}'(d_{x_i}'-1)d_{k}'\\
    &\geq \frac{\E[D_n^*]\expt{D_n'W_n}}{\expt{D_n'}}\sum_{l = 1}^{n^{\eta}/\log(n)}\Big(\nu_n'^{l-1}-\frac{d_1'n^{\eta}\big(\sum_{i\in [n]}d_i'(d_i'-1)\big)^{l-2}}{\log(n)\ell_n'^{l-1}}\Big)  \\
    & = \frac{\expt{D_n^*}\expt{D_n' W_n}(1-(\nu_n')^{n^{\eta}/\log(n)})}{\expt{D_n'}(1-\nu_n')}(1+o(1))\\
    &=\frac{\expt{D_n^*}\expt{D_n' W_n}}{\expt{D_n'}(1-\nu_n')}(1+o(1)),
  \end{split}
 \end{equation}where we have used the fact that $d_1'l\leq d_1'n^{\eta}/\log(n)$ and inclusion-exclusion to obtain the third step, and 
 \eqref{c3:defn:barely-subcrit}, $d_1'n^{\eta}/\ell_n'= c_1/\mu_d(1+o(1))$ and the fact that $\delta<\eta$ in the one-but-last step.
 To complete the proof of Lemma~\ref{c3:lem:expt-weight-random}, we need to have an upper bound on the last term of~\eqref{c3:sus1-expr-2}. 
 Observe that if $\mathcal{A}'(k,l)$ happens, then one of the structures in Figure~\ref{c3:fig:multi-paths} occurs.

\begin{figure}
\centering
\begin{subfigure}{.2\textwidth}
\centering
	\includegraphics[width=.5\textwidth]{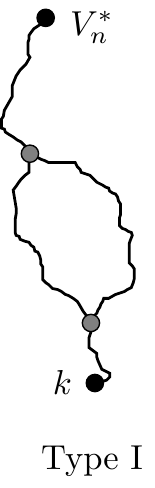}
\end{subfigure}
\begin{subfigure}{.2\textwidth}
\centering
	\includegraphics[width=.45\textwidth]{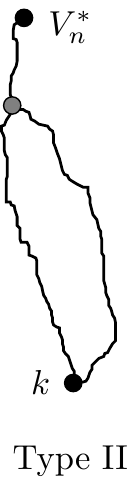}
\end{subfigure}
\begin{subfigure}{.2\textwidth}
\centering
	\includegraphics[width=.5\textwidth]{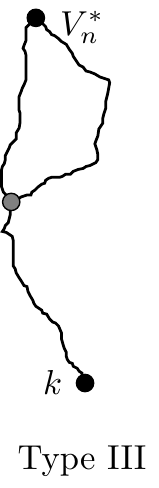}
\end{subfigure}
\begin{subfigure}{.2\textwidth}
\centering
	\includegraphics[width=.5\textwidth]{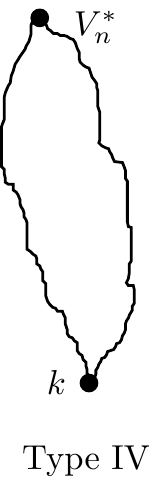}
\end{subfigure}
\caption{Possible paths corresponding to $\mathcal{A}'(k,l)$.}\label{c3:fig:multi-paths}
\end{figure}

Denote by $\mathcal{A}'(k,l,i)$ the event that the structure of type $i$ ($i$=I, II, III, IV) in Figure~\ref{c3:fig:multi-paths} appears. 
%Denote the $r$th factorial moment of $D_n'$ by $\sigma_r(n)$. 
We use the notation $C$ to denote a generic constant.
Using an argument identical to \eqref{c3:sus-simpl-1}, and applying Assumption~\ref{c3:assumption-w}, it follows that
\begin{equation}
\label{c3:eqn:paths1}
\begin{split}
  &\sum_{l\geq 1}\sum_{k\in [n]}w_{k}\prob{\mathcal{A}'(k,l,\mathrm{I})}
  %&\leq \sum_{l\geq 1} \sum_{r\geq 1}\sum_{k\in [n]}\sum_{x_1\neq \dots\neq x_{l+r-1}}\expt{D_n^*}\sum_{\substack{i,j=1\\i\neq j}}^{l-1} \frac{\prod_{k=i,j}d_{x_k}'(d_{x_k}'-1)(d_{x_k}'-2)\prod_{\substack{k=1\\k\neq i,j,l}}^{l+r-1}d_{x_k}'(d_{x_k}'-1)d_{x_l}'w_{x_l}}{(\ell_n'-1)\cdots(\ell_n'-2(l+r-1)-1)}\nonumber\\
  \\
  &\leq C\expt{D_n^*}\expt{D_n'W_n}\sum_{l\geq 1}\sum_{r\geq 1} \frac{\sigma_3(n)^2}{\ell_n'}(l-1)(l-2)\nu_n'^{l+r-4}\nonumber\\
&\leq C \frac{n^{6\alpha-3}}{1-\nu_n'} \sum_{l\geq 3}(l-1)(l-2)\nu_n'^{l-3}\leq C\frac{n^{6\alpha-3}}{(1-\nu_n')^4} =o(n^{\delta}),
\end{split}
\end{equation}where we have used the fact $\delta<\eta$ in the last step. 
Identical arguments can be carried out to conclude that $\sum_{l\geq 1}\sum_{k\in [n]}w_{k}\prob{\mathcal{A}'(k,l,i)} = o(n^{\delta})$, $i=\mathrm{II},\mathrm{III},\mathrm{IV}$.
%\begin{equation}\label{c3:eqn:paths2}
% \begin{split}
%  &\sum_{l\geq 1}\sum_{k\in [n]}w_{k}\prob{\mathcal{A}'(k,l,\mathrm{II})}\leq   C \frac{\expt{D_n^*}\expt{D_n'(D_n'-1)W_n}}{\expt{D_n'}}\sum_{l\geq 1}\sum_{r\geq 1} \frac{\sigma_3(n)}{n}(l-1)\nu_n'^{l+r-3},
% \end{split}
%\end{equation}
%\begin{equation}\label{c3:eqn:paths3}
%\begin{split}
% &\sum_{l\geq 1}\sum_{k\in [n]}w_{k}\prob{\mathcal{A}'(k,l,\mathrm{III})}\leq C\frac{\expt{D_n^*(D_n^*-1)}\expt{D_n' W_n}}{\expt{D_n'}}\frac{\sigma_3(n)}{n}\sum_{l,r\geq 1}\nu_n'^{l+r-2},
% \end{split}
%\end{equation}
%\begin{equation}\label{c3:eqn:paths4}
%\begin{split}
% &\sum_{l\geq 1}\sum_{k\in [n]}w_{k}\prob{\mathcal{A}'(k,l,\mathrm{IV})}\leq C\frac{\expt{D_n^*(D_n^*-1)}\expt{D_n'(D_n'-1)W_n}}{n(1-\nu_n')^2},
% \end{split}
%\end{equation} and all the terms in right hand sides of \eqref{c3:eqn:paths2}, \eqref{c3:eqn:paths3}, \eqref{c3:eqn:paths4} are $o(n^{\delta})$.
Combining this with \eqref{c3:sus-lb} and applying them to \eqref{c3:sus1-expr-2}, it follows that
\begin{equation}\label{c3:susc-lb2}
 \expt{\mathscr{W}(V_n^*)}\geq  \frac{\expt{D_n^*}\expt{D_n' W_n}}{\expt{D_n'}(1-\nu_n')}(1+o(1)),
\end{equation}and the proof of Lemma~\ref{c3:lem:expt-weight-random} is now complete using \eqref{c3:sus-simpl-1}.
\end{proof}
\begin{proof}[Proof of Lemma~\ref{c3:lem:expt-weight-random}~$(ii)$]
Notice that 
\begin{equation}
\big(\mathscr{W}(V_n^*)\big)^r = \sum_{k_1,\dots,k_r\in [n]}w_{k_1}\cdots w_{k_r}\ind{V_n^*\leadsto k_1,\dots, V_n^*\leadsto k_r}. 
\end{equation}
\begin{figure}\centering
\begin{subfigure}{.4\textwidth}
\centering
	\includegraphics[width=.5\textwidth]{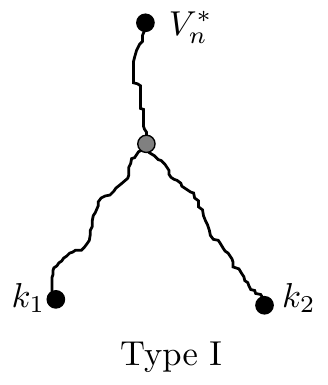}
\end{subfigure}
\begin{subfigure}{.4\textwidth}
\centering
	\includegraphics[width=.5\textwidth]{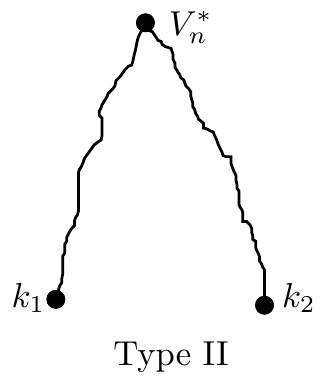}
\end{subfigure}
\caption{Possible paths when $V_n^*\leadsto k_1$, and $V_n^*\leadsto k_2$.}\label{c3:fig:paths2}
\end{figure}
We can again count the contribution due to the different types of paths in Figure~\ref{c3:fig:paths2} by using similar argument as in \eqref{c3:sus1-expr-1} to compute the second moment. 
Ignoring the re-computation, it follows that
\begin{equation}\label{c3:W-moment2-ub}
  \begin{split}\E\Big[\big(\mathscr{W}(V_n^*)\big)^2\Big] &\leq\expt{(W_n^*)^2}+\frac{\expt{D_n^*}(\expt{D_n'W_n})^2\sigma_3(n)}{(\expt{D_n'})^3(1-\nu_n')^3}\\
  &+\frac{\expt{D_n^*(D_n^*-1)}(\expt{D_n'W_n})^2}{(\expt{D_n'})^2(1-\nu_n')^2},
  \end{split}
\end{equation}which gives rise to the desired $O(\cdot)$ term. 
For the third moment, the leading contributions arise from the structures given in Figure~\ref{c3:fig:paths3}. 
See Appendix~\ref{c3:sec:appendix-path-counting} for a detailed computation.
\begin{figure} \centering
\begin{subfigure}{.2\textwidth}
 \centering
	\includegraphics[width=.8\textwidth]{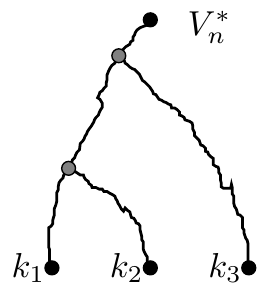}
\end{subfigure}
\begin{subfigure}{.2\textwidth}
\centering
	\includegraphics[width=.8\textwidth]{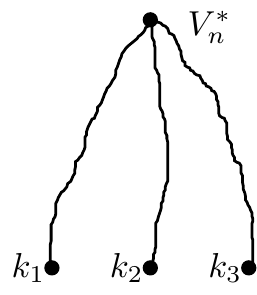}
\end{subfigure}
\begin{subfigure}{.2\textwidth}
\centering
	\includegraphics[width=.8\textwidth]{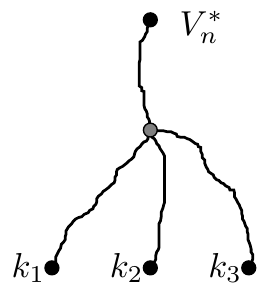}
\end{subfigure}
\begin{subfigure}{.2\textwidth}
\centering
	\includegraphics[width=.8\textwidth]{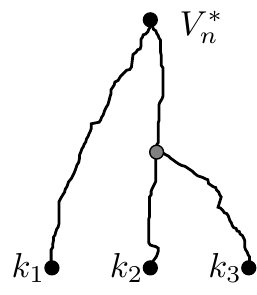}
\end{subfigure}
\caption{Possible paths when $V_n^*\leadsto k_1$, $V_n^*\leadsto k_2$, and $V_n^*\leadsto k_3$.}\label{c3:fig:paths3}
\end{figure}
%Thus,
%\begin{equation}\label{c3:W-moment3-ub}
% \begin{split}
%  &\expt{\big(\mathscr{W}(V_n^*)\big)^3}\\
%  &\leq \expt{(W_n^*)^3}+\frac{\expt{D_n^*}(\expt{D_n'W_n})^3\sigma_3(n)^2}{(\expt{D_n'})^5(1-\nu_n')^5}+\frac{\expt{D_n^*(D_n^*-1)(D_n^*-2)}(\expt{D_n'W_n})^3}{(\expt{D_n'})^3(1-\nu_n')^3}\\
%  & \hspace{1cm}+\frac{\expt{D_n^*}(\expt{D_n'W_n})^3\sigma_4(n)}{(\expt{D_n'})^4(1-\nu_n')^4}+ \frac{\expt{D_n^*(D_n^*-1)}(\expt{D_n'W_n})^3\sigma_3(n)}{(\expt{D_n'})^4(1-\nu_n')^4}
%%  &=O(n^{4\alpha-1}+n^{6\alpha-2+3\delta}+n^{4\alpha-1+3\delta}+n^{4\alpha-1+4\delta}+n^{6\alpha-2+4\delta})=o(n^{2\delta+1}),
% \end{split}
%\end{equation} and the proof follows.
\end{proof}
\begin{remark}\label{c3:rem:ub-susc} \normalfont The  upper bounds of $\E[(\mathscr{W}(V_n^*))^r]$ in \eqref{c3:sus-simpl-1} and \eqref{c3:W-moment2-ub} %and \eqref{c3:W-moment3-ub} 
hold for any configuration model for which the value of the criticality parameter is less than one. The precise assumptions were needed to estimate the orders of these terms. 
%The path counting techniques, used in the proof of Lemmas~\ref{c3:lem:expt-weight-random} was introduced in \cite{J09b} to obtain upper bounds on the susceptibility function for the configuration models and was later used in~\cite{HM12,DMS16}
%to obtain specific subgraph counts. 
\end{remark}
\begin{remark}\label{c3:rem:spr-asymp} \normalfont The method used to obtain the asymptotics of $s_2^\star$ can be followed verbatim to obtain the asymptotics of $s_{pr}^\star$. Indeed, notice that 
\begin{equation}
 \expt{s_{pr}^\star} = \frac{1}{n}\E\bigg[\sum_{i\geq 1}\mathscr{W}_{\sss (i)}|\mathscr{C}'_{\sss(i)}|\bigg] = \expt{\mathscr{W}(V_n)}.
\end{equation} 
A similar identity for the second moment of $s_{pr}^\star$ also holds.
\end{remark}

%%%%%%%%%%\subsection{Tail bounds on $s_3^{\star}$}
The main aim of this section is to prove the following proposition which will be required to obtain the asymptotics of $s_3^\star$, as well as $\mathscr{W}_{\sss (i)}$:
\begin{proposition}\label{c3:prop:tail3-bare-subcrit}
Suppose that \textrm{Assumption~\ref{c3:assumption-w}} holds. 
For any $\varepsilon > 0$,
\begin{equation}
 \lim_{K\to\infty}\limsup_{n\to\infty}\PR\bigg(\sum_{i>K}\big(\mathscr{W}_{\sss (i)}\big)^3>\varepsilon n^{3(\alpha+\delta)}\bigg)=0.
\end{equation}
\end{proposition}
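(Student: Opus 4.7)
The plan is to dominate the tail sum $\sum_{j>K}\mathscr{W}_{\sss(j)}^3$ by a restricted sum that the path-counting apparatus already developed in Lemma~\ref{c3:lem:expt-weight-random}\,(ii) can handle. Relabel so that $d_1'\geq d_2'\geq\cdots$ and set $H_K:=\{1,\ldots,K\}$, the $K$ largest-degree vertices. The first step is the combinatorial exchange
\[
\sum_{j>K}\mathscr{W}_{\sss(j)}^3\ \leq\ \sum_{\mathscr{C}:\,\mathscr{C}\cap H_K=\emptyset}\mathscr{W}(\mathscr{C})^3.
\]
To see this, let $A$ be the components of rank $\leq K$ by weight that miss $H_K$ and $B$ the components of rank $>K$ that meet $H_K$. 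Since at most $K$ components contain a vertex of $H_K$, one has $|A|\geq|B|$; moreover every weight in $A$ is $\geq\mathscr{W}_{\sss(K)}\geq$ every weight in $B$, so $\sum_A\mathscr{W}^3\geq|A|\mathscr{W}_{\sss(K)}^3\geq|B|\mathscr{W}_{\sss(K)}^3\geq\sum_B\mathscr{W}^3$, and the displayed inequality follows by subtracting the common contribution of tail components that miss $H_K$.

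Next I will rewrite the right-hand side by the double-counting used in~\eqref{c3:eq:identity-moment} as
\[
\sum_{\mathscr{C}\cap H_K=\emptyset}\mathscr{W}(\mathscr{C})^3=\ell_n^w\,\E_{\bld{d}'}\!\big[\mathscr{W}(V_n^*)^2\,\ind{\mathscr{C}'(V_n^*)\cap H_K=\emptyset}\big],
\]
and bound the expectation by repeating the path-counting derivation of Lemma~\ref{c3:lem:expt-weight-random}\,(ii), but restricting every intermediate/endpoint vertex summation to $[n]\setminus H_K$. This replaces $\nu_n'$ and the third-moment factor $\sigma_3(n):=\ell_n'^{-1}\sum_{i}d_i'^3$ appearing in~\eqref{c3:W-moment2-ub} by
\[
\widetilde{\nu}_n':=\frac{\sum_{i>K}d_i'(d_i'-1)}{\sum_{i>K}d_i'},\qquad \widetilde{\sigma}_3(n):=\frac{1}{\ell_n'}\sum_{i>K}d_i'^3,
\]
yielding $\E[\mathscr{W}(V_n^*)^2\,\ind{\mathscr{C}'(V_n^*)\cap H_K=\emptyset}]=O(\widetilde{\sigma}_3(n)/(1-\widetilde{\nu}_n')^3)+O((1-\widetilde{\nu}_n')^{-2})$.

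To finish, I will verify the asymptotics of these two quantities. Because $\sum_{i\leq K}d_i'=O(n^\alpha)$ and $\sum_{i\leq K}d_i'(d_i'-1)=O(n^{2\alpha})=O(n^{1-\eta})$, removing $H_K$ perturbs $\nu_n'$ by at most $O(n^{-\eta})$, which is negligible compared to $1-\nu_n'=\lambda_0 n^{-\delta}$ since $\delta<\eta$; hence $1-\widetilde{\nu}_n'\asymp n^{-\delta}$. Assumption~\ref{c3:assumption-w}\,(i) (inherited from Assumption~\ref{c3:assumption1}\,(ii)) gives $\lim_{K\to\infty}\limsup_{n\to\infty}n^{-3\alpha}\sum_{i>K}d_i'^3=0$, so $\widetilde{\sigma}_3(n)=o(n^{3\alpha-1})$ in the iterated limit. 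Multiplying by $\ell_n^w=\Theta(n)$ then yields $\E\big[\sum_{j>K}\mathscr{W}_{\sss(j)}^3\big]=o(n^{3(\alpha+\delta)})$ in the iterated limit, and Markov's inequality concludes. The main obstacle I anticipate is the careful audit of the restricted path-counting: one has to check that the Janson-type bound $\sum_{x_1,\ldots,x_{l-1}\text{ distinct}}\prod_i d_{x_i}'(d_{x_i}'-1)/\ell_n'^{l-1}\leq\nu_n'^{l-1}$ from~\cite{J09b} (used throughout the proof of Lemma~\ref{c3:lem:expt-weight-random}) still holds when the sums are restricted to $x_i\in[n]\setminus H_K$, producing $(\widetilde{\nu}_n')^{l-1}$, and that the lower-order path-structures from Figure~\ref{c3:fig:paths2} contribute only the $O((1-\widetilde{\nu}_n')^{-2})$ error. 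This is a tedious but routine adaptation of the estimates already carried out for~\eqref{c3:W-moment2-ub}.
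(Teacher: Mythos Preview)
Your argument is correct and clean. The paper follows a closely related but not identical route: instead of restricting the path-counting to $[n]\setminus H_K$, it deletes all edges incident to $H_K$ to obtain a new configuration model $\mathcal{G}^{\sss K}$ (conditional on its induced degrees), uses the domination $\sum_{i>K}\mathscr{W}_{\sss(i)}^3\leq\sum_{i\geq 1}(\mathscr{W}_{\sss(i)}^{\sss K})^3$, and then applies the moment bound of Lemma~\ref{c3:lem:expt-weight-random}\,(ii) (via Remark~\ref{c3:rem:ub-susc}) to $\mathcal{G}^{\sss K}$. The mechanism invoked for the decay in $K$ is also different: the paper argues that removing $H_K$ pushes the criticality parameter down by an amount proportional to $\sum_{i\leq K}c_i^2$ and then uses $\bld{c}\notin\ell^2_{\shortarrow}$, whereas you keep $1-\widetilde\nu_n'\asymp n^{-\delta}$ fixed and let the restricted third-moment factor $\widetilde\sigma_3(n)\leq\ell_n'^{-1}\sum_{i>K}(d_i')^3$ carry the decay via the tail condition in Assumption~\ref{c3:assumption1}\,(ii). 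Your route has the advantage of working entirely inside the original configuration model (no conditioning on the random degree sequence of $\mathcal{G}^{\sss K}$), and your combinatorial exchange is in fact a sharper reduction, since $\sum_{\mathscr{C}\cap H_K=\varnothing}\mathscr{W}(\mathscr{C})^3\leq\sum_{i\geq 1}(\mathscr{W}_{\sss(i)}^{\sss K})^3$.

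One small clean-up: when you restrict the path sums to $[n]\setminus H_K$, the matching probabilities from \cite[Lemma~5.1]{J09b} still carry the \emph{full} $\ell_n'$ in the denominator, so the geometric ratio you actually obtain is $\hat\nu:=\ell_n'^{-1}\sum_{i>K}d_i'(d_i'-1)\leq\nu_n'$ rather than your $\widetilde\nu_n'$. This only tightens your bound, and $1/(1-\hat\nu)\leq 1/(1-\nu_n')=O(n^{\delta})$ is immediate, so your conclusion stands unchanged.
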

\begin{proof}
 %Assume that $\sum_{i=1}^Kd_i'$ is even. 
 Let $\mathcal{G}^{\sss K}$ denote the graph obtained by deleting all the edges incident to vertices $\{1,\dots,K\}$. In this proof, a superscript $K$ to any previously defined object will correspond to the object in $\mathcal{G}^K$. Note that $\mathcal{G}^{\sss K}$ is again distributed as a configuration model conditioned on the new degree sequence $\bld{d}^{\sss K}$. 
 %When $\sum_{i=1}^Kd_i'$ is odd, we add one extra half-edge to vertex $K+1$. This does not contribute anything asymptotically. 
 Firstly, for each fixed $K$, there exists a constant $C_1>0$ such that 
  \begin{equation}\label{c3:eqn:nu-K}
  \begin{split}
   \nu^{\sss K}_n&= \frac{\sum_{i\in [n]} d_i^{\sss K}(d^{\sss K}_i-1)}{\sum_{i\in [n]} d_i^{\sss K}}\leq \frac{\sum_{i\in [n]}d_i'(d_i'-1)-\sum_{i=1}^Kd_i'(d_i'-1)}{\ell_n'-2\sum_{i=1}^Kd_i'}\\
   &=\nu_n'-C_1n^{2\alpha -1}\sum_{i\leq K}c_i^2\leq\nu_n'-C_1n^{-\delta}\sum_{i\leq K}c_i^2,
  \end{split}
  \end{equation}where we have used the fact that $\delta<\eta$ in the last step. Since $\nu_n^{\sss K}<1$, we can apply the upper bound in \eqref{c3:W-moment2-ub} (see Remark~\ref{c3:rem:ub-susc}) and it follows that
  \begin{equation}\label{c3:est:3rd-susc-order}
  \begin{split}
   \frac{1}{n}\E\bigg[\sum_{i\geq 1}\big(\mathscr{W}_{\sss (i)}^{\sss K}\big)^3\bigg]&= \frac{\ell_n^w}{n}\expt{\big(\mathscr{W}^{\sss K}(V_n^*)\big)^2}
   \leq C\frac{n^{3(\alpha+\delta)-1}}{1+C_1\sum_{i=1}^Kc_i^2},
   \end{split}
  \end{equation}for some constant $C>0$, and therefore, using the Markov inequality and the fact that $\bld{c}\in \ell^3_{\shortarrow}\setminus \ell^2_{\shortarrow}$, it follows that, for any $\varepsilon>0$,
\begin{equation}\label{c3:eq:delete-K-susc}
\lim_{K\to\infty}\limsup_{n\to\infty}\PR\bigg(\sum_{i\geq 1}\big(\mathscr{W}_{\sss (i)}^{\sss K}\big)^3>\varepsilon n^{3(\alpha+\delta)}\bigg) = 0.
\end{equation}  
    Now, the proof is complete by  observing that $
   \sum_{i>K}(\mathscr{W}_{\sss (i)})^3\leq \sum_{i\geq 1}(\mathscr{W}_{\sss (i)}^{\sss K})^3.$
  \end{proof}
\begin{remark}\label{c3:rem:3rd-suscep}\normalfont Notice that the proof of Proposition~\ref{c3:prop:tail3-bare-subcrit} can be modified to conclude the similar results for  $\sum_{i>K}\big(\mathscr{W}_{\sss (i)}\big)^2|\mathscr{C}_{\sss (i)}'|$ and  $\sum_{i>K}\big(\mathscr{W}_{\sss (i)}\big)^2|\mathscr{C}_{\sss (i)}'|$.
%
%that for any $\varepsilon>0$, the iterated limits $\lim_{K\to\infty}\limsup_{n\to\infty}$ of
%$ \PR\big(\sum_{i>K}\big(\mathscr{W}_{\sss (i)}\big)^2|\mathscr{C}_{\sss (i)}'|>\varepsilon n^{3(\alpha+\delta)}\big)$ and 
%$  \lim_{K\to\infty}\limsup_{n\to\infty}\PR\big(\sum_{i>K}\mathscr{W}_{\sss (i)}|\mathscr{C}_{\sss (i)}'|^2>\varepsilon n^{3(\alpha+\delta)}\big)$ are zero.
Indeed, \eqref{c3:est:3rd-susc-order} can be replaced by observing that the following identities hold:
$$ \E[\sum_{i\geq 1}(\mathscr{W}_{\sss (i)}^{\sss K})^2|\mathscr{C}_{\sss (i)}'^{\sss K}|] = n  \E[\mathscr{W}^{\sss K}(V_n)], \quad \E[\sum_{i\geq 1}\mathscr{W}_{\sss (i)}^{\sss K}(|\mathscr{C}_{\sss (i)}'^{\sss K}|)^2] = \ell_n^w \E[|\mathscr{C}'^{\sss K}(V_n^*)|^2].$$
\end{remark}
\subsection{Barely sub-critical masses}\label{c3:sec:barely-subcritical-mass}
We only prove the asymptotics of $\mathscr{W}_{\sss (i)}$ in Theorem~\ref{c3:thm:susceptibility}.
Then the asymptotics of $s_3^\star$  follow by a direct application of Proposition~\ref{c3:prop:tail3-bare-subcrit}. 
The idea is to obtain the asymptotics for $\mathscr{W}(j)$ for each fixed $j$.
 We will see that Proposition~\ref{c3:prop:tail3-bare-subcrit} implies that $\mathscr{W}_{\sss (j)}=\mathscr{W}(j)$ with high probability. Consider the breadth-first exploration of the graph  starting from vertex $j$ as follows:
 \begin{algo} \label{c3:algo:explor-barely-sub}\normalfont
 The algorithm carries along three disjoint sets of half-edges: \emph{active, neutral, dead}. 
\begin{itemize}
\item[(S0)] At stage $i=0$, the half-edges incident to $j$ are active and all the other half-edges are neutral. Order the initially active half-edges arbitrarily.
\item[(S1)]  At each stage, take the \emph{largest} half-edge $e$ and pair it with another half-edge $f$, chosen uniformly at random from the set of half-edges that are either active or neutral. If $f$ is neutral, then the vertex $v$ to which $f$ is incident, is not discovered yet. 
Declare the half-edges incident to $v$ to be active and \emph{larger} than all other active vertices (choose any order between the half-edges of $v$). 
Declare $e,f$ to be dead.
\item[(S2)] Repeat from (S1) until the set of active half-edges is empty.
\end{itemize}
\end{algo}
\noindent Define  the process $\mathbf{S}_n^j$ by
$S_n^j(l)= S_n^j(l-1)+d_{\sss (l)}'J_l-2,$ and $S_n^j(0)= d_j'$,
where $J_l$ is the indicator that a new vertex is discovered at time $l$ and $d_{\sss (l)}'$ is the degree of the discovered vertex, if any. Let $L:=\inf\{l\geq 1:S_n^j(l)=0\}$. 
 By convention, we assume that $S_n^j(l)=0$ for $l>L$. 
 Let $\mathscr{V}_l$ denote the vertex set discovered upto time $l$ excluding $j$ and $\mathcal{I}_i^n(l):=\ind{i\in\mathscr{V}_l}$. Define $\mathcal{I}_j^n(l)\equiv 0$. 
 Also, let $\mathscr{F}_l$ denote the sigma-field containing all the information upto time $l$ in Algorithm~\ref{c3:algo:explor-barely-sub}. 
Note that 
   \begin{equation}
   \begin{split}
    S_n^j(l)&= d_j'+\sum_{i\in [n]} d_i' \mathcal{I}_i^n(l)-2l=d_j'+\sum_{i\in [n]} d_i' \bigg( \mathcal{I}_i^n(l)-\frac{d_i'}{\ell_n'}l\bigg)+\left( \nu_n'-1\right)l.
    \end{split}
   \end{equation}  
    Consider the re-scaled process $\bar{\mathbf{S}}^j_n$ defined as $\bar{S}^j_n(t)= n^{-\alpha}S_n^j(\floor{tn^{\alpha+\delta} })$. Then, using Assumption~\ref{c3:assumption-w},
   \begin{equation} \label{c3:eqn::scaled_process_j}
    \bar{S}_n^j(t)= c_j+n^{-\alpha} \sum_{i\in [n]}d_i'\bigg( \mathcal{I}_i^n(tn^{\alpha+\delta})-\frac{d_i'}{\ell_n'}tn^{\alpha+\delta} \bigg)- \lambda_0 t +o(1).
   \end{equation}The following three lemmas determine the asymptotics of $\mathscr{W}_{\sss (j)}$ and $s_3^\star$: 
\begin{lemma} \label{c3:lem:exploration::subcritical-j}
 Let $L_j$ be the function with  $L_j(t)=c_j-\lambda_0t$ for $t\in [0,c_j\lambda_0^{-1}]$ and $L_j(t)=0$ for $t>c_j\lambda_0^{-1}$. Then, under \textrm{Assumption~\ref{c3:assumption-w}}, as $n\to\infty$,
  $\bar{\mathbf{S}}^j_n \xrightarrow{\sss\PR} L_j$ with respect to the Skorohod $J_1$ topology. 
 %\todo{define Skorohod $J_1$}
\end{lemma}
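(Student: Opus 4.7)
Since $L_j$ is continuous on $[0,\infty)$, Skorokhod $J_1$ convergence reduces to uniform convergence in probability on each compact $[0,T]$ with $T>c_j/\lambda_0$. In view of \eqref{c3:eqn::scaled_process_j}, the task is to show
\[
R_n^j(t):= n^{-\alpha}\sum_{i\in[n]} d_i'\Big(\mathcal{I}_i^n(l)-\tfrac{d_i'}{\ell_n'}l\Big)\pto 0 \text{ uniformly on }[0,T], \qquad l:=\lfloor tn^{\alpha+\delta}\rfloor,
\]
after which a continuous-mapping / first-passage argument handles the hitting time $L$. I would perform a Doob decomposition of each $\mathcal{I}_i^n$ relative to $(\mathscr{F}_l)_l$: the conditional probability of discovering $i$ at step $k$ is $(1-\mathcal{I}_i^n(k-1))d_i'/(\ell_n'-2k+1)$, so writing the compensator as $A_i(l)$ gives $\mathcal{I}_i^n(l)=A_i(l)+\tilde{\mathcal{I}}_i^n(l)$ with $\tilde{\mathcal{I}}_i^n$ a martingale. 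This splits $R_n^j$ into a martingale piece and a drift correction $\mathrm{Drift}_n^j(t)=n^{-\alpha}\sum_i d_i'(A_i(l)-d_i' l/\ell_n')$.

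The martingale part is handled via Doob's $L^2$ maximal inequality. By \textrm{Assumption~\ref{c3:assumption-w}}, the conditional variance of the $k$-th step is bounded by $\sum_i d_i'^3/\ell_n'=O(n^{3\alpha-1})$, so the maximal square expectation over $l\leq Tn^{\alpha+\delta}$ is $O(n^{-2\alpha}\cdot n^{3\alpha-1}\cdot n^{\alpha+\delta})=O(n^{2\alpha+\delta-1})=o(1)$, using $\delta<\eta=1-2\alpha$. For the drift correction, I would write $A_i(l)-d_i'l/\ell_n'$ as the sum of (i) a deterministic $O(d_i' l^2/\ell_n'^2)$ piece coming from $1/(\ell_n'-2k+1)-1/\ell_n'$, and (ii) an already-discovered piece $d_i'\sum_{k\leq l}\mathcal{I}_i^n(k-1)/(\ell_n'-2k+1)$. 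Summed over $i$, part (i) contributes $O(l^2/\ell_n')=O(n^{2\alpha+2\delta-1})$, which after dividing by $n^\alpha$ is $o(1)$ because $\alpha+2\delta<1$ for $\tau\in(3,4)$ (a consequence of $\delta<\eta$ and $3\alpha>1$).

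The already-discovered piece is bounded by $(l/\ell_n')\sum_{i\in\mathscr{V}_l}d_i'^2$ and is the main technical obstacle. My plan is to truncate at a threshold $K$: the deterministic identity $\sum_{i\in\mathscr{V}_l}d_i'=S_n^j(l)+2l-d_j'=O(n^{\alpha+\delta})$ combined with $d_{K+1}'\leq(c_{K+1}+o(1))n^\alpha$ gives $\sum_{i\in\mathscr{V}_l,\,i>K}d_i'^2\leq O(c_{K+1}n^{2\alpha+\delta})$, contributing $O(c_{K+1}n^{\alpha+2\delta-1})$ after normalization. For each of the top $K$ vertices $i\neq j$, a path-counting estimate analogous to the proof of Lemma~\ref{c3:lem:expt-weight-random} shows $\PR(i\in\mathscr{C}_j)=O(d_i'd_j'/(\ell_n'(1-\nu_n')))=O(n^{2\alpha+\delta-1})\to 0$, so with high probability none of them contribute; letting $K\to\infty$ after $n\to\infty$ and using $c_{K+1}\to 0$ closes the bound. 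Having shown uniform convergence on $[0,T_0]$ for any $T_0<c_j/\lambda_0$, strict monotonicity of $L_j$ implies $L/n^{\alpha+\delta}\pto c_j/\lambda_0$ by first-passage continuity, and the convention $S_n^j\equiv 0$ beyond $L$ matches $L_j\equiv 0$, extending convergence to $[0,T]$. The crucial point is controlling the contamination of $\mathscr{C}_j$ by high-degree vertices, where the strict subcriticality $\delta<\eta$ is used essentially to make the top-$K$ path estimate vanish.
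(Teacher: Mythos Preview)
Your Doob--decomposition strategy is sound in outline, and the martingale piece and the deterministic drift piece (i) are handled correctly. The gap is in piece (ii), the ``already-discovered'' contribution. Your arithmetic is off by a factor $n^\alpha$: with $\sum_{i\in\mathscr{V}_l,\,i>K}d_i'^2\le d_{K+1}'\sum_{i\in\mathscr{V}_l}d_i'=O(c_{K+1}n^{2\alpha+\delta})$, the normalized bound is
\[
n^{-\alpha}\cdot\frac{l}{\ell_n'}\cdot O(c_{K+1}n^{2\alpha+\delta})
= O\bigl(c_{K+1}\,n^{2\alpha+2\delta-1}\bigr),
\]
not $O(c_{K+1}n^{\alpha+2\delta-1})$. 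The corrected exponent $2\alpha+2\delta-1$ is negative only when $\delta<\eta/2$; for $\delta\in[\eta/2,\eta)$ (which is allowed by Assumption~\ref{c3:assumption-w}, and is in fact the range used later in Proposition~\ref{c3:thm:comp-functional-original}) the bound diverges for each fixed $K$, so sending $K\to\infty$ afterwards cannot rescue it.

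The paper avoids this difficulty altogether by not performing a full Doob decomposition. Instead it replaces the centering $d_i'/\ell_n'$ by $d_i'/\ell_n'(T)$ with $\ell_n'(T)=\ell_n'-2Tn^{\alpha+\delta}-1$, making the whole sum a \emph{supermartingale}; the supremum is then controlled via a supermartingale maximal inequality by bounding the mean (Taylor expansion) and the variance, the latter using the key observation that the indicators $(\mathcal{I}_i^n(l))_{i}$ are \emph{negatively correlated}. This yields $\mathrm{Var}(M_n'(l))\le (l/\ell_n'(T))\sum_i d_i'^3=o(n^{2\alpha})$ for all $\delta<\eta$, with no truncation or path-counting needed.

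Your approach can be repaired without invoking negative correlation: since piece (ii) is monotone in $l$, it suffices to bound its expectation at the endpoint. Using $\E[\mathcal{I}_i^n(k)]\le kd_i'/\ell_n'(T)$ gives
\[
n^{-\alpha}\cdot\frac{1}{\ell_n'(T)}\sum_i d_i'^2\sum_{k\le l}\E[\mathcal{I}_i^n(k-1)]
\;\le\; n^{-\alpha}\cdot\frac{l^2}{2\ell_n'(T)^2}\sum_i d_i'^3
= O\bigl(n^{4\alpha+2\delta-2}\bigr)=o(1),
\]
valid for all $\delta<\eta$. This replaces your truncation-plus-path-counting detour and closes the gap.
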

\begin{lemma}\label{c3:lem:weight-prop-l} For any $T>0$,
$\sup_{l\leq Tn^{\alpha+\delta}}\Big|\sum_{i\in [n]}w_i\mathcal{I}_i^n(l)- \frac{\sum_{i\in [n]}d_i'w_i}{\sum_{i\in [n]}d_i'}l\Big|=\oP(n^{\alpha+\delta}).$
\end{lemma}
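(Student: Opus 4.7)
}
The approach is a Doob--Meyer decomposition of $W_n(l) := \sum_{i\in[n]} w_i \mathcal{I}_i^n(l)$. At step $s$ of Algorithm~\ref{c3:algo:explor-barely-sub}, conditionally on $\mathscr{F}_{s-1}$ the selected active half-edge is paired uniformly with one of the remaining $\ell_n' - 2s + 1$ half-edges, so a new vertex $i$ is discovered with probability $d_i'(1-\mathcal{I}_i^n(s-1))/(\ell_n' - 2s + 1)$. This yields the predictable compensator
\begin{equation*}
A_n(l) := \sum_{s=1}^l \frac{\sum_{i\in [n]} w_i d_i'(1 - \mathcal{I}_i^n(s-1))}{\ell_n' - 2s + 1},
\end{equation*}
and $M_n(l) := W_n(l) - A_n(l)$ is an $(\mathscr{F}_l)$-martingale. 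The plan is to control $M_n$ via a second-moment bound and to expand $A_n$ into its linear-in-$l$ leading term plus negligible corrections.

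For the martingale piece, the conditional second moment of the jump $w_{v_s}J_s$ is at most $\sum_i w_i^2 d_i' / (\ell_n' - 2s + 1) \leq 2\sum_i w_i^2 d_i'/\ell_n' = O(n^{3\alpha - 1})$ uniformly in $s \leq Tn^{\alpha+\delta}$, using $\sum_i w_i^2 d_i' = O(n^{3\alpha})$ from Assumption~\ref{c3:assumption-w}(ii). Summing over $s \leq Tn^{\alpha+\delta}$ and invoking Doob's $L^2$ inequality yield $\E[\sup_{l \leq Tn^{\alpha+\delta}} M_n(l)^2] = O(n^{4\alpha + \delta - 1})$. Since $\tau > 3$ gives $2\alpha < 1$, the inequality $4\alpha + \delta - 1 < 2(\alpha + \delta)$ holds, whence $\sup_l |M_n(l)| = \oP(n^{\alpha + \delta})$.

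For the compensator, split $A_n = I_n - II_n$, where $I_n(l) := (\sum_i w_i d_i')\sum_{s=1}^l(\ell_n' - 2s+1)^{-1}$ and $II_n(l) := \sum_{s=1}^l D_n(s-1)/(\ell_n' - 2s+1)$ with $D_n(s) := \sum_i w_i d_i' \mathcal{I}_i^n(s)$. Expanding $(\ell_n' - 2s+1)^{-1} = \ell_n'^{-1} + O(s/\ell_n'^2)$ gives $I_n(l) = l\cdot \sum_i d_i' w_i/\ell_n' + O(n^{2\alpha + 2\delta - 1})$ uniformly, which matches the target up to $\oP(n^{\alpha+\delta})$ since $\alpha + \delta < \alpha + \eta = (\tau-2)/(\tau-1) < 1$. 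The second piece $II_n$ encodes the bias from already-discovered vertices; the same exploration identity applied to $D_n$ (which is itself of the form $\sum_s w_{v_s}d_{v_s}'J_s$) yields $\E[D_n(l)] \leq 2 l \sum_i w_i (d_i')^2/\ell_n' = O(n^{4\alpha + \delta - 1})$, using $\sum_i w_i (d_i')^2 = O(n^{3\alpha})$ from Assumption~\ref{c3:assumption-w}(ii). Consequently $\E[II_n(l)] = O(n^{5\alpha + 2\delta - 2})$, and by monotonicity of $II_n$ in $l$ and Markov's inequality, $\sup_{l\leq Tn^{\alpha+\delta}} II_n(l) = \oP(n^{\alpha+\delta})$ provided $5\alpha + 2\delta - 2 < \alpha + \delta$, i.e.\ $4\alpha + \delta < 2$. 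Combined with $\delta < \eta$, this inequality reduces to $(\tau+1)/(\tau-1) < 2$, i.e.\ $\tau > 3$.

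The main technical point is verifying that the two exponent budgets---$2\alpha < 1 + \delta$ for the martingale fluctuation and $4\alpha + \delta < 2$ for the discovered-vertex correction in the compensator---both fit inside the regime $\tau \in (3,4)$, $\delta < \eta$. Both reduce to elementary inequalities once the moment estimates $\sum_i w_i^2 d_i',\, \sum_i w_i (d_i')^2 = O(n^{3\alpha})$ of Assumption~\ref{c3:assumption-w}(ii) are in hand, and combining the martingale and compensator bounds delivers the claim.
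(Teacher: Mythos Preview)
Your argument is correct and gives a clean alternative to the paper's proof. The paper does not carry out a full Doob--Meyer decomposition; instead it observes directly that $\tilde W(l):=W_n(l)-(\sum_i d_i'w_i/\ell_n')\,l$ is a supermartingale, bounds $|\E[\tilde W(l)]|$ via the pointwise estimate $\PR(\mathcal I_i^n(l)=0)\le (1-d_i'/\ell_n')^l$, bounds $\var{\tilde W(l)}$ using the \emph{negative correlation} of the indicators $(\mathcal I_i^n(l))_{i\in[n]}$ together with $\var{\mathcal I_i^n(l)}\le ld_i'/\ell_n'(T)$, and then applies the supermartingale maximal inequality $\varepsilon\,\PR(\sup_{s\le t}|M(s)|>3\varepsilon)\le 3(|\E M(t)|+\sqrt{\var{M(t)}})$ already used in the proof of Lemma~\ref{c3:lem:exploration::subcritical-j}.

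The main conceptual difference is how the sup is controlled: you bound the exact martingale via its predictable quadratic variation and Doob's $L^2$ inequality, which bypasses the negative-correlation step entirely; the paper packages both the drift and fluctuation errors into a single supermartingale and reuses the machinery (negative correlation, the supermartingale inequality) developed for Lemma~\ref{c3:lem:exploration::subcritical-j}. Your exponent checks $2\alpha<1+\delta$ and $4\alpha+\delta<2$ are correct and use exactly the same inputs from Assumption~\ref{c3:assumption-w}(ii), namely $\sum_i w_i^2 d_i'=O(n^{3\alpha})$ and $\sum_i w_i (d_i')^2=O(n^{3\alpha})$. One minor point: the compensator you write down omits the indicator $\ind{s\le L}$ coming from the stopping of Algorithm~\ref{c3:algo:explor-barely-sub}; this is harmless for the upper bounds you use (dropping the indicator only increases $A_n$, $I_n$, $II_n$), and in any case the lemma is only applied at $l$ of order $L=\Theta_{\PR}(n^{\alpha+\delta})$ by Lemma~\ref{c3:lem:exploration::subcritical-j}.
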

\begin{lemma}\label{c3:lem:hdeg-hweight}
 Fix any $j\geq 1$. Then with high probability $\mathscr{W}(j) = \mathscr{W}_{\sss (j)}$.
\end{lemma}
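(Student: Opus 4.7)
\medskip

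\noindent\textit{Proof proposal.} The plan is first to pin down the asymptotic weight of the component attached to each high-degree vertex, and then to invoke Proposition~\ref{c3:prop:tail3-bare-subcrit} to rule out any competitor for the $j$-th place.

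First, I will combine Lemmas~\ref{c3:lem:exploration::subcritical-j} and~\ref{c3:lem:weight-prop-l} to show that, for each fixed $i\ge 1$,
\[
n^{-(\alpha+\delta)}\mathscr{W}(i)\ \pto\ \frac{c_i\,\mu_{d,w}}{\mu_d\lambda_0}.
\]
Indeed, Lemma~\ref{c3:lem:exploration::subcritical-j} gives $\bar{\mathbf{S}}_n^i\pto L_i$ in the $J_1$ topology, and since $L_i$ crosses zero transversally at $t=c_i/\lambda_0$, the termination time $L$ of Algorithm~\ref{c3:algo:explor-barely-sub} satisfies $n^{-(\alpha+\delta)}L\pto c_i/\lambda_0$. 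Evaluating Lemma~\ref{c3:lem:weight-prop-l} at $l=L$ (which is legitimate after restricting to the high-probability event $\{L\le Tn^{\alpha+\delta}\}$ for large $T$) yields
\[
\mathscr{W}(i)=w_i+\sum_{k\in[n]}w_k\mathcal{I}_k^n(L)=\frac{\mu_{d,w}}{\mu_d}L+\oP(n^{\alpha+\delta}),
\]
and $w_i=O(n^\alpha)=o(n^{\alpha+\delta})$ follows from Assumption~\ref{c3:assumption-w}(ii). Because the sequence $(c_i\mu_{d,w}/(\mu_d\lambda_0))_{i\ge 1}$ is strictly decreasing, the components $\mathscr{C}'(1),\dots,\mathscr{C}'(K)$ must be pairwise distinct with high probability for any fixed $K$, and their weights are strictly ordered.

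Second, I will use the deletion graph $\mathcal{G}^K$ from the proof of Proposition~\ref{c3:prop:tail3-bare-subcrit} to control every other component. Any component of $\rCM_n(\bld{d}')$ disjoint from $\{1,\dots,K\}$ is automatically a component of $\mathcal{G}^K$, and the bound underlying \eqref{c3:est:3rd-susc-order} gives
\[
\E\bigg[\max_{i\ge 1}\big(\mathscr{W}_{\sss(i)}^{\sss K}\big)^3\bigg]\le \E\bigg[\sum_{i\ge 1}\big(\mathscr{W}_{\sss(i)}^{\sss K}\big)^3\bigg]\le \frac{C\,n^{3(\alpha+\delta)}}{1+C_1\sum_{i\le K}c_i^2}.
\]
Since $\bld{c}\notin\ell^2_{\shortarrow}$, a Markov estimate makes $n^{-(\alpha+\delta)}\max_{i\ge 1}\mathscr{W}_{\sss(i)}^{\sss K}$ as small as desired in probability by choosing $K$ sufficiently large.

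Finally, fix $j$ and pick $K>j$ so that $\max_{i\ge 1}\mathscr{W}_{\sss(i)}^{\sss K}<\tfrac12\mathscr{W}(j)$ with high probability. On a high-probability event the components $\mathscr{C}'(1),\dots,\mathscr{C}'(K)$ are distinct with strictly decreasing weights; exactly the $j-1$ of them $\mathscr{C}'(1),\dots,\mathscr{C}'(j-1)$ are heavier than $\mathscr{W}(j)$; and every other component of $\rCM_n(\bld{d}')$, being disjoint from $\{1,\dots,K\}$, has weight strictly less than $\mathscr{W}(j)$. Hence $\mathscr{W}(j)$ is the $j$-th largest component weight, i.e.\ $\mathscr{W}(j)=\mathscr{W}_{\sss(j)}$. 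The main obstacle is the first step: Lemma~\ref{c3:lem:weight-prop-l} controls a supremum only over deterministic times, and the chaining with the convergence of the random termination time $L$ must be done carefully; once that is in place, the remainder is bookkeeping built on top of the susceptibility bound of Proposition~\ref{c3:prop:tail3-bare-subcrit}.
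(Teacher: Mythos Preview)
Your proposal is correct and follows essentially the same route as the paper's proof. Both arguments rest on the same two ingredients: (i) the convergence $n^{-(\alpha+\delta)}\mathscr{W}(i)\pto c_i\mu_{d,w}/(\mu_d\lambda_0)$ for each fixed $i$, obtained by combining Lemmas~\ref{c3:lem:exploration::subcritical-j} and~\ref{c3:lem:weight-prop-l} exactly as you do (the paper records this as \eqref{c3:eq:weight-j-subcrit-1}--\eqref{c3:eq:weight-j-subcrit-2}); and (ii) the deletion-graph bound \eqref{c3:eq:delete-K-susc} behind Proposition~\ref{c3:prop:tail3-bare-subcrit} to make all components disjoint from $\{1,\dots,K\}$ negligible. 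The paper packages the conclusion via the events $\mathrm{A}_i$, $\mathrm{A}_{>K}$ and a threshold event $\mathrm{B}=\{\mathscr{W}(1)>rn^{\alpha+\delta}\}$, whereas you argue directly that the first $K$ components are distinct with strictly ordered weights and that every other component is too small to compete; these are equivalent bookkeeping choices, and your concern about transferring the deterministic-time supremum in Lemma~\ref{c3:lem:weight-prop-l} to the random termination time is handled correctly by restricting to $\{L\le Tn^{\alpha+\delta}\}$.
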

%Let us first obtain the asymptotics of $\mathscr{W}_{\sss (j)}$.
\begin{proof}[Asymptotics of $\mathscr{W}_{\sss (j)}$]Note that, since the exploration process explores one edge at each time, Lemma~\ref{c3:lem:exploration::subcritical-j} implies that (see e.g. \cite[Theorem 13.6.4]{W02})
\begin{equation}\label{c3:eq:weight-j-subcrit-1}
 \frac{1}{2n^{\alpha+\delta}}\sum_{k\in \mathscr{C}'(j)}d_k'\pto \frac{c_j}{\lambda_0}.
\end{equation}Moreover, Lemma~\ref{c3:lem:weight-prop-l} yields that 
\begin{eq}\label{c3:eq:weight-j-subcrit-2}
\frac{1}{n^{\alpha+\delta}}\mathscr{W}(j)&= \frac{1}{n^{\alpha+\delta}}\sum_{k\in \mathscr{C}'(j)}w_k \\
&= \frac{\sum_{i\in [n]}d_i'w_i}{\ell_n'n^{\alpha+\delta}}\frac{1}{2}\sum_{k\in \mathscr{C}'(j)}d_k'+\oP(1) \pto \frac{ \mu_{d,w}}{\mu_d \lambda_0}c_j.
\end{eq}Now the asymptotics of $\mathscr{W}_{\sss (j)}$ in Theorem~\ref{c3:thm:susceptibility} follows by an application of Lemma~\ref{c3:lem:hdeg-hweight} under Assumption~\ref{c3:assumption-w}.
\end{proof}
Next we provide a proof for Lemma~\ref{c3:lem:hdeg-hweight} subject to Lemmas~\ref{c3:lem:exploration::subcritical-j}, ~\ref{c3:lem:weight-prop-l}. 
The proofs of Lemmas~\ref{c3:lem:exploration::subcritical-j} and~\ref{c3:lem:weight-prop-l} are similar to \cite[Section 4]{DHLS16} and are provided in Appendix~\ref{c3:sec:appendix-barely-subcrit}.
\begin{proof}[Proof of Lemma~\ref{c3:lem:hdeg-hweight}]
To simplify the writing, we only give a proof for $j=1$, the general case follows similarly. 
Define the event $\mathrm{A}_i:= \{\mathscr{W}(i)>\mathscr{W}(1), i\notin \cup_{k\leq i-1}\mathscr{C}'(k)\}$ and let $\mathrm{A}_{\sss >K}=\cup_{i>K}\mathrm{A}_i$.
Fix $r$ such that $c_2<r\lambda_0\mu_d/\mu_{d,w}<c_1$ and define the event $\mathrm{B}=\{\mathscr{W}(1)>rn^{\alpha+\delta}\}$. 
Then, for any $K\geq 2$,
\begin{equation}\label{c3:eq:weight-large-split}
\prob{\mathscr{W}(1) \neq \mathscr{W}_{\sss (1)}}
%= \prob{\exists\ i\geq 2: \mathscr{W}(i)>\mathscr{W}(1)} 
\leq \sum_{i=2}^K\prob{\mathrm{A}_i\cap \mathrm{B}} + \prob{\mathrm{A}_{\sss >K}\cap \mathrm{B}} +\prob{\mathrm{B}^c}.
\end{equation}
Firstly, notice that due to the choice of $r$, \eqref{c3:eq:weight-j-subcrit-1} and \eqref{c3:eq:weight-j-subcrit-2} implies that $\PR(\mathrm{B}^c)\to 0$. 
Moreover, for each fixed $i\geq 2$, $\prob{\mathrm{A}_i\cap \mathrm{B}}\leq \PR(\mathscr{W}(i)>rn^{\alpha+\delta})\to 0$.
Further, recall \eqref{c3:eq:delete-K-susc} and the relevant notation. 
Note that $$
\prob{\mathrm{A}_{\sss >K}\cap \mathrm{B}}\leq \PR\bigg(\sum_{i\geq 1}\big(\mathscr{W}_{\sss (i)}^{\sss K}\big)^3>r^3n^{3(\alpha+\delta)}\bigg).$$
Thus, the proof follows from \eqref{c3:eq:weight-large-split} by taking first the limit as $n\to\infty$, and then as $K\to\infty$ and using \eqref{c3:eq:delete-K-susc}.
\end{proof}
\subsection{Mesoscopic typical distances}
Recall the definition of $\mathcal{D}_n^\star$ from \eqref{c3:defn:susc-dist}.
 In this section, we obtain the asymtotics of $\mathcal{D}_n^\star$ in Theorem~\ref{c3:thm:susceptibility} using a similar analysis as in Section~\ref{c3:sec:s2}. 
Again the proof involves the Chebyshev inequality where the moments are estimated using path counting. 
We sketch the computation of $\E[\mathcal{D}_n^\star]$.
%
%\begin{equation}\label{c3:eq:expt-dis-sus}
% n^{-2\delta}\expt{\mathcal{D}_n^\star} \to \frac{\mu_{d,w}^2}{\mu_d}.
%\end{equation}
Recall the notations $U_n^*$, $V_n^*$, 
%are two independent vertices chosen in a size-biased manner with sizes being $(w_i)_{i\in [n]}$, independently of the graph. Further, 
$\mathcal{A}(k,l)$ %denotes the event that there is a path from $V_n^*$ to $k$ of length $l$, 
and $\mathcal{A}'(k,l)$ %denotes the event that there exists two paths from $V_n^*$ to $k$, one of length $l$ and another of length at most $l$
from Section~\ref{c3:sec:s2}. 
Note that
\begin{equation}
\begin{split}
 \expt{\mathcal{D}_n^\star} &= \frac{1}{n}\E\bigg[\sum_{i,k\in [n]}w_iw_k\dst(i,k)\ind{k\in\mathscr{C}'(i)}\bigg]\\
 &\leq  \frac{\ell_n^w}{n}\sum_{k\in [n]}w_k\sum_{l\geq 1}l\prob{\mathcal{A}(k,l)}=   \frac{\ell_n^w}{n} \sum_{l\geq 1} l\sum_{k\in [n]}w_k\prob{\mathcal{A}(k,l)},
\end{split}
\end{equation}
and
\begin{equation}
\expt{\mathcal{D}_n^\star} \geq \frac{\ell_n^w}{n} \sum_{l\geq 1} l \bigg(\sum_{k\in [n]}w_k\big(\prob{\mathcal{A}(k,l)}-\prob{\mathcal{A}'(k,l)}\big)\bigg).
\end{equation}
Now compare the terms above  to \eqref{c3:sus1-expr-1}, \eqref{c3:sus1-expr-2}. The only difference is that there is an extra multiplicative $l$ here which amounts to differentiating  with respect to $\nu_n'$ in the obtained bounds. Thus, we can repeat an argument identical to \eqref{c3:sus-simpl-1}, \eqref{c3:susc-lb2} to obtain that
\begin{equation}
 \expt{\mathcal{D}^\star_n} = \frac{\expt{W_n}\expt{D_n^*}\expt{D_n'W_n}}{\expt{D_n'}(1-\nu_n')^2}(1+o(1))= \frac{(\expt{D_n'W_n})^2}{\expt{D_n'}(1-\nu_n')^2}(1+o(1)).
\end{equation}%which completes the proof of \eqref{c3:eq:expt-dis-sus}.
The variance terms can also be computed similarly. Due to the presence of a factor $l^2$ in the second moment, we have to differentiate the upper-bounds twice with respect to $\nu_n'$. Again, the identical arguments as \eqref{c3:weight-conditioned-comp} can be applied to show that $\var{\mathcal{D}_n^{\star}}=o(n^{4\delta})$. 
This completes the proof of the asymptotics of $\mathcal{D}_n^{\star}$.
\subsection{Maximum diameter: Proof of Theorem~\ref{c3:thm:diam-max}}
Firstly, let us investigate the diameter of $\mathscr{C}'(i)$. 
Notice that, if $\Delta(\mathscr{C}'(i))>6n^{\delta}\log(n)$, then there exists at least one path of length at least $3n^{\delta}\log(n)$ starting from $i$. 
Now, the expected number of such paths is at most \linebreak $\sum_{l=3n^{\delta}\log(n)}^n \expt{P_l}$, where $P_l$ denotes the number of paths of length $l$, starting from vertex $i$ and we have used the fact that a vertex disjoint path can be of size at most $n$. Again, the path-counting technique yields $\expt{P_l}\leq d_i'\nu_n'^{l-1}$. Thus, for some constant $C>0$,
%\begin{equation}
% \begin{split}
\begin{eq}  
  \prob{\Delta(\mathscr{C}'(i))>6n^{\delta}\log(n)}&\leq \sum_{l=3n^{\delta}\log(n)}^n\expt{P_l}\\
  &\leq C d_i' n (\nu_n')^{3n^{\delta}\log(n)}\leq C\frac{d_i'}{n^2},
  \end{eq}
% \end{split}
%\end{equation}
where in the last step we have used \eqref{c3:defn:barely-subcrit}.
Thus, the proof of Theorem~\ref{c3:thm:diam-max} follows using the union bound.
%
%by noting that
%\begin{equation}
% \begin{split}
%  \prob{\exists i\geq 1: \Delta(\mathscr{C}'_{\sss (i)})>6n^{\delta}\log(n)}\leq \sum_{i\in [n]}\prob{\Delta(\mathscr{C}'(i))>6n^{\delta}\log(n)}%\leq C\frac{\sum_{i\in [n]}d_i'}{n^2}
%  =o(1).
% \end{split}
%\end{equation}
\section{Metric space limit for percolation clusters}
\label{c3:sec:proof-metric-mc}
Finally, the aim of this section is to complete the proof of Theorem~\ref{c3:thm:main}. 
We start by defining the multiplicative coalescent process \cite{A97,AL98} that will play a pivotal role in this section:
\begin{defn}[Multiplicative coalescent]\label{c3:defn:mul-coalescent} \normalfont
Consider a (possibly infinite) collection of particles and let $\mathbf{X}(s)=(X_i(s))_{i\geq 1}$ denote the collection of masses of those particles at time $s$. 
Thus the $i$-th particle has mass $X_i(s)$ at time~$s$. 
The evolution of the system takes place according to the following rule at time $s$: At rate $X_i(s)X_j(s)$,  particles $i$ and $j$ merge into a new particle of mass $X_i(s)+X_j(s)$.
%This process has been extensively studied in \cite{A97,AL98}. In particular, Aldous~\cite[Proposition 5]{A97} showed that this is a Feller process.
\end{defn}
Before going into the details, let us describe the general idea and the organization of this section. 
Extending the approach of \cite{BBSX14}, we  consider a dynamically growing process of graphs that approximates the percolation clusters in the critical window (see Chapter~\ref{chap:thirdmoment}). 
Now, the graphs generated by this dynamic evolution satisfy: 
(i) In the critical window, the components merge \emph{approximately} as the multiplicative coalescent where the \emph{mass} of each component is approximately proportional to the component size; 
(ii) the masses of the barely  sub-critical clusters satisfy \emph{nice} properties due to Theorem~\ref{c3:thm:susceptibility}. 
In Section~\ref{c3:sec:entrance-boundary-open-he}, we derive the required properties in the barely subcritical regime for the dynamically growing graph process using Theorems~\ref{c3:thm:susceptibility}~and~\ref{c3:thm:diam-max}.
In Section~\ref{c3:sec:coupling-mul-coal}, we modify the dynamic process such that the components merge exactly as multiplicative coalescent.
Since the exact multiplicative coalescent corresponds to the rank-one inhomogeneous case, thinking of these barely subcritical clusters as blobs, we use the universality theorem (Theorem~\ref{c3:thm:univesalty}) in Section~\ref{c3:sec:modified-graph} to determine the metric space limits of the largest components of the modified graph (Theorem~\ref{c3:thm:mspace-limit-modified}). 
Section~\ref{c3:sec:struct-compare} is devoted to the structural comparison of the modified graph and the original graph, and we finally complete the proof of Theorems~\ref{c3:thm:main}. 
   in Section~\ref{c3:sec:proof-thm1}.
   %~and~\ref{c3:sec:proof-thm2}, respectively.
%   The proof of Theorem~\ref{c3:thm:main-simple} uses similar arguments as \cite[Section 7]{DHLS16} and the 
  The proof of Theorem~\ref{c3:thm:main-simple} is given in Section~\ref{c3:sec:simple}.

%\subsection{The dynamic construction and its properties}\label{c3:sec:dyn-cons}
\begin{algo}[The dynamic construction]\label{c3:algo:dyn-cons} \normalfont Let $\mathcal{G}_n(t)$ be the graph obtained up to  time $t$ by the following dynamic construction:
\begin{itemize}
 \item[(S0)] A half-edge can either be alive or dead. Initially, all the half-edges are alive. All the half-edges have an independent unit rate  exponential clock attached to them.
 \item[(S1)] Whenever a clock rings, we take the corresponding half-edge, kill it and pair it with a half-edge chosen uniformly at random among the alive half-edges. The paired half-edge is also killed and the exponential clocks associated with killed half-edges are discarded.
\end{itemize} 
\end{algo}  Since a half-edge is paired with another unpaired half-edge, chosen uniformly at random from the set of all unpaired half-edges, the final graph $\mathcal{G}_n(\infty)$ is distributed as $\mathrm{CM}_n(\boldsymbol{d})$. 
%Further, conditioned of the number of edges in $\mathcal{G}_n(t)$ being $k$, the graph $\mathcal{G}_n(t)$ can be constructed as follows:
%\begin{equation}
%  \text{ Choose }2k \text{ half-edges uniformly at random from }\ell_n = \sum_{i\in [n]}d_i \text{  half-edges and create a perfect matching on them to crate the edges of }\mathcal{G}_n(t). 
%\end{equation}
Define
\begin{equation}\label{c3:eq:def-crit-time}
 t_c(\lambda)=\frac{1}{2}\log\bigg(\frac{\nu_n}{\nu_n-1}\bigg)+\frac{\nu_n}{2(\nu_n-1)}\frac{\lambda}{n^\eta}.
\end{equation}
We denote the $i$-th largest component of $\mathcal{G}_n(t)$ by $\mathscr{C}_{\sss (i)}(t)$.
In the subsequent part of this chapter, we will derive the metric space limit of $(\mathscr{C}_{\sss (i)}(t_c(\lambda)))_{i\geq 1}$. 
The following lemma (see Proposition~\ref{c2prop:coupling-whp}) enables us to switch to the conclusions for the largest clusters of $\rCM_n(\bld{d},p_n(\lambda))$:
\begin{lemma}[{\cite[Proposition 24]{DHLS16}}]\label{c3:lem:coupling-whp} There exists $\varepsilon_{n}=o(n^{-\eta})$ and a coupling such that, with high probability,
\begin{gather*}
 \mathcal{G}_n(t_c(\lambda)-\varepsilon_{n})\subset \mathrm{CM}_n(\bld{d},p_n(\lambda)) \subset\mathcal{G}_n(t_c(\lambda)+\varepsilon_n),\\
  \mathrm{CM}_n(\bld{d},p_n(\lambda)-\varepsilon_n) \subset \mathcal{G}_n(t_c(\lambda)\subset \mathrm{CM}_n(\bld{d},p_n(\lambda)+\varepsilon_n).
  \end{gather*}
\end{lemma}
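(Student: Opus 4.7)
The plan is to construct a single probability space carrying both the family $(\mathcal{G}_n(t))_{t\geq 0}$ and the family $(\mathrm{CM}_n(\bld{d},p))_{p\in[0,1]}$, on which all four inclusions of the lemma reduce to one-dimensional concentration statements. The starting observation is deterministic: in Algorithm~\ref{c3:algo:dyn-cons} an alive half-edge dies at total rate $2$ -- rate $1$ from its own clock and, in expectation, rate $1$ from being the uniform partner of some other ringing clock -- so $\E[N(t)] = \ell_n e^{-2t}$ and $\E[E(t)] = \ell_n(1-e^{-2t})/2$, where $E(t)$ denotes the number of edges formed by time $t$. Matching this against the mean edge count $p\ell_n/2$ of $\mathrm{CM}_n(\bld{d},p)$ identifies the bijection $p = 1-e^{-2t}$, and a Taylor expansion around $p = 1/\nu_n$ shows that $t_c(\lambda)$ as defined in \eqref{c3:eq:def-crit-time} equals $-\tfrac{1}{2}\log(1-p_n(\lambda))$ up to an $o(n^{-\eta})$ remainder, fixing the correct ``time origin.''

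The coupling I would use exploits the exchangeability of the dynamic construction: conditional on $\mathcal{G}_n(\infty) = \mathrm{CM}_n(\bld{d})$, the order in which the $E := \ell_n/2$ edges form is a uniformly random permutation of the edge set, by symmetry of Algorithm~\ref{c3:algo:dyn-cons}. Writing $R_e$ for the rank of edge $e$, one has $\mathcal{G}_n(t) = \{e : R_e \leq E(t)\}$. On the same probability space sample an independent i.i.d.\ sequence $U_1,\dots,U_E \sim \mathrm{Unif}[0,1]$, let $K(p) := |\{j \leq E : U_j \leq p\}| \sim \mathrm{Bin}(E,p)$, and define
\[
\mathrm{CM}_n(\bld{d},p) := \{e : R_e \leq K(p)\}.
\]
Exchangeability of $(R_e)$ together with the identity $\E[\binom{K(p)}{k}/\binom{E}{k}] = p^k$ yields the exact percolation law for every $p$, while monotonicity in both $p$ and $t$ is automatic. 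All four inclusions in the lemma then reduce to comparisons between $E(t_c(\lambda) \pm \varepsilon_n)$ and $K(p_n(\lambda) \pm \varepsilon_n)$.

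The analytical heart is a two-sided concentration bound for these scalar quantities. A Doob--martingale or Azuma estimate for the pure-death process $N(\cdot)$ gives $E(t) = \ell_n(1-e^{-2t})/2 + O_\PR(\sqrt{n})$, while the standard CLT gives $K(p) = Ep + O_\PR(\sqrt{n})$. The deterministic gap $\E[E(t_c(\lambda) - \varepsilon_n)] - \E[K(p_n(\lambda))]$ is of order $-\ell_n(1-p_n(\lambda))\varepsilon_n \asymp -n\varepsilon_n$, which strictly dominates the $\sqrt{n}$-fluctuations as soon as $\varepsilon_n \gg n^{-1/2}$. Since $\eta < 1/2$ throughout $\tau \in (3,4)$, one may choose $\varepsilon_n$ with $n^{-1/2} \ll \varepsilon_n = o(n^{-\eta})$, and all four inclusions then hold with probability tending to $1$.

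The main obstacle I anticipate is verifying the exact distributional claim in this construction -- it is not obvious \emph{a priori} that $\{e : R_e \leq K(p)\}$ realises $\mathrm{CM}_n(\bld{d},p)$ as a percolated configuration model rather than merely as a uniform random subset of the edge set of prescribed random size. The equality of these two laws relies on the exact exchangeability of $(R_e)$ over permutations of $[E]$, which is a symmetry statement about Algorithm~\ref{c3:algo:dyn-cons}. Establishing this symmetry carefully, and quantifying the Taylor error in the correspondence $p_n(\lambda) \leftrightarrow t_c(\lambda)$ so that a single sequence $\varepsilon_n$ handles both pairs of inclusions simultaneously, are the two most delicate steps; everything else reduces to routine concentration.
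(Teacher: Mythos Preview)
The paper does not prove this lemma; it is quoted verbatim from \cite[Proposition~24]{DHLS16} (equivalently Proposition~\ref{c2prop:coupling-whp} in an earlier chapter), and the only hint the paper gives about the argument is the parenthetical remark following~\eqref{c3:eq:blob-graphs}: ``The proof follows from an identical argument as Lemma~\ref{c3:lem:coupling-whp} using the estimate \eqref{c3:eq:estimate-he-modi-supst} and standard concentration inequalities for binomial random variables.'' Your proposal is therefore being compared against that description rather than a written-out proof, and it matches it in spirit: both reduce the sandwich to a scalar comparison of edge counts and then invoke $O_\PR(\sqrt{n})$ concentration against an $\Theta(n\varepsilon_n)$ deterministic gap, using $\eta<1/2$ to find room for $\varepsilon_n$.

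Your construction is correct, and the obstacle you flag at the end is not a real one. The exchangeability of the edge-arrival order in Algorithm~\ref{c3:algo:dyn-cons} is genuine: at each step the pair formed is uniform over all remaining unordered pairs of alive half-edges (one factor from the exponential race, one from the uniform partner), which is exactly the law obtained by first sampling a uniform perfect matching and then revealing its edges in uniformly random order. Given that, $\{e:R_e\le K(p)\}$ with $K(p)=|\{j:U_j\le p\}|$ is, conditionally on $\mathcal{G}_n(\infty)$, a $\mathrm{Bin}(E,p)$-sized uniformly random edge subset, which \emph{is} the law of Bernoulli-$p$ edge percolation on $\mathcal{G}_n(\infty)$; no further argument is needed. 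The Taylor computation you sketch is also exact: $-\tfrac12\log(1-p_n(\lambda))=\tfrac12\log\!\big(\tfrac{\nu_n}{\nu_n-1}\big)+\tfrac{\nu_n}{2(\nu_n-1)}\lambda n^{-\eta}+o(n^{-\eta})=t_c(\lambda)+o(n^{-\eta})$, and the paper's own Lemma~\ref{c3:lem:total-open-he} supplies the $O_\PR(n^{-1/2})$ fluctuation bound for $s_1(t)$ (hence for $E(t)$) that you need. So your blind proof is a complete and valid realization of the cited result.
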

Let $\omega_i(t)$ denote the number of unpaired/open half-edges incident to  vertex~$i$ at time $t$ in Algorithm~\ref{c3:algo:dyn-cons}. 
We end this section by understanding the evolution of some functionals of the degrees and the open half-edges in the graph $\mathcal{G}_n(t)$.
Let $s_1(t)$ denote the total number of unpaired half-edges at time $t$. Denote also $s_2(t)=\sum_{i\in [n]} \omega_i(t)^2$, $s_{d,\omega}(t)=\sum_{i\in [n]}d_i\omega_i(t)$. 
Further, we write $\mu_n=\ell_n/n$.
\begin{lemma}\label{c3:lem:total-open-he}  Under \textrm{Assumption~\ref{c3:assumption1}}, the quantities 
%\begin{equation}\label{c3:eqn:s-1-he}
 $\sup_{t\leq T}|\frac{1}{n}s_1(t)-\mu_n\e^{-2t}|$, $\sup_{t\leq T}|\frac{1}{n} s_2(t) - \mu_n\e^{-4t}(\nu_n+\e^{2t})|,$ $\sup_{t\leq T}|\frac{1}{n}s_{d,\omega}(t) - \mu_n(1+\nu_n)\e^{-2t}|\}$ are  $\OP(n^{-1/2})$, for any $T>0$.
%\end{equation}
\end{lemma}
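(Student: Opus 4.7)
The plan is to apply the differential equation (Wormald) method to the continuous-time jump Markov chain of Algorithm~\ref{c3:algo:dyn-cons}: derive the infinitesimal generator $\mathcal{L}$ of each functional, solve the resulting linear ODE to recognise the deterministic limit, and then use Dynkin's martingale decomposition together with Doob's $L^2$ maximal inequality and Gronwall's lemma to control fluctuations. In this chain, every unordered pair of distinct alive half-edges $\{e,f\}$ is matched at rate $2/(s_1(t)-1)$ (either clock rings first and selects the other), and a matching involving vertices $u,v$ decreases $s_1$ by $2$, decreases $s_{d,\omega}$ by $d_u+d_v$, and decreases $s_2$ by $2(\omega_u+\omega_v)-2$ (or by $4\omega_u-4$ in the self-loop case $u=v$).

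Summing the jump contributions over all pairs and using $\sum_u \omega_u = s_1$, routine algebra should yield the \emph{exact} identities $\mathcal{L}s_1=-2s_1$ and $\mathcal{L}s_{d,\omega}=-2s_{d,\omega}$, together with the near-exact relation $\mathcal{L}s_2 = -4 s_2 + 2 s_1 - 2(s_1-s_2)/(s_1-1)$, whose remainder is $O(1)$ uniformly on $[0,T]$. Solving the associated linear ODEs with initial data $s_1(0)=\ell_n$ and $s_{d,\omega}(0)=s_2(0)=\ell_n(1+\nu_n)$ reproduces precisely the three deterministic profiles $\mu_n \e^{-2t}$, $\mu_n(1+\nu_n)\e^{-2t}$ and $\mu_n \e^{-4t}(\nu_n + \e^{2t})$ appearing in the statement.

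Next, Dynkin's formula gives the martingales $M^X(t) := X(t)-X(0)-\int_0^t \mathcal{L}X(s)\,ds$ with predictable quadratic variation $\langle M^X\rangle_t = \int_0^t \sum_{\{e,f\}} \tfrac{2}{s_1(s)-1}(\Delta X)^2 \, ds$. For $s_1$, $\Delta X\equiv -2$ gives $\langle M\rangle_T = O(n)$, so Doob's $L^2$ inequality yields $\sup_{t\leq T}|M^{s_1}(t)|=\OP(n^{1/2})$; applying Gronwall to the integral equation satisfied by $E(t):=s_1(t)-\ell_n\e^{-2t}$, namely $E(t) = M^{s_1}(t) - 2\int_0^t E(s)\,ds$, delivers the claimed $\OP(n^{-1/2})$ rate after dividing by $n$. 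The same three-step scheme then handles $s_{d,\omega}$ and $s_2$ as soon as one controls $\langle M^X\rangle_T$.

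The main obstacle is bounding the quadratic variation for $s_{d,\omega}$ and $s_2$ in the heavy-tailed regime: a single jump can be of order $d_1 = O(n^\alpha)$ with $\alpha\in(1/3,1/2)$, and a pointwise bound through $\sum_{\{e,f\}}(d_u+d_v)^2$ produces a term $\sum_u \omega_u d_u^2 \leq \sum_u d_u^3 = O(n^{3\alpha})$, which exceeds $n$ in this regime. The resolution will exploit Assumption~\ref{c3:assumption1}(i)--(ii): the finitely many near-maximal hubs can be treated separately by tracking their individual $\omega_i$ via the limiting weights $\theta_i$, while for the bulk the truncation estimate $\limsup_n n^{-3\alpha}\sum_{i>K} d_i^3 \to 0$ as $K\to\infty$ forces the dominant fluctuation to come from bounded-degree vertices, where the standard $\OP(n^{1/2})$ Doob bound applies. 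Carrying out this truncation-and-concentration step uniformly on $[0,T]$ is the technical heart of the argument.
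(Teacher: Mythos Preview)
Your plan---Dynkin's martingale decomposition, ODE identification, Doob's $L^2$ inequality, then Gronwall---is exactly the paper's argument. The paper writes the same integral representations you describe and, for $s_2$ and $s_{d,\omega}$, simply asserts that the martingale quadratic variations are $O(n)$ (e.g.\ it claims $\langle M_n''\rangle(t)\leq 2t\sum_i d_i^2=O(n)$) before invoking Gronwall. So up to the quadratic-variation step you are reproducing the paper; you are also right to flag this step, since the jump-square intensity is $2\sum_i \omega_i(u)d_i^2+O(n)$ and integrates to $O(\sum_i d_i^3)=O(n^{3\alpha})$, not $O(n)$.

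However, your proposed truncation cannot recover the stated $O_P(n^{-1/2})$ rate either. By exchangeability of half-edges, conditional on $s_1(t)=k$ the set of alive half-edges is a uniformly random size-$k$ subset of all $\ell_n$ half-edges, so $s_{d,\omega}(t)=\sum_e d_{i(e)}A_e$ has conditional variance
\[
\frac{k(\ell_n-k)}{\ell_n(\ell_n-1)}\Big(\sum_i d_i^3-\tfrac{1}{\ell_n}\big(\textstyle\sum_i d_i^2\big)^2\Big)=\Theta(n^{3\alpha}),
\]
and in particular each hub contributes $d_i\omega_i(t)$ with fluctuations $\Theta(d_i^{3/2})=\Theta(n^{3\alpha/2})\gg n^{1/2}$; tracking finitely many $\omega_i$ individually does not make these smaller. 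Hence neither the paper's asserted bound nor your truncation yields the $O_P(n^{-1/2})$ rate for $s_2$ and $s_{d,\omega}$: the honest output of the martingale/Gronwall scheme is $O_P(n^{3\alpha/2-1})$. Since $1-\tfrac{3\alpha}{2}=\eta+\tfrac{\alpha}{2}>\eta>\delta$, this weaker rate is still $o_P(n^{-\delta})$, which is all that the subsequent applications in the chapter actually use---so the downstream arguments survive, but the rate claimed in the lemma for $s_2$ and $s_{d,\omega}$ appears overstated and your ``resolution'' should be replaced by the observation that only $O_P(n^{3\alpha/2-1})$ is attainable (and sufficient).
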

\begin{proof}
 The proof uses the differential equation method \cite{wormald1995differential}. 
 Notice that, after each exponential clock rings in Algorithm~\ref{c3:algo:dyn-cons}, $s_1(t)$ decreases by two. Let $Y$ denote a unit rate Poisson process. Using the random time change representation \cite{EK86},
 \begin{equation}\label{c3:eq:diff-eqn}
  s_1(t) = \ell_n - 2 Y\bigg(\int_{0}^t s_1(u)\mathrm{d} u\bigg) = \ell_n +M_n(t)-2\int_{0}^t s_1(u)\mathrm{d} u,
 \end{equation}where $\bld{M}_n$ is a martingale. 
 Now, the quadratic variation of $\bld{M}_n$ satisfies $ \langle M_n \rangle (t)\leq 4t\ell_n = O(n),$ which implies that $\sup_{t\leq T}|M_n(t)|= \OP(\sqrt{n}).$ 
 Moreover, notice that the function $f(t)=\mu_n\e^{-2t}$ satisfies \linebreak $f(t)=\mu_n-2\int_0^tf(u)\mathrm{d}u$. 
  Therefore,
 \begin{equation}
 \begin{split}
  \sup_{t\leq T}\bigg|\frac{1}{n}s_1(t)- \mu_n\e^{-2t}\bigg| &\leq \sup_{t\leq T}\frac{|M_n(t)|}{n}+2\int_0^T\sup_{t\leq u}\bigg|\frac{1}{n}s_1(t)- \mu_n\e^{-2t}\bigg| \mathrm{d}u.
 \end{split}
 \end{equation} Using Gr\H{o}nwall's inequality \cite[Proposition 1.4]{M86}, it follows that
 \begin{equation}\label{c3:eq:diff-eqn-gronwall}
   \sup_{t\leq T}\bigg|\frac{1}{n}s_1(t)- \mu_n\e^{-2t}\bigg|\leq \e^{2T} \sup_{t\leq T}\frac{|M_n(t)|}{n} =\OP(n^{-1/2}),
 \end{equation}as required. 
For $s_2(t)$, note that if half-edges corresponding to vertices $i$ and $j$ are paired, then $s_2$ changes by $-2\omega_i-2\omega_j+2$ and if two half-edges corresponding to $i$ are paired, $s_2$ then changes by $-4\omega_i+4$. Thus,
\begin{equation}
\begin{split}
&\sum_{i\in [n]}\omega_i(t)^2\\
&=\sum_{i\in [n]}d_i^2+M_n'(t)+\int_0^t \sum_{i\neq j}\frac{\omega_i(u)\omega_j(u)(-2\omega_i(u)-2\omega_j(u)+2)}{s_1(u)-1}\\
 &\hspace{3cm}+\int_0^t\sum_{i\in [n]}\frac{\omega_i(u)(\omega_i(u)-1)(-4\omega_i(u)+4)}{s_1(u)-1}\\
 & = n\mu_n(1+\nu_n)+M_n'(t)+\int_0^t(-4s_2(u)+2s_1(u))\mathrm{d}u+\OP(1),
 \end{split}
\end{equation}where $\bld{M}_n'$ is a martingale with quadratic variation given by $\langle M_n'\rangle (t) =O(n)$. 
Again, an estimate equivalent to \eqref{c3:eq:diff-eqn-gronwall} follows using Gr\H{o}nwall's inequality.
Notice also that when a clock corresponding to vertex $i$ rings and it is paired to vertex $j$, then $s_{d,\omega}$ decreases by $d_i+d_j$. 
Thus,
 \begin{equation}
  \begin{split}
   s_{d,\omega}(t)&=\sum_{i\in [n]}d_i^2+M_n''(t)-\int_0^t \sum_{i\neq j}\frac{\omega_i(u)\omega_j(u)(d_i+d_j)}{s_1(u)-1} \mathrm{d}u\\
   &\hspace{2cm}-\int_0^t \sum_{i\in [n]}\frac{\omega_i(u)(\omega_i(u)-1)2d_i}{s_1(u)-1}\mathrm{d}u\\
   & = n\mu_n(1+\nu_n)+M_n''(t)- 2 \int_0^ts_{d,\omega}(u)\mathrm{d}u,
  \end{split}
 \end{equation}where $\bld{M}_n''$ is a martingale with quadratic variation given by $\langle M_n''\rangle (t) \leq 2t \sum_{i\in [n]}d_i^2=O(n)$. 
 The proof of Lemma~\ref{c3:lem:total-open-he} is now complete. 

\end{proof}

\subsection{Entrance boundary for open half-edges}\label{c3:sec:entrance-boundary-open-he}
Define
\begin{equation}\label{c3:eq:def-subcrit-time}
 t_n = \frac{1}{2}\log\bigg(\frac{\nu_n}{\nu_n-1}\bigg)-\frac{\nu_n}{2(\nu_n-1)}\frac{1}{n^{\delta}}, \quad 0< \delta < \eta.
\end{equation} The goal is to show that the open half-edges satisfy the entrance boundary conditions. %This is important and serves as a key input in the next section which states that during the critical time in the dynamic construction, the components merge approximately as the multiplicative coalescent with the masses being the total number of half-edges in the components.
Let $\bld{d}(t)=(d_i(t))_{i\in [n]}$ denote the degree sequence of $\mathcal{G}_n(t)$ constructed by Algorithm~\ref{c3:algo:dyn-cons}. Recall that $\mathcal{G}_n(t)$ is a configuration model conditionally on~$\bld{d}(t)$. 
Let us first derive the asymptotics of $\nu_n(t_n)$.
%\begin{equation}\label{c3:eq:nu-t-dyn}
% \nu_n(t_n):= \frac{\sum_{i\in [n]} d_i(t_n)(d_i(t_n)-1)}{\sum_{i\in [n]}d_i(t_n)} = 1 - \nu_n n^{-\delta}+\oP(n^{-\delta}).
%\end{equation}
Recall that $\omega_i(t)$ denotes the number of open half-edges adjacent to vertex $i$ in $\mathcal{G}_n(t)$. Notice that 
\begin{equation}
 \nu_n(t_n)=\frac{\sum_{i\in [n]}(d_i-\omega_i(t_n))^2}{\ell_n-s_1(t_n)} -1 = \frac{\sum_{i\in [n]}d_i^2-2s_{d,\omega}(t_n)+s_2(t_n)}{\ell_n-s_1(t_n)}-1.
\end{equation}
Using Lemma~\ref{c3:lem:total-open-he} and Assumption~\ref{c3:assumption1}, 
\begin{equation}\label{c3:nu-calc-1}
 \begin{split}
  &\frac{1}{n}(\ell_n-s_1(t_n))=\mu_n (1-\e^{-2t_n}) +\oP(n^{-\delta})
    = \frac{\mu_n}{\nu_n}\Big(1-\frac{\nu_n}{n^{\delta}}\Big)+\oP(n^{-\delta}),
 \end{split}
\end{equation}
\begin{equation}\label{c3:nu-calc-2}
\begin{split}
 \frac{1}{n}\bigg(\sum_{i\in [n]}d_i^2-2s_{d,\omega}(t_n)+s_2(t_n)\bigg) %&= \mu_n(1+\nu_n)(1-2\e^{-2t_n})+\mu_n \e^{-4t_n}(\nu+\e^{2t_n})+\oP(n^{-\delta})\\
 & = \frac{\mu_n}{\nu_n}\Big(2-\frac{3\nu_n}{n^{\delta}}\Big)+\oP(n^{-\delta}).
\end{split}
\end{equation}
Thus, \eqref{c3:nu-calc-1} and \eqref{c3:nu-calc-2} yield that
$\nu_n(t_n) %= \Big(2-\frac{3\nu_n}{n^{\delta}}\Big)\Big(1+\frac{\nu_n}{n^{\delta}}\Big)-1 +\oP(n^{-\delta}) 
= 1- \nu_n n^{-\delta} +\oP(n^{-\delta}).$
We aim to apply the results for the barely sub-critical regime in Theorem~\ref{c3:thm:susceptibility} to the number of open half-edges $\bld{\omega}(t_n)=(\omega_i(t_n))_{i\in [n]}$. 
Notice that, by  Lemma~\ref{c3:lem:total-open-he} and Assumption~\ref{c3:assumption1}, $\bld{\omega}(t_n)$ and $\bld{d}(t_n)$ satisfy Assumption~\ref{c3:assumption-w} with
\begin{equation}\label{c3:eq:mu-mudw-open-he}
 \mu_{\omega} = \frac{\mu(\nu-1)}{\nu},\quad \mu_d = \frac{\mu}{\nu}, \quad \mu_{d,\omega}  = \frac{\mu(\nu-1)}{\nu} ,\quad c_i = \frac{\theta_i}{\nu}.
\end{equation} 
Consider the quantities $s_2^\star$, $s_3^\star$, $\mathcal{D}_n^\star$ with the weights being the number of open half-edges and denote them by $s_2^\omega$, $s_3^\omega$, $\mathcal{D}_n^\omega$ respectively.
Denote $f_i(t)= \sum_{k\in \mathscr{C}_{\sss (i)}(t)}\omega_k(t)$ and $\bld{f}(t) = (f_i(t))_{i\geq 1}$.
The following theorem summarizes the entrance boundary conditions for $\bld{f}(t)$: 

\begin{theorem}\label{c3:th:open-he-entrance}Under \textrm{Assumption~\ref{c3:assumption1}}, as $n\to\infty$,
\begin{gather*}
 n^{-\delta}s_2^\omega \pto \frac{\mu (\nu-1)^2}{\nu^2},\quad n^{-\delta}s_{pr}^\omega \pto\frac{\mu (\nu-1)}{\nu^2}, \quad 
 n^{-(\alpha+\delta)}f_i(t_n) \pto \big(\frac{\nu-1}{\nu^2}\big)\theta_i, \\
  n^{-3\alpha-3\delta+1}s_3^\omega \pto \big(\frac{\nu-1}{\nu^2}\big)^3 \sum_{i=1}^\infty \theta_i^3, \quad n^{-2\delta} \mathcal{D}_n^\omega \pto \frac{\mu (\nu-1)^2}{\nu^3}.
 \end{gather*}
\end{theorem}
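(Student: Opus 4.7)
The strategy is to recognize that, conditionally on the history of Algorithm~\ref{c3:algo:dyn-cons} up to time $t_n$ (and in particular on $\bld{d}(t_n)$ and $\bld{\omega}(t_n)$), the graph $\mathcal{G}_n(t_n)$ is distributed as $\mathrm{CM}_n(\bld{d}(t_n))$; thus all five statements of the theorem are conditional consequences of applying Theorem~\ref{c3:thm:susceptibility} to this (random) configuration model with weight sequence $w_i := \omega_i(t_n)$. Concretely, I will verify that $(\bld{d}(t_n), \bld{\omega}(t_n))$ satisfies Assumption~\ref{c3:assumption-w} with probability $1-o(1)$, with the constants $\mu_d = \mu/\nu$, $\mu_w = \mu_{d,w} = \mu(\nu-1)/\nu$, $c_i = \theta_i/\nu$, and $\lambda_0 = \nu$ from \eqref{c3:eq:mu-mudw-open-he}. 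Substituting into Theorem~\ref{c3:thm:susceptibility} then gives each of the five limits: e.g.\ $\mu_{d,w}^2/(\mu_d\lambda_0) = \mu(\nu-1)^2/\nu^2$ recovers the claim for $s_2^\omega$, and $(\mu_{d,w}/(\mu_d\lambda_0))\,c_i = \theta_i(\nu-1)/\nu^2$ recovers the claim for $f_i(t_n)$. A standard bootstrap, using that Assumption~\ref{c3:assumption-w} holds on an event of probability $1-o(1)$, promotes the conditional in-probability convergence to the unconditional convergence stated in the theorem.

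The first-moment conditions of Assumption~\ref{c3:assumption-w} drop out of Lemma~\ref{c3:lem:total-open-he} together with $e^{-2t_n}\to (\nu-1)/\nu$ and $e^{2t_n}\to \nu/(\nu-1)$: one finds $n^{-1}\sum_i d_i(t_n) = \mu_n - n^{-1}s_1(t_n) \pto \mu/\nu$, $n^{-1}\sum_i \omega_i(t_n) = n^{-1}s_1(t_n) \pto \mu(\nu-1)/\nu$, and $n^{-1}\sum_i d_i(t_n)\omega_i(t_n) = n^{-1}(s_{d,\omega}(t_n)-s_2(t_n)) \pto \mu(\nu-1)/\nu$ after the cancellation $\mu(\nu^2-1)/\nu - \mu(\nu-1) = \mu(\nu-1)/\nu$. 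The calculation already carried out between \eqref{c3:nu-calc-1} and \eqref{c3:nu-calc-2} delivers $\nu_n(t_n) = 1 - \nu n^{-\delta} + \oP(n^{-\delta})$, pinning down $\lambda_0=\nu$. The moment-domination condition in Assumption~\ref{c3:assumption-w}(ii) is immediate from $d_i(t_n),\omega_i(t_n)\le d_i$ together with $\sum_i d_i^3 = O(n^{3\alpha})$, and the high-degree tail $\lim_{K}\limsup_{n}n^{-3\alpha}\sum_{i>K}d_i(t_n)^3 = 0$ is inherited from Assumption~\ref{c3:assumption1}(ii) for the same reason.

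The only non-routine input is the pointwise high-degree convergence $n^{-\alpha}d_i(t_n) \pto \theta_i/\nu$ (equivalently $n^{-\alpha}\omega_i(t_n) \pto \theta_i(\nu-1)/\nu$) for each fixed $i$. The plan is a vertex-level variant of the differential equation method of Lemma~\ref{c3:lem:total-open-he}: at any instant, an alive half-edge of vertex~$i$ is killed at total rate $2(1+O(1/s_1(t)))$, once when its own clock rings and once when another ringing half-edge selects it as its uniform partner. This yields the semimartingale decomposition
\begin{equation*}
 \omega_i(t) = d_i - 2\int_0^t \omega_i(u)\,\mathrm{d}u + M_i(t),
\end{equation*}
with a martingale $M_i$ whose quadratic variation is $O(d_i)$. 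Gr\"onwall's inequality, applied as in \eqref{c3:eq:diff-eqn-gronwall}, yields $\omega_i(t) = d_i e^{-2t}(1 + \OP(d_i^{-1/2}))$ uniformly on compact time intervals; since $d_i \sim \theta_i n^\alpha \to \infty$ the claim follows, and then $n^{-\alpha}d_i(t_n) = n^{-\alpha}d_i - n^{-\alpha}\omega_i(t_n) \pto \theta_i - \theta_i(\nu-1)/\nu = \theta_i/\nu$.

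The main obstacle is precisely this last concentration step at individual high-degree vertices: the aggregate estimates of Lemma~\ref{c3:lem:total-open-he} do not by themselves pin down $\omega_i(t_n)$ for a specific large-degree vertex, so a coordinate-wise martingale argument (with quadratic-variation control exploiting $d_i\to\infty$) is required. Once it is in hand, the rest of the proof is substitution into Theorem~\ref{c3:thm:susceptibility} and the conditioning bootstrap outlined in the first paragraph.
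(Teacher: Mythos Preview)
Your proposal is correct and follows the same approach as the paper: verify that $(\bld{d}(t_n),\bld{\omega}(t_n))$ satisfies Assumption~\ref{c3:assumption-w} with the constants~\eqref{c3:eq:mu-mudw-open-he} and $\lambda_0=\nu$, then read off the five limits from Theorem~\ref{c3:thm:susceptibility}. In fact you are more explicit than the paper on the one point it leaves implicit, namely the vertex-level concentration $n^{-\alpha}\omega_i(t_n)\pto\theta_i(\nu-1)/\nu$ needed for the high-degree condition $c_i=\theta_i/\nu$; your coordinate-wise semimartingale argument (drift exactly $-2\omega_i(t)$, quadratic variation $O(d_i)$, Gr\"onwall) is the right way to fill that gap, and the conditioning bootstrap you outline is the standard way to pass from the deterministic hypotheses of Assumption~\ref{c3:assumption-w} to the random pair $(\bld{d}(t_n),\bld{\omega}(t_n))$.
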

\begin{remark}\label{c3:rem:entrance-comp-size} \normalfont Setting $w_i =1$ for all $i$, we get the entrance boundary conditions for the component sizes also. In this case $\mu_d=\mu_{d,w}=\mu/\nu$. Augmenting a predefined notation with $c$ in the superscript to denote the component susceptibilities, it follows that 
\begin{align*}
 n^{-\delta}s_2^c \pto \frac{\mu}{\nu^2},\quad n^{-(\alpha+\delta)}|\mathscr{C}_{\sss (i)}(t_n)|\pto \frac{\theta_i}{\nu^2},\quad n^{-3\alpha-3\delta+1}s_3^c\pto \frac{1}{\nu^6}\sum_{i=1}^\infty \theta_i^3.
\end{align*}
\end{remark}
\subsection{Coupling with the multiplicative coalescent}
\label{c3:sec:coupling-mul-coal}
Recall the definitions of $t_c(\lambda)$ and $t_n$ from \eqref{c3:eq:def-crit-time} and \eqref{c3:eq:def-subcrit-time}.  
% In this section, we simply write $f_i(t_n)=\sum_{k\in \mathscr{C}_{\sss (i)}(t)}\omega_k$ and $\bld{f}(t) = (f_i(t))_{i\in [n]}$.
 Now, let us investigate the dynamics of $\bld{f}(t) $ starting from time $t_n$. Notice that, in the time interval $ [t_n,t_c(\lambda)]$, components with masses $f_i(t)$ and $f_j(t)$ merge at rate 
 \begin{equation}
  f_i(t)\frac{f_j(t)}{s_1(t)-1}+f_j(t)\frac{f_i(t)}{s_1(t)-1}=\frac{2f_i(t)f_j(t)}{s_1(t)-1}\approx \frac{2\nu f_i(t)f_j(t)}{\mu(\nu-1)n}, 
 \end{equation}and create a component with $f_i(t)+f_j(t)-2$ open half-edges. 
 Thus $\bld{f}(t)$ does not \emph{exactly} evolve as a multiplicative coalescent, but it is close. 
 Now, we define an exact multiplicative coalescent that approximates the above process:
 \begin{algo}[Modified process] \label{c3:algo:modified-MC} \normalfont Conditionally on $\mathcal{G}_n(t_n)$, associate a rate $2/(s_1(t_n)-1)$ Poisson process $\mathcal{P}(e,f)$ to each of pair of unpaired-half-edges $(e,f)$. 
An edge $(e,f)$ is created between the vertices incident to $e$ and $f$ at the instance when $\mathcal{P}(e,f)$ rings. 
We denote the graph obtained at time $t$ by $\bar{\mathcal{G}}_n(t)$. 
 \end{algo}
\begin{proposition}\label{c3:prop-coupling-order}
There exists a coupling such that $\mathcal{G}_n(t)\subset\bar{\mathcal{G}}_n(t)$ for all $t>t_n$ with probability one.
\end{proposition}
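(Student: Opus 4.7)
The aim is to construct an explicit coupling of the two processes on a common probability space so that $\mathcal{G}_n(t) \subset \bar{\mathcal{G}}_n(t)$ pathwise for all $t > t_n$. Since both processes inherit the same initial state $\mathcal{G}_n(t_n)$, it is enough to couple the edges added after $t_n$. The natural idea is to drive both processes from shared Poisson clocks on pairs of half-edges unpaired at $t_n$: for each such pair $(e,f)$ introduce an independent Poisson process $\mathcal{P}(e,f)$ of rate $2/(s_1(t_n)-1)$ which drives $\bar{\mathcal{G}}_n$ exactly as prescribed in Algorithm~\ref{c3:algo:modified-MC}.

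The key quantitative observation is monotonicity of $s_1(t)$: since $s_1(t)$ is non-increasing for $t \geq t_n$, the pair-formation rate required by $\mathcal{G}_n$, namely $2/(s_1(t)-1)$, dominates the constant $\bar{\mathcal{G}}_n$-rate $2/(s_1(t_n)-1)$, with a non-negative excess $2/(s_1(t)-1) - 2/(s_1(t_n)-1) \geq 0$ that is adapted to the state of $\mathcal{G}_n$. We realize the dynamics of $\mathcal{G}_n$ as the superposition of two sources: (i) the firings of $\mathcal{P}(e,f)$, which, whenever both $e,f$ are still alive in $\mathcal{G}_n$, are used to pair them (and simultaneously contribute the corresponding edge to $\bar{\mathcal{G}}_n$ by construction), and (ii) an ``acceleration'' stream of auxiliary independent events at the state-dependent excess rate. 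Pair-formations produced by source (i) give the inclusion for free.

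The delicate part is source (ii): pair-formations arising from the auxiliary acceleration must also be exhibited as edges of $\bar{\mathcal{G}}_n$ present at that moment. This will be achieved by binding each acceleration event for $(e,f)$ to an already-realized firing of $\mathcal{P}(e,f)$, which is consistent because $\bar{\mathcal{G}}_n$'s Poissons are allowed to fire multiple times and the corresponding multi-edges simply add further copies in $\bar{\mathcal{G}}_n$. Verifying that this binding does not disturb the marginal law of $\mathcal{P}(e,f)$ as a rate-$2/(s_1(t_n)-1)$ Poisson is the technical crux, and is handled via a standard Poisson splitting/thinning argument combined with the memorylessness property, applied simultaneously across all pairs $(e,f)$.

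The main obstacle I anticipate is executing this coupling simultaneously for all pairs in a way compatible with the state-dependent rate $2/(s_1(t)-1)$, which couples different pairs through the common variable $s_1(t)$. A time-ordered inductive construction of the joint filtration — inducting on the pair-formation events of $\mathcal{G}_n$ in chronological order, and invoking the concentration of $s_1(t)$ from Lemma~\ref{c3:lem:total-open-he} (obtained via Wormald's differential equation method) to control the excess rate — should resolve this, yielding the claimed pathwise inclusion.
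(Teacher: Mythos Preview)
Your approach is far more elaborate than the paper's and contains a real gap at the ``binding'' step.

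The paper's argument is a one-line modification of Algorithm~\ref{c3:algo:dyn-cons}: when two half-edges are paired, do not kill them and reset (rather than discard) their clocks. Since nothing is ever killed, the partner at each ring is uniform over all $s_1(t_n)-1$ half-edges, and by memorylessness each half-edge's clock is simply a rate-$1$ Poisson process; Poisson thinning then shows this modified process has the law of $\bar{\mathcal G}_n$. The paper obtains the coupling by viewing the event times of $\mathcal G_n$ as a thinning of those of the modified process. No auxiliary acceleration clocks, no Lemma~\ref{c3:lem:total-open-he}, and no inductive construction enter.

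Your decomposition runs in the opposite direction. You correctly note that for a pair $(e,f)$ still alive in $\mathcal G_n$ at time $t>t_n$, the $\mathcal G_n$-matching rate $2/(s_1(t)-1)$ strictly exceeds the constant $\bar{\mathcal G}_n$-rate $2/(s_1(t_n)-1)$, and you propose to make up the difference with auxiliary acceleration events. But the binding step fails: with positive probability the \emph{first} event affecting the pair $(e,f)$ is an acceleration event --- conditionally on both $e,f$ alive at time $t$, the next event for that pair is an acceleration with probability $1-\tfrac{s_1(t)-1}{s_1(t_n)-1}>0$ --- and in that case $\mathcal P(e,f)$ has no point in $(t_n,t]$, so there is nothing ``already realized'' to bind to. Inserting a point there by fiat would change the marginal of $\mathcal P(e,f)$; no memorylessness or Poisson-splitting argument repairs this, because for alive pairs the $\mathcal G_n$-edge time is stochastically \emph{smaller} than the first point of $\mathcal P(e,f)$. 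Your appeal to Lemma~\ref{c3:lem:total-open-he} is also misplaced: that is a with-high-probability concentration estimate and cannot contribute to an almost-sure pathwise inclusion.
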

\begin{proof}
Recall the construction of $\mathcal{G}_n(t)$ from Algorithm~\ref{c3:algo:dyn-cons}. 
We modify (S1) as follows: whenever two half-edges are paired, we do not kill the corresponding half-edges and do not discard the associated exponential clocks. 
Instead we reset the corresponding exponential clocks. 
The graphs generated by this modification of Algorithm~\ref{c3:algo:dyn-cons} has the same distribution as $\bar{\mathcal{G}}_n(t)$, conditionally on $\mathcal{G}_n(t_n)$.
Moreover, the above also gives a natural coupling such that $\mathcal{G}_n(t)\subset\bar{\mathcal{G}}_n(t)$, by viewing the event times of Algorithm~\ref{c3:algo:dyn-cons} as a thinning of the event times of the modified process.
\end{proof}
Henceforth, we will always assume that we are working on a probability space such that Proposition~\ref{c3:prop-coupling-order} holds.
 The connected components at time $t_n$, $(\mathscr{C}_{\sss (i)}(t_n))_{i\geq 1}$ are regarded as blobs.
Thus, for $t\geq t_n$, the graph $\bar{\mathcal{G}}_n(t)$ should be viewed as a super-graph with the superstructure being determined by the edges appearing after time $t_n$ in Algorithm~\ref{c3:algo:modified-MC}.  
  Let us denote the ordered connected components of $\bar{\mathcal{G}}_n(t)$ by $(\bar{\mathscr{C}}_{\sss (i)}(t))_{i\geq 1}$. 
  The components of $\bar{\mathcal{G}}_n(t)$ can be regarded as a union of the blobs. 
  For a component $\mathscr{C}$, we use the notation 
  $\bl(\mathscr{C})$ to denote the collection of indices corresponding to the blobs within $\mathscr{C}$ given by $\{b: \mathscr{C}_{\sss (b)}(t_n)\subset\mathscr{C}\}$.
  Denote 
 $$ \bar{\mathcal{F}}_{i}(t) = \sum_{b\in \bl(\bar{\mathscr{C}}_{\sss (i)}(t))}f_{b}(t_n).$$
 The $\bar{\mathcal{F}}$-value is regarded as the mass of component $\bar{\mathscr{C}}_{\sss (i)}(t)$ at time $t$.
Note that for the modified process in Algorithm~\ref{c3:algo:modified-MC}, conditionally on $\mathcal{G}_n(t_n)$, at time $t\in[t_n,t_c(\lambda)]$, $\bar{\mathscr{C}}_{\sss (i)}(t)$ and $\bar{\mathscr{C}}_{\sss (j)}(t)$ merge at exact rate $2 \bar{\mathcal{F}}_i(t)\bar{\mathcal{F}}_j(t)/(s_1(t_n)-1)$ and the new component has mass $\bar{\mathcal{F}}_i(t)+\bar{\mathcal{F}}_j(t)$. 
Thus, the vector of masses $(\bar{\mathcal{F}}_i(t))_{i\geq 1}$ merge as an exact multiplicative coalescent.

\subsection{Properties of the modified process}
\label{c3:sec:modified-graph}
Notice that, conditionally on $\mathcal{G}_n(t_n)$, blobs $b_i$ and $b_j$ are connected in $\bar{\mathcal{G}}_n(t_c(\lambda))$ with probability $p_{ij}$ equal to
\begin{equation}\label{c3:eq:pij-value}
 1- \exp\Big(-f_{b_i}(t_n) f_{b_j}(t_n)\Big[\frac{1}{n^{1+\delta}}\frac{\nu^2 }{\mu(\nu-1)^2} + \frac{1}{n^{1+\eta}}\frac{\nu^2}{\mu(\nu-1)^2}\lambda\Big] (1+\oP(1))\Big),
\end{equation}
where the $\oP(\cdot)$ term appearing above is uniform in $i,j$.
Thus, using Theorem~\ref{c3:th:open-he-entrance}, \eqref{c3:eq:pij-value} is of the form $1-\e^{-qx_i^nx_j^n(1+\oP(1))}$ with
\begin{equation}\label{c3:eq:parameters-inhom}
 x_i^n = n^{-\rho}f_{b_i}(t_n), \quad q = \frac{1}{\sigma_2(\bld{x}^n)} + \frac{\nu^2}{\mu(\nu-1)^2}\lambda,
\end{equation}where $\sigma_r(\bld{x}^n) = \sum (x_i^n)^r$. 
By Theorem~\ref{c3:thm:susceptibility}, the sequence $\bld{x}^n$ satisfies the entrance boundary conditions of \cite{AL98}, i.e.,
\begin{equation}
\begin{split}
 \frac{\sigma_3(\bld{x}^n)}{(\sigma_2(\bld{x}^n))^3} \pto \frac{1}{\mu^3(\nu-1)^3}\sum_{i=1}^\infty \theta_i^3,\quad \frac{x_i^n}{\sigma_2(\bld{x}^n)}\pto \frac{1}{\mu(\nu-1)}\theta_i, \quad \sigma_2(\bld{x}^n)\pto 0.
 \end{split}
\end{equation}
To simplify the notation, we write $\bar{\mathcal{F}}_i(\lambda)$ for  $\bar{\mathcal{F}}_i(t_c(\lambda))$ and $\bar{\mathscr{C}}_{\sss (i)}(\lambda)$ for $\bar{\mathscr{C}}_{\sss (i)}(t_c(\lambda))$.
The following result is a consequence of \cite[Proposition~7]{AL98} and Lemma~\ref{c3:lem:rescale}:
%\todo[inline]{Write all the scaling limits in terms of one fundamental process}
\begin{proposition}\label{c3:thm:modified-open-he-limit} As $n\to\infty$, $\big(n^{-\rho}\bar{\mathcal{F}}_i(\lambda)\big)_{i\geq 1} \dto \frac{\nu-1}{\nu}\bld{\xi}$
% \begin{equation}\label{c3:limit-weight-mod}
%  \big(n^{-\rho}\bar{\mathcal{F}}_i(\lambda)\big)_{i\geq 1} \dto \frac{\nu-1}{\nu}\bld{\xi}
% \end{equation}
with respect to the $\ell^2_{\shortarrow}$ topology, where $\bld{\xi}$ is defined in \textrm{Proposition~\ref{c3:prop:comp-size}}.
\end{proposition}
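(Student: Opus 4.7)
The plan follows the strategy signaled by the author: invoke the Aldous-Limic entrance-boundary theorem for the multiplicative coalescent and then apply the scaling identity of Lemma~\ref{c3:lem:rescale}. The first observation is that, conditionally on $\mathcal{G}_n(t_n)$, the process $(\bar{\mathcal{F}}_i(t))_{i\geq 1}$ run over the interval $[t_n,t_c(\lambda)]$ is an exact multiplicative coalescent (after a deterministic time change by $2/(s_1(t_n)-1)$) started from the blob masses $(f_i(t_n))_{i\geq 1}$. By the standard correspondence between multiplicative coalescents and inhomogeneous random graphs, the final ordered-mass vector has the same distribution as that obtained from a Norros-Reittu graph on the blobs with weights $x_i^n = n^{-\rho} f_{b_i}(t_n)$ and parameter $q = 1/\sigma_2(\bld{x}^n) + \lambda\nu^2/(\mu(\nu-1)^2)$, which is exactly the form already recorded in \eqref{c3:eq:pij-value}--\eqref{c3:eq:parameters-inhom}. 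The residual $o_{\PR}(1)$ factor in \eqref{c3:eq:pij-value} is uniform in pairs $(i,j)$ and can be removed by sandwiching between coalescents with parameters $q(1\pm\varepsilon)$ and sending $\varepsilon\downarrow 0$.

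The next step is to verify the Aldous-Limic entrance-boundary hypotheses on $\bld{x}^n$. Combining Theorem~\ref{c3:th:open-he-entrance} with the arithmetic identities $2\rho-1=\eta$ and $\alpha+\delta-\rho=\delta-\eta$ (both immediate from \eqref{c3:eqn:notation-const}) yields
\begin{equation*}
\sigma_2(\bld{x}^n) = n^{\delta-\eta}\bigl(n^{-\delta}s_2^\omega\bigr) \pto 0, \qquad \frac{x_i^n}{\sigma_2(\bld{x}^n)} \pto \frac{\theta_i}{\mu(\nu-1)} =: \beta_i,
\end{equation*}
together with $\sigma_3(\bld{x}^n)/\sigma_2(\bld{x}^n)^3 \pto \sum_i \beta_i^3$. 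These place us exactly in the ``nice'' entrance-boundary regime of \cite[Proposition~7]{AL98} with zero Brownian coefficient and effective drift $\lambda_{\mathrm{AL}} := \lambda\nu^2/(\mu(\nu-1)^2)$. That proposition then produces convergence of the ordered component masses in $\ell^2_{\shortarrow}$:
\begin{equation*}
\bigl(n^{-\rho}\bar{\mathcal{F}}_i(\lambda)\bigr)_{i\geq 1} \dto \bld{\xi}\bigl(\bld{\beta},\lambda_{\mathrm{AL}}\bigr) = \bld{\xi}\bigl(\bld{\theta}/(\mu(\nu-1)),\, \lambda\nu^2/(\mu(\nu-1)^2)\bigr).
\end{equation*}

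The final step rewrites this limit in the normalization of Proposition~\ref{c3:prop:comp-size}, in which $\bld{\xi} = \bld{\xi}(\bld{\theta}/(\mu\nu),\lambda/\mu)$. Choosing $\eta_1 = \nu/(\nu-1)$ gives $\eta_1\bld{\theta}/(\mu\nu) = \bld{\theta}/(\mu(\nu-1))$, while $\eta_2 = \nu^2/(\nu-1)^2 = \eta_1^2$ gives $\eta_2\lambda/\mu = \lambda\nu^2/(\mu(\nu-1)^2)$. Lemma~\ref{c3:lem:rescale} then reads
\begin{equation*}
\bld{\xi}\bigl(\eta_1\bld{\theta}/(\mu\nu),\, \eta_2\lambda/\mu\bigr) \eqd \tfrac{1}{\eta_1}\bld{\xi}\bigl(\bld{\theta}/(\mu\nu),\, (\eta_2/\eta_1^2)\lambda/\mu\bigr) = \tfrac{\nu-1}{\nu}\bld{\xi},
\end{equation*}
which is the desired identification. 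The only delicate point is the passage from in-probability convergence of the random entrance data to Aldous-Limic convergence conditionally on $\mathcal{G}_n(t_n)$; I would handle this by Skorokhod representation combined with the continuity of the Aldous-Limic map under the $\ell^3_{\shortarrow}$ topology on initial weights, the required third-moment control being precisely the convergence $n^{-3\alpha-3\delta+1}s_3^\omega \pto ((\nu-1)/\nu^2)^3\sum_i\theta_i^3$ from Theorem~\ref{c3:th:open-he-entrance}.
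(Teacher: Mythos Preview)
Your proposal is correct and follows exactly the route the paper indicates: verify the Aldous--Limic entrance-boundary conditions for $\bld{x}^n$ via Theorem~\ref{c3:th:open-he-entrance}, apply \cite[Proposition~7]{AL98} to obtain $\bld{\xi}(\bld{\theta}/(\mu(\nu-1)),\lambda\nu^2/(\mu(\nu-1)^2))$, and then rewrite this as $\tfrac{\nu-1}{\nu}\bld{\xi}$ using Lemma~\ref{c3:lem:rescale} with $\eta_1=\nu/(\nu-1)$, $\eta_2=\eta_1^2$. The paper itself gives no further argument beyond citing these two ingredients, so you have in fact supplied more detail than the text---in particular your treatment of the uniform $o_{\PR}(1)$ in \eqref{c3:eq:pij-value} and of the random (rather than deterministic) entrance data, both of which the paper passes over silently.
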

\noindent We next relate $(\bar{\mathcal{F}}_i(\lambda))_{i\geq 1}$ to $(\bar{\mathscr{C}}_{\sss (i)}(\lambda))_{i\geq 1}$, for each fixed $i$:
\begin{proposition}\label{c3:thm:mod-comp-openhe}
 As $n\to\infty$, 
% \begin{equation}\label{c3:eq:he-comp-modified}
  $\bar{\mathcal{F}}_i(\lambda) = (\nu-1)|\bar{\mathscr{C}}_{\sss (i)}(\lambda)| + \oP(n^{\rho}).$
% \end{equation}
 Consequently,
%  \begin{equation}\label{c3:scaling-limit-mod-com}
  $\big(n^{-\rho}|\bar{\mathscr{C}}_{\sss (i)}(\lambda)|\big)_{i\geq 1} \dto \frac{1}{\nu}\bld{\xi}$
% \end{equation}
with respect to the product topology.
\end{proposition}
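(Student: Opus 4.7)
The plan is to decompose the difference $\bar{\mathcal{F}}_i(\lambda) - (\nu-1)|\bar{\mathscr{C}}_{\sss (i)}(\lambda)|$ as a sum over blobs and bound it by truncating at the top $K$ blobs. Set $g_b := f_b(t_n) - (\nu-1)|\mathscr{C}_{\sss (b)}(t_n)|$, so that
\begin{equation*}
\bar{\mathcal{F}}_i(\lambda) - (\nu-1)|\bar{\mathscr{C}}_{\sss (i)}(\lambda)| = \sum_{b \in \bl(\bar{\mathscr{C}}_{\sss (i)}(\lambda))} g_b,
\end{equation*}
and the goal is to show the right-hand side is $\oP(n^\rho)$. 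For fixed $K$, Theorem~\ref{c3:th:open-he-entrance} combined with Remark~\ref{c3:rem:entrance-comp-size} gives $n^{-(\alpha+\delta)}g_b \pto 0$ for each $b\leq K$, so the top-$K$ contribution is $\oP(Kn^{\alpha+\delta})=\oP(n^\rho)$ (since $\alpha+\delta<\rho$).

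The heart of the proof is controlling the small-blob contribution $\sum_{b>K,\, b\in\bl(\bar{\mathscr{C}}_{\sss (i)}(\lambda))} g_b$. The driving observation is the first-order cancellation
\begin{equation*}
\sum_b f_b^2 \sim \frac{\mu(\nu-1)^2}{\nu^2}\,n^{1+\delta} \sim (\nu-1)\sum_b f_b|\mathscr{C}_{\sss (b)}|,
\end{equation*}
implied by Theorem~\ref{c3:th:open-he-entrance}, which gives $\sum_b f_b g_b = \oP(n^{1+\delta})$. Since $(\bar{\mathcal{F}}_i(\cdot))_{i\geq 1}$ evolves as an exact multiplicative coalescent on the $f$-masses (Section~\ref{c3:sec:coupling-mul-coal}), conditional on $\mathcal{G}_n(t_n)$ the probability that blob $b$ is absorbed into $\bar{\mathscr{C}}_{\sss (i)}(\lambda)$ is, to leading order, proportional to $f_b$. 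This proportionality, coupled with the above cancellation, shows that the conditional mean of the small-blob sum is $\oP(n^\rho)$. The conditional variance is controlled via the third-moment-type susceptibilities $\sum f_b^3$ and $\sum f_b|\mathscr{C}_{\sss (b)}|^2$, both of order $n^{3(\alpha+\delta)}$ by Theorem~\ref{c3:thm:susceptibility} and Remark~\ref{c3:rem:3rd-suscep}; a direct computation using $\delta<\eta$ and the identity $3(\alpha+\delta)-1 = 2\delta-\eta+2(\alpha+\delta)$ shows Chebyshev's inequality yields $\oP(n^\rho)$.

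The chief obstacle is the rigorous justification of the heuristic ``blob $b$ is absorbed with probability $\propto f_b$'' and the associated variance bound: the events $\{b\in\bar{\mathscr{C}}_{\sss (i)}(\lambda)\}$ are correlated across $b$ via the coalescent dynamics, so one must exploit either the exchangeability of small blobs within an excursion or the Poisson/thinned-L\'evy description of the multiplicative coalescent's entrance boundary from~\cite{AL98} to turn the heuristic into a quantitative mean/variance estimate. The ``Consequently'' clause then follows at once from the first claim: one writes $n^{-\rho}|\bar{\mathscr{C}}_{\sss (i)}(\lambda)| = (\nu-1)^{-1}n^{-\rho}\bar{\mathcal{F}}_i(\lambda) + \oP(1)$ and invokes Proposition~\ref{c3:thm:modified-open-he-limit} to obtain $n^{-\rho}|\bar{\mathscr{C}}_{\sss (i)}(\lambda)|\dto \xi_i/\nu$ in the product topology.
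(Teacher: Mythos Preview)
Your approach is genuinely different from the paper's, and the obstacle you flag in your last paragraph is real and is not overcome in your sketch; the paper sidesteps it entirely.

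Instead of a mean--variance argument on the indicators $\{b\in\bl(\bar{\mathscr{C}}_{\sss (i)}(\lambda))\}$, the paper exploits the Aldous--Limic breadth-first exploration of the superstructure of $\bar{\mathcal{G}}_n(t_c(\lambda))$, which is the rank-one graph $\mathrm{NR}_n(\bld{x},q)$ with $x_b=n^{-\rho}f_b(t_n)$. In that exploration the blobs are visited in size-biased order with sizes $\bld{x}$, and by \cite{AL98} there exist random $m_L<m_R$ with $\bar{\mathscr{C}}_{\sss (i)}(\lambda)$ explored precisely over $(m_L,m_R]$ and $\sum_{k\leq m_R}x_{v(k)}$ tight. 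The key tool is then the size-biased law of large numbers Lemma~\ref{c3:lem:size-biased}: applied once with $y_b=x_b$ and once with $y_b=n^{-\rho}|\mathscr{C}_{\sss(b)}(t_n)|$, it gives $\sum_{m_L<k\leq m_R}x_{v(k)}\approx (m_R-m_L)/m_0$ and $\sum_{m_L<k\leq m_R}y_{v(k)}\approx (m_R-m_L)/m_0'$, whence the ratio of $\bar{\mathcal{F}}_i$ to $|\bar{\mathscr{C}}_{\sss(i)}|$ converges to $s_2^\omega/s_{pr}^\omega\to\nu-1$ by Theorem~\ref{c3:th:open-he-entrance}. The hypotheses \eqref{c3:eq:size-biased-conditions} are checked from the third-order susceptibilities (Remark~\ref{c3:rem:3rd-suscep}). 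This linearises the problem: the component becomes an \emph{interval} in the exploration order, so no control of the joint law of the absorption events is needed---only moment conditions on the full sequence $(x_b,y_b)$, which you already have. Your truncation at $K$ and the cancellation $\sum_b f_b g_b=\oP(n^{1+\delta})$ are correct, but the variance bound you propose requires covariance estimates for $\ind{b\in\bl},\ind{b'\in\bl}$ under the coalescent, and these do not decouple without essentially reproving the exploration picture. Your derivation of the ``Consequently'' clause from the first part and Proposition~\ref{c3:thm:modified-open-he-limit} is correct and matches the paper.
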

% The proof uses similar ideas to \cite{DHLS16}. 
% Consider the breadth first exploration of the graph $\bar{\mathcal{G}}_n(t_c(\lambda))$. Notice that the vertices are explored in a sized biased manner with the sizes being proportional to $(x_i)_{i}$.
%  are  The vertices are explored in a sized-biased manner with the sizes being $\bld{f}(t_n)$. 
\noindent We will need the following lemma, the proof of which is same as \cite[Lemma 8.2]{BSW14}. 
%Note that the precise assumptions in \cite[Lemma 8.2]{BSW14} are used only in the last step of the proof and the proof holds under \eqref{c3:eq:size-biased-conditions} also.
%one is a consequence of the size-biased ordering of a set of indices and the other deals with the effect of rescaling time and space in the distribution of $\bld{\xi}$. 
%These will be useful in the proof of Proposition~\ref{c3:thm:mod-comp-openhe} as well as in the later part of this section.
\begin{lemma}[{\cite[Lemma 8.2]{BSW14}}]\label{c3:lem:size-biased} Consider two ordered weight sequences $\bld{x} = (x_i)_{i\in [m]} $ and $\bld{y} = (y_i)_{i\in [m]}$. 
Consider the size-biased reordering $(v(1),v(2),\dots)$ of  $[m]$ with respect to the weights $\bld{x}$ and let $V(i):= \{v(1),\dots,v(i)\}$. Denote $m_{rs} = \sum_i x_i^{r}y_i^{s}$, define $c_n = m_{11}/m_{10}$ and assume that $c_n>0$ for each $n$. 
Suppose that the following conditions hold:
\begin{equation}\label{c3:eq:size-biased-conditions}
 \frac{lm_{21}}{m_{10}m_{11}} \to 0, \quad \frac{m_{12}m_{10}}{lm_{11}^2}\to 0,\quad \frac{lm_{20}}{m_{10}^2}\to 0, \quad \text{ as }n\to\infty.
\end{equation}
Then, as $n\to\infty$,
%\begin{equation}\label{c3:eq:size-biased}
 $\sup_{k\leq l}\big|\frac{1}{lc_n}\sum_i y_i \ind{i\in V(k)}-\frac{k}{l}\big| \pto 0.$
%\end{equation}
\end{lemma}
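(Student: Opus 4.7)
The plan is to use a martingale decomposition to compare the partial sum $M_k := \sum_{j=1}^k y_{v(j)} = \sum_i y_i \ind{i\in V(k)}$ against its deterministic proxy $k c_n$. Let $\mathcal{F}_k$ be the $\sigma$-algebra generated by $(v(1),\dots,v(k))$, and set $S_x(k) = \sum_{i\notin V(k)} x_i$ and $S_{xy}(k) = \sum_{i\notin V(k)} x_i y_i$, so by the size-biased sampling rule
\begin{equation*}
\E\bigl[y_{v(k+1)} \,\big\vert\, \mathcal{F}_k\bigr] = \frac{S_{xy}(k)}{S_x(k)}.
\end{equation*}
Write $M_k = A_k + N_k$, where $A_k := \sum_{j=0}^{k-1} S_{xy}(j)/S_x(j)$ is the Doob--Meyer compensator and $N_k$ is a martingale. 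It then suffices to show, for $\varepsilon>0$,
\begin{equation*}
\PR\Bigl(\sup_{k\leq l}|A_k - k c_n|>\varepsilon l c_n\Bigr)\to 0,\qquad \PR\Bigl(\sup_{k\leq l}|N_k|>\varepsilon l c_n\Bigr)\to 0.
\end{equation*}

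For the compensator, note $S_{xy}(0) = m_{11}$ and $S_x(0) = m_{10}$, while for $j\leq l$ one has the deterministic bounds $m_{11} - j\max_i(x_iy_i) \leq S_{xy}(j) \leq m_{11}$ and similarly $m_{10} - j\max_i x_i\leq S_x(j)\leq m_{10}$. Using $\max_i x_i y_i \leq \sqrt{m_{20} m_{02}}$ is wasteful; instead, I would exploit the crude but sufficient bound $|S_{xy}(j)/S_x(j) - c_n| \leq  c_n\bigl(j \max_i x_i / m_{10} + j\max_i x_iy_i/m_{11}\bigr)$, control $\max_i x_i$ via $m_{20}$ and $\max_i x_i y_i$ via $m_{21}$, and conclude via hypotheses (i) and (iii) that $\sup_{k\leq l}|A_k - k c_n| = o(lc_n)$ almost surely. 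For the martingale piece, Doob's $L^2$ inequality gives
\begin{equation*}
\E\Bigl[\sup_{k\leq l}N_k^2\Bigr] \leq 4\sum_{j=0}^{l-1}\E\bigl[\mathrm{Var}(y_{v(j+1)}\mid \mathcal{F}_j)\bigr] \leq 4 l \cdot \frac{m_{12}}{m_{10}},
\end{equation*}
since the conditional variance is bounded by $S_{xy^2}(j)/S_x(j) \leq m_{12}/(m_{10}-l\max_i x_i)$, and the denominator is $m_{10}(1+o(1))$ by condition (iii). Dividing by $(lc_n)^2 = l^2 m_{11}^2/m_{10}^2$ and applying Chebyshev reduces the tightness of $N_k/(lc_n)$ exactly to condition (ii).

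The main technical point, and the step that forces the three quantitative hypotheses in \eqref{c3:eq:size-biased-conditions}, is obtaining the uniform-in-$k$ closeness of $S_{xy}(k)/S_x(k)$ to $c_n$: we must bound the maxima $\max_i x_i$ and $\max_i x_i y_i$ in terms of $m_{20}$ and $m_{21}$ respectively, which is clean since $\max_i x_i \leq \sqrt{m_{20}}$ and $\max_i x_i y_i \leq m_{21}^{1/1}$ via $\max \leq \sum$; the $o(1)$ error then propagates additively through $k\leq l$ terms, matching the normalization by $lc_n$ exactly when (i) and (iii) hold. Once this deterministic control of the compensator is in place, the martingale bound via (ii) finishes the proof.
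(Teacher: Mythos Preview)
The paper itself does not prove this lemma; it cites \cite[Lemma 8.2]{BSW14}. Your overall strategy---Doob decomposition $M_k=A_k+N_k$ and Doob's $L^2$ inequality for the martingale part---is the standard one and your treatment of $N_k$ via condition (ii) is correct.

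The compensator step, however, has a real gap. You claim deterministic control via $\max_i x_i\le\sqrt{m_{20}}$ and ``$\max_i x_iy_i\le m_{21}$ via $\max\le\sum$''. The second bound is simply wrong (the sum of the $x_iy_i$ is $m_{11}$, not $m_{21}$), and even the first does not do what you need: your bound on $|A_k-kc_n|$ requires $l\max_i x_i/m_{10}\to 0$, whereas condition (iii) only gives $lm_{20}/m_{10}^2\to 0$, i.e.\ $\sqrt{l}\,\sqrt{m_{20}}/m_{10}\to 0$, which is strictly weaker once $l\to\infty$. Concretely, with $y_i\equiv 1$, $x_2=\cdots=x_m=1$, $x_1=m^{0.7}$ and $l=m^{0.4}$, all three hypotheses of \eqref{c3:eq:size-biased-conditions} hold, yet $l\max_i x_i/m_{10}\asymp m^{0.1}\to\infty$, so your deterministic bound blows up.

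The repair is to bound the random sums $a_j:=\sum_{i\in V(j)}x_i$ and $b_j:=\sum_{i\in V(j)}x_iy_i$ in probability rather than via maxima. Writing $|S_{xy}(j)/S_x(j)-c_n|=(c_na_j-b_j)/S_x(j)$, introduce the stopping time $\tau=\inf\{j:a_j>m_{10}/2\}$; on $\{r\le\tau\}$ one has $\E[x_{v(r)}\mid\mathcal F_{r-1}]\le 2m_{20}/m_{10}$ and $\E[x_{v(r)}y_{v(r)}\mid\mathcal F_{r-1}]\le 2m_{21}/m_{10}$, so optional stopping gives $\E[a_{l\wedge\tau}]\le 2lm_{20}/m_{10}$ and $\E[b_{l\wedge\tau}]\le 2lm_{21}/m_{10}$. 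Markov plus condition (iii) then yields $\PR(\tau\le l)\to 0$, and on $\{\tau>l\}$ one obtains $\sup_{k\le l}|A_k-kc_n|\le 2l(c_na_l+b_l)/m_{10}$; the same expectation bounds together with conditions (iii) and (i) force $a_l/m_{10}\to 0$ and $b_l/m_{11}\to 0$ in probability, giving the required $o_{\PR}(lc_n)$. This is how conditions (i) and (iii) actually enter: they control the \emph{expected} depletion of mass after $l$ size-biased picks, not worst-case maxima.
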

%\begin{proof}
%The proof is exactly same as \cite[Lemma 8.2]{BSW14} and we skip it here. Note that the precise assumptions in \cite[Lemma 8.2]{BSW14} are used only in the last step of the proof and the proof holds under \eqref{c3:eq:size-biased-conditions} also.
%\end{proof}
% Let us now complete the proof of Proposition~\ref{c3:thm:mod-comp-openhe}.
\begin{proof}[Proof of Proposition~\ref{c3:thm:mod-comp-openhe}]
%The proof is similar to the proof of \cite[Proposition 6.6]{BBSX14}.
We only prove the asymptotic relation of $\bar{\mathcal{F}}_1(\lambda)$ and $|\mathscr{C}_{\sss (1)}(\lambda)|$.
Consider the breadth-first exploration of the \emph{supestructure} of graph $\bar{\mathcal{G}}_n(t_c(\lambda))$ (which is also a rank-one inhomogeneous random graph) using the Aldous-Limic construction from \cite[Section 2.3]{AL98}. 
Notice that the vertices are explored in a size-biased manner with the sizes being $\bld{x} = (x_i)_{i\geq 1}$, where $x_i = n^{-\rho} f_i(t_n)$.  
Let $v(i)$ be the $i$-th vertex explored.
Further, let $\bar{\mathscr{C}}_{\sss (i)}^{\sss \mathrm{st}}(\lambda)$ denote the component $\bar{\mathscr{C}}_{\sss (i)}(\lambda)$, where the blobs have been shrunk to single vertices.
Then, from \cite{AL98}, one has the following:
\begin{enumerate}[(i)]
\item there exists random variables $m_L,m_R$ such that $\bar{\mathscr{C}}_{\sss (i)}^{\sss \mathrm{st}}(\lambda)$ is explored between $m_L+1$ and $m_R$;
\item $\sum_{i\leq m_R}x_{\sss v(i)}$ is tight;
\item $\sum_{i=m_L+1}^{m_R}x_{\sss v(i)} \dto \gamma$, where $\gamma$ is some non-degenerate, positive random variable. 
\end{enumerate}  
Let $y_i = n^{-\rho}|\mathscr{C}_{\sss (i)}(t_n)|$. Using  Theorem~\ref{c3:th:open-he-entrance},  Remark~\ref{c3:rem:3rd-suscep} and Remark~\ref{c3:rem:entrance-comp-size}, it follows that
%\begin{equation}
%\begin{split}
 $\sum_i x_i^{r} y_i^{s} = \OP(n^{3\delta-3\eta});$ for $r+s=3$, $\sum_ix_i = \OP(n^{1-\rho}),$ and 
 $\sum_{i} x_i^{r} y_i^{s} = \OP(n^{-2\rho+1+\delta});$ for  $r+s = 2$.
% \end{split}
%\end{equation} 
Below, we show that 
\begin{equation}\label{c3:eqn:ratio-comp-he}
 \frac{\sum_{i = m_L+1}^{m_R}y_{\sss v(i)}}{\sum_{i = m_L+1}^{m_R}x_{\sss v(i)}}\times \frac{\sum_ix_i^2}{\sum_i x_i y_i} \pto 1.
\end{equation} The proof of Proposition~\ref{c3:thm:mod-comp-openhe} follows from \eqref{c3:eqn:ratio-comp-he} by observing that
$ \frac{\sum_{i}x_i^2}{\sum_ix_iy_i} = \frac{s_2^\omega(t_n)}{s_{pr}^\omega(t_n)}\pto \nu-1,$ and using Theorem~\ref{c3:th:open-he-entrance}.
To prove \eqref{c3:eqn:ratio-comp-he}, we will now apply Lemma~\ref{c3:lem:size-biased}.  Denote $m_0 = \sum_i x_i/\sum_ix_i^2$ and consider $l = 2Tm_0$ for some fixed $T>0$.
 Using Theorem~\ref{c3:th:open-he-entrance}, an application of Lemma~\ref{c3:lem:size-biased} yields
\begin{align*}
 \sup_{k\leq 2Tm_0} \bigg|\sum_{i=1}^k x_{\sss v(i)} - \frac{k}{m_0}\bigg|\pto 0.
\end{align*}
Now, for any $\varepsilon > 0$, $T>0$ can be chosen large enough such that $\sum_{i=1}^{m_R}x_{\sss v(i)}>T$ has probability at most $\varepsilon$ and on the event 
$\big\{\sup_{k\leq 2Tm_0} \big|\sum_{i=1}^k x_{\sss v(i)} - \frac{k}{m_0}\big|\leq \varepsilon \big\}\cap \big\{\sum_{i=1}^{m_R}x_{\sss v(i)}\leq T\big\},$ 
one has $m_L<m_R<2Tm_0$. Thus, it follows that 
\begin{equation}\label{c3:weight-total-larg-comp-1}
 \bigg|\sum_{i=m_L+1}^{m_R}x_{\sss v(i)} - \frac{m_R-m_L}{m_0}\bigg| \pto 0.
\end{equation}An identical argument as above shows that 
\begin{equation}\label{c3:weight-total-large-comp-2}
 \bigg|\sum_{i=m_L+1}^{m_R}y_{\sss v(i)} - \frac{m_R-m_L}{m_0'}\bigg| \pto 0,
\end{equation}where $m_0'=\sum_{i}x_i/\sum_ix_iy_i$. The proof of \eqref{c3:eqn:ratio-comp-he} now follows from \eqref{c3:weight-total-larg-comp-1} and \eqref{c3:weight-total-large-comp-2}. 
The asymptotic distribution for $(n^{-\rho}|\bar{\mathscr{C}}_{\sss (i)}|)$ can be obtained using Proposition~\ref{c3:thm:modified-open-he-limit} and  Lemma~\ref{c3:lem:rescale}.
\end{proof}
Recall that $\omega_i(t_n)$ denotes the number of open-half edges attached to vertex $i$ in the graph $\mathcal{G}_n(t_n)$. 
We now equip $\bar{\mathscr{C}}_{\sss (i)}(\lambda)$ with the probability measure $\mu_{\sss \mathrm{fr}}^i$ given by $\mu_{\sss \mathrm{fr}}^i(A) = \sum_{k\in A}\omega_k(t_n)/\mathcal{F}_i(\lambda)$ for $A\subset \bar{\mathscr{C}}_{\sss (i)}(\lambda)$, and denote the corresponding measured metric space by $\bar{\mathscr{C}}^{\sss \mathrm{fr}}_{\sss (i)}(\lambda)$.
\begin{theorem}\label{c3:thm:mspace-limit-modified} Under \textrm{Assumption~\ref{c3:assumption1}}, as $n\to\infty$,
\begin{equation}\label{c3:eq:limit-component-free}
 \big(n^{-\eta}\bar{\mathscr{C}}_{\sss (i)}^{\sss \mathrm{fr}}(\lambda)\big)_{i\geq 1} \dto (M_{i})_{i\geq 1},
\end{equation}with respect to the $\mathscr{S}_*^\N$ topology, where $M_i$ is defined in Section~\ref{c3:sec:limit-component}. 
\end{theorem}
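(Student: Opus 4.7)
The plan is to view $\bar{\mathcal{G}}_n(t_c(\lambda))$ as a super-graph whose blobs are the barely-subcritical clusters $\mathscr{C}_{\sss(j)}(t_n)$ and whose superstructure is (asymptotically) a rank-one inhomogeneous random graph of Norros--Reittu type; Theorem~\ref{c3:thm:univesalty} then applies component-by-component. The intra-blob graph metrics are inherited from $\mathcal{G}_n(t_n)$, and the intra-blob measures are induced by the open half-edges $\omega_k(t_n)$, which is exactly what makes the resulting measure on the blobbed graph coincide with $\mu_{\sss\mathrm{fr}}^i$. All inputs required to invoke Theorem~\ref{c3:thm:univesalty} are supplied by the barely-subcritical analysis: Theorem~\ref{c3:th:open-he-entrance} (entrance-boundary conditions for open half-edges), Theorem~\ref{c3:thm:susceptibility} (weighted susceptibilities) and Theorem~\ref{c3:thm:diam-max} (diameter control).

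First, conditioning on $\mathcal{G}_n(t_n)$, the superstructure of $\bar{\mathcal{G}}_n(t_c(\lambda))$ on the blob-vertex set is $\mathrm{NR}_m(\bld{x}^n,q)$ in the sense of \eqref{c3:eq:pij-value}--\eqref{c3:eq:parameters-inhom} up to $(1+o_{\mathbb{P}}(1))$ corrections in the edge probabilities. Proposition~\ref{c3:prop:generate-nr-given-partition} then decomposes the superstructure conditional on the random blob-partition: the $i$-th component has law $\PR_{\mathrm{con}}(\cdot;\vp^{(i)}, a^{(i)}, \bl(\bar{\mathscr{C}}_{\sss(i)}(\lambda)))$ with
\begin{equation*}
 p_j^{(i)} = \frac{f_j(t_n)}{\bar{\mathcal{F}}_i(\lambda)},\qquad a^{(i)} = q\Big(\sum_{j\in \bl(\bar{\mathscr{C}}_{\sss(i)}(\lambda))} x_j^n\Big)^{2}.
\end{equation*}
Re-inflating the blobs with their graph metrics and $\omega$-measures produces exactly the blobbed Norros--Reittu structure $\tilde{\mathcal{G}}^{\sss\mathrm{bl}}_m(\vp^{(i)},a^{(i)})$ of \eqref{c3:defn:blob-Gm}.

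Next, I would verify the hypotheses of Theorem~\ref{c3:thm:univesalty} for each fixed $i$. For Assumption~\ref{c3:assm:BHS15}, the joint blob-size asymptotics in Theorem~\ref{c3:th:open-he-entrance}, together with the size-biased exploration that identifies the $i$-th component with the $i$-th excursion $\Xi_i^*$ of the thinned L\'evy process (as in Propositions~\ref{c3:thm:modified-open-he-limit}--\ref{c3:thm:mod-comp-openhe}), deliver $p_j^{(i)}/\sigma(\vp^{(i)}) \to \beta_j^{(i)}$ and $a^{(i)}\sigma(\vp^{(i)}) \to \gamma^{(i)}$ with the constants dictated by Section~\ref{c3:sec:limit-component}. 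For Assumption~\ref{c3:assm:blob-diameter}, Theorem~\ref{c3:thm:diam-max} gives $\Delta_{\max} = O(n^{\delta}\log n)$ whp, while the weighted distance susceptibility $\mathcal{D}_n^\omega$ of Theorem~\ref{c3:thm:susceptibility} controls $B_m^{(i)}$ at order $n^\delta$; together with $\sigma(\vp^{(i)})=O(n^{\delta-\eta})$ (from $\sum_j f_j^2 = O(n^{2(\alpha+\delta)})$ and $\bar{\mathcal{F}}_i = O(n^\rho)$) this yields $\sigma(\vp^{(i)})\Delta_{\max}/(B_m^{(i)}+1)\to 0$. Theorem~\ref{c3:thm:univesalty} then produces $\frac{\sigma(\vp^{(i)})}{B_m^{(i)}+1}\,\tilde{\mathcal{G}}^{\sss\mathrm{bl}}_m(\vp^{(i)}, a^{(i)}) \dto \mathcal{G}_{\sss(\infty)}(\bld{\beta}^{(i)},\gamma^{(i)})$; the same scaling computation shows $\sigma(\vp^{(i)})/(B_m^{(i)}+1) = n^{-\eta}(1+o_{\mathbb{P}}(1))$ times an explicit $\xi_i^*$-dependent constant, and Lemma~\ref{c3:lem:rescale} converts this limit into the form of $M_i$ given in Section~\ref{c3:sec:limit-component}. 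Joint convergence in $\mathscr{S}_*^{\mathbb{N}}$ follows from the conditional independence of distinct components in the Norros--Reittu construction combined with the joint convergence of $(\bar{\mathcal{F}}_i(\lambda))_{i\geq 1}$ in Proposition~\ref{c3:thm:modified-open-he-limit}.

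The main obstacle will be upgrading the global entrance-boundary asymptotics of Theorems~\ref{c3:thm:susceptibility} and~\ref{c3:th:open-he-entrance} (which aggregate over all blobs) to convergence statements \emph{restricted to} each fixed component $\bl(\bar{\mathscr{C}}_{\sss(i)}(\lambda))$, in particular verifying that the heavy-tailed blob structure inside that component is asymptotically governed exactly by the excursion $\Xi_i^*$. This requires combining the size-biased Aldous--Limic exploration already used in Proposition~\ref{c3:thm:mod-comp-openhe} with a careful analysis of how the blob-weight vector $(p_j^{(i)})$ is sampled during that exploration, in order to pass from the joint limit of the masses $\bar{\mathcal{F}}_i(\lambda)$ to a joint limit of the \emph{multi-set} of blob-sizes within each component. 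The subsequent bookkeeping of the constants $\mu,\nu,\nu-1$ entering $\sigma(\vp^{(i)})$, $a^{(i)}$ and $B_m^{(i)}$, needed to match the deterministic rescaling $\sigma(\vp^{(i)})/(B_m^{(i)}+1)$ to $n^{-\eta}$ and to agree with the parametrisation of $M_i$ via Lemma~\ref{c3:lem:rescale}, is the remaining technical piece that has to be handled with care.
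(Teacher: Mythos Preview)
Your proposal is correct and follows the paper's approach exactly: treat the barely-subcritical clusters as blobs, recognise the superstructure as $\mathrm{NR}_m(\bld{x}^n,q)$, and apply Theorem~\ref{c3:thm:univesalty} component-by-component with the $\omega$-measure on blobs. The ``main obstacle'' you flag is dispatched in the paper by (i) quoting \cite[Proposition~5.1, Lemma~5.4]{BHS15} as a black box for the joint convergence of the per-component parameters $(a^{(i)}\sigma(\vp^{(i)}),\,\Upsilon_n^{(i)},\ldots)$ to the excursion data, and (ii) applying Lemma~\ref{c3:lem:size-biased} with $y_j=x_j u_j$ to reduce the within-component averaged blob distance $B_n^{(i)}$ to the global ratio $\sum_j x_j^2 u_j/\sum_j x_j^2=\mathcal{D}_n^\omega/s_2^\omega$; Lemma~\ref{c3:lem:rescale} plays no role in this step.
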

\begin{proof}
We just consider the metric space limit of $\bar{\mathscr{C}}_{\sss (i)}^{\sss \mathrm{fr}}(\lambda)$ for each fixed $i\geq 1$ and the joint convergence in \eqref{c3:eq:limit-component-free} follows using the joint convergence of different functionals used throughout the proof. 
Recall the notation $\bl(\mathscr{C}):=\{b: \mathscr{C}_{\sss (b)}(t_n)\subset\mathscr{C}\}$ for a component $\mathscr{C}$.
Now, $\bar{\mathscr{C}}^{\sss \mathrm{fr}}_{\sss (i)}(\lambda)$ can be seen as a super-graph as defined in Section~\ref{c3:defn:super-graph} with \begin{enumerate}
\item the collection of blobs $\{\mathscr{C}_{\sss (b)}(t_n):b\in \bl(\bar{\mathscr{C}}_{\sss (i)}(\lambda))\}$ and within-blob measure $\mu_b$ given by $\mu_b(A) = \sum_{k\in A}\omega_k(t_n)/f_b(t_n)$, $A\subset \mathscr{C}_{\sss (b)}(t_n)$,  $b\in  \bl(\bar{\mathscr{C}}_{\sss (i)}(\lambda))$;
\item the superstructure consisting of the edges appearing during $[t_n,t_c(\lambda)]$ in Algorithm~\ref{c3:algo:modified-MC} and weight sequence $(f_b(t_n)/\bar{\mathcal{F}}_i(\lambda):b\in  \bl(\bar{\mathscr{C}}_{\sss (i)}(\lambda)))$.
\end{enumerate}  
Let $\dst(\cdot,\cdot)$ denote the graph distance on $\bar{\mathscr{C}}_{\sss (i)}(\lambda)$ and define 
 \begin{equation}
  u_b = \sum_{i,j\in \mathscr{C}_{\sss (b)}(t_n)} \frac{\omega_i\omega_j}{f_b^2(t_n)}\dst(i,j), \quad B_n^{\sss (i)}= \frac{\sum_{b \in \bl(\bar{\mathscr{C}}_{\sss (i)}(\lambda))}x_bu_b}{\sum_{b\in \bl(\bar{\mathscr{C}}_{\sss (i)}(\lambda))}x_b}.
 \end{equation}
 Here $u_b$ gives the average distance within blob $\mathscr{C}_{\sss (b)}(t_n)$.
Using Lemma~\ref{c3:lem:size-biased}, we will show  
   \begin{equation}\label{c3:eq:measure-limit}
    B_n^{\sss (i)} \times \frac{\sum_i x_i^2}{\sum_i x_i^2u_i}\pto 1.
   \end{equation}
The argument is the same as the proof of \eqref{c3:eqn:ratio-comp-he}. We only have to ensure that \eqref{c3:eq:size-biased-conditions} holds with $y_i = x_iu_i$. 
   Thus, we need to show that 
   \begin{equation}\label{c3:cond-check-dist-blob}
    \frac{n^{\rho-\delta}\sum_ix_i^3u_i}{\sum_ix_i\sum_ix_i^2u_i}\pto 0,\quad \text{and}\quad \frac{\sum_i x_i^3 u_i^2\sum_ix_i}{n^{\rho-\delta}\big(\sum_ix_i^2u_i\big)^2}\pto 0.
   \end{equation}First of all, notice that, by Lemma~\ref{c3:lem:total-open-he} and Theorem~\ref{c3:th:open-he-entrance},
   \begin{equation} \label{c3:order:cn}
   \begin{split}
    c_n &= \frac{\sum_i x_i^2u_i}{\sum_i x_i} = (1+\oP(1))\frac{\nu n^{-1+\rho}}{\mu(\nu-1)} n^{-2\rho} \sum_b f_b^2(t_n) \sum_{i,j\in b} \frac{\omega_i\omega_j}{f_b^2(t_n)}\dst(i,j) \\
    &=  \frac{\nu n^{-1+\rho}}{\mu(\nu-1)} n^{1-2\rho} \mathcal{D}_n^\omega  = n^{2\delta-\rho} \frac{\nu-1}{\nu^2}(1+\oP(1)). 
    \end{split} 
   \end{equation}
   Also, recall from Theorem~\ref{c3:thm:diam-max} that $u_{\max}= \max_{b}u_b =\OP(n^{\delta}\log(n))$.  
   Now, 
   \begin{eq}
    \frac{n^{\rho-\delta}\sum_ix_i^3u_i}{\sum_i x_i \sum_ix_i^2u_i} &\leq \frac{n^{\rho-\delta}u_{\max}\sum_{i}x_i^3}{\sum_{i} x_i \sum_ix_i^2u_i}  = \OP\bigg(\frac{n^{\rho-\delta}n^{\delta}\log(n)n^{-3\rho}n^{3\alpha+3\delta}}{n^{1-\rho}n^{2\delta-\rho}n^{1-\rho}}\bigg) \\
    &= \OP(n^{\delta-\eta}\log(n))=\oP(1),\\
      \frac{\sum_i x_i^3 u_i^2\sum_ix_i}{n^{\rho-\delta}\big(\sum_ix_i^2u_i\big)^2}&\leq \frac{x_{\max}u_{\max}\sum_ix_i}{n^{\rho-\delta}\sum_ix_i^2u_i} = \OP\bigg(\frac{n^{-\rho}n^{\alpha+\delta}n^{\delta}\log(n)}{n^{\rho-\delta}n^{2\delta-\rho}}\bigg) \\
      &= \OP(n^{\delta-\eta}\log(n))=\oP(1),
   \end{eq}and \eqref{c3:cond-check-dist-blob} follows, and hence the proof of \eqref{c3:eq:measure-limit} also follows.    
   Recall that the superstructure of $\bar{\mathcal{G}}_n(t_c(\lambda))$ has the same distribution as a $\mathrm{NR}_n(\bld{x}, q)$ random graph with the parameters given by \eqref{c3:eq:parameters-inhom}. 
   Thus, using Proposition~\ref{c3:prop:generate-nr-given-partition}, we now aim to use Theorem~\ref{c3:thm:univesalty} on $\mathscr{C}_{\sss (i)}^{\sss \mathrm{fr}}(\lambda)$ with the blobs being $(\mathscr{C}_{\sss (i)}(t_n))_{i\geq 1}$, and $\mathbf{p}_n^{\sss (i)}$, $a_n^{\sss (i)}$ given by \eqref{c3:eq:p-n-a-NR}. 
%   Define $\Upsilon_n^{\sss (i)} = \big(p_b/\sigma(\mathbf{p}_n^{\sss (i)}):b\in \bl(\bar{\mathscr{C}}_{\sss (i)}(\lambda))\big)$. Let $\mathcal{N}(\R_+)$ denote the space of all counting measures equipped with the vague topology and denote the product space $\mathbb{S} = \R_+^3\times \mathcal{N}(\R_+)$. Define 
%   \begin{equation}
%    \mathscr{P}_n = \Big(a_n^{\sss (i)}\sigma(\mathbf{p}_n^{\sss (i)}), \sum_{b\in \bl(\bar{\mathscr{C}}_{\sss (i)}(\lambda))}x_b, \frac{1}{\sigma_2^2(\bld{x}^n)}\sum_{b\in \bl(\bar{\mathscr{C}}_{\sss (i)}(\lambda))}x_b^2, \Upsilon_n^{\sss (i)}\Big)_{i\geq 1},
%   \end{equation}  viewed as an element of $\mathbb{S}^\N$.  
 Define $\Upsilon_n^{\sss (i)} = \big(p_b/\sigma(\mathbf{p}_n^{\sss (i)}):b\in \bl(\bar{\mathscr{C}}_{\sss (i)}(\lambda))\big)$. 
 Let $\mathcal{N}(\R_+)$ denote the space of all counting measures equipped with the vague topology and denote the product space $\mathbb{S} = \R_+^3\times \mathcal{N}(\R_+)$. Define 
   \begin{equation}
    \mathscr{P}_n = \Big(a_n^{\sss (i)}\sigma(\mathbf{p}_n^{\sss (i)}), \sum_{b\in \bl(\bar{\mathscr{C}}_{\sss (i)}(\lambda))}x_b, \frac{1}{\sigma_2^2(\bld{x}^n)}\sum_{b\in \bl(\bar{\mathscr{C}}_{\sss (i)}(\lambda))}x_b^2, \Upsilon_n^{\sss (i)}\Big)_{i\geq 1},
   \end{equation}  viewed as an element of $\mathbb{S}^\N$.  
  Recall the definition of $\xi_i^*$ and $\Xi_i^*$ from Section~\ref{c3:sec:limit-component}.
   Define
   {\small\begin{equation}
    \mathscr{P}^\infty = \bigg( \frac{\xi_i^*}{\mu(\nu-1)}\bigg(\sum_{v\in \Xi_i^*}\theta_v^2\bigg)^{1/2},\ \xi_i^*,\ \frac{1}{\mu^2(\nu-1)^2}\sum_{v\in \Xi_i^*}\theta_v^2,\ \bigg(\frac{\theta_j}{\sum_{v\in \Xi_i^*}\theta_v^2}: j\in \Xi_i^*\bigg) \bigg)_{i\geq 1} 
   \end{equation} }
   The following is a consequence of \cite[Proposition 5.1, Lemma 5.4]{BHS15}:
   \begin{equation}\label{c3:con-com-parameters}
    \sigma(\mathbf{p}_n^{\sss (i)})\pto 0,\quad  \text{and} \quad \mathscr{P}_n\dto \mathscr{P}_{\infty} \text{ on } \mathbb{S}^\N.
   \end{equation}  Without loss of generality, we assume that the convergence in \eqref{c3:con-com-parameters} holds almost surely. Now, using \eqref{c3:eq:measure-limit}, it follows that
   \begin{align*}
    &\frac{\sigma(\mathbf{p}_n^{\sss (i)})}{1+B_n^{\sss (i)}} = \frac{\sigma_2(\bld{x}^n)\big( \sum_{v\in \Xi_i^*}\theta_v^2\big)^{1/2}}{\mu(\nu-1)\xi_i^*}\times\frac{\sum_i x_i^2}{\sum_i x_i^2u_i}(1+o(1))\\
  &= \frac{\sigma_2^2(\bld{x}^n)\big( \sum_{v\in \Xi_i^*}\theta_v^2\big)^{1/2}}{\mu(\nu-1)\xi_i^*\sum_i x_i^2u_i} (1+o(1))= n^{-\eta}\frac{\nu-1}{\nu}  \frac{1}{\xi_i^*}\bigg( \sum_{v\in \Xi_i^*}\theta_v^2\bigg)^{1/2}(1+o(1)),
   \end{align*}    
    where the last step follows from Theorem~\ref{c3:th:open-he-entrance}, \eqref{c3:order:cn} and \eqref{c3:con-com-parameters}. The proof of Theorem~\ref{c3:thm:mspace-limit-modified} is now complete using Theorem~\ref{c3:thm:univesalty}.
\end{proof}
%\todo[inline]{Check limiting object. See Section~\ref{c3:sec:limit-component} for notation change. Make notation consistent. Definition of $\xi_i$ has changed.}
\subsection{Properties of the original process}
\label{c3:sec:struct-compare}
 Let us denote the ordered components of $\mathcal{G}_n(t_c(\lambda))$ simply by $(\mathscr{C}_{\sss (i)}(\lambda))_{i\geq 1}$. 
% Equip $\mathscr{C}_{\sss (i)}(\lambda)$ with the measure $\mu_{\sss\mathrm{fr}}^i$ and denote the corresponding measured metric space by $\mathscr{C}_{\sss (i)}^{\sss \mathrm{fr}}(\lambda)$.
%The goal is to show that $n^{-\eta}\mathscr{C}_{\sss (i)}^{\sss \mathrm{fr}}(\lambda)$ and $n^{-\eta}\bar{\mathscr{C}}_{\sss (i)}^{\sss \mathrm{fr}}(\lambda)$ are close in the Gromov-weak topology. 
To prove Theorem~\ref{c3:thm:main}, we need to compare functionals of $\mathscr{C}_{\sss (i)}(\lambda)$ and $\bar{\mathscr{C}}_{\sss (i)}(\lambda)$ that describe the structures of these graphs. 
%The goal of this section is to do that comparison and the proof of Theorem~\ref{c3:thm:main} is completed in Section~\ref{c3:sec:proof-thm1}.
Firstly, the following is a direct consequence of Lemma~\ref{c3:lem:coupling-whp} and Proposition~\ref{c3:prop:comp-size}:
\begin{proposition}\label{c3:thm:comp-functionals-original} Let $(\mathscr{C}_{\sss (i)}(\lambda))_{i\geq 1}$ denote the ordered vector of components sizes of the graph $\mathcal{G}_n(t_c(\lambda))$. Then, 
%\begin{equation}
$\big(n^{-\rho}|\mathscr{C}_{\sss (i)}(\lambda)|,\surp{\mathscr{C}_{\sss (i)}(\lambda)}\big)_{i\geq 1} \xrightarrow{\sss d} (\frac{1}{\nu}\xi_i,\mathscr{N}_i)_{i\geq 1}$
% \end{equation}
as $n\to\infty$,
with respect to the topology on $\ell^2_\shortarrow\times \N^\N$, where the limiting objects are defined in Proposition~\ref{c3:prop:comp-size}.
\end{proposition}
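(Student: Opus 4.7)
The plan is to deduce Proposition~\ref{c3:thm:comp-functionals-original} from Lemma~\ref{c3:lem:coupling-whp} and Proposition~\ref{c3:prop:comp-size} via a sandwich argument. By Lemma~\ref{c3:lem:coupling-whp} there exists $\varepsilon_n = o(n^{-\eta})$ and a coupling under which, with high probability,
\[
\mathrm{CM}_n(\bld{d}, p_n(\lambda) - \varepsilon_n) \subset \mathcal{G}_n(t_c(\lambda)) \subset \mathrm{CM}_n(\bld{d}, p_n(\lambda) + \varepsilon_n).
\]
Since $\varepsilon_n = o(n^{-\eta})$, the perturbed probabilities satisfy $p_n(\lambda) \pm \varepsilon_n = 1/\nu_n + \lambda n^{-\eta} + o(n^{-\eta})$, so the two outer graphs $G_n^{\pm} := \mathrm{CM}_n(\bld{d}, p_n(\lambda) \pm \varepsilon_n)$ both lie in the critical window \eqref{c3:eq:critical-window-defn} with the \emph{same} parameter $\lambda$. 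Proposition~\ref{c3:prop:comp-size} then gives, for both $G_n^{\pm}$,
\[
\big(n^{-\rho}|\mathscr{C}^{\pm}_{\sss (i)}|,\ \surp{\mathscr{C}^{\pm}_{\sss (i)}}\big)_{i \geq 1} \dto \big(\tfrac{1}{\nu}\xi_i,\ \mathscr{N}_i\big)_{i \geq 1} \quad \text{in } \ell^2_{\shortarrow} \times \N^\N,
\]
with a common limit.

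Next I would exploit monotonicity under the sandwich. Since $G_n^- \subset \mathcal{G}_n(t_c(\lambda)) \subset G_n^+$, every component of $\mathcal{G}_n(t_c(\lambda))$ is a union of components of $G_n^-$ and is contained in some component of $G_n^+$. On the high-probability coupling event this gives, for every $k \geq 1$, the pathwise inequalities
\[
\sum_{i=1}^k |\mathscr{C}^-_{\sss (i)}| \;\leq\; \sum_{i=1}^k |\mathscr{C}_{\sss (i)}(\lambda)| \;\leq\; \sum_{i=1}^k |\mathscr{C}^+_{\sss (i)}|, \qquad \sum_{i \geq 1} |\mathscr{C}^-_{\sss (i)}|^2 \;\leq\; \sum_{i \geq 1} |\mathscr{C}_{\sss (i)}(\lambda)|^2 \;\leq\; \sum_{i \geq 1} |\mathscr{C}^+_{\sss (i)}|^2,
\]
together with analogous bounds on partial sums of surplus edges (using that the surplus of a coarsened component is at least the sum of the surpluses of its constituents). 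Because the outer quantities converge in distribution to the same limit after rescaling by $n^{-\rho}$ (respectively $n^{-2\rho}$), the sandwich forces the corresponding middle quantities to converge to the same limit.

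The main obstacle is promoting these partial-sum and $\ell^2$-norm controls into the coordinate-wise convergence required by the topology on $\ell^2_{\shortarrow} \times \N^\N$. I would address this by passing to a Skorokhod coupling on which both outer convergences hold almost surely, and invoking that the limit sequence $(\xi_i)_{i \geq 1}$ has almost surely strictly decreasing positive entries---a standard property of the excursion lengths of the thinned L\'evy process \eqref{c3:defn::limiting::process}, cf.~\cite{AL98}. Distinctness of the limiting coordinates rules out any macroscopic merger or swap between $G_n^\pm$ in the limit, so each $n^{-\rho}|\mathscr{C}_{\sss (i)}(\lambda)|$ is forced to track $n^{-\rho}|\mathscr{C}^{\pm}_{\sss (i)}|$ to leading order; the coordinate-wise plus $\ell^2$-norm characterization of $\ell^2_{\shortarrow}$-convergence then closes the argument, and the integer-valued surplus coordinates are handled identically.
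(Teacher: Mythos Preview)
Your proposal is correct and matches the paper's approach: the paper states the result as ``a direct consequence of Lemma~\ref{c3:lem:coupling-whp} and Proposition~\ref{c3:prop:comp-size}'' without further detail, and the sandwich argument you spell out is exactly what is implicitly invoked (the same argument appears in the referenced earlier chapters). Your treatment of the promotion from partial-sum bounds to coordinate-wise convergence via almost-sure distinctness of the $\xi_i$, together with the $\ell^2$-norm sandwich, is the standard way to close this, and the surplus coordinates follow once the component-level matching $\mathscr{C}_{\sss(i)}^{-}\subset\mathscr{C}_{\sss(i)}(\lambda)\subset\mathscr{C}_{\sss(i)}^{+}$ is established for each fixed $i$.
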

\noindent Now, conditionally on $\mathcal{G}_n(t_n)$, $\mathscr{C}_{\sss (i)}(\lambda)$ can also be viewed as consisting of blobs $(\mathscr{C}_{\sss (i)}(t_n))_{i\geq 1}$ and a superstructure connecting the blobs. 
Denote 
\begin{equation}
  \mathcal{F}_{i}(\lambda) = \sum_{b\in \bl(\mathscr{C}_{\sss (i)}(\lambda))}f_{b}(t_n).
 \end{equation} 
 The components consist of surplus edges within the blobs and the surplus edges in the superstructure. 
Let $\mathrm{SP}'(\mathscr{C}_{\sss (i)}(\lambda))$ denote the number of surplus edges in the superstructure of $\mathscr{C}_{\sss (i)}(\lambda)$. 
The following proposition relates the superstructure and components:
\begin{proposition}\label{c3:thm:comp-functional-original}
Assume that $\eta/2<\delta<\eta$. 
Then, for each $1\leq i\leq K$, the following hold:
\begin{enumerate}[(a)]
\item With high probability, $\mathrm{SP}'(\mathscr{C}_{\sss (i)}(\lambda)) = \mathrm{SP}(\mathscr{C}_{\sss (i)}(\lambda))$. Consequently, there are no surplus edges within blobs in $\mathscr{C}_{\sss (i)}(\lambda)$ with high probability;
\item $\mathcal{F}_i(\lambda)/|\mathscr{C}_{\sss (i)}(\lambda)|\pto \nu-1$. Consequently, $(n^{-\rho}\mathcal{F}_i(\lambda))_{i\geq 1}$ and $(n^{-\rho}\bar{\mathcal{F}}_i(\lambda))_{i\geq 1}$ have the same distributional limit as \textrm{Proposition~\ref{c3:thm:modified-open-he-limit}}.
\end{enumerate}
\end{proposition}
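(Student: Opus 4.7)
The proposition has two parts. I plan to prove (b) first, as a direct analogue of the proof of Proposition~\ref{c3:thm:mod-comp-openhe}, and then (a), which is more delicate. For (b), I will use that the processes $\mathcal{G}_n$ and $\bar{\mathcal{G}}_n$ (from Proposition~\ref{c3:prop-coupling-order}) differ only by re-pairings of already paired half-edges; over the time window $[t_n,t_c(\lambda)]$ the number of re-pairings is $\OP(1)$, so the superstructures of the two graphs on the time-$t_n$ blob set agree up to $\OP(1)$ multi-edges, which is negligible for sums over a fixed top supercomponent. I can therefore repeat the proof of Proposition~\ref{c3:thm:mod-comp-openhe} verbatim, exploring the supercomponents of $\mathcal{G}_n(t_c(\lambda))$ in size-biased order with weights $x_b=n^{-\rho}f_b(t_n)$ and applying Lemma~\ref{c3:lem:size-biased} twice, first with $y_b=x_b$ (to certify that the exploration sweeps the correct blob indices) and then with $y_b=n^{-\rho}|\mathscr{C}_{(b)}(t_n)|$; the conditions~\eqref{c3:eq:size-biased-conditions} are verified via Theorem~\ref{c3:th:open-he-entrance} and Remark~\ref{c3:rem:entrance-comp-size}. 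This yields
\[
\frac{\sum_{b\in\bl(\mathscr{C}_{(i)}(\lambda))}|\mathscr{C}_{(b)}(t_n)|}{\sum_{b\in\bl(\mathscr{C}_{(i)}(\lambda))}f_b(t_n)}\;\pto\;\frac{s_{pr}^{\omega}}{s_2^{\omega}}\;\pto\;\frac{1}{\nu-1},
\]
i.e.\ $\mathcal{F}_i(\lambda)=(\nu-1)|\mathscr{C}_{(i)}(\lambda)|+\oP(n^\rho)$; combining with Proposition~\ref{c3:thm:comp-functionals-original} and Lemma~\ref{c3:lem:rescale} applied to Proposition~\ref{c3:thm:modified-open-he-limit} gives the joint distributional limit.

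For (a), decompose
\[
\mathrm{SP}(\mathscr{C}_{(i)}(\lambda))-\mathrm{SP}'(\mathscr{C}_{(i)}(\lambda))\;=\;\sum_{b\in\bl(\mathscr{C}_{(i)}(\lambda))}\mathrm{SP}(\mathscr{C}_{(b)}(t_n))\;+\;N_i^{\mathrm{in}},
\]
where $N_i^{\mathrm{in}}$ is the number of pairings during $(t_n,t_c(\lambda)]$ whose two half-edges belong to the same time-$t_n$ blob sitting inside $\mathscr{C}_{(i)}(\lambda)$. For the first sum I will recycle the cycle-counting bookkeeping from Lemma~\ref{c3:lem:expt-weight-random}: since $\nu_n'<1$ with gap of order $n^{-\delta}$ and every relevant blob has size $\OP(n^{\alpha+\delta})$, the expected number of cycles inside a single such blob is $\oP(1)$, and a union bound over the finitely many hub-blobs plus a tail bound via Proposition~\ref{c3:prop:tail3-bare-subcrit} handle the whole sum. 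For $N_i^{\mathrm{in}}$ I use the coupling $\mathcal{G}_n\subset\bar{\mathcal{G}}_n$ and the first-moment bound
\[
\E\bigl[N_i^{\mathrm{in}}\,\big|\,\mathcal{G}_n(t_n)\bigr]\;\leq\;\sum_{b\in\bl(\mathscr{C}_{(i)}(\lambda))}\binom{f_b(t_n)}{2}\cdot\frac{2\bigl(t_c(\lambda)-t_n\bigr)}{s_1(t_n)-1}.
\]

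\paragraph*{Main obstacle.} The real difficulty is in this last bound: the naive sum gives only $\OP(1)$, not $\oP(1)$, because $(t_c(\lambda)-t_n)/s_1(t_n)$ is of order $n^{-1-\delta}$ while $\sum_{b\in\bl(\mathscr{C}_{(i)}(\lambda))}f_b(t_n)^2$ is of the same order $n^{1+\delta}$ as $s_2^{\omega}$ itself. I plan to sharpen this via truncation at a large level $K'$: each of the first $K'$ hub-blobs contributes $\OP(n^{2(\alpha+\delta)})\cdot\OP(n^{-1-\delta})=\OP(n^{\delta-\eta})$, which is $\oP(1)$ for each fixed $K'$. For the tail $b>K'$, the key input is an $L^1$-refinement of the proof of Theorem~\ref{c3:thm:mspace-limit-modified} restricted to one fixed top supercomponent,
\[
\sigma_2(\bld{x}^n)^{-2}\sum_{b\in\bl(\mathscr{C}_{(i)}(\lambda)),\,b>K'}\bigl(n^{-\rho}f_b(t_n)\bigr)^2\;\xrightarrow{d}\;\frac{1}{\mu^2(\nu-1)^2}\sum_{v\in\Xi_i^*,\,v>K'}\theta_v^2,
\]
and the fact that $\Xi_i^*$ is almost surely a finite set of hub indices, so the right-hand side tends to $0$ in probability as $K'\to\infty$. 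This forces the tail contribution to $\E[N_i^{\mathrm{in}}]$ to vanish in the iterated limit $\lim_{K'\to\infty}\limsup_{n\to\infty}$, and Fact~\ref{c3:fact:2} then delivers $N_i^{\mathrm{in}}\pto 0$. The assumption $\delta>\eta/2$ enters at exactly this step, to keep the error arising from passing between $\mathcal{G}_n$ and $\bar{\mathcal{G}}_n$ (and from controlling $\sum_{b>K'}f_b^3$ via Proposition~\ref{c3:prop:tail3-bare-subcrit}) negligible against the main term; certifying this $L^1$-refinement per-component, rather than summed over all supercomponents as in Theorem~\ref{c3:thm:mspace-limit-modified}, is the single genuinely subtle technical step of the whole proof.
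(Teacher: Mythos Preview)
Your proposed reduction of the original process to the modified one is based on a quantitative error. You claim that over the window $[t_n,t_c(\lambda)]$ the number of re-pairings in $\bar{\mathcal{G}}_n$ is $\OP(1)$. It is not. In the Poisson model of Algorithm~\ref{c3:algo:modified-MC} each of the $s_1(t_n)=\Theta(n)$ half-edges is involved in events at total rate~$2$, so over a window of length $t_c(\lambda)-t_n=\Theta(n^{-\delta})$ it is hit by $\mathrm{Poisson}(\Theta(n^{-\delta}))$ events; the expected number of half-edges hit at least twice is therefore $\Theta(n^{1-2\delta})$, and since $\delta<\eta<1/3$ this diverges. Equivalently, the total number of edges created in $\bar{\mathcal{G}}_n$ and in $\mathcal{G}_n$ over $[t_n,t_c(\lambda)]$ are both $\Theta(n^{1-\delta})$, and their difference is $\Theta_{\PR}(n^{1-2\delta})$. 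So the superstructures of $\mathcal{G}_n(t_c(\lambda))$ and $\bar{\mathcal{G}}_n(t_c(\lambda))$ do \emph{not} agree up to $\OP(1)$ edges, and you cannot transport the Aldous--Limic exploration from $\bar{\mathcal{G}}_n$ to $\mathcal{G}_n$ ``verbatim''; there is no natural breadth-first exploration of the superstructure of $\mathcal{G}_n(t_c(\lambda))$ that is size-biased with weights $f_b(t_n)$.

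The paper avoids this obstacle altogether and takes a different route. It does \emph{not} compare $\mathcal{G}_n$ with $\bar{\mathcal{G}}_n$ for this proposition. Instead it shows that the superstructure $\mathcal{G}_n'$ of $\mathcal{G}_n(t_c(\lambda))$ is sandwiched (eq.~\eqref{c3:eq:blob-graphs}) between two percolated configuration models $\mathcal{G}_n'(\pi_n\pm\varepsilon_n)$ on the blob set with i.i.d.\ degrees $a_i\sim\mathrm{Bin}(f_i(t_n),\pi_n)$. The hypothesis $\delta>\eta/2$ enters precisely here, in Lemma~\ref{c3:lem:a-asymp}, to make the Azuma-type bound \eqref{c3:eqn::prob:ineq:third} effective for $\sum_i a_i(a_i-1)$ --- not where you locate it. One then runs the depth-first exploration (Algorithm~\ref{c3:algo:explor-barely-sub}) on $\mathcal{G}_n'(\pi_n)$, which is size-biased in the $a_i$'s, and proves the process limit of Proposition~\ref{c3:thm::convegence::exploration-process-blob}. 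This yields Lemmas~\ref{c3:thm:superstrcut-comp-surp-original} and~\ref{c3:thm:comp-blob-same-whp}: $\mathrm{SP}'(\mathscr{C}_{(i)}(\lambda))\dto\mathscr{N}_i$. Since $\mathrm{SP}(\mathscr{C}_{(i)}(\lambda))\dto\mathscr{N}_i$ already by Proposition~\ref{c3:thm:comp-functionals-original} and $\mathrm{SP}'\leq\mathrm{SP}$ almost surely, part~(a) follows without any first-moment cycle counting inside the blobs. Part~(b) is then obtained from the \emph{same} exploration (weights $a_i$, not $f_i(t_n)$) by one more application of Lemma~\ref{c3:lem:size-biased} with $y_i=f_i(t_n)$, see \eqref{c3:blob-vs-comp3}.
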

% The idea of the proof is again similar to \cite[Theorem 8.21]{BBSX14} using the entrance boundary conditions. 
\noindent We start by explaining the idea of the proof. 
Since $\mathrm{SP}'(\mathscr{C}_{\sss (i)}(\lambda))\leq \mathrm{SP}(\mathscr{C}_{\sss (i)}(\lambda))$ almost surely, for Part (a) it suffices to show that 
 \begin{equation}\label{c3:eq:surp-sam-dist-limit}
\mathrm{SP}'(\mathscr{C}_{\sss (i)}(\lambda))\text{ and }\mathrm{SP}(\mathscr{C}_{\sss (i)}(\lambda))\text{ have the same distributional limit}.
 \end{equation}
 Let $\mathcal{G}_n'$ denote the graph obtained from $\mathcal{G}_n(t_c(\lambda))$ by shrinking each blob to a single node. 
 Then, $\mathrm{SP}'(\cdot)$ can be viewed as the surplus edges in the components of $\mathcal{G}_n'$. 
% Suppose that $n_b$ denotes the total number of blobs, i.e. the total number of connected components in $\mathcal{G}_n(t_n)$. 
The graph $\mathcal{G}_n'$ can also be viewed to be constructed dynamically as in Algorithm~\ref{c3:algo:dyn-cons} with the degree sequence being $(f_i(t_n))_{i\geq 1}$.
In the following, we investigate the relations between $\mathcal{G}_n(t_n)$ and $\mathcal{G}_n'$ carefully. 
%This will provide the required machinery to prove Proposition~\ref{c3:thm:comp-functional-original} at the end of this section.
Lemma~\ref{c3:lem:total-open-he} implies that the number of unpaired half-edges in $\mathcal{G}_n(t_n)$ that are paired in $\mathcal{G}_n(t_c(\lambda))$ is given by
% \begin{gather*}
%  s_1(t_n) %&= n\mu_n \bigg(\frac{(\nu_n-1)}{\nu_n}\Big(1+\frac{\nu_n}{(\nu_n-1)}\frac{1}{n^{\delta}}+\OP(n^{-2\delta})\Big)\bigg)+\OP(n^{1/2}) \\
%  = \frac{n\mu_n(\nu_n-1)}{\nu_n}+\mu_n n^{1-\delta}+\OP(n^{1-2\delta})+\OP(n^{1/2}),\\
% s_1(t_c(\lambda)) =\frac{n\mu_n(\nu_n-1)}{\nu_n}-\mu_n \lambda n^{1-\eta}+\OP(n^{1-2\eta})+\OP(n^{1/2}).
%\end{gather*}
%Noting  that $1-2\eta<1-2\delta<1-\eta$ and $1-\eta>1/2$, the number of half-edges in the graph $\mathcal{G}_n'$ is given by
\begin{equation}\label{c3:eq:estimate-he-modi-supst}
s_1(t_n)-s_1(t_c(\lambda))  = n \mu_n( n^{-\delta} + \lambda n^{-\eta} )+\oP(n^{1-\gamma}), \quad \text{for some }\eta<\gamma.
\end{equation} 
 \begin{algo}\label{c3:algo:perc-for-blob}\normalfont
  Define
 $ \pi_n = \frac{\nu_n}{\nu_n-1}( n^{-\delta} + \lambda n^{-\eta} )$
   and associate $f_i(t_n)$ half-edges to the vertex $i$ of $\mathcal{G}_n'$. Construct the graph $\mathcal{G}_n'(\pi_n)$ as follows:
  \begin{enumerate}
  \item[(S1)] Retain each half-edge independently with probability $\pi_n$. 
  \item[(S2)] Create a uniform perfect matching between the retained half-edges and obtain $\mathcal{G}_n'(\pi_n)$ by creating edges corresponding to any two pair of matched half-edges.
  \end{enumerate}
 \end{algo}
 In (S1), if the total number of retained half-edges is odd, then add an extra half-edge to vertex~1. However, this will be ignored in the computations since it does not make any difference in the asymptotic computations.
Notice that $a_i$, the number of half-edges attached to $i$ that are retained by Algorithm~\ref{c3:algo:perc-for-blob}~(S1), is distributed as $\mathrm{Bin}(f_i(t_n),\pi_n)$, independently for each $i$. 
Thus the number of half-edges in the graph $\mathcal{G}_n'(\pi_n)$ is distributed as a $\mathrm{Bin}(s_1(t_n),\pi_n)$ random variable.
We claim that there exists $\varepsilon_n = o(n^{-\eta})$ and a coupling such that, with high probability 
\begin{equation}\label{c3:eq:blob-graphs}
 \mathcal{G}_n'(\pi_n-\varepsilon_n)\subset \mathcal{G}_n'\subset \mathcal{G}_n'(\pi_n+\varepsilon_n).
\end{equation}
The proof follows from an identical argument as Lemma~\ref{c3:lem:coupling-whp} using the estimate \eqref{c3:eq:estimate-he-modi-supst} and standard concentration inequalities for binomial random variables. 
We skip the proof here and refer the reader to Chapter~\ref{c1:sec_multidimensional}.
We now continue to analyze $\mathcal{G}_n'(\pi_n)$, keeping in mind that the relation \eqref{c3:eq:blob-graphs} allows us make conclusions for $\mathcal{G}_n'$.
%
%
%
%To avoid inconvenience, we will regard the vertices of $\mathcal{G}_n'(\pi_n)$ as the blobs of $\mathcal{G}_n(t_c(\lambda))$ instead of $\mathcal{G}_n'$, keeping in mind that the relation \eqref{c3:eq:blob-graphs}  ensures the scaling limits to be correct.  
%Let $\mathscr{C}_{\sss (i)}(\pi_n)$ denote the $i$th largest component of $\mathcal{G}_n'(\pi_n)$
%Thus, \eqref{c3:eq:surp-sam-dist-limit} follows if we can show that 
%\begin{equation}\label{c3:eq:surp-sam-dist-limit-2}
%\surp{\mathscr{C}_{\sss (i)}(\pi_n)}\text{ and }\surp{\mathscr{C}_{\sss (i)}(\lambda)}\text{ has the same distributional limit},
%\end{equation} and 
%\begin{equation}
% \sum_{\mathrm{blob}\in \mathscr{C}_{\sss (i)}}
%\end{equation}
To analyze the component sizes and the surplus edges of the components of $\mathcal{G}'(\pi_n)$ we first need some regularity conditions on $\bld{a}$, the degree sequence of $\mathcal{G}_n'(\pi_n)$, as summarized in the following lemma:
\begin{lemma}\label{c3:lem:a-asymp}
For some $\eta/2<\delta<\eta$, as $n\to\infty$,
\begin{gather*} 
 n^{-\alpha}a_i\pto\frac{\theta_i}{\nu}, \qquad  \frac{a_i}{\sum_i a_i}n^{\rho-\delta} \pto \frac{\theta_i}{\mu\nu}, \\ 
 \nu_n(\bld{a}) = \frac{\sum_ia_i(a_i-1)}{\sum_ia_i} = 1+ \lambda n^{-\eta+\delta} + \oP(n^{-\eta+\delta}),
 \end{gather*} and for any $\varepsilon>0$,
 \begin{equation}\label{c3:eq:a-third-moment}
 \lim_{K\to\infty}\limsup_{n\to\infty}\PR\bigg(\sum_{i>K}a_i^3>\varepsilon n^{3\alpha}\bigg) = 0.  
 \end{equation} 
\end{lemma}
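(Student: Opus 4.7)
The plan is to work conditionally on $\mathcal{F}_n := \sigma(\mathcal{G}_n(t_n))$, under which Algorithm~\ref{c3:algo:perc-for-blob} makes the $a_i$'s independent $\mathrm{Bin}(f_i(t_n),\pi_n)$. Combining this with Theorem~\ref{c3:th:open-he-entrance} (which controls $\bld{f}(t_n)$) and a Taylor expansion of $e^{-2t_n}$ inserted into Lemma~\ref{c3:lem:total-open-he} (which yields $s_1(t_n)/n = \mu(\nu-1)/\nu + \mu n^{-\delta} + \oP(n^{-\delta})$ and $\pi_n = \tfrac{\nu_n}{\nu_n-1}(n^{-\delta}+\lambda n^{-\eta})(1+o(1))$), each claim reduces to a standard moment computation for sums of independent binomials.

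Parts (1), (2), (4) will follow quickly. For (1), I will use $\E[a_i\mid\mathcal{F}_n] = \pi_n f_i(t_n) \sim (\theta_i/\nu) n^\alpha$ together with $\mathrm{Var}(a_i\mid\mathcal{F}_n) \le \pi_n f_i(t_n) = \OP(n^\alpha)$, which gives fluctuation $\OP(n^{\alpha/2})$ and closes the argument via Chebyshev. Part (2) follows by applying the same strategy to $\sum_i a_i \mid \mathcal{F}_n \sim \mathrm{Bin}(s_1(t_n),\pi_n)$, which has mean $\mu n^{1-\delta}(1+o(1))$ and standard deviation $\OP(n^{(1-\delta)/2})$, and then dividing through (1). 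For (4), Markov's inequality conditional on $\mathcal{F}_n$ together with the binomial identity $\E[a_i^3\mid\mathcal{F}_n] \le C(\pi_n^3 f_i^3 + \pi_n^2 f_i^2 + \pi_n f_i)$ yields
\[
\E\Big[\sum_{i>K} a_i^3 \,\Big|\, \mathcal{F}_n\Big] \le C\Big[\pi_n^3 \sum_{i>K} f_i(t_n)^3 + \pi_n^2 \sum_i f_i(t_n)^2 + \pi_n \sum_i f_i(t_n)\Big];
\]
the first term is $\oP(n^{3\alpha})$ uniformly in $n$ as $K\to\infty$ by Proposition~\ref{c3:prop:tail3-bare-subcrit} and Remark~\ref{c3:rem:3rd-suscep} applied to $\bld{f}(t_n)$, while the remaining two are $\OP(n^{1-\delta}) = o(n^{3\alpha})$ since $3\alpha > 1 > 1-\delta$ for $\tau \in (3,4)$.

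Part (3) is the delicate step. Writing $N = \sum_i a_i(a_i-1)$ and $D = \sum_i a_i$, so that $\nu_n(\bld{a})-1 = (N-D)/D$, conditioning on $\mathcal{F}_n$ gives the expansion $\E[N-D\mid\mathcal{F}_n]/\E[D\mid\mathcal{F}_n] = \pi_n \sum f_i^2/s_1(t_n) - (1 + \pi_n)$; inserting $\sum f_i(t_n)^2/s_1(t_n) \sim \tfrac{\nu-1}{\nu} n^\delta$ (from Theorem~\ref{c3:th:open-he-entrance} and Lemma~\ref{c3:lem:total-open-he}) together with $\pi_n n^\delta = 1 + \lambda n^{\delta-\eta} + o(n^{\delta-\eta})$ yields the desired skeleton $\lambda n^{\delta-\eta}$. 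The conditional fluctuations are easily handled: $\mathrm{Var}(N\mid\mathcal{F}_n) \le \OP(\pi_n^3 \sum f_i^3 + \pi_n^2 \sum f_i^2) = \OP(n^{3\alpha})$, so the standard deviation of $N/D$ is $\OP(n^{3\alpha/2 - 1 + \delta}) = \oP(n^{\delta-\eta})$ using $3\alpha/2 < 2\alpha = 1-\eta$, and an analogous computation handles $D$. The hypothesis $\delta > \eta/2$ is precisely what is needed to absorb the offset $-\pi_n = O(n^{-\delta}) = o(n^{\delta-\eta})$ and the bias correction $\pi_n^2 s_1(t_n) = O(n^{1-2\delta})$ into the error term $\oP(n^{\delta-\eta})$.

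The genuine obstacle lies in the deterministic expansion for (3). Theorem~\ref{c3:th:open-he-entrance} only states $s_2^\omega = \tfrac{\mu(\nu-1)^2}{\nu^2} n^\delta(1+\oP(1))$, so the correction $\lambda n^{\delta-\eta}$ extracted by multiplying the $(1+\oP(1))$ factor by $\pi_n n^\delta = O(1)$ is a priori swamped by a residual $\oP(1)$ rather than $\oP(n^{\delta-\eta})$. Closing this last gap will require either sharpening the second-moment/path-counting estimates of Section~\ref{c3:sec:entrance-bdd-proofs} to guarantee $s_2^\omega = \tfrac{\mu(\nu-1)^2}{\nu^2} n^\delta + \oP(n^{2\delta-\eta})$, or else avoiding $s_2^\omega$ altogether by relating $\nu_n(\bld{a})$ directly to the known criticality of $\mathcal{G}_n(t_c(\lambda))$ through the coupling \eqref{c3:eq:blob-graphs}; this is where I expect the bulk of the technical effort.
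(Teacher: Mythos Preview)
Your treatment of parts (1), (2), and (4) is essentially identical to the paper's: binomial concentration conditional on $\mathcal{G}_n(t_n)$, combined with Theorem~\ref{c3:th:open-he-entrance} and Proposition~\ref{c3:prop:tail3-bare-subcrit} for the third-moment tail.

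For part (3), the two arguments diverge in a minor way. The paper controls $\sum_i a_i(a_i-1)$ via a bounded-differences inequality (flipping one indicator $I_{kj}$ changes the sum by at most $2(f_k(t_n)+1)$), which forces the extra constraint $\delta < (2-3\alpha)/5$ to make the exponent diverge. Your direct variance bound $\mathrm{Var}(N\mid\mathcal{F}_n) = \OP(\pi_n^3\sum_i f_i^3) = \OP(n^{3\alpha})$ is sharper and yields fluctuation $\OP(n^{3\alpha/2-1+\delta}) = \oP(n^{\delta-\eta})$ for every $\delta\in(\eta/2,\eta)$, since $3\alpha/2 < 2\alpha = 1-\eta$; this is cleaner than the paper's route.

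The gap you flag in the deterministic expansion is genuine, and the paper does not close it either. After concentration, both arguments reduce $\nu_n(\bld{a})$ to $\pi_n\,\sum_i f_i(t_n)(f_i(t_n)-1)/s_1(t_n)$ plus admissible error, and the paper then simply writes ``Therefore, Theorem~\ref{c3:th:open-he-entrance} yields the required asymptotics for $\nu_n(\bld{a})$.'' But Theorem~\ref{c3:th:open-he-entrance} gives only $n^{-\delta}s_2^\omega \pto \mu(\nu-1)^2/\nu^2$, i.e.\ a $(1+\oP(1))$ multiplicative error in $\sum_i f_i(t_n)^2$; after multiplying by $\pi_n = \Theta(n^{-\delta})$ this produces $\nu_n(\bld{a}) = 1 + \oP(1)$, not the claimed $1 + \lambda n^{\delta-\eta} + \oP(n^{\delta-\eta})$. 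To recover the correct drift $\lambda t$ in Proposition~\ref{c3:thm::convegence::exploration-process-blob} one genuinely needs $s_2^\omega$ to precision $\oP(n^{2\delta-\eta})$, which is strictly finer than what the susceptibility analysis of Section~\ref{c3:sec:entrance-bdd-proofs} delivers (and also finer than the unquantified convergence $\nu_n\to\nu$ from Assumption~\ref{c3:assumption1}). Your two proposed remedies---sharpen the path-counting or bypass $s_2^\omega$ via the coupling~\eqref{c3:eq:blob-graphs}---are both reasonable, and the paper offers neither; you have correctly located where the work remains.
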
  
\begin{proof} 
Using Theorem~\ref{c3:th:open-he-entrance} and the fact that $a_i\sim \mathrm{Bin}(f_i(t_n),\pi_n)$, one gets $n^{-\alpha}a_i = (1+\oP(1))\theta_i/\nu$.
%\begin{equation}
%n^{-\alpha}a_i = (1+\oP(1))\frac{\nu}{\nu-1}n^{-\alpha-\delta}f_i(t_n) = (1+\oP(1))\frac{\theta_i}{\nu}.
%\end{equation}
Moreover, $\sum_ia_i\sim\mathrm{Bin}(\sum_if_i(t_n),\pi_n)$
and $\sum_ia_i = (1+\oP(1))\pi_n \sum_if_i(t_n)$ yield the required asymptotics for  $a_i/\sum_ja_j$.
%\begin{equation}
% \frac{a_i}{\sum_i a_i}n^{\rho-\delta} = \frac{(1+\oP(1))\frac{\theta_i}{\nu}}{(1+\oP(1))\frac{\nu}{\nu-1}\frac{\mu(\nu-1)}{\nu}} = (1+\oP(1))\frac{\theta_i}{\mu\nu}.
%\end{equation}
 Let $I_{ij}$ be the indicator of the $j$-th half-edge corresponding to vertex $i$ is kept in Algorithm~\ref{c3:algo:perc-for-blob}~(S1). Then $I_{ij} \sim \text{Ber} (\pi_n)$ independently for $j \in [f_i(t_n)]$, $i\geq 1$. 
% Let 
% \begin{equation} 
% \mathbf{I}:= (I_{ij})_{j \in [f_i(t_n)], i \geq 1}, \quad\text{and} \quad  A(\mathbf{I}):=\sum_{i} a_i(a_i-1).
% \end{equation}
 Note that, by changing the status of one half-edge corresponding to vertex $k$, we can change $\sum_ia_i(a_i-1)$ by at most $2(f_k(t_n)+1)$. Therefore we can apply \cite[Corollary 2.27]{JLR00} to conclude that 
 \begin{equation}\label{c3:eqn::prob:ineq:third}
 \begin{split}
 &\mathbb{P} \bigg( \Big|\sum_{i} a_i(a_i-1)- \pi_n^2 \sum_{i} f_i(t_n)(f_i(t_n)-1) \Big| >t \Big| (f_i(t_n))_{i\geq 1}\bigg) \\
 &\hspace{1cm}\leq 2 \exp \bigg( \frac{-t^2}{2\sum_{i} f_i(t_n) (f_{i}(t_n)+1)^2}\bigg).
 \end{split}
 \end{equation}
Observe that $\pi_n^2\sum_{i} f_i(t_n)(f_i(t_n)-1) = \Theta_{\sss \PR}(n^{1-\delta})$.
  Take $t = \varepsilon n^{1-\delta}$ and recall that $\sum_if_i^3(t_n) = \Theta_{\sss \PR}(n^{3\alpha+3\delta})$.
  %, the order of the right hand side is $\exp(-n^{2-3\alpha -5\delta})$.
%Thus for any $\delta<(2-3\alpha)/5$,  we get an exponential upper bound in \eqref{c3:eqn::prob:ineq:third}. 
It is easy to check that $(2-3\alpha)/5 > \eta /2$, and therefore one can choose $\eta/2<\delta<\eta$ such that $t^2/\sum_if_i^3(t_n) \to\infty$. Thus,
 %for some $\iota >0$, where we have used the fact that $2\delta>\eta$. 
\begin{equation}\label{c3:eq:a-i-1}
 \sum_{i}a_i(a_i-1) = (1+\oP(1)) \pi_n^2 \sum_i f_i(t_n)(f_i(t_n)-1).
\end{equation}
Therefore,  Theorem~\ref{c3:th:open-he-entrance} yields the required asymptotics for $\nu_n(\bld{a})$.
%\begin{equation}\label{c3:eq:nu-n-a}
%\begin{split}
% \nu_n(\bld{a})&=\frac{\sum_ia_i(a_i-1)}{\sum_i a_i} = (1+\oP(1))\frac{\nu}{(\nu-1)}(n^{-\delta} + \lambda n^{-\eta})\bigg(\frac{\sum_{i}f_i^2(t_n)}{\sum_i f_i(t_n)}-1\bigg)\\
% & = \frac{\nu}{(\nu-1)}\frac{n^{-\delta}s_2(t_n)}{s_1(t_n)} (1+\lambda n^{-\eta+\delta})+\oP(n^{-\eta+\delta}) = 1+ \lambda n^{-\eta+\delta} + \oP(n^{-\eta+\delta}).
% \end{split}
%\end{equation}
To see \eqref{c3:eq:a-third-moment}, note that
 $\E\big[\sum_{i>K}a_i(a_i-1)(a_i-2)\mid(f_i(t_n))_{i\geq 1}\big] = \pi_n^3\sum_{i>K}f_i(t_n)^3,$
 and the proof follows again by using the condition on $s_3^\omega$ in Theorem~\ref{c3:th:open-he-entrance}.
\end{proof}
\noindent From here onward, we assume that $\delta>0$ is such that Lemma~\ref{c3:lem:a-asymp} holds. 
Consider the  exploration of the graph $\mathcal{G}_n'(\pi_n)$ via Algorithm~\ref{c3:algo:explor-barely-sub}, but now the first vertex is chosen proportional to its degree.
%\begin{algo}\label{c3:algo-blob-explore}\normalfont Conditionally on $\bld{a} = (a_i)_{i\geq 1}$, we sequentially explore the graph $\mathcal{G}_n'(\pi_n)$ as follows:
%\begin{itemize}
%\item[(S0)] At stage $i=0$, all the vertices and the half-edges are \emph{alive} but none of them are \emph{active}. Also, there are no \emph{exploring} vertices. At each stage there will either be one exploring vertex or none.
%\item[(S1)]  At each stage $i$, if there is no active half-edge at stage $i$, choose a vertex $v$ proportional to its degree among the alive (not yet killed) vertices and declare all its half-edges to be \emph{active} and declare $v$ to be \emph{exploring}. If there is an active vertex but no exploring vertex, then declare the \emph{smallest} vertex to be exploring.
%\item[(S2)] At each stage $i$, take an active half-edge $e$ of an exploring vertex $v$ and pair it uniformly to another alive half-edge $f$. Kill $e,f$. If $f$ is incident to a vertex $v'$ that has not been discovered before, then declare all the half-edges incident to $v'$ active, except $f$ (if any). 
%If $\mathrm{degree}(v')=1$ (i.e. the only half-edge incident to $v'$ is $f$) then kill $v'$. Otherwise, declare $v'$ to be active and larger than all other vertices that are alive. After killing $e$, if $v$ does not have another active half-edge, then kill $v$ also.
%\item[(S3)] Repeat from (S1) at stage $i+1$ if not all half-edges are already killed.
%\end{itemize}
%\end{algo}
Define the exploration process by $\bld{S}_n$ similarly as the process $\bld{S}_n^j(l)$ in Section~\ref{c3:sec:barely-subcritical-mass}.
%   \begin{equation}\label{c3:defn:exploration:process}
%    S_n(0)=0,\quad
%    S_n(l)=S_n(l-1)+a_{(l)}J_l-2,
%    \end{equation} where $J_l$ is the indicator that a new vertex of $\mathcal{G}_n'(\pi_n)$ is discovered at time $l$ and $a_{(l)}$ is the degree of the new vertex chosen at time $l$ when $J_l=1$.  
%    Define $\tau_{k}=\inf \big\{ i:S_{n}(i)=-2k \big\}$. Then the $k$th component of $\mathcal{G}_n'(\pi_n)$ is discovered between the times $\tau_{k-1}+1$ and $\tau_k$, and  $\tau_{k}-\tau_{k-1}-1$ gives the total number of edges in that component.
%    Suppose $\mathscr{C}_{k}'(\pi_n)$ is the $k^{th}$ connected component explored by the above exploration process and define
%$\tau_{k}=\inf \big\{ i:S_{n}(i)=-2k \big\}.$
%Then  $\mathscr{C}_{k}'(\pi_n)$ is discovered between the times $\tau_{k-1}+1$ and $\tau_k$, and  $\tau_{k}-\tau_{k-1}-1$ gives the total number of edges in $\mathscr{C}_k$. The process $\mathbf{S}_{n}= ( S_{n}(i))_{i \geq 1}$ does not encode the component sizes as lengths of path segments above past minima. However, we will see that the number of surplus edges in each component is very small compared to the size of the component. Therefore, the scaling limit for the total number of edges in a component gives the scaling limit of the component sizes. 
 Call a vertex \emph{discovered} if it is either active or killed. Let $\mathscr{V}_l$ denote the set of vertices discovered up to time $l$ and $\mathcal{I}_i^n(l):=\ind{i\in\mathscr{V}_l}$. Note that 
   \begin{equation}\label{c3:expl-process-CM}
    S_n(l)= \sum_{i} a_i \mathcal{I}_i^n(l)-2l=\sum_{i} a_i \left( \mathcal{I}_i^n(l)-\frac{a_i}{\ell^a_n}l\right)+\left( \nu_n(\bld{a})-1\right)l,
   \end{equation}where $\ell_n^a = \sum_i a_i$. Consider the re-scaled version $\bar{\mathbf{S}}_n$ of $\mathbf{S}_n$ defined as $\bar{S}_n(t)=n^{-\alpha}S_n(\lfloor tn^{\rho-\delta} \rfloor)$. 
   %Then, using Lemma~\ref{c3:lem:a-asymp},%\eqref{c3:eq:nu-n-a} and  the fact that $\eta/2<\delta<\eta$, 
%   it follows that
%   \begin{equation} \label{c3:eqn::scaled_process}
%    \bar{S}_n(t)= n^{-\alpha} \sum_{i}a_i\left( \mathcal{I}_i^n(tn^{\rho-\delta})-\frac{a_i}{\ell_n^a}tn^{\rho-\delta} \right) +\lambda t+\oP(1).
%   \end{equation}
   Define the limiting process
   \begin{equation}\label{c3:eq:lim-blob-process}
   S(t) = \sum_{i=1}^\infty\frac{\theta_i}{\nu}\bigg(\ind{E_i\leq t}-\frac{\theta_i}{\mu\nu}t\bigg) + \lambda t,
   \end{equation}where $E_i\sim\mathrm{Exp}(\theta_i/(\mu\nu))$ independently for $i\geq 1$. 
   The following proposition describes the scaling limit of $\bar{\mathbf{S}}_n$:
% We will prove the following:
   \begin{proposition} \label{c3:thm::convegence::exploration-process-blob} As $n\to\infty$, $\bar{\mathbf{S}}_n \dto \mathbf{S}$ with respect to the Skorohod $J_1$ topology.
\end{proposition}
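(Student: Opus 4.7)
The plan is to follow the now-standard recipe for proving scaling limits of exploration processes on heavy-tailed configuration models (as in \cite{DHLS16,BHS15}), with the crucial inputs being the regularity properties of the effective degree sequence $\bld{a}$ established in Lemma~\ref{c3:lem:a-asymp}. Writing $L_n(t) = \lfloor t n^{\rho-\delta}\rfloor$, decompose
\begin{equation*}
\bar{S}_n(t) = \underbrace{n^{-\alpha}\sum_{i\leq K} a_i\Bigl(\mathcal{I}_i^n(L_n(t)) - \tfrac{a_i}{\ell_n^a}L_n(t)\Bigr)}_{=: \bar{S}_n^{\leq K}(t)} + \underbrace{n^{-\alpha}\sum_{i> K} a_i\Bigl(\mathcal{I}_i^n(L_n(t)) - \tfrac{a_i}{\ell_n^a}L_n(t)\Bigr)}_{=: \bar{S}_n^{> K}(t)} + n^{-\alpha}(\nu_n(\bld{a})-1)L_n(t).
\end{equation*}
The drift term converges to $\lambda t$ directly from Lemma~\ref{c3:lem:a-asymp}, since $\rho - \alpha - \eta = 0$ and $\nu_n(\bld{a})-1 = \lambda n^{-\eta+\delta} + o_{\sss \PR}(n^{-\eta+\delta})$.

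For the finite part, I would argue that the hitting times $\tau_i := n^{-(\rho-\delta)}\inf\{l\geq 0 : \mathcal{I}_i^n(l)=1\}$ of the top $K$ vertices converge jointly in distribution to independent $\mathrm{Exp}(\theta_i/(\mu\nu))$ random variables. Conditionally on the past, at step $l$ vertex $i$ (if still undiscovered) is absorbed at rate approximately $a_i / (\ell_n^a - 2l + O(1))$ per half-edge pairing; by Lemma~\ref{c3:lem:a-asymp} this rate, when multiplied by the time-scale $n^{\rho-\delta}$, tends to $\theta_i/(\mu\nu)$, and different large vertices are competing in essentially disjoint pools of half-edges. Making this precise via a standard comparison with i.i.d.~exponentials (as in \cite[Section~4]{DHLS16}) gives joint convergence of $(\tau_1,\dots,\tau_K)$, hence of $\bar{S}_n^{\leq K}$ in the Skorohod $J_1$ topology to the truncated limit $\sum_{i\leq K}(\theta_i/\nu)(\1\{E_i\leq t\} - (\theta_i/(\mu\nu))t)$. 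The same computation checks that the compensator term $(a_i/\ell_n^a)L_n(t)/n^\alpha \to (\theta_i/(\mu\nu))t$ uniformly on compacts.

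For the tail, I would note that for each fixed $i>K$, the process $l\mapsto a_i(\mathcal{I}_i^n(l) - (a_i/\ell_n^a) l)$ is, up to a lower-order correction from the absorbing nature of $\mathcal{I}_i^n$, a martingale; summing and exchanging with the compensator gives that $n^{-\alpha}\bar{S}_n^{>K}(\cdot)$ is close to a martingale whose predictable quadratic variation at time $t$ is bounded by a constant multiple of $n^{-2\alpha}\sum_{i>K} a_i^2 \cdot L_n(t)/\ell_n^a$. Invoking Doob's $L^2$ inequality and the estimate $\sum_{i>K}a_i^2 \leq \bigl(\sum_{i>K}a_i^3\bigr)^{2/3} (\#\{i:a_i>0\})^{1/3}$ (or a direct bound via $\sum_{i>K}a_i^3 = o_{\sss \PR}(n^{3\alpha})$ from Lemma~\ref{c3:lem:a-asymp}), I obtain
\begin{equation*}
\lim_{K\to\infty}\limsup_{n\to\infty}\PR\Bigl(\sup_{t\leq T}|\bar{S}_n^{>K}(t)| > \varepsilon\Bigr) = 0.
\end{equation*}
A parallel bound controls $\sup_{t\leq T}|\sum_{i>K}(\theta_i/\nu)(\1\{E_i\leq t\}-(\theta_i/(\mu\nu))t)|$ using $\sum\theta_i^3<\infty$.

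Combining the three pieces through Fact~\ref{c3:fact:2} (with $X_{n,K} := \bar{S}_n^{\leq K} + \text{drift}$) yields convergence in finite-dimensional distributions together with tightness in $D([0,T],\R)$, which is exactly $J_1$-convergence to $\mathbf{S}$. The main obstacle I anticipate is the tail control: one must handle both the loss of the exact martingale property of $\mathcal{I}_i^n$ (since once a vertex is discovered it stays so, instead of accumulating mass linearly) and the joint fluctuations of the compensators $a_i^2 L_n(t)/\ell_n^a$ for $i>K$, and it is precisely here that the third-moment decay condition \eqref{c3:eq:a-third-moment} from Lemma~\ref{c3:lem:a-asymp} is used to close the estimate with a uniform-in-$n$ bound whose limit vanishes as $K\to\infty$.
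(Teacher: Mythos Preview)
Your proposal is essentially the same as the paper's proof: decompose into the top-$K$ hubs, the tail $i>K$, and the drift; handle the drift via Lemma~\ref{c3:lem:a-asymp}; show joint convergence of the hitting times of the hubs to independent exponentials with rates $\theta_i/(\mu\nu)$; and control the tail by a (super-)martingale argument exploiting the third-moment condition~\eqref{c3:eq:a-third-moment}. The paper's sketch in Appendix~\ref{c3:sec:appendix-perc-blob} follows exactly this route.

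One computational slip: your quadratic-variation bound should read $n^{-2\alpha}\,(L_n(t)/\ell_n^a)\sum_{i>K}a_i^3$, not $\sum_{i>K}a_i^2$. Each increment of the compensated indicator contributes variance of order $a_i/\ell_n^a$, and the prefactor $a_i^2$ from the weight gives $a_i^3$ inside the sum (compare the analogous computation~\eqref{c3:variance::M_n^k}). With $a_i^2$ the bound would vanish even for $K=0$, which cannot be right since the full process has a nondegenerate limit; your H\"older workaround does not rescue this, because it does not produce a quantity that tends to $0$ as $K\to\infty$ uniformly in~$n$. Your parenthetical ``direct bound via $\sum_{i>K}a_i^3=o_{\sss\PR}(n^{3\alpha})$'' is exactly the correct fix, so the argument closes once the exponent is corrected.
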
The proof of Proposition~\ref{c3:thm::convegence::exploration-process-blob} can be carried out using similar ideas as Chapter~\ref{chap:secondmoment} Theorem~\ref{c2thm::convegence::exploration_process}. 
A sketch of the proof is given in Appendix~\ref{c3:sec:appendix-perc-blob}. 
 The excursion lengths of the exploration process give the number of edges in the explored components. 
 Now, at each step $l$, the probability of discovering a surplus edge, conditioned on the past, is approximately the proportion of half-edges that are active. 
 Note that the number of active half-edges is the reflected version of $\mathbf{S}_n$ given by $\mathrm{refl}(S_n(t)) = S_n(t) - \inf_{u\leq t} S_n(u)$. 
 Thus, conditionally on $(S_n(l))_{l\leq tn^{\rho-\delta}}$, the rate at which a  surplus edge appears at time $tn^{\rho-\delta}$ is approximately  
$n^{\rho-\delta}\frac{\mathrm{refl}(S_n(tn^{\rho-\delta}))}{\sum_{i}a_i} = \frac{1}{\mu}\refl{\bar{S}_n(t)}(1+\oP(1)).$
% For a component $\mathcal{I}\subset \mathcal{G}_n'(\pi_n)$, let $\mathscr{B}(\mathcal{I})$ denote the component size in $\mathcal{G}_n'(\pi_n)$. 
 Therefore, Proposition~\ref{c3:thm::convegence::exploration-process-blob} implies that for each $K\geq 1$, there exists components $C_1,\dots,C_K \subset \mathcal{G}_n'(\pi_n)$ such that
 \begin{equation}\label{c3:eq:blob-perc-limit}
  \big(n^{-\rho +\delta}|C_i|, \mathrm{SP} (C_i)\big)_{i\in [K]} \dto \big(\xi_i, \mathscr{N}_i \big)_{i\in[K]},
 \end{equation} where $\xi_i$ and $\mathscr{N}_i$ are defined in Proposition~\ref{c3:prop:comp-size}.
 Here we have also used the fact that the ordered excursion lengths of the process $(S(t))_{t\geq 0}$, defined in \eqref{c3:eq:lim-blob-process},
 are identically distributed as the ordered excursion lengths of $(S(t)/\mu)_{t\geq 0}$.
 Note that $C_i$ in \eqref{c3:eq:blob-perc-limit} may not be the $i$-th largest component of $\mathcal{G}_n'(\pi_n)$ as we have not established that the $i$-th largest component is explored by time $\Theta(n^{\rho-\delta})$. However, that is not required for our purposes.  
We can now combine \eqref{c3:eq:blob-graphs} and \eqref{c3:eq:blob-perc-limit} to obtain the asymptotics for the number of blobs in the largest connected components and $\mathrm{SP}'(\cdot)$.
Denote $\mathscr{B} (\mathscr{C}) = |\mathfrak{B}(\mathscr{C})|$ for a component $\mathscr{C}\subset \mathcal{G}_n(t_c(\lambda))$. 
The following is a direct consequence of \eqref{c3:eq:blob-graphs} and \eqref{c3:eq:blob-perc-limit}:
\begin{lemma}\label{c3:thm:superstrcut-comp-surp-original}
 For $K\geq 1$, there exist components $\mathscr{C}_1,\dots,\mathscr{C}_K \subset \mathcal{G}_n(t_c(\lambda))$ such that the following convergence holds:
 %\begin{equation}
  $$(n^{-\rho +\delta}\mathscr{B}(\mathscr{C}_i), \mathrm{SP}' (\mathscr{C}_i))_{i\in [K]} \dto (\xi_i, \mathscr{N}_i  )_{i\in[K]}.
 $$
 %\end{equation}
\end{lemma}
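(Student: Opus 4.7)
The plan proceeds in two steps: a structural identification and a transfer of the convergence \eqref{c3:eq:blob-perc-limit} across the sandwich \eqref{c3:eq:blob-graphs}. First, observe that $\mathcal{G}_n'$ is obtained from $\mathcal{G}_n(t_c(\lambda))$ by contracting every blob $\mathscr{C}_{\sss (b)}(t_n)$ to a single vertex while retaining all superstructure edges (as multi-edges and self-loops when necessary). This contraction induces a bijection between connected components: a component $\mathscr{C}\subset \mathcal{G}_n(t_c(\lambda))$ corresponds to the component $C\subset\mathcal{G}_n'$ with vertex set $\bl(\mathscr{C})$. Since the edges of $\mathcal{G}_n'$ are exactly the superstructure edges of $\mathcal{G}_n(t_c(\lambda))$, the bijection satisfies $|C|=\mathscr{B}(\mathscr{C})$ and $\mathrm{SP}(C)=\mathrm{SP}'(\mathscr{C})$. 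It therefore suffices to produce components $C_1,\ldots,C_K\subset\mathcal{G}_n'$ with $(n^{-\rho+\delta}|C_i|,\mathrm{SP}(C_i))_{i\in[K]}\dto(\xi_i,\mathscr{N}_i)_{i\in[K]}$ and then take $\mathscr{C}_i$ as their blob-unions.

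To produce such $C_i$, I will transport \eqref{c3:eq:blob-perc-limit} from $\mathcal{G}_n'(\pi_n)$ to $\mathcal{G}_n'$ via the sandwich $\mathcal{G}_n'(\pi_n-\varepsilon_n)\subset \mathcal{G}_n'\subset \mathcal{G}_n'(\pi_n+\varepsilon_n)$ with $\varepsilon_n=o(n^{-\eta})$. Since $\varepsilon_n n^{\eta} \to 0$, replacing $\pi_n$ by $\pi_n\pm\varepsilon_n$ shifts the drift $\lambda$ in \eqref{c3:eq:lim-blob-process} by $o(1)$, so Lemma~\ref{c3:lem:a-asymp} and Proposition~\ref{c3:thm::convegence::exploration-process-blob} hold for $\mathcal{G}_n'(\pi_n\pm\varepsilon_n)$ with the same limit $\mathbf{S}$, and consequently \eqref{c3:eq:blob-perc-limit} gives the same limit $(\xi_i,\mathscr{N}_i)_{i\in[K]}$ for both bracketing graphs. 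Working on a probability space where the sandwich inclusion is almost sure, let $C_i^-$ be the first $K$ components discovered by the size-biased exploration of $\mathcal{G}_n'(\pi_n-\varepsilon_n)$; let $\tilde{C}_i\subset\mathcal{G}_n'$ be the component containing $C_i^-$; and let $C_i^+\subset\mathcal{G}_n'(\pi_n+\varepsilon_n)$ be the component containing $\tilde{C}_i$. Graph inclusion yields $|C_i^-|\leq|\tilde{C}_i|\leq |C_i^+|$ and $\mathrm{SP}(C_i^-)\leq\mathrm{SP}(\tilde{C}_i)\leq\mathrm{SP}(C_i^+)$, so squeezing between two sequences with the same joint distributional limit pushes the required convergence onto the $\tilde{C}_i$, and the bijection from the first paragraph completes the proof.

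The main technical point is the squeeze: showing that $(n^{-\rho+\delta}|C_i^-|,\mathrm{SP}(C_i^-))_{i\in[K]}$ and $(n^{-\rho+\delta}|C_i^+|,\mathrm{SP}(C_i^+))_{i\in[K]}$ share the \emph{same} joint limit under the coupling. A naive union bound on the $\oP(n^{1-\eta})$ extra edges of $\mathcal{G}_n'(\pi_n+\varepsilon_n)\setminus\mathcal{G}_n'(\pi_n-\varepsilon_n)$ is not quite sharp enough by itself, because the top $K$ components already carry a $\Theta(n^{-\alpha})$ fraction of the half-edges. The cleanest resolution is to couple the three explorations simultaneously: Proposition~\ref{c3:thm::convegence::exploration-process-blob} applied to each of $\mathcal{G}_n'(\pi_n\pm\varepsilon_n)$ (using the coordinatewise degree sandwich $a_b^-\leq a_b^+$) shows that the rescaled exploration processes converge jointly in the Skorohod $J_1$ topology to the same limit $\mathbf{S}$ in \eqref{c3:eq:lim-blob-process}, so the first $K$ excursions—and hence the first $K$ components—agree in the scaling limit. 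The Poisson surplus count is a continuous functional of the reflected exploration, so the joint limits of $\mathrm{SP}(C_i^\pm)$ match as well, closing the squeeze.
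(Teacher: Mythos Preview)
Your proposal is correct and follows the same route the paper takes: the paper states the lemma as ``a direct consequence of \eqref{c3:eq:blob-graphs} and \eqref{c3:eq:blob-perc-limit}'', and your two steps (the component-wise bijection $\mathscr{C}\leftrightarrow C$ giving $|C|=\mathscr{B}(\mathscr{C})$, $\mathrm{SP}(C)=\mathrm{SP}'(\mathscr{C})$, followed by the sandwich transfer across $\mathcal{G}_n'(\pi_n\pm\varepsilon_n)$) are exactly the intended unpacking of that sentence. Your handling of the squeeze---noting that $\varepsilon_n=o(n^{-\eta})$ leaves the drift in Lemma~\ref{c3:lem:a-asymp} and hence the limit in Proposition~\ref{c3:thm::convegence::exploration-process-blob} unchanged, and then coupling the explorations so the first $K$ excursions match in the limit---is the standard mechanism already used for Lemma~\ref{c3:lem:coupling-whp}, so nothing new is needed.
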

\noindent Next we show that the components $\sC_i$ in Lemma~\ref{c3:thm:superstrcut-comp-surp-original} indeed correspond to the $i$-th largest component of $\mathcal{G}_n(t_c(\lambda))$:
\begin{lemma}\label{c3:thm:comp-blob-same-whp}
 For any $K\geq 1$, $\mathscr{C}_i = \mathscr{C}_{\sss (i)}(\lambda)$, $\forall i\in [K]$ with high probability.
\end{lemma}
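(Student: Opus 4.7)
\emph{Proof plan.} The strategy is to augment Lemma~\ref{c3:thm:superstrcut-comp-surp-original} with a scaling limit for the vertex count $|\mathscr{C}_i|$ and then match it against the scaling of $|\mathscr{C}_{\sss(i)}(\lambda)|$ obtained in Proposition~\ref{c3:thm:comp-functionals-original}. Concretely, my target is to show
\begin{equation}
\label{eqn:plan-size-match}
\big(n^{-\rho}|\mathscr{C}_i|\big)_{i\in[K]} \dto (\xi_i/\nu)_{i\in[K]},
\end{equation}
jointly with the convergence in Lemma~\ref{c3:thm:superstrcut-comp-surp-original}. The excursion lengths $\xi_1>\xi_2>\cdots$ of the thinned L\'evy process are almost surely strictly decreasing (cf.~\cite{AL98}), and Proposition~\ref{c3:thm:comp-functionals-original} supplies $n^{-\rho}|\mathscr{C}_{\sss(i)}(\lambda)|\dto \xi_i/\nu$ jointly for all $i$, so~\eqref{eqn:plan-size-match} will be enough to pin $\mathscr{C}_i=\mathscr{C}_{\sss(i)}(\lambda)$ for every $i\in[K]$ with high probability.

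To establish~\eqref{eqn:plan-size-match}, I decompose $|\mathscr{C}_i|=\sum_{b\in \mathfrak{B}(\mathscr{C}_i)}|\mathscr{C}_{\sss(b)}(t_n)|$ and control the blob-size sum along the portion of the exploration of $\mathcal{G}_n'(\pi_n)$ that excavates the component $C_i$ associated with the $i$-th longest excursion in Proposition~\ref{c3:thm::convegence::exploration-process-blob}. The exploration of Algorithm~\ref{c3:algo:explor-barely-sub} visits blobs in a size-biased fashion with weights $a_b\sim \mathrm{Bin}(f_b(t_n),\pi_n)$, so Lemma~\ref{c3:lem:size-biased} applies with $x_b=a_b$, $y_b=|\mathscr{C}_{\sss(b)}(t_n)|$ and $l=T n^{\rho-\delta}$ for arbitrary fixed $T>0$. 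The hypotheses~\eqref{c3:eq:size-biased-conditions} reduce, by the binomial concentration already used in Lemma~\ref{c3:lem:a-asymp}, to the moment estimates of $(f_b(t_n),|\mathscr{C}_{\sss(b)}(t_n)|)$ furnished by Theorem~\ref{c3:th:open-he-entrance} and Remark~\ref{c3:rem:entrance-comp-size}. Following the partial-sum argument of Proposition~\ref{c3:thm:mod-comp-openhe} (with $(m_L,m_R)$ now the interval of exploration times traversing $C_i$) gives
\begin{equation}
|\mathscr{C}_i|=\mathscr{B}(\mathscr{C}_i)\,\frac{\sum_b a_b|\mathscr{C}_{\sss(b)}(t_n)|}{\sum_b a_b}\,(1+\oP(1)).
\end{equation}
Theorem~\ref{c3:th:open-he-entrance} applied to $s_{pr}^\omega$, together with $n^{-1}s_1(t_n)\to \mu(\nu-1)/\nu$ from Lemma~\ref{c3:lem:total-open-he}, shows that the right-hand ratio equals $(n^\delta/\nu)(1+\oP(1))$, and combining with $n^{-\rho+\delta}\mathscr{B}(\mathscr{C}_i)\dto \xi_i$ from Lemma~\ref{c3:thm:superstrcut-comp-surp-original} delivers~\eqref{eqn:plan-size-match}.

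Given~\eqref{eqn:plan-size-match}, the conclusion is soft. The $\mathscr{C}_i$ are pairwise distinct components of $\mathcal{G}_n(t_c(\lambda))$ because they arise from distinct excursions of the exploration of $\mathcal{G}_n'(\pi_n)$. If for some $i_0\in[K]$ one had $\limsup_n\PR(\mathscr{C}_{i_0}\notin\{\mathscr{C}_{\sss(j)}(\lambda)\}_{j\in[K]})>0$, then on that event $|\mathscr{C}_{i_0}|\le|\mathscr{C}_{\sss(K+1)}(\lambda)|$, and passing to the limit via~\eqref{eqn:plan-size-match} and Proposition~\ref{c3:thm:comp-functionals-original} would force $\xi_{i_0}\le \xi_{K+1}$ with positive probability, contradicting the a.s.\ strict ordering of the $\xi_j$. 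Hence $\{\mathscr{C}_i\}_{i\in[K]}\subseteq \{\mathscr{C}_{\sss(i)}(\lambda)\}_{i\in[K]}$ whp; the two sets agree by cardinality, and the labels match since within each set the members are ordered by the strictly decreasing limits $\xi_i/\nu$. The principal technical effort is the size-biased law of large numbers yielding~\eqref{eqn:plan-size-match}, which will essentially transcribe the argument used for Proposition~\ref{c3:thm:mod-comp-openhe}; the rest of the argument is routine.
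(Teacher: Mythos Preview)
Your proposal is correct and follows essentially the same approach as the paper: both establish \eqref{eqn:plan-size-match} by applying Lemma~\ref{c3:lem:size-biased} with weights $x_b=a_b$ and $y_b=|\mathscr{C}_{\sss(b)}(t_n)|$ to convert the blob-count convergence of Lemma~\ref{c3:thm:superstrcut-comp-surp-original} into a vertex-count convergence, computing the ratio $\sum_b a_b|\mathscr{C}_{\sss(b)}(t_n)|/\sum_b a_b$ via Theorem~\ref{c3:th:open-he-entrance}, and then matching against Proposition~\ref{c3:thm:comp-functionals-original} using the a.s.\ strict ordering of the $\xi_i$. The paper phrases the final matching via the almost-sure partial-sum inequality $\sum_{j\le i}|\mathscr{C}_j|\le \sum_{j\le i}|\mathscr{C}_{\sss(j)}(\lambda)|$, while you argue by contradiction through $|\mathscr{C}_{i_0}|\le|\mathscr{C}_{\sss(K+1)}(\lambda)|$; these are equivalent.
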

\begin{proof}
 Notice that $\sum_{j\leq i}|\mathscr{C}_j|\leq \sum_{j\leq i}|\mathscr{C}_{\sss (j)} (\lambda)|$ for all $i\in [K]$, almost surely. 
 Thus, it is enough to prove that $|\mathscr{C}_i|$ and  $|\mathscr{C}_{\sss (i)} (\lambda)|$ involve the same re-scaling factor and have the same scaling limit. 
We again make use of the inclusions in graphs in \eqref{c3:eq:blob-graphs}.
Algorithm~\ref{c3:algo:explor-barely-sub} explores the components of $\mathcal{G}_n'(\pi_n)$ in a size-biased manner with the sizes being $(a_i)_{i\geq 1}$. 
An application of Lemma~\ref{c3:lem:size-biased} with $y_i = \mathscr{C}_{\sss (i)}(t_n)$ yields that, for any $t>0$, uniformly for $l\leq tn^{\rho-\delta}$,
 \begin{equation}\label{c3:blob-vs-comp1}
  \sum_i|\mathscr{C}_{\sss (i)}(t_n)| \mathcal{I}_i^n(l) = \sum_i |\mathscr{C}_{\sss (i)}(t_n)| \frac{a_i}{\sum_i a_i}l +\oP(n^{\rho}).
 \end{equation} 
% To avoid repetition of similar arguments we skip the proof of \eqref{c3:blob-vs-comp1} (see e.g. \cite[Lemma 27]{DHLS16}).
% One would also need to show that $\sum_i f_i^2(t_n)|\mathscr{C}_{\sss (i)}(t_n)| = \OP(n^{3\alpha+3\delta})$.
Since $a_i\sim\mathrm{Bin}(f_i(t_n),\pi_n)$, we can apply concentration inequalities like \cite[Corollary 2.27]{JLR00} and use the asymptotics from Theorem~\ref{c3:th:open-he-entrance} to conclude that
 \begin{equation}\label{c3:blob-vs-comp2}
 \begin{split}
  n^{-\delta}\frac{\sum_i a_i|\mathscr{C}_{\sss (i)}(t_n)|}{\sum_ia_i} %&= n^{-\delta}\frac{\pi_n\sum_i f_i(t_n)|\mathscr{C}_{\sss (i)}(t_n)|}{\pi_n\sum_{i}f_i(t_n)} (1+\oP(1))\\  
  & = \frac{\frac{\mu(\nu-1)}{\nu^2}}{\frac{\mu(\nu-1)}{\nu}}(1+\oP(1))=\frac{1}{\nu}(1+\oP(1)).
  \end{split}
 \end{equation}
 Thus, \eqref{c3:blob-vs-comp1} and \eqref{c3:blob-vs-comp2}, together with \eqref{c3:eq:blob-graphs}, imply that
$  \frac{\nu |\mathscr{C}_i|}{n^{\delta}\mathscr{B}(\mathscr{C}_i)} \xrightarrow{\sss \PR} 1,$ and it follows from Lemma~\ref{c3:thm:superstrcut-comp-surp-original} and Lemma~\ref{c3:lem:rescale} that 
 %\begin{equation}
$  (n^{-\rho} |\mathscr{C}_i|)_{i\in [K]} \xrightarrow{\sss d} (\frac{1}{\nu}\xi_i )_{i\in[K]}. $
 %\end{equation} 
% The proof now follows.
\end{proof}

\begin{proof}[Proof of Proposition~\ref{c3:thm:comp-functional-original}]
We are now finally in the position to prove Proposition~\ref{c3:thm:comp-functional-original}.
Using Lemmas~\ref{c3:thm:superstrcut-comp-surp-original},~\ref{c3:thm:comp-blob-same-whp}, and Proposition~\ref{c3:thm:comp-functionals-original} together with \eqref{c3:eq:blob-graphs}, we directly conclude Part~(a) from \eqref{c3:eq:surp-sam-dist-limit}.
For Part~(b),  we can follow the same arguments as~\eqref{c3:blob-vs-comp1} to conclude that, uniformly for $l\leq tn^{\rho-\delta}$,
 \begin{equation}\label{c3:blob-vs-comp3}
  \sum_if_i(t_n) \mathcal{I}_i^n(l) = \sum_i f_i(t_n) \frac{a_i}{\sum_i a_i}l +\oP(n^{\rho}),
 \end{equation} where 
$  n^{-\delta}\frac{\sum_i a_if_i(t_n)}{\sum_ia_i} = \frac{\nu-1}{\nu} (1+\oP(1)).$
 Now, \eqref{c3:blob-vs-comp1} and \eqref{c3:blob-vs-comp3} together with \eqref{c3:eq:blob-graphs} prove Part~(b).
\end{proof}
 In the final part of the proof, we will also need an estimate of the surplus edges in the components $\bar{\mathscr{C}}_{\sss (i)}(\lambda)$, that can be obtained by following the exact same argument as the proof outline of Lemma~\ref{c3:thm:superstrcut-comp-surp-original}. Recall that the superstructure on the graph $\bar{\mathcal{G}}_n(t_c(\lambda))$ is a rank-one inhomogeneous random graph $\mathrm{NR}_n(\bld{x},q)$. 
 The connection probabilities given by \eqref{c3:eq:pij-value} can be written as $1-\exp(-z_i(\lambda)z_j(\lambda)/\sum_kz_k(\lambda))$, where 
 \begin{equation}
  z_i(\lambda) = \frac{f_i(t_n)\sum_jf_j(t_n)}{\sum_j f_j^2(t_n)}\big(1+\lambda n^{-\eta+\delta}+\oP(n^{-\eta+\delta})\big) .
 \end{equation}
 Moreover, using Theorem~\ref{c3:th:open-he-entrance}, it follows that 
 \begin{gather*}
  n^{-\alpha}z_i(\lambda) \pto \frac{\theta_i}{\nu}, \qquad \frac{z_i(\lambda)}{\sum_jz_j(\lambda)} \pto \frac{\theta_i}{\mu\nu}, \\ \nu_n(\bld{z}) = \frac{\sum_iz_i^2(\lambda)}{\sum_i z_i(\lambda)} = 1+\lambda n^{-\eta+\delta}+\oP(n^{-\eta+\delta}).
 \end{gather*}
 Now, we may consider the breadth-first exploration of the above graph  and define the exploration process 
$  S_n^{\sss \mathrm{NR}}(l) = \sum_i z_i(\lambda)\mathcal{I}_i^n(l) - l,$ as in \eqref{c3:expl-process-CM}. The only thing to note here is that the component sizes are not necessarily encoded by the excursion lengths above the past minima of $\mathbf{S}_n^{\sss \mathrm{NR}}$.
 However, if $  \tilde{S}_n^{\sss \mathrm{NR}}(l) = \sum_i \mathcal{I}_i^n(l) - l$, then it can be shown that (see \cite[Lemma 3.1]{BHL12}) $\tilde{\mathbf{S}}_n^{\sss \mathrm{NR}}$ and $\mathbf{S}_n^{\sss \mathrm{NR}}$ have the same distributional limit. 
 Thus, a conclusion identical to Proposition~\ref{c3:thm::convegence::exploration-process-blob} follows for $\bar{\mathcal{G}}_n(t_c(\lambda))$.  
 Due to the size-biased exploration of the components one can also obtain analogues of Lemmas~\ref{c3:thm:superstrcut-comp-surp-original}~and \ref{c3:thm:comp-blob-same-whp} for $\bar{\mathcal{G}}_n(t_c(\lambda))$. 
 This explains the following proposition:
 %More formally, one can show the following lemma:
%For a component $\mathcal{I}\subset \bar{\mathcal{G}}_n(t_c(\lambda))$, denote by  $\mathscr{B}(\mathcal{I})$ and $\mathrm{SP}'(\mathcal{I})$, the component size and the number of surplus edges respectively in the superstructure of $\mathcal{I}$. 
\begin{proposition}\label{c3:thm:surp-superstruc-modi}
 For fixed  $K\geq 1$,
% \begin{equation}
$  (n^{-\rho+\delta}\mathscr{B}(\bar{\mathscr{C}}_{\sss (i)}(\lambda)), \mathrm{SP}' (\bar{\mathscr{C}}_{\sss (i)}(\lambda)))_{i\in [K]} \dto (\xi_i, \mathscr{N}_i  )_{i\in[K]},$ as $n\to\infty$.
% \end{equation}
\end{proposition}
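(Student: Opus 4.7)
The plan is to mirror the arguments developed in Section~\ref{c3:sec:struct-compare} for the original dynamic graph $\mathcal{G}_n(t_c(\lambda))$, now applied to the modified graph $\bar{\mathcal{G}}_n(t_c(\lambda))$, whose superstructure after time $t_n$ is exactly a rank-one Norros--Reittu graph $\mathrm{NR}_n(\mathbf{x},q)$ with parameters in \eqref{c3:eq:parameters-inhom}. The goal is to set up a size-biased breadth-first exploration of this inhomogeneous superstructure and read off the number of blobs and superstructural surplus edges from the associated exploration process and its reflected marks process.

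The first step is to verify that the driving weights $z_i(\lambda)$ (defined so that $1-\exp(-z_iz_j/\sum_k z_k)$ matches \eqref{c3:eq:pij-value}) satisfy the barely-critical asymptotics already listed right before the statement of the proposition, i.e., $n^{-\alpha}z_i(\lambda) \pto \theta_i/\nu$, the normalized weights converge to $\theta_i/(\mu\nu)$, and $\nu_n(\mathbf{z}) = 1+\lambda n^{-\eta+\delta} + \oP(n^{-\eta+\delta})$. These three facts are immediate from Theorem~\ref{c3:th:open-he-entrance} applied to $(f_i(t_n))_{i\geq 1}$.

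Next I would define the exploration $\tilde{S}_n^{\sss \mathrm{NR}}(l) = \sum_i \mathcal{I}_i^n(l) - l$ whose excursion lengths above past minima correspond to the blob counts $\mathscr{B}(\bar{\mathscr{C}}_{\sss (i)}(\lambda))$. By \cite[Lemma 3.1]{BHL12}, $\tilde{\mathbf{S}}_n^{\sss \mathrm{NR}}$ shares its scaling limit with the weighted version $S_n^{\sss \mathrm{NR}}(l)=\sum_i z_i(\lambda)\mathcal{I}_i^n(l)-l$, and repeating the martingale/quadratic-variation argument behind Proposition~\ref{c3:thm::convegence::exploration-process-blob} (sketched in Appendix~\ref{c3:sec:appendix-perc-blob}) gives $n^{-\alpha}\tilde{S}_n^{\sss \mathrm{NR}}(\lfloor tn^{\rho-\delta}\rfloor)\dto S(t)$ with $S$ the thinned L\'evy process in \eqref{c3:defn::limiting::process} after the harmless $1/\mu$ rescaling encoded in Lemma~\ref{c3:lem:rescale}. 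Joint convergence with the superstructural surplus-edge counts follows because, conditionally on the exploration, a new surplus edge appears at step $l$ at rate approximately $\mathrm{refl}(\tilde{S}_n^{\sss \mathrm{NR}}(l))/\sum_j z_j(\lambda)$, producing a Poisson limit driven by $\mathrm{refl}(\bar{S}_\infty^{\lambda})$. Thus, for some components $\bar{\mathscr{C}}_1,\dots,\bar{\mathscr{C}}_K$ of $\bar{\mathcal{G}}_n(t_c(\lambda))$ (those explored by time $Tn^{\rho-\delta}$ for sufficiently large $T$), the joint convergence $(n^{-\rho+\delta}\mathscr{B}(\bar{\mathscr{C}}_i), \mathrm{SP}'(\bar{\mathscr{C}}_i))_{i\in [K]} \dto (\xi_i,\mathscr{N}_i)_{i\in [K]}$ holds.

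The final step, and the only one requiring genuine care, is to identify these exploration-order components with the mass-ordered $\bar{\mathscr{C}}_{\sss (i)}(\lambda)$; this is the analogue of Lemma~\ref{c3:thm:comp-blob-same-whp}. I would apply Lemma~\ref{c3:lem:size-biased} to the weights $(z_i(\lambda))$ and secondary weights $(f_i(t_n))$, verifying the three conditions in \eqref{c3:eq:size-biased-conditions} using the entrance-boundary estimates from Theorem~\ref{c3:th:open-he-entrance}, to conclude that for any $t>0$, uniformly in $l\leq tn^{\rho-\delta}$, the total $f$-mass discovered up to step $l$ is asymptotically $((\nu-1)/\nu)\cdot l + \oP(n^\rho)$. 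Combining this with Propositions~\ref{c3:thm:modified-open-he-limit} and~\ref{c3:thm:mod-comp-openhe}, which already fix the distributional limits of $n^{-\rho}\bar{\mathcal{F}}_i(\lambda)$ and $n^{-\rho}|\bar{\mathscr{C}}_{\sss (i)}(\lambda)|$ as $((\nu-1)/\nu)\xi_i$ and $\xi_i/\nu$ respectively, forces $\bar{\mathscr{C}}_i = \bar{\mathscr{C}}_{\sss (i)}(\lambda)$ for $i\in [K]$ with high probability. The main obstacle is precisely this identification: the scaling-limit analysis of $\tilde{\mathbf{S}}_n^{\sss \mathrm{NR}}$ and of the surplus marks is essentially a transcription of established arguments, but tying exploration-ranked components to mass-ranked ones demands a careful application of the size-biased lemma under the third-moment control coming from Theorem~\ref{c3:th:open-he-entrance}.
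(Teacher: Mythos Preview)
Your proposal is correct and follows essentially the same route as the paper: the paper's argument (given in the paragraph immediately preceding the proposition) sets up the breadth-first exploration of the $\mathrm{NR}_n(\mathbf{x},q)$ superstructure via the weighted process $S_n^{\sss \mathrm{NR}}$ and the unweighted $\tilde S_n^{\sss \mathrm{NR}}$, invokes \cite[Lemma~3.1]{BHL12} to match their limits, obtains the analogue of Proposition~\ref{c3:thm::convegence::exploration-process-blob}, and then says that analogues of Lemmas~\ref{c3:thm:superstrcut-comp-surp-original} and~\ref{c3:thm:comp-blob-same-whp} follow by the size-biased exploration. Your only deviation is cosmetic: in the identification step you track the $f$-mass via Lemma~\ref{c3:lem:size-biased} and compare against Proposition~\ref{c3:thm:modified-open-he-limit}, whereas the paper's template (Lemma~\ref{c3:thm:comp-blob-same-whp}) tracks blob sizes $|\mathscr{C}_{\sss (i)}(t_n)|$ and compares against the component-size limit; both choices work and are interchangeable given Proposition~\ref{c3:thm:mod-comp-openhe}.
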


\subsection{Completing the proof of Theorem~\ref{c3:thm:main}}
\label{c3:sec:proof-thm1}
In this section, we finally conclude the proof of Theorem~\ref{c3:thm:main}.
Recall Theorem~\ref{c3:thm:mspace-limit-modified} and the terminologies therein.
Let $n^{-\eta}\mathscr{C}_{\sss (i)}^{\sss \mathrm{fr}}(\lambda)$ denote the measured metric space with measure $\mu_{\sss \mathrm{fr}}^i$ and the distances multiplied by $n^{-\eta}$.
%We will often write $\mathscr{C}_{\sss (i)}(\lambda) \subset \bar{\mathscr{C}}_{\sss (i)}(\lambda)$ and avoid writing the additive $\varepsilon_n$.  
At this moment, let us recall the relevant properties $\mathscr{C}_{\sss (i)}(\lambda)$ and $\bar{\mathscr{C}}_{\sss (i)}(\lambda)$: 
\begin{enumerate}[(A)]
\item By Proposition~\ref{c3:prop-coupling-order}, $\cup_{j\leq i}\mathscr{C}_{\sss (j)}(\lambda) \subset \cup_{j\leq i}\bar{\mathscr{C}}_{\sss (j)}(\lambda)$ almost surely for any $i\geq 1$. 
Therefore, applying Propositions~\ref{c3:thm:mod-comp-openhe}~and~\ref{c3:thm:comp-functionals-original}, it follows that with high probability $\mathscr{C}_{\sss (i)}(\lambda) \subset \bar{\mathscr{C}}_{\sss (i)}(\lambda)$ for any fixed $i\geq 1$. 
\item By Proposition~\ref{c3:thm:comp-functional-original}~(b), $\bar{\mathcal{F}}_i(\lambda) - \mathcal{F}_i(\lambda) \pto 0$ and consequently $\mu_{\sss \mathrm{fr}}^i(\bar{\mathscr{C}}_{\sss (i)}(\lambda)\setminus \mathscr{C}_{\sss (i)}(\lambda)) \pto 0$. 
\item By Propositions~\ref{c3:thm:comp-functional-original}~(a)~and~\ref{c3:thm:surp-superstruc-modi}, the number of surplus edges with one endpoint in  $\bar{\mathscr{C}}_{\sss (i)}(\lambda)\setminus \mathscr{C}_{\sss (i)}(\lambda)$ converges in probability to zero. 
Moreover, with high probability there is no surplus edge within the blobs.
This implies that, for any pair of vertices $u,v\in \mathscr{C}_{\sss (i)}(\lambda)$, with high probability, the shortest path between them is exactly the same in $\mathscr{C}_{\sss (i)}(\lambda)$ and in $\bar{\mathscr{C}}_{\sss (i)}(\lambda)$.
\end{enumerate}
Thus, from the definition of Gromov-weak convergence in Section~\ref{c3:sec:defn:GHP-weak}, an application of Theorem~\ref{c3:thm:mspace-limit-modified} yields that
%\begin{equation}\label{c3:eq:metric-struct-free}
 $\big(n^{-\eta}\mathscr{C}_{\sss (i)}^{\sss \mathrm{fr}}\big)_{i\geq 1} \xrightarrow{\sss d} (M_{i} )_{i\geq 1},$ 
%\end{equation}with respect to the $\mathscr{S}_*^\N$ topology. 
%Let $\mu_{\sss \mathrm{ct}}$ denote the measure such that $\mu^i_{\sss \mathrm{ct}}(i) = 1/|\mathscr{C}_{\sss (i)}(\lambda)|$ for all $i\in \mathscr{C}_{\sss (i)}(\lambda)$. The proof is complete if we can show that 
%\begin{equation}
% \|\mu^i_{\sss\mathrm{free}}-\mu^i_{\sss \mathrm{ct}}\|_{\sss \mathrm{TV}},\quad \forall i\geq 1.
%\end{equation}It is enough to show that 
%\begin{equation}
% \sum_{k\in \mathscr{C}_{\sss(i)}(\lambda)}|\omega_k - (\nu-1)|\pto 0.
%\end{equation}
The only thing remaining to show is that we can replace the measure $\mu_{\sss\mathrm{fr}}^i$ by $\mu_{\sss\mathrm{ct},i}$. Now, using Propositions~\ref{c3:thm:comp-functionals-original}~and~\ref{c3:thm:comp-functional-original}~(b), it is enough to show  that
\begin{equation}\label{c3:counting-measure-switch}
 \sum_{b\in \bl( \mathscr{C}_{\sss (i)}(\lambda))}\big|f_{b}(t_n) - (\nu-1) |\mathscr{C}_{\sss(b)}(t_n)|\big| = \oP(n^{\rho}).
\end{equation}
Indeed, during the breadth-first exploration of the superstructure of $\mathcal{G}_n(t_c(\lambda))$, the blobs are explored in a size-biased manner with the sizes being $(f_i(t_n))_{i\geq 1}$. 
Therefore, one can again use Lemma~\ref{c3:lem:size-biased}. 
Recall that, by Lemma~\ref{c3:thm:comp-blob-same-whp}, for any $\varepsilon>0$, one can choose $T>0$ so large that the probability of exploring $\mathscr{C}_{\sss (i)}(\lambda)$ within time $Tn^{\rho-\delta}$ is at least $1-\varepsilon$. 
Thus, if $\mathscr{V}^b_l$ denotes the set of blobs explored before time $l$, then, for any $T>0$,  
\begin{align*}
  &\sum_{b\in \mathscr{V}^b_{\sss Tn^{\rho-\delta}}}\big|f_{b}(t_n) - (\nu-1)|\mathscr{C}_{\sss (b)}(t_n)|\big|\\
  &\hspace{1cm}= (1+\oP(1)) Tn^{\rho-\delta} \sum_i \frac{f_i(t_n)}{\sum_if_i(t_n)}\big|f_{\sss i}(t_n) - (\nu-1)|\mathscr{C}_{\sss (i)}(t_n)|\big|.
\end{align*}
Using the Cauchy-Schwarz inequality and Theorem~\ref{c3:th:open-he-entrance} it now follows that the above term is $o(n^{\rho})$.
%\begin{align*}
%&\sum_{b\in \mathscr{V}^b_{\sss Tn^{\rho-\delta}}}\big|f_{b}(t_n) - (\nu-1)|\mathscr{C}_{\sss (b)}(t_n)|\big|\\
%&= Tn^{\rho-\delta-1} \OP\bigg(\Big(\sum_{i}f_i^2(t_n)\Big)^{1/2}\Big(\sum_i \big(f_i(t_n)-(\nu-1)|\mathscr{C}_{\sss (i)}(t_n)|\big)^2\Big)^{1/2} \bigg),
%\end{align*}which is $ \oP(n^{\rho})$. 
Therefore \eqref{c3:counting-measure-switch} follows.
Finally the proof of Theorem~\ref{c3:thm:main} is complete using Lemma~\ref{c3:lem:coupling-whp}.\qed

\begin{remark} \label{c3:rem:switch-measure} \normalfont
The fact that the measure can be changed from $\mu_{\sss \mathrm{fr}}^i$ to $\mu_{\sss\mathrm{ct},i}$ in $n^{-\eta}\mathscr{C}_{\sss (i)}^{\sss \mathrm{fr}}$ follows only from~\eqref{c3:counting-measure-switch}, which again follows from the entrance boundary conditions. 
However, the entrance boundary conditions in Theorem~\ref{c3:thm:susceptibility} hold for weight sequences $\bld{w}=(w_i)_{i\in [n]}$ under general assumptions (see Assumption~\ref{c3:assumption-w}). 
Therefore, one could also replace the measure $\mu_{\sss\mathrm{ct},i}$ by $\mu_{w,i}$, where $\mu_{w,i} = \sum_{i\in A}w_i/\sum_{k\in \mathscr{C}_{\sss (i)}(\lambda)}w_i$ and $\bld{w}$ satisfies Assumption~\ref{c3:assumption-w}.
\end{remark}

\subsection{Proof of Theorem~\ref{c3:thm:main-simple}}
\label{c3:sec:simple}
%We only prove this for the largest connected component.
The argument is related to Chapter~\ref{c2sec:simple-graphs}.
% and we give a detailed outline ignoring some details. 
Using \cite[Lemma 3.2]{F07}, the random graph $\mathrm{CM}_n(\bld{d},p_n(\lambda))$, conditionally on its degree sequence $\bld{d}^p$, is distributed as $\mathrm{CM}_n(\bld{d}^p)$.
To complete the proof of Theorem~\ref{c3:thm:main-simple}, consider the exploration algorithm given by Algorithm~\ref{c3:algo:explor-barely-sub}, now on the graph $\mathrm{CM}_n(\bld{d},p_n(\lambda))$, conditionally on the degree sequence $\bld{d}^p$.
The starting vertex is chosen in a size biased manner with sizes proportional to the degrees $\bld{d}^p$. 
For convenience, we denote $X = (\mathscr{C}_{\sss (i)}^p(\lambda))_{i\leq K}$ in this section. 
Consider a bounded continuous function $f:(\mathscr{S}_*)^K\mapsto \R$, where we recall $\mathscr{S}_*$ from Section~\ref{c3:sec:defn:GHP-weak}. 
Recall from \cite[Theorem 1.1]{J09c} that $$\liminf_{n\to\infty}\prob{\CM \text{ is simple}}>0.$$
Thus, it is enough to show that 
\begin{equation}\label{c3:simple-fact}
\expt{f(X)\ind{\CM \text{ is simple}}} - \expt{f(X)} \prob{\CM \text{ is simple}} \to 0.
\end{equation}
Now, for any $T>0$, let $\mathcal{A}_{n,T}$ denote the event that $X$ is explored before time $Tn^{\rho}$ by the exploration algorithm. 
Using \cite[Lemma 13]{DHLS16}, it follows that 
\begin{equation}
\lim_{T\to\infty}\limsup_{n\to\infty} \prob{\mathcal{A}_{n,T}^c} = 0.
\end{equation}
Let $X_{ T}$ denote the random vector consisting of $K$ largest ones among the components explored before time $Tn^{\rho}$.
 Thus,
\begin{align*}
&\lim_{T\to\infty}\limsup_{n\to\infty}\expt{f(X)\ind{\CM \text{ is simple}}\1_{\mathcal{A}_{n,T}^c}} \\
&\hspace{1cm}\leq \|f\|_{\infty} \lim_{T\to\infty}\limsup_{n\to\infty}\prob{\mathcal{A}_{n,T}^c}=0, 
\end{align*}which implies that
\begin{equation}\label{c3:simple-fact-3}
\lim_{T\to\infty}\limsup_{n\to\infty}\big|\expt{\big(f(X)-f(X_{T})\big)\ind{\CM \text{ is simple}}} \big| = 0.
\end{equation}
%if we first take $\limsup_{n\to\infty}$ and then $\lim_{T\to\infty}$.
Further, let $\mathcal{B}_{n,T}$ denote the event that a vertex $v$ is explored before time $Tn^{\rho}$ such that $v$ is involved in a self-loop or a multiple edge in $\CM$. 
%Algorithm~3 will explore the vertices in a size-biased manner with sizes being $\bld{d}^p$.
%Let $\ell_n^*:=\sum_{i\in [n]}d_i^p-2Tn^{\rho}$. 
%Let $v$ be any vertex that is explored before time $Tn^{\rho}$.
For any fixed vertex $v$, the $i$-th half edge creates a self-loop in $\CM$ with probability at most $(d_{v}-i)/(\ell_n-1)$ and creates a multiple edge with probability at most $(i-1)/(\ell_n-1)$ so that the probability of $v$ creating a self-loop or a multiple edge is at most $d_v^2/(\ell_n-1)$. 
Let $\mathcal{I}_i^n(l)$ denote the indicator that vertex $i$ is discovered upto time $l$ and note that Algorithm~\ref{c3:algo:explor-barely-sub} will explore the vertices in a size-biased manner with sizes being $\bld{d}^p$.
Let $\PR_p$ (respectively $\E_p$) denote the conditional probability (respectively expectation), conditionally on $\bld{d}^p$. 
Thus, 
\begin{align*}
 \PR_p(\mathcal{B}_{n,T})&\leq \frac{1}{\ell_n-1}\E_p\bigg[\sum_{i\in [n]}d_i^2\mathcal{I}^n_i(Tn^{\rho})\bigg]\\
 &=\frac{1}{\ell_n-1}\bigg(\E_p\bigg[\sum_{i=1}^K d_i^2\mathcal{I}^n_i(Tn^{\rho})\bigg]+\E_p\bigg[\sum_{i=K+1}^nd_i^2\mathcal{I}^n_i(Tn^{\rho})\bigg]\bigg).
 \end{align*} Now, using Assumption~\ref{c3:assumption1}, for every fixed $K\geq 1$,
\begin{equation}
 \frac{1}{\ell_n-1}\E_p\bigg[\sum_{i=1}^K d_i^2\mathcal{I}^n_i(Tn^\rho)\bigg]\leq \frac{2}{\ell_n} \sum_{i=1}^K d_i^2 \pto 0.
\end{equation} 
Further, $\PR_p(\mathcal{I}^n_i(Tn^{\rho})=1)\leq Tn^{\rho}d_i^p/(\sum_{i\in [n]}d_i^p-2Tn^{\rho})$. 
Therefore, 
\begin{equation}\label{c3:simple-calc-1}
 \begin{split} 
  \frac{2}{\ell_n}\E_p\bigg[\sum_{i=K+1}^nd_i^2\mathcal{I}_i^n(Tn^{\rho})\bigg]&\leq \frac{2Tn^{\rho}}{\ell_n \sum_{i\in [n]}d_i^p}\sum_{i=K+1}^nd_i^2 d_i^p\\
  &\leq \OP(1)\bigg(n^{-3\alpha}\sum_{i=K+1}^nd_i^3 \bigg), 
 \end{split}
\end{equation}
where the last step follows using $\sum_{i\in [n]}d_i^p \sim 2\times\mathrm{Bin}(\ell_n/2,p_n(\lambda))$, standard concentration inequalities for the binomial distribution, and the fact that $d_i^p\leq d_i$ for all $i\in [n]$.
Now, by Assumption~\ref{c3:assumption1}, the final term in \eqref{c3:simple-calc-1} tends to zero in probability if we first take $\limsup_{n\to\infty}$ and then take $\lim_{K\to\infty}$. 
Consequently, for any fixed $T>0$, 
\begin{equation}\label{c3:eq:B-n-t}
 \lim_{n\to\infty}\prob{\mathcal{B}_{n,T}}= 0.
\end{equation}
Let $\mathcal{E}_{n,T}$ denote the event that no self-loops or multiple edges are attached to the vertices in $\CM$ that are discovered after time $Tn^{\rho}$. Then \eqref{c3:simple-fact-3} and \eqref{c3:eq:B-n-t} implies that
\begin{eq}\label{c3:simple-fact-2}
&\lim_{n\to\infty}\expt{f(X)\ind{\CM \text{ is simple}}} = \lim_{T\to\infty}\lim_{n\to\infty}\expt{f(X_T)\1_{\mathcal{E}_{n,T}}}\\
&= \lim_{T\to\infty}\lim_{n\to\infty}\expt{f(X_T)\prob{\mathcal{E}_{n,T}\vert \mathscr{F}_{Tn^{\rho}}}} \\
&= \lim_{T\to\infty}\lim_{n\to\infty}\expt{f(X_T)\prob{\mathcal{E}_{n,T}\vert \mathscr{F}_{Tn^{\rho}}, \mathcal{B}_{n,T}}}.
\end{eq}
Let $\cG^*_{Tn^{\rho}}$ denote the graph obtained from $\CM$ after removing the vertices discovered upto time $Tn^{\rho}$.
Then $\cG^*_{Tn^{\rho}}$ is distributed as a configuration model  conditional on its degree sequence. 
Thus conditional on $\mathscr{F}_{Tn^{\rho}}\cap \mathcal{B}_{n,T}$, $\mathcal{E}_{n,T}$ happens if and only if $\cG^*_{Tn^{\rho}}$ is simple.
Now, an argument similar to \eqref{c2prob-of-simple} in Chapter~\ref{chap:secondmoment} can be applied to conclude that 
\begin{equation}
\prob{\cG^*_{Tn^{\rho}} \text{ is simple}\vert \mathscr{F}_{tn^{\rho}}} - \prob{\CM \text{ is simple}} \pto 0,
\end{equation}and using \eqref{c3:simple-fact-2},  \eqref{c3:simple-fact} follows, and the proof of Theorem~\ref{c3:thm:main-simple} is now complete. \qed

\section{Conclusion}
We have obtained the scaling limit for the metric structure of the ordered component sizes of the critical percolation clusters for $\CM$ when the empirical degree distribution has diverging third moment. 
The key ingredient of the proof is a universality principle in Theorem~\ref{c3:thm:univesalty}, which basically says that after replacing each nodes by metric spaces having small diameter, the scaling limit for the rank-one inhomogeneous random graphs does not change even if typical distances change. 
This work provides a general framework to establish the scaling limit for networks which are in the same universality class as identified in \cite{BHS15}. 
The overall idea for percolation on $\CM$ is not very specific  to the underlying model, and could be applicable to other types of inhomogeneous random graphs. 
An analogous framework for the Erd\H{o}s-R\'enyi universality class was established in \cite{BBSX14}. 
The underlying topology for the convergence of metric spaces in \cite{BBSX14} was taken to be Gromov-Hausdorff-Prokhorov topology, which turns out to be strictly stronger than the Gromov-weak topology considered here.
In the next chapter, we strengthen Theorem~\ref{c3:thm:main} to Gromov-Hausdorff-Prokhorov topology under some additional mild assumptions on the degree sequence.

\begin{subappendices}
\section{Proof of Proposition~\ref{c3:prop:comp-size}}\label{c3:sec:appendix-rescaling}
Note that due to the difference in the choice of $p_n(\lambda)$ in Chapter~\ref{chap:secondmoment} Assumption~\ref{c2assumption1} and this chapter, $\lambda$ must be replaced by $\lambda \nu$.
Let $\mathcal{E}(\cdot)$ denote the operator that maps a process to its ordered vector of excursion lengths, and $\mathcal{A}(\cdot)$  maps a process to the vector of areas under those excursions. 
Let us use $\mathrm{Exp}(b)$ as a generic notation to write an exponential random variable with rate $b$. 
All the different exponential random variables will be assumed to independent.
Now, 
 \begin{equation}\label{c3:eq:excursion-param}
 \begin{split}
  &\frac{1}{\sqrt{\nu}}\mathcal{E}\bigg(\sum_{i\geq 1}\frac{\theta_i}{\sqrt{\nu}}\big(\ind{\mathrm{Exp}(\theta_i/(\mu\sqrt{\nu}))\leq t} - (\theta_i/(\mu\sqrt{\nu}))t\big)+\lambda\nu t\bigg)\\
  &\hspace{1cm} \eqd \frac{1}{\nu} \mathcal{E}\bigg(\sum_{i\geq 1}\frac{\theta_i}{\sqrt{\nu}}\big(\ind{\mathrm{Exp}(\theta_i/(\mu\nu))\leq u} - (\theta_i/(\mu\nu))u\big)+\lambda u\sqrt{\nu}\bigg)\\
  &\hspace{1cm} \eqd \frac{1}{\nu} \mathcal{E}\bigg(\sum_{i\geq 1}\frac{\theta_i}{\mu\nu}\big(\ind{\mathrm{Exp}(\theta_i/(\mu\nu))\leq u} - (\theta_i/(\mu\nu))u\big)+\frac{\lambda}{\mu} u\bigg),
   \end{split}
 \end{equation} where the last step follows by rescaling the space by $\mu\sqrt{\nu}$ and noting that the rescaling of space does not affect excursion lengths.
 Again, 
 \begin{eq}
  &\mathcal{A}\bigg(\sum_{i\geq 1}\frac{\theta_i}{\mu\sqrt{\nu}}\big(\ind{\mathrm{Exp}(\theta_i/(\mu\sqrt{\nu}))\leq t} - (\theta_i/(\mu\sqrt{\nu}))t\big)+\frac{\lambda\nu}{\mu} t\bigg)\\
  & \hspace{1cm}\eqd \mathcal{A}\bigg(\sum_{i\geq 1}\frac{\theta_i}{\mu\nu}\big(\ind{\mathrm{Exp}(\theta_i/(\mu\nu))\leq u } - (\theta_i/(\mu\nu))t\big)+\frac{\lambda}{\mu} u\bigg),
 \end{eq}which is obtained by rescaling both the space and time by $\sqrt{\nu}$.  
 Thus, the proof follows.
%In Section~\ref{c3:sec:proof-metric-mc}, we will use the following more general fact about the rescaling of the parameters of $\bld{\xi}(\bld{\theta},\lambda)$, which can be proved using identical arguments as \eqref{c3:eq:excursion-param}:
\section{Computation for $\E[\mathscr{W}(V_n^*)^3]$} 
\label{c3:sec:appendix-path-counting}
Recall that 
\begin{equation}
\big(\mathscr{W}(V_n^*)\big)^r = \sum_{k_1,\cdots,k_r\in [n]}w_{k_1}\dots w_{k_r}\ind{V_n^*\leadsto k_1,\dots, V_n^*\leadsto k_r}. 
\end{equation}
For the third moment, the leading contributions arise from the structures given in Figure~3.
Thus,
\begin{equation}\label{c3:W-moment3-ub}
 \begin{split}
  &\expt{\big(\mathscr{W}(V_n^*)\big)^3}\\
  &\leq \expt{(W_n^*)^3}+\frac{\expt{D_n^*}(\expt{D_n'W_n})^3\sigma_3(n)^2}{(\expt{D_n'})^5(1-\nu_n')^5}\\
  &\hspace{.5cm}+\frac{\expt{D_n^*(D_n^*-1)(D_n^*-2)}(\expt{D_n'W_n})^3}{(\expt{D_n'})^3(1-\nu_n')^3}\\
  & \hspace{1cm}+\frac{\expt{D_n^*}(\expt{D_n'W_n})^3\sigma_4(n)}{(\expt{D_n'})^4(1-\nu_n')^4}+ \frac{\expt{D_n^*(D_n^*-1)}(\expt{D_n'W_n})^3\sigma_3(n)}{(\expt{D_n'})^4(1-\nu_n')^4}\\
  &=O(n^{4\alpha-1}+n^{6\alpha-2+3\delta}+n^{4\alpha-1+3\delta}+n^{4\alpha-1+4\delta}+n^{6\alpha-2+4\delta})=o(n^{2\delta+1}),
 \end{split}
\end{equation} and the proof follows.
\section{Proofs of Lemmas \ref{c3:lem:exploration::subcritical-j}~and~\ref{c3:lem:weight-prop-l}}
\label{c3:sec:appendix-barely-subcrit}
\begin{proof}[Proof of Lemma~6.3]
Recall the representation of $\bar{S}_n^j(t)$ from \eqref{c3:eqn::scaled_process_j}. 
It is enough to show that 
\begin{eq}
\sup_{t\in [0,T]} & n^{-\alpha} \bigg|\sum_{i\in [n]}d_i'\bigg( \mathcal{I}_i^n(tn^{\alpha+\delta})-\frac{d_i'}{\ell_n'}tn^{\alpha+\delta} \bigg)\bigg|\\
&= \sup_{t\in [0,T]}n^{-\alpha}|M_n(tn^{\alpha+\delta})| \pto 0.
\end{eq}
Fix any $T>0$ and define $\ell_n'(T)=\ell_n'-2Tn^{\alpha+\delta}-1$, and $ M_n'(l) = $ \linebreak $\sum_{i\in [n]} d_i'(\mathcal{I}_i^n(l)- (d_i/\ell_n'(T))l)$. Note that 
\begin{eq}
 \sup _{t\in [0,T]} &n^{-\alpha}|M_n(tn^{\alpha+\delta})-M_n'(tn^{\alpha+\delta})| \\
 &\leq Tn^{\delta}\frac{(2Tn^{\alpha+\delta}-1)\sum_{i\in [n]}d_i'^2}{\ell_n'(T)^2} = \oP(1),
\end{eq}and thus the proof reduces to showing that 
\begin{equation} \label{c3:tail::martingale}
\sup_{t\in [0,T]}n^{-\alpha}|M_n'(tn^{\alpha+\delta})| \pto 0.
\end{equation}
Note that, uniformly over $l\leq Tn^{\alpha+\delta}$, 
  \begin{equation}\label{c3:eq:prob-ind}
  \prob{\mathcal{I}_i^n(l+1)=1\mid \mathscr{F}_l} \leq \frac{d_i'}{\ell_n'(T)}\quad\text{ on the set } \{\mathcal{I}_i^n(l)=0\}.
  \end{equation} 
 Therefore,
 \begin{align*}
  &\E\big[M_n'(l+1)-M_n'(l) \mid \mathscr{F}_l\big]\\
  &=\E\bigg[\sum_{i\in [n]} n^{-\alpha}d_i' \left(\mathcal{I}^n_i(l+1)-\mathcal{I}_i^n(l)-\frac{d_i'}{\ell_n'(T)}\right)\Big| \mathscr{F}_l\bigg]\\
  &= \sum_{i\in [n]} n^{-\alpha}d_i' \left(\E\big[\mathcal{I}^n_i(l+1)\big| \mathscr{F}_l\big]\ind{\mathcal{I}_i^n(l)=0} - \frac{d_i'}{\ell_n'(T)} \right)\leq 0.
  \end{align*} Thus $(M_n'(l))_{l= 1}^{Tn^{\alpha+\delta}}$ is a super-martingale.  Further, uniformly for all $l\leq Tn^{\alpha+\delta}$,
  \begin{equation}\label{c3:prob-ind-lb}
   \prob{\mathcal{I}_i^n(l)=0} \leq \left(1-\frac{d_i'}{\ell_n'} \right)^l.
  \end{equation}
  Thus, Assumption~2 gives
  \begin{align*}
    &n^{-\alpha}\big| \E[M_n'(l)]\big| \\&\leq n^{-\alpha} \sum_{i\in [n]} d_i'\left( 1-\left(1-\frac{d_i'}{\ell_n'} \right)^l-\frac{d_i'}{\ell_n'}l \right)+n^{-\alpha}l\sum_{i\in [n]}d_i'^2\left(\frac{1}{\ell_n'(T)}-\frac{1}{\ell_n'}\right)\\
    &\leq \frac{l^2}{2\ell_n'^2 n^{\alpha} } \sum_{i\in [n]} d_i'^3+o(1) = o(1),
  \end{align*} 
 where we have used the fact that 
  $$n^{-\alpha}l\sum_{i\in [n]}d_i'^2(1/\ell_n'(T)-1/\ell_n')=O(n^{2\rho+1-\alpha-2})=O(n^{(\tau-4)/(\tau-1)}),$$ uniformly for $l\leq Tn^{\alpha+\delta}$ and, in the last step, that fact that $\delta<\eta$. Therefore, uniformly over $l\leq Tn^{\alpha+\delta}$,
  \begin{equation}\label{c3:expectation::M_n^K}
  \lim_{n\to\infty}\big| \E[M_n'(l)]\big|=0.
  \end{equation} 
  Now, note that for any $(x_1,x_2,\dots)$, $0\leq a+b \leq x_i$ and $a,b>0$ one has $\prod_{i=1}^R(1-a/x_i)(1-b/x_i)\geq \prod_{i=1}^R (1-(a+b)/x_i)$. Thus, for all $l\geq 1$ and $i\neq j$, 
  \begin{equation}\label{c3:neg:correlation}
  \prob{\mathcal{I}_i^n(l)=0, \mathcal{I}_j^n(l)=0}\leq \prob{\mathcal{I}_i^n(l)=0}\prob{\mathcal{I}_j^n(l)=0}
  \end{equation} and therefore $\mathcal{I}_i^n(l)$ and $\mathcal{I}^n_j(l)$ are negatively correlated. Observe also that, uniformly over $l\leq Tb_n$, 
  \begin{eq}\label{c3:var-ind-ub}
   &\var{\mathcal{I}_i^n(l)}\leq  \prob{\mathcal{I}_i^n(l)=1} \\
   &\leq \sum_{l_1=1}^l\prob{\text{vertex  }i \text{ is first discovered at stage }l_1 }\leq \frac{ld_i' }{\ell_n'(T)}.
  \end{eq}  
  Therefore, using the negative correlation in \eqref{c3:neg:correlation}, uniformly over $l\leq Tn^{\alpha+\delta}$, 
  \begin{equation} \label{c3:variance::M_n^k}
   \begin{split}
    n^{-2\alpha}\var{M_n'(l)}&%\leq n^{-2\alpha}\sum_{i\in [n]} d_i'^2 \var{\mathcal{I}_i^n(l)} 
    \leq \frac{l}{\ell_n'(T)n^{2\alpha}}\sum_{i\in [n]} d_i'^3 =o(1).
   \end{split}
  \end{equation} Now we can use the super-martingale inequality \cite[Lemma 2.54.5]{RW94} stating that for any super-martingale $(M(t))_{t\geq 0}$, with $M(0)=0$, 
 \begin{equation}\label{c3:eqn:supmg:ineq}
  \varepsilon \prob{\sup_{s\leq t}|M(s)|>3\varepsilon}\leq 3\expt{|M(t)|}\leq 3\left(|\expt{M(t)}|+\sqrt{\var{M(t)}}\right).
 \end{equation}
  Thus \eqref{c3:tail::martingale} follows using \eqref{c3:expectation::M_n^K}, \eqref{c3:variance::M_n^k}, and \eqref{c3:eqn:supmg:ineq}.
\end{proof}
\begin{proof}[Proof of Lemma~6.4]
Fix any $T>0$ and recall that $\ell_n(T)=\ell_n'-2Tn^{\alpha+\delta}-1$. Denote $W(l)=\sum_{i\in [n]}w_i\mathcal{I}_i^n(l)$. Firstly, observe that
\begin{align*}
  &\E[W(l+1)-W(l) \mid \mathscr{F}_l]\\
  &= \sum_{i\in [n]} w_i\E\big[\mathcal{I}^n_i(l+1)\mid \mathscr{F}_l\big]\ind{\mathcal{I}_i^n(l)=0} \leq \frac{\sum_{i\in [n]}d_i'w_i}{\ell_n'(T)},
  \end{align*} 
uniformly over $l\leq Tn^{\alpha+\delta}$. Therefore, $(\tilde{W}(l))_{l=1}^{Tn^{\alpha+\delta}}$ is a super-martingale, where $\tilde{W}(l)=W(l)-(\sum_{i\in [n]}d_i'w_i/\ell_n')l$. Again, the goal is to use \eqref{c3:eqn:supmg:ineq}. Using \eqref{c3:prob-ind-lb}, we can show that
$ \big|\E[\tilde{W}(l)]\big|
 %&\leq \sum_{i\in [n]}w_i \bigg(1-\bigg(1-\frac{d_i'}{\ell_n'}\bigg)^l-\frac{d_i'}{\ell_n'}l\bigg) + l\sum_{i\in [n]}d_i'w_i \bigg(\frac{1}{\ell_n'(T)}-\frac{1}{\ell_n'}\bigg)
 =o(n^{\alpha+\delta}),$
uniformly over $l\leq Tn^{\alpha+\delta}$. Also, using \eqref{c3:neg:correlation} and \eqref{c3:var-ind-ub} and Assumption~\ref{c3:assumption-w},
$\mathrm{Var}(\tilde{W}(l))\leq \sum_{i\in [n]}w_i^2 \mathrm{var}(\mathcal{I}_i^n(l))= o(n^{2(\alpha+\delta)}),$ uniformly over $l\leq Tn^{\alpha+\delta}$. Finally, using \eqref{c3:eqn:supmg:ineq},  we conclude the proof.
\end{proof}
\section{Proof sketch for Proposition~\ref{c3:thm::convegence::exploration-process-blob}}
\label{c3:sec:appendix-perc-blob}
%\begin{proof}
The proof of Proposition~\ref{c3:thm::convegence::exploration-process-blob} can be carried out using similar ideas as Chapter~\ref{chap:secondmoment} Theorem~\ref{c2thm::convegence::exploration_process}. 
%Thus, instead of giving a detailed proof, we outline the steps only. 
The key idea to prove Proposition~\ref{c3:thm::convegence::exploration-process-blob} is that the scaling limit is governed by the vertices having large degrees only. More precisely, for any $\varepsilon > 0$ and $T>0$,
\begin{equation}
 \lim_{K\to\infty}\limsup_{n\to\infty}\PR\bigg(\sup_{t\leq T}n^{-\alpha}\bigg|\sum_{i>K}a_i\Big( \mathcal{I}_i^n(tn^{\rho-\delta})-\frac{a_i}{\ell_n^a}tn^{\rho-\delta} \Big)\bigg| > \varepsilon \bigg) = 0.
\end{equation} This can be proved using martingale estimates. Thus, if one considers the truncated sum 
\begin{align*}
\sum_{i\leq K} a_i \left( \mathcal{I}_i^n(l)-\frac{a_i}{\ell^a_n}l\right)+\left( \nu_n(\bld{a})-1\right)l,
\end{align*}with the first $K$ (fixed) terms
%\begin{equation}
%\bar{S}_n^K(t)= n^{-\alpha} \sum_{i=1}^Ka_i\Big( \mathcal{I}_i^n(tn^{\rho-\delta})-\frac{a_i}{\ell_n^a}tn^{\rho-\delta} \Big),
%\end{equation} 
it is enough to show that the iterated limit of the truncated process (first taking $\lim_{n\to\infty}$ and then $\lim_{K\to\infty}$) converges to $\mathbf{S}$ with respect to the Skorohod $J_1$ topology. Now, using the fact that $a_i/\sum_ia_i\xrightarrow{\sss \PR} \theta_i/(\mu\nu)$, and the fact that the vertices are explored in a size-biased manner with sizes being $(a_i)_{i\geq 1}$, it follows that (see Chapter~\ref{chap:secondmoment} \linebreak Lemma~\ref{c2lem::convergence_indicators}), for each fixed $K\geq 1$,
\begin{equation}
 \big(\mathcal{I}_i^n(tn^{\rho-\delta})\big)_{i\in [K],t\geq 0} \dto \big(\ind{\mathrm{Exp}(\theta_i/(\mu\nu))\leq t}\big)_{i\in [K],t\geq 0}.
\end{equation}
This concludes the proof of Proposition~\ref{c3:thm::convegence::exploration-process-blob}. 
%\end{proof}
\end{subappendices}
%
%\bibliographystyle{apa}
%\bibliography{thesis}
%
%%\todo[inline]{Refer to Christina's ongoing works}
%%\todo[inline]{Say that the proof of simple graphs follow from DHLS16}
%
%
%\end{document}
 
\cleardoublepage

\chapter[Global lower mass-bound for critical components]{Global lower mass-bound for critical configuration models in the heavy-tailed regime}
\label{chap:mspace-GHP}
{\small \paragraph*{Abstract.}
We establish the global lower mass-bound property for largest connected components in the critical window of phase transition for configuration model when the degree distribution has an infinite third moment.
The scaling limit of the critical percolation clusters, viewed as measured metric spaces, was established in \cite{BDHS17} with respect to the Gromov-weak topology.
Our result extends those scaling limit results to hold under the stronger Gromov-Hausdorff-Prokhorov topology.
This implies convergence of global functionals such as the diameters of the critical components.
Further, our result establishes compactness of the random metric spaces, which arise as scaling limits of critical clusters in the heavy-tailed regime. 
}

\vspace{.3cm} 
\noindent {\footnotesize Based on the preprint: 
Shankar Bhamidi, Souvik Dhara, Remco van der Hofstad, Sanchayan Sen;
\emph{Global lower mass-bound for critical configuration models in the heavy-tailed regime} (2018)}
\vfill
%\documentclass[10pt,a4paper,reqno]{memoir}
%\usepackage{dhara}
%
%
%
%
%%%%%%%%%%%%%%%%%%%%%%%%%%%%%%
%
%
%
%%
%\begin{document}
%\title{Global lower mass-bound for critical configuration models in the heavy-tailed regime}
%\author{Shankar Bhamidi, Souvik Dhara, Remco van der Hofstad and Sanchayan Sen}
%\maketitle
%\begin{abstract}
%We establish the global lower mass-bound property for largest connected components in the critical window of phase transition for configuration model when the degree distribution has an infinite third moment.
%The scaling limit of the critical percolation clusters, viewed as measured metric spaces, was established in \cite{BDHS17} with respect to the Gromov-weak topology.
%Our result extends those scaling limit results to hold under the stronger Gromov-Hausdorff-Prokhorov topology.
%This implies convergence of global functionals such as the diameters of the critical components.
%Further, our result establishes compactness of the random metric spaces, which arise as scaling limits of critical clusters in the heavy-tailed regime. 
%\end{abstract}

%\chapter{Global lower-mass bound}

%\section{Introduction}
Any connected graph $\sC$ can be viewed as a metric space with the distance between points given by $a \dst (\cdot,\cdot)$ for some constant $a>0$, where $\dst(\cdot,\cdot)$ is used as a generic notation to denote the  graph-distance (i.e., number of edges in the shortest path). 
Suppose that each vertex $i$ is assigned a \emph{mass}~$w_i$ so that there is a natural probability measure associated to the Borel sigma-algebra on $(\sC,a \dst)$ with the measure given by  $\mu (A) = \sum_{v\in A}w_v/\sum_{v\in \sC}w_v $ for any $A\subset \sC$. 
We denote the above metric space with a measure by $(\sC,a,\bld{w})$.
Fix any $\delta>0$ and define the $\delta$-lower mass of $(\sC,a,\bld{w})$ by 
\begin{eq}\label{c4:eq:defn:GLM}
\fm(\delta):=  \frac{\inf_{v\in \sC}\sum_{u: a \dst(v,u) \leq \delta} w_u}{\sum_{u\in \sC}w_u}.
\end{eq}
For a sequence $(\sC_n,a_n,\bld{w}_n)_{n\geq 1}$ of graphs viewed as metric spaces endowed with a measure,  the global lower mass-bound property is defined as follows:
\begin{defn}[Global lower mass-bound property \cite{ALW16}]\normalfont 
For $\delta>0$, let $\fm_n(\delta)$ denote the $\delta$-lower mass of $(\sC_n,a_n,\bld{w}_n)$.
Then $(\sC_n,a_n,\bld{w}_n)_{n\geq 1}$ is said to satisfy the global lower mass-bound property if and only if $\sup_{n\geq 1} \fm_n(\delta)^{-1}<\infty$ for any $\delta >0$.
When the sequence $(\sC_n)_{n\geq 1}$ is a collection of random graphs, $(\sC_n,a_n,\bld{w}_n)_{n\geq 1}$ is said to satisfy the global lower mass-bound property if and only if $(\fm_n(\delta)^{-1})_{n\geq 1}$ is a tight sequence of random variables for any $\delta >0$.
\end{defn}
The aim of this chapter is to prove the global lower mass-bound property for connected components of a configuration model at criticality, when the third moment of the empirical degree distribution tends to infinity.  
Informally speaking, the global lower mass-bound property ensures that all the small neighborhoods have mass bounded away from zero, so that the graph does not have any \emph{light spots} and the total mass is well-distributed over the whole graph.
This has several interesting consequences in the theory of critical random graphs, which we discuss in detail below after the formal statement of the result.
We start by defining the configuration model and state the precise assumptions, followed by a formal statement of the main result. 
Subsequently, we discuss some implications of this result in the context of recent scaling limit results for critical percolation on a configuration model.

\section{Main results}
Fix $\tau\in (3,4)$. Throughout this chapter we will use the shorthand notation
\begin{equation}\label{c4:eqn:notation-const}
 \alpha= 1/(\tau-1),\quad \rho=(\tau-2)/(\tau-1),\quad \eta=(\tau-3)/(\tau-1).
\end{equation}
Further, we assume the following conditions on the degree sequences of $\CM$:
\begin{assumption}[Degree sequence]\label{c4:assumption1}
\normalfont 
For each $n\geq 1$, let $\bld{d}=\boldsymbol{d}_n=(d_1,\dots,d_n)$ be a degree sequence satisfying $d_1\geq d_2\geq\ldots\geq d_n$. 
We assume the following about $(\boldsymbol{d}_n)_{n\geq 1}$ as $n\to\infty$:
\begin{enumerate}[(i)] 
\item \label{c4:assumption1-1} (\emph{High-degree vertices}) For each fixed $i\geq 1$, 
\begin{equation}\label{c4:defn::degree}
 n^{-\alpha}d_i\to \theta_i,
\end{equation}
where $\boldsymbol{\theta}=(\theta_1,\theta_2,\dots)\in \ell^3_{\shortarrow}\setminus \ell^2_{\shortarrow}$. 
\item \label{c4:assumption1-2} (\emph{Moment assumptions}) 
Let $D_n$ denote the degree of a typical vertex, i.e. a vertex chosen uniformly at random, independently of $\mathrm{CM}_n(\boldsymbol{d})$. Then, $D_n$ converges in distribution to some discrete random variable $D$ and 
\begin{eq}
 \frac{1}{n}\sum_{i\in [n]}d_i\to \mu := \E[D], \quad \frac{1}{n}&\sum_{i\in [n]}d_i^2 \to \mu_2:=\E[D^2],\\ 
 \lim_{K\to\infty}\limsup_{n\to\infty}n^{-3\alpha} &\sum_{i=K+1}^{n} d_i^3=0.
\end{eq}
\item For all sufficiently large $n$, the following holds uniformly over $i\in [n]$ and $x>0$:
\begin{eq}\label{c4:eq:assumption-degree-tail}
n^{-\alpha} \sum_{j:d_j>xn^{\alpha}}d_j \geq x^{-b},\quad \Big(\frac{d_i}{n^{\alpha}}\Big)^{b-1} n^{-2\alpha} \sum_{j\leq i}d_j^2>C
\end{eq} for some $b\in (1,2)$. Further, $\limsup_{n\to\infty}\sum_{i\geq 1}\e^{-n^{-2\alpha}\sum_{j=1}^{i}d_j^2}<\infty.$

\item \label{c4:assumption1-4} Let $n_1$ be the number of degree-one vertices. Then $n_1=\Theta(n)$, which is equivalent to assuming that $\prob{D=1}>0$.
\item The weight sequence $\bld{w} = (w_i)_{i\in [n]}$ satisfies 
$$\lim_{n\to\infty}\frac{1}{\ell_n}\sum_{i\in [n]} d_i w_i = \mu_{w}, \quad \max\bigg\{\sum_{i\in [n]}d_iw_i^2,\sum_{i\in [n]}d_i^2w_i\bigg\} = O(n^{3\alpha}).$$
\end{enumerate}
\end{assumption}
%\todo[inline]{We need additional assumptions as commented in Section~\ref{c4:sec:height-vs-rw}. Are these okay to assume?}
Assumption~\ref{c4:assumption1}~(i)--(iii) are the general set of assumptions on the degree distribution under which the scaling limit for the component sizes, surplus edges and the metric structure of critical configuration model was proved in \cite{DHLS16,BDHS17}. 
These assumptions are applicable for a configuration model with power-law degree distribution with exponent $\tau\in (3,4)$.
More precisely, if $F$ is a distribution function on non-negative integers  satisfying $(1-F)(x) = C x^{\tau-1}$, then Assumptions~\ref{c4:assumption1}~(i)--(iv) is satisfied when (a) $d_i=(1-F)^{-1}(i/n)$, (b) $d_i$'s are i.i.d.~samples from $F$ \cite[Section 2]{DHLS16}. 
Thus, above assumptions are applicable for configuration model with power-law degree distribution with exponent $\tau\in (3,4)$.
We note that Assumption~\ref{c4:assumption1}~(iv) is required for technical purposes, which was not required in \cite{DHLS16,BDHS17}.
Assumptions~\ref{c4:assumption1}~(v) for the weight sequence is satisfied for $w_i = 1$ or $w_{i} = d_i$ for all $i\in [n]$.
$w_i = 1$ is equivalent to the normalized counting measure on $\sC$.
%
%
%Assumption~\ref{c4:assumption1} includes as speci
Moreover, we assume that the configuration model lies within the critical window of the phase transition, i.e., for some $\lambda\in \R$,
\begin{equation}\label{c4:defn:criticality}
\nu_n=\frac{\sum_{i\in [n]}d_i(d_i-1)}{\sum_{i\in [n]}d_i} =  1 + \lambda n^{-\eta} + o(n^{-\eta}).
\end{equation}   
We denote the $i$-th largest connected component of $\rCM_n(\bld{d})$ by $\csi$.
For each $v\in [n]$ and $\delta>0$, let $\mathcal{N}_v(\delta)$ denote the $\delta n^{\eta}$ neighborhood of $v$ in $\rCM_n(\bld{d})$. 
For each $i\geq 1$, define 
\begin{equation} \label{c4:eq:m-i-defn}
 \mathfrak{m}_i^n(\delta) = \inf_{v\in\mathscr{C}_{\sss (i)}}n^{-\rho}\sum_{k\in \mathcal{N}_v(\delta)} w_k.
\end{equation}
For $\CM$ satisfying Assumption~\ref{c4:assumption1} and \eqref{c4:defn:criticality}, the total mass of components $n^{-\rho}\sum_{v\in \csi} w_v$ is known to converge to some non-degenerate random variable with support $(0,\infty)$ \cite[Theorem 21]{DHLS16}. 
Therefore, it is enough to rescale by $n^{\rho}$ in \eqref{c4:eq:m-i-defn} instead of the total weight of the components as given in \eqref{c4:eq:defn:GLM}.
The following theorem is the main result of this chapter:
\begin{theorem}[Global lower mass-bound]
\label{c4:thm:gml-bound} 
Suppose that \textrm{Assumption~\ref{c4:assumption1}} and \eqref{c4:defn:criticality} holds.
Then, for each  fixed $i\geq 1$, $(\sC_{\sss (i)},n^{-\eta},\bld{w})_{n\geq 1}$ satisfies global lower mass-bound, i.e., for any $\delta>0$, the sequence $(\mathfrak{m}_i^n(\delta)^{-1})_{n \geq 1}$ is tight. 
\end{theorem}
\noindent By the results of \cite{J09c}, under Assumption~\ref{c4:assumption1},
Therefore, 
\begin{equation}
\liminf_{n\to\infty} \PR(\CM \text{ is simple})>0.
\end{equation} 
This immediately implies the following corollary:
\begin{corollary}\label{c4:cor:GLM-uniform}
Under \textrm{Assumption~\ref{c4:assumption1}} and \eqref{c4:defn:criticality}, the largest components of \linebreak $\UM$ also satisfies the global lower mass-bound property.
\end{corollary}
Next we state another important corollary, which says that the global lower mass-bound property is also satisfied by critical percolation clusters of $\CM$ and $\UM$. 
To this end, let us assume that 
\begin{equation}
 \lim_{n\to\infty}\frac{\sum_{i\in [n]}d_i(d_i-1)}{\sum_{i\in [n]}d_i} = \nu >1.
\end{equation} 
$\CM$ is super-critical in the sense that there exists a unique \emph{giant} component whp for $\nu>1$,, and when $\nu<1$, all the components have size $\oP(n)$  \cite{JL09,MR95}.
 Percolation refers to deleting each edge of a graph independently with probability $1-p$.  
% Let $\CMP$, and $\mathrm{UM}_n(\bld{d},p_n)$ denote the graphs obtained from percolation with probability $p_n$ on the graphs $\mathrm{CM}_n(\boldsymbol{d})$ and $\mathrm{UM}_n(\bld{d})$, respectively. 
 %For $p_n\to p$, it was shown in \cite{J09} that the critical point for the phase transition of the component sizes is $p=1/\nu$. 
 The critical window for percolation was studied in \cite{DHLS16,BDHS17}, and is defined by the values of $p$ given by
 \begin{equation}\label{c4:eq:critical-window-defn}
  p_c(\lambda) = \frac{1}{\nu_n}+\frac{\lambda}{n^{\eta}}+o(n^{-\eta}).
 \end{equation}
Let $\sC_{\sss (i)}(p_c(\lambda))$ denote the $i$-th largest component of the graph obtained by percolation with probability $p_c(\lambda)$ on the graph $\CM$. 
Then the following result holds:
\begin{corollary}\label{c4:cor:GLM-percoltion}
Under \textrm{Assumption~\ref{c4:assumption1}} and \eqref{c4:eq:critical-window-defn}, $(\sC_{\sss (i)}(p_c(\lambda)),n^{-\eta},\bld{w})$ satisfies the global lower mass-bound property, for each fixed $i\geq 1$.
\end{corollary}
%Without going into
%\todo[inline]{Add proof of this corollary.}
%Theorem~\ref{c4:thm:gml-bound} 

\subsection{Discussion} 
\paragraph*{Gap between Gromov-weak and GHP convergence.}
For formal definitions of the Gromov-weak topology, and Gromov-Hausdorff-Prokhorov (GHP) \linebreak topology on the space of compact measured metric spcaes, we refer the reader to \cite{BHS15,GPW09,ALW16}.
The Gromov-weak topology is an analogue of finite-dimensional convergence, since it takes into account distances between a finite number of sampled points from the underlying metric space.
Thus, global functionals such as the diameter is not continuous with respect to this topology. 
Further, under the Gromov-weak convergence, the limit of compact measured metric spaces may not be compact.  
On the other hand, GHP convergence imposes a stronger topology which takes care of both the above points.
The global lower mass (GLM) bound property acts as a bridge between these two notions of convergence. 
In fact, Gromov-weak convergence and GLM-bound together imply GHP-convergence when the support of the limiting measure is the entire limiting space \cite[Theorem 6.1]{ALW16}, in which case the limiting metric space is always compact. 
Thus, given Gromov-weak convergence, in order to derive convergence of global functionals like diameter, it is desirable to establish the GLM-bound.

%The Gromov-weak topology is defined on $\mathscr{S}$, the space of 
%all complete and separable measured metric spaces (metric spaces endowed with a measure on its Borel sigma-algebra).
%On the other hand, Gromov-Hausdorff-Prokhorov (GHP) convergence endows 
%
%  (see \cite[Section 2.1.2]{BHS15}, \cite{GPW09,G07}).
%The Gromov-weak topology on the space of metric space 
%\todo[inline]{Discuss the GHP convergence, diameter convergence.}
\paragraph*{Scaling limit of critical percolation clusters.}
The scaling limit for largest critical percolation clusters $\sC_{\sss (i)}(p_c(\lambda))$, viewed as a measured metric space, was derived in Chapter~\ref{chap:mspace} with respect to the Gromov-weak topology.
Following the above discussion, Corollary~\ref{c4:cor:GLM-percoltion} establishes that the convergence in Chapter~\ref{chap:mspace} holds with respect to the GHP topology. 
This in particular establishes that the limiting metric spaces in \cite{BHS15,BDHS17} are compact almost surely.
Due to Assumption~\ref{c4:assumption1}~(iv), some additional conditions are imposed on $\bld{\theta}$.
For example, the assumption is satisfied for $\theta_i \in [L_1(i)i^{-a_1}, L_2(i)i^{-a_2}]$, where $a_1,a_2 \in (1/3,1/2)$, and $L_1,L_2$ are slowly varying functions.
This is much less restrictive than assuming $\theta_i = i^{-\alpha}$ as in \cite{BHS15}.
The compactness of the limiting metric spaces in \cite{BHS15,BDHS17} was also established under some regularity conditions in a very recent preprint~\cite{BDW18} %\todo{check reference} 
using independent methods as in this chapter. 
In addition to the compactness of the limiting metric space, we also have the convergence of the diameters, i.e.,
\begin{eq}
\big(n^{-\eta}\diam(\sC_{\sss (i)}(p_c(\lambda)))\big)_{i\geq 1} \dto (X_i)_{i\geq 1}
\end{eq}
with respect to the product topology, where $(X_i)_{i\geq 1}$ is a non-degenerate random vector. 
In fact $X_i$ corresponds to the diameter of the limiting object of $\sC_{\sss (i)}(p_c(\lambda))$ from \cite{BDHS17}.
%\paragraph*{Note about the assumptions.}
%\todo[inline]{\eqref{c4:eq:m-i-defn} suffices as sum of weigths is $\Theta(n^\rho)$.}

\paragraph*{Proof ideas and technical motivation for this work.} 
The key idea of the proof of Theorem~\ref{c4:thm:gml-bound} consists of two main steps. 
The first step is to show that the neighborhoods of the high-degree vertices, called \emph{hubs}, have mass  $\Theta(n^{\rho})$. 
Secondly, the probability of any small~$\varepsilon n^{\eta}$ neighborhood  not containing hubs is arbitrarily small.
These two facts, summarized in Propositions~\ref{c4:prop:size-nbd bound} and~\ref{c4:prop:diamter-small-comp} below, together ensure that the total mass of any neighborhood of $\sC_{\sss(i)}$ of radius $\varepsilon n^{\eta}$ is bounded away from zero. 
These two facts were proved in \cite{BHS15} in the context of inhomogeneous random graphs.
However, the proof techniques are completely different here.
%However, especially the proof of Proposition~\ref{c4:prop:diamter-small-comp} completely different here. 
The main advantage in \cite{BHS15} was that the breadth-first exploration of components could be dominated by a branching process with \emph{mixed Poisson} progeny distribution that is \emph{independent of $n$}. 
The above facts allow one to use existing literature and estimate the probabilities that a long path exists in the branching process in \cite{BHS15}.
However, such a technique is specific to rank-one inhomogeneous random graphs and does not work in the cases where the above stochastic domination is not possible. 
This was partly a motivating reason for this work.
Moreover, the final section contains many results about exponential bounds for the number of edges in the large components (Proposition~\ref{c4:lem:volume-large-deviation}), 
a coupling of the neighborhood exploration with a branching process with stochastically larger progeny distribution (Section~\ref{c4:sec:BP-approximation}), which is interesting in its own right.

%\todo[inline]{The proof idea}
%\todo[inline]{Organization of the chapter.}
\paragraph*{Organization of this chapter.}
The rest of this chapter is organized as follows: In Section~\ref{c4:sec:proof-main-thm}, we state two key propositions, one involving the total mass of small neighborhoods, and the second one involving a bound on the diameter. 
The proof of Theorem~\ref{c4:thm:gml-bound} is completed in Section~\ref{c4:sec:proof-main-thm}.
In Section~\ref{c4:sec:total-mass-hubs} we derive the required bounds on the total mass of small neighborhoods.
In Section~\ref{c4:sec:diameter-after-removal} we obtain the required bounds on the diameter. 
%The final section contains many results about exponential bounds for the number of edges in the large components, 
%a coupling of the neighborhood exploration with a branching process with stochastically larger progeny distribution, and exponential bounds on the heights of sequences of branching processes, which can be interesting in its own right.

\section{Proof of Theorem~\ref{c4:thm:gml-bound}} \label{c4:sec:proof-main-thm}

In this section, we first state the two key propositions in Propositions~\ref{c4:prop:size-nbd bound}, and~\ref{c4:prop:diamter-small-comp}, and then complete the proof of Theorem~\ref{c4:thm:gml-bound}.
The following shows that hub $i$ has sufficient mass close to it with high probability:
\begin{proposition}
\label{c4:prop:size-nbd bound}
For each fixed $i\geq 1$ and $\varepsilon_2>0$, there exists $\delta_{i, \varepsilon_2}>0$ and $n_{i,\varepsilon_2}\geq 1$ such that, for any $\delta\in (0,\delta_{i,\varepsilon_2}]$ and $n\geq n_{i,\varepsilon_2}$, 
\begin{equation}\label{c4:eq:size-nbd bound}
\PR\bigg(\sum_{k\in\mathcal{N}_i(\delta)}w_k\leq  \theta_i \delta n^{\rho}\bigg)\leq \frac{\varepsilon_2}{2^{i+1}}.
\end{equation}
\end{proposition}

Denote by $\mathcal{G}^{\sss >K}_n$ the graph obtained by removing the vertices $1,\dots,K$ having the largest degrees and the associated edges from $\CM$. 
Note that $\mathcal{G}^{\sss >K}_n$ is a configuration model conditional on its degree sequence.
%The diameter of a disconnected graph is the maximum diameter of its connected components.
Let $\Delta^{\sss >K}$ denote the maximum of the diameters of the connected components of $\mathcal{G}^{\sss >K}_n$. 
For a component $\sC \subset \CM$, we write $\Delta(\sC)$ to denote its diameter.
The following proposition shows that the diameter of all components of $\mathcal{G}^{\sss >K}_n$ is small with high probability:

\begin{proposition}\label{c4:prop:diamter-small-comp}
Assume that \textrm{Assumption~\ref{c4:assumption1}} holds.  Then, for any $\varepsilon_1,\varepsilon_2 > 0$, there exists $K =K(\varepsilon_1,\varepsilon_2)$ and $n_0=n_{0}(\varepsilon_1,\varepsilon_2)$ such that for all $n\geq n_0$,
\begin{equation}\label{c4:eq:diamter-small-comp}
 \prob{\Delta^{\sss >K}>\varepsilon_1 n^{\eta}}\leq \frac{\varepsilon_2}{4}. 
\end{equation}
\end{proposition}
\begin{proof}[Proof of Theorem~\ref{c4:thm:gml-bound}] 
Fix any $i\geq 1$ and $\varepsilon_1,\varepsilon_2>0$. 
Let us choose $K$ and $n_0$ satisfying~\eqref{c4:eq:diamter-small-comp}. 
In view of Proposition~\ref{c4:prop:size-nbd bound}, let $\delta_0 = \min\{\varepsilon_1,\delta_{1,\varepsilon_2},\dots,\delta_{K,\varepsilon_2}\}/2$, and $n_0' = \max\{n_0,n_{1,\varepsilon_2},\dots,n_{K,\varepsilon_2}\}$. 
Thus, for all $n\geq n_0'$, \eqref{c4:eq:size-nbd bound} is satisfied for all $i\in [K]$.
Define 
\begin{equation}
F_1 := \{\Delta^{\sss >K}< \varepsilon_1 n^{\eta}/2\}, \quad F_2 := \{\Delta(\csi )>\varepsilon_1 n^{\eta}/2\}. 
\end{equation}
Notice that, on the event $F_1\cap F_2$, it must be that one of the vertices $1,2,\dots, K$ belongs to~$\csi$, and the union of the neighborhoods of $[K]$ of radius $\lceil\varepsilon_1 n^{\eta}/2\rceil+1 \approx \varepsilon_1 n^{\eta}/2$ covers~$\csi $. 
Therefore, given any vertex $v\in \csi$, $\cN_v(\varepsilon_1)$ contains at least one of the neighborhoods~$(\cN_j(\varepsilon_1/2))_{j\in [K]}$. 
This observation yields that
\begin{equation}
 \inf_{v\in\csi}n^{-\rho}\sum_{k\in \mathcal{N}_v(\varepsilon_1)} w_k \geq \min_{j\in [K]} n^{-\rho}\sum_{k\in \mathcal{N}_j(\varepsilon_1/2)} w_k \geq \min_{j\in [K]} n^{-\rho}\sum_{k\in \mathcal{N}_j(\delta_0)} w_k.
\end{equation} 
Thus, for all $n\geq n_0'$
\begin{equation}\label{c4:eq:f1-f2}
\begin{split}
 &\PR\bigg(F_1\cap F_2 \cap \bigg\{\inf_{v\in\csi}n^{-\rho}\sum_{k\in \mathcal{N}_v(\varepsilon_1)} w_k  \leq  \theta_K \delta_0 \bigg\}\bigg)\\
 &\hspace{.5cm}\leq \sum_{j\in [K]} \PR\bigg(\sum_{k\in\mathcal{N}_j(\delta)}w_k\leq  \theta_j \delta_0 n^{\rho}\bigg) \leq \frac{\varepsilon_2}{2} .
 \end{split}
\end{equation}
Further, on the event $F_2^c$, $\sum_{k \in \mathcal{N}_v(\varepsilon_1)} w_k = \sum_{k\in\csi}w_k$ for all $v\in \csi$. 
Moreover, using \cite[Theorem 21]{DHLS16}, it follows that $n^{-\rho}\sum_{k\in\csi}w_k$ converges in distribution to a random variable with  strictly positive support.
Using the Portmanteau theorem, the above implies that for any $\delta_0'>0$, there exists $\tilde{n}_0 = \tilde{n}_0(\varepsilon_2,\delta_0')$ such that, for all $n\geq \tilde{n}_0$, 
\begin{equation}
\PR\bigg(n^{-\rho}\sum_{k\in\csi}w_k\leq \delta_0'\bigg)\leq \frac{\varepsilon_2}{4}.
\end{equation}
Therefore,
\begin{equation}\label{c4:eq:f1-f2c}
\PR\bigg( F_2^c\cap \bigg\{\inf_{v\in\csi}n^{-\rho}\sum_{k\in \mathcal{N}_v(\varepsilon_1)} w_k  \leq \delta_0' \bigg\}\bigg)\leq \frac{\varepsilon_2}{4}.
\end{equation} 
Now, using \eqref{c4:eq:f1-f2}, \eqref{c4:eq:f1-f2c} together with Proposition~\ref{c4:prop:diamter-small-comp}, it follows that, for any $n\geq \max\{n_0',\tilde{n}_0\}$,
\begin{eq}
\PR\bigg(\inf_{v\in\csi}n^{-\rho}\sum_{k\in \mathcal{N}_v(\varepsilon_1)} w_k  \leq \min\{\delta_0',\theta_K\delta_0\} \bigg)\leq \varepsilon_2.
\end{eq}
This completes the proof of Theorem~\ref{c4:thm:gml-bound}.
\end{proof}

\section{Lower bound on the total mass of neighborhoods of hubs} 
\label{c4:sec:total-mass-hubs}
In this section, we prove Proposition~\ref{c4:prop:size-nbd bound}.
\begin{proof}[Proof of Proposition~\ref{c4:prop:size-nbd bound}]
Let us denote the component of $\CM$ containing vertex~$i$  by $\cs(i)$.
Consider the breadth-first exploration of $\cs(i)$ starting from vertex~$i$, given by
the following algorithm:
\begin{algo}[Exploring the graph]\label{c4:algo-expl}\normalfont  
%Consider the configuration model $\mathrm{CM}_{n}(\boldsymbol{d})$. 
The algorithm carries along vertices that can be alive, active, exploring and killed and half-edges that can be alive, active or killed. 
We sequentially explore the graph as follows:
\begin{itemize}
\item[(S0)] At stage $l=0$, all the vertices and the half-edges are \emph{alive}, and only the half-edges associated to vertex $i$ are \emph{active}. Also, there are no \emph{exploring} vertices except $i$. 
\item[(S1)]  At each stage $l$, if there is no active half-edge, choose a vertex $v$ proportional to its degree among the alive (not yet killed) vertices and declare all its half-edges to be \emph{active} and declare $v$ to be \emph{exploring}. If there is an active vertex but no exploring vertex, then declare the \emph{smallest} vertex to be exploring.
\item[(S2)] At each stage $l$, take an active half-edge $e$ of an exploring vertex $v$ and pair it uniformly to another alive half-edge $f$. Kill $e,f$. If $f$ is incident to a vertex $v'$ that has not been discovered before, then declare all the half-edges incident to $v'$ active, except $f$ (if any). 
If $\mathrm{degree}(v')=1$ (i.e. the only half-edge incident to $v'$ is $f$) then kill $v'$. Otherwise, declare $v'$ to be active and larger than all other vertices that are alive. After killing $e$, if $v$ does not have another active half-edge, then kill $v$ also.

\item[(S3)] Repeat from (S1) at stage $l+1$ if not all half-edges are already killed.
\end{itemize}
\end{algo}
%Algorithm~\ref{c4:algo-expl} gives a \emph{breadth-first} exploration of the connected components of $\mathrm{CM}_n(\boldsymbol{d})$. 
\noindent  Call a vertex \emph{discovered} if it is either active or killed. Let $\mathscr{V}_l$ denote the set of vertices discovered up to time $l$ and $\mathcal{I}_i^n(l):=\ind{i\in\mathscr{V}_l}$.
Define the exploration process by
\begin{equation}\label{c4:def:exploration-process}
    S_n(l)= d_i+\sum_{j\neq i} d_j \mathcal{I}_j^n(l)-2l=d_i+\sum_{j\neq i} d_j \left( \mathcal{I}_j^n(l)-\frac{d_j}{\ell_n}l\right)+\bigg( \frac{1}{\ell_n} \sum_{j\neq i}d_j^2-2\bigg)l.
   \end{equation} 
Note that the exploration process keeps track of the number of active half-edges.
Thus, $\cs(i)$ is explored when $\bld{S}_n$ hits zero.
Moreover, since one edge is explored at each step, the hitting time to zero is the total number of edges in $\cs(i)$.
Define the re-scaled version $\bar{\bld{S}}_n$ of~$\bld{S}_n$ by $\bar{S}_n(t)= n^{-\alpha}S_n(\lfloor tn^{\rho} \rfloor)$. 
Then, by Assumption~\ref{c4:assumption1} and \eqref{c4:defn:criticality},
   \begin{equation} \label{c4:eqn::scaled_process}
    \bar{S}_n(t)= \theta_i-\frac{\theta_i^2 t}{\mu}+ n^{-\alpha} \sum_{j\neq i}      d_j\left(\mathcal{I}_j^n(tn^\rho)-\frac{d_j}{\ell_n}tn^{\rho} \right)+\lambda t +o(1).
   \end{equation}
Using arguments similar to \cite[Theorem 8]{DHLS16}, it can be shown that 
\begin{eq}\label{c4:eq:dist-conv-S}
\bar{\bld{S}}_n\xrightarrow{\sss d} \bld{S}_\infty,
\end{eq}
 with respect to the Skorohod $J_1$ topology, where
\begin{equation}
S_\infty(t) = \theta_i - \frac{\theta_i^2 t}{\mu} +\sum_{j\neq i}\theta_j\Big(\mathcal{I}_j(t)- \frac{\theta_jt}{\mu}\Big)+\lambda t,
\end{equation}with $\mathcal{I}_j(s):=\ind{\xi_j\leq s }$ and $\xi_j\sim \mathrm{Exponential}(\theta_j/\mu)$ independently.

Let $h_n(u)$ (respectively $h_\infty(u)$) denote the first hitting time of $\bar{\bld{S}}_n$ (respectively $\bld{S}_\infty$) to~$u$. 
More precisely, 
\begin{eq}
h_n(u):= \inf\{t: \bar{S}_n(t) \leq u \text{ or }  \bar{S}_n(t - ) \leq u\}, 
\end{eq} and define $h_\infty(u)$ similarly by replacing $\bar{S}_n(t)$ by $\bar{S}_\infty(t)$ above.
Note that for any $u>0$, $h_{\infty}(u) < h_{\infty}(u - )$ implies that $\bar{\bld{S}}_\infty(t)$ has a jump at $u$, which is a zero probability event. 
Thus, \cite[Chapter VI.2, Proposition 2.11]{JS03} is applicable and together with the convergence in \eqref{c4:eq:dist-conv-S}, this yields 
\begin{eq}\label{c4:eq:hitting-time-conv}
n^{-\rho} h_n(u)\xrightarrow{\sss d} h_\infty(u)
\end{eq} for any $u>0$.
Further, the distribution of $h_{\infty} (u)$ do not contain any atoms.
This follows using \cite[Lemma 3.5]{BHL12}. 
Now an application of Portmanteau theorem yields that
%
%Further, $\bld{S}_\infty$ has positive jumps only, which implies that the support of $h_\infty(u)$ \red{does not contain zero} for any $u<\theta_i$, and consequently 
there exist $\beta_{\varepsilon_2,i}>0$ and $n_{i,\varepsilon_2}\geq 1$ such that, for all $n\geq n_{i,\varepsilon_2}$,
\begin{equation}
 \PR(h_n(\theta_i/2)\leq n^{\rho} \beta_{\varepsilon_2,i})\leq \frac{\varepsilon_2}{2^{i+1}}.
\end{equation} 
Firstly the goal is to show that there exists a $\delta_{i,\varepsilon}$ such that for any $\delta\in (0,\delta_{i,\varepsilon_2}]$,  
\begin{eq}
\sum_{k\in \cN_i(\delta)}d_k \leq \theta_i\delta n^{\rho}\quad \implies \quad h_n(\theta_i/2)\leq n^{\rho} \beta_{\varepsilon_2,i}.
\end{eq}
Recall that $\mathcal{N}_v(\delta)$ denotes the $\delta n^{\eta}$ neighborhood of $v$ in $\rCM_n(\bld{d})$.
To this end, let $\partial(j)$ denote the set of vertices at distance $j$ from $i$. 
Let $E_{j1}$ denote the total number of edges 
between  vertices in $\partial(j)$ and $\partial(j-1)$, and let $E_{j2}$ denote the number of edges within $\partial(j-1)$.
Define $E_j = E_{j1}+E_{j2}$.
Fix any $\delta<2\beta_{\varepsilon,i}/\theta_i$. 
Note that if $\sum_{k\in\mathcal{N}_i(\delta)}d_k\leq \theta_i \delta n^{\rho}$, then the total number of edges in $\mathcal{N}_i(\delta)$ is at most $\theta_i \delta n^{\rho}/2$. 
Thus there exists $j\leq \delta n^{\eta}$ such that $E_j\leq\theta_i\delta n^{\rho}/2\delta n^{\eta} = \theta_in^{\alpha}/2 $. 
This implies that $\bld{S}_n$ must go below $\theta_i n^\alpha/2$ before exploring all the vertices in $\cN_i(\delta)$.
This is because we are exploring the components in a breadth-first manner and $\bar{\bld{S}}_n$ keeps track of the number of active half-edges which are the potential connections to vertices at the next level.
%Further, the number of active half-edges at the time when we finish exploring all the vertices at level $jn^{-\eta}-1$ is precisely $E_j$. 
Since one edge is explored in each time step, and we rescale time by $n^{\rho}$,  this implies that 
\begin{equation}
h_n(\theta_i/2)\leq \frac{1}{2} n^{-\rho} \sum_{k\in\mathcal{N}_i(\delta)}d_k\leq \delta\theta_i/2 \leq \beta_{\varepsilon,i}.
\end{equation}
Therefore, for all $n\geq n_{i,\varepsilon}$,
\begin{equation} \label{c4:eq:dk-bound}
\PR\bigg(\sum_{k\in\mathcal{N}_i(\delta)}d_k\leq \theta_i\delta n^{\rho}\bigg)\leq \PR(h_n(\theta_i/2)\leq \beta_{\varepsilon,i})\leq \frac{\varepsilon}{2^{i+1}}.
\end{equation}
Finally, to conclude Proposition~\ref{c4:prop:size-nbd bound} from \eqref{c4:eq:dk-bound}, we use the following result from \cite[Lemma 22]{DHLS16}:
For any $T>0$, 
\begin{equation}\label{c4:weight-expl-prop}
 \sup_{u\leq T}\bigg| \sum_{i\in [n]} w_i\mathcal{I}_i^n(un^{\rho})-\frac{\sum_{i\in [n]}d_iw_i}{\ell_n}un^{\rho}\bigg|=\oP(n^{\rho}).
\end{equation} 
Note that \cite[Lemma 22]{DHLS16} does not use $\sum_{i\in [n]}w_i  = O(\ell_n)$ from \cite[Assumption 3]{DHLS16}, and thus it is omitted in Assumption~\ref{c4:assumption1}.
The proof of Proposition~\ref{c4:prop:size-nbd bound} now follows.
\end{proof}

\section{Diameter after removing hubs}
\label{c4:sec:diameter-after-removal}
Recall the definition of the graph $\cG_n^{\sss >K}$ from Proposition~\ref{c4:prop:diamter-small-comp}. 
If we keep on exploring $\cG_n^{\sss >K}$ in a breadth-first manner using Algorithm~\ref{c4:algo-expl} and ignore the cycles created, we get a random tree. 
The idea is to couple neighborhoods of $i$ in $\cG_n^{\sss >K}$ with a suitable branching process such that the progeny distribution of the branching process dominates the number of children of each vertices in the breadth-first tree.
Therefore, if there is a long path in $\cG_n^{\sss >K}$ which makes the diameter large, that long path must be present in the branching process as well under the above coupling.
In this way, the question about the diameter of $\cG_n^{\sss >K}$ reduces to the question about the height of a branching process. 
To estimate the height suitably, we use a beautiful recent technique by Addario-Berry from \cite{A17} which allows one to relate the height of a branching process to the sum of inverses of the associated breadth-first random walk.

In Section~\ref{c4:sec:asymp-edges}, we establish large deviation bounds for the number of edges within components.
This allows us to come up with the desired coupling in Section~\ref{c4:sec:BP-approximation}. 
In Section~\ref{c4:sec:height-vs-rw}, we analyze the breadth-first random walk to show that the height of the branching process being larger than $\varepsilon n^{\eta}$ has small probability. 
These bounds are different from those derived in \cite{A17} since the branching process depends on $n$ and there is a joint scaling involved between the distances and the mean of the branching process.

%\todo[inline]{Complete the proof idea and the overview of this section.}

\subsection{Asymptotics for the number of edges} \label{c4:sec:asymp-edges}
For a graph $G$, let $\rE(G)$ denote the number of edges in $G$.
%Let $\PR_n$ denote the probability measure induced on the set of graphs on $n$ vertices by the construction of $\CM$, and let~$\PR_{\sss \mathrm{CM}}:= \bigotimes_{n\geq 1}\PR_n$.
 \begin{proposition}\label{c4:lem:volume-large-deviation} There exists $\varepsilon_0>0$ such that the following holds: For all $\varepsilon \in (0,\varepsilon_0)$, there exists $\delta>0$ such that for all sufficiently large $n$
 \begin{eq}
 \PR(\rE(\sC (i))> n^{\rho+\varepsilon}) \leq C\e^{-C n^{\delta}},
 \end{eq}for some absolute constant $C>0$ and for all $i\in [n]$.  
 \end{proposition}
Consider exploring $\CM$ with Algorithm~\ref{c4:algo-expl}, and the associated exploration process defined in \eqref{c4:def:exploration-process}. 
Let us denote by $d_{\sss (l)}$ the degree of the vertex found at step $l$. 
If no new vertex is found at step $l$, then $d_{\sss (l)} = 0$.
Also, let $\mathscr{F}_l$ denote the sigma algebra containing all the information revealed by the exploration process upto time $l$.
Thus, 
\begin{eq}
S_n(0) = d_i, \quad \text{and}\quad  S_n(l) = S_n(l-1) + (d_{\sss (l)}-2).
\end{eq}
Using the Doob-Meyer decomposition, one can write 
\begin{equation}
S_n(l) = M_n(l) + A_n(l), 
\end{equation}where $M_n$ is a martingale with respect to $(\mathscr{F}_l)_{l\geq 1}$. 
The drift $A_n$ and the quadratic variation $\langle M_n \rangle$ of $M_n$ are given by 
\begin{equation}
   A_{n}(l)= \sum_{j=1}^{l} \mathbb{E}\big[d_{\sss(j)}-2 \vert \mathscr{F}_{j-1} \big], \quad 
   \langle M_n \rangle(l)= \sum_{j=1}^{l} \var{d_{\sss(j)}\vert \mathscr{F}_{j-1}} .
  \end{equation}
  Fix $\varepsilon_0 = (4-\tau)/(\tau-1)$. 
  We use $C$ as a generic notation for an absolute constant whose value can be different in different places.
  We will show that for any $\varepsilon \in (0,\varepsilon_0)$, there exists $\delta$ such that the following two lemmas hold with $t_n := n^{\rho +\varepsilon}$:
\begin{lemma}\label{c4:lem:small-martingale}
For all sufficiently large $n$, $\PR(n^{-(\alpha+\varepsilon)} M_n(t_n) > 1) \leq C\e^{-C n^{\delta}}$. 
\end{lemma}
\begin{lemma}\label{c4:lem:drift-superlinear}
For all $K\geq 1$ the following bound holds sufficiently large $n$:
 \begin{eq}
 \PR\bigg(n^{-(\alpha+\varepsilon)} A_n(t_n) \geq -C\sum_{i=1}^K\theta_i^2\bigg) \leq C\e^{-C n^{\delta}}.
 \end{eq} 
\end{lemma}  
\begin{proof}[Proof of Proposition~\ref{c4:lem:volume-large-deviation} subject to Lemmas~\ref{c4:lem:small-martingale},~\ref{c4:lem:drift-superlinear}] 
Note that, we can choose $K\geq 1$ such that $\sum_{i=1}^K\theta_i^2$ is arbitrarily large as $\bld{\theta}\notin \ell^2_{\shortarrow}$.
Thus, if $n^{-(\alpha+\varepsilon)} M_n(t_n)$ $\leq 1$ and $n^{-(\alpha+\varepsilon)}A_n(t_n) \leq -\sum_{i=1}^K\theta_i^2$, then $n^{-(\alpha + \varepsilon)}S_n(t_n) <0$, and therefore $\sC(i)$ must be explored before time $t_n$.
Thus, Lemmas~\ref{c4:lem:small-martingale} and~\ref{c4:lem:drift-superlinear} together complete the proof of Proposition~\ref{c4:lem:volume-large-deviation}.
\end{proof}
\begin{proof}[Proof of Lemma~\ref{c4:lem:small-martingale}]
Firstly note that $\varepsilon_0 < \alpha$ and therefore $t_n = o(n)$.
Thus, uniformly over $j\leq t_n$, 
\begin{equation}
\var{d_{\sss (j)} \vert \mathscr{F}_{j-1}} \leq \E[d_{\sss (j)}^2 \vert \mathscr{F}_{j-1}]  = \frac{\sum_{j\notin \mathscr{V}_{j-1}} d_j^3}{\ell_n - 2j +2} \leq \frac{\sum_{j\in [n]} d_j^3}{\ell_n - 2t_n+2}\leq Cn^{3\alpha - 1},
\end{equation}so that almost surely,
\begin{equation}\label{c4:eq:bound-QV}
\langle M_n\rangle (t_n) \leq t_n Cn^{3\alpha-1} = C n^{2\alpha + \varepsilon}.
\end{equation}
Also, $d_{\sss (j)} \leq C n^{\alpha}$ almost surely. 
We can now use Freedman's inequality \cite[Proposition 2.1]{Fre75} to conclude that 
\begin{eq}
\PR(M_n(t_n) > n^{\alpha + \varepsilon}) \leq \exp \bigg(- \frac{n^{2\alpha + 2\varepsilon}}{2  (n^{\alpha} n^{\alpha+ \varepsilon} + Cn^{2\alpha+ \varepsilon})}\bigg) \leq C\e^{-Cn^{\varepsilon}},
\end{eq} 
and the proof follows.
\end{proof}

\begin{proof}[Proof of Lemma~\ref{c4:lem:drift-superlinear}]
Note that 
\begin{eq}\label{c4:the-split-up}
 &\mathbb{E} \big[ d_{\sss(i)} -2 \vert \mathscr{F}_{i-1} \big]  = \frac{\sum_{j \notin \mathscr{V}_{i-1}} d_{j}^2}{\ell_n-2i+1}-2\\
 &\hspace{2cm}= \frac{1}{\ell_n}\sum_{j \in [n]} d_{j}(d_{j}-2)- \frac{1}{\ell_n} \sum_{j \in \mathscr{V}_{i-1}} d_{j}^2 + \frac{(2i-1)\sum_{j \notin \mathscr{V}_{i-1}} d_{j}^2 }{\ell_n (\ell_n-2i+1)} \\
 &\hspace{2cm}\leq  \lambda n^{-\eta} - \frac{1}{\ell_n} \sum_{j \in \mathscr{V}_{i-1}} d_{j}^2 + \frac{(2i-1)}{(\ell_n-2i+1)^2} \sum_{j \in [n]} d_{j}^{2} +o(n^{-\eta})
 %\\ & \hspace{2cm} \leq \lambda n^{-\eta} - \frac{1}{\ell_n} \sum_{j \in \mathscr{V}_{i-1}} d_{j}^2 +o(n^{-\eta}),
\end{eq}uniformly over $i\leq t_n$.
%The last step follows from Assumption~\ref{c4:assumption1}, and the fact that $\varepsilon<\alpha$.
%
%by noting that $\sum_{j \in \mathscr{V}_{i-1}} d_{j} = o(n)$ uniformly over~$i \leq t_n$. 
%In fact due to the uniform integrability condition of the first moment in Assumption~\ref{c4:assumption1}, $\sum_{i\in V} d_i = o(n)$, whenever $|V| = o(n)$. 
Therefore, for all sufficiently large $n$, 
\begin{eq}\label{c4:upper-bound-drift-large}
A_n(t_n) &\leq \lambda n^{\alpha+\varepsilon} - \frac{1}{\ell_n}\sum_{i=1}^{t_n} \sum_{j \in \mathscr{V}_{i-1}} d_{j}^{2} + \frac{Ct_n^2}{\ell_n}+ o(n^{\alpha+\varepsilon}) \\
&= \lambda n^{\alpha+\varepsilon} - \frac{1}{\ell_n}\sum_{i=1}^{t_n} \sum_{j \in \mathscr{V}_{i-1}} d_{j}^{2} + o(n^{\alpha+\varepsilon}),
\end{eq}where in the last step we have used the fact that $\varepsilon< (4-\tau)/(\tau-1)$.
Let us denote the second term above by (A).
To analyze~(A), define the event 
\begin{eq}
\cA_n:= \big\{\exists j: d_j>n^{\alpha - \varepsilon/2}, j\notin \mathscr{V}_{t_n/2}\big\}.
\end{eq}
Thus, for all sufficiently large $n$,
\begin{eq}\label{c4:eq:estimate-bad-event-exponential}
\PR(\cA_n) \leq \sum_{j: d_j>n^{\alpha-\varepsilon/2}} \bigg(1-\frac{d_i}{\ell_n-2t_n}\bigg)^{t_n} \leq n \e^{-n^{\varepsilon/2}}.
\end{eq}
On the event $\cA_n^c$, 
\begin{eq}\label{c4:eq:expression-A}
\mathrm{(A)} &= \frac{1}{\ell_n} \sum_{i=1}^{t_n} \sum_{j\in [n]}d_j^2\mathbf{1}\{j\in \mathscr{V}_{i-1}\} \\
&\geq \frac{1}{\ell_n} \sum_{i = \frac{t_n}{2}+1}^{t_n} \sum_{j=1}^K d_j^2 \mathbf{1}\{j\in \mathscr{V}_{i-1}\} \geq Cn^{\alpha+\varepsilon} \sum_{j=1}^K \theta_j^2.  
\end{eq}
%The last term can be made arbitrarily large as $\bld{\theta}\notin \ell^2_{\shortarrow}$. Therefore $n^{-(\alpha+\varepsilon)}\mathrm{(A)} \to \infty$, a.s.~$\PR_{\sss\mathrm{CM}}$.
%Now we need to upper bound (B).
%Firstly, note that 
%\begin{eq}
% &\E\bigg[\sum_{j\in [n]}d_j \big(\mathbf{1}\{j\in \mathscr{V}_{i}\}-\mathbf{1}\{j\in \mathscr{V}_{i-1}\}\big)\Big| \mathscr{F}_{i-1}\Big]-\frac{\sum_{j\in [n]}d_j^2}{\ell_n-2t_n-1} \leq 0,
% \end{eq} 
%uniformly over $l\leq t_n$ and therefore, 
%\begin{eq}
%(X_n(i))_{i\leq t_n}:= \bigg(\sum_{j \in \mathscr{V}_{i-1}} d_{j} - \frac{\sum_{j\in [n]}d_j^2}{\ell_n-2t_n-1} i\bigg)_{i\leq t_n} \text{ is a super-martingale.}
%\end{eq}
%We can now use exponential concentration inequalities for super-martingales such as \cite[Corollary 2.7]{FIQ15}, and \eqref{c4:eq:bound-QV} to conclude that 
%\begin{equation}
%\PR(X_n(t_n)>n^{2\alpha - \varepsilon}) \leq \e^{-Cn^{2\alpha-3\varepsilon}}.
%\end{equation}
%Therefore, for all sufficiently large $n$, the following holds with probability atleast $1-\e^{-Cn^{\delta}}$
%\begin{eq}\label{c4:eq:expression-B}
%n^{-(\alpha+\varepsilon)}\mathrm{(B)} \leq \frac{t_n}{\ell_n n^{\alpha+\varepsilon}} \sum_{j \in \mathscr{V}_{t_n-1}} d_{j} 
%\leq Cn^{-2\rho+2\epsilon-\alpha-\varepsilon-1} = Cn^{\varepsilon- \frac{4-\tau}{\tau-1}} \to 0,
%\end{eq}
%for $\varepsilon<(4-\tau)/(\tau-1)$.
Combining \eqref{c4:upper-bound-drift-large}, \eqref{c4:eq:estimate-bad-event-exponential} and \eqref{c4:eq:expression-A}  now completes the proof.
\end{proof}

\subsection{Coupling with Branching processes} \label{c4:sec:BP-approximation}
Define the event $\cK_n:= \{\rE(\mathscr{C}_{\sss (i)})>n^{\rho+\varepsilon}\}$. 
%Proposition~\ref{c4:lem:volume-large-deviation} implies that the probability of $\cK_n$ happening is exponentially small in $n$. 
%Thus, by Borel-Cantelli lemma, the probability of $\cK_n$ happening infinitely often is exactly zero under the product measure~$\bigotimes_{n\geq 1}\PR_n$, where $\PR_n$ is the law of $\CM$.
%Henceforth, we always assume that $\cK_n^c$ occurs.
On the event~$\mathcal{K}_n^c$, we can couple the breath-first exploration starting from vertex $i$ with a suitable branching process. 
Consider the branching process $\mathcal{X}_n(i)$ starting with $d_i$ individuals, and the progeny distribution $\bar{\xi}_n$ given by 
\begin{equation}\label{c4:upperbounding-BP}
\begin{split}\prob{\bar{\xi}_n=k}=\bar{p}_k=
\begin{cases}
\frac{(k+1)n_{k+1}}{\ubar{\ell}_n} \quad &\text{for } k\geq 1,\\
\frac{n_1-2n^{\rho+\varepsilon}}{\ubar{\ell}_n} \quad &\text{for } k=0,
\end{cases}
\end{split}
\end{equation}where $\ubar{\ell}_n=\ell_n-2n^{\rho+\varepsilon}$.
 Note that, at each step of the exploration, we have at most $(k+1)n_{k+1}$ half-edges that are incident to vertices having $k$ further unpaired half-edges. 
 Further, on the event $\mathcal{K}_n^c$, we have at least $\ubar{\ell}_n$ choices for pairing. 
Therefore, the number of active half-edges discovered at each step in the breadth-first exploration of the neighborhoods of $i$ is stochastically dominated by  $\bar{\xi}_n$.
This proves the next proposition, which we state after setting up some notation. 
Recall that $\cG_n^{\sss >i-1}$ denotes the graph obtained by deleting vaertices $[i-1]$ and the associated edges from $\CM$.
Let $\partial_i(r)$ denote the number of vertices at distance $r$ from $i$ in the graph $\cG_n^{\sss >i-1}$. 
Let $\bar{\xi}_n(i)$ denote the random variable with the distribution in \eqref{c4:upperbounding-BP} truncated in such a way that $\{d_1,\dots,d_{i-1}\}$ are excluded from the support.
More precisely,
\begin{eq}
\PR(\bar{\xi}_n(i)=k) =  
\begin{cases}
\frac{(k+1)n_{k+1}}{L} \quad &\text{for } 1\leq k\leq d_i,\\
\frac{n_1-2n^{\rho+\varepsilon}}{L} \quad &\text{for } k=0,
\end{cases}
\end{eq}where $L$ is the normalizing constant.
Let $\cX_{n,\mathrm{res}}(i)$ denote the branching process starting with~$d_i$ individuals and progeny distribution $\bar{\xi}_n(i)$ and let $\bar{\partial}_i(r)$ denotes the number of individuals at generation $r$ of $\mathcal{X}_n(i)$.
Then the above stochastic domination argument immediately yields the next proposition:

\begin{proposition}\label{c4:prop:coupling-uppperbound}
For all $r\geq 1$ and $i\in [n]$ and $n\geq 1$: 
\begin{eq}
\PR(\partial_i(r)\neq \varnothing) \leq \PR(\bar{\partial}_i(r)\neq \varnothing)+ \PR(\rE(\sC(i))>n^{\rho+\varepsilon}).
\end{eq}
\end{proposition}
\noindent Before going into the next section, we note that, by Assumption~\ref{c4:assumption1},
%the following fact about the mean of $\bar{\xi}_n(i)$. 
%Observe that
\begin{eq} \label{c4:eq:computation-mean-BP}
\bar{\nu}_n(i)&=\expt{\bar{\xi}^n(i)}=\frac{1}{\bar{\ell}_n}\sum_{j\geq  i} d_j(d_j-1)= \frac{1}{\ell_n}\sum_{j \geq  i} d_j(d_j-1) +O(n^{-\alpha + \epsilon })\\
&\leq 1 - \bigg(Cn^{-2\alpha}\sum_{j\leq i} d_j^2\bigg)n^{-\eta}  +o(n^{-\eta }).
\end{eq}
%Thus, for large $i$, $\bar{\nu}_n(i) \leq 1 - Cn^{-\eta}$, for a large constant $C>0$.
Thus for $i$ large and $n\geq n_0$, 
\begin{eq} \label{c4:eq:BP-expt}
\expt{\bar{\xi}^n(i)}\leq 1 - \beta_i n^{-\eta}, \ \text{ where }\ \beta_i=C\sum_{j\leq i}\theta_j^2.
\end{eq}
This fact will be crucially used in the next section.

\subsection{Estimating heights of trees via random walks} \label{c4:sec:height-vs-rw}
%\todo[inline]{Mention that the absolute constant $C$ does not depend on $i$, $K$, but it may depend on the absolute quantities like $\mu$ in the assumption.}
Consider a branching process $\cX_{n,\mathrm{res}}(i)$ starting with $d_i$ individuals, and progeny distribution $\bar{\xi}_n(i)$ given by \eqref{c4:upperbounding-BP}.
Thus the progeny distribution satisfies 
\begin{equation} \label{c4:progeny-BP}
\E[\bar{\xi}_n(i)] \leq 1 - \beta_i n^{-\eta}, \quad \var{\bar{\xi}_n(i)} \leq C n^{3\alpha - 1},
\end{equation} where the choices of
$\beta_i$'s are given by \eqref{c4:eq:BP-expt}.
%For any graph $G$ and vertex $v$, $\partial_v(r)$ will denote the set of vertices at exact distance $r$ from $v$.
We will prove the following theorem in this section:
\begin{theorem}\label{c4:lem:boundary-small-prob} 
Fix any $\varepsilon>0$ and let $r_0= \varepsilon n^\eta/2$. 
Then for all $i\in [n]$
\begin{equation}
\PR(\bar{\partial}_i(r_0)\neq \varnothing)\leq C2^{-\beta_i/C},
\end{equation}for some large constant $C>0$.
\end{theorem}
%\todo{Should be an $\varepsilon>0$ on the RHS}
%In \citep{A17}, a beautiful idea to analyze heights of critical branching processes was proposed that says heights of trees can be estimated from the asymptotics of the reciprocal of the associated breadth first random walk.
The estimate in Theorem~\ref{c4:lem:boundary-small-prob} is interesting in its own right and do not follow from previous asymptotic results in~\cite{A17,Kor17}.
This is due to the dependence of the branching process and the height on $n$.
%
%because these bounds hold for sequence of branching processes, and thus do not follow from \cite{A17,Kor17}.
In the proof of Theorem~\ref{c4:lem:boundary-small-prob}, we leverage the high-level ideas from~\cite{A17}.
%
%The estimates in Theorem~\ref{c4:lem:boundary-small-prob} is interesting in its own right because these bounds hold for sequence of branching processes, and thus do not follow from \cite{A17,Kor17}.
%In the proof of Theorem~\ref{c4:lem:boundary-small-prob}, we leverage the proof ideas from \cite{A17}.
Define the breadth-first random walk by
\begin{equation}\label{c4:eq:random-walk-tree}
s_n(u) = s_n(u-1) + \zeta_u -1, \quad s_n(0) = d_i,
\end{equation}
where $(\zeta_u)_{u\geq 0}$ are i.i.d.~observations from the distribution of $\bar{\xi}_n(i)$. 
Define the function 
\begin{equation}
H_n(t) = \sum_{u\in [0,t)}\frac{1}{s_n(u)},
\end{equation}and $\sigma = \inf\{u:s_n(u) = 0\}$.
It was shown in \cite[Proposition 1.7]{A17} that the height of a branching process is at most $3H_n(\sigma)$. 
Thus Theorem~\ref{c4:lem:boundary-small-prob} can be concluded from the following estimate:
%
%it is enough to show the following proposition:
\begin{proposition}\label{c4:prop:RW-hitting-estimate}
%There exists a universal constant $C>0$ (independent of $i$ and $K$) such that 
For any $\varepsilon>0$ and $i\in [n]$,
\begin{equation}
\PR(H_n(\sigma)\geq \varepsilon n^{\eta}) \leq C 2^{-\varepsilon \beta_i/C },
\end{equation}for some large constant $C>0$.
\end{proposition}
\noindent 
%As mentioned above, we leverage tools from~\cite{A17}. 
Denote $I_l:=[2^{l-1} d_i,2^{l+2} d_i)$ for $l>0$, and $I_l: = [d_i2^{l-2}, d_i2^{l+1})$ for $l<0$.
Note that $I_l$'s are not disjoint intervals.
We decompose the possible values of the random walk \eqref{c4:eq:random-walk-tree} into different scales. 
At each time $t$, the scale of $s_n(t)$, denoted by $\scl(s_n(t))$, is an integer. 
Suppose that $\scl(s_n(u)) = l$ for some $u>0$.
A change of scale occurs when $\bld{s}_n$ leaves $I_l$. 
That is, at time $T:= \inf\{t>u: s_n(t)\notin I_l\}$, a change of scale occurs, and the new scale is given by $\scl(s_n(T)) = l'$, where $l'\in \Z$ is such that $s_n(T)\in (2^{l'-1}d_i,2^{l'}d_i]$. 
Now, the next change of scale occurs at time $T':= \inf\{t>T: s_n(t)\notin I_{l'}\}$, and the scale remains the same until $T'$, i.e., $\scl(s_n(t)) = l'$ for all $T\leq t<T'$.
Define 
\begin{eq}
H_{nl}(t):= \sum_{u \in [0,t), \ \scl(s_n(u))=l}\frac{1}{s_n(u)}, \quad \text{so that} \quad H_n(t) = \sum_{l\in \Z} H_{nl}(t).
\end{eq} 
%\todo[inline]{SD: The second equation should not be less than or equal to because a unique scale is assigned at each time.}
\noindent Denote $T_{nl}(t):= \#\{u\in [0,t): \scl(s_n(u))=l\}$, and note that for $l>0$
%denotes the total time spent at scale $l$ before time $t$, then
 \begin{eq}
 2^{l-1} d_i H_{nl}(t)\leq T_{nl} (t)\leq 2^{l+2}d_i H_{nl}(t),
 \end{eq}and a similar inequality holds for $l<0$.
Therefore, for any $x>0$ and $l>0$, 
\begin{equation}\label{c4:eq:T-H-relation}
\PR\Big(H_{nl}(\sigma)\geq \frac{x}{2^{l-1}d_i}\Big)\leq \PR(T_{nl}(\sigma)\geq x),
\end{equation}and a similar inequality holds for $l<0$.
Thus the proof of Proposition~\ref{c4:prop:RW-hitting-estimate} follows from a careful estimate of the final term in  \eqref{c4:eq:T-H-relation}, which is given by the next lemma.
Let $\bld{s}_n'$ be a random walk given by the same recursion relation as \eqref{c4:eq:random-walk-tree}, except only that $s_n'(0)\in I_l$.
Let $\sigma_{nl} : = \min\{t\geq 1: s_n'(t) \notin I_l\}$ and
 $r_{nl}: = \min\{t\geq 1:\sup_{x\in I_l}\PR_x(\sigma_{nl}>t)\leq 1/2\}$.
\begin{lemma}\label{c4:lem:time-spent-l-ub} 
%There exists an absolute constant $C>0$ such that 
For all $n\geq 1$, and $l\in \Z$:
\begin{equation}
\PR(T_{nl}(\sigma)\geq a r_{nl})\leq C\min\{1,2^{-l}\}2^{-a/C},
\end{equation}for some large constant $C>0$.
\end{lemma}
\begin{proof}
Firstly, note that $T_{nl}(\sigma)\neq 0$ if and only if $\scl(s_n(u)) =l$ for some $u<\sigma$. 
The number of upcrossings of an interval $[a,b]$ by $s_n$ is defined to be the supremum of the integers $k$ such that there exists times $(u_j,t_j)_{j=1}^k$ satisfying $u_1<t_1<u_2<\dots<t_k$, and $s_n(u_j)<a<b<s_n(t_j)$ for all $j\in [k]$. 
Now, for any $l\geq 2$, if $\scl(s_n(u)) =l$ occurs, then $s_n$ must have made an upcrossing of the intervals $((2^{j-1}d_i,2^jd_i])_{1\leq j\leq l}$.
Using \cite[Lemma 3.1]{A17}, it follows that there exists a constant $C>0$ such that for any $l\geq 2$,
 \begin{eq}
\PR(T_{nl}(\sigma) \neq 0)\leq C2^{-l}.
\end{eq}
Moreover, we bound $\PR(T_{nl}(\sigma)\neq 0)$ by 1 for $l\leq 1$. 
Next define $\mathrm{visit}(l,t)$ to be the number of visits to scale $l$, i.e., this is the supremum over $k\in \N$ such that one can find $(u_j,t_j)_{j=1}^k$ with $u_1<t_1< \dots <u_k<t_k$ satisfying $\scl(s_n(u_j))\neq l$ but $\scl(s_n(t_j)) = l$. 
Set $\mathrm{visit}(1,0) =1$ and $\mathrm{visit}(l,t) = 0$ if $\scl(s_n(t)) \neq l$.
Further, define $M_{nl} = \mathrm{visit}(l,\sigma)$ (total number of visits to scale $l$) and $t_{jl} = \#\{t<\sigma:\scl(s_n(t))=l, \mathrm{visit}(l,t)=j \}$  (the time spent at scale $l$ during the $j$-th visit).
Thus $T_{nl} (\sigma) = \sum_{j=1}^{M_{nl}} t_{jl}$, and for $m\geq 2$,
\begin{eq}\label{eq:Tnl-split-up}
\PR\bigg(\sum_{j=1}^{M_{nl}} t_{jl}>a r_{nl}\bigg)\leq \PR(M_{nl}>m)+\PR\bigg(\sum_{j=1}^{m} t_{jl}>ar_{nl}\bigg).
\end{eq} 
Now $s_n$ can enter scale $l$ from below, which yields an upcrossing of the interval $[2^{l-1}d_i,2^ld_i)$. 
Otherwise, $s_n$ can enter scale $l$ from above, whence it must be the case that while leaving the scale $l$ during the previous visit, the walk went from scale $l$ to $l+1$. 
The latter case yields an upcrossing of $[2^{l}d_i,2^{l+1}d_i)$.
Therefore, if $U_n(t,[a,b))$ denotes the number of upcrossings of  $[a,b)$ by $s_n$ before time $t$, then
\begin{eq}
&\PR(M_{nl}>m)\\
&\leq \PR\Big( U_n(\sigma,[2^{l-1}d_i,2^ld_i))\geq (m+1)/2\Big)\\
&\hspace{2cm}+\PR\Big(U_n(\sigma,[2^{l}d_i,2^{l+1}d_i)\geq (m+1)/2\Big) \\
&\leq \frac{1}{2^{(m-1)/2}}.
\end{eq}
On the other hand, after each time $r_{nl}$, the probability of exiting from scale $l$ is at most 1/2, by definition.
Now, $\PR(t_{jl} > k r_{nl}) \leq 2^{-k}$, which implies that $\lfloor t_{jl}/r_{nl}\rfloor$ can be stochastically dominated by Geometric$(1/2)$ random variable.
 Thus, if $(g_i)_{i\geq 1}$ denotes an i.i.d.~collection of Geometric$(1/2)$ random variables, 
\begin{eq}
\PR\bigg(\sum_{j=1}^mt_{jl}\geq (k+m)r_{nl}\bigg) &\leq \PR\bigg(\sum_{j=1}^m\Big\lfloor\frac{t_{jl}}{r_{nl}}\Big\rfloor\geq k\bigg)\leq \PR\bigg(\sum_{i=1}^m g_i>k\bigg) \\
&= \PR(\mathrm{Bin}(k,1/2)<m)\leq \e^{-(k-2m)^2/2k},
\end{eq}where the last step follows using standard concentration inequalities such as \cite[Theorem 2.1]{JLR00}.
Therefore, the proof follows by taking $k=m=a/2$.
%\todo[inline]{Take care of the fact that the stopping time can be infinity. So we must condition on it being not infinity.}
\end{proof}
%Notice that $H_n (\sigma) = \sum_{l\in \Z}H_{nl} (\sigma)$.
For a sequence $a = (a_l)_{l\in \Z}$, define $V_n(a) = d_i^{-1}\sum_{l\in\Z}a_lr_{nl}/2^l$ and $\delta(a) = C\sum_{l\in \Z} \min\{1,2^{-l}\}2^{-Ca_l }$.
Using Lemma~\ref{c4:lem:time-spent-l-ub} and \eqref{c4:eq:T-H-relation}, we can now conclude that 
\begin{equation}\label{c4:eq:RW-inverse-total}
\PR(H_n(\sigma)>V_n(a)) \leq \delta(a).
\end{equation}
To apply the above bound, we need a good estimate on $r_{nl}$.  To apply the above bound, we need a good estimate on $r_{nl}$. 
Let $\PR_x$ denote the law of the random walk $s_n'$, with $s_n'(0) = x$, but satisfying identical recurrence recurrence relation as \eqref{c4:eq:random-walk-tree}.
 Suppose that $\sigma_{nl} : = \min\{t\geq 1: s_n(t) \notin I_l, s_n(0) \in I_l\}$, and $r_{nl}: = \min\{t\geq 1:\sup_{x\in I_l}\PR_x(\sigma_{nl}>t)\leq 1/2\}$.
The next two lemmas allow us to deduce such a result for $l>0$ and $l<0$ respectively:
\begin{lemma}\label{c4:lem:rw-exit}
Fix any $l\geq 0$ and let $\sigma_{nl}:= \inf\{t:s_n(t)\notin I_l\}$. 
Then, for all $i\in [n]$, 
%there exists an constants $C>0$, $i_0$ (depending only on $\bld{\theta}$)  such that,  for all $i\geq i_0$,
\begin{equation}
\limsup_{n\to\infty}\sup_{x\in I_l}\PR_x \bigg(\sigma_{nl}> C n^{\eta} \frac{d_i 2^{l/2}}{\beta_i^{1/(b-1)}}\bigg) \leq \frac{1}{2}.
\end{equation} 
\end{lemma}
\begin{proof}
Fix any $x\in I_l$.
Note that for any $t>0$,
\begin{eq}\label{c4:sigmanl-bound}
&\PR_x(\sigma_{nl}>tn^{\rho})\\
& \leq \PR_x(S_n(tn^{\rho})\in I_l) \leq \PR_x(S_{n}(tn^{\rho})>2^{l-1}d_i)  \\
&= \PR_x\big(S_{n}(tn^{\rho}) + tn^{\rho}n^{-\eta}\beta_i> 2^{l-1}d_i+tn^{\alpha}\beta_i\big) \\
&\leq \frac{tn^{\rho}c_0n^{3\alpha -1}}{(2^ld_i+tn^{\alpha}\beta_i)^2} \leq \frac{c_0tn^{2\alpha}}{2^{2l}d_i^2},
\end{eq} where the last step follows from Chebyshev's inequality and the estimates in~\eqref{c4:progeny-BP}.
For $l\geq 0$, by setting $t  = \frac{d_i 2^{l/2}}{n^{\alpha} (\beta_i)^{1/(b-1)}}$ where $b$ is given by \eqref{c4:eq:assumption-degree-tail}, \eqref{c4:sigmanl-bound} reduces to
\begin{eq}\label{c4:sigmanl-bound-2}
\PR(\sigma_{nl}>tn^{\rho})  \leq \frac{C}{2^{l} (d_i/n^{\alpha})\beta_i^{1/(b-1)}} \leq \frac{C}{2^l},
\end{eq}where we have used Assumption~\ref{c4:assumption1} in the last step.
%Now, under Assumption~\ref{c4:assumption1}, 
%\begin{eq}
%\frac{d_i}{n^{\alpha}}\beta_i =  \frac{d_i}{n^{\alpha}} \bigg(n^{-2\alpha}\sum_{j\leq i} d_j^2\bigg) \geq n^{-3\alpha} \sum_{j\leq i}d_j^3 \geq \theta_1^3,
%\end{eq}where the third step follows by noting that $d_j\geq d_i$
This completes the proof of Lemma~\ref{c4:lem:rw-exit} using \eqref{c4:sigmanl-bound-2}.
%By Assumption~\ref{c4:assumption1}, it follows that the denominator on the right hand side can become large with both $l$ and $i$. This completes the proof.
%For $l<0$, by setting $t = \frac{d_i 2^{3l}}{n^{\alpha} \beta}$,
%\begin{eq}
%\PR(\sigma_{nl}>tn^{\rho}) \leq \frac{C(d_i/n^{\alpha})2^{3l}}{\beta  (d_i/n^{\alpha})^2 2^{l}} \leq
%\end{eq} 
% 
% 
%and the proof now follows.
\end{proof}
%Consider the breadth-first random walk $S_n$ defined in \eqref{c4:eq:random-walk-tree}, but with $S_n(0) = xn^{\alpha}$.
%Further, let $\PR_x$ denote the conditional probability distribution of $\bld{S}_n$ with $S_n(0) = xn^{\alpha}$.
\begin{lemma}\label{c4:lem:RW-exit-small}
%Fix any $\varepsilon>0$. 
%and let $T(\varepsilon):= \inf \{t: s_n(t) \notin ((x-\varepsilon)n^{\alpha}, (x+\varepsilon)n^{\alpha})\}$. 
%There exist absolute constants $b>1$ a such that 
%\begin{equation}
%\limsup_{n\to\infty}\sup_{y\in (x-\varepsilon)n^{\alpha}, (x+\varepsilon)n^{\alpha})}\PR_y\Big(T(\varepsilon) > \frac{A \varepsilon^b n^{\rho}}{\beta_i}\Big) <\frac{1}{2}. 
%\end{equation}
%Consequently, 
For $l>0$, and $i\in [n]$,
\begin{equation}
\limsup_{n\to\infty}\sup_{x\in (d_i/2^{l+2},d_i/2^{l-1})}\PR_x \bigg(\sigma_{nl}> C n^{\eta} \frac{d_i }{\beta_i 2^{bl}}\bigg) \leq \frac{1}{2}.
\end{equation}
\end{lemma}
\begin{proof}
Recall from the definition of $\xi_n(i)$ from \eqref{c4:progeny-BP}, and let $(\xi_{nj})_{j\geq 1}$ be an iid collection with the same distribution as $\xi_n(i)$.
Note that, for $x\in I_l$,
\begin{eq}
&\PR_x(\sigma_{nl}>tn^{\rho}) \leq \PR_x(S_n(tn^{\rho})\in I_l) \leq \PR\Big(\xi_{nj}\leq \frac{d_i}{2^{l-1}} \ \forall j\leq tn^{\rho}\Big) \\
&\hspace{1cm} \leq \bigg(1-\frac{\sum_{j:d_j>d_i/2^{l-1}}d_j}{\ell_n(1+o(1))}\bigg)^{tn^{\rho}} \leq \exp\bigg( - t n^{-\alpha}\sum_{j:d_j>d_i/2^{l-1}}d_j\bigg).
\end{eq}
Putting $t= \frac{d_i}{n^{\alpha}\beta_i2^{bl}}$ and using Assumption~\ref{c4:assumption1}, it follows that the right-hand side above is at most 
\begin{eq}
\exp\bigg(- \frac{C}{(d_i/n^{\alpha})^{b-1} n^{-2\alpha} \sum_{j\leq i}d_j^2}\bigg),
\end{eq}
and the proof follows.
%\todo[inline]{Assume: $n^{-\alpha} \sum_{j:d_j>xn^{\alpha}}d_j \geq x^{-b}$, for some $b\in (1,2)$, and also $(d_i/n^{\alpha})^{b-1} n^{-2\alpha} \sum_{j\leq i}d_j^2>C$.}
%\todo[inline]{Why the scale decomposition should necessarily depend on $d_i$.}
%\begin{eq}
%&\PR_x(T> \varepsilon^b n^{\rho}) \leq \PR_x\big(S_n(\varepsilon^b n^{\rho}) \in (x-\varepsilon)n^{\alpha}, (x+\varepsilon)n^{\alpha})\big) \\
%& \leq \PR (\xi_{nl} \leq \varepsilon n^{\alpha}, \ \forall l \in [0,\varepsilon^b n^{\rho}]) \leq \prod_{i: d_i>\varepsilon n^{\alpha}}\Big(1-\frac{d_i}{\ell_n}\Big)^{\varepsilon^b n^{\rho}} \\
%&\leq \exp\Big(-\frac{A\varepsilon^b n^{\rho}}{\ell_n} \sum_{i: d_i > \varepsilon n^{\alpha}}d_i\Big).
%\end{eq}
%The proof now follows from Assumption~\cmt{ref}.
\end{proof}

\begin{proof}[Proof of Proposition~\ref{c4:prop:RW-hitting-estimate}] 
We will use the estimates of the terms appearing in \eqref{c4:eq:RW-inverse-total}. Firstly, note from Lemmas~\ref{c4:lem:rw-exit},~\ref{c4:lem:RW-exit-small} that, for all sufficiently large $n$, 
\begin{equation}
r_{nl} \leq \begin{cases}
\frac{d_i 2^{l/2}}{\beta_i^{1/(b-1)}}\quad &\text{for } l>0,\\
\frac{d_i }{2^{bl}\beta_i}\quad &\text{for } l<0.
\end{cases}
\end{equation}
Moreover, take $b_l  = \varepsilon 2^{ l/4 },$ for all $l\geq 0$ and $b_l = \varepsilon 2^{-(b-1)l/2}$ for $l<0$.
In the above case, there exists an absolute constant $C>0$ such that
\begin{eq}
V_n(b)\leq \frac{ C \varepsilon n^{\eta}}{\beta_i},
\end{eq}where the constant only depends on the parameter $\tau$.
Moreover, $\delta(b) \leq C2^{-\varepsilon}.$
Therefore,~\eqref{c4:eq:RW-inverse-total} yields that
\begin{eq}
\PR\bigg(H_n>\frac{ C \varepsilon n^{\eta}}{\beta_i}\bigg)\leq C 2^{-\varepsilon}.
\end{eq} 
Taking $\varepsilon' = C\varepsilon/\beta_i$, the proof of Proposition~\ref{c4:prop:RW-hitting-estimate} follows.
\end{proof}

%
%\subsection{Proof of Proposition~\ref{c4:prop:diamter-small-comp}}
%Therefore,
%\begin{equation}\label{c4:boundary-nonempty-restricted}
% \prob{\partial_i (r_0)\neq \varnothing, \mathcal{K}_n^c}\leq  \prob{\mathcal{X}_n(i) \text{ survives upto generation }r_0},
%\end{equation}say. Let us fix $\epsilon>0$ such that $\alpha - \epsilon > \eta$.
%Thus the proof is complete if we can show the probability on the right hand side of \eqref{c4:boundary-nonempty-restricted} satisfies the required bound in Proposition~\ref{c4:lem:boundary-small-prob}. 

%
%
%
%
%
%\section{Proof of Proposition~\ref{c4:prop:diamter-small-comp}}
%Let $(d_i^{\sss >K})_{i\in [n]}$ denote the degree sequence, and $\nu_n^{\sss >K}$ denote the criticality parameter for the graph $\mathcal{G}_n^{\sss >K}$. 
%Notice that
% \begin{equation}\label{c4:eqn:nu-K}
%  \begin{split}
%   \nu^{\sss >K}_n&= \frac{\sum_{i\in [n]} d_i^{\sss >K}(d^{\sss >K}_i-1)}{\sum_{i\in [n]} d_i^{\sss >K}}\leq \frac{\sum_{i\in [n]}d_i(d_i-1)-\sum_{i=1}^Kd_i(d_i-1)}{\ell_n-2\sum_{i=1}^Kd_i}\\
%   &=1-n^{-\eta}C_0\bigg(\sum_{i\leq K}\theta_i^2-\lambda\bigg)+\oP(n^{-\eta}),
%  \end{split}
% \end{equation}
% for some constant $C_0>0$. 
%We shall simply write $\theta_{\sss \leq K} =  C_0(\sum_{i\leq K}\theta_i^2 - \lambda)$, so that 
%\begin{equation}\label{c4:eq:nu-K-compact}
%\nu_n^{\sss >K} \leq 1-\frac{\theta_{\sss \leq K}}{n^{\eta}}.
%\end{equation}
%For any $r\geq 1$, let $\partial_i(r)$ denote the set of vertices that are at graph distance exactly $r$ from vertex~$i$. The goal in this section is to prove the following lemma:

\subsection{Proof of Proposition~\ref{c4:prop:diamter-small-comp}} 
Let us now complete the proof of Proposition~\ref{c4:prop:diamter-small-comp} using Propositions~\ref{c4:prop:coupling-uppperbound}, and Theorem~\ref{c4:lem:boundary-small-prob}.
Define $\sC_{ \mathrm{res}} (i) $ to be the connected component containing vertex $i$ in the graph $\cG_n^{\sss > i-1}=\CM \setminus [i-1]$.
Note that if $\Delta^{\sss >K}>\varepsilon_1n^{\eta}$, then there exists
a path in $\CM$ avoiding all the vertices in $[K]$. 
Suppose that the minimum index among vertices on that path is $i_0$.
Then $\Delta (\sC_{ \mathrm{res}} (i_0) )>\varepsilon n^{\eta}$.
Therefore, $\Delta^{\sss >K}>\varepsilon_1n^{\eta}$ implies that there exists an $i>K$ satisfying $\Delta (\sC_{ \mathrm{res}} (i) )>\varepsilon n^{\eta}$.
Let $\partial_i(r)$ denotes the number of vertices at distance $r$ starting from vertex $i$ in the graph $\cG_n^{\sss > i-1}$. 
Recall the definition of $\bar{\partial}$ in Proposition~\ref{c4:prop:coupling-uppperbound}.
Thus, 
\begin{equation}
\prob{ \Delta^{\sss >K}>\varepsilon_1 n^{\eta} } \leq \sum_{i>K}\PR(\partial_i(\varepsilon_1n^{\eta}/2)\neq \varnothing)\leq C\delta\sum_{i>K} \e^{-\beta_i} 
\end{equation}which tends to zero if we first take $n\to\infty$ and then take $K\to\infty$ using Assumption~\ref{c4:assumption1}. 
Thus the proof follows. \qed

\section{Conclusion}
We prove a global lower mass-bound property for the largest components of the critical configuration model when the third moment of the degree distribution diverges to infinity. 
Together with the results in Chapter~\ref{chap:mspace}, this proves that the scaling limits in Chapter~\ref{chap:mspace} hold with respect to the Gromov-Hausdorff-Prokhorov topology, and the limiting metric space in Chapter~\ref{chap:mspace} is almost surely compact under some regularity conditions.
Also, this implies that the diameter of these components converge to some non-degenerate random variable after rescaling by $n^{(\tau-3)/(\tau-1)}$.
The main proof technique involves an exponential bound on the probability that the height of a sequence of branching processes is large, which may be of independent interest. 

%\todo[inline]{We have to assume that $\sum_{i\geq 1}\e^{-\sum_{j=1}^{i}\theta_j^2}<\infty.$}

%\subsection{Coupling with Branching processes}
%
%
%\section{Estimating heights of trees via.~random walks}
%%We need to show the following;
%%\begin{proposition}\label{c4:prop:height-estimate}
%%There exists a universal constant $C>0$ such that for any $\varepsilon>0$, there exists a $\delta^*=\delta^*(\varepsilon)>0$ (independent of $i$ and $K$) satisfying
%%\begin{equation}
%%\PR(\rht(\cX_n(i))\geq \varepsilon n^{\eta}) \leq \delta \cmt{?}\e^{-C(\sum_{i=1}^K \theta_i^2)^{\tau-2}}.
%%\end{equation}
%%\end{proposition}
% 
%
%Notation:\\
%$\Lambda_n(t):=$ scale of $S_n$ at time $t$.\\
%$H_{nl}(s):= \sum_{t<s\wedge \sigma: \Lambda_n(t)=l} \frac{1}{S_n(t)}.$ \\
%$\tau_{nl}:=\min \{t: S_n(t)\notin [2^{l-1}d_i,2^{l+2}d_i)\}$\\
%$r_{nl}:= $.\\
%
%$M_{nl}:=$ the number of visits to scale $l$.\\
%$t_{j,l}:=$ the time spent on scale $l$ during the $j$-th visit.\\
%$U_n(t,I):=$ number of upcrossings of $S_n$ of the interval $I$ before time $t$.\\
%$I_l$:= the interval $[2^{l-1}d_i,2^ld_i)$.\\
%$Q(Z,L):= \sup_{x\in\R}\PR(Z\in [x,x+L))$, also known as L\'evy's concentration function.\\
%$\beta = (\tau-2)/2$.\\
%

%
%%\todo[inline]{Add the proof for critical percolation.\\
%%Add the proof for uniform simple graphs.}

%{\small \bibliographystyle{apa}
%\bibliography{thesis} }
%
%\end{document} 
\cleardoublepage

\chapter[Critical window: Infinite second moment]{Critical percolation on scale-free random graphs: Effect of the single-edge constraint}
\label{chap:infinite-second}
{\small \paragraph*{Abstract.}
In this chapter, we study the percolation critical behavior for random graphs with degree distributions having a power-law with exponent $\tau \in (2,3)$. 
In this regime, the critical behavior is observed when the percolation probability tends to zero with the network size. 
We identify the critical window for the configuration model, the erased configuration model and the generalized random graph. 
The critical window turns out to be different for the multigraph version of the configuration model, a feature that is not observed for $\tau > 3$.
We provide exact asymptotics of the rescaled component sizes, and describe many structural properties of these critical components.
We also analyze the so-called barely sub/supercritical regimes, which establishes the relevance of the critical window identified in this chapter.
}

\vspace{.3cm} 
\noindent {\footnotesize Based on the preprint: 
Souvik Dhara, Remco van der Hofstad, Johan S.H. van Leeuwaarden; 
\emph{Critical percolation on scale-free random graphs: Effect of the single-edge constraint} (2018)}
\vfill
All the results and the relevant literature discussed in the previous chapters assume a finite second-moment condition on the degree distribution, and thus do not include the $\tau\in (2,3)$ case, where the asymptotic degree distribution has infinite second moment but finite first moment.
These networks are popularly known as \emph{scale-free networks} \cite{Bar16} in the literature. 
One of the well-known features of scale-free networks is that these networks are \emph{robust} under random edge-deletion, i.e., for any sequence $(p_n)_{n\geq 1}$ with $\liminf_{n\to\infty} p_n > 0$, the graph obtained by applying percolation with probability $p_n$ is always supercritical. 
This feature has been studied experimentally in~\cite{AJB00}, using heuristic arguments in~\cite{CEbAH00,CNSW00,DGM08,CbAH02} (see also \cite{braunstein2003optimal,braunstein2007optimal,Halvin05} in the context of optimal paths in the strong disorder regime), and mathematically in \cite{BR03}. %\todo{More refs?}
Thus, in order to observe the percolation critical behavior, one needs to take $p_c \to 0$ with the network size, even if the average degree of the network is finite.
However, obtaining the right scaling exponents for the critical behavior was an open question in the mathematical literature.

In this chapter, we discuss the first mathematically rigorous results in the $\tau \in (2,3)$ regime for the critical behavior of component sizes and their complexity.
As canonical random graph models on which percolation acts, we take the multigraph generated by the configuration model, and the closely associated erased configuration model, obtained by deleting self-loops and multiple-edges in the configuration model. 
The latter model is often referred to in the literature as the configuration model with single-edge constraint.
The most striking observation of this chapter is that the critical value changes depending on the \emph{single-edge constraint}, a feature that has never surfaced in the finite second-moment setting.
%, i.e., the critical value when the underlying graph is a random multigraph generated by the configuration model is different than that under models like erased configuration model, generalized random graph, where the underlying graph is allowed to save only one edge between vertices.
%we take the multigraph generated by $\CM$, or we take the simple graph $\ECM$ by obtained by erasing the self-loops and multiple edges of $\CM$.
For the configuration model multigraph, the critical value turns out to be $p_c \sim n^{-(3-\tau)/(\tau-1)}$, whereas under the single-edge constraint $p_c \sim n^{-(3-\tau)/2}$, which is much larger $n^{-(3-\tau)/(\tau-1)}$.
%The later critical value is the 
%Thus the so-called \emph{single-edge constraint} changes the critical behavior distinctively, a feature that is never observed in the finite second moment scenario.
The largest component sizes in both the regimes are of the order $n^\alpha p_c$, and the scaling limits are in a completely different universality class than the $\tau\in (3,4)$ and $\tau>4$ case. 
 We also study percolation on the generalized random graph, which gives uniformly chosen graph conditional on degrees. 
The contributions of this chapter can be summarized as follows:
\begin{enumerate}
%\item We show that the single-edge constraint gives rise to a completely different universality class for the percolation phase transition --
%even the critical value depends on this single-edge constraint.
%Thus, the results in this paper are in sharp contrast with the previous results in the $\tau>4$ and $\tau\in (3,4)$ cases \cite{DHLS15,DHLS16}, where the single-edge constraint does not play any effective role in the critical behavior of percolation. 
%The critical value for the generalized random graph is the same as the erased configuration model.
\item For the configuration model multigraph, we obtain scaling limits for the largest component sizes and surplus edges under a strong topology. 
Further, the diameter of the largest components is shown to be tight random variables. 
To establish that the scaling limits correspond to the critical behavior, we further look at the near-critical behavior and derive the asymptotics for the component sizes in the so-called barely sub/supercritical regimes.

\item Under the single-edge constraint, we identify the scaling limit of the largest component sizes in the part of the critical window, where the criticality parameter is sufficiently small. 

\item This is the first work on critical percolation on random graphs in the $\tau\in (2,3)$ setting, thus the techniques are novel. 
The primary difficulty in this setting is that the exploration process approach does not work. 
For the configuration model, this difficulty is circumvented by sandwiching the percolated graphs by two configuration models, which yield the same scaling limits for the component sizes. 
The main novelty in the proof of the configuration model is the analysis of the limiting exploration process. 

\item On the other hand, in the single-edge constraint scenario, the proofs 
require a more detailed understanding of the structure of the critical components. 
It turns out that the hubs (vertices of high-degree) do not connect to each other directly, but there are some special vertices that interconnect hubs. 
This interconnected structure forms the \emph{core} of the critical components, and the 1-neighborhood of the core spans the critical components.
We primarily use path counting techniques here since the exploration process approach does not work anymore. 
For path counting, we compare the connection probabilities between the hubs with the connection probabilities in a preferential attachment model, which is interesting in its own right.
%The proof techniques in the later part of this paper primarily involve 
%\todo[inline]{Contributions: Properties of the limiting process, Additive scaling window}
\end{enumerate}

%In the next section, we describe the main results of this paper.

\section{Main results}
\subsection{The configuration model}
%\subsubsection{Model description}
%The configuration model generates random multigraphs with any given degree sequence. 
%Consider $n$ vertices labeled by $[n]:=\{1,2,...,n\}$ and a non-increasing sequence of degrees $\boldsymbol{d} =\bld{d}_n = ( d_i )_{i \in [n]}$ such that $\ell_n = \sum_{i \in [n]}d_i$ is even. 
%The configuration model on $n$ vertices having degree sequence $\boldsymbol{d}$ is constructed as follows \cite{B80,BC78}:
% \begin{itemize}
% \item[] Equip vertex $j$ with $d_{j}$ stubs, or \emph{half-edges}. Two half-edges create an edge once they are paired. Therefore, initially we have $\ell_n=\sum_{i \in [n]}d_i$ half-edges. Pick any one half-edge and pair it with a uniformly chosen half-edge from the remaining unpaired half-edges and keep repeating the above procedure until all unpaired half-edges are paired. 
% \end{itemize}
%  Let $\CM$ denote the graph constructed by the above procedure.
%  Note that $\CM$ may contain self-loops and multiple edges. 
%In fact, the probability that $\CM$ is a simple graph tends to zero in our setting with power-law degrees with exponent $\tau\in (2,3)$. 
%Before stating the main results about the configuration model, we set up some necessary notation.
\subsubsection{Notions of convergence and the limiting objects} 
\label{c5:sec:notation}
Recall the notations from Chapter~\ref{sec:notation-intro}.
Consider a decreasing sequence $ \boldsymbol{\theta}=(\theta_1,\theta_2,\dots)\in \ell^2_{\shortarrow}\setminus \ell^1_{\shortarrow}$. 
Denote by  $\mathcal{I}_i(s):=\ind{\xi_i\leq s }$ where $\xi_i\sim \mathrm{Exp}(\theta_i/\mu)$ independently, and $\mathrm{Exp}(r)$ denotes the exponential distribution with rate $r$.  
Consider the process \begin{equation}\label{c5:defn::limiting::process}
\iS(t) =  \lambda\sum_{i=1}^{\infty} \theta_i\mathcal{I}_i(t)- 2 t,
\end{equation}for some $\lambda\in\mathbb{R}, \mu >0$ and define the reflected version of $\iS(t)$ by
\begin{equation} \label{c5:defn::reflected-Levy}
 \refl{ \iS(t)}= \iS(t) - \min_{0 \leq u \leq t} \iS(u).
\end{equation}
For any function $f\in \mathbb{D}[0,\infty)$,  define $\ubar{f}(x)=\inf_{y\leq x}f(y)$.  $\mathbb{D}_+[0,\infty)$ is the subset of $\mathbb{D}[0,\infty)$ consisting of functions with positive jumps only. Note that $\ubar{f}$ is continuous when $f\in \mathbb{D}_+[0,\infty)$. An \emph{excursion} of a function $f\in \mathbb{D}_+[0,T]$ is an interval $(l,r)$ such that 
\begin{gather*}\min\{f(l-),f(l)\}=\ubar{f}(l)=\ubar{f}(r)=\min\{f(r-),f(r)\} \\
 \text{and}\quad f(x)>\ubar{f}(r),\ \forall x\in (l,r)\subset [0,T].
\end{gather*}
Excursions of a function $f\in \mathbb{D}_+[0,\infty)$ are defined similarly.
We will show that, for any $\lambda>0$, the excursions of the process $\biS$ can be ordered almost surely as an element of $\ell^2_{\shortarrow}$. We denote this ordered vector by $(\gamma_i(\lambda))_{i\geq 1}$.

 Also, define the counting process $\mathbf{N}^\lambda$ to be the Poisson process that has intensity $\refl{ \iS(t)}$ at time $t$ conditional on $( \iS(u) )_{u \leq t}$. Formally, $\mathbf{N}^\lambda$ is characterized as the counting process for which 
\begin{equation} \label{c5:defn::counting-process}
N(t) - \frac{\sum_{i=1}^\infty\theta_i^2}{\mu^2} \int\limits_{0}^{t}\refl{ \iS(u)}\dif u
\end{equation} is a martingale.  We use  the notation $N(\gamma)$ to denote the number of marks in the interval $\gamma$.
Let $\mathbf{Z}(\lambda)$ denote the vector $((\gamma_i(\lambda),N(\gamma_i(\lambda))))_{i\geq 1}$, ordered as an 
 element of $\Unot$.
 
\subsubsection{Results for the critical window}
Fix any $\tau \in (2,3)$. 
Throughout this chapter, we denote 
\begin{equation}\label{c5:eqn:notation-const}
 \alpha= 1/(\tau-1),\qquad \rho=(\tau-2)/(\tau-1),\qquad \eta=(3-\tau)/(\tau-1),
\end{equation}
and assume the following about the degree sequences $(\bld{d}_n)_{n\geq 1}$:
%\todo{Assumptions discussed later}
\begin{assumption}
\label{c5:assumption1}
\normalfont  
\begin{enumerate}[(i)] 
\item \label{c5:assumption1-1} (\emph{High-degree vertices}) For any  $i\geq 1$, 
$ n^{-\alpha}d_i\to \theta_i,$
where the vector $\boldsymbol{\theta}:=(\theta_1,\theta_2,\dots)\in \ell^2_{\shortarrow}\setminus \ell^1_{\shortarrow}$. 
\item \label{c5:assumption1-2} (\emph{Moment assumptions}) 
%\begin{equation}
 $\lim_{n\to\infty}\frac{1}{n}\sum_{i\in [n]}d_i= \mu,$ and $$ \lim_{K\to\infty}\limsup_{n\to\infty}n^{-2\alpha} \sum_{i=K+1}^{n} d_i^2=0.$$
\end{enumerate}
\end{assumption}
\noindent 
In Section~\ref{c5:sec:discussion}, we discuss that Assumption~\ref{c5:assumption1} is satisfied for power-law degrees with exponent $\tau\in (2,3)$. 
%A discussion about Assumption~\ref{c5:assumption1} along with some interesting examples are provided in Section~\ref{c5:sec:discussion}.
For $\CM$, the \emph{criticality parameter} $\nu_n$ is defined as
\begin{equation}
\nu_n = \frac{\sum_{i\in [n]}d_i(d_i-1)}{\sum_{i\in [n]}d_i}.
\end{equation} 
Molloy and Reed~\cite{MR95}, and Janson and Luczak~\cite{JL09} showed that, under some regularity conditions, $\CM$ has a unique giant component (a component of size $\Theta(n)$) with high probability precisely when $\nu_n \to \nu>1$. 
Under Assumption~\ref{c5:assumption1}, $\nu_n\to\infty$, as $n\to\infty$ and $\CM$ always contains a giant component.

%We study \emph{percolation}, which refers to deleting each edge of a graph independently with probability $1-p$.  
%In case of percolation on random graphs, the deletion of edges are also independent from the underlying graph.
 Let $\rCM_n(\bld{d},p)$ denote the graph obtained from percolation with probability $p$ on the graphs $\mathrm{CM}_n(\boldsymbol{d})$. 
Now, under Assumption~\ref{c5:assumption1}  for any $p\in (0,1]$, $\rCM_n(\bld{d},p)$ retains a giant component with high probability, i.e.~$\rCM_n(\bld{d},p)$ is always supercritical;  see the remark below \cite[Theorem 4.5]{Hof17}.
%\todo[inline]{Is it proved formally somewhere?} 
%\todo[inline]{Define the critical value? Component sizes become random and surplus edges start to emerge.}
Thus, in order to observe the critical behavior, one must take $p \to 0$, as $n\to\infty$. 
However, it is is not clear here how to define the critical window of phase-transition.
%
%But what should be the definition of critical window? 
%A characteristic feature that has been universally observed in the literature is that in the 
One way to do it is to say that inside the critical window, the order of the sizes of largest connected components are same, and the rescaled vector of ordered component sizes converge to some \emph{non-degenerate} random vector.
This property has been observed universally for the critical window of phase transition; see \cite{DHLS15,DHLS16} and the references therein.
In this chapter, we define the critical window to be those values of $p$ for which the re-scaled vector of component sizes converge to some non-degenerate random vector. 
It is worthwhile mentioning that there is a substantial literature on how to define the critical value, and the phase transition. 
See \cite{NP08,JW18,BCHSS05,HvdH17,Hof17} for different definitions of critical probability and related discussions.

We will show that the critical window for percolation on $\CM$ is given  by 
\begin{equation}\label{c5:eq:crit-window-CM}
p_c=p_c(\lambda):= \frac{\lambda}{\nu_n}(1+o(1)), \quad \lambda \in (0,\infty).
\end{equation}
Notice that, under Assumption~\ref{c5:assumption1}, the parameter $\nu_n$ is of the order $n^{2\alpha-1} = n^{\eta}$, where $\eta = (3-\tau)/(\tau-1)>0$.
 
To avoid complicated notation, we will always write $\sC_{\sss (i)}(p)$ to denote the $i$-th largest component in the percolated graph. The random graph on which percolation acts will always be clear from the context. 
A vertex is called isolated if it has degree zero in the graph $\CMP$. 
We define the component size corresponding to an isolated vertex to be zero (see Remark~\ref{c5:rem:isolated} below). 
For any component $\mathscr{C}\subset\CMP$, let $\SP(\mathscr{C})$ denote the number of surplus edges given by $\text{the number of edges in }\mathscr{C}-|\mathscr{C}|+1$. 
Finally, let $\mathbf{Z}_n(\lambda)$ denote the vector $(n^{-\rho}|\sCi(p_c(\lambda))|,\SP(\sCi(p_c(\lambda))))_{i\geq 1}$, ordered as an element of~$\Unot$.
The following theorem gives the asymptotics for the critical component sizes and the surplus edges of  $\CMP$: 
\begin{theorem}[Component sizes and surplus edges]\label{c5:thm:main} Under \textrm{Assumption~\ref{c5:assumption1}}, as $n\to\infty$,
\begin{equation}
\mathbf{Z}_n(\lambda) \dto \mathbf{Z}(\lambda)
\end{equation}with respect to the $\Unot$ topology, where $\mathbf{Z}(\lambda)$ is defined in Section~\ref{c5:sec:notation}.
\end{theorem}
\begin{remark}\label{c5:rem:isolated} \normalfont
Note that if $\tau\in (2,3)$, then $2\rho<1$. 
When percolation is performed with probability $p_c$, there are of the order $n$ isolated vertices and thus $n^{-2\rho}$ times the number of isolated vertices tends to infinity. 
This is the reason why we must ignore the contributions due to isolated vertices, when considering the convergence of the component sizes in the $\ell^2_{\shortarrow}$ topology.
Note that an isolated vertex with self-loops does not create an isolated component. 
%Therefore, if we explore half-edges and .
\end{remark}

For a connected graph $G$, let $\diam(G)$ denote its diameter. 
Our next result shows that the diameter of the largest connected components is of constant order. 

\begin{theorem}[Diameter of largest clusters]\label{c5:thm:diameter-large-comp} Under \textrm{Assumption~\ref{c5:assumption1}}, for any $i\geq 1$, $(\diam(\sCi(p_c(\lambda))))_{n\geq1}$ is a tight sequence of random variables, .
\end{theorem}
%\todo[inline]{Mention the properties of the limiting process?}
\subsubsection{Behavior in the near-critical regimes}
We now present asymptotic results for the component sizes in the so-called barely subcritical ($p_n\ll p_c(\lambda)$) and barely supercritical regimes ($p_n\gg p_c(\lambda)$).
%More precisely, consider $\rCM_n(\bld{d},p_n)$. 
%The phrases barely supercritical (respectively subcritical) refer to the case $p_n\gg p_c(\lambda)$ (respectively $p_n\ll p_c(\lambda)$).
%Let us denote the $i$-th largest component of $\rCM_n(\bld{d},p_n)$ by $\sCi(p_n)$.
The following two theorems summarize the near-critical behavior:
\begin{theorem}[Barely subcritical regime]\label{c5:thm:barely-subcrit} For $\rCM_n(\bld{d},p_n)$, let us assume that $\frac{\log(n)}{\ell_n}\ll p_n\ll p_c(\lambda)$ and that \textrm{Assumption~\ref{c5:assumption1}} holds. Then, for each fixed $i\geq 1$, as $n\to\infty$,
\begin{equation}
\frac{|\sCi(p_n)|}{n^\alpha p_n} \pto \theta_i.
\end{equation}
\end{theorem}
For the result about the barely supercritical regime, we need one further mild technical assumption, which is as follows:
Let $D_n^*$ denote the degree of a vertex chosen in a size-biased manner with the sizes being $(d_i/\ell_n)_{i\in [n]}$.
Then, there exists a constant $\kappa>0$ such that 
\begin{equation}\label{c5:eq:asymp-laplace}
1 - \E[\e^{-tp_n^{1/(3-\tau)}D_n^*}] = \kappa p_n^{(\tau-2)/(3-\tau)}(t^{\tau-2}+o(1)).
\end{equation}
\begin{theorem}[Barely supercritical regime]\label{c5:thm:barely-supercrit} 
For $\rCM_n(\bld{d},p_n)$, suppose that $p_n\gg p_c(\lambda)$ and \textrm{Assumption~\ref{c5:assumption1}}, \eqref{c5:eq:asymp-laplace} holds. Then, as $n\to\infty$,
\begin{equation}
\frac{|\sC_{\sss (1)}(p_n)|}{np_n^{1/(3-\tau)}} \pto \frac{\mu\kappa^{1/(3-\tau)}}{2^{(\tau-2)/(3-\tau)}}, \quad \frac{\rE(\sC_{\sss (1)}(p_n))}{np_n^{1/(3-\tau)}} \pto \frac{\mu\kappa^{1/(3-\tau)}}{2^{(4-\tau)/(3-\tau)}},
\end{equation}and for all $i\geq 2$, $|\sCi(p_n)| = \oP(np_n^{1/(3-\tau)})$, $\rE(\sCi(p_n)) = \oP(np_n^{1/(3-\tau)})$, where $\rE(G)$ denotes the number of edges in the graph $G$.
\end{theorem}

\begin{remark} \normalfont 
%If the degrees are iid samples from a power-law degree distribution with exponent $\tau\in (2,3)$, then 
The identity \eqref{c5:eq:asymp-laplace} is basically a version of the celebrated Abel-Tauberian theorem \cite[Chapter XIII.5]{Fel91} (see also \cite[Chapter 1.7]{BGT89}). 
However, since both $D_n^*$ and $p_n$ depend on $n$, the joint asymptotics needs to be stated as an assumption.
In Section~\ref{c5:sec:discussion}, we discuss how this assumption is satisfied for power-law degree distributions with $\tau\in (2,3)$. 
\end{remark}

\subsection{Effect of the single-edge constraint}
In this section, we will consider two random graph models that do not allow for self-loops or multiple edges in the graph, namely the generalized random graph and the erased configuration model. 
We will see that for random graphs that generate simple graphs, the critical window for percolation is given by 
\begin{equation}\label{c5:eq:scaling-window}
p_c =p_c(\lambda):= \lambda n^{-\frac{3-\tau}{2}}(1+o(1)), \quad \lambda \in (0,\infty).
\end{equation} 
We state the results about two different random graph models in two different subsections below, starting with the generalized random graph:

\subsubsection{Generalized random graphs}\label{c5:sec:GRG-results}
Given a set of weights $(w_i)_{i\in [n]}$ on the vertex set $[n]$, the generalized random graph model  \cite{BDM06}, denoted by $\GRG$ is generated by creating an edge between vertex $i$ and $j$ independently with probability 
\begin{equation}\label{c5:eq:p-ij-GRG-defn}
p_{ij} = \frac{w_iw_j}{\ell_n+w_iw_j},
\end{equation}
where $\ell_n = \sum_{i\in [n]}w_i$. 
This model has the property that, conditionally on the degree sequence~$\bld{d}$, the law of the obtained random graph is the same as a uniformly chosen graphs from the space of all simple graphs with degree distribution $\bld{d}$.
The graph $\rGRG(\bw,p)$ is obtained by keeping each edge of the graph independently with probability $p$. The later deletion process is also independent of the randomization of the graph.
\begin{assumption}\label{c5:assumption-GRG}
\normalfont
For some $\tau \in (2,3)$, consider the distribution function satisfying $(1-F)(x) = C x^{-(\tau-1)}$ and let $w_i = (1-F)^{-1}(i/n)$. 
\end{assumption} 
\noindent In the above case, if $W_n$ denotes the weight of a typical vertex, then 
\begin{equation}
\E[W_n] = \frac{1}{n}	\sum_{i\in [n]}w_i \to \mu = \E[W].
\end{equation}
Moreover, $$n^{-\alpha}w_i  = c_{\sss F}i^{-\alpha},$$ for some constant $c_F >0$.
Throughout $c_{\sss F}$ will denote the special constant appearing above.
Assumption~\ref{c5:assumption-GRG} is strictly stronger than Assumption~\ref{c5:assumption1} in the sense that Assumption~\ref{c5:assumption-GRG} specifies not only the high-degree vertices but all the $w_i$'s. 
This is required in the proofs as one needs precise estimates of quantities like $\E[W_n \ind{W_n\geq K_n}]$.
However, Assumption~\ref{c5:assumption-GRG} also yields that the weight sequence satisfies a power law with exponent $\tau\in (2,3)$.

Let $\sCi(p)$ denote the $i$-th largest component of $\rGRG(\bw,p)$, and define $W_{\sss (i)}(p):= \sum_{k\in \sCi(p)} w_k$. 
We will consider the scaling limits of $(W_{\sss (i)}(p_c))_{i\geq 1}$ and $(\sCi(p_c))_{i\geq 1}$. 
To describe the limiting object, consider the graph $G_\infty(\lambda)$ on the vertex set $\Z_+$, where the vertices $i$ and $j$ are joined independently by Poisson$(\lambda_{ij})$ many edges with $\lambda_{ij}$ given by 
\begin{equation}\label{c5:defn:lambda-ij}
\lambda_{ij}:=\lambda^2\int_0^\infty \theta_i(x) \theta_j(x) \dif x, \quad \theta_i(x):= \frac{c_{\sss F}^2i^{-\alpha} x^{-\alpha}}{\mu+c_{\sss F}^2i^{-\alpha}x^{-\alpha}}.
\end{equation}
Let $W_{\sss (i)}^{\infty}(\lambda)$ denote the $i$-th largest element of the set $$\bigg\{\sum_{i\in \sC}\theta_i: \sC \text{ is a connected component of }G_{\infty}(\lambda)\bigg\}.$$ The following is the main result:

\begin{theorem}[Critical regime for $\GRG$] \label{c5:thm:main-crit}
There exists an absolute constant $\lambda_0$ such that for any $\lambda\in (0,\lambda_0)$,
under \textrm{Assumption~\ref{c5:assumption-GRG}}, as $n\to \infty$,
\begin{equation}
n^{-\alpha} (W_{\sss (i)}(p_c(\lambda)))_{i\geq 1} \dto (W_{\sss (i)}^{\infty}(\lambda))_{i\geq 1},
\end{equation} and 
\begin{equation}
(n^{\alpha}p_c)^{-1} (|\sC_{\sss (i)}(p_c(\lambda))|)_{i\geq 1} \dto (W_{\sss (i)}^{\infty}(\lambda))_{i\geq 1},
\end{equation} with respect to the $\ell^2_{\shortarrow}$ topology.
\end{theorem}
%\todo[inline]{Think about the general $\lambda$ case}
In the proofs we will also need to show that $W_{\sss (i)}^{\infty}(\lambda)<\infty$ almost surely for all $i\geq 1$.
In fact we will prove the following about the limiting object:
\begin{proposition}\label{c5:prop-limit-as-finite}
There exists an absolute constant $\lambda_0$ such that for any $\lambda\in (0,\lambda_0)$, $\bld{W}^{\infty}(\lambda) : = (W_{\sss (i)}^{\infty}(\lambda))_{i\geq 1}$ is in $\ell^2_{\shortarrow}$ almost surely.
\end{proposition}
\begin{remark}
\normalfont
In the proofs under the single-edge constraint, coming up with an analyzable exploration process for the clusters seems challenging.
The only tool we have is an estimate of the connection probabilities of hubs via an intermediate vertex, which allows us to estimate expectations of several moments of component sizes and total weights of those component. 
These are often referred to as susceptibility functions. 
The susceptibility functions allow us to ignore negligible contributions on the total weights of cluster using the first-moment method. 
Unfortunately, the first-moment method does not work for high values of $\lambda$. 
Also, we do not know how to show the finiteness of the limiting object in Proposition~\ref{c5:prop-limit-as-finite} for large $\lambda$.
This is the reason for assuming $\lambda \in (0,\lambda_0)$ in Theorem~\ref{c5:thm:main-crit}. 
The proof for general $\lambda$ is an open question. 
\end{remark}

\begin{remark}\normalfont Theorem~\ref{c5:thm:main-crit} also holds under different choices of $p_{ij}$'s than given by \eqref{c5:eq:p-ij-GRG-defn}.
For example, for the Chung-Lu Model ($p_{ij}: = \min \{w_iw_j/\ell_n,1\}$) and the Norros-Reittu model (with $p_{ij}: = 1-\e^{-w_iw_j/\ell_n}$), the statement of Theorem~\ref{c5:thm:main-crit} holds, with the scaling limit obtained in an identical manner with $\theta_i(x)$ is \eqref{c5:defn:lambda-ij} replaced respectively by $\min\{c_{\sss F}^2i^{-\alpha}x^{-\alpha}/\mu,1\}$, and $1-\e^{-c_{\sss F}^2 i^{-\alpha}x^{-\alpha}/\mu}$.
%\todo[inline]{Mention this in the intro as well?}
\end{remark}

\subsubsection{Erased configuration model}
The erased configuration model is obtained by erasing self-loops and multiple edges of $\CM$. We denote this random graph by $\ECM$, and we denote the graph obtained after bond percolation on $\ECM$ by $\mathrm{ECM}_n(\bld{d},p)$.
We will assume that $\bld{d}$ satisfies Assumption~\ref{c5:assumption-GRG}, where $F$ is a distribution function supported on the non-negative integers.
Thus, we take $d_i= (1-F)^{-1}(i/n)$, and we add an extra half-edge to vertex 1 if $\sum_{i\in [n]}d_i$ is odd.
The limiting object for $\ECM$ is identical to $\GRG$ after replacing $\theta_i(x)$ in \eqref{c5:defn:lambda-ij} by 
\begin{equation}\label{c5:defn:lambda-ij-ECM}
\theta_i(x):=1-\e^{-c_{\sss F}^2i^{-\alpha}x^{-\alpha}/\mu}.
\end{equation}

\begin{theorem}[Critical regime for $\ECM$]\label{c5:thm:main-ECM}
There exists an absolute constant $\lambda_0$ such that for any $\lambda\in (0,\lambda_0)$, under \textrm{Assumption~\ref{c5:assumption-GRG}},
the scaling limit results in \textrm{ Theorem~\ref{c5:thm:main-crit}} hold for $\mathrm{ECM}_n(\bld{d},p_c(\lambda))$ with limit objects described by \eqref{c5:defn:lambda-ij-ECM} above.
\end{theorem}
Notice that first performing percolation and then erasing self-loops and multiple-edges gives a different random graph than first erasing the self-loops and multiple-edges and then performing percolation.
%
%Note that after constructing the random graph $\CM$, performing percolation and then deletion of self-loops, and doing these operations the other way around gives two different random graphs.
%Thus, the above two operations are not interchangeable. 
For the $\tau>3$ case however, the order of these operations does not matter and leads to the same scaling limits as~\cite{DHLS15,DHLS16}.
However, the operations of deletion of self-loops and multiple-edges and performing percolation are not interchangeable  in the $\tau\in (2,3)$ regime, as evidenced by Theorems~\ref{c5:thm:main}~and~\ref{c5:thm:main-ECM}.
This can be understood intuitively.
In $\CM$, vertices $i$ and $j$ share $d_id_j/(\ell_n-1)$ edges in expectation.
Thus for hubs with $d_i=  O(n^{\alpha})$ and $d_j = O(n^{\alpha})$, in expectation $O(1)$ many edges survive after percolation in the critical window \eqref{c5:eq:crit-window-CM}.
On the other hand, whenever $p \to 0$, hubs are never connected directly under the single-edge constraint.
We will see in the proofs that the value $p_c$ in \eqref{c5:eq:scaling-window} is such that the hubs are connected to each other via intermediate vertices of degree $\Theta(n^{\rho})$ (see Figure~\ref{c5:fig:core-component}). 
\begin{figure}\centering
\includegraphics[scale=.05]{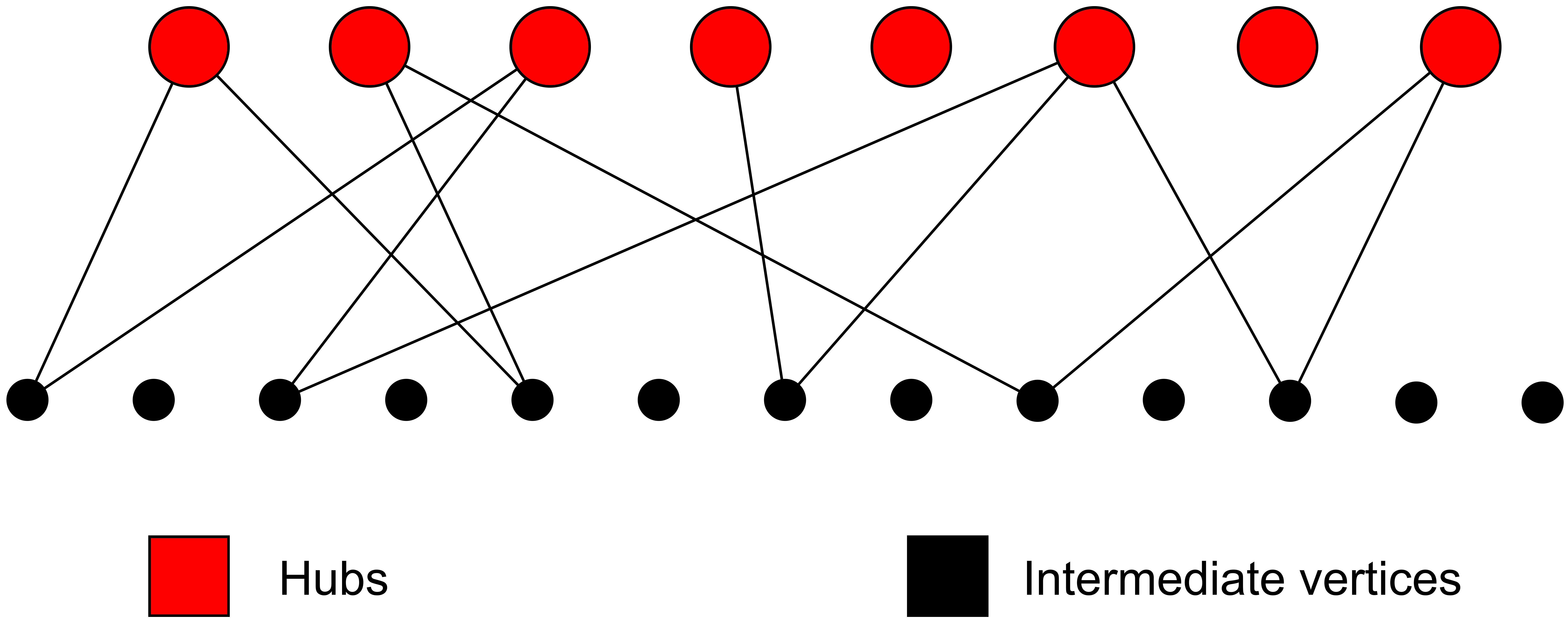}
\caption{Visualization of the connectivity between the hubs (vertices of degree $\Theta(n^{\alpha})$). 
The intermediate vertices have degree $\Theta(n^{\rho})$.}\label{c5:fig:core-component}
\end{figure}
This forms a core of the largest connected components, and the 1-neighborhood of this core spans the largest connected component asymptotically. 

%Further, we will see that the size of the 1-neighborhood of the core is asymptotically the same as the total-weight of the core.  

\subsubsection{Behavior in the near-critical regimes} 
In this section, we state the results about the barely subcritical regime under the single-edge constraint. 
The result below holds for both percolation on $\GRG$ and $\ECM$:
\begin{theorem}\label{c5:thm:barely-subcrit-single-edge}
Suppose that \textrm{ Assumption~\ref{c5:assumption-GRG}} holds and $p_n \ll p_c(\lambda)$. 
Then, for any fixed $i\geq 1$, as $n\to\infty$, 
\begin{equation}
\frac{|\sC_{\sss (i)}(p_n)|}{n^{\alpha}p_n} \pto c_{\sss F}i^{-\alpha},\quad \text{and}\quad \frac{W_{\sss (i)}(p_n)}{n^{\alpha}} \pto c_{\sss F}i^{-\alpha}.
\end{equation}
\end{theorem}
\noindent 
Under the single-edge constraint, the exact asymptotics in the barely supercritical case is left to future work. 
In the proofs under the single-edge constraint, coming up with an analyzable exploration process for the clusters seems challenging.
The only tool we have is an estimate of the connection probabilities of hubs via an intermediate vertex, which allows us to estimate expectations of several moments of component sizes and total weights of those component. 
These are often referred to as susceptibility functions. 
The susceptibility functions allow us to ignore negligible contributions on the total weights of cluster using first-moment method. 
Unfortunately, the first-moment method does not work in the barely-supercritical regime, or even high values of $\lambda$ in Theorems~\ref{c5:thm:main-crit},~\ref{c5:thm:main-ECM}. 
This is the reason for assuming $\lambda \in (0,\lambda_0)$ in those theorems.

%\todo[inline]{Can we atleast show that $|\sC_{\sss (1)}|\gg |\sC_{\sss (2)}|$? Obviously, $|\sC_{\sss (1)}|/(n^{\alpha}p_n) \to \infty$ whp, since the hubs are connected. But does that say anything?}

%Thus the hubs are in different components in the barely subcritical regime.

\subsection{Discussion}
\label{c5:sec:discussion}
\paragraph*{Assumption on the degrees.} Note that Assumption~\ref{c5:assumption1} is weaker than Assumption~\ref{c5:assumption-GRG}. 
Indeed, suppose that $d_i = (1-F)^{-1}(i/n)$, for some distribution function $F$ supported on non-negative integers, and $(1-F)(x) = C k^{-(\tau-1)},$ for  $k \leq x< k+1.$
We ignore the effect due to adding a dummy half-edge to vertex 1 if necessary to make $\sum_{i\in [n]}d_i$ even, since this does not change any asymptotics. 
Now, Assumption~\ref{c5:assumption1}~(i) is satisfied with $\theta_i = C i^{-\alpha}$. 
One can also verify Assumption~\ref{c5:assumption1}~(ii) using identical arguments as \cite[Lemma~6]{DHLS16}.
For this specific choice of $d_i$, \eqref{c5:eq:asymp-laplace} holds as well. 
To see this, write $t_n = tp_n^{1/(3-\tau)}$, and note that 
%\begin{eq}
%&1 - \E[\e^{-tp_n^{1/(3-\tau)}D_n^*}] = \frac{1}{\ell_n} \sum_{k\in [n]} (1-F)^{-1}(k/n) \Big(1-\e^{-tp_n^{1/(3-\tau)} (1-F)^{-1}(k/n)}\Big) \\
%& = \frac{n}{\ell_n} \int_0^1 (1-F)^{-1}(x) \Big(1-\e^{-tp_n^{1/(3-\tau)} (1-F)^{-1}(x)}\Big) \dif x
%\end{eq}
\begin{eq}
&1 - \E[\e^{-t_nD_n^*}] = \frac{1}{\ell_n} \sum_{k\in [n]} d_k \big(1-\e^{-t_nd_k}\big).
\end{eq}
Let us split the last sum in two parts on the set $\{k: d_k <1/(2t_n)\}$ and its complement, and denote them by $(I)$ and $(II)$ respectively.
We write $a_n \sim b_n$ to denote that $\lim_{n\to\infty}a_n/b_n = 1$.
Using the fact that $x-x^2/2 \leq 1-\e^{-x} \leq x$, it follows that for some constants $c_1,c_2>0$,
\begin{gather}
\frac{(I)}{t_n^{\tau-2}} \sim t_n^{3-\tau} \int_0^{1/(2t_n)} \frac{\dif x}{x^{\tau-2}} \sim c_0, \quad \text{and} \quad
\frac{(II)}{t_n^{\tau-2}} \sim t_n^{-(\tau-2)}\int_{1/(2t_n)}^\infty \frac{\dif x}{x^{\tau-1}} \sim c_1,
\end{gather}
which yields \eqref{c5:eq:asymp-laplace}.
%
%It is worthwhile noting that 
%\todo[inline]{Verify the Abel-Tauberian theorem.}
%To generate networks with power-law degrees, one can take $d_i = \lfloor(1-F)^{-1}(i/n)\rfloor$, and add a dummy half-edge to vertex 1 if necessary to make $\sum_{i\in [n]}d_i$ even.

%\paragraph*{Additive scaling window.}

\paragraph*{Critical windows: emergence of hub connectivity.}
The critical window changes due to the single-edge constraint, as noted in \eqref{c5:eq:crit-window-CM} and \eqref{c5:eq:scaling-window}.
However, there are some common features. 
Firstly, the component sizes are of the order $n^{\alpha} p_c$ in both the regimes. 
This is due to the fact that the main contribution to the component sizes comes from hubs and their finite neighborhood.
Secondly, in both cases, the critical window is the regime in which  hubs start getting connected. 
More precisely, if critical window is given by those values of $\pi$ such that for any fixed $i, j\geq 1$
\begin{eq}
\liminf_{n\to\infty} \PR(i, j\text{ are in the same component in the }\pi \text{-percolated graph} ) \in (0,1).
\end{eq}
For the configuration model, hubs are connected directly with strictly positive probability, while under the single-edge constraint, hubs are connected via intermediate vertices of degree $\Theta (n^{\rho})$.
Intuitively, in the barely subcritical regime, all the hubs are in different components.
Hubs start forming the critical components as the $p$ varies over the critical window since most paths between hubs are of length 2 and go via intermediate vertices of degree $\Theta (n^{\rho})$.
Finally in the barely super-critical regime the giant component is formed which contains all the hubs. 
This feature is also observed in the $\tau \in (3,4)$ case~\cite{BHL12}. 
However, the distinction between $\tau \in (3,4)$ and $\tau\in (2,3)$ is that for $\tau \in (3,4)$ the paths between the hubs have a length that grows as $n^{(\tau-3)/(\tau-1)}$.

\section{Configuration model: Proofs}\label{c5:sec:CM-proof}
In this section, we prove our results related to critical percolation on $\CM$. 
We start by proving some properties of the process \eqref{c5:defn::limiting::process} in Section~\ref{c5:sec:properties-exploration}. 
In Section~\ref{c5:sec:sandwich-CM}, we describe a way to approximate  percolation on a configuration model by a suitable configuration model.
In Section~\ref{c5:sec:CM-critical-window}, we analyze the latter graph by setting up an exploration process and obtaining its scaling limit.
Hence the proof of Theorem~\ref{c5:thm:main} is completed. 
In Section~\ref{c5:sec:near-critical-proofs}, we consider the near critical behavior and provide proofs of Theorems~\ref{c5:thm:barely-subcrit}~and~\ref{c5:thm:barely-supercrit}.
\subsection{Properties of the excursions of the limiting process}\label{c5:sec:properties-exploration}
%\todo[inline]{Edit with $\theta_i, \mu$ notation}
In the following proposition, we summarize the properties of the limiting process $\biS$ that are required in our analysis.
\begin{proposition} \label{c5:prop:limit-properties}
\begin{itemize}
\item[\textrm{(P1)}] As $t\to\infty$, $\iS(t) \asto -\infty$. Thus, $\biS$ does not have an  excursion of infinite length almost surely.
\item[\textrm{(P2)}] For any $\delta >0$, $\biS$ has only finitely many excursions of length at least $\delta$ almost surely. 
\item[\textrm{(P3)}] Let $\mathcal{R}$ denote the set of excursion end-points of $\biS$. 
Then $\mathcal{R}$ does not have an isolated point.
\item[\textrm{(P4)}] For any $t>0$, $\PR(\iS(t)=\inf_{u\leq t}\iS(u))=0$.
\end{itemize}
\end{proposition}
The conditions in Proposition~\ref{c5:prop:limit-properties} form the bedrock of using \cite[Lemma 14]{DHLS16}, which will be crucial in the next section. 
An inquisitive reader might note that the conditions are related to \cite[Proposition 14]{AL98}. 
The proof of Proposition~\ref{c5:prop:limit-properties} requires the analysis of the martingale decomposition for the process $\biS$. 
Consider the sigma-field $\mathscr{F}_t = \sigma(\{\xi_i\leq s\}:s\leq t, i\geq 1)$, where for a collection of sets $\cA$, $\sigma (\cA)$ denotes the minimum sigma algebra containing all the sets in $\cA$. 
Then $(\mathscr{F}_t)_{t\geq 0}$ is a filtration. 
All the martingales in this section will be with respect to $(\mathscr{F}_t)_{t\geq 0}$, unless stated otherwise.
Without loss of generality we assume that $\mu=1$ in this section to simplify notation.
Below we summarize the martingale decomposition for $\biS$:
\begin{lemma}\label{c5:lem:mart-decomp-Sinfty} The process $\biS$ admits the Doob-Meyer decomposition $\iS(t) = M(t)+A(t)$ with the drift term $A(t)$, and the quadratic variation for the martingale term $\langle M\rangle (t)$  given by 
\begin{equation}
A(t) = \lambda\sum_{i=1}^\infty \theta_i^2\min\{\xi_i,t\}-2t, \qquad \langle M\rangle (t) = \lambda^2\sum_{i=1}^\infty \theta_i^3\min\{\xi_i,t\}.
\end{equation}
\end{lemma}
\begin{proof}
 Define $M_i(t) = \ind{\xi_i\leq t}-\theta_i\min\{\xi_i,t\}$. The proof follows if we show that $(M_i(t))_{t\geq 0}$ is a martingale with quadratic variation term given by 
\begin{equation}
\langle M_i\rangle(t) = \theta_i\min\{\xi_i,t\}.
\end{equation}
Denote $\mathcal{I}_i(t) = \ind{\xi_i\leq t}$. Let $(\mathcal{N}(t))_{t\geq 0}$ denote a unit jump Poisson process. 
Then $\mathcal{I}_i(t)$ can be written in terms of the random time change  of $(\mathcal{N}(t))_{t\geq 0}$ \cite{EK86,PTW07} as follows
\begin{equation}
\mathcal{I}_i(t) = \mathcal{N}\bigg(\theta_i\int_0^t (1-\mathcal{I}_i(s))\dif s\bigg).
\end{equation}
%\todo[inline]{This is not a homogeneous Poisson process.
%The rate is time-dependent and random. You can think of this as: when the first exponential clock rings, $\cI_i$ becomes one and the rate of the Poisson process becomes zero.}
\noindent Further, $\int_0^t (1-\mathcal{I}_i(s))\dif s = \min\{\xi_i,t\}$ which completes the proof.
\end{proof} 
The rest of the section is devoted to proving the properties of $\biS$ stated in Proposition~\ref{c5:prop:limit-properties}. 
We give the proofs of different conditions separately below:

\begin{proof}[Proof of Proposition~\ref{c5:prop:limit-properties}~\textrm{(P1)}]
We use the martingale decomposition of $\biS$ from Lemma~\ref{c5:lem:mart-decomp-Sinfty}. 
Fix $K\geq 1$ such that $\lambda \sum_{i>K} \theta_i^2 <1$.
Such a choice of $K$ is always possible as $\bld{\theta}\in \ell^2_{\shortarrow}$.
Further define the stopping time $T:= \inf\{t: \xi_i\leq t,\ \forall i\in [K]\}$, and observe that $T<\infty$ almost surely.
Note that $\min\{\xi_i,t\} \leq t$ and thus, 
\begin{eq}
\frac{1}{t}\lambda\sum_{i>K}\theta_i^2 \min\{\xi_i,t\} \leq 1, \quad \text{almost surely.}
\end{eq}
Therefore, for any $t>T$,
\begin{eq}
A(t) = \lambda \sum_{i\in [K]} \theta_i^2 \xi_i  + \lambda\sum_{i>K}\theta_i^2 \min\{\xi_i,t\} -2t \leq \lambda \sum_{i\in [K]} \theta_i^2 \xi_i -t, \quad \text{almost surely.}
\end{eq}
We conclude that, for any $r\in (0,1)$,
$t^{-r}A(t) \asto -\infty.$
For the martingale part we will use the exponential concentration inequality \cite[Inequality 1, Page 899]{SW86}, which is stated below:
\begin{lemma}\label{c5:lem:concentration-inequality}
If $M$ is any continuous time local martingale such that $M(0) = 0$, and $\sup_{t\in [0,\infty)} |M(t) - M(t-)| \leq c$, almost surely, then for any $t>0$, $a>0$ and $b>0$, 
\begin{eq}
\PR\Big(\sup_{s\in [0,t]} M(s) > a, \text{ and } \langle M \rangle (t) \leq b  \Big) \leq \exp \bigg(-\frac{a^2}{2b} \psi \Big( \frac{a c}{b}\Big) \bigg), 
\end{eq}where $\psi(x) = ( (1+x)\log(1+x) - x ) / x^2$. 
\end{lemma}
\noindent In particular, $\psi(x) \geq 1/(2 (1+x/3))$ (see \cite[Page 27]{JLR00}).
Note that $ \langle M\rangle(t)\leq \lambda^2 t \sum_{i=1}^\infty \theta_i^3.$
We apply Lemma~\ref{c5:lem:concentration-inequality} with $a = \varepsilon t^{r}$, $b = \lambda^2 t \sum_{i=1}^\infty \theta_i^3$, and $c= \theta_1$. 
Now, $\psi (ac/b) \geq C/(1+t^{r-1})$, and thus for any $\varepsilon >0$, and $r\in (1/2,1)$
\begin{eq}
\PR\Big(\sup_{s\in [0,t]} |M(s)| > \varepsilon t^{r}  \Big) \leq 2 \exp (-C t^{2r-1}), 
\end{eq}for some constant $C>0$, where the bound on the absolute value of $M$ follows from the fact that $-M$ is also a martingale, so Lemma~\ref{c5:lem:concentration-inequality} applies to $-M$ as well. 
Now an application of the Borel-Cantelli lemma proves that $ t^{-r}| M(t) |\asto 0,$ for any $r\in (1/2,1)$.
This fact, together with the asymptotics of the drift term, completes the proof.
\end{proof}

\begin{proof}[Proof of Proposition~\ref{c5:prop:limit-properties}~\textrm{(P2)}]
 Let $t_k = (k-1)\delta/2$ and define the event
 \begin{equation}
  \rC_k^\delta:=\bigg\{\sup_{t\in (t_{k-1},t_k]}\iS(t_{k+1}) - \iS(t)>0\bigg\}.
 \end{equation}
Suppose that there is an excursion $(l,r)$ with $r-l > \delta$ and $l\in (t_{k-1},t_k]$ for some $k$. 
Since $r> t_{k+1}$, $\iS(t_{k+1})> \iS(l) \geq \inf_{t\in (t_{k-1},t_k]} \iS(t)$, and therefore $\rC_k^\delta$ must occur.
Therefore, if $\biS$ has infinitely many excursions of length at least $\delta$, then $\rC_k^\delta$ must occur infinitely often.
Using the Borel-Cantelli lemma, the proof follows if we can show that
\begin{equation}
\sum_{k=1}^\infty\PR(\rC_k^\delta) <\infty.
\end{equation}
As before, fix $K\geq 1$ such that $\lambda \sum_{i>K} \theta_i^2 <1$, and let $T:= \inf\{t: \xi_i\leq t,\ \forall i\in [K]\}$.
Notice that for each $K\geq 1$,
\begin{eq}
\sum_{k=1}^\infty \prob{T>t_{k-1}} &= \sum_{k=1}^\infty \prob{\exists i\in [K]: \xi_i>t_{k-1}}\leq \sum_{k=1}^\infty K \e^{-\theta_K(k-1)\delta/2}<\infty,
\end{eq} and therefore it is enough to show that 
\begin{equation}
\sum_{k=1}^\infty\PR(\rC_k^\delta\cap \{T\leq t_{k-1}\}) <\infty.
\end{equation}
Now,
\begin{eq}
&\sup_{t\in [t_{k-1},t_k]}\iS(t_{k+1})-\iS(t) \\
&\leq M(t_{k+1})+\sup_{t\in [t_{k-1},t_k]} - M(t) + \sup_{t\in [t_{k-1},t_k]} A(t_{k+1}) - A(t)\\
& \hspace{1cm}\leq M(t_{k+1}) - M(t_{k-1})+ \sup_{t\in [t_{k-1},t_k]} M(t_{k-1})- M(t) \\
&\hspace{2cm}+ \sup_{t\in [t_{k-1},t_k]}\bigg[\lambda\sum_{i=1}^\infty \theta_i^2 (\min\{\xi_i,t_{k+1}\}-\min\{\xi_i,t\}) -(t_{k+1}-t)\bigg]-\frac{\delta}{2}\\
& \hspace{1cm}\leq 2\sup_{t\in [t_{k-1},t_{k+1}]}  |M(t)-M(t_{k-1})| -\frac{\delta}{2}\\
& \hspace{2cm}+ \sup_{t\in [t_{k-1},t_k]}\bigg[\lambda\sum_{i=1}^\infty \theta_i^2 (\min\{\xi_i,t_{k+1}\}-\min\{\xi_i,t\}) -(t_{k+1}-t)\bigg].
\end{eq}
The third term is negative on the event $\{\tau\leq t_{k-1}\}$. 
Thus we only need to estimate the probability
\begin{equation}
\PR\bigg(\sup_{t\in [t_{k-1},t_{k+1}]}  |M(t)-M(t_{k-1})| >\frac{\delta}{4}\bigg).
\end{equation}
Note that $M(t)-M(t_{k-1})$ is a martingale with respect to the filtration $(\mathscr{F}_t)_{t\geq t_{k-1}}$ with quadratic variation given by 
\begin{equation}
\lambda^2\sum_{i=1}^\infty \theta_i^3\big(\min\{\xi_i,t\}-\min\{\xi_i,t_{k-1}\}\big).
\end{equation}
Further, $\E[\min\{\xi_i,t\}] = \theta_i^{-1}(1-\e^{-\theta_i t})$. 
Therefore, Doob's martingale inequality \cite[Theorem~1.9.1.3]{LS89} implies
\begin{eq}
&\sum_{k=1}^\infty\PR\bigg(\sup_{t\in [t_{k-1},t_{k+1}]}  |M(t)-M(t_{k-1})| >\frac{\delta}{4}\bigg)\\
&\leq \sum_{k=1}^\infty\frac{16}{\delta^2}\sum_{i=1}^\infty \theta_i^2 (\e^{-\theta_i t_{k-1}}-\e^{-\theta_i t_{k+1}}).
\end{eq}
By interchanging the sums, the last term is finite and the proof now follows.
\end{proof}
\begin{proof}[Proof of Proposition~\ref{c5:prop:limit-properties}~\textrm{(P3)}]
Define the process 
\begin{equation}
L(t) = \lambda\sum_{i=1}^\infty \theta_i \mathcal{N}_i(t) -2t,
\end{equation} 
where $(\mathcal{N}_i(t))_{t\geq 0}$ is a rate $\theta_i$ Poisson process, independently over $i$. 
We assume that $\biS$ and $\bld{L}$ are coupled by taking $\cI_i(s) = 
\mathbf{1}\{\mathcal{N}_i(s)\geq 1\}$, so that $\iS(t)\leq L(t)$. 
Using \cite[Chapter VII.1, Theorem 1]{Ber96} and the fact that $\sum_i\theta_i = \infty$, 
 \begin{equation}\label{c5:defn:perfect-levy-1}
  \inf \{t>0: L(t)<0\}=0, \quad \text{almost surely.}
 \end{equation}
 Moreover, for any stopping time $\mathcal{T}>0$, $(S_\infty^\lambda(\mathcal{T}+t)-S_\infty^\lambda(\mathcal{T}))_{t\geq 0}$, conditioned on the sigma-field $\sigma (S_\infty^\lambda(s):s\leq \mathcal{T})$, is distributed as a process  defined in \eqref{c5:defn::limiting::process} for some random $\boldsymbol{\theta}$. 
 Now we can take $\mathcal{T}$ to be an excursion endpoint and the proof follows.
\end{proof}

\begin{proof}[Proof of Proposition~\ref{c5:prop:limit-properties}~\textrm{(P4)}]
We leverage the proof techniques in \cite[Proposition 14 (b)]{AL98}. 
Define the process 
\begin{equation}
Q_m(t) = \lambda \sum_{i=m+1}^\infty \theta_i \1_{A_i} M_i(t) -2t,
\end{equation}
where $(A_i)_{i\geq 1}$ is a sequence of independent events with $\PR(A_i) = (1-2\theta_it_0)^+$, and $(M_i)_{i\geq 1}$ are independent Poisson processes with rates $\theta_i\e^{-\theta_it_0}$. 
Now, $\bld{Q}_m$ is a L\'evy process, and thus we can apply \eqref{c5:defn:perfect-levy-1} together with \cite[Chapter VI.1, Proposition 3]{Ber96} (see also the remark below the statement of the cited proposition)
to conclude that, for any $t_0>0$ and $m\geq 1$,
\begin{equation}\label{c5:fact1}
\PR\Big(Q_m(t_0) = \inf_{t\in [ 0, t_0]}Q_m(t)\Big)=0.
\end{equation}
Now, if one can couple $(Q_{2m}(t))_{t\in [ 0, t_0]}$ and $(S_{\infty}(t))_{t\in [ 0, t_0]}$ in such a way that
\begin{equation}\label{c5:fact2}
 \lim_{m\to\infty} \prob{Q_{2m}(t_0)-Q_{2m}(t)\leq S_{\infty}(t_0)-S_{\infty}(t),\ \forall t\in [ 0, t_0]} = 0, 
\end{equation}then \eqref{c5:fact1} and \eqref{c5:fact2} together complete the proof. 
To see \eqref{c5:fact2}, write $(\xi_{i,j},j\in J_i)$ for the set of points of $\bld{M}_i$ in  $[0,t_0]$ if $A_i$ occurs, but to be the empty set if $A_i$ does not occur.
Call a coupling \emph{successful} if $(\xi_{i,j}:i>2m, j\in J_i)$ and $(\xi_i)_{i\geq 1}$ are coupled in such a way that $\xi_{i,j} = \xi_{h(i,j)}$ for some random variable $h(i,j)\leq i$, and the values $(h(i,j):i>2m,j\in J_i)$ are distinct.
Aldous and Limic~\cite{AL98} showed the existence of a coupling such that the probability of the coupling is successful tends to 0 as $m\to\infty$. 
Although their proof is under a different setting with $\bld{\theta}\in \ell^3_{\shortarrow}\setminus \ell^2_{\shortarrow}$, the proof of the coupling holds under the assumption that $\bld{\theta}\in \ell^2_{\shortarrow}\setminus \ell^1_{\shortarrow}$. %\cmt{double check}. 
Moreover, \eqref{c5:fact2} holds under a successful coupling and thus the proof is complete. 
%
%\todo[inline]{After (3.20), the rest of the argument is same as \cite{AL98}. Should we just cite them and remove the red part?}
\end{proof}

\subsection{Sandwiching the percolated configuration model}\label{c5:sec:sandwich-CM}
A key step in all our proofs is to \emph{approximate} $\rCM_n(\bld{d},p_n)$ by a configuration model, which is given in Proposition~\ref{c5:prop:coupling-lemma} below. 
This idea has also appeared in the context of the finite third moment \cite{DHLS15} and the infinite third moment case \cite{DHLS16}.
We emphasize that Proposition~\ref{c5:prop:coupling-lemma} holds for percolation on the configuration model without any specific assumption on the degree distribution, as long as $\ell_n p_n \gg \log(n)$.
We start by describing the approximating configuration model below:
\begin{algo}\label{c5:algo-alt-cons-perc}
\begin{itemize}\normalfont
 \item[(S0)] Keep each half-edge with probability $p_n$. If the total number of retained half-edges is odd, attach a \emph{dummy} half-edge to vertex 1.
 \item[(S1)] Perform a uniform perfect matching among the retained half-edges, i.e., within the retained half-edges, pair unpaired half-edges sequentially with a uniformly chosen unpaired half-edge until all half-edges are paired. 
 The paired half-edges create edges in the graph, and we call the resulting graph $\mathcal{G}_n(p_n)$.
\end{itemize}
\end{algo}
\noindent The following proposition formally states that $\cG_n(p_n)$ approximates the percolated graph $\rCM_n(\bld{d},p_n)$:
\begin{proposition} \label{c5:prop:coupling-lemma}
Let $p_n$ be such that $\ell_np_n \gg \log(n)$. 
Then there exists a sequence $(\varepsilon_n)_{n\geq 1}$ with $\varepsilon_n\to 0 $, and a coupling such that, with high probability,
\begin{equation}
\mathcal{G}_n(p_n(1-\varepsilon_n))\subset\mathrm{CM}_n(\bld{d},p_n)\subset\mathcal{G}_n(p_n(1+\varepsilon_n)).
\end{equation}
\end{proposition}
%\todo[inline]{The proof does not require that $p_n = o(1)$. It does not even require $\ell_n = \Theta (n)$!}
\begin{proof} 
The proof relies on an exact construction of $\rCM_n(\bld{d},p_n)$ by Fountoulakis~\cite{F07} which goes as follows:
\begin{algo}\label{c5:algo-fountoulakis}\normalfont
\begin{itemize}
\item[(S0)] Perform a binomial trial $X\sim \mathrm{Bin}(\ell_n/2,p_n)$ and choose $2X$ half-edges uniformly at random from the set of all half-edges.
\item[(S1)] Perform a perfect matching of these $2X$ chosen half-edges. The resulting graph is distributed as $\rCM_n(\bld{d},p_n)$.
\end{itemize}
\end{algo}
Notice the similarity between Algorithm~\ref{c5:algo-alt-cons-perc}~(S1) and Algorithm~\ref{c5:algo-fountoulakis}~(S1). 
In Algorithm~\ref{c5:algo-alt-cons-perc}~(S0), given the number of retained  half-edges, the choice of the half-edges can be performed sequentially uniformly at random without replacement. 
Thus, given the number of half-edges in the two algorithms, we can couple the choice of the half-edges, and their pairing (the restriction of a uniform matching to a subset remains).
Let $\mathcal{H}_1$, $\mathcal{H}_2^-$ and $\mathcal{H}_2^+$ respectively denote the number of half-edges in $\mathrm{CM}_n(\bld{d},p_n)$, $\mathcal{G}_n(p_n(1-\varepsilon_n))$ and $\mathcal{G}_n(p_n(1+\varepsilon_n))$.
From the above discussion, the proof is complete if we can show that as $n\to\infty$,
\begin{equation}\label{c5:eq:coup-reduc}
 \PR\big(\mathcal{H}_2^-\leq \mathcal{H}_1\leq \mathcal{H}_2^+\big) \to  1.
\end{equation} 
Notice that $\mathcal{H}_1=2X$, where $X\sim\mathrm{Bin}(\ell_n/2,p_n)$, and $\mathcal{H}_2^+\sim\mathrm{Bin} (\ell_n,p_n(1+\varepsilon_n))$.
Using standard concentration inequalities \cite[Corollary 2.3]{JLR00}, it follows that
\begin{subequations}
\begin{equation}
\mathcal{H}_1 = \ell_np_n +\oP(\sqrt{\ell_np_n\log(n)}), 
\end{equation}and
\begin{equation}
\mathcal{H}_2^+ = \ell_np_n + \ell_np_n \varepsilon_n +\oP(\sqrt{\ell_np_n\log(n)}).
\end{equation}
\end{subequations}
Now, if we choose $\varepsilon_n$ such that $\varepsilon_n\gg (\log(n)/(\ell_np_n))^{1/2}$ and $\varepsilon_n\to 0$, then, with high probability, $\mathcal{H}_1\leq \mathcal{H}_2^+$.
Similarly we can conclude that $\mathcal{H}_2^-\leq \mathcal{H}_1$ with high probability.
The proof is now complete. 
\end{proof}
%\begin{remark}\normalfont
%\cmt{Coupling for the whole evolution breaks down.}
%\end{remark}

We conclude this subsection by stating some properties of the degree sequence of the graph $\cG_n(p_n)$ that will be crucial in the analysis below. 
Let $\Mtilde{\bld{d}}=(\tilde{d}_1,\dots,\tilde{d}_n)$ be the degree sequence induced by Algorithm~\ref{c5:algo-alt-cons-perc}~(S1).
Then the following result holds for $\Mtilde{\bld{d}}$:
\begin{lemma}\label{c5:lem:perc-degrees}
For each fixed $i\geq 1$, $\tilde{d}_i = d_ip_n(1+\oP(1))$ and  $\tilde{\ell}_n = \ell_np_n (1+\oP(1))$. Moreover, for $p_n\ll p_c$, $\sum_{i\in [n]}\tilde{d}_i^2= \tilde{\ell}_n(1+\oP(1))$, whereas for $p_n= p_c(\lambda)$ the following holds:
For any $\varepsilon >0$,
\begin{equation}\label{c5:eq:perc-tail-sum-square}
\lim_{K\to\infty}\limsup_{n\to\infty} \PR\bigg(\sum_{i>K}\tilde{d}_i^2>\varepsilon \tilde{\ell}_n\bigg) = 0.
\end{equation}
\end{lemma}  
\begin{proof}
Note that $\Mtilde{d}_i \sim \mathrm{Bin}(d_i,p_n)$, independently for $i\in [n]$. 
For each fixed $i\geq 1$, $d_i p_n = \Theta(n^\rho)$, which tends to infinity. 
Thus the first fact follows using \cite[Theorem 2.1]{JLR00}. 
Since, $\tell_n\sim \mathrm{Bin}(\ell_n,p_n)$, the second fact also follows using the same bound.
To prove \eqref{c5:eq:perc-tail-sum-square}, we first assume that $p_n = p_c(\lambda)$ given by \eqref{c5:eq:crit-window-CM}. 
Then, for any $\varepsilon>0$, the probability in \eqref{c5:eq:perc-tail-sum-square} is at most
\begin{eq}
&\PR\bigg(\sum_{i>K}\tilde{d}_i^2>\varepsilon \tilde{\ell}_n, \frac{\ell_np_n}{2}\leq \tilde{\ell}_n\leq 2\ell_np_n \bigg) +o(1)\\
&\hspace{1cm}\leq \PR\bigg(\sum_{i>K}\tilde{d}_i^2> \frac{\varepsilon\ell_np_n}{2}\bigg)\leq \frac{4p_n\sum_{i>K}d_i^2}{\ell_n},
\end{eq}where the last step follows from Markov's inequality. 
The proof now follows using Assumption~\ref{c5:assumption1} and $p_n = O(n^{2\alpha-1})$. 
The case for $p_n\ll p_c$ follows similarly.
\end{proof}

\subsection{Analysis in the critical window}\label{c5:sec:CM-critical-window}
\subsubsection{Convergence of the exploration process}\label{c5:sec:expl-process}
  Let $\Mtilde{\bld{d}}=(\tilde{d}_1,\dots,\tilde{d}_n)$ be the degree sequence induced by Algorithm~\ref{c5:algo-alt-cons-perc}~(S1) with $p_n=p_c(\lambda)$, and consider $\mathcal{G}_n(p_c(\lambda))$. 
  Note that $\mathcal{G}_n(p_c(\lambda))$ has the same distribution as $\mathrm{CM}_{n}(\Mtilde{\boldsymbol{d}})$. 
  We start by describing how the connected components in the graph can be explored while generating the random graph simultaneously:
\begin{algo}[Exploring the graph]\label{c5:algo-expl}\normalfont  The algorithm carries along vertices that can be alive, active, exploring and killed and half-edges that can be alive, active or killed. 
We sequentially explore the graph as follows:
\begin{itemize}
\item[(S0)] At stage $i=0$, all the vertices and the half-edges are \emph{alive} but none of them are \emph{active}. Also, there are no \emph{exploring} vertices. 
\item[(S1)]  At each stage $i$, if there is no active half-edge at stage $i$, choose a vertex $v$ proportional to its degree among the alive (not yet killed) vertices and declare all its half-edges to be \emph{active} and declare $v$ to be \emph{exploring}. If there is an active vertex but no exploring vertex, then declare the \emph{smallest} vertex to be exploring.
\item[(S2)] At each stage $i$, take an active half-edge $e$ of an exploring vertex $v$ and pair it uniformly to another alive half-edge $f$. Kill $e,f$. If $f$ is incident to a vertex $v'$ that has not been discovered before, then declare all the half-edges incident to $v'$ active, except $f$ (if any). 
If $\mathrm{degree}(v')=1$ (i.e. the only half-edge incident to $v'$ is $f$) then kill $v'$. Otherwise, declare $v'$ to be active and larger than all other vertices that are alive. After killing $e$, if $v$ does not have another active half-edge, then kill $v$ also.

\item[(S3)] Repeat from (S1) at stage $i+1$ if not all half-edges are already killed.
\end{itemize}
\end{algo}
Algorithm~\ref{c5:algo-expl} gives a \emph{breadth-first} exploration of the connected components of $\mathrm{CM}_n(\Mtilde{\boldsymbol{d}})$. Define the exploration process by
   \begin{equation}\label{c5:defn:exploration:process}
    S_n(0)=0,\quad
     S_n(l)=S_n(l-1)+\tilde{d}_{(l)}J_l-2,
    \end{equation} where $J_l$ is the indicator that a new vertex is discovered at time $l$ and $\tilde{d}_{(l)}$ is the degree of the new vertex chosen at time $l$ when $J_l=1$.  Suppose $\mathscr{C}_{k}$ is the $k^{th}$ connected component explored by the above exploration process and define
$\tau_{k}=\inf \big\{ i:S_{n}(i)=-2k \big\}.$
Then  $\mathscr{C}_{k}$ is discovered between the times $\tau_{k-1}+1$ and $\tau_k$, and  $\tau_{k}-\tau_{k-1}-1$ gives the total number of edges in $\mathscr{C}_k$.
 Call a vertex \emph{discovered} if it is either active or killed. Let $\mathscr{V}_l$ denote the set of vertices discovered up to time $l$ and $\mathcal{I}_i^n(l):=\ind{i\in\mathscr{V}_l}$. Note that 
   \begin{equation}\label{c5:eq:expl-process-crit}
    S_n(l)= \sum_{i\in [n]} \tilde{d}_i \mathcal{I}_i^n(l)-2l.
   \end{equation} 
   %Recall the notation in \eqref{c5:eqn:notation-power*}. 
   Define the re-scaled version $\bar{\mathbf{S}}_n$ of $\mathbf{S}_n$ by $\bar{S}_n(t)= n^{-\rho}S_n(\lfloor tn^{\rho} \rfloor)$. Then,
   \begin{equation} \label{c5:eqn::scaled_process}
    \bar{S}_n(t)= n^{-\rho} \sum_{i\in [n]}\tilde{d}_i \mathcal{I}_i^n(tn^{\rho})-2t + o(1).
   \end{equation}Note the similarity between the expressions in \eqref{c5:defn::limiting::process} and \eqref{c5:eqn::scaled_process}. We will prove the following:
   \begin{theorem} \label{c5:thm::convegence::exploration_process} Consider the process $\bar{\mathbf{S}}_n:= (\bar{S}_n(t))_{t\geq 0}$ defined in \eqref{c5:eqn::scaled_process} and recall the definition of  $\bar{\mathbf{S}}_\infty $ from \eqref{c5:defn::limiting::process}. Then, under \textrm{ Assumption~\ref{c5:assumption1}}, as $n\to\infty,$
 \begin{equation}
  \bar{\mathbf{S}}_n \dto \bar{\mathbf{S}}_\infty
 \end{equation} with respect to the Skorohod $J_1$ topology.
\end{theorem}
\begin{proof}
We denote $\tell_n(u) = \sum_{i\in [n]}\td_i - 2un^{\rho}$. 
Since $\tell_n = \Theta_{\sss \PR} (n^{2\rho})$, $\tell_n(u) = \tell_n(1+\oP(1))$ uniformly over $u\leq t$.
 Let $\tilde{\PR}(\cdot)$ (respectively $\tilde{\E}[\cdot]$) denote the conditional probability (respectively expectation) conditional on $(\td_i)_{i\in [n]}$.
 Note that, for any $t\geq 0$, uniformly over $l\leq tn^{\rho}$
\begin{equation}\label{c5:prob-ind-lb}
   \probc{\mathcal{I}_i^n(l)=0} \geq \bigg(1-\frac{\tilde{d}_i}{\tilde{\ell}_n (t)} \bigg)^l, \quad \text{and} \quad \exptc{\mathcal{I}_i^n(l)} \leq \frac{l\tilde{d}_i}{\tilde{\ell}_n(t)}.
  \end{equation}
Now, note that 
\begin{eq}
n^{-\rho}\tilde{\E}\bigg[\sum_{i>K}\tilde{d}_i\mathcal{I}_i^n(tn^{\rho})\bigg]\leq t \frac{\sum_{i>K}\tilde{d}_i^2}{\tilde{\ell}_n(t)}.
\end{eq}
Using \eqref{c5:eq:perc-tail-sum-square}, it is now enough to deduce the scaling limit for 
\begin{equation}
\bar{S}_n^K(t) = n^{-\rho}\sum_{i=1}^K \tilde{d}_i \mathcal{I}_i^n(tn^{\rho})- 2t
\end{equation} and then taking $K\to\infty$. 
The next lemma gives the scaling limit for $\bar{\mathbf{S}}_n^K$ and completes the proof of Theorem~\ref{c5:thm::convegence::exploration_process}.
\end{proof}
 \begin{lemma} \label{c5:lem::convergence_indicators}
   Fix any $K\geq 1$, and $\mathcal{I}_i(s):=\ind{\xi_i\leq s }$ where $\xi_i\sim \mathrm{Exp}(\theta_i/\mu)$ independently for $i\in [K]$. 
   Under \textrm{ Assumption~\ref{c5:assumption1}}, as $n\to\infty$,
   \begin{equation}
    \left( \mathcal{I}_i^n(tn^\rho) \right)_{i\in[K],t\geq 0} \dto \left( \mathcal{I}_i(t) \right)_{i\in[K],t\geq 0}.
   \end{equation}
 \end{lemma}
\begin{proof} By noting that $(\mathcal{I}_i^n(tn^\rho))_{t\geq 0}$ are indicator processes, it is enough to show that 
\begin{equation}
 \probc{\mathcal{I}_i^n(t_in^{\rho})=0,\ \forall i\in [K]} \to \probc{\mathcal{I}_i(t_i)=0,\ \forall i\in [K]} = \exp \Big( -\mu^{-1}\sum_{i=1}^{K} \theta_it_i\Big).
\end{equation} for any $t_1,\dots,t_K\in \mathbb{R}$. Now, 
\begin{equation} \label{c5:lem::eqn::expression1}
 \probc{\mathcal{I}_i^n(m_i)=0,\ \forall i\in [K]}=\prod_{l=1}^{\infty}\Big(1-\sum_{i\leq K:l\leq m_i}\frac{\td_i}{\tell_n-\Theta(l)} \Big).
\end{equation}Taking logarithms on both sides of \eqref{c5:lem::eqn::expression1} and using the fact that $l\leq \max m_i=\Theta(n^{\rho})$ we get 
\begin{equation}\label{c5:lem::eqn::ex1}
 \begin{split}
  \probc{\mathcal{I}_i^n(m_i)=0\, \forall i\in [K]}&= \exp\Big( - \sum_{l=1}^{\infty}\sum_{i\leq K:l\leq m_i} \frac{\td_i}{\tell_n}+o(1) \Big)\\
  &= \exp\Big( -\sum_{i\in [K]} \frac{\td_im_i}{\tell_n} +o(1) \Big).
 \end{split}
\end{equation} Putting $m_i=t_in^{\rho}$, Assumption~\ref{c5:assumption1}~\eqref{c5:assumption1-1},~\eqref{c5:assumption1-2} gives
\begin{equation} \label{c5:lem::eqn::expression2}
 \frac{m_i\td_i}{\tell_n}= \frac{\theta_it_i}{\mu} (1+\oP(1)).
\end{equation}
Hence \eqref{c5:lem::eqn::expression2}, and \eqref{c5:lem::eqn::ex1} complete the proof of Lemma \ref{c5:lem::convergence_indicators}.
\end{proof}

\subsubsection{Large components are explored early}
Now, we prove two key results that allow us to deduce the convergence of the component sizes. 
Firstly, we show that the rescaled vector of component sizes is tight in~$\ell^2_{\shortarrow}$ (see Proposition~\ref{c5:prop:l2-tight}). 
This result is then used to show that the largest components of~$\mathcal{G}_n(p_c(\lambda))$ are explored before time $\Theta(n^{\rho})$.
Let $\sCi$ denote the $i$-th largest component for~$\mathcal{G}_n(p_c(\lambda))$.
\begin{proposition}\label{c5:prop:l2-tight}
Under \textrm{ Assumption~\ref{c5:assumption1}}, for any $\varepsilon>0$, 
\begin{equation}
\lim_{K\to\infty}\limsup_{n\to\infty}\PR\bigg(\sum_{i>K}|\sCi|^2>\varepsilon n^{2\rho}\bigg) = 0.
\end{equation}
\end{proposition}
\noindent Denote $D_i = \sum_{k\in\sCi}\tilde{d}_k$. 
It is enough to show that
\begin{equation}\label{c5:eq:tight-D-i}
\lim_{K\to\infty}\limsup_{n\to\infty}\PR\bigg(\sum_{i>K}D_i^2>\varepsilon n^{2\rho}\bigg) = 0.
\end{equation}
In the above, we have used our convention that the component size of an isolated vertex is zero.
For a vertex $v$, let $\mathscr{C}(v)$ denote the component containing vertex $v$ in $\mathcal{G}_n(p_n)$ and $D(v) = \sum_{k\in \mathscr{C}(v)} \tilde{d}_k$.
Let $\mathcal{G}^{\sss K}$ be the random graph obtained by removing all edges attached to vertices $1,\dots,K$ and let $\boldsymbol{d}'$ be the obtained degree sequence. 
Further, let $\sC^{\sss K}(v)$ and $\sCi^{\sss K}$ denote the connected component containing $v$ and the $i$-th largest component respectively,  and $D^{\sss K}(v) = \sum_{k\in \sC^{\sss K}(v)}\td_k$, $D^{\sss K}_i = \sum_{k\in \sCi^{\sss K}}\td_k$.
%\todo[inline]{$D_i$ should be the $i$-th largest $D$-value.}
Suppose $V_n^*$ is a vertex of $\mathcal{G}^{\sss K}$ chosen according to the size-biased distribution with sizes being $(\tilde{d}_k/\tell_n)_{k\geq 1}$, independently of the graph.  
Denote the criticality parameter of $\mathcal{G}^{\sss K}$ by $\nu_n^{\sss K}$.
\begin{lemma} \label{c5:lem::tail_sum_squares} Suppose that  \textrm{ Assumption~\ref{c5:assumption1}} holds.
Then, for fixed $K\geq 1$ (sufficiently large), with high probability
\begin{equation}\label{c5:expt-cvn-K-removed}
\exptc{D^{\sss K}(V_n^*)}\leq \frac{\sum_{i>K}\tilde{d}_i^2}{\tilde{\ell}_n-2\sum_{i=1}^K\tilde{d}_i}\times\frac{1}{1-\nu_n^{\sss K}}.
\end{equation}
\end{lemma}
\begin{proof}
We make use of path counting techniques \cite{J09b,BDHS17}.
Note that the criticality parameter of the graph $\mathcal{G}_n(p_c(\lambda))$ is $\tilde{\nu}_n = \lambda(1+\oP(1))$, by Lemma~\ref{c5:lem:perc-degrees}.
Now, conditional on the set of removed half-edges and $\Mtilde{\bld{d}}$, $\mathcal{G}^{\sss K}$ is still a configuration model with some degree sequence $\boldsymbol{d}'$ with $d_i'\leq \td_i$ for all $i\in [n]\setminus [K]$ and $d_i'=0$ for $i\in [K]$.
Further, the criticality parameter of $\mathcal{G}^{\sss K}$ satisfies 
 \begin{equation}\label{c5:eqn:nu-K}
  \begin{split}
   \nu^{\sss K}_n&= \frac{\sum_{i\in [n]} d_i'(d'_i-1)}{\sum_{i\in [n]} d_i'}\leq \frac{\sum_{i>K}\tilde{d}_i(\tilde{d}_i-1)}{\tilde{\ell}_n-2\sum_{i=1}^K\tilde{d}_i} \\
   &= \tilde{\nu}_n \frac{\sum_{i>K}\tilde{d}_i(\tilde{d}_i-1)}{\sum_{i\in [n]}\tilde{d}_i(\tilde{d}_i-1)}(1+\oP(1)) = \lambda\frac{\sum_{i>K}\tilde{d}_i(\tilde{d}_i-1)}{\sum_{i\in [n]}\tilde{d}_i(\tilde{d}_i-1)}(1+\oP(1))\\
   &= \lambda\frac{\sum_{i>K}d_i(d_i-1)}{\sum_{i\in [n]}d_i(d_i-1)}(1+\oP(1)).
  \end{split}
 \end{equation}
 Now, by Assumption~\ref{c5:assumption1} and Lemma~\ref{c5:lem:perc-degrees}, it is possible to choose $K_0$ large such that, 
 \begin{equation} \label{c5:eq:fact-K-large-nu}
 \text{for all }K\geq K_0,\text{ with high probability }\nu_n^{\sss K}<1,
 \end{equation} and then we can apply similar arguments as in \cite[Section 7]{BDHS17} for calculating weight-based susceptibility functions with weights being the degrees. 
The term $\sum_{i>K}\td_i^2/\sum_{i>K}\td_i$ arises in \eqref{c5:expt-cvn-K-removed} due to the fact that 
\begin{equation}
\tilde{\E}[\text{degree of }V_n^*] = \frac{\sum_{i\in [n]}d_i'^2}{\sum_{i\in [n]}d_i'}\leq 
\frac{\sum_{i>K}\td_i^2}{\tilde{\ell}_n-2\sum_{i=1}^K\tilde{d}_i}.
\end{equation}
Thus the proof of Lemma~\ref{c5:lem::tail_sum_squares} follows.
\end{proof}

\begin{proof}[Proof of Proposition~\ref{c5:prop:l2-tight}]
Denote the sum of squares of the $D$-values excluding the components containing vertices $1,2,\dots, K$ by $\mathscr{S}_K$.
 Note that 
 \begin{equation}
 \sum_{i>K}D_i^2\leq \mathscr{S}_K\leq \sum_{i\geq 1} (D_i^{\sss K})^2. 
 \end{equation}  
Now, using Lemma~\ref{c5:lem::tail_sum_squares} and \eqref{c5:eq:fact-K-large-nu}, it follows that
\begin{eq}
&\tilde{\PR}\bigg(\sum_{i\geq 1} (D_i^{\sss K})^2 >\varepsilon n^{2\rho}\bigg)\leq \frac{1}{\varepsilon n^{2\rho}}\tilde{\E}\bigg[\sum_{i\geq 1} (D_i^{\sss K})^2 \bigg] \\
&\hspace{1cm}= \frac{1}{\varepsilon n^{2\rho}}\exptc{D^{\sss K}(V_n^*)}\leq \OP(1) n^{-2\rho}\sum_{i>K}\tilde{d}_i^2.
\end{eq}
Thus, \eqref{c5:eq:tight-D-i} follows from Lemma~\ref{c5:lem:perc-degrees}.
\end{proof}
The next proposition shows that the large components are explored before time $\Theta(n^{\rho})$ by Algorithm~\ref{c5:algo-expl}. 
The proof follows using similar arguments as \cite[Lemma 13]{DHLS16} and we skip it here.
Let $\sC_{\max}^{\sss \geq T}$ denote the size of the largest component that is started exploring by Algorithm~\ref{c5:algo-expl} after time $Tn^{\rho}$. 
\begin{proposition}\label{c5:prop:large-comp-expl-early}
 Under \textrm{ Assumption~\ref{c5:assumption1}}, for any $\varepsilon>0$, 
\begin{equation}
\lim_{T\to\infty}\limsup_{n\to\infty}\PR\big(|\sC_{\max}^{\sss \geq T}|>\varepsilon n^{\rho}\big) = 0.
\end{equation}
\end{proposition}

\subsubsection{Convergence of the component sizes and the surplus edges}
We start by first showing the asymptotics of the component sizes:
\begin{lemma}\label{c5:lem:component-sizes}
Under \textrm{ Assumption~\ref{c5:assumption1}}, as $n\to\infty$, 
\begin{equation}\label{c5:eq:l2-conv}
(n^{-\rho}|\mathscr{C}_{\sss (i)}|)_{i\geq 1} \dto (\gamma_i(\lambda))_{i\geq 1},
\end{equation}with respect to the $\ell^2_{\shortarrow}$ topology.
\end{lemma}
\begin{proof}
Recall \cite[Lemma 14]{DHLS16} and notice that the process $\biS$ satisfies all the \emph{nice} properties stated therein by Proposition~\ref{c5:prop:limit-properties} (see \cite[Lemma 15]{DHLS16} for a similar application in a different context). 
This observation, together with Proposition~\ref{c5:prop:large-comp-expl-early} yields the finite-dimensional convergence in \eqref{c5:eq:l2-conv}.
Finally the proof is completed using Proposition~\ref{c5:prop:l2-tight}.
\end{proof}
\begin{lemma} \label{c5:lem:surp:poisson-conv} Let $N_n^\lambda(k)$ be the number of surplus edges discovered up to time $k$ and $\bar{N}^\lambda_n(u) = N_n^\lambda(\lfloor un^\rho \rfloor)$. Then, as $n\to\infty$,
 \begin{equation}
 (\bar{\mathbf{S}}_n,\bar{\mathbf{N}}_n^\lambda)\dto (\mathbf{S}_{\infty}^\lambda,\mathbf{N}^\lambda),
 \end{equation} where $\mathbf{N}^\lambda$ is defined in \eqref{c5:defn::counting-process}.
 \end{lemma}
 \begin{proof}
  We write $
N_n^{\lambda}(l)=\sum_{i=2}^l \xi_i$,
where $\xi_i=\ind{\mathscr{V}_i=\mathscr{V}_{i-1}}$. Let $A_i$ denote the number of active half-edges after stage $i$ while implementing Algorithm~\ref{c5:algo-expl}. Note that 
\begin{equation}
 \prob{\xi_i=1\vert \mathscr{F}_{i-1}}=\frac{A_{i-1}-1}{\tilde{\ell}_n-2i-1}= \frac{A_{i-1}}{\tilde{\ell}_n}(1+O(i/n))+O(n^{-1}),
\end{equation} uniformly for $i\leq Tn^{\rho}$ for any $T>0$. 
Therefore, the instantaneous rate of change of the re-scaled process $\bar{\mathbf{N}}^{\lambda}$ at time $t$, conditional on the past, is 
\begin{equation}\label{c5:eqn:intensity}
 n^{\rho}\frac{A_{\floor{tn^\rho}}}{n^{2\rho}\frac{\mu^2}{\sum_{i\geq 1}\theta_i^2}}\left( 1+o(1)\right) +o(1)= \frac{\sum_{i\geq 1}\theta_i^2}{\mu^2}\refl{\bar{S}_n(t)}\left( 1+o(1)\right) +o(1).
\end{equation}
Theorem~\ref{c5:thm::convegence::exploration_process} yields that $\mathrm{refl}(\bar{\mathbf{S}}_n)\dto \mathrm{refl}(\biS)$.  
Then, by the Skorohod representation theorem, we can assume that $\mathrm{refl}(\bar{\mathbf{S}}_n)\to \mathrm{refl}(\biS)$ almost surely on some probability space. Observe that $(\int_{0}^t \refl{\iS(u)}du)_{t\geq 0}$ has continuous sample paths. Therefore, the conditions of \cite[Corollary 1, Page 388]{LS89} are satisfied and the proof is complete.
 \end{proof} 

\begin{lemma}\label{c5:lem-surp-u-0} For any $\varepsilon >0$,
 \begin{equation}
 \lim_{\delta\to 0}\limsup_{n\to\infty} \PR\bigg(\sum_{i: |\mathscr{C}_{(i)}|\leq \delta n^{\rho} }|\mathscr{C}_{\sss(i)}|\times \surp{\mathscr{C}_{\sss(i)}}> \varepsilon b_n\bigg)=0.
 \end{equation}
\end{lemma}
Again, the proof of Lemma~\ref{c5:lem-surp-u-0} can be carried out in an identical manner as 
\cite[Proposition 19]{DHLS16} and therefore is skipped here. 
The only crucial thing to observe here is that $V_n$ has to be replaced by $V_n^*$ for an analogue of \cite[Lemma 20]{DHLS16} and one has to consider the cases $\lambda<1$, and $\lambda>1$ separately instead of $\lambda<0$ and $\lambda>0$ in \cite{DHLS16}.

\begin{proof}[Proof of Theorem~\ref{c5:thm:main}] Let $\mathbf{Z}_n'(\lambda)$ denote the vector $(n^{-\rho}|\sCi|,\SP(\sCi))_{i\geq 1}$, ordered as an element in~$\Unot$. Then, Lemmas~\ref{c5:lem:component-sizes},~\ref{c5:lem:surp:poisson-conv},~and~\ref{c5:lem-surp-u-0} together imply that
\begin{equation}
\mathbf{Z}_n'(\lambda) \dto \mathbf{Z}(\lambda).
\end{equation}
Finally the proof is complete using Proposition~\ref{c5:prop:coupling-lemma}.
\end{proof}
\subsubsection{Analysis of the diameter}
In this section, we deduce the asymptotics of the diameter of the  components of $\cG_n(p_c(\lambda))$, and hence complete the proof of Theorem~\ref{c5:thm:diameter-large-comp}.
\begin{proof}[Proof of Theorem~\ref{c5:thm:diameter-large-comp}] Firstly, we can leverage Janson's path counting technique again (see \cite[Lemma 5.1]{J09b}) to show that, given a vertex $v\in [n],$ the expected number of paths of length $l$ from $v$ (conditional on $\Mtilde{\bld{d}}$) is at most $\tilde{\nu}_n^l$.
Recall the notations $D(v) = \sum_{k\in \mathscr{C}(v)}\tilde{d}_k$, $D_{\sss (i)} = \sum_{k\in \sCi}\tilde{d}_k$, and $V_n^*=$ a vertex chosen according to the size-biased distribution $(\tilde{d}_i/\tilde{\ell}_n)_{i\geq 1}$.
For any fixed $\delta>0$, define
\begin{equation}
X_n(K) = \sum_{v\in [n]} \ind{D(v)> \delta n^{\rho}, |\mathscr{C}(v)|> \delta n^{\rho}, \diam(\mathscr{C}(v))>K}.
\end{equation}
Observe that, for $\lambda<1$, 
\begin{eq}
&\probc{\exists i\geq 1: D_{\sss (i)}> \delta n^{\rho}, |\sCi|>\delta n^{\rho}, \diam(\sCi)>K} \\
&\leq \sum_{i\geq 1} \probc{D_{\sss (i)}> \delta n^{\rho}, |\sCi|>\delta n^{\rho}, \diam(\sCi)>K}\\
&=\tilde{\ell}_n \sum_{v\in [n]} \frac{\tilde{d}_v}{\tilde{\ell}_n}\tilde{\E}\bigg[\frac{1}{D(v)}\ind{D(v)> \delta n^{\rho}, |\mathscr{C}(v)|> \delta n^{\rho}, \diam(\mathscr{C}(v))>K}\bigg]\\
&\leq  \frac{\tilde{\ell}_n}{\delta n^{\rho}} \probc{D(V_n^*)> \delta n^{\rho}, |\mathscr{C}(V_n^*)|> \delta n^{\rho}, \diam(\mathscr{C}(V_n^*))>K}\\
&= \frac{\tilde{\ell}_n}{\delta n^{\rho}} \probc{X_n(K)\geq \delta n^{\rho}} \leq \frac{\tilde{\ell}_n}{\delta^2 n^{2\rho}}\sum_{l>K}\tilde{\nu}_n^l =  \frac{C\tilde{\ell}_n}{\delta^2 n^{2\rho}}\frac{\lambda^K}{1-\lambda} \pto 0,
\end{eq}if we first take $\lim_{n\to\infty}$ and then $\lim_{K\to\infty}$.
The proof can be generalized naturally to the case $\lambda >1$.
In that case, we delete $R$ high-degree vertices to obtain a new graph $\mathcal{G}^{\sss >R}$, for which the above result holds (see the proof of Lemma~\ref{c5:lem::tail_sum_squares}). 
However, after putting back the $R$ deleted vertices, the diameter of $\mathcal{G}^{\sss >R}$ can change by a factor of at most $R$.
 This implies the tightness of the diameter for the largest connected components of $\mathcal{G}_n(p_c(\lambda))$ for $\lambda >1$.
 Finally the proof of Theorem~\ref{c5:thm:diameter-large-comp} follows by invoking Proposition~\ref{c5:prop:coupling-lemma} again.
\end{proof}

\subsection{Near-critical behavior} \label{c5:sec:near-critical-proofs}
Finally we consider the near-critical behavior for $\mathrm{CM}_n(\bld{d},p)$ in this section. 
The analysis for the barely subcritical and supercritical regimes are given separately below.
\subsubsection{Barely-subcritical regime}
In this section, we analyze the barely-subcritical regime ($p_n\ll p_c$) for percolation and complete the proof of Theorem~\ref{c5:thm:barely-subcrit}. 
Recall the exploration process from Algorithm~\ref{c5:algo-expl} on the graph $\cG_n(p_n)$, starting with vertex $j$.
Let $\sC(j,p_n)$ denote the connected component in~$\cG_n(p_n)$ containing vertex $j$.
 We will use the same notation for the quantities defined in Section~\ref{c5:sec:expl-process}, but the reader should keep in mind that we now deal with different $p_n$ values. 
 We avoid augmenting $p_n$ in the notation for the sake of simplicity.
 Let $\mathcal{N}_1$ denote the 1-neighborhood of $j$ in $\cG_n(p_n)$ and define $\mathcal{D}(j)$ to be the number of half-edges at vertices incident to $ \mathcal{N}_1$ which are not paired with $j$.
Define the exploration process $\mathbf{S}_n^j$ similar to \eqref{c5:eq:expl-process-crit} but starting with $\mathcal{D}(j)$ half-edges as given below: 
   \begin{equation}
   \begin{split}
    S_n^j(0) = \mathcal{D}(j), \quad S_n^j(l)= \mathcal{D}(j)+\sum_{i\notin \mathcal{N}_1} \tilde{d}_i \mathcal{I}_i^n(l)-2l.
    \end{split}
   \end{equation}         
Thus the exploration process starts from $\tilde{d}_j$ now. 
Consider the re-scaled process $\bar{\mathbf{S}}^j_n$ defined as $\bar{S}^j_n(t)= (n^{\alpha}p_n)^{-1}S_n^j(\floor{tn^{\alpha}p_n })$. Then, 
   \begin{equation} \label{c5:eqn::scaled_process_j}
    \bar{S}_n^j(t)= (n^{\alpha}p_n)^{-1}\mathcal{D}(j)+(n^{\alpha}p_n)^{-1} \sum_{i\notin \mathcal{N}_1}\tilde{d}_i \mathcal{I}_i^n(tn^{\alpha}p_n) - 2 t +\oP(1).
   \end{equation}
%   The following three lemmas determine the asymptotics of $\mathscr{W}_{\sss (i)}$ and $s_3^\star$:
Recall that $\tilde{\E}$ is the conditional expectation conditionally on $(\tilde{d}_i)_{i\in [n]}$.
 Now, since the vertices are explored in a size-biased manner with the sizes being $(\tilde{d}_i/\tilde{\ell}_n)_{i\in [n]}$, for any $t\geq 0$,
\begin{eq} \label{c5:expt-barely-sub-diff}
\tilde{\E}\bigg[\frac{1}{n^{\alpha}p_n}\sum_{i\notin \mathcal{N}_1} \tilde{d_i}\cI_i^n\big(\lfloor tn^{\alpha}p_n\rfloor\big)\bigg] \leq \frac{tn^{\alpha}p_n}{n^{\alpha}p_n \tilde{\ell}_n}\sum_{i\in [n]}\tilde{d}_i^2 = \oP(1),
\end{eq}where the last step follows from Lemma~\ref{c5:lem:perc-degrees}. 
Moreover, $\mathcal{N}_1 =\Mtilde{d}_j - 2L_j$, where $L_j$ denotes  the number of self-loops associated to vertex $j$ in $\cG_n(p_n)$. 
Since $\tilde{\E}[L_j] \leq \Mtilde{d}_j^2/\tilde{\ell}_n = \oP(1)$, it follows that $\tilde{\E}[\mathcal{N}_1] = \Mtilde{d}_j(1+\oP(1))$.
Further,  $\tilde{\E}[\mathcal{D}(j)] \leq \tilde{\nu}_n \tilde{\E}[|\mathcal{N}_1|] = \oP(n^{\alpha}p_n)$. 
Consequently, $\bar{\mathbf{S}}_n^j\xrightarrow{\sss \PR} 0$, and therefore, the number of edges in $\sC(j,p_n)\setminus \mathcal{N}_1 = \oP(1)$.
Since the number of vertices in $\sC(j,p_n)\setminus \mathcal{N}_1$ is at most the number of edges +1, this yields that $|\sC(j,p_n)\setminus \mathcal{N}_1| = \oP(n^{\alpha}p_n)$. 
Also, one can use Lemma~\ref{c5:lem:perc-degrees} to show that $\mathrm{Var}(\mathcal{N}_1 \mid \Mtilde{\bld{d}}) = \oP(n^{2\alpha}p_n^2)$, which yields $\mathcal{N}_1 = \Mtilde{d}_j(1+\oP(1))$.
Thus,
\begin{equation}
(n^{\alpha}p_n)^{-1}|\sC(j,p_n)| \pto \theta_j.
\end{equation}
To conclude Theorem~\ref{c5:thm:barely-subcrit}, it remains to prove that for each fixed $j\geq 1$, 
\begin{equation}\label{c5:eq:whp-Cj-barely-sub}
|\mathscr{C}(j,p_n)| = |\sC_{\sss (j)}(p_n)|, \text{ with high probability.} 
\end{equation}
%\todo[inline]{Is it $\theta_i/2$ or $\theta_i$? What's wrong?}
For that, we show that the rescaled vector of ordered component sizes converges in $\ell^2_{\shortarrow}$. 
It is enough to show that for any $\varepsilon >0$ 
\begin{equation}
\lim_{K\to\infty}\limsup_{n\to\infty}\PR\bigg(\sum_{i>K}|\sCi(p_n)|^2>\varepsilon n^{2\alpha}p_n^2\bigg) = 0.
\end{equation}
This can be concluded using identical arguments as Proposition~\ref{c5:prop:l2-tight} above.
Now, \eqref{c5:eq:whp-Cj-barely-sub} follows using \cite[Lemma 7.6]{BDHS17}. 
The proof of Theorem~\ref{c5:thm:barely-subcrit} is now complete.

\subsubsection{Barely-supercritical regime}
In this section, we provide the proof of Theorem~\ref{c5:thm:barely-supercrit} by leveraging techniques from~\cite{HJL16,JL09}. 
Using Proposition~\ref{c5:prop:coupling-lemma}, it is enough to prove Theorem~\ref{c5:thm:barely-supercrit} for the graph $\cG_n(p_n)$ generated by Algorithm~\ref{c5:algo-alt-cons-perc}. 
Let $\Mtilde{\bld{d}}$ denote the degree sequence obtained after performing Algorithm~\ref{c5:algo-alt-cons-perc}~(S1). 
Thus, $\cG_n(p_n)$ is distributed as $\rCM_n(\Mtilde{\bld{d}})$.
We will verify Assumptions (B1)--(B8) from \cite{HJL16} on the graph $\cG_n(p_n)$,
which allows us to conclude Theorem~\ref{c5:thm:barely-supercrit} from \cite[Theorem 5.3]{HJL16}.
Consider the following exploration process on $\Gp$ from \cite[Section 5.1]{HJL16}:

\begin{algo}\label{c5:algo:expl-janson}
\normalfont
\begin{itemize}
\item[(S0)] Associate an independent $\mathrm{Exponential}(1)$ clock $\xi_e$ to each half-edge $e$. 
Any half-edge can be in one of the \emph{states} among sleeping, active, and dead. 
Initially at time $0$, all the half-edges are sleeping. 
Whenever the set of active half-edges is empty, select a sleeping half-edge $e$ uniformly at random among all sleeping half-edges and declare it to be active. 
If $e$ is incident to $v$, declare all the other half-edges of $v$ to be active as well.
The process stops when there is no sleeping half-edge left; the
remaining sleeping vertices are all isolated and we have explored all other components.

\item[(S1)]  Pick an active half-edge (which one does not matter) and kill it, i.e., change its status to dead.
\item[(S2)] Wait until the next half-edge dies (spontaneously). This half-edge is paired to the one killed in the previous step (S1) to form an edge of the graph. 
If the vertex it belongs to is sleeping, then we declare this vertex awake and all of its other half-edges active.
Repeat from (S1) if there is any active half-edge; otherwise from (S0).
\end{itemize}
\end{algo}
Denote the number of living half-edges upto time $t$ by $L_n(t)$.
Let $V_{n,k}$ and $\tV_{n,k}(t)$ respectively denote the number of sleeping vertices of degree $k$ such that all the $k$ associated exponential clocks ring after time $t$.
Define
\begin{equation}
\tV_n(t) = \sum_{k=0}^\infty \tV_{n,k}(t), \quad \tS_n(t) = \sum_{k=0}^\infty k\tV_{n,k}(t),\quad \tA_n(t) = L_n(t)-\tS_n(t).
\end{equation}
We show that Assumptions (B1)--(B8) from \cite{HJL16} holds with 
\begin{eq}\label{c5:eq:choice-parameters-barely-supercrit}
\zeta = (\kappa/2)^{\frac{1}{3-\tau}}, \quad \gamma_n = \alpha_n &= p_n^{\frac{\tau-2}{3-\tau}}, \quad \psi(t) =  2t - \kappa t^{\tau-2}, \\
\hat{g}(t) = \mu \kappa t^{\tau-2},& \quad \hat{h}(t) = \mu t.
\end{eq}
The $\zeta$ in our notation corresponds to $\tau$ in \cite[Theorem 5.3]{HJL16}, but we have avoided that since~$\tau$ denotes the power-law exponent in our case.
\begin{remark} \normalfont
Notice that the rate at which an edge is created in the system is $\ell_np_n$, instead of rate $n$, as in the setting of \cite{HJL16}. 
Thus, in time $\alpha_n$, the total number of explored half-edges in $\ell_n p_n \alpha_n $ which is of the order $n p_n^{1/(3-\tau)}$.
For this reason, the largest component size is of the order $n p_n^{1/(3-\tau)}$ in Theorem~\ref{c5:thm:barely-supercrit}.
%\cmt{comment on why $n$ should be replaced by $\ell_np_n$.}
\end{remark}
Conditions (B1)--(B4) and (B8) in \cite{HJL16} are straightforward, and are left to the reader.
To verify Conditions (B5)--(B7), we first obtain below the asymptotics of the mean-curve and then show that the processes $\Mtilde{\bld{S}}_n$, $\tilde{\bld{V}}_n$, $\Mtilde{\bld{A}}_n$ remain uniformly close to their expected curves. 
These are summarized in the following two propositions:
\begin{proposition}\label{c5:prop:expt-supcrit} For any fixed $t\geq 0$, as $n\to\infty$, 
\begin{gather}
\sup_{u\leq t}\bigg|\frac{1}{n\alpha_np_n}\big(\E[\tS_n(0)-\E[\tS_n(\alpha_n t)]\big) - \hat{g}(t)\bigg|\to 0, \label{c5:eq:expt-S}\\
\sup_{u\leq t}\bigg|\frac{1}{n\alpha_np_n}\big(\E[\tV_n(0)]-\E[\tV_n(\alpha_n t)]\big) - \hat{h}(t)\bigg|\to 0, \label{c5:eq:expt-V}\\
\sup_{u\leq t}\bigg|\frac{1}{n\alpha_n p_n}\E[\tA_n(\alpha_n t)] - \psi(t)\bigg| \to 0.\label{c5:eq:expt-A}
\end{gather}
\end{proposition}
\begin{proposition}\label{c5:prop:supcrit-expt-concen}
For any fixed $t\geq 0$, as $n\to\infty$, all the terms \linebreak
$\sup_{u\leq t}|\tS_n(\alpha_n t)-\E[\tS_n(\alpha_n t)] |$, $\sup_{u\leq t}|\tV_n(\alpha_n t)-\E[\tV_n(\alpha_n t)]|$, and \linebreak $\sup_{u\leq t}|\tA_n(\alpha_n t) - \E[\tA_n(\alpha_n t)]| $ are $ o(n\alpha_np_n)$.
\end{proposition}
To prove Propositions~\ref{c5:prop:expt-supcrit}~and~\ref{c5:prop:supcrit-expt-concen}, we make crucial use of the following lemma:
\begin{lemma} \label{c5:lem:laplace-transform-estimate} For any $t>0$, as $n\to\infty$, 
%$\sum_{i\in [n]}\tilde{d}_i = p_n\ell_n(1+o(1))$, a.s.~$\PR_p$. \todo{Define $\PR_p$}
%Moreover, 
\begin{gather}
\E\bigg[\sum_{i\in [n]} \tilde{d_i} \e^{-t\alpha_n\tilde{d}_i}\bigg] = (1+o(1)) p_n \sum_{i\in [n]} d_i \e^{-t\alpha_np_n d_i}, \\ \E\bigg[\sum_{i\in [n]} \e^{-t\alpha_n\tilde{d}_i}\bigg] = (1+o(1)) \sum_{i\in [n]}  \e^{-t\alpha_np_n d_i}. 
\end{gather}
\end{lemma}
\begin{proof}
%The first fact follows from standard concentration inequalities for binomial distributions \cite[Theorem 2.1]{JLR00}. 
%For the second one, 
Note that if $X\sim \mathrm{Bin}(m,p)$, then 
\begin{eq}
\E\big[X\e^{-sX}\big] = mp\e^{-s}(1-p+p\e^{-s})^{m-1}.
\end{eq}
Putting $m=d_i$, $p=p_n$, and $s = t\alpha_n$, it follows that
\begin{eq}
\E\big[\tilde{d}_i \e^{-t\alpha_n\tilde{d}_i}\big] &= d_ip_n \e^{-t\alpha_n} \Big(1-p_n\big(1-\e^{-t\alpha_n}\big)\Big)^{d_i -1} \\
&= (1+o(1)) d_ip_n (1-p_nt\alpha_n)^{d_i}\\
& = (1+o(1)) d_ip_n \e^{-t\alpha_np_n d_i}.
\end{eq}
Thus the proof follows. 
\end{proof}

\begin{proof}[Proof of Proposition~\ref{c5:prop:expt-supcrit}]
Note that by Lemma~\ref{c5:lem:laplace-transform-estimate}
\begin{eq}
\E\big[\tilde{S}_n( t)\big] &= \E\bigg[\sum_{i\in [n]} \Mtilde{d}_i\e^{-t\alpha_n \Mtilde{d}_i} \bigg]= (1+o(1))\ell_n p_n \E\big[\e^{-t\alpha_n p_n D_n^*}\big], \\ 
\E\big[\tilde{V}_n( t)\big] &= \E\bigg[\sum_{i\in [n]} \e^{-t\alpha_n \Mtilde{d}_i} \bigg] =  (1+o(1))\ell_n  \E\big[\e^{-t\alpha_n p_n D_n}\big],
\end{eq}where $D_n^*$ has a size-biased distribution with the sizes being $(d_i/\ell_n)_{i\in [n]}$, and $D_n$ is the degree of a vertex chosen uniformly at random from $[n]$. 
By the convergence of $\E[D_n]$ in Assumption~\ref{c5:assumption1}, 
\begin{eq}
\E\big[1 - \e^{-t\alpha_n p_n D_n}\big] = (1+o(1)) t\alpha_np_n \E[D_n],
\end{eq}
by noting that $1-\e^{-x} =x(1+o(1))$ as $x\to 0$.
Further, by using \eqref{c5:eq:asymp-laplace},
\begin{eq}\label{c5:laplace-exp-alpha-estimate}
\E\big[1- \e^{-t\alpha_n p_n D_n^*}\big] = \kappa \alpha_n (t^{\tau-2}+o(1)).
\end{eq}
Thus, \eqref{c5:eq:expt-S} and \eqref{c5:eq:expt-V} follow. 
Moreover, $L_n(t)$ is a pure death process, where $L(0) = \sum_{i\in [n]} \Mtilde{d}_i$,  and the jumps occur at rate $L_n(t)$, and at each jump $L_n(t)$ decreases by $2$. 
Therefore, $\E[L_n(t)] = \E[L(0)]\e^{-2t}$ and consequently, 
\begin{eq}
\E[\tA(\alpha_n t)] &= \ell_np_n\big(\e^{-2\alpha_nt} - \E\big[\e^{-tp_n\alpha_nD_n^*}\big]\big)+o(\ell_n\alpha_np_n)\\
& = \ell_n p_n \alpha_n(2t - \kappa t^{\tau-2})++o(\ell_n\alpha_np_n).
\end{eq}
Thus the proof follows.
\end{proof}

\begin{proof}[Proof of Proposition~\ref{c5:prop:supcrit-expt-concen}]
Let us consider $\tS_n$ only and the other inequalities follow using identical arguments. 
We will use precise bounds in \cite[Lemma 5.13]{HJL16}. 
In fact, using the fact that $1-\e^{-x} \geq (1\wedge x)/3$, it follows that 
\begin{eq}\label{c5:bound-expt-sup-second-moment}
\E\Big[\sup_{u\leq t\alpha_n} |\tS_n(u) - \E[\tS_n(u)]| \Big] \leq C \E \bigg[ \sum_{i\in [n]} \Mtilde{d}_i^2 \big(1-\e^{-t\alpha_n \Mtilde{d}_i} \big)\bigg]. 
\end{eq}
%\todo[inline]{I have not used any martingale inequalities here. I have used the exact bound from~\cite{HJL16}.}
\noindent Now, using standard concentration inequalities for tails of binomial distributions \cite[Theorem 2.1]{JLR00}, for any $i\in [n]$,
\begin{eq}
\PR(\Mtilde{d}_i > 2d_1 p_n ) \leq C \e^{-C d_1 p_n} = C\e^{-C n^{\rho} \lambda_n},
\end{eq}and therefore $\max_{i\in [n]}\Mtilde{d}_i \leq 2d_1 p_n$, a.s.~$\PR_{p}$.
Now, using \eqref{c5:eq:choice-parameters-barely-supercrit}, the bounds\eqref{c5:laplace-exp-alpha-estimate} and \eqref{c5:bound-expt-sup-second-moment} yield
\begin{eq}
&\frac{1}{(\ell_np_n\alpha_n)^2}\E\Big[\sup_{u\leq t\alpha_n} |\tS_n(u) - \E[\tS_n(u)]| \Big] \\
&\hspace{.5cm}\leq \frac{C 2d_1p_n \ell_np_n\alpha_n}{(\ell_np_n\alpha_n)^2} = \frac{C}{\lambda_n^{(\tau-2)/(3-\tau)}} \to 0,
\end{eq}since $\lambda_n\to 0$, as $n\to\infty$. Thus the proof follows.
\end{proof}
\begin{proof}[Proof of Theorem~\ref{c5:thm:barely-supercrit}] 
The proof follows by applying \cite[Theorem 5.3]{HJL16}.
Proposition~\ref{c5:prop:expt-supcrit} verifies conditions (B5)--(B7) in \cite{HJL16}, and the rest of the conditions are straightforward to verify.
\end{proof}

\section{Generalized random graphs: Proofs}
\label{c5:sec:proof-GRG}
In this section, we prove our results related to critical percolation on $\GRG$. 
In Section~\ref{c5:sec:key-ingredients}, we set up the fundamental technical ingredients required for the proof. 
In Section~\ref{c5:sec:negligible-contributions}, we use the  first-moment method to identify the primary contributions on the total weight of the components. 
The connectivity structure between the hubs is described in detail in Section~\ref{c5:sec:hub-structure}, which allows us to deduce the component sizes of components containing hubs in Section~\ref{c5:sec:component-with-hubs}.
Finally, Theorem~\ref{c5:thm:main-crit} is proved in Section~\ref{c5:sec:l2-tightness}.

\subsection{Key Ingredients} \label{c5:sec:key-ingredients}
In this section, we provide the two key ingredients that will play a pivotal role in the proof of Theorem~\ref{c5:thm:main-crit}. 
The first one, stated in Lemma~\ref{c5:lem:order-estimates}, provides the estimates for different moments of the~$\bw$.
Next, in Lemma~\ref{c5:lem:2-nbd-prob}, we estimate the probability of two vertices being connected via another vertex. 
The later result forms the conceptual bedrock of our analysis (see Section~\ref{c5:sec:hub-structure}). 
For example, one can set up path-counting techniques in Corollary~\ref{c5:lem:geometric-decay} using Lemma~\ref{c5:lem:2-nbd-prob}.
We use a generic notation $C$ to denote a positive constant. 
Also we write $a_n \sim b_n$ to mean that $a_n/b_n\to 1$, as $n\to\infty$.
\begin{lemma}[Moment estimates]\label{c5:lem:order-estimates}
Under \textrm{ Assumption~\ref{c5:assumption-GRG}}, there exists a constant $\cf>0$ (depending only on $F$) such that, for all $i\in [n]$, 
\begin{equation}\label{c5:eq:w-i-order}
w_i = \bigg(\frac{c_{\sss F} n}{i}\bigg)^{\alpha}.
\end{equation}
For any $a>0, b\leq \alpha$, 
\begin{eq}\label{c5:eq:square-elln}
\#\{r: w_r >a\ell_n/w_j\} &\sim a^{-(\tau-1)} n\bigg(\frac{w_j}{\ell_n}\bigg)^{\tau-1} , \\
\quad
\sum_{k: w_{k}\leq a\ell_n/w_{j}}w_k^2 &\sim  C a^{3-\tau}n \bigg(\frac{\ell_n}{w_j}\bigg)^{3-\tau}, \\
\sum_{w_k>a\ell_n/w_j}w_k &\sim Ca^{-(\tau-2)}n^{3-\tau} w_j^{\tau-2}, \\ \sum_{k:w_k\leq an^{b}} w_k^{\tau-2} &\sim C n^\rho (an^{b})^\alpha,
\end{eq}
where $C>0$ is considered as a generic notation for a constant.
\end{lemma}
\begin{proof}
\eqref{c5:eq:w-i-order} follows directly from Assumption~\ref{c5:assumption-GRG}. 
Next, note that 
\begin{eq}\label{c5:eq:larger-than-elln-order}
\bigg(\frac{n}{r}\bigg)^{\alpha} \leq a \frac{\ell_n}{w_j} \iff r\geq a^{-(\tau-1)} n\bigg(\frac{w_j}{\ell_n}\bigg)^{\tau-1}.
\end{eq}
Thus, \eqref{c5:eq:square-elln} follows by noting that
\begin{eq}
\sum_{r: w_{r}\leq a\ell_n/w_{j}}w_r^2 = n^{2\alpha} \sum_{r\geq a^{-(\tau-1)} n(w_j/\ell_n)^{\tau-1}} r^{-2\alpha} \sim C a^{3-\tau}n \bigg(\frac{\ell_n}{w_j}\bigg)^{3-\tau},
\end{eq}and 
\begin{eq}
\sum_{w_k>a\ell_n/w_j}w_k &\sim Cn^\alpha \sum_{k\leq Ca^{-(\tau-1)}(w_j/\ell_n)^{\tau-1}} k^{-\alpha} \\
&\sim Cn^{\alpha} \bigg(a^{-(\tau-1)}n\Big(\frac{w_j}{\ell_n}\Big)^{\tau-1}\bigg)^{\rho}\\
& \sim Ca^{-(\tau-2)}n^{3-\tau} w_j^{\tau-2}.
\end{eq}
The last expression is also similar.
\end{proof}
%\todo{$\rE(G):=$ set of edges.}
\begin{lemma}[Two-hop connection probabilities]\label{c5:lem:2-nbd-prob}
There exists an absolute constant $C>0$ such that for all $n\geq 1$, 
\begin{equation}\label{c5:2hop-path-prob-pref-attachment-n}
p_{ij}(2):= p_c^2 \sum_{v\in [n]} \frac{w_iw_v^2w_j}{(\ell_n+w_iw_v)(\ell_n+w_jw_v)} \leq \frac{C\lambda^2}{(i\wedge j)^{1-\alpha} (i\vee j)^{\alpha}}, \quad \forall \ i,j\in [n].
\end{equation}
\end{lemma}
\begin{proof}
Without loss of generality, we assume that $w_i\geq w_j$, i.e., $i\leq j$. 
Let us split the sum in three parts with $\{v: w_iw_v\leq \ell_n\}$, $\{v:w_jw_v\leq \ell_n<w_iw_v\}$, and $\{v:w_jw_v> \ell_n\}$, and denote them by $(I)$, $(II)$ and $(III)$ respectively. 
Note that using Lemma~\ref{c5:lem:order-estimates},
\begin{eq}
(I)&\leq  \frac{p_c^2w_iw_j}{\ell_n} \sum_{v: w_v\leq \ell_n/w_{i}} w_v^2 = C\lambda^2 n^{-(3-\tau)} \frac{w_iw_j}{\ell_n^2} n\frac{\ell_n^{3-\tau}}{w_i^{3-\tau}} \\
&\leq C\lambda^2\frac{w_i^{\tau-2}w_j}{\ell_n}\leq \frac{C\lambda^2}{i^{1-\alpha}j^{\alpha}},
\end{eq}
\begin{eq}
(II)&\leq \frac{p_c^2w_j}{\ell_n} \sum_{v: \frac{\ell_n}{w_i}<w_v\leq \frac{\ell_n}{w_j}}w_v \leq C\lambda^2 n^{-(3-\tau)} \frac{w_j}{\ell_n} \ell_n^{3-\tau} w_i^{\tau-2}\\
&\leq C\lambda^2\frac{w_i^{\tau-2}w_j}{\ell_n}\leq \frac{C\lambda^2}{i^{1-\alpha}j^{\alpha}},
\end{eq}
\begin{eq}
(III) & \leq p_c^2 \#\{w_v>\ell_n/w_j\} = p_c^2 n\frac{w_j^{\tau-1}}{\ell_n^{\tau-1}}\leq C\lambda^2\frac{w_i^{\tau-2}w_j}{\ell_n}\leq \frac{C\lambda^2}{i^{1-\alpha}j^{\alpha}}.
\end{eq}
Thus the proof follows.
\end{proof}

\begin{corollary}[Path counting estimate]\label{c5:lem:geometric-decay}
Let $f_k(i,j)$ denote the probability that there exists a path of length $k$ in $\GRG$ from $i$ to $j$. 
For all $1-\alpha<b<\alpha$, there exists a constant $c_0>0$ such that for all $n\geq 1$,
\begin{equation}\label{c5:eq:geometric-decay}
f_{2k}(i,j) \leq \frac{(c_0\lambda^2)^{k}}{(i\wedge j)^b(i\vee j)^{1-b}}.
\end{equation}
\end{corollary}
\begin{proof}
Note that 
\begin{equation}
f_{2k}(i,j) \leq \sum_{\substack{(v_r)_{r=0}^{2k}: v_r'\text{s distinct}\\ v_0 = i, v_{2k}=j}} p_c^{2k}\prod_{r=0}^{2k} \frac{w_{v_r}w_{v_{r+1}}}{\ell_n+w_{v_r}w_{v_{r+1}}}, 
\end{equation}and the proof follows directly from \cite[Lemma 2.4]{DHH10} using Lemma~\ref{c5:lem:2-nbd-prob}.
\end{proof}

\subsection{Negligible contributions on the total weight} \label{c5:sec:negligible-contributions}
Suppose that $\sC(i)$ denotes the component in $\mathrm{GRG}_n(\bw,p_c(\lambda))$ containing vertex $i$ and $W_k(i) = \sum_{v\in \sC(i), \dst(v,i) = k}w_v$, where $\dst (\cdot, \cdot)$ is used as a notation for graph distance (the number of edges on the shortest path) throughout.
In this section, we identify the terms that have negligible contributions to $W(i)$.  
The next proposition states that the total contribution on the total weight coming from vertices in the odd neighborhood is small. 
Moreover, the total weight outside a large but finite neighborhood of $i$ is also negligible.
\begin{proposition} \label{c5:prop:tail-particular-component}Suppose that $\lambda<c_0^{-1/2}$, where $c_0$ is defined in \textrm{ Corollary~\ref{c5:lem:geometric-decay}}.
For any fixed $i\geq 1$ and $\varepsilon > 0$,
\begin{eq}\label{c5:eq:contr-negligible-both}
\lim_{K\to\infty}\limsup_{n\to\infty}\PR\bigg(\sum_{k>K}W_{2k}(i) > \varepsilon n^{\alpha}\bigg) &= 0, \\  \lim_{n\to\infty} \PR\bigg(\sum_{k=0}^\infty W_{2k+1}(i) > \varepsilon n^{\alpha} \bigg) &= 0.
\end{eq}
\end{proposition}
\begin{proof}
Recall the definition of $f_k(i,j)$ from Corollary~\ref{c5:lem:geometric-decay} and note that $c_0\lambda^2<1$. 
Therefore, using Corollary~\ref{c5:lem:geometric-decay},
\begin{eq}\label{c5:eq:path-count-geom}
&\E[W_{2k}(i)] \leq \sum_{j\in [n]}w_{j} f_{2k}(i,j) \\
&\leq n^{\alpha}(c_0\lambda^2)^k \bigg[\sum_{j<i} \frac{1}{j^{\alpha}} \frac{1}{j^{b}i^{1-b}} + \sum_{j>i} \frac{1}{j^{\alpha}} \frac{1}{i^{b}j^{1-b}}\bigg] \\
&\leq C n^\alpha (c_0\lambda^2)^k \Big[\frac{1}{i^{1-b}} + \frac{1}{i^{\alpha}}\Big] \leq C (c_0\lambda^2)^k\frac{n^{\alpha}}{i^{1-b}}\\
&\leq C (c_0\lambda^2)^k w_i i^{b- (1-\alpha)}.
\end{eq}
Now, an application of Markov's inequality proves the first part of \eqref{c5:eq:contr-negligible-both}. 
We stress that \eqref{c5:eq:path-count-geom} holds uniformly over $i\in [n]$, which we will use in the next part of the proof. 

The proof of the second part in \eqref{c5:eq:contr-negligible-both} is complete if we can show that, for any $i\in [n]$,
\begin{eq}\label{c5:eq:odd-sum-weight-negligible}
n^{-\alpha}\E\bigg[\sum_{k=0}^\infty W_{2k+1}(i)\bigg] \leq \frac{Cn^{-\epsilon}}{i^\rho},
\end{eq}for absolute constants $C>0$, $\epsilon>0$.
Firstly, note that for any vertex $i\in [n]$,  
\begin{equation}\label{c5:eq:one-hop-weight}
p_c \sum_{v\in [n]} \frac{w_{i}w_{v}^2}{\ell_n+w_{i}w_{v}} \leq C \lambda w_i^{\tau-2} n^{3-\tau}.
\end{equation}
To see this, let us split the above sum in two parts with $\{v:w_v\leq \ell_n/w_i\}$ and $\{v:w_v > \ell_n/w_i\}$, and denote them by $(I)$ and $(II)$ respectively.
Then, by Lemma~\ref{c5:lem:order-estimates},
\begin{eq}
(I) &\leq \frac{p_c}{\ell_n} w_{i} n\frac{\ell_n^{3-\tau}}{w_i^{3-\tau}} \leq C\lambda w_i^{\tau-2} n^{(3-\tau)/2},\\
 (II)&\leq p_c\sum_{v:w_v>\ell_n/w_i} w_v \leq C\lambda w_i^{\tau-2} n^{(3-\tau)/2},
\end{eq}and \eqref{c5:eq:one-hop-weight} follows.
Now, we will use the precise bound in \eqref{c5:eq:path-count-geom}. 
Choose $\epsilon = \frac{1}{2}(\frac{3-\tau}{\tau-1}-\frac{3-\tau}{2})$ and $b$ such that $b- (1-\alpha) = \epsilon$.
Thus,
\begin{eq}
n^{-\alpha}\E[W_{2k+1}(i)] &\leq n^{-\alpha} \sum_{v\in [n]} \PR(i \text{ and } v \text{ create an edge}) \E[W_{2k}(v)] \\
&\leq  n^{-\alpha}n^{\epsilon}\sum_{v\in [n]} p_c\frac{w_iw_v}{\ell_n+w_iw_v} C(c_0\lambda^2)^k w_v \\
&= \frac{C(c_0\lambda^2)^k n^{\epsilon}}{n^{\alpha}} \bigg(p_c\sum_{v\in [n]}\frac{w_iw_v^2}{\ell_n+w_iw_v}\bigg) \\
&\leq  \frac{C(c_0\lambda^2)^k}{i^{\rho}} n^{\epsilon}n^{\frac{3-\tau}{2}-\frac{3-\tau}{\tau-1}} = \frac{C(c_0\lambda^2)^k n^{-\epsilon}}{i^{\rho}},
\end{eq}where the last-but-one step follows from \eqref{c5:eq:one-hop-weight}, and in the final step we have used the choice of $\epsilon>0$. 
The proof of \eqref{c5:eq:odd-sum-weight-negligible} now follows using the fact that $c_0\lambda^2<1$, which also concludes the proof of Proposition~\ref{c5:prop:tail-particular-component}.
%\todo[inline]{kept $\epsilon$ so that it is not confused with $\varepsilon$ is the statement. }
\end{proof}

We will be interested in obtaining the limit of $\sum_{k=1}^{\infty}W_{2k}(i)$.
Using Proposition~\ref{c5:prop:tail-particular-component}, it is enough to find the limit, as $n\to\infty$, of the quantity $\sum_{k=1}^{K}W_{2k}(i)$ for each fixed $K\geq 1$.
The next proposition states that for each fixed $k\geq 1$, the primary contribution to $W_{2k}(i)$ arises only due to the hubs. 
For $\delta>0$, define $V_{L}(\delta) := \{v:w_v>\delta n^{\alpha}\}$, and  $W_{k}(i,\delta):= \sum_{v\notin V_L(\delta), \dst(v,i)=k}w_v$. 
\begin{proposition}\label{c5:prop-non-hub-contribution-K-nbd}
For any fixed $i\geq 1$, $K\geq 1$, and $\varepsilon>0$, 
\begin{equation}
\lim_{\delta \to 0}\limsup_{n\to\infty}\PR\bigg(\sum_{k=1}^K W_{2k}(i,\delta)>\varepsilon n^{\alpha} \bigg) =0.
\end{equation}
\end{proposition}
\begin{proof}
Suppose that we choose $\delta>0$ to be so small that $i\in V_L(\delta)$. 
Therefore, $v\notin V_L$ implies that $v>i$.
Using Corollary~\ref{c5:lem:geometric-decay}, it follows that 
\begin{eq}
\E[W_{2k}(i,\delta)] &\leq \sum_{v\notin V_L(\delta)} \frac{n^\alpha}{v^\alpha} \frac{(c_0\lambda^2)^k}{i^bv^{1-b}} \\
&
\leq \frac{Cn^{\alpha}}{i^b} \sum_{v>\delta^{-(\tau-1)}} \frac{1}{v^{1+\alpha-b}} \leq \frac{Cn^{\alpha}\delta^{(\tau-1)(\alpha-b)}}{i^b}.
\end{eq}
Therefore, 
\begin{equation} \label{c5:eq:weight-nonhub-even}
n^{-\alpha}\E\bigg[\sum_{k=1}^K W_{2k}(i,\delta)\bigg] \leq CKi^{-b}\delta^{(\tau-1)(\alpha-b)}.
\end{equation}
Now, an application of Markov's inequality completes the proof.
\end{proof}

\subsection{Total weight of components containing hubs} \label{c5:sec:hub-structure}
To simplify writing we will always assume that $c_{\sss F} =1$ without loss of generality. 
Recall the definition of the graph $G_{\infty}(\lambda)$ from Section~\ref{c5:sec:GRG-results}.
In the rest of this section, we write $\theta_i = i^{-\alpha}$.
Consider the following objects defined on the graph $G_{\infty}$: $W_{k}^{\infty}(i) = \sum_{j\in C(i),\ \dst(i,j) = k} \theta_j$, where $C(i)$ is defined as the component in $G_{\infty}(\lambda)$ containing vertex $i$, and $W_{\leq K}^{\infty}(i) = \sum_{k=1}^KW_{k}^{\infty}(i)$, and $W^\infty(i) = \sum_{k=1}^\infty W_{k}^{\infty}(i)$.
The main result of this subsection is the following:
\begin{theorem}\label{c5:thm:total-weight-i}
Suppose that $\lambda<c_0^{-1/2}$, where $c_0$ is defined in \textrm{ Corollary~\ref{c5:lem:geometric-decay}}.
For each fixed $i\geq 1$, as $n\to\infty$, $n^{-\alpha}W(i) \dto W^\infty (i)$.
\end{theorem}
The key ingredient in the proof is the proposition below. 
We immediately give the proof of Theorem~\ref{c5:thm:total-weight-i} after stating the proposition, and devote the rest of this section to the proof of Proposition~\ref{c5:prop:convergence-K-nbd}:
\begin{proposition}\label{c5:prop:convergence-K-nbd}
For each fixed $i\geq 1$ and $K\geq 1$, as $n\to\infty$, $$n^{-\alpha}\sum_{k=1}^{K}W_{2k}(i) \dto W_{\leq K}^{\infty}(i).$$
\end{proposition}

\begin{proof}[Proof of Theorem~\ref{c5:thm:total-weight-i}]
Propositions~\ref{c5:prop:tail-particular-component},~\ref{c5:prop-non-hub-contribution-K-nbd},~and~\ref{c5:prop:convergence-K-nbd} together directly concludes the proof. 
\end{proof}

Throughout this subsection, we will use the notation $V_{L} := \{i:w_i>\delta n^{\alpha}\}$,  $V_{\sss SS} = \{i: w_i < \delta n^{\rho}\}$, $V_{\sss SI} = \{i: \delta n^{\rho}\leq w_i \leq \delta^{-1} n^{\rho}\}$ and $V_{\sss SL} = \{i: w_i > \delta^{-1} n^{\rho}\}$.
We have tacitly avoided augmenting $\delta > 0$ in the notation to simplify notation. 
Note that for $v\in V_{\sss SI}$ and $i\in V_L$, $w_iw_{v} = \Theta(\ell_n)$.
Consider the following multigraph $G_{n,\delta}$ on the vertex set $V_L$, where the number of edges $X_{ij}$ between $i$ and $j$ is number of distinct $v\in [n]$ such that both $(i,v)$ and $(v,j)$ are edges of $\rGRG(\bw,p_c)$.
Note that, for any $i\neq j$,
\begin{eq}\label{c5:eq:distn-Xij}
X_{ij} = \sum_{v\neq i,j} \ber\bigg(\frac{w_iw_jw_v^2p_c^2}{(\ell_n+w_iw_v)(\ell_n+w_jw_v)}\bigg),
\end{eq}with the different Bernoulli random variables in the sum \eqref{c5:eq:distn-Xij} being independent.
We can split the above sum in three parts with $v\in V_{SS}$, $v\in V_{SI}$, and $v\in V_{SL}$ and denote them by $(I)$, $(II)$, and $(III)$ respectively.
Now, using Lemma~\ref{c5:lem:order-estimates}, 
\begin{eq} \label{c5:hub-connection-term-1}
\E[(I)] &\leq \frac{w_iw_jp_c^2}{\ell_n^2} \sum_{v:w_v<\delta n^{\rho}}w_v^2 \\
&\leq C_0\theta_1^2 \delta^{3-\tau}n^{2\alpha - 2+1+(3-\tau)\rho}p_c^2 \leq C_1\delta^{3-\tau},
\end{eq}
\begin{eq} \label{c5:hub-connection-term-3}
\E[(III)] \leq p_c^2 \times \#\{v:w_v>\delta^{-1}n^{\rho}\} =C \delta^{\tau-1}.
\end{eq}
Using the above and Markov's inequality, it follows that
\begin{eq}\label{c5:eq:Xij-approximation}
X_{ij} &= \sum_{v: w_v\in [\delta n^{\rho},\delta^{-1}n^{\rho}]} \ber\bigg(\frac{w_iw_jw_v^2p_c^2}{(\ell_n+w_iw_v)(\ell_n+w_jw_v)}\bigg) + E(\delta,n) \\
&= X_{ij}(\delta) + E(\delta,n),
\end{eq} where for any $\varepsilon>0$ 
\begin{eq}\label{c5:E-delta-n}
\lim_{\delta \to 0}\limsup_{n\to\infty}\PR(E(\delta,n) >\varepsilon) = 0.
\end{eq}
Define 
\begin{eq}
\lambda_{ij}(\delta) = \sum_{v: w_v\in [\delta n^{\rho},\delta^{-1}n^{\rho}]} \frac{w_iw_jw_v^2p_c^2}{(\ell_n+w_iw_v)(\ell_n+w_jw_v)} .
\end{eq}
The proof of Proposition~\ref{c5:prop:convergence-K-nbd} is decomposed into three key lemmas below. 
After stating these lemmas, we first prove Proposition~\ref{c5:prop:convergence-K-nbd}, and subsequently prove the lemmas. 
\begin{lemma}\label{c5:lem:poisson-approximation}For any $\delta\in (0,1)$, and $i,j \geq 1$
\begin{equation}
\lim_{n\to\infty}\dTV\big(X_{ij}(\delta), \poi(\lambda_{ij}(\delta))\big) =0.
\end{equation}
\end{lemma}

\begin{lemma}\label{c5:lem:asymptotics-lambda-ijd}For any fixed $i,j \geq 1$
\begin{equation}
\lim_{\delta\to 0} \lim_{n\to\infty} \lambda_{ij}(\delta) = \lambda^2\int_{0}^{\infty} \frac{\theta_i\theta_j x^{-2\alpha}}{(\mu + \theta_i x^{-\alpha})(\mu + \theta_j x^{-\alpha})} \dif x.
\end{equation}
\end{lemma}

\begin{lemma} \label{c5:lem:symptotic-independence}For any $\delta\in (0,1)$, the collection of random variables $(X_{ij}(\delta))_{i,j\in V_L}$ is asymptotically independent. 
\end{lemma}

\begin{proof}[Proof of Proposition~\ref{c5:prop:convergence-K-nbd}]
The proof follows directly from Lemmas~\ref{c5:lem:poisson-approximation},~\ref{c5:lem:asymptotics-lambda-ijd}, and~\ref{c5:lem:symptotic-independence}.
\end{proof}

\begin{proof}[Proof of Lemma~\ref{c5:lem:poisson-approximation}]
Using standard inequalities from Stein's method \cite[Theorem 2.10]{RGCN1}, it follows that, as $n\to\infty$,
\begin{eq}
&\dTV\big(X_{ij}(\delta), \poi(\lambda_{ij}(\delta))\big) \leq \sum_{v: w_v\in [\delta n^{\rho},\delta^{-1}n^{\rho}]} \bigg(\frac{w_iw_jw_v^2p_c^2}{(\ell_n+w_iw_v)(\ell_n+w_jw_v)}\bigg)^2
\\
 &\leq Cn^{4\alpha - 4}p_c^4 \sum_{v: w_v\in [\delta n^{\rho},\delta^{-1}n^{\rho}]}w_v^4
 \leq \frac{C}{\delta^2}n^{2\alpha - 2}p_c^4\sum_{v: w_v\in [\delta n^{\rho},\delta^{-1}n^{\rho}]}w_v^2\\
 &= \frac{C}{\delta^2} n^{2\alpha-2}n^{-2(3-\tau)}n^{1+ (3-\tau) \rho} = \frac{Cn^{-(3-\tau)}}{\delta^2} \to 0,
\end{eq}and the proof follows.
\end{proof}

\begin{proof}[Proof of Lemma~\ref{c5:lem:asymptotics-lambda-ijd}]
Observe that
\begin{eq}
\lambda_{ij}(\delta) &= \sum_{v: w_v\in [\delta n^{\rho},\delta^{-1}n^{\rho}]} \frac{w_iw_jw_v^2p_c^2}{(\ell_n+w_iw_v)(\ell_n+w_jw_v)} \\
&=p_c^2 n^{4\alpha -2} \sum_{k=\delta^{\tau-1}n^{3-\tau}}^{\delta^{-(\tau-1)}n^{3-\tau}} \frac{\theta_i\theta_jk^{-2\alpha}}{(\mu + \theta_i n^{-\frac{3-\tau}{\tau-1}}k^{-\alpha})(\mu + \theta_j n^{-\frac{3-\tau}{\tau-1}}k^{-\alpha})} \\
&  = p_c^2 n^{4\alpha -2} \int_{\delta^{\tau-1}n^{3-\tau}}^{\delta^{-(\tau-1)}n^{3-\tau}} \frac{\theta_i\theta_jk^{-2\alpha}}{(\mu + \theta_i n^{-\frac{3-\tau}{\tau-1}}k^{-\alpha})(\mu + \theta_j n^{-\frac{3-\tau}{\tau-1}}k^{-\alpha})} \dif k\\
& = (1+o(1))\lambda^2\int_{\delta^{\tau-1}}^{\delta^{-(\tau-1)}} \frac{\theta_i\theta_j x^{-2\alpha}}{(\mu + \theta_i x^{-\alpha})(\mu + \theta_j x^{-\alpha})} \dif x,
\end{eq}and the proof follows.
\end{proof}

\begin{proof}[Proof of Lemma~\ref{c5:lem:symptotic-independence}]
Note that for pairs $(i,j)$, and $(k,l)$ with $\{i,j\}\cap \{k,l\} = \varnothing$, $X_{ij}$ and $X_{kl}$ are independent due to the independence of the occupancy of edges in $\rGRG(\bw,p_c)$.
The only dependence between $X_{ij}$ and $X_{ik}$ arises due to connections $(i,v)$, $(v,j)$ and $(v,k)$. 
Thus, the lemma is proved if we can show that the above does not arise with high probability. 
Note that
\begin{eq}
&\PR(i,j,k\in V_L: \exists v\in [n] \text{ such that }(i,v), (v,j),(v,k) \text{ are edges in }\rGRG(\bw,p_c)) \\
& \leq \sum_{i,j,k\in V_L}\sum_{v\in [n]}p_c^3\frac{w_iw_v}{\ell_n+w_iw_v}\frac{w_jw_v}{\ell_n+w_jw_v}\frac{w_kw_v}{\ell_n+w_kw_v}.
\end{eq}
Again, let us split the above sum in two pars with $\{v:w_v\leq n^{\rho} \}$ and $\{v:w_v> n^{\rho} \}$, and denote them by $(I)$ and $(II)$ respectively. 
Thus, using Lemma~\ref{c5:lem:order-estimates},
\begin{eq}
(I) &\leq \frac{p_c^3w_1^3|V_L|^3}{\ell_n^3} \sum_{v:w_v\leq n^{\rho}} w_v^3 \leq \frac{p_c^3w_1^3|V_L|^3 n^{\rho}}{\ell_n^3} \sum_{v:w_v\leq n^{\rho}} w_v^2 \\
&\leq C p_c^3 n^{3\alpha -3 +\rho +1+(3-\tau)\rho} = O(p_c),
\end{eq}
\begin{eq}
(II) \leq |V_L|^3p_c^3 \#\{v:w_v>n^{\rho}\} = O(p_c).
\end{eq}
This completes the proof of Lemma~\ref{c5:lem:symptotic-independence}.
\end{proof}

\subsection{Sizes of components containing hubs}\label{c5:sec:component-with-hubs}
In this section, we consider the asymptotic size of $\sC(i)$, the component containing vertex $i$. 
We will prove the following theorem: 
\begin{theorem}\label{c5:thm:comp-size-i}
For each fixed $i\geq 1$, as $n\to\infty$, $(n^{\alpha}p_c)^{-1}|\sC(i)| \dto W^\infty (i)$.
\end{theorem}
Lemmas~\ref{c5:lem:odd-even-com-size},~and~\ref{c5:lem:non-hub-comp-size} identify the primary contribution to the component sizes.
Since the proof of the lemmas are short, they are given immediately.
We conclude the section with the proof of Theorem~\ref{c5:thm:comp-size-i} using these two lemmas.
Define $\sC_k(i):= \{v\in \sC(i): \dst(v,i) = k\}$. 
Thus $\sC_k(i)$ denotes the set of vertices at exactly distance $k$ from vertex $i$.
\begin{lemma}\label{c5:lem:odd-even-com-size}
 Suppose that $\lambda<c_0^{-1/2}$, where $c_0$ is defined in \textrm{ Corollary~\ref{c5:lem:geometric-decay}}. 
 For any fixed $i\geq 1$, and $\varepsilon>0$, 
\begin{eq}\label{c5:eq:even-comp-size}
\lim_{n\to\infty} \PR( \sum_{k=0}^\infty |\sC_{2k}(i)| > \varepsilon n^{\alpha}p_c )&=  0, \\ 
\lim_{K\to\infty}\limsup_{n\to\infty}\PR\bigg( \sum_{k>K} |\sC_{2k+1}(i)| > \varepsilon n^{\alpha}& p_c\bigg) = 0.
\end{eq}
\end{lemma}
\begin{proof}
Note that 
\begin{eq}\label{c5:odd-comp-even-weight}
\E\Big[|\sC_{k+1}(i)|\Big|\bigcup_{r=1}^k \sC_{r}(i)\Big] &\leq p_c\sum_{v_1\in \sC_{k}(i)} \sum_{v_2\in [n]} \frac{w_{v_1}w_{v_2}}{\ell_n+w_{v_1}w_{v_2}} \\
&\leq p_c W_{k}(i), 
\end{eq}and therefore $\E[|\sC_{k+1}(i)|] \leq p_c \E[ W_{k}(i)]$. Now the estimates \eqref{c5:eq:path-count-geom}, \eqref{c5:eq:odd-sum-weight-negligible} conclude the proof.
\end{proof}

Let $\sC_{k}'(i)\subset \sC_{k}(i)$ denote the vertices of $\sC_{k}(i)$ that are neighbors of some vertex in $\sC_{k-1}(i)\cap V_L$, where $V_L: = \{v: w_v> \delta n^{\alpha}\}$ for some $\delta >0$. 
Then the following lemma estimates the contribution to the component size due to the non-hubs at distance $2k+1$:
\begin{lemma}\label{c5:lem:non-hub-comp-size}
For each fixed $i\geq 1$, $k\geq 1$, and $\varepsilon>0$,
\begin{equation}\label{c5:comp-nonhub-odd}
\lim_{\delta\to 0}\limsup_{n\to\infty}\PR(|\sC_{2k+1}(i)\setminus\sC_{2k+1}'(i)| >\varepsilon n^{\alpha} p_c) = 0.
\end{equation}
\end{lemma}
\begin{proof}
Using an identical argument as \eqref{c5:odd-comp-even-weight} yields $\E[|\sC_{2k+1}(i)\setminus\sC_{2k+1}'(i)|] \leq p_c \E[ W_{2k}(i,\delta)]$, where $W_{k}(i,\delta):= \sum_{v\in \sC_k(i) \cap V_L^c}w_v$. 
Now, \eqref{c5:comp-nonhub-odd} follows from \eqref{c5:eq:weight-nonhub-even}.
\end{proof}

\begin{fact} \label{c5:fact:variance}
Given a matrix $(p_{ij})_{i\in [m],j\in [n]}$, let $I_{ij}\sim \mathrm{Ber}(p_{ij})$, independently. 
For all $i\in [m]$, construct the random set $V_i:= \{j: I_{ij} = 1\}$, and let $V=\cup_{i\in [m]}V_i$. 
Then $\var{|V|} \leq \sum_{i\in [m]} \var{|V_i|}$.
\end{fact} 
\begin{proof}
Note that $\PR(v_j\in V) = 1- \prod_{i\in [m]}(1-p_{ij})$, and the events $\{u\in V\}$ and $\{w\in V\}$ are independent for all $u\neq w$. 
Further, $\var{|V_i|} = \sum_{j\in [n]} p_{ij}(1-p_{ij})$. 
Therefore, 
\begin{eq}
\var{|V|} &= \sum_{j\in [n]} \bigg(1- \prod_{i\in [m]}(1-p_{ij})\bigg) \prod_{i\in [m]}(1-p_{ij}) \\
&\leq \sum_{j\in [n]} \sum_{i\in [m]} p_{ij}\prod_{i\in [m]}(1-p_{ij}) \\
&\leq  \sum_{j\in [n]} \sum_{i\in [m]} p_{ij}(1-p_{ij}) = \sum_{i\in [m]} \var{|V_i|},
\end{eq}where the third step follows using the fact the $1-\prod (1-x_r) \leq \sum x_r$, whenever $0\leq x_{r} \leq 1$ for every $r\geq 1$.
\end{proof}

\begin{proof}[Proof of Theorem~\ref{c5:thm:comp-size-i}]
Let $F_k$ denote the minimal sigma-algebra with respect to which $\cup_{r=1}^k \sC_{r}(i)$ is measurable. 
Define $W_k'(i):= \sum_{v\in \sC_k(i)\cap V_L}w_v$. 
Using Lemmas~\ref{c5:lem:odd-even-com-size},~\ref{c5:lem:non-hub-comp-size}, it is now enough to show that 
$|\sC_{2k+1}'(i)| = p_c W_{2k}'(i) (1+E(\delta,n))$, where the random variable $E(\delta,n)$ satisfies \eqref{c5:E-delta-n}.
 This follows from Chebyshev's inequality if we can show that 
 \begin{equation} \label{c5:eq:estimate-expt-var-comp}
 \E[|\sC_{2k+1}'(i)| \vert F_{2k}] = p_c W_{2k}'(i) (1+o(1)), \quad \text{and} \quad \var{|\sC_{2k+1}'(i)| \vert F_{2k}} \leq E_{n},
 \end{equation}where $\E[E_n] = o(n^{2\alpha}p_c^2)$. 
 For $v\in [n]\setminus \sC_{2k}(i)$, let $I_{v}$ denote the indicator that there exists $u\in \sC_{2k}(i)\cap V_L$ such that $(v,u)$ creates an edge. 
 Thus, for any $v\in [n]\setminus \sC_{2k}(i)$
 \begin{equation}
 \PR(I_{v} = 1\vert F_{2k}) = 1 - \prod_{u\in \sC_{2k}(i)\cap V_L}\bigg(1-\frac{p_cw_{u}w_v}{\ell_n+w_{u}w_v}\bigg).
 \end{equation}
Using inclusion-exclusion with respect to the union of $u\in \sC_{2k}(i)\cap V_L$, it now follows that 
\begin{eq}
&\E[|\sC_{2k+1}'(i)| \vert F_{2k}] \\
&\geq \sum_{v\notin \sC_{2k}(i)} \sum_{u\in \sC_{2k}(i) \cap V_L} \frac{p_cw_{u}w_v}{\ell_n+w_{u}w_v} \\
&\hspace{2cm}-
\sum_{v\notin \sC_{2k}(i)} \sum_{\substack{u_1,u_2\in \sC_{2k}(i) \cap V_L,\\  u_1<u_2}} \frac{p_c^2w_{u_1}w_v^2 w_{u_2}}{(\ell_n+w_{u_1}w_v)(\ell_n+w_{u_2}w_v)}.
\end{eq} Let us denote the first and second terms above by $(I)$ and $(II)$ respectively. 
Note that 
\begin{eq}
(II) \leq  \sum_{u_1,u_2\in V_L} \sum_{v\in [n]} \frac{p_c^2w_{u_1}w_v^2 w_{u_2}}{(\ell_n+w_{u_1}w_v)(\ell_n+w_{u_2}w_v)} = O(1) = o(n^{\alpha}p_c), 
\end{eq}almost surely, where the third equality above follows using \eqref{c5:hub-connection-term-1}, \eqref{c5:hub-connection-term-3} and Lemma~\ref{c5:lem:asymptotics-lambda-ijd}. 
Further, by observing 
\begin{eq}
\frac{1}{n^{\alpha}p_c} &\sum_{u\in \sC_{2k}(i) \cap V_L} \sum_{v\in \sC_{2k}(i)} \frac{p_cw_{u}w_v}{\ell_n+w_{u}w_v} \\
&\leq \frac{1}{n^{\alpha}\ell_n} (W_{2k}(i))^2 = \OP(n^{\alpha-1}) = \oP(1),
\end{eq}it follows that 
\begin{eq}
\E[|\sC_{2k+1}(i)| \vert F_{2k}] =  \sum_{u\in \sC_{2k}(i) \cap V_L} \sum_{v\in [n]} \frac{p_cw_{u}w_v}{\ell_n+w_{u}w_v} +\oP(n^{\alpha}p_c).
\end{eq}
Now for $\varepsilon>0$ (sufficiently small), let us split the above term in two parts with $\{v:w_v\leq n^{\rho-\varepsilon}\}$, $\{v:w_v>n^{\rho-\varepsilon}\}$, and call them $(I)$ and $(II)$ respectively.
Now, using Lemma~\ref{c5:lem:order-estimates},
\begin{eq}
\frac{(II)}{n^{\alpha}p_c} \leq C |V_L|^2 \frac{n^{1- (\tau-1)\rho+\varepsilon (\tau-1)}}{n^{\alpha}} \leq C |V_L|^2 n^{\rho - \rho(\tau-1)+\varepsilon (\tau-1)} = o(1),
\end{eq}almost surely, and 
\begin{eq}
(I) = p_c \sum_{u\in \sC_{2k}(i) \cap V_L} \sum_{v:w_v \leq n^{\rho-\varepsilon}} \frac{w_uw_v}{\ell_n(1+o(1))} = p_c W_{2k}'(i) (1+o(1)).
\end{eq}
The estimate for the expectation term in \eqref{c5:eq:estimate-expt-var-comp} now follows. 
 For $u\in \sC_{2k}(i)$, let $N_u$ denote the number of neighbors of $u$ in $\sC_{2k+1}(i)$. For the variance term, Fact~\ref{c5:fact:variance} implies that 
\begin{eq}
&\var{|\sC_{2k+1}'(i)|\vert F_{2k}} \leq \sum_{u\in \sC_{2k}(i) \cap V_L} \var{N_u} \\
&\leq \sum_{u\in \sC_{2k}(i) \cap V_L} \sum_{v\in [n]} \frac{p_cw_{u}w_v}{\ell_n+w_{u}w_v} \leq p_c W_{2k}'(i), 
\end{eq}and the required estimate in \eqref{c5:eq:estimate-expt-var-comp} follows using Theorem~\ref{c5:thm:total-weight-i}.
\end{proof}

\subsection{Proof of Theorem~\ref{c5:thm:main-crit}} \label{c5:sec:l2-tightness}
To conclude Theorem~\ref{c5:thm:main-crit} using Theorems~\ref{c5:thm:total-weight-i},~\ref{c5:thm:comp-size-i}, it is enough to show that $(n^{-\alpha}(W_{\sss (i)})_{i\geq 1})_{n\geq 1}$ and $((n^{\alpha}p_c)^{-1}(|\sC_{\sss (i)}(p_c)|)_{i\geq 1})_{n\geq 1}$ are tight in $\ell^2_{\shortarrow}$, and the limiting object in Theorem~\ref{c5:thm:main-crit} is finite almost surely. 
We state the tightness below and defer the finiteness of the limiting object to Proposition~\ref{c5:prop:no-infinite-cluster} in the next section.
\begin{proposition} \label{c5:prop-l2-tightness-23}
 Suppose that $\lambda<c_0^{-1/2}$, where $c_0$ is defined in \textrm{ Corollary~\ref{c5:lem:geometric-decay}}. 
Then, $(n^{-\alpha}(W_{\sss (i)})_{i\geq 1})_{n\geq 1}$ and $((n^{\alpha}p_c)^{-1}(|\sC_{\sss (i)}(p_c)|)_{i\geq 1})_{n\geq 1}$ are tight in $\ell^2_{\shortarrow}$.
\end{proposition}
\begin{proof}
To show the $\ell^2_{\shortarrow}$-tightness of $(n^{-\alpha}(W_{\sss (i)})_{i\geq 1})_{n\geq 1}$, it is enough to show that for any $\varepsilon>0$
\begin{equation}
\lim_{K\to\infty} \limsup_{n\to\infty} \PR\bigg(\sum_{i>K} W_{\sss (i)}^2 > \varepsilon n^{2\alpha}\bigg) = 0.
\end{equation} 
Consider the graph $\rGRG(\bw, p_c)\setminus [K]$, and define $W_{\sss (i) K}$, $W(v;K)$ on this graph analogously as  $W_{\sss (i)}$ and $W(v)$.
It is enough to show that 
\begin{equation}\label{c5:eq:tightness-suff}
\lim_{K\to\infty} \limsup_{n\to\infty} \PR\bigg(\sum_{i\geq 1} W_{\sss (i),K}^2 > \varepsilon n^{2\alpha}\bigg) = 0.
\end{equation}
Let $V_n^*(K)$ denote a vertex chosen in a size-biased manner from $[n]\setminus [K]$ with the sizes being proportional to $(w_i)_{i>K}$, chosen independently from $\rGRG(\bw, p_c)$. 
Let $\ell_n(K):=\sum_{i>K}w_i$. Then, $\ell_n(K) = \ell_n(1+o(1))$ for each fixed $K\geq 1$.
Note that 
\begin{eq} \label{c5:ss-simple-1}
&\E\bigg[\sum_{i\geq 1} W_{\sss (i),K}^2\bigg] = \ell_n(K) \E[W(V_n^*(K);K)] \\
&\leq \ell_n(K) \bigg[\sum_{k\geq 0} \sum_{v\in[n]}w_vf_{2k} (V_n^*(K),v)+\sum_{k\geq 0} \sum_{v\in[n]}w_vf_{2k+1} (V_n^*(K),v)\bigg].
\end{eq}
Let us denote the two sums above by $(I)$ and $(II)$ respectively. 
We can now use the estimates from \eqref{c5:eq:path-count-geom} and \eqref{c5:eq:odd-sum-weight-negligible}.
Note that \eqref{c5:eq:path-count-geom} implies that, for $\lambda<c_0^{-1/2}$,
\begin{eq}
n^{-2\alpha}(I) \leq C n^{-2\alpha}\ell_n(K)\E[w_{V_n^*(K)}] = C  \sum_{i>K} i^{-2\alpha},
\end{eq}which tends to zero as $K\to\infty$.
Moreover, \eqref{c5:eq:odd-sum-weight-negligible} implies that for $\lambda<c_0^{-1/2}$,
\begin{eq}
n^{-2\alpha}(II) \leq  C n^{-\epsilon}\sum_{i>K} \frac{1}{i} = \frac{C \log(n)}{n^{\epsilon}},
\end{eq}which goes to zero as $n\to\infty$. Thus, \eqref{c5:ss-simple-1} follows, and \eqref{c5:eq:tightness-suff} follows from Markov's inequality.

For the  $\ell^2_{\shortarrow}$-tightness of $((n^{\alpha}p_c)^{-1}(|\sC_{\sss (i)}(p_c)|)_{i\geq 1})_{n\geq 1}$, note that, for any vertex $i\in [n]$, $\E[|\sC_{k+1}(i)|] \leq p_c \E[W_{k}(i)]$ for all $k\geq 1$, and therefore $\E[|\sC(i)|] \leq p_c\E[W(i)]$. 
Thus, if $V_n^*$ a vertex chosen in a size-biased manner with the sizes being $(w_i/\ell_n)_{i\in [n]}$, chosen independently of $\rGRG(\bw,p_c)$,
\begin{eq}
\E\bigg[\sum_{i\geq 1}|\sC_{\sss (i)}|^2\bigg] &= \E\bigg[\sum_{i\in [n]}|\sC(i)|\bigg] \leq p_c \E\bigg[\sum_{i\in [n]}W(i)\bigg] = p_c\E\bigg[\sum_{i\geq 1}|\sC_{\sss (i)}|W_{\sss (i)}\bigg] \\
&= \ell_n p_c \E[|\sC(V_n^*)|] \leq \ell_n p_c^2 \E[W(V_n^*)] = p_c^2 \E\bigg[\sum_{i\geq 1}W_{\sss (i)}^2\bigg].
\end{eq}We can apply this quantity to $\rGRG(\bw,p_c)\setminus [K]$ as above and the proof of Proposition~\ref{c5:prop-l2-tightness-23} is now complete.
\end{proof}

%\todo[inline]{Add the proof for the general $\lambda$ case.}

\subsection{Finiteness of the limiting object}
\label{c5:sec:limit-object-finite}
We write $\theta_i = i^{-\alpha}$. 
Recall that the graph $G_{\infty}(\lambda)$ with vertex set $\Z_+$ is created by creating $\mathrm{Poisson}(\lambda_{ij})$ many edges vertices $i$ and $j$, where 
\begin{equation}
\label{c5:eq:lambda-ij-expr}
\lambda_{ij} = \lambda^2 \int_0^\infty \frac{\theta_i\theta_jx^{-2\alpha}}{(\mu+\theta_ix^{-\alpha})(\mu+\theta_jx^{-\alpha})}\dif x.
\end{equation}
Let $C(i)$ denote the connected component containing vertex $i$ and define $W^{\infty}(i) = \sum_{j\in C(i)}\theta_j$.
We will show the following and the fact that $(W^\infty_{\sss (i)})_{i\geq 1} \in \ell^2_{\shortarrow}$ then follows from Proposition~\ref{c5:prop-l2-tightness-23} using Fatou's lemma.
\begin{proposition}\label{c5:prop:no-infinite-cluster}
Consider $\theta_{i} = i^{-\alpha}$. There exists and absolute constant $\lambda_0$ such that the following holds for any $\lambda\in (0,\lambda_0)$: 
For each $i\in \Z_+$, $W^{\infty}(i)<\infty$ almost surely.
\end{proposition}
\begin{lemma}\label{c5:lem:ub-PAM-probability} 
$\lambda_{ij} \leq \frac{C\lambda^2}{ (i\wedge j)^{1-\alpha} (i\vee j)^\alpha}$ for some absolute constant $C>0$.
\end{lemma}
\begin{proof}
Without loss of generality, let us assume that $\theta_i>\theta_j$ (i.e., $i<j$), and $\mu = 1$.
Let us split the integral \eqref{c5:eq:lambda-ij-expr} in three parts with $\{x: \theta_i x^{-\alpha}<1\}$, $\{x: 0<\theta_j x^{-\alpha}<1<\theta_i x^{-\alpha}\}$ and $\{x: \theta_j x^{-\alpha}>1\}$ and denote them by $(I)$, $(II)$ and $(III)$ respectively.
Then,
\begin{eq}
(I) \leq \lambda^2\theta_i\theta_j \int_{\theta_i^{1/\alpha}}^\infty x^{-2\alpha} \dif x \leq C\lambda^2 \theta_i \theta_j (\theta_i^{1/\alpha})^{1-2\alpha} = \frac{C \lambda^2}{  i^{1-\alpha} j^{\alpha}},
\end{eq}
\begin{eq}
(II) \leq \lambda^2\theta_j \int_{\theta_j^{1/\alpha}}^{\theta_i^{1/\alpha}} x^{-\alpha} \dif x = \frac{C\lambda^2}{j^{\alpha}} \bigg[\frac{1}{i^{1-\alpha}}-\frac{1}{j^{1-\alpha}} \bigg] \leq \frac{C \lambda^2}{  i^{1-\alpha} j^{\alpha}},
\end{eq}
\begin{eq}
(III) \leq \lambda^2\int_0^{\theta_j^{1/\alpha}} \dif x \leq \frac{C\lambda^2}{j} \leq \frac{C \lambda^2}{  i^{1-\alpha} j^{\alpha}}.
\end{eq}Thus the proof follows.
%\todo[inline]{Computation of integrals here is not repetitive. This is just for the completeness of the argument. We can drop it.}
\end{proof}
\begin{proof}[Proof of Proposition~\ref{c5:prop:no-infinite-cluster}] 
We will use the path counting estimates from \cite{DHH10}. 
We estimate the probability that there exists a non self-intersecting path of length $k$ from $i$ to $j$ in $G_\infty$.
Using Lemma~\ref{c5:lem:ub-PAM-probability}, note that
\begin{eq}
&\PR(j\in C(i), \dst(i,j) = k) \\
&\leq (c_2\lambda^2)^k  \sum_{\substack{(v_{r})_{r=0}^k:v_r\text{'s distinct} \\ v_0 = i, v_k = j
}} \prod_{r=0}^{k-1} \frac{1}{(v_r\wedge v_{r+1})^{1-\alpha}(v_r\vee v_{r+1})^{\alpha}}.
\end{eq}
Using \cite[Lemma 2.4]{DHH10}, for any $b<\alpha$,  
\begin{eq}
\PR(j\in C(i), \dst(i,j) = k) \leq \frac{(c_3\lambda^2)^k }{i^{b} j^{1-b}},
\end{eq}for some absolute constant $c_3>0$.
Thus, for $\lambda<1/\sqrt{c_3}$,
\begin{eq}
\E[W^{\infty}(i)] \leq C \sum_{j\geq 1} j^{-\alpha }\frac{(c_3\lambda^2)^k }{i^{b} j^{ 1-b}} \leq \frac{C}{i^{b}} < \infty,
\end{eq}which implies $W^{\infty}(i)<\infty$ almost surely. 
\end{proof}
\subsection{Near-critical behavior}
%In this section, we consider component sizes just outside the critical window. 
%In particular, we prove Theorem~\ref{c5:thm:barely-subcrit-single-edge}.
\begin{proof}[Proof of Theorem~\ref{c5:thm:barely-subcrit-single-edge}]
The proof can be completed by modifying the arguments in Section~\ref{c5:sec:proof-GRG}. 
In fact, if $p_n = \lambda_n n^{-(3-\tau)/2}$, for some $\lambda_n\to 0$, then Lemma~\ref{c5:lem:2-nbd-prob} holds with $\lambda$, replaced by $\lambda_n$. 
One can use identical arguments as Proposition~\ref{c5:prop:tail-particular-component} to show that $W(i) = w_i (1+\oP(1))$.
Finally, one can use identical arguments as Proposition~\ref{c5:prop-l2-tightness-23} to deduce the $\ell^2_{\shortarrow}$ tightness of the vector of component sizes and weights. 
Thus, the proof of Theorem~\ref{c5:thm:barely-subcrit-single-edge} follows.
\end{proof}
% \todo[inline]{Can we prove that the giant component concentrates using some sort of existential argument? }

\section{Erased configuration model: Proofs}
In this section, we provide the necessary adaptations required to the arguments in Section~\ref{c5:sec:proof-GRG} to complete the proof of Theorem~\ref{c5:thm:main-ECM}.
Let $e_{ij}$ denote the number of edges between vertices $i$ and $j$ in~$\CM$. 
Note that an edge $\{i,j\}$ appears in $\ECM$ if and only if $e_{ij}\geq 1$.
We start by describing two elementary properties of the occurrence of edges in Lemmas~\ref{c5:lem:edge-occupancy-negative-correlation},~\ref{c5:crude-estimate}, which will be the key to the required adaptations:
\begin{lemma}\label{c5:lem:edge-occupancy-negative-correlation}
For all distinct $i,v,j$, $\PR(e_{iv}\geq 1, e_{vj}\geq 1) \leq \PR(e_{iv}\geq 1)\PR(e_{vj}\geq 1)$ .
\end{lemma}
\begin{proof}
For any $m_1\geq 1$, 
\begin{eq}\label{c5:eq:edge-connection-prob}
\PR(e_{iv} = m_1) = \frac{{d_i \choose m_1}{d_v \choose m_1} m_1!}{(\ell_n-1)\dots (\ell_n -2m_1+1)}.
\end{eq}
Further, fix an $m_2\geq 1$.
Now conditionally on the first $m_1$ edge, the probability of $\{e_{vj}=m_2\}$ is also given by \eqref{c5:eq:edge-connection-prob}, with $d_v$ changed to $d_v-m_2$ and the product in the denominator being $(\ell_n-2m_1-1) \cdots (\ell_n -2(m_1+m_2)+1)$.
Therefore, for any $m_1,m_2\geq 1$,
\begin{eq}
\PR(e_{iv} = m_1, e_{vj} = m_2) =\frac{{d_i \choose m_1}{d_v \choose m_1} m_1!{d_v-m_1 \choose m_2}{d_j \choose m_2} m_2!}{(\ell_n-1)\cdots (\ell_n -2(m_1+m_2)+1)}.
\end{eq}
Moreover, for any $a>0$, $x/y\leq (x+a)/(y+a)$, iff $x\leq y$, 
which yields that $(d_v-m_1-k)/ (\ell_n-2m_1-k-1) \leq (d_v-k)/(\ell_n-k-1)$.
Therefore,
\begin{eq}
\frac{\PR(e_{iv} = m_1, e_{vj} = m_2)}{\PR(e_{iv} = m_1)\PR(e_{vj} = m_2)} = \frac{{d_v-m_1 \choose m_2}}{{d_v \choose m_2}} \prod_{i=1}^m \frac{\ell_n-2m_1+1-2i}{\ell_n-2i+1}\leq 1.
\end{eq}
 The proof of Lemma~\ref{c5:lem:edge-occupancy-negative-correlation} thus follows.
\end{proof}
\begin{lemma}\label{c5:crude-estimate}
For any $i\neq j$, $\PR(e_{ij}\geq 1) \leq \min\{\frac{d_id_j}{\ell_n-1},1\} $.
\end{lemma}
\begin{proof}Obviously, $\PR(e_{ij}\geq 1) \leq 1$, and by Markov's inequality, 
\begin{eq}
\PR(e_{ij}\geq 1) \leq \E[e_{ij}] = \frac{d_id_j}{\ell_n-1}, 
\end{eq}and the proof follows.
\end{proof}
Define $p_{ij}(2)$ to be the probability that there exists some intermediate vertex $v$ such that $\{i,v\}$ and $\{v,j\}$ form edges in $\ECMp$. 
We have seen that the bound in Lemma~\ref{c5:lem:2-nbd-prob} forms the bedrock for all the error estimates for $\mathrm{GRG}_n(\bw,p_c(\lambda))$, while using the first moment method. 
The next lemma provides an analogue of Lemma~\ref{c5:lem:2-nbd-prob} for $\mathrm{ECM}_n(\bld{d},p_c(\lambda))$:
\begin{lemma}\label{c5:lem:2-hop-connection-ECM}
$p_{ij}(2)\leq \frac{C\lambda^2}{(i\wedge j)^{1-\alpha}(i\vee j)^\alpha}$, $\forall i,j\in [n]$ for some constant $C>0$. 
\end{lemma}
\begin{proof}
Without loss of generality, we assume that $d_i > d_j$, i.e., $i< j$. Note that 
\begin{eq}
p_{ij}(2) \leq p_c^2 \sum_{v\in [n]} \PR(e_{iv}\geq 1, e_{iv}\geq 1) \leq p_c^2 \sum_{v\in [n]} \PR(e_{iv}\geq 1)\PR( e_{iv}\geq 1),
\end{eq}where we have used Lemma~\ref{c5:lem:edge-occupancy-negative-correlation}.
Let us split the sum in three parts with $\{v: d_id_v\leq \ell_n\}$, $\{v:d_jd_v\leq \ell_n<d_id_v\}$, and $\{v:d_jd_v> \ell_n\}$, and bound $\PR(e_{iv\geq 1})\PR(e_{vj}\geq 1)$ from above by $4d_id_v^2d_j/\ell_n^2$, $2d_vd_j/\ell_n$ and $1$ respectively on those sets. 
The rest of the argument is identical to Lemma~\ref{c5:lem:2-nbd-prob}.
\end{proof}
For the proof of the results for $\mathrm{ECM}_n(\bld{d},p_c(\lambda))$, we replace the $w_i$'s by the $d_i$'s in all the notations in Section~\ref{c5:sec:proof-GRG}.
The inequality in Lemma~\ref{c5:lem:2-hop-connection-ECM} again establishes that the two-hop connection probabilities are upper-bounded by the connection probabilities of the preferential attachment model. 
Thus, we can use Lemma~\ref{c5:lem:2-hop-connection-ECM} together with Lemma~\ref{c5:lem:edge-occupancy-negative-correlation} to get an identical estimate as \eqref{c5:eq:geometric-decay}, and therefore Proposition~\ref{c5:prop:tail-particular-component} holds.
Let us now identify the connectivity structure between the hubs to establish an analog of Proposition~\ref{c5:prop:convergence-K-nbd}.
%\begin{proposition}\label{c5:prop:convergence-K-nbd-ECM}
%For each fixed $i\geq 1$ and $K\geq 1$, as $n\to\infty$, $n^{-\alpha}\sum_{k=1}^{K}W_{2k}(i) \dto W_{\leq K}^{\infty}(i)$.
%\end{proposition}

We use the notation $V_{\sss L} := \{i:d_i>\delta n^{\alpha}\}$,  $V_{\sss SS} = \{i: d_i < \delta n^{\rho}\}$, $V_{\sss SI} = \{i: \delta n^{\rho}\leq d_i \leq \delta^{-1} n^{\rho}\}$ and $V_{\sss SL} = \{i: d_i > \delta^{-1} n^{\rho}\}$. 
Consider the following multigraph $G_{n,\delta}$ on the vertex set $V_L$, where the number of edges $X_{ij}$ between $i$ and $j$ is number of distinct $v\in [n]$ such that both $(i,v)$ and $(v,j)$ are edges of $\ECMp$.
Note that, for any $i\neq j$,
\begin{eq}\label{c5:eq:distn-Xij-ECM}
X_{ij} = \sum_{v\neq i,j} \ber\big(p_c^2 \PR(e_{iv}\geq 1,\ e_{vj}\geq 1)\big).
\end{eq}
We can split the above sum in three parts with $v\in V_{SS}$, $v\in V_{SI}$, and $v\in V_{SL}$ and denote them by $(I)$, $(II)$, and $(III)$ respectively.
Now, using Lemma~\ref{c5:lem:order-estimates} and Lemma~\ref{c5:lem:edge-occupancy-negative-correlation}, 
\begin{eq} \label{c5:hub-connection-term-1-ECM}
\E[(I)] \leq \frac{d_id_jp_c^2}{\ell_n^2} \sum_{v:d_v<\delta n^{\rho}}d_v^2 \leq C_0\theta_1^2 \delta^{3-\tau}n^{2\alpha - 2+1+(3-\tau)\rho}p_c^2 \leq C_1\delta^{3-\tau},
\end{eq}
\begin{eq} \label{c5:hub-connection-term-3-ECM}
(III) \leq p_c^2 \times \#\{v:d_v>\delta^{-1}n^{\rho}\} = \delta^{\tau-1}.
\end{eq}
Using the above calculation of $(I)$ and $(III)$ and Markov's inequality, it follows that
\begin{eq}\label{c5:eq:Xij-approximation-ECM}
X_{ij} &= \sum_{v: d_v\in [\delta n^{\rho},\delta^{-1}n^{\rho}]} \ber\big(p_c^2 \PR(e_{iv}\geq 1,\  e_{vj}\geq 1)\big) + \err_{\sss \PR}(\delta,n) \\
&= X_{ij}(\delta) + \err_{\sss \PR}(\delta,n).
\end{eq} 
Define the quantity 
\begin{eq}\label{c5:eq:lambda-ij-delta-ECM}
\lambda_{ij}(\delta) = p_c^2\sum_{v: d_v\in [\delta n^{\rho},\delta^{-1}n^{\rho}]} \Big(1-\e^{-\frac{d_id_v}{\ell_n}}\Big)\Big(1-\e^{-\frac{d_id_v}{\ell_n}}\Big).
\end{eq}

\begin{lemma}\label{c5:lem:poisson-approximation-ECM}For any $\delta\in (0,1)$, and $i,j \geq 1$
\begin{equation}
\lim_{n\to\infty}\dTV\big(X_{ij}(\delta), \poi(\lambda_{ij}(\delta))\big) =0.
\end{equation}
\end{lemma}
\begin{proof}
Let us first estimate the probabilities of the Bernoulli random variables in \eqref{c5:eq:Xij-approximation-ECM}. 
Firstly, note that using \cite[(36)]{HHLS17}, for any $v\neq i$
\begin{eq}\label{c5:ECM-connection-estimates}
\Big|\PR(e_{iv}=0) - \e^{-\frac{d_id_v}{\ell_n}} \Big| &\leq \frac{d_id_v^2}{(\ell_n-d_1)^2}, \\
\Big|\PR(e_{iv}=0,e_{vj} = 0) - \e^{-\frac{(d_i+d_j)d_v}{\ell_n}} &\Big| \leq \frac{(d_i+d_j)d_v^2}{(\ell_n-d_1)^2}.
\end{eq}
The second inequality also follows from \cite[(36)]{HHLS17}, because if we  merge two vertices $i$ and~$j$ into one single vertex, then $\PR(e_{iv}=0,e_{vj} = 0)$ becomes the probability that the merged vertex does not have an edge with $v$, and one can use the first bound to deduce the second.
% since we can identify vertices $i$ and $j$ and the pairing of $\CM$ does not depend on the labeling of the vertices.
Now, using Lemma~\ref{c5:lem:order-estimates},  
\begin{eq}
\frac{d_i}{(\ell_n-d_1)^2}\sum_{v\leq \delta^{-1}n^{\rho}}d_v^2 \leq Cn^{-\rho(\tau-2)} \to 0, 
\end{eq}and therefore one can conclude that
\begin{gather*}\label{c5:eq:bernoulli-prob-ECM}
E_1:= p_c^2\sum_{v: d_v\in [\delta n^{\rho},\delta^{-1}n^{\rho}]} \Big|\PR(e_{iv}\geq 1, \ e_{vj}\geq 1) -  \Big(1-\e^{-\frac{d_id_v}{\ell_n}}\Big)\Big(1-\e^{-\frac{d_id_v}{\ell_n}}\Big) \Big|
\end{gather*}which goes to zero as $n\to\infty$.
We will now use  Stein's method for convergence for sums of negatively correlated Bernoulli random variables \cite[Theorem 6.24]{JLR00}, which states that for $I_i\sim\mathrm{Bernoulli}(p_i)$ with $\mathrm{cov}(I_i,I_j)<0$, for all $i\neq j$, then 
\begin{eq}
\dTV \bigg(\sum_{i} I_i, \mathrm{Poi} \Big(\sum_{i}p_i \Big) \bigg) \leq \max_i p_i.
\end{eq} 
Note that $\ind{e_{iv} \geq 1,e_{vj} \geq 1}$ and $\ind{e_{iv'} \geq 1,e_{v'j} \geq 1}$ are negatively correlated for $v\neq v'$ which can be established using similar arguments as Lemma~\ref{c5:lem:edge-occupancy-negative-correlation}.
Therefore
\begin{eq}
&\dTV\big(X_{ij}(\delta), \poi(\lambda_{ij}(\delta))\big) \\
&\leq p_c^2E_1 + p_c^2\max_{v: w_v\in [\delta n^{\rho},\delta^{-1}n^{\rho}]} \Big(1-\e^{-\frac{d_id_v}{\ell_n}}\Big)\Big(1-\e^{-\frac{d_id_v}{\ell_n}}\Big) = O(p_c^2),
\end{eq}and the proof follows.
\end{proof}
\begin{lemma}\label{c5:lem:asymptotics-lambda-ijd-ECM}For any fixed $i,j \geq 1$
\begin{equation}
\lim_{\delta\to 0} \lim_{n\to\infty} \lambda_{ij}(\delta) = \lambda^2\int_{0}^{\infty} \Big(1-\e^{-\frac{\theta_ix^{-\alpha}}{\mu}}\Big)\Big(1-\e^{-\frac{\theta_jx^{-\alpha}}{\mu}}\Big) \dif x
\end{equation}
\end{lemma}
\begin{proof}
Using \eqref{c5:eq:lambda-ij-delta-ECM}, the proof is identical to Lemma~\ref{c5:lem:asymptotics-lambda-ijd}.
\end{proof}
\noindent For the asymptotic independence of the two-hop connections between the hubs, we need an estimate of the joint connection probabilities between distinct vertices in the configuration model, as given in the following lemma: 
\begin{lemma}\label{c5:lem:CM-joint-connection-estimate}
Consider four distinct vertices $i$, $u$, $k$ and $v$ such that $d_u,d_v\in [a n^{\rho}, b n^{\rho}]$ and $d_i,d_j \in [a n^{\alpha}, b n^{\alpha}]$, for some $a,b>0$. 
Then, as $n\to\infty$,
\begin{equation}
\Big|\PR(e_{iu}=0,e_{kv}=0) - \e^{- \frac{d_id_u}{\ell_n}}\e^{- \frac{d_kd_v}{\ell_n}} \Big| = O\bigg(\frac{d_id_u^2+d_kd_v^2}{(\ell_n-2d_1)^2}\bigg).
\end{equation}
\end{lemma}
\begin{proof}
We first sequentially pair the half-edges of $u$, and then in the next step pair the half-edges of $v$. 
Let $\cA_n$ denote the event that after the first stage of pairing $u$ does not create more than $n^{\varepsilon}$ edges with $k$ or $v$, where $0<\varepsilon<\rho$.
Firstly, note that $\PR(\cA_n^c)$ is exponentially small in $n$. 
Indeed, using \eqref{c5:eq:edge-connection-prob}, the probability that $u$ and  $k$ share at least $n^{\varepsilon}$ edges is at most 
\begin{eq}
\sum_{m>n^{\varepsilon}} \bigg(\frac{d_k d_u}{\ell_n - 2d_1}\bigg)^{m_1} \frac{1}{m_1!} \leq C \e^{-C n^{\varepsilon}},
\end{eq}where we have used the fact that $d_kd_u =O(\ell_n)$, and the last bound follows from the tail probabilities of a Poisson distribution.
A similar bound holds for the connections between $u$ and $v$ as well, and therefore, 
\begin{equation}\label{c5:eq:exponential-small-prob}
\PR(\cA_n^c) \leq C\e^{-cn^{\varepsilon}}.
\end{equation}
Now, using \eqref{c5:ECM-connection-estimates} and \eqref{c5:eq:exponential-small-prob}
\begin{eq}
\PR(\cA_n\cap \{e_{iu} = 0\})  = \e^{ - \frac{d_i d_u}{\ell_n}} + O\bigg(\frac{d_id_u^2}{(\ell_n-2d_1)^2}\bigg),
\end{eq}
and considering the second step of pairing of the remaining half-edges of $v$
\begin{eq}
\PR(e_{kv} = 0\vert \cA_n\cap \{e_{iu} = 0\} ) = \e^{ - \frac{d_k d_v}{\ell_n}} + O\bigg(\frac{d_kd_v^2}{(\ell_n-2d_1)^2}\bigg).
\end{eq}
Thus the proof follows.
\end{proof}
\begin{lemma} \label{c5:lem:symptotic-independence-ECM}For any $\delta\in (0,1)$, the collection of random variables $(X_{ij}(\delta))_{i,j\in V_L}$ is asymptotically independent. 
\end{lemma}
\begin{proof}
Consider vertices $i,j,k,l$ from $V_L$.
Let $\mathcal{E}_{iuj}$ denote the event that $\{e_{iu}\geq 1,  e_{uj}\geq 1\}$.
Suppose that we can show as $n\to\infty$
\begin{eq}\label{c5:eq:need-estimate-joint-conv}
E:=p_c^2\sum_{i,j,k.l\in V_L}\sum_{ d_u,d_v\in [\delta n^{\rho},\delta^{-1}n^{\rho}]} \Big|\PR(\mathcal{E}_{iuj} \cap\mathcal{E}_{kvl}) -  \PR(\mathcal{E}_{iuj} )\PR(\mathcal{E}_{kvl} )\Big|=o(1).
\end{eq} %which tends to zero as $n\to\infty$.
Then, with high probability, we can couple all the Bernoulli random variable in $(X_{ij}(\delta))_{i,j\in V_L}$ with an independent collection of Bernoulli random variables, and then the proof will be complete. 
Suppose that $i,j,k,l$ are distinct.
To estimate $\PR(\mathcal{E}_{iuj} \cap\mathcal{E}_{kvl})$, we use Lemma~\ref{c5:lem:CM-joint-connection-estimate}.
Indeed, if we merge vertices $i$ with $j$ and $k$ with $l$, and denote them by $v_1$ and $v_2$ respectively, then the required probability is the same as the probability that $(v_1,u)$ and $(v_2,v)$ share an edge. 
Therefore, Lemma~\ref{c5:lem:CM-joint-connection-estimate} implies that $E$ defined by \eqref{c5:eq:need-estimate-joint-conv} is of the order
\begin{eq}
p_c^2&\sum_{u,v:\ d_u,d_v\in [\delta n^{\rho},\delta^{-1}n^{\rho}]} \frac{(d_i+d_j)d_u^2+(d_k+d_l)d_v^2}{(\ell_n-2d_1)^2} \\
&\leq p_c^2\frac{C n^{\alpha}}{\ell_n^2}n^{1-(\tau-1)\rho} n^{1+\rho(3-\tau)} = Cn^{-\rho (\tau-2)}\to 0.
\end{eq} 
Since $V_L$ is a finite collection, this proves \eqref{c5:eq:need-estimate-joint-conv} on the partial sum with $i,j,k,l$ being distinct.
The case where $i = k$ can be dealt with similarly using \eqref{c5:ECM-connection-estimates}, and we do not repeat the argument again.
This completes the proof of Lemma~\ref{c5:lem:symptotic-independence-ECM}.
\end{proof}

\section{Conclusion}
In this chapter, we have provided to the best of our knowledge the first mathematically rigorous analysis for the critical window for random graphs that satisfy a power law distribution with exponent $\tau\in(2,3)$.
The main surprise is that the critical window changes depending on whether the network is constrained to have single-edges between the vertices. 
We identify the critical window and the scaling limits for the component sizes within the critical window. 
The main technical obstacle under the single-edge constraint is that the exploration process approach does not work and we had to resort to path counting techniques to identify the primary contributions on the component sizes. 
For this reason, the proof does not work for sufficiently large $\lambda$ values, which we leave as an open question.

%
%
%
%
%\bibliographystyle{apa}
%\bibliography{thesis}
%
%\end{document}
 
\cleardoublepage

\chapter[Open problems]{Open problems}
\label{chap:open}
%\vfill
%\documentclass[10pt,a4paper,reqno]{memoir}
%\usepackage{dhara}
%
%%\newcounter{op}
%
%
%%%%%%%%%%%%%%%%%%%%%%%%%%%%%%
%
%
%
%%
%\begin{document}
%\chapter{Open problems}

In the final chapter of this thesis, we discuss several related research questions, which are open to the best of our knowledge.

\section{Component sizes and complexity.}
\subsection{Finite and infinite third moment cases}
\paragraph*{Tail bound on the component sizes.} 
While the scaling limit results yield an exact asymptotic distribution of the largest component sizes, these results do not give any explicit probability bounds for large, but finite $n$. 
It is often interesting to obtain explicit bounds for the probabilities of the events involving ``tails'' such as the largest component is quite large ($\{|\sC_{\sss (1)}(p_c(\lambda))| > An^{\rho}\}$ for large $A>0$), or it is quite small  ($\{|\sC_{\sss (1)}(p_c(\lambda))| \leq  \delta n^{\rho}\}$ for small $\delta >0$), where $\rho$ is the scaling exponent for the component sizes.  
For critical Erd\H{o}s-R\'enyi random graphs with $p = 1/n(1+\lambda n^{-1/3})$, Nachmias and Peres \cite{NP10a} showed that 
\begin{eq}\label{eq:moderate-dev-bounds-1}
\PR(|\sC_{\sss (1)}(p_c(\lambda))| > An^{2/3}) \leq \frac{c_1}{A}\e^{-c_2 A^3}, 
\end{eq} 
\begin{eq}
\label{eq:moderate-dev-bounds-2}
\PR(|\sC_{\sss (1)}(p_c(\lambda))| \leq \delta n^{2/3}) \leq c_3 \delta^{3/5}, 
\end{eq}
for any $A>A_0$, $\delta \in (0,\delta_0)$, and $n\geq n_0$, where $c_1,c_2,c_3, A_0,\delta_0>0$ and $n_0\geq 1$ can be calculated explicitly. See also \cite{Pit01} and \cite[Corollary 19]{SS06}.
 In the context of quantum random graphs \cite{DLV14}, the bound in \eqref{eq:moderate-dev-bounds-1} was proven to be $A^{-3/2}$, with the exponential term missing, while the bound for \eqref{eq:moderate-dev-bounds-2} involves $\delta^{3/5}$.
 Even for Erd\H{o}s-R\'enyi random graphs, whether the bounds in \eqref{eq:moderate-dev-bounds-1} and \eqref{eq:moderate-dev-bounds-2} are optimal is not known to the best of our knowledge.
 Using results from \cite{Pit01} (see also \cite[(1.2)]{HKL14}), \cite[Corollary 2]{A97} and Portmanteau theorem, we can obtain a lower bound on the probability \eqref{eq:moderate-dev-bounds-1} of $c_1A^{-3/2}\e^{-c_2A^3}$, which differs from \eqref{eq:moderate-dev-bounds-1} in the polynomial term.

There are several challenges in order to derive such estimates for random graphs with general moment assumptions on the degrees such as Chapter~\ref{chap:thirdmoment}.
Specifically, the techniques involving Chernoff bound in \cite[Section 4]{NP10a} does not work for general degree distributions.
Bounds like \eqref{eq:moderate-dev-bounds-1} and \eqref{eq:moderate-dev-bounds-2} in the $\tau\in (3,4)$ case has never been studied. 
The properties of the scaling limit was studied in \cite{AHKL16,HKL14} for $\tau\in(3,4)$.
Using the results from \cite[Theorem 1.6]{HKL14}, Theorem~\ref{c2thm:spls} and Portmanteau theorem, the probability in \eqref{eq:moderate-dev-bounds-1} is bounded from below by $c_1A^{-(\tau-1)/2}\e^{-c_2A^{\tau-1}}$.
But an upper-bound with the same exponential term could also possibly be proved leading to the following conjecture:
\begin{conjecture}\label{c6:conj:tail} Under Assumption~\ref{c2assumption1}, there exists $A_0>0$, $n_{0}\geq 1$ and constant $c_1, c_2$ such that for all $A>A_0$ and $n\geq n_0$
\begin{eq}\label{eq:moderate-dev-bounds-3}
\PR(|\sC_{\sss (1)}(p_c(\lambda))| > A b_n) \leq \frac{c_1}{p(A)}\e^{-c_2 A^{\tau-1}}, 
\end{eq} 
for some polynomial $p$.
%\begin{eq}
%\label{eq:moderate-dev-bounds-2}
%\PR(|\sC_{\sss (1)}(p_c(\lambda))| \leq \delta n^{2/3}) \leq c_3 \delta^{3/5}, 
%\end{eq}
%for any $A>A_0$, $\delta \in (0,\delta_0)$, and $n\geq n_0$, where $c_1,c_2,c_3, A_0,\delta_0>0$ and $n_0\geq 1$ can be calculated explicitly.
\end{conjecture}
We do not have a good guess about the bound of the form \eqref{eq:moderate-dev-bounds-2} for $\tau\in (3,4)$.
It is further interesting to learn about these tail probabilities when $A=A_n \to\infty$ or $\delta=\delta_n\to 0$. 
For $A_n \ll n^{1/12}$, suitable bounds were derived for Erd\H{o}s-R\'enyi random graphs in \cite[Proposition 3.1]{RS16}. 
In Chapter~\ref{chap:mspace-GHP}, we consider the $A_n \to\infty$ case (see Theorem~\ref{c4:lem:volume-large-deviation}), but we did not focus on obtaining the optimal bound, which is an interesting question.

%
%
% is interesting because the 
%
%
%and 
%
%In Chapters~\cmt{ref}, we have observed that the re-scaled vector of ordered component sizes converge to the ordered excursions of a Brownian motion with negative parabolic drift in the finite third moment case, and that of a thinned L\'evy process in the infinite third moment case. 

%\paragraph*{Dynamic critical random graphs}

\paragraph*{Comparison to Joseph's scaling limit.}
As observed in Section~\ref{c2sec:iid-degrees}, Assumption~\ref{c2assumption1} is satisfied almost surely when the degrees are an iid sample from a power-law distribution with exponent $\tau\in (3,4)$. 
Thus, conditionally on the observed degree sequence, the exploration process converges to the process 
\begin{eq}\label{op-scaling-limit-1}
S_{\infty,1}(t): = \sum_{i=1}^\infty \frac{C_{\sss F}}{\Gamma_i^\alpha} \bigg(\mathbf{1}_{\big\{X_i \leq \frac{s\mu \Gamma_i^\alpha}{C_{\sss F}}\big\}} - \frac{C_{\sss F}}{\mu \Gamma_i^\alpha}\int_{\Gamma_{i-1}}^{\Gamma_i} u^{-2\alpha}\dif u\bigg),
\end{eq}
where $(X_j)_{j\geq 1}$ and  $\Gamma_i = \sum_{j\leq i}E_j$ where $(E_j)_{j\geq 1}$ is an independent collections of i.i.d unit rate exponential random variables; see Theorem~\ref{c2thm::convegence::exploration_process}.
%This thinned L\'evy process does not have independent increments. 
On the other hand, the iid degree setting has been studied in \cite{Jo10,CG17}, where the scaling limit of the exploration process turns out to be different. 
More precisely, the scaling limit is given by 
\begin{eq}\label{op-scaling-limit-2}
S_{\infty,2}(t) = Y(t)+A(t),
\end{eq}
where
\begin{eq}
A(t) = - \frac{C_{\sss F} \Gamma(4-\tau)}{(\tau-3)(\tau-2)\mu^{\tau-2}} t^{\tau-2},
\end{eq}
and $Y(t)$ is the unique process with independent increments such that for every $t\geq 0$ and $u\in \R$,
\begin{eq}
\E[\exp(i u Y(t))] = \exp\bigg(\int_0^t\dif s \int_0^\infty \dif x (\e^{iux} - 1 -iux)\frac{C_{\sss F}}{\mu x^{\tau-1}}\e^{-xs/\mu}\bigg).
\end{eq}
Since the second process is the limit of the same exploration process averaged out over the degrees, this indirectly implies that the law of $\mathbf{S}_{\infty,2}$ is the same as the law of $\mathbf{S}_{\infty,1}$, averaged out over the $\Gamma$-values (although we use a different exploration process from \cite{Jo10}, the
fact that the component sizes are huge compared to the number of cycles in a component, one can prove Theorem~\ref{c2thm::convegence::exploration_process} for the exploration process in \cite{Jo10} also).
This is remarkable given the vastly different descriptions of the scaling limits \eqref{op-scaling-limit-1} and \eqref{op-scaling-limit-2}. 
For example,  the martingale part of \eqref{op-scaling-limit-1} does not have independent increments due to thinning of the Poisson processes.
However, after averaging out over $\Gamma$-values, the dependence goes away.
It may be worthwhile investigating whether there is a direct approach to show that  $\mathbf{S}_{\infty,1}$, after averaged out over the $\Gamma$-values, 
yields the same law as $\mathbf{S}_{\infty,2}$.

\paragraph*{Joint convergence over the critical window.}
While studying the joint convergence over the critical window in Theorems \ref{c1:thm_multiple_convergence} and~\ref{c2thm:mul:conv}, we considered finite dimensional convergence. 
It will be interesting to show that the process $(\mathbf{Z}_n(\lambda))_{\lambda\in\R}$ converges in $\mathbb{D}(\R,\Unot)$, where we recall that $\mathbf{Z}_n(\lambda)$ is the vector of rescaled component sizes and the surplus edges and $\mathbb{D}(\R,\Unot)$ denotes the set of c\`adl\`ag functions equipped with the Skorohod $J_1$-topology.
The proof will follow if one can verify a suitable tightness criterion $\mathbb{D}(\R,\Unot)$-valued stochastic processes, but we were unable to find a suitable tightness criterion.

\paragraph*{Dynamically evolving critical random graphs.}
In \cite{RS16}, Roberts and \c{S}eng\"{u}l considered a dynamically evolving version of critical Erd\H{o}s-R\'enyi random graphs.
The dynamic graph process $(G_t)_{t\geq 0}$ starts with $G_0$ which is distributed as $\mathrm{ERRG}_n(1/n)$, and each pair $\{u,v\}$ is equipped with an independent rate-one Poisson process $\mathcal{N}_{uv}$.
At each event time of $\mathcal{N}_{uv}$, an edge is resampled according to an independent Bernoulli$(1/n)$ random variables. 
Then, for each fixed $t\geq 0$, $G_t$ is distributed as $\mathrm{ERRG}_n(1/n)$. 
Let $\sC_{\sss (1)}(t)$ denote the largest component of $G_t$.
It was shown in \cite{RS16} that $M_n:=\sup_{t\in [0,1]} |\mathscr{C}_{\sss (1)} (t)| \leq \beta n^{2/3}(\log (n))^{1/3}$ with high probability for $\beta < 2/3^{2/3}$.
There are several further interesting questions that arise for this dynamic graph process. 
\begin{enumerate}
\item Does $M_n/n^{2/3}(\log(n))^{1/3}$ converge in probability  to some $\beta_0 >0$?
\item What is the behavior of $I_n:=\inf_{t\in [0,1]} |\mathscr{C}_{\sss (1)} (t)|$? 
\item What happens in the heavy-tailed universality class of the multiplicative coalescent regime? How does the exponent of $\log(n)$ change depending on the power-law exponent $\tau$ of the degree distribution.
\end{enumerate}
Questions 1 and 2 are discussed in \cite{RS16}. 
While $\beta_0$ was conjectured to be $2/3^{1/3}$, not much was known for question 2. 
For question 3, the tail asymptotics in \eqref{eq:moderate-dev-bounds-1} is intimately related to the asymptotics of $M_n$, as noted in \cite{RS16}. 
For each $t\in [0,1]$, $\sC_{\sss (1)}(t)$ satisfies \eqref{eq:moderate-dev-bounds-1}, and therefore a simple union bound yields an upperbound of $M_n = O(n^{2/3}(\log(n))^{1/3})$ with high probability
(proving the lower bound is considerably difficult which was accomplished in \cite{RS16}).
%Of course the lower bound is hard to obtain. 
Following the prediction of tail bounds in Conjecture~\ref{c6:conj:tail}, this leads us to the the conjecture below:
\begin{conjecture}\label{conj:dyn-ERRG-max}
Under Assumption~\ref{c2assumption1}, 
$M_n = \Theta_{\sss\PR}(n^{\rho} (\log(n))^{1/(\tau-1)})$.
\end{conjecture}
%
%
%There has been an interesting recent direction in the context of dynamically evolving critical random graphs. 

%\paragraph*{Scaling limits for $\tau = 4$.}
%
%
%
%In \cite{AL98}, the general distribution of multiplicative coalescent was characterized 

\paragraph*{Simulation guarantee for sample paths.}
The scaling limits for the component sizes are described by largest excursions of certain stochastic processes with negative drift. 
It is difficult to generate a sample from this distribution due to the lack of availability of the precise distribution function.
It will be interesting in the stochastic simulation literature to develop techniques for generating a sample from this distribution, and obtain exact error bounds if the simulation method is approximate. 
For the heavy-tailed scaling limits, simulating sample paths of the thinned L\'evy process is not standard.
A natural strategy could be to truncate the sum in \eqref{c2defn::limiting::process} upto first $K$ terms for a large $K$.
However, there is a more accurate approach using the techniques in \cite{AR01}.
Here, one can approximate the small jumps by a Brownian motion using the following theorem:
Recall that $\mathcal{I}_i(t)=\ind{\xi_i\leq t}$, $\xi_i\sim\mathrm{Exp}(\theta_i)$ (let $\mu=1$) and define $$E_K(t)=\sum_{i=K+1}^{\infty}\theta_i(\mathcal{I}_i(t)-\theta_it),\quad \sigma_K^2=\sum_{i=K+1}^\infty \theta_i^3.$$
\begin{theorem}[{\cite{DHM15}}]\label{th-conv-thinned}Suppose that 
$ \frac{\max_{i>K}\theta_i}{\sum_{i=K+1}^{\infty}\theta_i^3}\to 0.$
 Then, as $K\to\infty$,
 \begin{equation}
  (\sigma_K^{-1}E_K(t))_{t\geq 0}\dto \bld{W},
 \end{equation}where $\bld{W}$ is a standard Brownian motion.
\end{theorem}
\noindent See Appendix~\ref{sec:appendix-OP-1} for a proof.
Note that if we take $\theta_i=C_Fi^{-\alpha}$ for some $\alpha\in (1/3,1/2)$, we have $\sum_{i=K+1}^\infty \theta_i^3=\Theta(K^{1-3\alpha})$. Therefore,
\begin{equation}
 \frac{\max_{i>K}\theta_i}{\sum_{i=K+1}^{\infty}\theta_i^3}=\Theta(K^{-\alpha-1+3\alpha})=\Theta(K^{2\alpha-1})\to 0.
\end{equation}Thus, the assumption of Theorem~\ref{th-conv-thinned} is satisfied.
However, we did not pursue the question of simulation guarantees further.
\paragraph*{Independent proof for the scaling limit of diameter.} In Chapter~\ref{chap:mspace-GHP}, we have seen that the largest components converge as measured metric spaces under the Gromov-Hausdorff-Prokhorov topology yields the convergence of the diameters of these components.
This is yields that $\mathrm{diam}(\sC_{\sss (1)}(p_c(\lambda)))$ converges in distribution to some random variable. 
The above approach of proving the convergence of diameters is indirect and considerably difficult.
Till date there is no direct approach available to show the convergence of $\mathrm{diam}(\sC_{\sss (1)}(p_c(\lambda)))$ even for the Erd\H{o}s-R\'enyi random graphs. 
Only some bounds were derived in \cite{NP08} establishing the tightness of $\mathrm{diam}(\sC_{\sss (1)}(p_c(\lambda)))$ and $(\mathrm{diam}(\sC_{\sss (1)}(p_c(\lambda))))^{-1}$.
A direct proof is expected to yield a simpler expression for the diameter of the limiting metric spaces in \cite{ABG09,BHS15}, and it is expected to require novel techniques as well.

\paragraph*{Critical behavior on random geometric graphs.}
A random geometric graph is obtained by throwing $n$ points uniformly at random 
in the $d$-dimensional box $[0,1]^d$, and creating an edge between two points if their  euclidean distance is at most $\lambda n^{-d}$. 
Random geometric graphs are known to exhibit phase transition as $\lambda$ increases \cite{P2003}.
The phase transition has also been studied for random geometric graphs on hyperbolic spaces \cite{BFM15}.
However, analyzing the critical behavior on random geometric graphs is an open question. 
The inherent structure of these graphs are fundamentally different than the random graph models that do not depend on an underlying geometry. 
For example, the probability that a random vertex is involved in a clique of size $k$ is bounded away from zero for each fixed $k\geq 1$, showing that random geometric graphs cannot be approximated by a branching process locally. 
Thus, the critical components are not expected to have $O(1)$ many surplus edges anymore. 
Technically, it is challenging to deal with the exploration process since the drift and the quadratic variation terms depend on the area covered by the spheres centered at the active vertices, which is difficult to track. 

\paragraph*{Concentration of total size and total number of large components.} 
Let $\Xi_n$ denote the point process $\{n^{-\rho}|\sC_{\sss (i)}(p_c(\lambda))|\}_{i\geq 1}$, and let $\Xi$ be the point process $\{|\gamma_i|\}_{i\geq 1}$, where $|\gamma_i|$ denotes the scaling limit of $|\sC_{\sss (i)}(p_c(\lambda))|$ in Theorems~\ref{c1:thm_main}, or~\ref{c2thm::conv:component:size}.
Fix $\varepsilon>0$.
Then the convergence of the exploration processes imply that as $n\to\infty$
\begin{gather*}
Z_{n,\varepsilon}:=\int_{\varepsilon}^\infty x\ \Xi_n(\dif x) \dto Z_{\varepsilon}:=\int_{\varepsilon}^\infty x\ \Xi(\dif x)\\
W_{n,\varepsilon}:=  \Xi_n([\varepsilon,\infty)) \dto  W_{\varepsilon}:=  \Xi([\varepsilon,\infty));
\end{gather*} see \cite[Proposition 1.4]{JS07}.
As $\varepsilon\to 0$, $Z_{\varepsilon}$ gives the \emph{total mass} of the largest components, and $W_{\varepsilon}$ gives the number of largest components. 
Thus it is desirable to understand the asymptotics of $Z_{\varepsilon}$ and $W_{\varepsilon}$ as $\varepsilon\to 0$.
Note that $Z_{0} = \infty$ and $W_{0} =\infty$, and thus by monotone convergence theorem, $Z_{\varepsilon} \xrightarrow{\sss \PR} \infty$ and 
$W_{\varepsilon} \xrightarrow{\sss \PR} \infty$, as $\varepsilon\to 0$.
For the Erd\H{o}s-R\'enyi universality class, Janson and Spencer \cite{JS07} showed that as $\varepsilon\to 0$
\begin{equation}\label{eq:asymptotics-eps-to-zero-op}
\varepsilon^{1/2} Z_{\varepsilon} \pto \sqrt{\frac{2}{\pi}} \quad \text{and} \quad  \varepsilon^{3/2} W_{\varepsilon} \pto \sqrt{\frac{2}{9\pi}}.
\end{equation}
Thus, even if both $Z_{\varepsilon}$ and $W_{\varepsilon}$ are non-degenerate random variables, these concentrate as $\varepsilon\to 0$. 
The asymptotic normality is still an open question. 
Also, it will be interesting to derive the asymptotics \eqref{eq:asymptotics-eps-to-zero-op} for the general description of the multiplicative coalescent given in \cite{AL98}.
%
%Fix $\varepsilon >0$, and let $\mathcal{W}_{n,\varepsilon} := \{i: \sC_{\sss (i)}(p_c(\lambda)) > \varepsilon n^{\rho}\}$, $Z_{n,\varepsilon} := n^{-\rho}\sum_{i\in \mathcal{W}_{n,\varepsilon}}|\sC_{\sss (i)}(p_c(\lambda))|$, and $W_{n,\varepsilon} = |\mathcal{W}_{n,\varepsilon}|$, where we recall that $\rho$ is the scaling exponent corresponding to the component sizes.
%Note that $Z_{n,\varepsilon} \dto Z_{\varepsilon}$, where $Z_\varepsilon$ is the sum of excursion lengths with length atleast $\varepsilon$ of the scaling limits in Theorems~\cmt{ref}.
%Thus $Z_{\varepsilon}$ is a nondegenerate random variable. 

%\paragraph*{Near critical behavior.}

\section{Infinite second moment case} In this section, we state the open problems related to the critical behavior in the infinite third moment case.

\paragraph*{Barely super-critical regime and the large $\lambda$ case.}  
While the critical behavior was studied in detail for the configuration model in Chapter~\ref{chap:infinite-second}, the lack of an exploration process approach under the single-edge constraint limited our analysis to the barely subcritical regime and small values of $\lambda$ (i.e., $\lambda\in (0,\lambda_0)$ for some constant $\lambda_0$ which is independent of the model) within the critical window. 
In a future work, we wish to address the $\lambda>\lambda_0$ and the barely supercritical case. 
This will complete the analysis for the critical behavior of the component sizes within under the single-edge constraint.

%\todo[inline]{Comparison to the near critical bounds by Bollobas Janson}

\paragraph*{Novel evolution dynamics in the $\tau\in (2,3)$ case.}
%In Chapter~\cmt{ref}, we did not discuss the question of evolution of the component sizes over the critical window. 
In Chapters~\ref{chap:thirdmoment} and~\ref{chap:secondmoment}, we have seen that the evolution of the component sizes over the critical window is always guided by the multiplicative coalescent process, but apparently one would get a completely different coalescent process in the infinite second moment case, especially under the single-edge constraint.
In this case, components merge when the hubs get connected via some intermediate vertex. 
Now the evolution of the total weights of is not Markovian.
One has to keep track of all the hubs within components (which gives rise to an infinite dimensional vector) rather than some statistic of the components (like the total mass) to describe the process.
This gives rise to novel evolution dynamics in the context of critical random graphs.
%We have not discussed this question in Section~\cmt{ref}.

\paragraph*{Bounds on the diameter in the infinite second moment case.}
Although the diameter of critical components is $O(1)$ for $\CM$, we did not derive such a result under the single-edge constraint. 
In fact it is possible that the diameters in the latter case is diverging to infinity.
%It may be the case that . 
%It may be the case that for $n^{-(3-\tau)/(\tau-1)}\ll p \ll n^{-(3-\tau)/2}$, when the giant component is formed for percolation on $\CM$, several vertices of degree $o(n^{1/2})$ create a path of length $(\log(n))^a$ which do not contain vertices of degree $\Omega(n^{1/2})$.
%Now, the graph on vertices with degree $o(n^{1/2})$ do not contain many self-loops/multiple edges, and therefore the above paths may be unaffected under the single-edge constraint.
%This may lead to long paths in the critical components, and 
We do not have a concrete intuition for this problem so far and it requires further investigation.

\paragraph*{Uniformly chosen graphs with given degrees.}
It will be interesting to study the critical behavior in the infinite second moment case for uniformly chosen graphs with given degrees (denoted by $\mathrm{UM}_n(\bld{d})$).
We expect the same scaling critical exponents as under the single edge-constraint, and the scaling limit is expected to be the same as critical percolation on generalized random graphs.  
The reason behind this is as follows:  
Suppose that $\bld{D}:=(D_i)_{i\in [n]}$ denotes the degree sequence of $\GRG$.
Conditionally on $\bld{D} = \bld{d}$, the distribution of $\GRG$ is same as that of $\mathrm{UM}_n(\bld{d})$. 
Now the ``core'' of the components consists of vertices with weight $\Theta(n^{\rho})$ and $\Theta (n^{\alpha})$. Call these special vertices.
One can probably use concentration arguments to show that $D_i \approx d_i$ for all special vertices.
Now if one can show that perturbing the degrees of the special vertices does not change the connection probabilities in $\mathrm{UM}_n(\bld{d})$ significantly, then it will be possible to show that, conditionally on $\bld{D} = \bld{d}$, the core for the connected components of $\GRG$ is the same graph as the core of the connected components of $\mathrm{UM}_n(\bld{d})$. However, formalizing this is not straightforward.

\section{Global structure.} 

\paragraph*{Joint convergence of metric spaces in $l^4$ topology.}
Let $\cM$ denote the space of measured compact metric spaces endowed with the  Gromov-Hausdorff-Prokhorov (GHP) topology.
In Chapters~\ref{chap:mspace} and~\ref{chap:mspace-GHP}, we have considered the product topology on the  $\cM^{\N}$ for the joint convergence of the components.
A stronger result was shown for Erd\H{o}s-R\'enyi random graphs in \cite{ABG09} under the metric defined below.
Define the metric $d_{l^4}$ on $\cM^{\N}$ by $$d_{l^4}(\bld{X},\bld{Y}) = \bigg(\sum_{i\geq 1}\big(d_{\sss \mathrm{GHP}}(X_i,Y_i)\big)^4\bigg)^{1/4},$$ where $d_{\sss\mathrm{GHP}}$ denotes the GHP distance.
It is desirable to extend the results in Chapters~\ref{chap:mspace} and~\ref{chap:mspace-GHP} under this stronger topology which requires suitable bounds on the diameter of small components. 
%Define 
%\begin{eq}
%l^4:=\{(X_i)_{i\geq 1}: X_i \}
%\end{eq}

%\paragraph*{Identifying optimal conditions for compactness.}
%Chapter~\cmt{ref} provides a sufficient condition for almost sure compactness of the limiting random metric space in Theorem~\cmt{ref}. 
%While the sufficient condition identifies a large class of $\bld{\theta}$ for the compactness, it is desirable to derive necessary and sufficient conditions.
%%We also refer the reader to \cite{B}

\paragraph*{Evolution as metric space-valued stochastic process.}
The augmented multiplicative coalescent process only tracks the evolution of the component sizes and the surplus edges as $\lambda$ increases over the critical window.
The metric structures of the largest connected components also evolve as $\lambda$ increases. 
It will be interesting to describe the evolution of the infinite dimensional measured metric spaces. 

%\todo[inline]{Check for relation with \cite{BDW18} paper.}

\section{More challenges. }

\paragraph*{Critical behavior for general graphs.}
Studying the critical behavior for more general sequence of graphs is  an open direction.
Of course the phrase ``general graphs'' is too vague, and one must impose regularity conditions to see the critical behavior and scaling limits. 
For example, one may consider percolation on sequences of dense graphs (with $\Theta(n^{2})$ many edges) that converge in the so-called cut metric \cite{BCLSV12,BCLSV08} and impose some restrictions on the limiting graphon.
The critical value for the phase transition was identified in~\cite{BBCR10} under mild conditions, while the critical behavior is a completely open question.

\paragraph*{The minimum spanning tree problem.}
The study of critical percolation has experienced a renewed interest after a recent seminal work by Addario-Berry et al.~\cite{ABGM13}. 
They studied the limit as a metric space of the minimum spanning tree (MST) on a complete graph with iid edge weights under the GHP-topology.
Exploiting the relation between Kruskal's algorithm for generating MST, they showed that the MST can be \emph{approximated} by $\sC_{\sss (1)}(p_c(\lambda))$ after removing the cycles in a specified manner.
%, with respect to the GHP-topology.
Using the results about the limit of $\sC_{\sss (1)}(p_c(\lambda))$ from~\cite{ABG09}, they could describe the metric structure of the MST on complete graph.
The results in~\cite{BHS15} and in Chapter~\ref{chap:mspace} forms a basis of studying MST in the heavy-tailed regime where the scaling limit is expected to be different than MST on complete graph.
However, the study of minimal spanning trees is an open question, even for the simple models like random regular graphs.

\begin{subappendices}
\section{Proof of Theorem~\ref{th-conv-thinned}} \label{sec:appendix-OP-1}
Note that $(E_K(t))_{t\geq 0}$ is a super-martingale with the Doob-Meyer decomposition (\cite[Section 2.1]{AL98})
\begin{equation}
 E_K(t)=M_K(t)+A_K(t),
\end{equation}where $A_K(t)=-\sum_{i=K+1}^{\infty}\theta_i^2(t-\xi_i)^+$ and the quadratic variation of $\bld{M}_K$ is given by 
\begin{equation}
 \langle M_K \rangle (t)=\sum_{i=K+1}^\infty \theta_i^3\min\{t,\xi_i\}.
\end{equation}
For any fixed $T>0$, we show that $\sigma_K^{-1}\sup_{t\leq T}|A_K(t)|\pto 0$ and use the martingale functional central limit theorem \cite[Theorem 2.1]{W07} to conclude the theorem. To see the first part, note that 
\begin{align*}
 \expt{(T-\xi_i)^+}\leq  T\prob{\xi_i\leq T}= T(1-\e^{-\theta_iT})\leq \theta_iT^2. 
\end{align*}
Noting that $A_K(\cdot)$ is negative and monotonically decreasing we have, for any $\varepsilon >0$,
\begin{equation}
 \begin{split}
  \PR\bigg(\sup_{t\leq T}|A_K(t)|>\varepsilon \sigma_K\bigg)&=\prob{|A_K(T)|>\varepsilon \sigma_K}\leq \frac{1}{\varepsilon \sigma_K}\expt{-A_K(T)}\\
  &\leq \frac{T^2\sigma_K^2}{\varepsilon \sigma_K}=\frac{T^2\sigma_K}{\varepsilon}\to 0.
 \end{split}
\end{equation} Let $J(M_K,T)$ denote the value of the maximum jump of $\bld{M}_K$ before time $T$. Now to show that the martingale parts converge to a Brownian motion, it is enough to show that, as $K\to\infty$,
\begin{enumerate}[(1)]
 \item \label{fclt-con-1} $\sigma_K^{-2}\langle M_K \rangle (t)\pto t$, for each fixed $t>0$,
 \item \label{fclt-con-2} $\sigma_K^{-1}\expt{|J(M_K,T)|}\to 0.$
\end{enumerate} Let us first verify \eqref{fclt-con-2}. Note that $J(M_K,T)=\theta_i^2$ if $\xi_i\leq T$ and $\xi_j>T$ for all $K<j<i$ (since $\theta_i$'s are non-increasing). Therefore,
\begin{equation}
 \sigma_K^{-1}\expt{|J(M_K,T)|}\leq \sigma_K^{-1}\sum_{i=K+1}^{\infty}\theta_i^2\prob{\xi_i\leq T}\leq T\sigma_K\to 0.
\end{equation}
To see \eqref{fclt-con-1} we will use Chebyshev's inequality. Note that 
\begin{equation}
 \expt{\min\{t,\xi_i\}}=\int_0^t\theta_ix\e^{-\theta_ix}\dif x+t\prob{\xi_i>t}=\frac{1-\e^{-\theta_it}}{\theta_i}.
\end{equation}Using the fact that $x-x^2/2\leq 1-\e^{-x}\leq x$, we compute
\begin{equation}
 t- \frac{t^2}{2}\frac{\sum_{i=K+1}^{\infty}\theta_i^4}{\sum_{i=K+1}^{\infty}\theta_i^3}\leq \sigma_K^{-2}\expt{\langle M_K \rangle (t)}= \sigma_K^{-2} \sum_{i=K+1}^{\infty}\theta_i^2(1-\e^{-\theta_it})\leq t,
\end{equation}
and therefore,
\begin{equation}
 \left|\sigma_K^{-2}\expt{\langle M_K \rangle (t)}-t\right|\leq\frac{t^2}{2}\frac{\sum_{i=K+1}^{\infty}\theta_i^4}{\sum_{i=K+1}^{\infty}\theta_i^3}\leq  \frac{t^2}{2}\max_{i> K}\theta_i\to 0.
\end{equation} 
Moreover, since $\var{\min\{t,\xi_i\}}\leq Ct^2\theta_i^{-2}$ for some uniform constant $C>0$,
\begin{equation}
\mathrm{Var}\big(\sigma_K^{-2}\langle M_K \rangle (t)\big)=\sigma_K^{-4}\sum_{i=K+1}^{\infty} \theta_i^6 \var{\min\{t,\xi_i\}}\leq Ct^2\frac{\max_{i>K}\theta_i}{\sum_{i=K+1}^{\infty}\theta_i^3}\to 0,
\end{equation}where the last part follows by our assumption. Thus, \eqref{fclt-con-1} follows and the proof is complete. \qed

\end{subappendices}

%\bibliographystyle{apa}
%\bibliography{thesis}
%
%
%\end{document} 

%
%\chapter[Introduction]{Introduction}
%\label{chap:introduction}
%\vfill
%%\include{intro} 
\cleardoublepage

 \bibliographystyle{apalike} 
\bibliography{thesis}

\chapter*{Summary}
Random graphs have played an instrumental role in modelling real-world networks arising from the internet topology, social networks, or even protein-interaction networks within cells. Percolation, on the other hand, has been the fundamental model for understanding robustness and spread of epidemics on these networks. 
From a mathematical perspective, percolation is the simplest model that exhibits phase transition, and fascinating features are observed around the critical point.
In this thesis, we prove limit theorems about structural properties of the connected components obtained from percolation on random graphs at criticality. 
The results are obtained for random graphs with general degree sequence, and we identify different universality classes for the critical behavior based on moment assumptions on the degree distribution. 

In Chapter 1, we start with an introduction to this attractive branch of probability which has spurred interest among mathematicians for several decades, with many of the interesting questions being still open. 
We briefly review the history of the percolation phase transition on finite graphs, and describe the emerging literature for the critical behavior.
Subsequently, we describe our results from a high-level, and discuss the general proof ideas. 
Three types of fundamentally different critical behaviors are observed depending on whether degree distribution satisfies (a) a finite third moment condition, (b) an infinite third moment condition, (c) and an infinite second moment condition.
In all these regimes, we ask questions about the component sizes and structures of the critical components.
The goal of this chapter is to convey the main challenges in the upcoming chapters for each of the above regimes without going into the technical framework. 
%Before going into the technical parts, several definitions and concepts need to be defined. These are provided in Chapter 2.

In Chapter 2, we state and prove results about the component sizes and surplus edges when the degree distribution satisfies a finite third moment condition. 
The evolution of component sizes and surplus edges over the critical window is also shown to converge to the augmented multiplicative coalescent. 
The results show that only a finite third moment condition ensures that the critical behavior lies in the same universality class as classical homogeneous random graph models like Erd\H{o}s-R\'enyi random graph or random regular graph.

In Chapter 3, we investigate the infinite third moment case. 
In this setting, the critical behavior for component sizes and surplus edges turns out to be in a completely different universality class. 
The key difference lies in the fact that the asymptotics of high degree vertices play a pivotal role in describing the scaling limits. 
For example, if the highest degree vertex is deleted, the scaling limit changes in this regime which is in sharp contrast to the finite third moment case. 
The results in Chapters 2 and 3 observe both the possible scaling limits for multiplicative coalescent processes that was predicted by Aldous and Limic. 

In Chapter 4, we describe the global structure of components in the infinite third moment case. 
More precisely, one can view any connected graph as a metric space equipped with a measure, where the elements of the metric space given by the vertices, the metric given by the graph distance, and the measure proportional to the counting measure.
With all the above ingredients, the critical components can be viewed as a random element from the space of all complete metric spaces equipped with a measure. 
In this chapter we show that after rescaling the distances suitably, the largest critical components converge with respect to Gromov-weak topology.
%
%More precisely, if we view the connected components as metric spaces with graph distances (rescaled suitably), then these components converge on the space of metric spaces. 
%Thus the critical components 
These results yield joint convergence of several functionals related to distances within these connected components. 

In Chapter 5, we establish the global lower mass bound property. 
This property ensures that the convergence results in Chapter 4 could be strengthened to hold under the Gromov-Hausdorff-Prokhorov topology.
The primary outcome of the later stronger form of convergence is that it yields convergence of global distance related functionals like the diameter.

In Chapter 6, we investigate the case where the degree distribution has infinite second moment. 
Even defining the critical window for percolation is challenging in this case and all the questions related to the critical behavior in this regime were completely open question till date.
We initiate this study by identifying critical values and scaling limits of the component sizes. 
The striking observation that we make in this regime is that the critical exponents and the scaling limits depend crucially on the so-called single-edge constraint, i.e., the critical behavior for the configuration model and the erased configuration model are fundamentally different.
We also establish the uniqueness of the critical exponents by analyzing the barely sub/super-critical regimes.

%We identify a novel universality class in this scenario. In this regime also, the scaling limits depend crucially on the high degree vertices. However, the distances within critical components become finite in contrast to all the other regimes.

%In Chapter 6, we describe the global structure of components in the infinite third moment case. More precisely, if we view the connected components as metric spaces with graph distances (rescaled suitably), then these components converge on the space of metric spaces. These results provide asymptotics of several functionals related to distances within the connected components. 

In the final Chapter, we conclude with many open problems and future directions. 

The results in this thesis are strongest in terms of the topology of convergence and the results are proved under minimal assumptions. The results are expected to have potential impact on understanding spread of epidemics, minimum spanning trees on random networks with arbitrary degree sequence. 
The proof ideas are also robust and we hope that many of the core ideas would work for more many other random graph models.

\chapter*{About the author}
Souvik Dhara was born on May 9, 1991 in Kolkata, India. 
Souvik grew up in the southern part of Kolkata, and obtained his high school degree from Harinavi D.V.A.S.~High School.
During 2009 - 2012, he obtained his bachelor's degree with Statistics  major. 
In 2012, he joined Indian Statistical Institute in the master's program, and obtained his M.Stat degree with Mathematical Statistics and Probability specialization in 2014. 
In August 2014, he joined a PhD program in the  Eindhoven University of Technology under the supervision of Remco van der Hofstad and Johan van Leeuwaarden. 
His PhD project was part of the NETWORKS program funded by the Netherlands Organisation for Scientific Research (NWO).

Souvik's research interests lie in the intersection of probability theory and combinatorics, applied probability and operations research. 
During PhD, his primary aim has been understanding the interplay between the structural properties of networks and stochastic processes on them.
Souvik has explored and applied several recent concepts in both probability theory and combinatorics related to local and global structures of random networks.
His main contribution in the field is to derive limit laws for critical percolation on graphs with arbitrary degree distribution.
His findings is the topic of this PhD thesis.
At the same time, he has made contributions to combinatorics, and applied probability by analyzing graph limits and cut properties of random graphs, and providing asymptotic analysis of stochastic process arising from modern cloud computing systems and wireless networks.

In July 2018, Souvik joined as a Schramm fellow, which is offered jointly by Microsoft Research New England and MIT Mathematics. He will be at Microsoft Research during 2018 - 2019, and at MIT during 2019 - 2021.

\chapter*{Publications and preprints}

\begin{enumerate}[{[1]}]
\item Critical behavior of percolation on random graphs with given degree: A survey (2018+);
Souvik Dhara, Remco van der Hofstad, Johan S.H. van Leeuwaarden. 
(Preprint)
\item Limits of sparse configuration models and beyond: graphexes and multi-graphexes (2018+); Christian Borgs, Jennifer T. Chayes, Souvik Dhara, Subhabrata Sen. (Preprint)
\item Critical percolation on scale-free random graphs: Effect of the single-edge constraint (2018+); Souvik Dhara, Remco van der Hofstad, Johan S.H. van Leeuwaarden. (Preprint)
\item Global lower mass-bound for critical configuration models in the heavy-tailed regime (2018+); Shankar Bhamidi, Souvik Dhara, Remco van der Hofstad, Sanchayan Sen. (Preprint)
\item Universality for critical heavy-tailed network models: Metric structure of maximal components (2017); 
Shankar Bhamidi, Souvik Dhara, Remco van der Hofstad, Sanchayan Sen. arXiv:1703.07145
\item Heavy-tailed configuration models at criticality (2016); Souvik Dhara, Remco van der Hofstad, Johan S.H. van Leeuwaarden, Sanchayan Sen; arXiv:1612.00650 
\item Corrected mean-field model for random sequential adsorption on random geometric graphs (2016); Souvik Dhara, Johan S.H. van Leeuwaarden, Debankur Mukherjee; To appear with Journal of Statistical Physics.
\item Phase transitions of extremal cuts for the configuration model (2017);  Souvik Dhara, Debankur Mukherjee, Subhabrata Sen; Electronic Journal of Probability 22, no. 86, 1–29.
\item Optimal Service Elasticity in Large-Scale Distributed Systems (2017); Debankur Mukherjee, Souvik Dhara, Sem Borst, Johan S.H. van Leeuwaarden;  
SIGMETRICS'17, Urbana-Champaign, Illinois, USA. 
Proceedings of the ACM on Measurement and Analysis of computing systems. 
\item Critical window for the configuration model: finite third moment degrees (2016); Souvik Dhara, Remco van der Hofstad, Johan S.H. van Leeuwaarden, Sanchayan Sen; Electronic Journal of Probability 22, no. 16, 1–33.
\item Generalized random sequential adsorption on Erdos-Renyi random graphs (2016); Souvik Dhara, Johan S.H. van Leeuwaarden, Debankur Mukherjee; Journal of Statistical Physics 164, 1217-1232.
\end{enumerate}

\end{document}